\renewcommand*{\backrefalt}[4]{\ifcase #1 (Not cited).\or (Cited p.~#2).\else (Cited pp.~#2).\fi} 
\tikzset{snake it/.style={decorate, decoration=snake}}
 \newtheorem{theorem}{Theorem}[section]
  \newtheorem{proposition}[theorem]{Proposition}
  \newtheorem{corollary}[theorem]{Corollary}
  \newtheorem{lemma}[theorem]{Lemma}
  \theoremstyle{definition}
  \newtheorem{definition}[theorem]{Definition}
  \newtheorem{claim}[theorem]{Claim}
  \newtheorem*{claim*}{Claim}
    \newtheorem{notation}[theorem]{Notation}
  \newtheorem*{question*}{Question}
  \newtheorem*{answer*}{Answer}
  \newtheorem*{application*}{Application}
  \theoremstyle{remark}
  \newtheorem{remark}[theorem]{Remark}
  \newtheorem*{remark*}{Remark}
\newcommand{\id}{\text{id}}
\DeclarePairedDelimiterX{\Norm}[1]{\lVert}{\rVert}{#1}
\theoremstyle{definition}
  \newcommand{\hh}{{\sf h}}
    \newcommand{\RR}{{\mathbb{R}}}
  \newcommand{\gothic}{\mathfrak}
  \newcommand{\go}{{\gothic o}}
 \newcommand{\calA}{\mathcal{A}}
  \newcommand{\calB}{\mathcal{B}}
  \newcommand{\calC}{\mathcal{C}}
  \newcommand{\calD}{\mathcal{D}}
  \newcommand{\calF}{\mathcal{F}}
  \newcommand{\calH}{\mathcal{H}}
  \newcommand{\calN}{\mathcal{N}}
  \newcommand{\calQ}{\mathcal{Q}}
  \newcommand{\calS}{\mathcal{S}}
  \newcommand{\calU}{\mathcal{U}}
    \newcommand{\calV}{\mathcal{V}}
  \newcommand{\calW}{\mathcal{W}}
  \newcommand{\calX}{\mathcal{X}}
    \newcommand{\calY}{\mathcal{Y}}
  \newcommand{\calZ}{\mathcal{Z}}
      \newcommand{\oV}{\overline{\mathcal{V}}}
  \newcommand{\ZZ}{\mathbb Z}
  \newcommand{\Dual}{\calD}
    \newcommand{\MCG}{\mathcal{MCG}}
    \newcommand{\hT}{\widehat{T}}
    \newcommand{\hx}{\hat{x}}
     \newcommand{\ha}{\hat{a}}
         \newcommand{\hmed}{\hat{\mathfrak{m}}}
      \newcommand{\hb}{\hat{b}}
          \newcommand{\hc}{\hat{c}}
       \newcommand{\hz}{\hat{z}}
    \newcommand{\hy}{\hat{y}}
    \newcommand{\hd}{\hat{\delta}}
    \newcommand{\hw}{\hat{w}}
    \newcommand{\hlam}{\hat{\lambda}}
    \newcommand{\om}{\omega}
    \newcommand{\hO}{\widehat{\Omega}}
    \newcommand{\hpsi}{\hat{\psi}}
    \newcommand{\hPsi}{\widehat{\Psi}}
        \newcommand{\hPhi}{\widehat{\Phi}}
    \newcommand{\hf}{\hat{f}}
    \newcommand{\hY}{\widehat{\mathcal Y}}
    \newcommand{\BM}{\calB_{\min}}
    \renewcommand{\hh}{\mathbf{h}}
    \newcommand{\PP}{\mathbf{P}}
    \newcommand{\hpi}{\widehat{\pi}}
    \newcommand{\Dust}{\mathrm{Dust}}
    \newcommand{\bp}{\mathfrak{p}}
    \newcommand{\bF}{\mathbb{F}}
    \newcommand{\bE}{\mathbb{E}}
       \newcommand{\oZ}{\overline{\mathcal Z}}
    \newcommand{\oQ}{\overline{\mathcal Q}}
    \newcommand{\oY}{\overline{\mathcal Y}}
    \newcommand{\Rel}{\mathrm{Rel}}
  \newcommand{\dist}{\mathbf{d}}
\newcommand{\diam}{\mathrm{diam}}
\newcommand{\hull}{\mathrm{hull}}
\newcommand{\med}{\mathfrak{m}}
\newcommand{\gate}{\mathfrak{g}}
\newcommand{\Neb}{\mathcal{N}}
\newcommand{\ret}{\mathrm{ret}}
\newcommand{\Real}{\mathrm{Real}}
\newcommand{\supp}{\mathrm{supp}}
\newcommand{\nest}{\sqsubset}
\newtheorem{thmi}{Theorem}
\newtheorem{cori}[thmi]{Corollary}
\newtheorem{propi}[thmi]{Proposition}
\begin{document}

\title[Cubulating infinity in HHSes]{Cubulating infinity in hierarchically hyperbolic spaces}


 \author   {Matthew Gentry Durham}
 \address{Department of Mathematics, University of California, Riverside, CA }
 \email{mdurham@ucr.edu}

\begin{abstract}
We prove that the hierarchical hull of any finite set of interior points, hierarchy rays, and boundary points in a hierarchically hyperbolic space (HHS) is quasi-median quasi-isometric to a CAT(0) cube complex of bounded dimension.  Our construction extends and refines a theorem of Behrstock-Hagen-Sisto about modeling hulls of interior points and our previous work with Zalloum on modeling finite sets of rays via limits of these finite models.

We further prove that the quasi-median quasi-isometry between the hull of a finite set of rays or boundary points and its cubical model extends to an isomorphism between their respective hierarchical and simplicial boundaries.  In this sense, we prove that the hierarchical boundary of any proper HHS is locally modeled by the simplicial boundaries of CAT(0) cube complexes.  This is a purely geometric statement, allowing one to important various topologies from the cubical setting.

Our proof of the cubical model theorem is new, even for the interior points case.  In particular, we provide a concrete description of the cubical model as a cubical subcomplex of a product of simplicial trees into which the hierarchical data is directly encoded.  Moreover, the above boundary isomorphism is new for all non-cubical HHSes, including mapping class groups and Teichm\"uller spaces of finite-type surfaces.

As an application of our techniques, we show that in most HHSes, including all hierarchically hyperbolic groups, the distance between any pair of points in the top-level hyperbolic space is coarsely the length of a maximal $0$-separated chain of hyperplanes separating them in an appropriate cubical model.  For mapping class groups, this says that these cubical models cubically encode distance in the curve graph of the surface.
\end{abstract}

\maketitle

\section{Introduction} \label{sec:intro}

The hierarchy machinery of Masur-Minsky \cite{MM99, MM00}, powered by techniques from hyperbolic geometry, revolutionized our understanding of the geometry of mapping class groups and Teichm\"uller spaces.  In recent years, an analogous influx of ideas from CAT(0) cubical complexes has transformed the area, leading to a similar seismic shift.

At the center of this recent wave is the \emph{cubical model machinery} of Behrstock-Hagen-Sisto \cite{HHS_quasi}.  Roughly speaking, they proved that the coarse convex hulls of any finite set of interior points in any \emph{hierarchically hyperbolic space} \cite{HHS_1}, such as the mapping class group, are quasi-median quasi-isometric to CAT(0) cube complexes (see also Bowditch \cite{Bowditch_hulls}).  This is a higher-rank generalization of the fact that coarse hulls of finite sets of points in hyperbolic spaces are quasi-trees.  These cubical models have proven to be quite useful.  They were essential in the resolution of Farb's quasi-flats conjecture \cite{HHS_quasi}, proofs of semihyperbolicity \cite{DMS20, HHP} and coarse injectivity \cite{HHP} of the mapping class group, and inspired Petyt's proof that mapping class groups are quasi-isometric to cube complexes \cite{Petyt_quasicube} (though necessarily non-equivariantly \cite{Bridson_notCCC}), among other results \cite{ABD, DZ22}.

The main purpose of this article is to extend this cubical machinery to model the hulls of finitely-many hierarchy rays and boundary points in a natural compactification of any proper HHS called the \emph{hierarchical boundary} \cite{DHS_boundary}, which we introduced with Hagen and Sisto.  Moreover, we provide a local description of this boundary via these cubical models.  We summarize our main results as follows:

\vspace{.1in}
\framebox{
\begin{minipage}{6in}
\begin{thmi}\label{thmi:main informal}
In any (proper) HHS, the hierarchical hull of any finite set of interior points, hierarchy rays, and boundary points admits a quasi-median quasi-isometry to a bounded dimensional CAT(0) cube complex.  Moreover, this map extends to a simplicial isomorphism between the simplicial boundary of the cubical model and the hierarchical boundary of the hull.
\end{thmi}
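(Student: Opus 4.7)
The plan is to build the cubical model explicitly as a subcomplex of a product of simplicial trees indexed by the (finitely many) relevant domains, and to extract both the quasi-median quasi-isometry and the boundary isomorphism from the combinatorial structure of this construction. Given a finite set $F$ consisting of interior points, hierarchy rays, and boundary points, I would first identify the finite collection $\calR(F)$ of domains $U$ in which the projections of $F$ span $\calC(U)$-diameter exceeding a threshold $\theta$ chosen larger than the consistency and bounded geodesic image constants. For each $U \in \calR(F)$, I would construct a simplicial tree $T_U$ that encodes the projection of $F$ to $\calC(U)$: interior points and ray/boundary projections contribute vertices or ends of $T_U$, and the tree geometry is dictated by a hyperbolic hull of these projections (for instance, an approximating $\RR$-tree for the projected data). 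This step is where the paper's claim of encoding the hierarchical data directly in the cubical model is cashed out.

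Next, I would form the product $\prod_{U \in \calR(F)} T_U$ and carve out the subcomplex $\calQ_F$ consisting of tuples that satisfy HHS-consistency: for each nesting or orthogonality or transversality relation between two domains, the pair of chosen tree-points must lie in an admissible region governed by the Behrstock inequality and partial realization. Standard cubical arguments—parallel to those used by Behrstock-Hagen-Sisto and my earlier work with Zalloum—show that $\calQ_F$ is a $\CAT$ cube complex of dimension bounded by the HHS complexity. A natural realization map $\Phi \from \calQ_F \to H(F)$ sends each consistent tuple to the realization of its coordinates. The quasi-median quasi-isometry for the interior-point sub-hull then follows from the distance formula: $\ell^1$-distance in $\calQ_F$ agrees, up to multiplicative and additive constants, with the sum of tree distances in the factors, which in turn matches $\dist_{H(F)}$ via the distance formula restricted to $\calR(F)$.

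To handle hierarchy rays and boundary points, I would take a direct limit along exhaustions of $F$ by finite sets of interior points converging to the rays and boundary points. Properness of the HHS ensures that passing to the limit is well-posed, while the uniform bound on dimension prevents the limiting complex from degenerating. For the simplicial boundary isomorphism, I would use the fact that the simplicial boundary $\partial_\triangle \calQ_F$ of a $\CAT$ cube complex is determined by its UBS structure, which in our product-of-trees construction corresponds exactly to making a consistent choice of ends in the $T_U$. This matches the recipe for building hierarchical boundary points: a point of $\partial H(F)$ is a support (a pairwise-orthogonal set of domains) together with boundary points in the corresponding $\calC(U)$ and a coefficient tuple, which is precisely the data of a compatible family of ends of the $T_U$ weighted by the geometry of the consistent tuples approaching them. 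Continuity and the simplicial structure then both come from the product-of-trees picture.

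The main obstacle I anticipate is the limiting step for boundary points: one must produce a coherent cubical model on the full hull, not merely on compact sub-hulls, and verify that HHS-consistency is preserved in the limit. This requires careful control of how the relevant domain set $\calR$ evolves as one approaches infinity—especially to rule out new relevant domains appearing in the limit that were invisible to every finite approximation—and a uniform quantitative handle on how ends of the $T_U$ assemble into supports in $\partial H(F)$. Matching the full \emph{simplicial} structure (and not just a bijection of underlying sets) between $\partial_\triangle \calQ_F$ and $\partial H(F)$ is the secondary obstacle, since joins of UBSes on the cubical side must be shown to correspond exactly to joins of orthogonal supports on the hierarchical side.
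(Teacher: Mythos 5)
Your high-level picture (product of simplicial trees indexed by relevant domains, consistent subset, distance formula) is in the right territory, but there are two genuine gaps, and one of the proposed steps is exactly what this paper is written to avoid.

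First, the raw consistent subset of $\prod_{U} T_U$ is \emph{not} a cube complex, and "standard cubical arguments" do not make it one. The consistency and BGI constraints on tuples in the Gromov trees $T_U$ are coarse: the admissible regions you describe have bounded but nonzero diameter, so your $\calQ_F$ is a fuzzy coarse object, not a subcomplex. The missing idea is the cluster collapsing $q_U \from T_U \to \hT_U$: one groups the shadows $\delta^V_U$ (pullbacks of $\rho$-sets of nested relevant $V \nest U$) together with nearby marked points into convex cluster subtrees and collapses them to vertices. After collapsing, the consistency conditions become \emph{exact} ($0$-consistency, Definition \ref{defn:Q consistent}), and the $0$-consistent set $\calQ \subset \prod_U \hT_U$ genuinely is (isometric to) a $\CAT(0)$ cube complex (Theorem \ref{thm:dual}). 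Without this collapse, you never get a cube complex to start with, and the converse map $\hO\from\calQ\to H$ — which has to \emph{uncollapse} arbitrarily large clusters via the honing process and bipartite domains of Section \ref{sec:Q to Y} — has nowhere to live.

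Second, the direct-limit strategy for rays and boundary points is precisely the approach this paper sets out to replace. The limiting construction was carried out in the prior Durham–Zalloum paper and, as the introduction explains (Remark \ref{rem:main rem}, item on refined infinite models), it destroys the hyperplane labeling data that the boundary isomorphism depends on; nothing in Arzel\`a–Ascoli controls which hierarchical information survives the limit. More concretely, when $\Lambda \neq \emptyset$ the relevant domain set $\calU$ is typically infinite, so "the uniform bound on dimension prevents degeneration" does not by itself give a cube complex: you need a canonicality condition on tuples (Definition \ref{defn:Q consistent}) to separate interior $0$-cubes from $0$-cubes at infinity, and you need Strong Passing-Up (Proposition \ref{prop:SPU} and its consequences like Lemma \ref{lem:rel sets countable} and Lemma \ref{lem:encoding PU}) to show $\calU$ is countable, that the half-space pocset is locally finite (Proposition \ref{prop:pocset}), and that every infinite family of hyperplane labels converges hierarchically. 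That same machinery is what turns a minimal UBS into a single support domain and a boundary point of $\calC(U)$ (Lemma \ref{lem:minimal spread}); "a compatible family of ends of the $T_U$" is not a priori the same thing as a hierarchical support set, and the proof that it is is where the real work lives. Your identified obstacle — matching the simplicial structure, not just underlying sets — is real, but the deeper missing ingredient is the passing-up analysis that makes any of the infinite accounting go through.
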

\end{minipage}
}
\vspace{.1in}

In fact, our proof of the cubical model theorem is entirely new, even in the interior point case.  Our construction is quite different from those contained in \cite{HHS_quasi, Bowditch_hulls}, and provides a wealth of new hierarchical information, especially in the infinite case as compared with the limiting cubical models for hierarchy rays that we constructed with Zalloum \cite{DZ22} (see Remark \ref{rem:main rem}).  Moreover, the construction is quite explicit and hands-on, and this alternative perspective has some added benefits, which we will describe below.

This article lays a broad technical foundation for future work using cubical techniques to analyze the geometry of hierarchically hyperbolic spaces and their hierarchical boundaries.   This includes further stabilizations of these cubical models along the lines of our work with Minsky and Sisto \cite{DMS20}, as well as metric and topological properties of hierarchical boundaries, neither of which are discussed at any length in this article.  Infinite diameter objects in HHSes can be extremely complicated, and this complexity accounts for the vast majority of the work in this paper.  In a future expository paper, we will explain how to prove the stable cubical model theorem from \cite{DMS20} in the relatively simple case of modeling the hull of a pair of internal points, which is enough to recover a proof of semihyperbolicity of the mapping class group along the lines of \cite{DMS20}.

The rest of the introduction provides the context of our results, a detailed discussion of the construction itself, and also gives some applications of our refined techniques.  We begin with a brief discussion of the relevant notions from the world of HHSes, then a comparative discussion between the cubical model construction from \cite{HHS_quasi} (Subsection \ref{subsec:BHS cube}) and our own (Subsection \ref{subsec:Dur cube}).  With this foundation, we then discuss some applications, including our simplicial boundary results (Subsection \ref{subsec:infinity intro}) and our ability to extract curve graph data from the cubical models (Subsection \ref{subsec:extract}).  The rest of the introduction then discusses the new technical aspects of our results.  We begin in Subsection \ref{subsec:ray example} by working a detailed example motivating our new hierarchical accounting techniques (Subsection \ref{subsec:PU intro}), and a refined version of the distance-formula (Subsection \ref{subsec:TT intro}).  We end with a detailed outline of how it all fits together (Subsection \ref{subsec:sketch}).

\subsection{A brief HHS primer}\label{subsec:HHS primer}

Building off of work of Masur-Minsky \cite{MM99,MM00} for mapping class groups, Brock \cite{Brock_WP}, Rafi \cite{Raf07}, and the author \cite{Dur16} for Teichm\"uller spaces, and Hagen \cite{Hagen_contact} and Kim-Koberda \cite{KK_HHS} for right-angled Artin groups, Behrstock-Hagen-Sisto \cite{HHS_1} introduced the axiomatic notion of a \emph{hierarchically hyperbolic space} (Definition \ref{defn:HHS}) to capture some of the coarse geometric features common to all of these objects.  This abstract perspective has proven to be quite useful, and, at the time of writing, the class of HHSes has expanded considerably to include:

\begin{itemize}
\item Hyperbolic spaces.
\item Mapping class groups \cite{MM99, MM00, Aougab_hyp, HHS_1} and Teichm\"uller spaces, with both the Teichm\"uller \cite{Raf07, Dur16} and Weil-Petersson metrics \cite{Brock_WP}.
\item Many \cite{HagenSusse}, but not all \cite{Shep_noHHS}, CAT(0) cubical groups,  including all right-angled Artin and Coxeter groups \cite{HHS_1}.
\item Fundamental groups of closed 3-manifolds without Nil or Sol summands \cite{HHS_2, hagen2022equivariant}.
\item Quotients of mapping class groups by large powers of Dehn twists \cite{BHMS_combo} and pseudo-Anosovs \cite{HHS_asdim}.
\item Surface group extensions of lattice Veech groups \cite{DDLS1, DDLS2} and multicurve stabilizers \cite{russell2021extensions} of mapping class groups.
\item  The genus-2 handlebody group \cite{chesser2022stable}.
\item Artin groups of extra-large type \cite{HMS21}.
\item The class of HHSes is closed under products and appropriate combinations \cite{HHS_2}, including graph products \cite{BR_combo} and graph braid groups  \cite{berlyne2021hierarchical}.
\end{itemize}

The general philosophy is that every HHS $\calX$ is built from an indexed family of uniform $\delta$-hyperbolic spaces $\{\calC(U)\}_{U \in \mathfrak S}$, in the follow sense.  The ambient space $\calX$ has a uniform coarse Lipschitz projection $\pi_U: \calX \to \calC(U)$ to each $\calC(U)$, which function like a closest point projections to quasiconvex subsets in a number of ways.  These combine into a global map to the full product of the associated hyperbolic spaces
$$\Pi: \calX \to \prod_{U \in \mathfrak S} \calC(U).$$

The elements of the index set $\mathfrak S$, called \emph{domains}, satisfy certain mutually exclusive \emph{relations}---\emph{orthogonality} $\perp$, \emph{nesting} $\nest$, and \emph{transversality} $\pitchfork$.  These relations generalize the various ways (isotopy classes of) subsurfaces can interact---namely when $U,V$ are essential subsurfaces, then $U \perp V$ stands for disjointness, $U \nest V$ for $U$ being a subsurface of $V$, and $U \pitchfork V$ when neither of those holds.  The nesting and transverse relations determine certain \emph{consistency inequalities} (Definition \ref{defn:consistency}) which control the tuples appearing in the image of $\Pi$, in the sense that every image tuple satisfies these inequalities, while every tuple satisfying the inequalities can be coarsely \emph{realized} as the image of a point (Theorem \ref{thm:realization}) \cite{BKMM, HHS_1}.  On the other hand, orthogonal domains have mutually unconstrained coordinates and determine \emph{product regions} (Subsection \ref{subsec:product region}), which are infinite-diameter quasi-isometrically embedded products coarsely generalizing the product structure of the thin parts of Teichm\"uller space discovered by Minsky \cite{Minsky_prod}.

Importantly, there is a Masur-Minsky-style \cite{MM00, HHS_2} \emph{distance formula} (Theorem \ref{thm:DF}), 
\begin{equation}\label{eq:DF intro}
d_{\calX}(a,b) \asymp \sum_{U \in \mathfrak S} [[d_{\calC(U)}(\pi_U(a), \pi_U(b))]]_K
\end{equation}
which says that the distance between any pair of points $a,b \in \calX$ is coarsely equal to the $\ell^1$-distance in the above product, as long as we ignore terms smaller than some uniform threshold $K = K(\calX)>0$.  Any pair of points is connected by a \emph{hierarchy path} \cite{MM00, Dur16, HHS_1}, a nice uniform quasi-geodesic which projects to an unparameterized quasi-geodesic in each hyperbolic space (Definition \ref{defn:hp}).  Notably, we will recover new proofs of the distance formula and the existence of hierarchy paths in this paper via our refined cubical model techniques (Corollary \ref{cori:DF and HP}), as Bowditch \cite{Bowditch_hulls} does in his alternate construction of cubical models; see Subsection \ref{subsec:related results} for a comparison of his related but very different median-based approach.

The \emph{hierarchical} or \emph{HHS boundary} $\partial \calX$ of $\calX$ \cite{DHS_boundary, DHS_corr} is built out of the Gromov boundaries of the various hyperbolic spaces in the hierarchy.  The boundary has an underlying simplicial structure, where each $0$-simplex is a point in $\partial \calC(U)$ for some $U \in \mathfrak S$, with higher dimensional simplices corresponding to collections of $0$-simplices supported on pairwise orthogonal domains.  Besides Gromov boundaries themselves, one motivation for this construction was Thurston's compactification of the Teichm\"uller space of a finite-type surface by projective classes of measured laminations on the surface.  The HHS boundary of Teichm\"uller space with the Teichm\"uller metric is morally a combinatorial version of this, where one forgets the measure classes on the minimal components and only remembers the topological (or combinatorial) information of the laminations and the pairwise disjoint subsurfaces on which these minimal components are supported.  Like Thurston's boundary for Teichm\"uller space, the HHS boundary compactifies any proper HHS \cite[Theorem 3.4]{DHS_boundary}, however we focus on the simplicial structure in this article and will not deal with a choice of topology.  Moreover, automorphisms of $\calX$ act by homeomorphisms on the HHS boundary of $\calX$ (see Definition \ref{defn:HHS auto}), a fact which proved quite useful in \cite{DHS_boundary} for proving algebraic results like the Tits alternative for hierarchically hyperbolic groups.

Notably, when an HHS $\calX$ is a CAT(0) cube complex, then there is a simplicial isomorphism from the simplicial boundary \cite{Hagen_simplicial} of the cube complex to its (untopologized) hierarchical boundary \cite[Theorem 10.1]{DHS_boundary}.  Theorem \ref{thmi:main boundary} below gives a local version of this theorem for any proper HHS.

Finally, one can take a \emph{hierarchical hull} of any subset $A \subset \calX$ in an HHS $\calX$ (Definition \ref{defn:hier hull}).  Roughly, the hull of a set $A$ is a coarse object consisting of points in $\calX$ which project close to the hyperbolic hull, $\hull_{\calC(U)}(\pi_U(A))$, for all $U \in \mathfrak S$.  Hierarchical hulls satisfy a notion of quasi-convexity called \emph{hierarchical quasi-convexity} \cite{HHS_2} (Definition \ref{defn:hqc}), which directly generalizes the notion of quasi-convexity from hyperbolic spaces and coincides with median convexity in the HHS setting \cite{RST18}.  The hulls of finite sets of interior points \cite{BKMM, EMR_rank, HHS_2, HHS_quasi}, hierarchy rays \cite{DZ22}, and boundary points (Definition \ref{defn:hier hull}) are particularly nice, and in many ways encode all of the hierarchical geometry between the objects in question.  These hulls are the main objects we study in this paper.

\subsection{The original cubical model construction}\label{subsec:BHS cube}

The main construction in this paper is a new proof of Behrstock-Hagen-Sisto's cubical model theorem \cite[Theorem 2.1]{HHS_quasi} which also works for points at infinity.  See also Bowditch \cite{Bowditch_hulls} for an alternative proof of the interior point case in the slightly more general setting of coarse median spaces satisfying certain hierarchical axioms (see also Subsection \ref{subsec:related results}).  We will now briefly discuss their construction, before moving on to explain our version.

For the purpose of this discussion, let $F \subset \calX$ be a finite subset of interior points in an HHS.  One of the basic properties of an HHS is that there exists a constant $K = K(\calX)>0$ so that the set $\calU$ of \emph{relevant domains} $U \in \mathfrak S$ where $F$ has a large diameter projection $\calU = \{U \in \mathfrak S| \diam_{\calC(U)}(F)>K\}$ is finite (Corollary \ref{cor:rel sets are finite}).  This allows us to work only with the domains in $\calU$.  Projecting $F$ to $\calC(U)$ for each $U \in \calU$, we can take the hyperbolic hull $H_U = \hull_{\calC(U)}(\pi_U(F)) \subset \calC(U)$.  The \emph{hierarchical hull} $H = \hull_{\calX}(F)$ of $F$ can defined to be the set of points in $\calX$ which project sufficiently close to $H_U$ in each $U \in \calU$ (see Definition \ref{defn:hier hull}).

Behrstock-Hagen-Sisto's approach to cubulating $H$ proceeds as follows: To each hyperbolic hull $H_U$, we can associate its Gromov modeling tree $\phi_U:T_U \to \calC(U)$.  If $\#\calU = 1$ (which can be arranged when $\calX$ is hyperbolic), then this tree would suffice as a cubical model, but in general one needs a higher-dimensional model because of the product regions in $\calX$.  At this point, one might hope to use the fact that $H$ admits a geometrically meaningful map $H \to \prod_{U \in \calU} T_U$ to a finite product of trees, which is morally a CAT(0) cube complex, though not exactly as the $T_U$ need not be simplicial trees.  However, the consistency inequalities (Definition \ref{defn:consistency}) constrain the image---this map is usually abritrarily far from being coarsely surjective.  The idea is to induce a wall-space structure on $H$ from a carefully defined collection of ambient ``walls" in the product which are defined in terms of the trees.

The challenge is that domains in $\calU$ are usually related in a complicated fashion, and any given $U \in \calU$ usually contains many domains which \emph{nest} ($\nest$) into $U$ (recall that nesting corresponds to subsurface inclusion in the surface case).  Each domain $V \nest U$ with $V \in \calU$ not only has its own hyperbolic hull $H_U$ and modeling tree $T_U$, but also is encoded in $\calC(U)$ by its \emph{$\rho$-set} $\rho^V_U$  (e.g., its boundary curves in the surface case) which are located near $H_U$ (and hence $\phi_U(T_U)$) in $\calC(U)$.  From the beginning \cite{MM00}, these $\rho$-sets have been understood to encode important organizational information about how the various pieces of the hierarchy fit together.

\begin{figure}
    \centering
    \includegraphics[width=.6\textwidth]{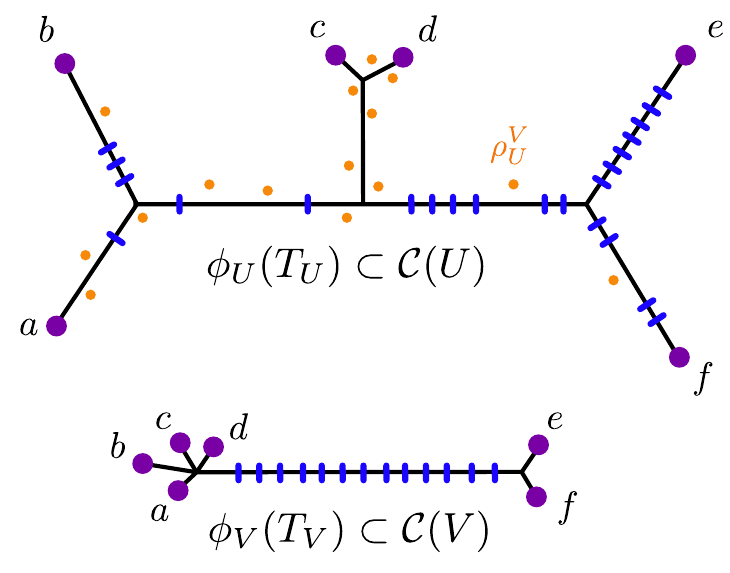}
    \caption{A cartoon of the tree-walls in the Behrstock-Hagen-Sisto construction of cubical model for the hierarchical hull of $H = \hull_{\calX}(\{a,b,c,d,e,f\})$: The tree $\phi_U(T_U) \subset \calC(U)$ models the hyperbolic hull $H_U = (\pi_U(\{a,b,c,d,e,f\}))$.  One chooses the tree-walls (in blue) to coarsely encode lengths along the subtrees of $\phi_U(T_U)$ which are far from the $\rho$-points (gold) and leaves (purple).  One then pulls back these tree-walls to obtain a wall-space on $H$, with the cubical model for $H$ being the Sageev dual cube complex to this wallspace.  Below is the setup for a domain $V \in \calU$ with $V \nest_{\calU} U$, i.e. no smaller domain in $\calU$ nests into $V$.  In this case, since $\rho^V_U$ is far from any branched point or point of $F$, the Bounded Geodesic Image Property \ref{ax:BGIA} forces this tree $\phi_V(T_V)$ to be coarsely an interval, and the lack of any nesting domains means that the number of tree-walls is coarsely the length of the interval.}
    \label{fig:BHS_cube}
\end{figure}

One of the key observations in \cite{HHS_quasi} is that, from the perspective of $U$, the information coming from $\rho$-sets in $U$ is redundant in a strong way.  In particular, the $\rho$-sets coming from nested domains $V \nest U$ can coarsely cover arbitrarily large portions of the tree $\phi_U(T_U)$ (including possibly all of it).  Each $\rho^V_U$ for $V \nest U$ also represents a large-diameter projection of $F$ to $\calC(V)$, making the small-diameter $\rho$-sets in $\calC(U)$ redundant.  The construction in \cite{HHS_quasi} involves carefully picking a family of tree-walls in $\phi_U(T_U)$ in the complement of these $\rho$-sets, and then pulling them back to walls on the hull $H \subset \calX$ using projections coming from the hierarchical setup.  See Figure \ref{fig:BHS_cube} for a heuristic picture.

These tree-walls allow them to build a wall-space structure on the hull $H$ as follows: Each tree-wall in $\phi_U(T_U)$ determines two half-trees of $\phi_U(T_U)$, which pull back to a wall and two half-spaces in $H$ under (the inverses of) the domain projection $\pi_U:H \to \calC(U)$ and closest-point projection $p_U:\calC(U) \to \phi_U(T_U)$.  They then directly prove that the dual cube complex $\calW$ (provided by Sageev \cite{Sageev_machine}) to this wall-space admits a quasi-median quasi-isometry $\mathfrak{p}: \calW \to H$.  The map $\mathfrak{p}$ is constructed coordinate-wise for each $U \in \calU$ by taking a consistent tuple of half-spaces (in the wall-space sense), projecting them to $\calC(U)$, and then intersecting over all of half-space projections.  They then show that this tuple is consistent (in the hierarchical sense), and that it defines a quasi-isometry, both of which are nontrivial tasks. 

One significantly complicating feature of this construction is the necessity of translating cubical techniques in the model to hierarchical facts in the hull via the map described above, which, for instance, notably elongated arguments in our paper with Minsky and Sisto (see especially \cite[Section 5]{DMS20}).  One of the main technical advantages of our alternate approach is an intermediate object which encodes a wealth of hierarchical information in an explicitly cubical form, thus acting as an interface between these two perspectives.  We explain how to build this bridge next.

\subsection{Cubulating hulls via products of trees} \label{subsec:Dur cube}

\begin{figure}
    \centering
    \includegraphics[width=1\textwidth]{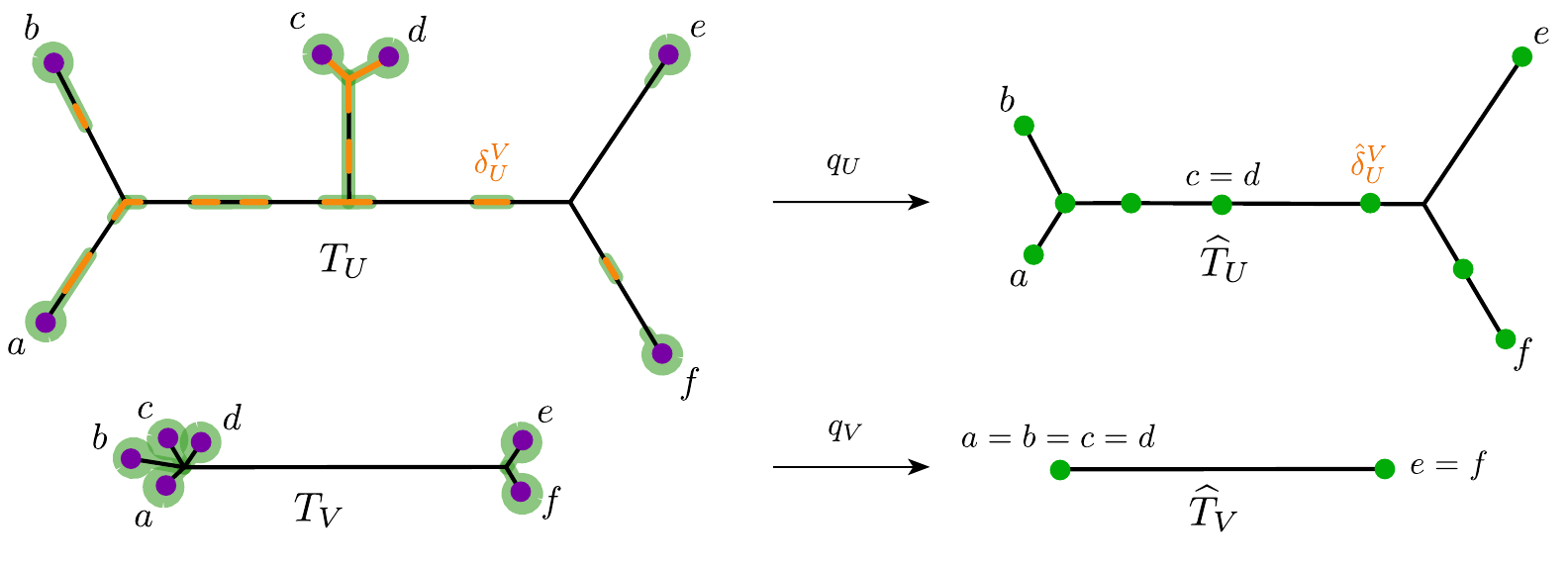}
    \caption{A cartoon of cluster formation and the collapsed trees in a hierarchical family of trees coming the setup in the example in Figure \ref{fig:BHS_cube}: Instead of working in the hyperbolic space $\calC(U)$, we work directly in the (abstract) modeling trees, $T_U$.  Here, we identify \emph{shadow} subtrees (gold) of $T_U$ using the $\rho$-set data in $\calC(U)$, group nearby shadows and leaves into \emph{cluster} subtrees (green), and then collapse down these cluster subtrees to obtain the collapsed tree $q_U: T_U \to \hT_U$.  This process results in special \emph{marked points} (green) on the collapsed trees $\hT_U$, which can encode both leaf data and $\rho$-set data (denoted by $\hd$).  These cluster subtrees can have arbitrarily large diameter and collapsing them can lead to the identification of multiple leaves.  For the domain $V \nest U$ which is $\nest$-minimal in $\calU$, the corresponding collapsed tree $\hT_V$ is an interval of coarsely the same length as the diameter of $T_V$.  In fact, almost every domain in $\calU$ is one of these two-sided \emph{bipartite} domains (Definition \ref{defn:bipartite}), a fact which plays a key role in our arguments.}
    \label{fig:Dur_cube}
\end{figure}

The starting point of our construction of the cubical model for the hull $H$ is a different conclusion from their observation that $\rho$-sets encode redundant information.  Instead of ignoring how the $\rho$-set information is encoded in the images of the trees $\phi_U(T_U) \subset \calC(U)$, we pull them back to the (abstract) tree $T_U$, group them into \emph{cluster} subtrees, and then collapse these subtrees $q_U: T_U \to \hT_U$ to obtain a family of trees $\hT_U$ which only encode the desired geometric information from each domain $U \in \calU$ while still encoding the organizational information provided by the $\rho$-sets.  See Figure \ref{fig:Dur_cube} for a heuristic picture.

We show that this family of trees $\{\hT_U\}_{U \in \calU}$ with its accompanying collapsed $\rho$-set data admits the structure of what we call a \emph{hierarachical family of trees} (Definition \ref{defn:HFT}).  We think of such a structure as an ``exact'' HHS structure, in the sense that it possesses many of the features of an HHS, while all of the coarseness has been eliminated.  In particular, every hierarchical family of trees admits a family of \emph{consistency equations}, for which the $0$-consistent set $\calQ \subset \prod_{U \in \calU} \hT_U$ (Definition \ref{defn:Q consistent}) behaves like an HHS in many important ways.  Notably, the standard HHS structure on any right-angled Artin group \cite{HHS_1} admits such an ``exact'' structure, where all of the associated coarseness constants are $0$.  In this way, a hierarchical family of trees is as hierarchically nice as possible.  See Subsection \ref{subsec:ray example} for an explicit example in a RAAG, and item \eqref{item:R-cubings} of Remark \ref{rem:main rem} below for a discussion of how this setup relates to Casals-Ruiz--Hagen--Kazachkov's notion of an $\mathbb{R}$-cubing \cite{CRHK}.

One could stop here and observe that $\calQ$ is a median subalgebra of a product of finite-valent trees, and hence quasi-median quasi-isometric to a CAT(0) cube complex (see \cite[Section 2]{Bowditch_hulls}, \cite[Proposition 2.12]{HP_proj}, or \cite[Theorem 4.16]{CRHK}).  However, we give an explicit direct proof.  We prove that, up to quasi-median quasi-isometry of the space $\calQ$, each collapsed tree $\hT_U$ can be replaced by a simplicial tree in an appropriate sense via our Tree Trimming \ref{thm:tree trimming} techniques, which function like a fine-tuned version of the distance formula; see Subsection \ref{subsec:TT intro} below for a discussion.  As a result, the $0$-consistent set $\calQ$ is contained in product of simplicial trees, i.e., a genuine CAT(0) cube complex $\prod_{U \in \calU} \hT_U$.  This allows us to work with the induced wall structure on $\calQ$, and we prove that $\calQ$ with its induced $\ell^1$-metric is \emph{isometric} to the cubical dual to this wall-space.  This space $\calQ$ is the aforementioned bridge, simultaneously hierarchical and cubical.

While the above discussion ignored the possibility for allowing points at infinity and hierarchy rays, our construction works in that context, and we will discuss this aspect in the next subsection.  However, for the purposes of summarizing the above discussion, we state the full version of our modeling theorem (Theorem \ref{thm:dual}):

\begin{center}
\framebox{
\begin{minipage}{6.25in}
\begin{thmi} \label{thmi:main model}
Let $\calX$ be a proper HHS, $F \subset \calX$ a nonempty finite set of interior points, $\Lambda$ a (possibly empty) set of hierarchy rays and points in $\partial \calX$, and $H$ the hierarchical hull of $F \cup \Lambda$.  Let $\calU$ be the set of relevant domains for $F \cup \Lambda$, and $\{\hT_U\}_{U \in \calU}$ the corresponding hierarchical family of simplicial trees.  Then the following hold:

\begin{enumerate}
\item The $0$-consistent canonical subset $\calQ \subset \prod_{U \in \calU} \left(\hT_U \cup \partial \hT_U\right)$ is weakly convex and isometric to its cubical dual.  In particular, $\calQ$ is a finite dimensional CAT(0) cube complex, whose dimension is bounded in terms $\calX$.
\item There exists an $L = L(|F \cup \Lambda|, \calX)>0$ and an $L$-quasi-inverse pair of maps
\begin{itemize}
\item a $L$-quasi-median $(L,L)$-quasi-isometry $\hPsi: H \to \calQ$ and 
\item a $L$-quasi-median $(L,L)$-quasi-isometric embedding $\hO: \calQ \to \calX$ with $d^{Haus}_{\calX}(\hO(\calQ), H) < L$.
\end{itemize}
\item There exists a collection of $0$-cells $\widehat{F} \cup \widehat{\Lambda}$ (possibly at infinity) of $\calQ$ which are in bijective correspondence under $\hO$ and $\hPsi$ with $F \cup \Lambda$.  Moreover, $\calQ$ is the cubical convex hull of $\widehat{F} \cup \widehat{\Lambda}$. 
\item (Equivariance) If $g \in \mathrm{Aut}(\calX)$ is an HHS automorphism, then the hull of $g \cdot (F \cup \Lambda)$ admits a cubical model $\calQ_g$ with $g$ inducing a cubical isomorphism $\calQ \to \calQ_g$.
\end{enumerate}
\end{thmi}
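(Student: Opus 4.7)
My plan is to build the cube complex $\calQ$ directly from the HHS data as a $0$-consistent subset of a product of collapsed modeling trees, and then to recognize $\calQ$ as a CAT(0) cube complex by identifying it with the cubical dual of an induced wallspace. First I extract the finite set of relevant domains $\calU \subset \mathfrak S$ for $F \cup \Lambda$---finite by the HHS finite-complexity axioms together with the fact that hierarchy rays and boundary points are supported on finitely many pairwise-orthogonal domains in the hierarchical boundary \cite{DHS_boundary}. For each $U \in \calU$ I take a Gromov modeling tree $\phi_U : T_U \to \calC(U)$ for the hyperbolic hull $H_U = \hull_{\calC(U)}(\pi_U(F \cup \Lambda))$, which is a finite simplicial tree when $\pi_U(\Lambda) = \emptyset$ and otherwise acquires ideal leaves for the relevant limits in $\partial \calC(U)$. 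I then pull back the $\rho$-set data $\rho^V_U$ for each $V \in \calU$ with $V \nest U$ to shadow subtrees of $T_U$, group these shadows together with nearby leaves into cluster subtrees, and collapse each cluster to a marked point, producing a quotient $q_U : T_U \to \hT_U$ retaining marked leaves and marked $\hd^V_U$ points. This indexed system is a hierarchical family of trees in the sense of Definition \ref{defn:HFT}, and applying Tree Trimming \ref{thm:tree trimming} replaces each $\hT_U$ by a uniformly quasi-isometric simplicial tree so that $\prod_{U \in \calU}(\hT_U \cup \partial \hT_U)$ becomes a finite-dimensional CAT(0) cube complex, with ideal leaves represented as $0$-cells at infinity.

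Next I define $\calQ \subset \prod_{U \in \calU}(\hT_U \cup \partial \hT_U)$ as the set of tuples $(\hx_U)_{U \in \calU}$ satisfying equational versions of the transverse and nested consistency conditions of Definition \ref{defn:consistency}: for $U \pitchfork V$, the $U$-coordinate must lie on the side of $\hd^V_U$ dictated by $\hx_V$ (and symmetrically), with an analogous condition when $V \nest U$. Collapsing the clusters has exactly the effect of turning the coarse HHS inequalities into honest equations. I verify that $\calQ$ is closed under the product median and is weakly $\ell^1$-convex in $\prod_U \hT_U$, so that each hyperplane of the ambient product cube complex separating a pair of points of $\calQ$ restricts to a wall of $\calQ$, with the resulting wallspace satisfying the Helly-type conditions required by Sageev's duality \cite{Sageev_machine}. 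A direct verification then identifies $\calQ$ with the cubical dual of this wallspace and shows its $\ell^1$-metric coincides with the restricted product metric, yielding the claimed CAT(0) cube complex structure. The dimension bound is immediate from the HHS bound on maximal $\perp$-antichains, since any cube in $\calQ$ corresponds to walls lying in pairwise orthogonal trees.

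For the maps I take $\hPsi : H \to \calQ$ coordinate-wise by $x \mapsto (q_U \circ p_U \circ \pi_U(x))_{U \in \calU}$, where $p_U$ is closest-point projection to $\phi_U(T_U)$ in $\calC(U)$; coarse HHS consistency becomes exact $0$-consistency after the cluster collapses, so the image lies in $\calQ$. For the quasi-inverse $\hO : \calQ \to \calX$, I lift each coordinate $\hx_U$ to a point of $\phi_U(T_U) \subset \calC(U)$, check that the resulting tuple is $\theta$-consistent for uniform $\theta$, and invoke the Realization Theorem \ref{thm:realization}. The quasi-isometry bounds follow by comparing the $\ell^1$-metric on $\calQ$ to the refined distance formula produced by Tree Trimming, so that lengths in $\calQ$ match hierarchical distance in $H$ up to uniform error; the quasi-median property is inherited from coordinate-wise approximate median-preservation of $\pi_U, p_U$, and $q_U$ combined with the median structure of the product of trees. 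The distinguished $0$-cells $\widehat F = \hPsi(F)$ and $\widehat \Lambda$ (attached to the ideal leaves) are in the asserted bijection; $\calQ$ is their cubical convex hull because every tree-wall in a $\hT_U$ necessarily separates two of these points, as otherwise the clustering step would have absorbed it. Equivariance is automatic since an HHS automorphism $g$ carries $\calU$ to itself, induces $\rho$-set-preserving tree isomorphisms $T_U \to T_{gU}$, and hence cubical isomorphisms $\calQ \to \calQ_g$ intertwining $\hPsi$ and $\hO$ with their $g$-images.

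The principal obstacle, in my view, is handling $\Lambda \neq \emptyset$ uniformly. When a boundary point is supported at some $U$, the modeling tree $T_U$ can have infinitely many $\rho$-shadows accumulating at an ideal leaf, so the clustering procedure must be designed to guarantee that only finitely many clusters survive the collapse and that the resulting $\hT_U$ remains tree-trimmable with uniform constants; this in turn demands careful control over how the coordinates of rays and boundary points limit in the hierarchical boundary. I expect the cleanest route is to first prove the theorem for finite sets of interior points following \cite{HHS_quasi}, then approximate each ray or boundary point by sequences of interior points, build the finite cubical models along the sequence in the spirit of \cite{DZ22}, and take a controlled limit of cube complexes, maps, and consistency tuples; the existence of such a limit should follow from the uniform dimension and consistency bounds available in the finite case, but verifying that the limit is still isometric to its cubical dual and that the quasi-median constants survive the limit is where I expect the delicate technical heart of the argument to lie.
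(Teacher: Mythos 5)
Your first three paragraphs track the paper's construction fairly well: building modeling trees $T_U$, collapsing clusters to get $\hT_U$, defining $\calQ$ via exact consistency equations, inducing walls and invoking Sageev duality, and defining $\hPsi$ coordinate-wise and $\hO$ via Realization. But there are two substantial errors that would sink the argument in the case the theorem is really about, namely $\Lambda \neq \emptyset$.

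First, you assert that $\calU$ is \emph{finite} ``by the HHS finite-complexity axioms together with the fact that hierarchy rays and boundary points are supported on finitely many pairwise-orthogonal domains.'' This is false: when $\Lambda \neq \emptyset$, the relevant set $\calU = \Rel_K(F \cup \Lambda)$ is typically (and generically) \emph{infinite}, and is in general only countable (Lemma \ref{lem:rel sets countable}). Finitely many support domains for a ray does not bound the number of domains where the ray has a large projection. This matters structurally: Definition \ref{defn:HFT} of a hierarchical family of trees requires a finite index set, so your appeal to that framework only applies when $\Lambda = \emptyset$. In the infinite case the paper has to supplement it with Strong Passing-Up arguments (Proposition \ref{prop:SPU}) at essentially every step --- e.g.\ in proving local finiteness of the pocset (Proposition \ref{prop:pocset}), in characterizing which tuples are DCC ultrafilters (Proposition \ref{prop:DCC}), and in establishing coarse surjectivity (Theorem \ref{thm:coarse inverse}). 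You also never impose the \emph{canonicality} condition on tuples in $\calQ$ (Definition \ref{defn:Q consistent}), which is precisely what keeps the $\ell^1$-sum $\sum_U d_{\hT_U}(\hx_U,\hy_U)$ finite and picks out the interior $0$-cubes from the points at infinity; without it, the metric on $\calQ$ is not even defined when $\calU$ is infinite.

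Second, your proposed route for $\Lambda \neq \emptyset$ --- prove the finite case, then take a controlled limit of finite cubical models along approximating sequences in the spirit of \cite{DZ22} --- is exactly the approach the paper explicitly abandons (see item (\ref{item:infinite model rem}) of Remark \ref{rem:main rem}): the ultralimit construction loses the hierarchical labeling data on hyperplanes that the present theorem is designed to preserve (the fact that crossing hyperplanes have orthogonal labels, the $\rho$-structure on the walls, Lemma \ref{lem:hyp PU}, etc.), and it does not yield the boundary isomorphism of Theorem \ref{thmi:main boundary}, which relies on identifying minimal UBSes with support domains via Strong Passing-Up (Lemma \ref{lem:minimal spread}). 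The paper instead constructs the infinite model directly and all at once, handling the infinitude of $\calU$ head-on. So while your construction sketch for the finite case is close to correct, the heart of the theorem --- where $\Lambda \neq \emptyset$ --- requires genuinely new accounting machinery that the limit strategy cannot supply.
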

\end{minipage}
}
\end{center}

\begin{remark} \label{rem:main rem}
Some comments are in order:
\begin{enumerate}
\item Properness in the statement above is only required when the set of points at infinity $\Lambda$ is nonempty.  The assumption that $F$ is nonempty provides a basepoint in the interior of $\calX$, which is necessary for many arguments.
\item In conclusion (1), $\calQ$ is \emph{weakly convex} in the sense that any pair of points in $\calQ$ can be connected by a geodesic in $\calQ$, in fact a combinatorial geodesic in its cubical structure which $\hO:\calQ \to H$ sends to a hierarchy path; see Proposition \ref{prop:cube paths hp}.

\item In the case where $\Lambda$ is nonempty, the set of relevant domains $\calU$ is frequently (and indeed generically \cite{ST_random, DZ22}) infinite.  This fact alone necessitates new techniques, even if one wanted to utilize existing cubical model constructions from \cite{HHS_quasi, Bowditch_hulls}.  See Subsection \ref{subsec:PU intro} for a discussion of these new ``passing-up''-type techniques. 
\item The isometry between $\calQ$ and its cubical dual involves showing that hierarchical consistency in $\calQ$ corresponds \emph
{exactly} to cubical consistency in its dual, in the sense of Sageev \cite{Sageev_machine}.  A coarse version of this correspondence is at the heart of the original construction in \cite{HHS_quasi}, but our version makes it completely explicit.  See Subsection \ref{subsec:dual to Q} for a discussion.
\item The notion of ``canonical'' for a tuple in $\prod_{U \in \calU} \left(\hT_U \cup \partial \hT_U\right)$ (Definition \ref{defn:Q consistent}) is necessary to pick out an ``internal point'' of the model space $\calQ$.  It is a hierarchical analogue of the cubical canonicality condition which determines interior $0$-cubes (or equivalently the descending chain condition on ultrafilters, Definition \ref{defn:ultrafilter}), and we make this connection explicit in the proof of the isometry between $\calQ$ and its cubical dual; see Proposition \ref{prop:DCC} in particular.
\item The map $\hPsi: H \to \calQ \subset \prod_{U \in \calU} \hT_U$ is defined as simply as possible in a coordinate-wise fashion as a combination of the various projection and collapsing maps described above; see Subsection \ref{subsec:hPsi defined} for a precise definition.
\item On the other hand, the coordinate-wise definition of the map $\hO: \calQ \to \calX$ is more involved, as possibly arbitrarily large pieces of nesting information have been collapsed in the construction of $\calQ$.  The process of uncollapsing exploits the fundamentally cubical nature of $\calQ$, namely the two-sided $\nest_{\calU}$-minimal \emph{bipartite} domains as discussed in Figure \ref{fig:Dur_cube}, though this part of the argument does not depend on any cubical techniques, including the fact that $\calQ$ is isometric to its cubical dual, which is a separate argument.  In this way, we see our construction as highlighting the fundamentally cubical nature of HHSes, at least at the local level.  See Section \ref{sec:Q to Y} for more details, especially Subsection \ref{subsec:honing clusters} for a detailed outline of this point of the argument.

\item \underline{Equivariance}: The construction in Theorem \ref{thmi:main model} is invariant under any \emph{HHS automorphisms} $g \in \mathrm{Aut}(\calX)$.  HHS automorphisms are isometries of $\calX$ which preserves the projections and other relevant hierarchical data of $\calX$.  In particular, the mapping class group is a group of HHS automorphisms of itself; see Subsection \ref{subsec:HHS auto} for more details, \cite[Subsection 1G]{HHS_2} (or equivalently \cite[Section 2]{PS20}) for a precise definition, and \cite{DHS_boundary, DHS_corr} for various results about HHS automorphisms.  Equivariance of the cubical models is an immediate consequence of the fact that the atoms of the entire construction are fundamental parts of an HHS, which are automorphism-equivariant by definition. 

\item \underline{The quasi-median property}: \emph{Coarse median} spaces were introduced by Bowditch \cite{Bowditch_coarsemedian} to generalize feature common to both hyperbolic spaces and cube complexes, with the main motivating example being the mapping class group \cite{BM_centroid}.  CAT(0) cube complexes are in fact \emph{median} spaces (see e.g. \cite{CDH10, Chepoi_median,Roller} and \cite{Bowditch_medianbook} for an overview), with their median structure directly encoding their $\ell^1$ (or combinatorial) geometry, while the coarse median structure on any HHS encodes a large amount of its hierarchical geometry (see e.g. \cite{RST18, HHS_quasi}).  The fact that the the model map $\hO:\calQ \to H$ is \emph{quasi-median} (Definition \ref{defn:quasi-median}) says, at least in principle, that the cubical geometry of $\calQ$ directly encodes the hierarchical geometry of $H$.  We note that the quasi-median property of these model maps was already obtained in the interior point case in \cite{HHS_quasi} and for limiting versions modeling hierarchy rays in \cite{DZ22}, and this property has proved useful in essentially every application of this technology \cite{HHS_quasi, HHP, DMS20, DZ22}.

\item \underline{Refined infinite models}: \label{item:infinite model rem} In our work with Zalloum \cite{DZ22}, we produce a cubical model for any finite set of hierarchy rays by taking an unrescaled ultralimit of finite cubical models.  While we are able to extract a good amount of information out of the fact that these limiting model maps were quasi-median quasi-isometries, the cubical models in Theorem \ref{thmi:main model} contain a vast amount of additional hierarchical information that is lost in the limiting process.  In particular, as with the original construction in \cite{HHS_quasi}, every hyperplane in $\calQ$ is labeled by some relevant domain in $\calU$ and two hyperplanes cross if and only if their labels are orthogonal.  Moreover, we recover the fact that the flat subspaces of the cubical models are mapped to (quasi-)flat subspaces of standard product regions in the HHS $\calX$ (Lemma \ref{lem:hyp PU}).  These facts alone are quite useful for applications; see Theorem \ref{thmi:curve graph} below and \cite[Section 16]{CRHK} for an in-depth discussion.

\item \underline{Extended tuples, $0$-cubes at infinity, and Roller boundaries}: Another motivation for the definition of points in the HHS boundary \cite{DHS_boundary} was the notion of a \emph{generalized marking} from Minsky's work on the Ending Lamination Theorem \cite[Section 5]{Min_ELC}, which were required to build model manifolds for hyperbolic $3$-manifolds with geometrically infinite ends.  The information of a generalized marking on a surface $S$ can be coarsely encoded in a pair consisting of a marking on $S$ (in the sense of \cite{MM00}) and a point in the HHS boundary, or an appropriately constructed hierarchy ray (Proposition \ref{prop:ray replace boundary}).  Any such generalized marking determines a tuple in $\prod_{U \in \mathfrak S} (\calC(U) \cup \partial \calC(U))$ satisfying a notion of \emph{extended consistency} (see Definition \ref{defn:extended consistency} and Proposition \ref{prop:extended consistency}).  These extended tuples play an analogous role to the $0$-cubes at infinity of a cube complex.  In this article, we lay the foundation for locally modeling the space of extended tuples by the $0$-cubes at infinity of appropriately chosen cubical models, and thus for the possibility of defining a Roller-type boundary for any proper HHS.\label{item:roller remark}

One upshot of this discussion is that the infinite cubical models constructed in Theorem \ref{thmi:main model} also can also be used to organize the infinite hierarchies that arise in the context of the Ending Lamination Theorem, though with much less specificity than required for that theorem.  See Bowditch \cite[Section 11]{Bowditch_hulls} for an in-depth discussion of this connection in the context of the interval between a pair of points in the pants graph of a compact surface, where the pants graph is quasi-isometric to the Teichm\"uller space of the surface with the Weil-Petersson metric \cite{Brock_WP}.

\item \underline{Relation to $\mathbb{R}$-cubings}: \label{item:R-cubings}
The $0$-consistent subset $\calQ$ of the (possibly infinite) product of trees $\prod_{U \in \calU} \hT_U$ satisfies most the axioms for being a \emph{discrete $\mathbb{R}$-cubing} in the sense of Casals-Ruiz--Hagen--Kazachkov \cite[Definition 4.2]{CRHK}; see especially Definition \ref{defn:HFT} of our notion of a hierarchical family of trees which collects a similar set of properties as axioms.  The key difference is that \cite[Definition 4.2]{CRHK} requires the ambient space to be path-connected, which, in our setting, is tantamount to assuming that $\calQ$ is a CAT(0) cube complex.  Thus one consequence of Theorem \ref{thmi:main model} is that the finite dimensional CAT(0) cube complex $\calQ$ admits a special discrete $\mathbb{R}$-cubing structure that is more hierarchical in flavor than the structure one might produce directly from its cubical structure; see \cite[Example 4.25]{CRHK} for a discussion of the latter.  As a consequence of their impressive machinery, we automatically obtain a host of other structural consequences for $\calQ$, which would be interesting to explore.
\end{enumerate}

\end{remark}

As we will see below, our cubical model construction is quite robust, and we develop a number of tools for producing many variations of cubical models which can be tailored to a number of potential applications.  For the rest of the introduction, we refer to $\calQ$, or any of its variants, as a \emph{cubical model} for the hull $H$.

Before we discuss boundaries, we pause to observe the following consequence of Theorem \ref{thmi:main model}, namely a new proof of (the upper bound of) the Distance Formula \ref{thm:DF} and the existence of hierarchy paths:

\begin{center}
\framebox{
\begin{minipage}{6.25in}
\begin{cori} \label{cori:DF and HP}
In any HHS in $\calX$, the upper-bound of the Distance Formula \ref{thm:DF} holds.  Moreover, any pair of interior points in $\calX$ are connected by a hierarchy path.
\end{cori}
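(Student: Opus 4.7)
The plan is to apply Theorem \ref{thmi:main model} with the simplest possible input and read both statements off the cubical model. For any two interior points $a,b \in \calX$, I would set $F=\{a,b\}$, $\Lambda = \emptyset$, and $H = \hull_\calX(\{a,b\})$, obtaining a finite-dimensional CAT(0) cube complex $\calQ \subset \prod_{U \in \calU} \hT_U$ together with a quasi-inverse pair of $L$-quasi-median $(L,L)$-quasi-isometries $\hPsi : H \to \calQ$ and $\hO : \calQ \to H$, and distinguished $0$-cells $\hat a, \hat b$ corresponding to $a,b$. By Corollary \ref{cor:rel sets are finite} the relevant set $\calU$ is finite, and every $U \in \mathfrak S \setminus \calU$ satisfies $\diam_{\calC(U)}(\pi_U\{a,b\}) \leq K$.

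For the upper bound in the distance formula, I would use that $\calQ$ is isometric to its cubical dual (conclusion (1) of Theorem \ref{thmi:main model}) and sits isometrically inside the $\ell^1$-product of the factor trees, so that
\[
d_\calQ(\hat a,\hat b) \;=\; \sum_{U \in \calU} d_{\hT_U}(\hat a_U,\hat b_U),
\]
where $\hat a_U,\hat b_U$ denote the $U$-coordinates of $\hat a,\hat b$. Because $\hT_U$ is obtained from the Gromov modeling tree of $\hull_{\calC(U)}(\pi_U\{a,b\})$ by collapsing cluster subtrees, the induced coordinate map $\calX \to \hT_U$ is coarsely Lipschitz with uniform constants, so each summand is bounded above by $d_{\calC(U)}(\pi_U a,\pi_U b)$ up to uniform additive error. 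Combining with the quasi-isometry estimate $d_\calX(a,b) \leq L\cdot d_\calQ(\hat a,\hat b) + L$ and padding by the $\leq K$ contributions from $U \in \mathfrak S \setminus \calU$ recovers the threshold-truncated upper bound of \eqref{eq:DF intro}.

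For the hierarchy path, I would choose a combinatorial geodesic $\gamma$ in $\calQ$ from $\hat a$ to $\hat b$, which exists because $\calQ$ is a CAT(0) cube complex, and push it forward by $\hO$ to obtain a uniform quasi-geodesic in $H$; bounded prefix and suffix adjustments then land its endpoints exactly at $a$ and $b$. To verify the hierarchy-path condition, I would show that $\pi_U \circ \hO(\gamma)$ is an unparameterized quasi-geodesic in $\calC(U)$ for every $U \in \mathfrak S$. For $U \in \calU$, the hyperplanes of $\calQ$ are canonically labeled by relevant domains (Remark \ref{rem:main rem}), so the $U$-labeled edges of $\gamma$ are crossed exactly once each in a monotone order along a geodesic of $\hT_U$; uncollapsing $\hT_U$ inside $\calC(U)$ and invoking the quasi-median property of $\hO$ converts this into an unparameterized quasi-geodesic in $\calC(U)$. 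For $U \in \mathfrak S \setminus \calU$, the projection $\pi_U \hO(\gamma)$ is trapped in a bounded neighbourhood of the $K$-diameter set $\pi_U\{a,b\}$, which is trivially an unparameterized quasi-geodesic.

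The main obstacle I expect is the last step above, namely extracting genuine no-backtracking behavior in each $\calC(U)$ from combinatorial monotonicity in $\calQ$. This requires leveraging the exact correspondence between hyperplanes of $\calQ$ and the coordinate trees $\hT_U$ provided by the cubical-dual isometry, together with the explicit coordinate-wise definition of $\hO$, to guarantee that edges of $\gamma$ labeled by a common domain project to an honest geodesic segment in $\hT_U$ rather than to a backtracking path. Once that monotonicity is in hand, both conclusions of the corollary reduce to standard estimates in hyperbolic spaces and products of trees.
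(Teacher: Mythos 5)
Your overall strategy — produce the cubical model $\calQ$ for $H=\hull_{\calX}(\{a,b\})$, read the distance formula off the $\ell^1$-structure of $\calQ$, and push a combinatorial geodesic forward to a hierarchy path — is the intended one, but two of the steps you lean on are circular in exactly the way the paper is trying to avoid.

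First, the bound you derive in the distance-formula step is $d_{\calX}(a,b) \prec \sum_U [d_U(a,b)]_K$, obtained by feeding $d_{\hT_U}(\ha_U,\hb_U) \prec d_U(a,b)$ into the Lipschitz estimate $d_{\calX}(a,b) \leq L\,d_{\calQ}(\ha,\hb) + L$. But what the paper is after (see Corollary \ref{cor:DF lower bound}) is the opposite inequality $\sum_U [d_U(a,b)]_K \prec d_{\calX}(a,b)$: the sum is what needs an upper bound in terms of $d_{\calX}$. Worse, the estimate $d_{\calX}\leq L\,d_{\calQ}+L$ that you quote from Theorem \ref{thmi:main model}(2) is itself proved by invoking the distance formula (Corollary \ref{cor:Q qi to H} cites Theorem \ref{thm:DF} directly), so using it as a black box here makes the argument circular. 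The distance-formula-free route is the one the paper's proof sketches: Theorem \ref{thm:Q distance estimate} gives $\sum_U [d_U(a,b)]_K \asymp d_{\calQ}(\ha,\hb)$ without appealing to the distance formula, and then the hierarchy path $\hO(\gamma)$, being a uniform quasi-geodesic in $\calX$ of $\calX$-length $\asymp d_{\calQ}(\ha,\hb)$, forces $\sum \prec d_{\calX}$ (and incidentally also $d_{\calX} \prec \sum$, since $\hO(\gamma)$ is a path of that length between $a$ and $b$).

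Second, and more fundamentally: your verification that $\pi_U\circ\hO(\gamma)$ is an unparameterized quasi-geodesic "invokes the quasi-median property of $\hO$." The paper explicitly warns against this in the paragraph immediately following the corollary: the only available proof that quasi-median quasi-geodesics are hierarchy paths is \cite[Theorem 5.2]{RST18}, and that proof uses the distance formula. So that step must be done directly. The paper's Proposition \ref{prop:cube paths hp} does this by combining Lemma \ref{lem:NR hp in Q} (monotonicity in each $\hT_U$ for combinatorial geodesics) with Claim \ref{claim:dense shadow}, which controls the uncollapsing $T_U\to\hT_U$ by exploiting the density of $\nest_{\calU}$-minimal bipartite domains inside clusters: whenever $B_U(\gamma)$ threatens to wander far from $[x_U,y_U]_{T_U}$ or backtrack, one can find a bipartite $W$ whose shadow separates the offending point from the rest, which then contradicts the monotonicity of $\gamma_W$ in the interval $\hT_W$. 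You correctly flag this uncollapsing step as the main obstacle, but the cubical-dual isometry alone does not resolve it — the resolution is the bipartite-domain mechanism, not the median structure.
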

\end{minipage}
}
\end{center}

\begin{proof}
Let $a,b \in \calX$ be points in an HHS $\calX$, and fix $K = K(\calX)>0$ sufficiently large.  Let $\calU = \{U|d_U(a,b) > K\}$ be the set of $K$-relevant domains.  Let $H = \hull_{\calX}(a,b)$ and $\calQ$ be a cubical model with cubical model map $\hO:\calQ \to H$ identifying $\ha,\hb \in \calQ$ with $a,b\in H$.

Theorem \ref{thm:Q distance estimate} provides a distance formula-free proof that
$$\sum_{U \in \calU} [[d_U(a,b)]]_K \asymp \sum_{U \in \calU}d_{\hT_U}(\ha,\hb)$$
where the left-hand sum is the distance-formula sum (see equation \eqref{eq:DF intro} or Theorem \ref{thm:DF}) while the right-hand sum is exactly $d_{\calQ}(\ha,\hb)$.  On the other hand, Proposition \ref{prop:cube paths hp} provides a direct proof that $\hO$ sends any combinatorial geodesic in $\calQ$ between $\ha,\hb$ to a uniform quasi-geodesic---in fact, a hierarchy path---between $a,b$ in $\calX$, and hence the right-hand side gives a lower-bound for $d_{\calX}(a,b)$.  Both conclusions follow.
\end{proof}

To our knowledge, the difficult part (the upper-bound) of every proof of the distance formula involves constructing hierarchy paths, and our proof is no exception.  Note that the proof that combinatorial geodesics in cubical models are sent to hierarchy paths (Proposition \ref{prop:cube paths hp}) does not involve appealing to the median structure, which is necessary since currently the only proof that median paths are hierarchy paths uses the distance formula \cite[Theorem 5.2]{RST18}.

\subsection{Cubulating infinity in proper HHSes} \label{subsec:infinity intro}

In addition to a more robust cubical model construction, the main contribution of this article---and the bulk of the work therein---involves extending the cubical model construction to allow for points in the HHS boundary and hierarchy rays.  This leads to a local description of the HHS boundary of any proper HHS via the simplicial boundaries of appropriately chosen cubical models.

The \emph{simplicial boundary} of a CAT(0) cube complex was introduced by Hagen \cite{Hagen_simplicial}.  It is a fundamentally combinatorial object designed to capture not only the behavior of combinatorial geodesic rays, but also other more pathological ways of running off to infinity, like an infinite staircase (see \cite[Figure 1]{Hagen_simplicial} or \cite[Figure 1]{HFF23}).  It is a simplicial complex whose simplices are \emph{unidirectional boundary sets} or \emph{UBSes} (Definition \ref{defn:UBS}), which are infinite equivalence classes of hyperplanes which encode a combinatorial form of divergence.  The simplicial boundary of a cube complex is called \emph{fully visible} if every UBS admits a combinatorial ray representative.  Hagen proved that every UBS in a sufficiently nice cube complex (like those considered in this paper) admits a decomposition into \emph{minimal} UBSes (see Theorem \ref{thm:minimal decomp}).  Hence every UBS in a fully visible simplicial boundary admits a decomposition with minimal combinatorial ray representatives each of which ``points'' in a well-defined direction.

The following summarizes our boundary isomorphism result:

\vspace{.1in}
\begin{center}
\framebox{
\begin{minipage}{.96\textwidth}
\begin{thmi}\label{thmi:main boundary}
Let $\calX$ be a proper HHS, $F \subset \calX$ a nonempty finite set of interior points, $\Lambda$ a (possibly empty) set of hierarchy rays and points in $\partial \calX$, and $H$ the hierarchical hull of $F \cup \Lambda$.  Let $\calQ$ be any cubical model for $H$.  Then the following hold:

\begin{enumerate}
\item The map $\hO:\calQ \to H$ extends to a simplicial isomorphism $\partial \hO: \partial_{\Delta} \calQ \to \partial H$ from the simplicial boundary of $\calQ$ to the HHS boundary of $H$.
\item The simplicial boundary $\partial_{\Delta} \calQ$ of $\calQ$ is fully visible.
\item (Equivariance) If $g \in \mathrm{Aut}(\calX)$ is an HHS automorphism, then the cubical isomorphism $\calQ \to \calQ_g$ induced by $g$ induces a simplicial isomorphism $\partial_{\Delta} \calQ \to \partial_{\Delta} \calQ_g$. 
\end{enumerate}

\end{thmi}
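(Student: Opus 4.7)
The plan is to exploit the explicit hyperplane labeling in $\calQ$ by relevant domains in $\calU$ (as recorded in Remark~\ref{rem:main rem}(\ref{item:infinite model rem}): every hyperplane is labeled by some $U \in \calU$, and two hyperplanes cross if and only if their labels are orthogonal), together with the minimal decomposition Theorem~\ref{thm:minimal decomp}, to set up a bijective correspondence at the level of $0$-simplices and then extend simplicially. Concretely, given a minimal UBS $\calV$ in $\calQ$, I would argue that there is a unique coarsely $\nest$-maximal label $U \in \calU$ such that infinitely many hyperplanes of $\calV$ are labeled by $U$ or by proper $\nest$-descendants of $U$, and that these hyperplanes determine a monotone sequence of marked points in the simplicial tree $\hT_U$ converging to a well-defined end $p \in \partial \hT_U \cong \partial \calC(U)$. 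Declare $\partial\hO([\calV]) := p$, viewed as a $0$-simplex of $\partial H$.

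To extend simplicially, take a UBS $\calW$ representing a higher simplex, write its minimal decomposition $\calW \sim \calV_1 \sqcup \cdots \sqcup \calV_k$ (Theorem~\ref{thm:minimal decomp}), and send it to $\{\partial\hO(\calV_i)\}$. Two minimal UBSes combine into a higher simplex of $\partial_{\Delta}\calQ$ precisely when cofinally every hyperplane of one crosses every hyperplane of the other, which by the labeling corresponds exactly to orthogonality $U_i \perp U_j$ of the dominant domains; this is the simplex relation in the HHS boundary, so $\partial\hO$ is simplicial. Independence from the minimal decomposition follows from the uniqueness in Theorem~\ref{thm:minimal decomp}. Injectivity is immediate because distinct minimal UBSes either have distinct dominant domains or determine distinct ends of a common $\hT_U$. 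Surjectivity is where the cubical-model construction pays off: a point $p \in \partial\calC(U)$ appearing in $\partial H$ forces $\pi_U(H)$ to be unbounded toward $p$, so the hull construction produces infinitely many hyperplanes of $\calQ$ labeled by $U$ (or by a $\nest$-descendant) whose marked points in $\hT_U$ accumulate onto $p$, and extracting a maximal equivalence class yields a minimal UBS mapping to $p$. Pairwise-orthogonal collections in $\partial H$ lift to pairwise mutually crossing minimal UBSes by the labeling-orthogonality correspondence, so the simplex counts match on both sides.

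For full visibility (2), I would combine two facts. First, by Proposition~\ref{prop:ray replace boundary}, every boundary $0$-simplex $p \in \partial\calC(U)$ of $\partial H$ admits a hierarchy ray representative in $\calX$, which can be included in the hull data without changing the cubical model up to the equivalence already provided by Theorem~\ref{thmi:main model}. Second, such a hierarchy ray pushes forward under $\hPsi$ to a sequence of $0$-cubes in $\calQ$, and the descending chain condition for canonical tuples (Remark~\ref{rem:main rem}(5), Proposition~\ref{prop:DCC}) together with the fact that $\calQ$ equals its cubical dual lets me straighten this to an honest combinatorial ray crossing each hyperplane of the associated minimal UBS exactly once. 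For higher-dimensional simplices, the dominant domains are pairwise orthogonal, so the per-factor rays live in a common cubical product subcomplex of $\calQ$ and combine to produce a combinatorial ray representing the whole UBS. Equivariance (3) is then essentially automatic: any $g \in \mathrm{Aut}(\calX)$ permutes $\calU$ and the hyperbolic projections, hence permutes the hyperplane labels of $\calQ$ equivariantly under the cubical isomorphism $\calQ \to \calQ_g$ of Theorem~\ref{thmi:main model}(4); since $\partial\hO$ is built only from labels and tree-end data, it intertwines this cubical isomorphism with the action of $g$ on the HHS boundaries.

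The main obstacle I expect is the very first step: showing that a minimal UBS has a \emph{unique} coarsely $\nest$-maximal label and determines a \emph{single} well-defined end of $\hT_U$. The danger is that the cluster-collapsing construction $q_U \colon T_U \to \hT_U$ identifies hyperplanes with heterogeneous domain labels, and one must rule out a minimal UBS secretly mixing contributions from several transverse domains, or from a single domain together with infinitely many pairwise non-orthogonal descendants. Controlling this requires the full force of the hierarchical-family-of-trees axioms (Definition~\ref{defn:HFT}) together with consistency in $\calQ$ to ensure that any would-be mixing either forces orthogonality (so the UBS was not minimal) or violates the bounded geodesic image property pulled back through the trees. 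Once this uniqueness is pinned down, the remaining steps reduce to bookkeeping against Theorem~\ref{thmi:main model}.
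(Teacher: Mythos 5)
Your overall architecture matches the paper's: assign a unique dominant domain and boundary direction to each minimal UBS, use Theorem~\ref{thm:minimal decomp} plus orthogonality of dominant domains to extend simplicially, and prove surjectivity via ray representatives. You also correctly identify the central obstacle (heterogeneous labels mixed by cluster collapsing). However, the way you propose to resolve the first step contains a genuine error, and the tool that actually resolves it is missing.

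The wrong claim is that the hyperplanes of a minimal UBS determine ``a monotone sequence of marked points in the simplicial tree $\hT_U$ converging to a well-defined end $p \in \partial \hT_U \cong \partial \calC(U)$.'' First, $\partial \hT_U \cong \partial \calC(U)$ is false: $\hT_U$ is the collapsed modeling tree, and $\partial \hT_U$ is at most a finite subset of ends, none of which needs to coincide with $\partial \calC(U)$. Second, and more fatally, the boundary direction of a minimal UBS is very often \emph{not} visible in $\partial \hT_U$ at all, because the ray in $T_U$ toward $\lambda_U$ may be coarsely covered by $\rho$-sets and therefore collapsed by $q_U$ to a single cluster point of $\hT_U$ (this is exactly the phenomenon in Subsection~\ref{subsec:ray example} and Remark~\ref{rem:canonicality}). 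In that case there is no end of $\hT_U$ to point at; the infinitely many hyperplanes of the UBS are labeled by distinct $\nest_{\calU}$-minimal domains $V_k \nest U$ whose $\hd^{V_k}_U$ all coincide with the same marked/cluster point in $\hT_U$. The paper's way out is to pass from $\hT_U$ back to $\calC(U)$ via the ``dust'' $\Dust_U(\hh) = \rho^{\supp(\hh)}_U$ (Definition~\ref{defn:dust}) and to show this dust escapes to a well-defined point of $\partial \calC(U)$ (Lemma~\ref{lem:minimal spread}). Establishing that the dust spreads at all (Lemma~\ref{lem:dust spread}), and that a minimal UBS has exactly one support domain, is done via Strong Passing-Up~\ref{prop:SPU}, which your proposal never invokes; the ``hierarchical-family-of-trees axioms plus consistency'' you cite are not enough on their own, since they are purely local while passing-up is the global tool that converts an infinite family of labels into a single divergent direction in a fixed $\calC(U)$.

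A related soft spot is your surjectivity argument: you say the hyperplanes' ``marked points in $\hT_U$ accumulate onto $p$,'' which again presupposes that $p$ is readable off $\hT_U$. The paper instead builds an explicit non-canonical $0$-consistent tuple from the target simplex and runs Niblo--Reeves (Lemma~\ref{lem:boundary surj}); this works even when the ray in $T_U$ has been crushed to a cluster point. Your observation about including a ray representative via Proposition~\ref{prop:ray replace boundary} is fine as a reduction, but the straightening to a combinatorial ray still needs the tuple construction and Proposition~\ref{prop:DCC}, which you do cite. Once you replace ends-of-$\hT_U$ by dust-in-$\calC(U)$ and invoke Strong Passing-Up for Lemmas~\ref{lem:dust spread} and~\ref{lem:minimal spread}, the rest of your sketch aligns with the paper's proof.
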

\end{minipage}
}
\end{center}
\vspace{.1in}

The map $\partial \hO:\partial_{\Delta} \calQ \to \partial_{\Delta} H$ extends $\hO$ in the following sense: Any combinatorial ray $\gamma$ in $\calQ$ is sent to a hierarchy ray $\hO(\gamma) \subset H$ whose limiting simplex in $\partial H$ (Definition \ref{defn:ray support}) is the image under $\partial \hO$ of the simplex in $\partial_{\Delta} \calQ$ corresponding to $\gamma$.  Since $\partial_{\Delta} \calQ$ is fully visible, this correspondence is as complete as possible; see Theorem \ref{thm:boundary iso} for more details.

One consequence of the above theorem is a local description of the hierarchical boundary of any proper HHS, as we will now explain.  Note that the simplicial isomorphism $\partial_{\Delta} \calQ \to \partial H$ holds regardless of the number of boundary points and hierarchy rays in $\Lambda$.  Given a finite collection of simplices in $\partial \calX$, one can construct a finite collection of hierarchy rays whose limiting data encode exactly these simplices; see Proposition \ref{prop:ray replace boundary}.  Theorem \ref{thmi:main boundary} then says that the simplicial boundary of any cubical model of the hierarchical hull of this finite set of rays is isomorphic to the hull's hierarchical boundary, which is (as a simplicial complex) precisely the desired set of simplices.  Hence we obtain the following as a corollary:

\begin{center}
\framebox{
\begin{minipage}{.96\textwidth}
\begin{cori}\label{cori:local boundary}
Let $\calX$ be a proper HHS and $\Sigma \subset \partial \calX$ be any finite union of simplices.  Then $\Sigma$ can be realized as the hierarchical boundary of the hierarchical hull $H$ of a finite set of hierarchy rays $\Lambda$ in $\calX$, and if $\calQ_{\Sigma}$ is any cubical model for $H$, then the model map $\hO: \calQ \to \calX$ extends to a simplicial embedding $\partial \hO: \calQ_{\Sigma} \to \partial \calX$ whose image is exactly $\Sigma$.
\end{cori}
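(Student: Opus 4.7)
The plan is to combine Proposition \ref{prop:ray replace boundary}, Theorem \ref{thmi:main model}, and Theorem \ref{thmi:main boundary} in a short chain: first replace the boundary simplices by hierarchy rays, then cubulate their hull, then transport the boundary isomorphism.

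\textbf{Step 1: Realizing $\Sigma$ by hierarchy rays.}  Write $\Sigma = \sigma_1 \cup \cdots \cup \sigma_k$ as a finite union of simplices in $\partial \calX$.  Proposition \ref{prop:ray replace boundary} produces, for each $\sigma_i$, a hierarchy ray (or small collection of rays) whose limiting simplicial data in $\partial\calX$ is exactly $\sigma_i$.  Let $\Lambda$ denote the finite union of these rays; fix any basepoint $p \in \calX$ and set $F = \{p\}$ so that the hypotheses of Theorem \ref{thmi:main model} hold.  Let $H = \hull_{\calX}(F \cup \Lambda)$.

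\textbf{Step 2: Cubulating the hull.}  Apply Theorem \ref{thmi:main model} to obtain a cubical model $\calQ_\Sigma$ for $H$ together with the quasi-median quasi-isometry $\hO \from \calQ_\Sigma \to H$.  This is the cubical model map asserted in the statement; every hypothesis is in place since $F \cup \Lambda$ is a finite set of interior points and hierarchy rays in the proper HHS $\calX$.

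\textbf{Step 3: Extending to the simplicial boundary.}  Apply Theorem \ref{thmi:main boundary}(1) to $\calQ_\Sigma$ and $H$ to extend $\hO$ to a simplicial isomorphism
\[
\partial \hO \from \partial_\Delta \calQ_\Sigma \longrightarrow \partial H.
\]
By construction of $\Lambda$ via Proposition \ref{prop:ray replace boundary}, the limiting simplices of the rays in $\Lambda$ recover exactly $\Sigma$, so $\partial H$ agrees as a subsimplicial complex of $\partial \calX$ with $\Sigma$.  Post-composing $\partial \hO$ with the inclusion $\partial H \hookrightarrow \partial \calX$ yields the desired simplicial embedding with image precisely $\Sigma$.

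\textbf{Expected obstacle.}  The only nontrivial verification is the identification $\partial H = \Sigma$ inside $\partial \calX$.  One direction (every simplex of $\Sigma$ lies in $\partial H$) is immediate from Proposition \ref{prop:ray replace boundary}, since we chose $\Lambda$ to limit onto $\Sigma$.  The reverse inclusion requires checking that no new boundary simplex is introduced when passing from the limits of the rays of $\Lambda$ to the boundary of their hull: any simplex of $\partial H$ must be supported on domains where $H$ has infinite diameter projections, and such domains are forced (via the hierarchical hull definition and a standard projection argument) to be supported on the relevant domains contributed by the rays of $\Lambda$, hence by the simplices of $\Sigma$.  This is the one point that deserves care, but it is a routine consequence of the construction of $\Lambda$ and of how $\partial H$ is defined; all other ingredients are applied as black boxes from Theorems \ref{thmi:main model} and \ref{thmi:main boundary}.
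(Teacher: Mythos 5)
Your proof is correct and follows exactly the paper's approach: the corollary is presented in the paper as an immediate consequence of the discussion preceding it, which combines Proposition \ref{prop:ray replace boundary} (to replace each simplex of $\Sigma$ by a hierarchy ray based at a common basepoint), Theorem \ref{thmi:main model} (to cubulate the resulting hull), and Theorem \ref{thmi:main boundary} (to get the simplicial isomorphism $\partial_\Delta \calQ_\Sigma \to \partial H$), followed by the natural inclusion $\partial H \hookrightarrow \partial\calX$. The ``obstacle'' you flag, namely that $\partial H$ is exactly $\Sigma$, is precisely the content of Lemma \ref{lem:H simplex} in the body of the paper, which you invoke implicitly via the same projection-support argument, so your step 3 is in order.
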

\end{minipage}
}
\end{center}
\vspace{.1in}

We note that both Theorem \ref{thmi:main boundary} and Corollary \ref{cori:local boundary} are entirely geometric statements, and thus do not reference any sort of topology on any of the relevant objects.  As such, they provide a foundation for future investigations of the various possible cubical-like topologies (see e.g., \cite{HFF23}) one can put on the HHS boundary of any proper HHS.

Moreover, Corollary \ref{cori:local boundary} provides a potential tool for studying homological and homotopical properties of the HHS boundary $\partial \calX$ of a proper HHS $\calX$.  In particular, suppose $\partial \calX$ is endowed with a topology $\tau$ for which the identity map $id:\partial \calX \to \partial \calX$ induces a continuous map $(\partial \calX, \tau_{\Delta}) \to (\partial \calX, \tau)$, where $\tau_{\Delta}$ is the topology on $\partial \calX$ realized as a CW complex.  Then any continuous map $S^n \to \partial \calX$ from the $n$-sphere intersects only finitely-many simplices, and hence Corollary \ref{cori:local boundary} would allow one to employ cubical techniques to study the map.  We thank Mark Hagen for pointing this connection out to us.

\subsection{Cubically capturing the curve graph}\label{subsec:extract}

In our view, the greatest utility of these cubical models \cite{HHS_quasi, DMS20, HHP, CRHK, DZ22} is that they encode a wealth of hierarchical information into their cubical structure, thereby allowing one to employ a mixture of cubical and hierarchical techniques.  A significant part of this article is spent developing a number of tools for extracting more information, not only in the case of infinite models but even for finite models.  While we will discuss some of these tools in detail below, we first highlight one feature of this more robust model machinery---namely that one can construct cubical models which coarsely encode curve graph distance.

In our work with Zalloum \cite{DZ22}, we were interested in studying, among other things, the cubical geometry of the hierarchical hulls associated to hierarchy rays which display ``hyperbolic'' behavior.  For instance, this includes hierarchy rays in the mapping class group or the Teichm\"uller space of a finite type surface $S$ which project to (unparameterized) quasi-geodesic rays in the curve graph $\calC(S)$ of $S$, examples of which are Morse \cite{Beh06, ABD} and sublinearly-Morse rays \cite{DZ22} (the latter were first defined in \cite{QRT20}).  As part of our analysis with Zalloum, we established one half of a correspondence between this sort of global hyperbolic behavior of rays in an HHS and certain hyperbolic features of their cubical representatives.   In this paper, we complete this correspondence in a strong way, which we now explain.

Following Genevois \cite{Gen_hyp}, we say that two hyperplanes in a CAT(0) cube complex $X$ are \emph{L-separated} if the number of hyperplanes crossing both is less than $L$ (see Definition \ref{defn:sep hyp}).  Genevois proved that one can define a new distance $d^X_L$ on the $0$-skeleton $X^{(0)}$ of $X$ by using the length of any maximal $L$-separated chain between any $0$-cells $x,y \in X^{(0)}$.  Importantly, he proved that $(X, d^X_L)$ is $\delta_L$-hyperbolic, with $\delta_L$ depending on the chosen $L$.  The philosophy here is that separated hyperplanes encode hyperbolic behavior in CAT(0) cube complexes.

To setup our next theorem, let $\calX$ be an HHS with \emph{unbounded products} (Definition \ref{defn:BDD}), a natural weak assumption which includes all of the main examples, including every hierarchically hyperbolic group.  Let $S$ be the label for the $\nest$-maximal hyperbolic space (e.g., the curve graph $\calC(S)$ when $S$ is a finite type surface and $\calX$ is the mapping class group), as constructed in our paper with Abbott and Behrstock \cite{ABD}; see Subsection \ref{subsec:ABD} for a discussion.

\begin{figure}
    \centering
    \includegraphics[width=.7\textwidth]{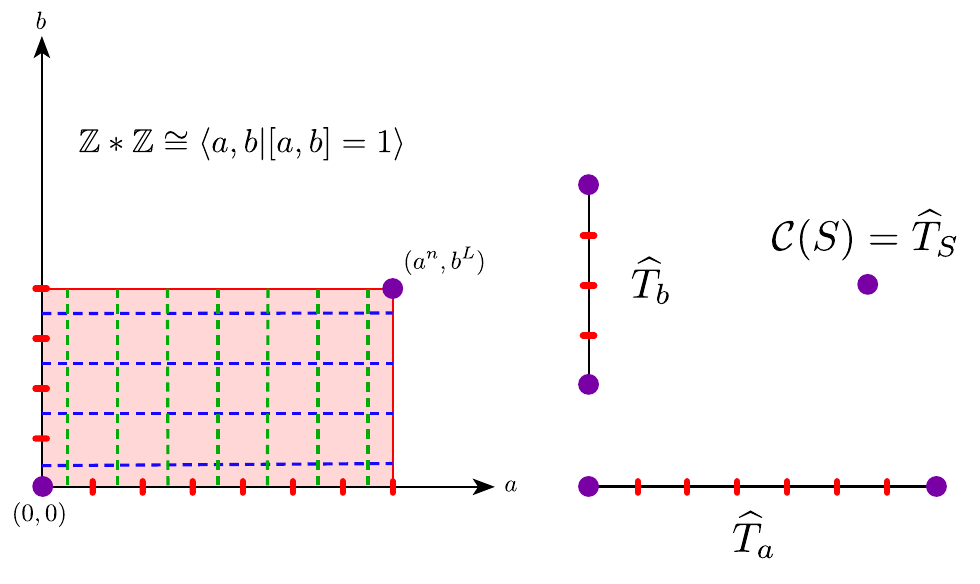}
    \caption{Fake $L$-separation: A simple example which shows that the upper bound $d^{\calQ}_L(x,y) \prec d_{\calC(S)}(\pi_S(x), \pi_S(y))$ does not hold for general cubical models $\calQ$ and separation constants $L>0$.  The standard HHG structure on $G = \ZZ \times \ZZ \cong \langle a,b|[a,b]=1\rangle$ consists of two lines labeled by $a,b$ and a point for the top-level hyperbolic space $\calC(S)$ (which can be arranged for any HHS which is coarsely a product).  Since this is a cube complex, cubical convex hulls coincide with hierarchical hulls, and are thus their own cubical models.  Given any $L>0$ and $n>0$, the cubical hull for $(id_G, a^nb^L)$ will be a rectangle (in faded red), where the collapsed trees are just intervals.  The set of (blue) vertical hyperplanes is $L$-separated because each is crossed by each of the $L$-many (green) horizontal hyperplanes, while the projection of the hull to the top level hyperbolic space $\calC(S)$ is a point (because $\calC(S)$ is a point).  Fixing $L$ and letting $n$ grow shows that the upper bound does not hold for general cubical models.   
Fake separation is easy to arrange elsewhere.  For instance, the cubical model of the hull of the identity and any sufficiently high power of a Dehn twist in the mapping class group of a finite type surface consists entirely of fake $0$-separated hyperplanes.}
    \label{fig:fake_sep}
\end{figure}

\vspace{.1in}
\begin{center}
\framebox{
\begin{minipage}{.96\textwidth}
\begin{thmi}\label{thmi:curve graph}
Let $\calX$ be an HHS with unbounded products.  For any $x,y \in \calX$, let $H$ be the hierarchical hull of $x,y$.  Then there exists a cubical model $\hO:\calQ_{x,y} \to H$ identifying $\hx,\hy$ with $x,y$ so that
$$d^{\calQ_{x,y}}_0(\hx,\hy) \asymp d_{\calC(S)}(\pi_S(x), \pi_S(y)),$$  
where the constants in $\asymp$ and the construction of $\calQ_{x,y}$ depend only on the hierarchical structure of $\calX$.
\begin{itemize}
\item Moreover, if $\gamma$ is a hierarchy ray, then
$$\diam_{\calC(S)}(\pi_S(\gamma)) = \infty \ \iff \ \diam_0^{\calQ_{\gamma(0), \gamma}}(\gamma) = \infty.$$
\end{itemize}
\end{thmi}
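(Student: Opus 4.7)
The plan is to construct $\calQ_{x,y}$ as a carefully augmented version of the cubical model provided by Theorem~\ref{thmi:main model} for $H = \hull_{\calX}(x,y)$, designed so that hyperplanes labeled by the $\nest$-maximal domain $S$ are essentially the only source of long $0$-separated chains. Starting from the hierarchical family of trees for $\{x,y\}$, for each non-$S$ relevant domain $U$ I would use the unbounded products hypothesis (Definition~\ref{defn:BDD}) to choose a ``witness'' $V_U$ in $U^{\perp}$ with $\calC(V_U)$ unbounded, and augment the construction by including a short edge in $\hT_{V_U}$ positioned so that the resulting $V_U$-hyperplane in $\calQ_{x,y}$ crosses every $U$-hyperplane while not itself separating $\hx$ from $\hy$. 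Using the robustness of the construction and the hyperplane labeling described in Remark~\ref{rem:main rem}, this produces a valid cubical model $\hO: \calQ_{x,y} \to H$ identifying $\hx, \hy$ with $x,y$, with constants depending only on $\calX$.

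For the lower bound, I would apply Tree Trimming (Theorem~\ref{thm:tree trimming}) to $\hT_S$ to obtain $d_{\hT_S}(\hx, \hy) \asymp d_{\calC(S)}(\pi_S(x), \pi_S(y))$, with each edge along the path in $\hT_S$ giving rise to an $S$-labeled hyperplane separating $\hx$ from $\hy$ in $\calQ_{x,y}$. By the ABD construction of $S$ (Subsection~\ref{subsec:ABD}), no relevant domain in the augmented relevant set is orthogonal to $S$, so no hyperplane in $\calQ_{x,y}$ crosses any of these $S$-hyperplanes. The $S$-hyperplanes separating $\hx$ from $\hy$ therefore form a $0$-separated chain of length $\asymp d_{\calC(S)}(\pi_S(x), \pi_S(y))$.

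For the upper bound, consider a $0$-separated chain $h_1, \ldots, h_k$ separating $\hx$ from $\hy$ with labels $U_1, \ldots, U_k$. Any two hyperplanes sharing a common non-$S$ label $U$ are both crossed by the witness $V_U$-hyperplane, so each non-$S$ label appears at most once. For distinct non-$S$ labels $U_i \ne U_j$ the orthogonal case is ruled out, since the hyperplanes would cross and fail to form a chain; the nested case $U_i \nest U_j$ admits the common witness $V_{U_j}$ in $U_j^{\perp} \subseteq U_i^{\perp}$; and the transverse case is handled via the $\rho$-set and cluster organization described in Figure~\ref{fig:Dur_cube}, where the Behrstock inequality governing transverse domains constrains the configuration enough to produce a witness or to force the chain to pass through an intervening $S$-hyperplane. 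Combining, the non-$S$ contribution is bounded by a constant depending only on $\calX$, while the $S$-contribution is at most $d_{\hT_S}(\hx, \hy) \asymp d_{\calC(S)}(\pi_S(x), \pi_S(y))$.

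The moreover statement follows by applying the finite-case equivalence to prefixes: unbounded $\calC(S)$-projection forces $\hT_S$ along $\gamma$ to contain an infinite ray, yielding an infinite $0$-separated chain of $S$-hyperplanes so that $\diam_0^{\calQ_{\gamma(0),\gamma}}(\gamma) = \infty$, whereas bounded $\calC(S)$-projection bounds $d_0^{\calQ_{\gamma(0), \gamma}}$ uniformly on all finite pairs in $\gamma$. I expect the main technical obstacle to be the transverse case of the upper bound: no single orthogonal witness for such a pair exists in general, and one must leverage the full hierarchical organization of $\calQ_{x,y}$---in particular, the bipartite-domain structure and the $\rho$-set interactions among transverse domains in the collapsed trees $\hT_U$---to certify that transverse pairs of non-$S$ hyperplanes cannot contribute unboundedly to $0$-separated chains.
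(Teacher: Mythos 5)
Your high-level architecture matches the paper's: augment the cubical model with auxiliary witness hyperplanes supplied by unbounded products, obtain the lower bound from the $S$-labeled hyperplanes (which is essentially the paper's Lemma~\ref{lem:sep LB}), and try to show the auxiliary hyperplanes suppress ``fake'' $0$-separation for the upper bound. But there is a genuine gap in the upper bound, and it traces back to two missing ideas.

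First, you propose choosing an \emph{actual} HHS domain $V_U \in U^{\perp}$ with $\calC(V_U)$ unbounded to serve as the witness. This is not enough. For the upper bound you need the $V_U$-hyperplane to cross not just every $U$-hyperplane, but every $W$-hyperplane for each $W$ whose only obstruction to $0$-separation from $U$ is the existence of \emph{some} unbounded domain orthogonal to both --- and that witnessing domain may vary with $W$ and need not equal your fixed $V_U$, nor even be relevant for $x,y$. A single actual domain $V_U$ does not carry the right orthogonality pattern. The paper sidesteps this entirely by working in the abstract hierarchical-family-of-trees framework (Definition~\ref{defn:HFT}) and introducing a \emph{formal} firewall label $V_U$ with a unit interval, \emph{declared} to be orthogonal to exactly those $W$ for which $U,W$ is not a filling pair (Subsection~\ref{subsec:well-sep cube}). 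That declaration is exactly what makes the upper bound go through, and it is only possible because the labels need not be actual domains of $\mathfrak S$.

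Second, and relatedly, you lack the paper's organizing concept: a \emph{filling pair} of domains (Definition~\ref{defn:filling}), namely a pair $U,V$ with no common orthogonal unbounded domain. Proposition~\ref{prop:firewall cubes} shows that in the firewall model, two hyperplanes are $0$-separated precisely when their labels fill; Lemma~\ref{lem:filling pairs} and Lemma~\ref{lem:filling UB} then show that pairwise-filling families with large projections are forced to spread out in $\calC(S)$, giving the upper bound by a clean counting argument (Proposition~\ref{prop:sep UB}) with no case split by $\nest$/$\pitchfork$. Your case analysis handles equal labels (via $V_U \perp U$) and nested labels (via $U_j^{\perp} \subseteq U_i^{\perp}$), but as you acknowledge, the transverse case is unresolved, and without the filling/non-filling dichotomy the ``Behrstock inequality'' and ``cluster organization'' you appeal to do not, on their own, produce the required witness or force an intervening $S$-hyperplane. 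In short: the filling concept and the formal firewall domains are precisely the two pieces that close the gap you identify.
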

\end{minipage}
}
\end{center}
\vspace{.1in}

In other words, the distance in the top-level hyperbolic space is coarsely encoded in the cubical geometry of a family of cubical models.  One consequence of Theorem \ref{thmi:curve graph} is that a hierarchy ray $\gamma$ determines a point in the boundary of $\calC(S)$ if and only if $\gamma$ determines a point in the boundary of the contact graph of $\calQ_{\gamma(0), \gamma}$.  The \emph{contact graph}, introduced by Hagen \cite{Hagen_contact}, is a hyperbolic graph which plays an analogous role to the curve graph for cube complexes.  Hence hyperbolic directions in the HHS $\calX$ coincide the hyperbolic directions in a family of uniform cubical models.

We note that \cite[Theorem C]{DZ22} provides the bound $d^{\calQ}_L(\hx,\hy) \succ d_S(\pi_S(x), \pi_S(y))$ for any cubical model $\calQ$, where $L$ is some constant depending on $\calX$.  In Lemma \ref{lem:sep LB}, we strengthen this to $0$-separation (called \emph{strong separation} in \cite{BC_div}) with a simpler proof which uses the fact that crossing hyperplanes are labeled by orthogonal domains, as discussed above in item \eqref{item:infinite model rem} of Remark \ref{rem:main rem}.  Thus Theorem \ref{thmi:curve graph} completes the relationship in the strongest way possible.  See Figure \ref{fig:fake_sep} for a simple example of how ``fake'' separation can prevent the upper bound in Theorem \ref{thmi:curve graph} from holding in general cubical models, and Subsection \ref{subsec:sketch} for a discussion of how to alter the cubical models to prevent fake separation from occurring.

In future work, we will develop a variation on this cubical machinery (along the lines of \cite{DMS20}) to, following Genevois, define a new metric on the mapping class group which will make it quasi-isometric to the curve graph.  We note that Incerti-Medici--Zalloum \cite{IMZ21} have already done this for cube complexes which admit an HHS structure; see also Petyt-Spriano-Zalloum \cite{PSZ_curtain} for a generalization of these ideas to CAT(0) spaces, as well as Zalloum's expository article \cite{Zalloum_curtain}.

\subsection{Related results}\label{subsec:related results} 

The use of local approximations, for instance in manifold theory, is at least as old as modern mathematics.  In geometric group theory, paramount examples include Gromov hyperbolic spaces and CAT(0) spaces, which are defined by $3$-point comparisons to tripods and Euclidean space, respectively.  As discussed above, Behrstock-Hagen-Sisto's cubical model theorem \cite[Theorem F]{HHS_quasi} directly generalizes Gromov's tree modeling theorem \cite{Gromov1987} for hyperbolic spaces.

In \cite{Bowditch_coarsemedian}, Bowditch introduced the notion of a coarse median space, a class of objects including cube complexes and HHSes which has proven very useful.  Their defining property is a coarse version of the median property which hinges on local comparisons with cube complexes (see Definition \ref{defn:coarse median}), which is in some sense a weak version of the cubical model theorem (see Remark \ref{rem:Bowditch median remark}).  In \cite{Bowditch_hulls}, Bowditch recovers the full cubical model theorem for interior points in the more general setting of coarse median spaces which satisfy certain HHS-like axioms.  Notably, he recovers (as we do) a proof of the existence of median quasi-geodesic paths connecting any pair of points and gives a new proof of the distance formula (Theorem \ref{thm:DF}).  His approach is similar in spirit to ours, since his cubical model also arises as a subset of a product of trees.  However, his approach is fundamentally different from ours (and the approach in \cite{HHS_quasi}) because it utilizes the median property as part of its axiomatic foundation, whereas medians are essentially a by-product of the hierarchical framework.  Moreover, his construction does not allow for modeling hulls of median rays (which coincide with hierarchy rays in the HHS setting), whereas the infinite models we construct and their applications constitute the majority of work in this paper.  We believe that any such extension of Bowditch's construction would require an analogous set of tools to the ones we develop in this article; see Subsection \ref{subsec:PU intro} for a discussion of some of these techniques.

In \cite{DZ22}, we proved with Zalloum that the hull of any finite collection of hierarchy rays in an HHS can be approximated by a CAT(0) cube complex which is obtained by taking an unrescaled ultralimit of the finite cubical approximations from \cite{HHS_quasi}.  Our purpose in that paper was to study certain hyperbolic-like directions in HHSes, such as sublinearly-Morse geodesic rays, and we discussed in Remark \ref{rem:main rem} the ways in which the current article refines and strengthens this result.  Much of that discussion did not actually involve any hierarchical geometry, and so we carried some of that work out in the framework of \emph{locally quasi-cubical spaces}, namely coarse median spaces where median hulls of finite sets of points are quasi-median quasi-isometric to CAT(0) cube complexes.  This more general framework is currently being further developed by Petyt-Spriano-Zalloum \cite{PSZ_gencurtain}.  See Zalloum's expository article \cite{Zalloum_expo} for a broad discussion of cubical approximations in geometric group theory.

Theorem \ref{thmi:main boundary} and Corollary \ref{cori:local boundary} are related to recent work of Hamenst\"adt \cite{Ham_boundary} and Petyt \cite{Petyt_quasicube}, both of which give a global cubical model for boundaries of the mapping class group $\MCG(S)$ of a finite-type surface $S$.

In \cite{Petyt_quasicube}, Petyt proves that $\MCG(S)$ (in fact, any \emph{colorable} hierarchically hyperbolic group, more generally) admits a quasi-median quasi-isometry to a finite dimensional CAT(0) cube complex.  He accomplishes this in two ways, with the more relevant one being through work of Bestvina-Bromberg-Fujiwara \cite{BBF_quasitree}.  They proved that $\MCG(S)$ admits a quasi-isometric embedding into a finite product of quasi-trees, and Petyt proved that this map is quasi-median.  This global approximation is quite useful for studying the interior of $\MCG(S)$, e.g. it provides a new proof that $\MCG(S)$ is bicombable though it does not recover semihyperbolicity \cite{Ham_biauto, DMS20, HHP}.  Moreover, it allows one to cubically approximate the hierarchical hull of any finite set of interior points or hierarchy rays with uniform constants (independent of the size of the finite set).  In addition, this global model allows one to identify any number of boundaries, e.g. the Roller boundary, of this cube complex as a boundary of $\MCG(S)$.  The main drawback of this construction is a lack of equivariance.  In particular, the quasi-isometry from the global cubical model to $\MCG(S)$ is not $\MCG(S)$-equivariant (and could not possibly be \cite{Bridson_notCCC}), and thus any notion of a boundary for $\MCG(S)$ constructed in this way would necessarily provide limited dynamical information.  In addition, it is yet unclear how hierarchical information is encoded in the cubical geometry, since, for instance, hyperplanes in Petyt's global model do not come with domain labels, as in Theorem \ref{thmi:main model}.

In \cite{Ham_boundary}, Hamenst\"adt constructs a uniformly finite CAT(0) cube complex $C$ which admits a proper coarsely surjective Lipschitz map $F:C \to \MCG(S)$.  Remarkably, Hamenst\"adt constructs a homeomorphic identification of the Roller boundary of $C$ with the space of complete geodesic laminations on $S$.  As with Petyt's construction, this map $F$ is necessarily not $\MCG(S)$-equivariant, and in fact is also not coarsely bi-Lipschitz.  Notably, these results are established via train-track techniques, which are quite different from our hierarchical approach.

Finally, Casals-Ruiz--Hagen--Kazachkov \cite{CRHK} have an ongoing program to study the asymptotic cones of HHSes.  In doing so, they introduce the notion of a \emph{real cubing}, which is a higher-rank generalization of a real tree, and prove that every asymptotic cone of an HHS admits the structure of a real cubing.  Remarkably, they use this robust theory to prove that the mapping class group has a unique asymptotic cone (up to bi-Lipschitz equivalence).  Their program is similar to ours in spirit, in that they need to wrestle with infinite sets of hierarchical data and they use cubical techniques to do so.  See item \eqref{item:R-cubings} of Remark \ref{rem:main rem} for a discussion about how our setup is related to theirs.

\subsection{A basic motivating example} \label{subsec:ray example}

We will spend the rest of the introduction highlighting some of our techniques and laying out some questions.  To motivate this discussion, we begin with a basic example of a combinatorial (and thus hierarchy) ray in a familiar cubical HHS, the Cayley 2-complex $\calX$ of $\ZZ^2 * \ZZ \cong \langle a,b| [a,b]=1\rangle * \langle c \rangle$.  This space is known as the ``tree of flats"  because it is a family of 2-dimensional flats (copies of $\ZZ^2$) connected in a tree-like fashion.  See Figure \ref{fig:ray_example} for the setup.

The standard HHS structure on $\calX \cong \ZZ^2 *\ZZ$ consists of the infinite-valent tree $T$ obtained by coning off all copies of $\ZZ^2$, which is the top-level space in the hierarchy, along with lower level spaces which are copies of $\ZZ$ corresponding to the coordinate axes of each flat.  In particular, every hyperbolic space in the structure is a tree.  This HHS structure is ``exact'' in the sense discussed above: all of the constants associated to the structure can be taken to be $0$, and $\ZZ^2 * \ZZ$ is isometric to the $0$-consistent subset of the product of the hyperbolic spaces in the hierarchy (which are all lines except for $T$).  Moreover, all projections in the hierarchical setup can be taken to be either the collapsing map $\calX \to T$ or closest-point projections (in $\calX$) to the flats and their axes (see \cite{HHS_1} for the general case of cubical HHSes).  As mentioned above, this is the sort of exactness that we capture in the hierarchical families of trees used to construct our version of the cubical models (Theorem \ref{thmi:main model}) in the interior point case.  We will now explain in some detail how our construction plays out in this example.

The set of combinatorial (and thus hierarchy) rays in $\calX$ admits a fairly simple description.  Since we are in the cubical setting, the HHS and simplicial boundaries coincide (by, e.g., \cite[Theorem 10.1]{DHS_boundary}).  They consist of two main types of simplices: a Cantor set of isolated $0$-simplices coming from the Gromov boundary of  $T$ and a family of isolated $1$-dimensional diamonds corresponding to each copy $\ZZ^2$, where the vertices of the diamond correspond to the cardinal directions in the flat they represent.  In particular, the HHS (or simplicial) boundary of $\ZZ^2 * \ZZ$ is a disconnected union of these diamonds and the Cantor set.  Importantly, every combinatorial geodesic ray in $\ZZ^2 * \ZZ$ either eventually lies in a single flat or projects to a geodesic ray in $T$ which runs off to infinity.

It is instructive to consider the hierarchical hull of a single combinatorial ray of the latter type:
$$\gamma = abca^2b^2ca^3b^3c \cdots.$$

The most obvious hierarchical family of trees associated to this setup consists of trees which are just the intervals obtained by projecting $\gamma$ to all of the various hyperbolic spaces in the structure, excluding projections of diameter $0$.  In particular, these intervals have the following form:
\begin{enumerate}
\item The infinite geodesic ray $T_{\gamma} = \pi_T(\gamma)$ in $T$, and
\item For each $n$, an interval $T_{n, a}$ of length $n$ in the $a$-direction of the flat labeled by $d_n = abca^2b^2c\dots ca^{n-1}b^{n-1}c$.
\item For each $n$, an interval $T_{n, b}$ of length $n$ in the $b$-direction of the flat labeled by $d_n$.
\end{enumerate}

Thus we have the following product of intervals:
$$\calY:= T_{\gamma} \times \prod_n \left(T_{n,a} \times T_{n,b}\right),$$
which we note is the product of the projections of $\gamma$ to factors of the full product of $T$ and the various copies of $\RR$ which give the HHS structure on $\calX$, with the factors that are a point being ignored.  In particular, the $0$-consistent set 
$\calQ$ of $\calY$ is exactly the restriction of the $0$-consistent set of the full product of hyperbolic spaces, which is itself precisely $\calX$.  Note that this makes $\calQ$ precisely the $\ell^1$-convex hull of $\gamma$ in $\calX$, when the latter is identified as a subset of the product of hyperbolic spaces.

Moreover, the above two sets of information, namely (1) the geodesic ray $T_{\gamma}$ and (2) the set of labels $d_n$ which encodes the relevant flats (and thus their axes), both completely encode the limit set of $\gamma$ in the simplicial (hierarchical) boundary of $\calX$---namely the $0$-simplex corresponding to $\overline{\gamma} = \pi_T(\gamma) \in \partial T$---since the $\rho$-sets for the domains labeled by $d_n$ diverge to infinity along $\pi_T(\gamma)$.  See Figure \ref{fig:ray_example}.

As for the cubical model $\calQ$ itself, the set of domain relations
\begin{itemize}
\item $T_{n,a} \perp T_{n,b}$ are orthogonal for all $n$,
\item $T_{n,a}, T_{n,b} \nest T_{\gamma}$ nest for all $n$ with $\rho^{d_n}_{T_{\gamma}}$ being the collapsed point for the corresponding flat;
\item All other domains are transverse $\pitchfork$;
\end{itemize}
tells us how to piece together $\calQ$ as follows.  As discussed in Subsection \ref{subsec:HHS primer}, orthogonality does not constrain coordinates in $\calY$.  Thus for each $n$, $\calQ$ will contain an isometrically embedded copy of the square $T_{n,a} \times T_{n,b}$.  The rest of the relation data then tells us to glue these squares into their corresponding $\rho$-points along $T_{\gamma}$, namely that $T_{n,a} \times T_{n,b}$ replaces $\rho^{d_n}_{T_{\gamma}}$ for each $n$.  Again, see Figure \ref{fig:ray_example}.

Returning to our boundary discussion, we claim that only one of the above sets of data encoded by the ray is robust under (necessary) variations of the model construction.  While the choice of $T_{\gamma}$ as a modeling tree for the projection of $\gamma$ to $T$ is natural, using it also as the \emph{collapsed} tree in the hierarchical product of trees in our construction is very special.  As heuristically indicated in Figures \ref{fig:BHS_cube} and \ref{fig:Dur_cube}, the collapsing procedure in each relevant tree $T_U$ involves ``clustering'' the nearby $\rho$-sets for domains which nest into $V$.  In the entirely coarse general HHS setting, one needs some flexibility in choosing this \emph{cluster separation constant}.  See Subsection \ref{subsec:shadows} for more details.

\begin{figure}
    \centering
    \includegraphics[width=.95\textwidth]{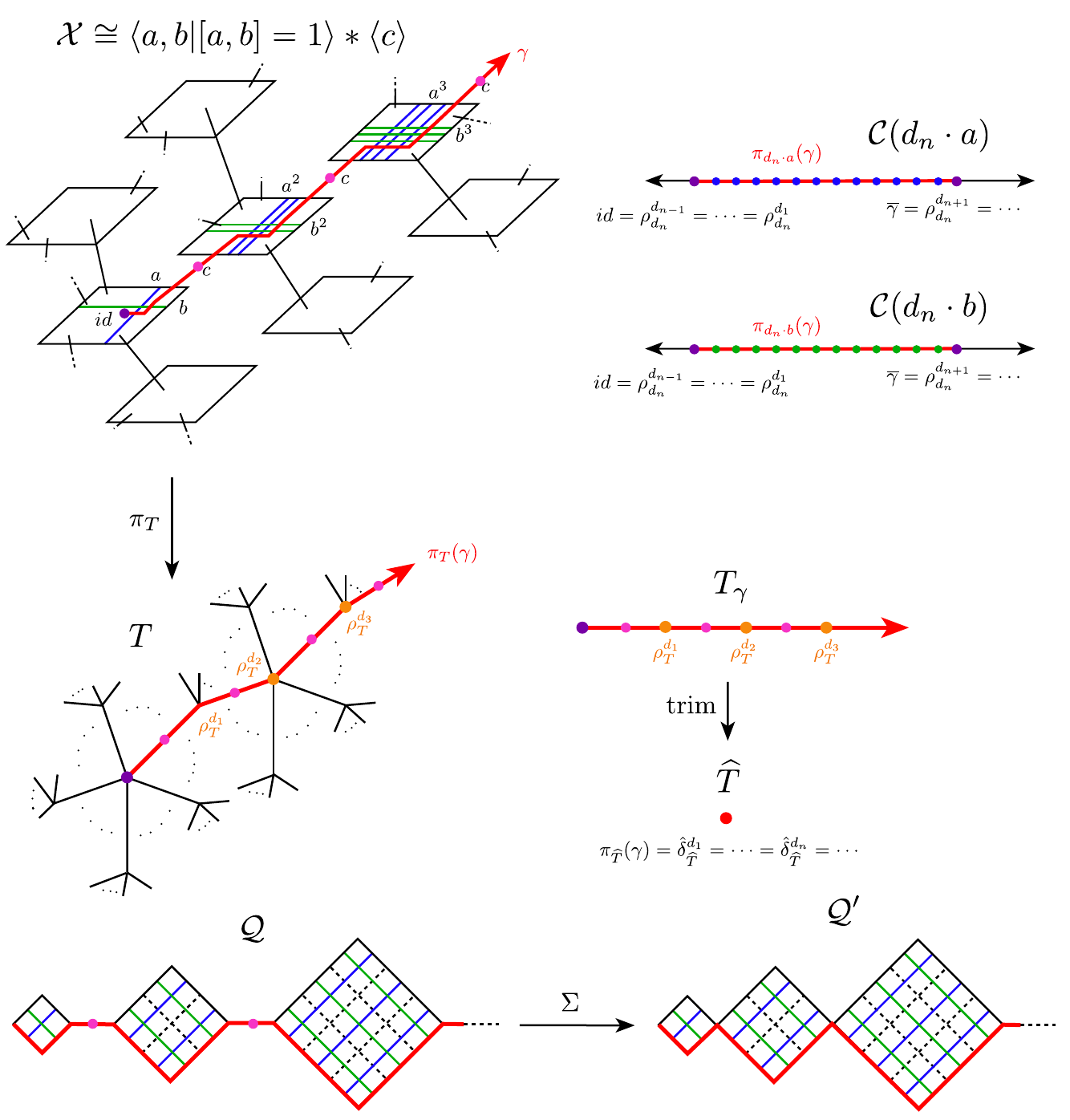}
    \caption{A ray in $\calX \cong \langle a,b \rangle * \langle c \rangle$ whose limiting behavior is only robustly encoded in domain data: The ray $\gamma= abca^2b^2ca^3b^3c \cdots$ passes through an infinite sequence of flats, moving progressively further in each successive flat.  The limiting behavior of $\gamma$ is encoded not only in the geodesic ray $\pi_{T}(\gamma)$ to which it projects in the tree $T$ obtained by collapsing down all of the flats to points, but also in the sequence of flats, which is encoded as a sequence of vertices $\rho^{d_n}_T$ of $T$.  When building a cubical model $\calQ$ for $\hull_{\calX}(\gamma)$, one could take $\pi_T(\gamma) = T_{\gamma}$, $\pi_{d_n\cdot a}(\gamma) = T_{n,a}$, $\pi_{d_n\cdot b}(\gamma) = T_{n,b}$ for the family of collapsed trees.  However, trimming $T_{\gamma} \to \hT$ the $c$-intervals from $T_{\gamma}$ collapses it to a point $\hT$ and gives another hierarchical family of trees, with a cubical model $\calQ'$ for $\hull_{\calX}(\gamma)$ which is $0$-quasi-median $(2,0)$-quasi-isometric under the induced collapsing map $\Sigma: \calQ \to \calQ'$.  At the level of simplicial boundaries, the hyperplanes in $\calQ$ induced by the green, blue, and pink hyperplanes in $\calX$ represent the class corresponding to $\gamma$ in $\partial_{\Delta} \calQ$, while the green and blue hyperplanes represent $\Sigma(\gamma)$ in $\partial_{\Delta}\calQ'$, with $\Sigma$ inducing the isomorphism $\partial \Sigma: \partial_{\Delta} \calQ \to \partial_{\Delta} \calQ'$ (between singletons).  In this sense, $\gamma$ is only robustly encoded in the domain labels for the green and blue hyperplanes.}
    \label{fig:ray_example}
\end{figure}

In this special ``exact'' setting, we were able to collapse nothing, implicitly setting the cluster separation constant to be $0$, but any other choice of cluster separation constant larger than $0$ would result in a cubical model that is quasi-median quasi-isometric to this special ``exact'' model, with constants depending on that choice of larger threshold.  In general, the cluster threshold is always bigger than $1$.

In this case, choosing it to be exactly $1$ results in collapsing the infinite ray $T_{\gamma}$ to a point.  While this may appear violent, one can show (using our Tree Trimming \ref{thm:tree trimming} techniques) that the corresponding map between hierarchical families of trees
$$T_{\gamma} \times \prod_n \left(T_{n,a} \times T_{n,b}\right) \to \prod_n \left(T_{n,a} \times T_{n,b}\right),$$
induces a $0$-quasi-median $(2,0)$-quasi-isometry $\Sigma:\calQ \to \calQ'$ between their $0$-consistent subsets.

Moreover, this map $\Sigma:\calQ \to \calQ'$ preserves the infinite family of (finite diameter) squares $T_{n,a} \times T_{n,b}$ and hence the labeling data of the hyperplanes they determine in the model.  These hyperplanes, on the other hand, encode $\gamma$ as the unique $0$-simplex in the simplicial boundary of the model $\calQ'$.  In fact, any choice of larger threshold would collapsed finitely-many of these squares, but any infinite subfamily is enough to determine $\pi_T(\gamma) \in \partial T$.  Thus the hyperplane domain label data robustly encodes the limiting behavior, since every hyperplane labeled by $T$ (and indeed even finitely-many hyperplanes with lower-level labels) may be lost in the process of forming a cubical model.

It is important to note that the purely coarse version of the cubical model first constructed by Behrstock-Hagen-Sisto \cite{HHS_quasi} has this exact same issue: there are no tree-hyperplanes (which they call \emph{separators}) along $T_{\gamma}$ in their setting.  We expect that the same issue would arise in any potential extension of Bowditch's construction \cite{Bowditch_hulls} which allows for modeling median rays.  Thus, these sorts of issues are unavoidable.

By this point, we hope that this example has motivated the need for (and utility of) the ability to modify the trees used in constructing cubical models via small surgeries.  The formal framework for this is what we have referred to as \emph{tree trimming}, and we will discuss this below in Subsection \ref{subsec:TT intro}.

However, this example also highlights another necessary feature of our analysis: the ability to use domain label data to interpret the various ways in which a family of hyperplanes can diverge to infinity in a cubical model.  This latter point is more subtle, and falls under the umbrella of what are often referred to as \emph{passing-up} style arguments, which play a key role at several points in this paper, including for tree-trimming.  We discuss these next.

\subsection{Hierarchical accounting techniques}\label{subsec:PU intro}

Passing-up style arguments have been foundational to the HHS setting since the work of Masur-Minsky \cite{MM00} on the hierarchical geometry of the mapping class group $\MCG(S)$ of a finite-type surface $S$.  Notably, this hierarchy machinery also played a central role in the construction of the model manifolds used in Brock-Canary-Minsky's \cite{Min_ELC, BCM_ELC} proof of the Ending Lamination Theorem, where the notion of ``generalized marking'' we discussed in Remark \ref{rem:main rem} arises.  To contextualize our results, which fully realize some of the related structural results from \cite{MM00} to the HHS setting, we will discuss that setting first.

Following \cite{MM00}, identify $\MCG(S)$ with its \emph{marking graph}, a combinatorial model for $\MCG(S)$ whose vertices are special filling collections of curves called \emph{markings}, with two markings being connected by an edge when they differ by an \emph{elementary move} on a minimal-complexity subsurface.

A \emph{hierarchy} $\mathcal H$ between a pair of markings $\mu,\nu$ is, roughly, a collection of geodesics between $\pi_U(\mu), \pi_U(\nu) \in \calC(U)$ in the curve graphs $\calC(U)$ of a collection $\calU$ of subsurfaces on which the markings $\mu, \nu$ interact in a sufficiently complicated fashion.  These hierarchies can be \emph{resolved} to build hierarchy paths between any pair of markings \cite[Theorem 6.10]{MM00}, and the structure of $\calH$ is essential for proving the original distance formula \cite[Theorem 6.5]{MM00}.  Our choice of notation $\calU$ here is intentional: all but boundedly-many of these subsurfaces $U \in \calU$ have the property that $\mu,\nu$ project far apart in $\calC(U)$, namely $d_{\calC(U)}(\mu,\nu) \gg K$ for some fixed $K = K(S)>0$ depending on $S$.  Hence most of the subsurfaces in $\calU$ are ``relevant domains'', in the sense of Subsection \ref{subsec:BHS cube}.

Importantly, a key property of any hierarchy $\calH$ between $\mu,\nu$ is the following: Not only will every relevant subsurface $U \in \calU$ support a geodesic in $\calH$, but the boundary curves $\partial U$ of $U$ will appear in the $1$-neighborhood of some vertex along a geodesic in $\calH$ which is supported on a relevant subsurface $V \in \calU$ of which $U \subset V$ is a subsurface (see \cite[Theorem 3.1 and Theorem 4.7]{MM00}).  This fact explains Masur-Minsky's terminology for such domains, namely \emph{large links} (see \cite[Lemma 6.2]{MM00}), and motivated the Large Links Axiom \ref{ax:LL} for HHSes \cite{HHS_2}.

The \emph{passing-up} principle, as it is now referred to, is a straight-forward consequence of the above: Given a large collection of relevant subsurfaces $\calV \subset \calU$ for some pair of markings $\mu,\nu$, there exists some relevant subsurface $W\in \calU$ so that the boundary curves of some further subcollection $\calV' \subset \calV$ spreads out along any geodesic between $\pi_W(\mu),\pi_W(\nu)$ in $\calC(W)$.  Moreover, by making $\#\calV$ larger, one can force the boundary curves from subsurfaces in $\calV'$ to spread further out in $\calC(W)$.  Put another way, the boundary curves of large links cannot cluster nearby in every subsurface curve graph.

A weak version of this passing-up principle has appeared in a number of guises in the HHS setting: see Lemma \ref{lem:passing-up} for the original HHS version (from \cite[Lemma 2.5]{HHS_1} and \cite[Lemma 3.2]{DHS_boundary}), as well as a slight generalization due to Behrstock-Hagen-Sisto \cite[Lemma 1.6]{HHS_quasi}, and Lemma \ref{lem:PS} for a useful consequence due to Petyt-Spriano \cite[Lemma 5.4]{PS20}.

In this article, we prove a strong version of the passing-up principle, as well as a number of related statements, which realize more fully the organizational power of hierarchies from the surface setting.  Moreover, we fully extend these techniques to allow for hierarchy rays and boundary points, thereby generalizing the organizational principles of Minsky's infinite hierarchies.  In particular, some version of the following theorem would be necessary to prove any version of the boundary isomorphism Theorem \ref{thmi:main boundary} with respect to any cubical model construction.  Since the full statement is somewhat technical, we will only give the following informal version:

\begin{center}
\framebox{
\begin{minipage}{.96\textwidth}
\begin{propi}[Strong passing-up]\label{propi:SPU}

Let $a,b$ be interior points, boundary points, or hierarchy rays in an HHS $\calX$.  Then for any sufficiently large collection $\calV$ of relevant domains for $a,b$, there exists a relevant domain $W$ and large subcollection $\calV' \subset \calV$, so that $V \nest W$ for all $V$ and
$$\diam_{\calC(W)}\left(\bigcup_{V \in \calV'} \rho^V_W\right)$$
can be made as large as necessary by increasing the size of $\#\calV$.  Moreover, the $\rho$-sets $\rho^V_W$ distribute proportionally evenly along any geodesic between $\pi_W(a)$ and $\pi_W(b)$.
\end{propi}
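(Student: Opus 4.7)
The plan is to iterate the classical Passing-Up Lemma \ref{lem:passing-up} with a bin-counting argument in the hyperbolic space $\calC(W)$. First, apply passing-up (in the form of \cite{HHS_1} or its boundary analogue \cite{DHS_boundary}) to the large relevant collection $\calV$ to extract a relevant domain $W_0$ and a subcollection $\calV_1 \subset \calV$ with $V \nest W_0$ for every $V \in \calV_1$, and $\#\calV_1 \to \infty$ as $\#\calV \to \infty$. When $a$ or $b$ is a hierarchy ray or a boundary point, I interpret relevance of $W_0$ via the extended projections into $\calC(W_0) \cup \partial \calC(W_0)$ provided by Definition \ref{defn:extended consistency} and Proposition \ref{prop:extended consistency}. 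By the Bounded Geodesic Image Axiom \ref{ax:BGIA}, each $\rho^V_{W_0}$ lies within a uniformly bounded neighborhood of any (possibly infinite) $\calC(W_0)$-geodesic $\alpha$ from $\pi_{W_0}(a)$ to $\pi_{W_0}(b)$.

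The key step is to force the diameter of $\bigcup_{V \in \calV'} \rho^V_W$ to grow with $\#\calV$. Suppose not: then for some fixed $D$, no matter how large $\#\calV$ is, uniformly many of the $\rho^V_{W_0}$'s cluster within a bounded sub-arc of $\alpha$ of diameter $\le D$. Applying the refined passing-up of Behrstock--Hagen--Sisto (\cite[Lemma~1.6]{HHS_quasi}) to this cluster, together with the Large Links Axiom \ref{ax:LL} to identify which proper nested domain absorbs the clustering, produces a strictly nested relevant domain $W_1 \sqsubsetneq W_0$ and a still-large subcollection $\calV_2 \subset \calV_1$ with each $V \in \calV_2$ nesting into $W_1$. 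Because HHS complexity is finite, this descent must terminate after boundedly many steps, ending at a relevant domain $W$ and a large $\calV'' \subset \calV$ where no such bounded-diameter clustering is possible; this establishes the diameter conclusion.

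To upgrade to proportional distribution, partition $\alpha$ into $N$ consecutive sub-segments of equal length, and for each $V \in \calV''$ assign it to the bin containing a closest point of $\rho^V_W$ on $\alpha$. If some bin receives a disproportionate share of the $\rho$-sets, that sub-collection falls into the clustered regime ruled out above, contradicting termination of the descent or permitting removal of a controlled fraction of $\calV''$. After such removal, the surviving $\calV' \subset \calV''$ has $\rho^V_W$'s meeting every bin with comparable multiplicity, which is precisely proportional distribution along $\alpha$. In the ray/boundary case, apply this bin argument to an exhaustion of $\alpha$ by finite sub-arcs and pass to a diagonal subcollection.

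The main obstacle is quantitative accounting across the iterated descent: each passing-up step shrinks the surviving subcollection by a factor depending only on the HHS structure, and the depth of the descent is bounded by the complexity of $\calX$, so the final $\#\calV'$ grows with $\#\calV$ only if the constants at each level are carefully compounded. One must calibrate $\#\calV$ at the outset to the desired $D$ and $N$ so that the output simultaneously satisfies the diameter bound and the distribution condition. A secondary subtlety is ensuring in the boundary/ray case that the extended consistency framework places $\rho^V_W$ a uniformly bounded distance from the endpoints of $\alpha$ in $\calC(W) \cup \partial\calC(W)$, so that the bin argument does not degenerate at infinity; this follows from the relevance of $V$ for $a,b$ together with Bounded Geodesic Image.
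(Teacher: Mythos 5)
For the interior-point case your plan is broadly sound and close in spirit to the paper's: both rely on the Petyt--Spriano localization principle (Lemma~\ref{lem:PS interior} in the paper, your ``cluster $\Rightarrow$ strictly nested domain'' step) as the engine that forbids dense clustering of $\rho$-sets, and both terminate a descent by finite complexity. The packaging differs---the paper descends level-by-level through the $\nest_{\calU}$-lattice, repeatedly passing to $\nest$-maximal subfamilies and invoking Lemma~\ref{lem:PS interior} as a black box once it reaches the bottom, whereas you descend by finding a strictly nested witness each time clustering is detected---but these are equivalent reformulations of the same mechanism, and your final bin-counting paragraph reproduces the ``moreover'' (proportional distribution) clause in essentially the paper's way. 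One small omission for the interior case: Lemma~\ref{lem:passing-up} only gives a relevant $W_0$ containing \emph{one} of the $V_i$; extracting a still-large $\calV_1 \subset \calV$ all nesting into a single $W_0$ needs the extra pigeonhole step (on the boundedly many $\nest_{\calU}$-maximal domains) that the paper carries out explicitly.

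The genuine gap is in the ray/boundary case, and it is a circularity you have not noticed. You open by ``apply passing-up (in the form of \cite{HHS_1} or its boundary analogue \cite{DHS_boundary}) to the large relevant collection $\calV$'' even when $a,b$ are hierarchy rays or boundary points. But no such ray/boundary passing-up exists off the shelf: the citations \cite[Lemma~2.5]{HHS_1} and \cite[Lemma~3.2]{DHS_boundary} are both for interior points, and the ray version (Lemma~\ref{lem:PU rays}) is new in this paper. Moreover, proving it is \emph{not} a routine extension via extended consistency: the paper shows that one must approximate the rays by points $x(t), y(t)$ far along the ray, and then the passing-up output is a domain $W$ with $d_W(x(t),y(t))$ large---but if $W \notin \supp(a) \cup \supp(b)$, the projections of the \emph{rays} to $\calC(W)$ are bounded, and the fact that $d_W(x(t),y(t))$ is large is not enough to conclude $W$ is relevant for $a,b$. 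What rescues this is precisely the $\rho$-set spreading from the interior-point Strong Passing-Up (Proposition~\ref{prop:strong pu}): if many $\rho^{V_i}_W$ spread out to diameter $\gg D$, then by BGIA any geodesic in $\calC(W)$ between $\pi_W(a)$ and $\pi_W(b)$ must pass $E$-close to each of them, forcing $d_W(a,b) > D$. Thus the interior-point strong result is a logical prerequisite for the ray version of the \emph{ordinary} passing-up lemma, which in turn feeds into the ray version of Petyt--Spriano and then into the ray Strong Passing-Up. Your proposal collapses this bootstrap into a single pass, and the very first invocation of passing-up for rays is the step that fails without it. Your closing remark about $\rho^V_W$ lying at bounded distance from the endpoints of $\alpha$ addresses a different (and real, but minor) issue; it does not repair this one.
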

\end{minipage}
}
\end{center}
\vspace{.1in}

In the above, ``proportionally evenly'' means, roughly, that the $\rho$-sets $\rho^V_W$ for $V \in \calV'$ must not only globally spread out in $\calC(W)$, but must also locally spread out.  In particular, keeping the diameter of $d_{\calC(W)}(\pi_W(a),\pi_W(b))$ fixed and increasing $\#\calV$ forces a progressively larger proportion of any geodesic $[\pi_W(a), \pi_W(b)]$ to be coarsely covered by these $\rho$-sets.  See Proposition \ref{prop:SPU} for the full statement and discussion.  We note that Zalloum \cite{Zalloum_effective} has already used Proposition \ref{propi:SPU} in his proof of effective rank rigidity for hierarchically hyperbolic groups.

Just as the passing-up principle and related ideas are essential for proving the distance formula in all of the relevant contexts \cite{MM00, Dur16, HHS_2}, this stronger version plays an essential role in several distinct ways in this article.  The following are the highlights:

\begin{itemize}
\item As discussed above in Subsection \ref{subsec:ray example}, even though the Behrstock-Hagen-Sisto's original cubical model construction did not require Strong Passing-Up \ref{prop:SPU}, some form of it is unavoidable to perform an analysis of the simplicial boundaries of these models.  In particular, in proving any variant of the simplicial isomorphism in Theorem \ref{thmi:main boundary}, one is handed an infinite family of hyperplanes in a cubical model, whose only related hierarchical data is the family of hyperplane labels which come from the relevant domains for some finite set of interior points and hierarchy rays.  In lieu of one domain labeling infinitely-many of these hyperplanes, extracting a domain of support for an infinite subfamily family requires knowing that the $\rho$-sets for those labels spread out in some domain.  This argument uses Proposition \ref{propi:SPU} in a strong way.  See Lemma \ref{lem:dust spread} in particular.
\item Strong Passing-Up \ref{prop:SPU} is essential for proving that all but boundedly-many domains appearing in the set of relevant domains for a finite collection of points and rays in an HHS are \emph{bipartite} domains, as discussed in Figure \ref{fig:Dur_cube}.  As discussed in Remark \ref{rem:main rem}, these bipartite domains are the most cubical part of the quasi-isometry between the hierarchical hull and its model, an argument which is otherwise fairly cubical-free.  See Subsection \ref{subsec:bipartite} for details and Proposition \ref{prop:bipartite} specifically.
\item The Bounding Large Containers Proposition \ref{prop:bounding containers} is a distance formula-related statement coming out this analysis.  In Section \ref{sec:distance estimates}, it is essential for proving the distance comparison between hierarchical hulls and their cubical models, as well as proving the Tree Trimming Theorem \ref{thm:tree trimming} that we will discuss next.
\item Another application of Proposition \ref{prop:bounding containers} is an elementary direct proof that changing thresholds in the distance formula works, which, to our knowledge, does not exist in the literature.  See Corollary \ref{cor:DF accounting} for details.
\item Notably, Strong Passing-Up \ref{prop:SPU} for a pair of interior points is necessary to prove standard Passing-Up \ref{lem:PU rays} for hierarchy rays, as well as the basic but important fact that the set of relevant domains for a ray (or pair of rays) is countable (Lemma \ref{lem:rel sets countable}).
\item It is standard that the Large Links Axiom \ref{ax:LL} implies the Passing-Up Lemma \ref{lem:passing-up}.  We give a proof of the converse in Proposition \ref{prop:strong ll}, and hence that the Large Links Axiom is equivalent to Passing-Up, so that the latter could replace the former in the axiomatic setup of HHSes.  We note that this fact was independently discovered by Hagen and Petyt \cite{HP_pc}.
\end{itemize}

\subsection{Tree-trimming: Refining the distance formula} \label{subsec:TT intro}

In our discussion of the cubical model Theorem \ref{thmi:main model} and the variation which captures curve graph distance (Theorem \ref{thmi:curve graph}), we saw how modifying the trees involved in the construction of the cubical models can be quite useful, as it allows one to directly modify the cubical geometry of the models in a fine-tuned fashion.  We will now discuss the machinery that makes this work, namely the Tree Trimming Theorem \ref{thmi:tree trimming} below.  Since it can be reasonably understood a refinement of the distance formula, we will begin our discussion there.

Given a pair of points $a,b \in \calX$ in an HHS, the distance formula (Theorem \ref{thm:DF}) says that the distance between them in $\calX$ is coarsely the $\ell^1$-sum of distances in the hyperbolic spaces to which they project far apart:
\begin{equation}\label{eq:DF intro}
d_{\calX}(a,b) \asymp_K \sum_{U}[[d_{U}(a,b)]]_K.
\end{equation}
In the above coarse inequality, we need to take $K$ sufficiently large (depending only on $\calX$), and then the additive and multiplicative constants of $\asymp_K$ depend on $K$ and $\calX$.  In particular, taking $K'>K$ provides a new distance estimate, in which any term $d_V(a,b) < K'$ disappears.  While it is hard to overstate how useful this distance formula has proven in the literature, we have developed a refined version which has some added benefits.

One of the upshots of Theorem \ref{thmi:main model} is the introduction of a cubical model $\calQ$ where the distance is actually an $\ell^1$-sum:
\begin{equation}\label{eq:Q DF intro}
d_{\calX}(a,b) \asymp_K d_{\calQ}(\ha,\hb) = \sum_{U \in \calU} d_{\hT_U}(\ha,\hb),
\end{equation}
where $\ha,\hb$ are identified with $a,b$ under the model map $\hO:\calQ \to \hull_{\calX}(a,b)$, and the $\hT_U$ are the collapsed trees (actually intervals, in this case) associated to $a,b$ and each $U \in \calU$.

In our construction, we also need to choose some sufficiently large $K$ (again, depending only on $\calX$) to collect the $K$-relevant domains $\calU$, which are precisely the domains contributing to the right side of equation \eqref{eq:DF intro}.  While $\calQ$-distance again only gives a coarse estimate, an obvious benefit of this setup is that equation \eqref{eq:Q DF intro} allows one to (coarsely) account for how much each domain in $\calU$ contributes in this estimate.  In other words, distance in the model $\calQ$ is a concrete realization of the distance formula.

However, this exact model has another benefit: we can perform small surgeries on each of the intervals $\hT_U$ which only changes the coarse estimate to the distance in $\calX$ in a controlled fashion.  Put another way, by extracting the precise contributions of the relevant domains---which is achieved by excising the redundant information coming from nesting, as discussed in Subsection \ref{subsec:Dur cube}---we can further make \emph{partial} alterations to the contributions of each relevant domain, while preserving the combinatorial information of the $\rho$-sets encoded in the trees.  Compare this with the distance formula \eqref{eq:DF intro}, in which the only dimension of flexibility results in the complete deletion of the data associated to a relevant domain.

The general version of this idea is the statement of our Tree Trimming Theorem \ref{thm:tree trimming}, which plays a crucial role in a number of places in this article.  To state an informal version, observe that in the setup of our cubical model Theorem \ref{thmi:main model}, each of the collapsed trees $\hT_U$ for $U \in \calU$ admits a decomposition $\hT_U = \hT^e_U \sqcup \hT^c_U$ along the marked points $\hT^c_U$ which encode the leaves and cluster (nesting) data (see Figure \ref{fig:Dur_cube}).  The subtrees $\hT^e_U$ in the complements of these marked points encode the information for which $U$ is responsible in equation \eqref{eq:Q DF intro}.  The idea is that we can delete (or insert) small subtrees into these complementary subtrees, and produce a new hierarchical family of trees whose cubical realization is $0$-median quasi-isometric to the original.  See Figure \ref{fig:tree_trim} below for a heuristic picture.

The following is an informal version of Theorem \ref{thm:tree trimming}:

\vspace{.1in}

\begin{center}
\framebox{
\begin{minipage}{.96\textwidth}

\begin{thmi}[Tree trimming]\label{thmi:tree trimming}
Let $\calQ$ be a cubical model arising from a hierarchical family of trees $\{\hT_U\}_{U \in \calU}$ associated to a finite set $F \cup \Lambda$ of interior points, boundary points, and hierarchy rays in an HHS $\calX$.  Then there exists a $B_{tt}$ depending only on the constants in the construction of $\calQ$, so that if $B< B_{tt}$ and $\Delta_U:\hT_U \to \hT'_U$ collapses at most $B$-many subtrees of diameter at most $B$ in each complementary component of $\hT^e_U$ for each $U \in \calU$, then 
\begin{enumerate}
\item Then the collapsing maps $\Delta_U:\hT_U \to \hT'_U$ combine to produce a new hierarchical family of tree $\{\hT'_U\}_{U \in \calU}$ over the same index set, and
\item The induced map $\Delta: \calQ \to \calQ'$ on $0$-consistent sets is a $0$-median $(L, L)$-quasi-isometry, where $L = L(\calX, |F\cup \Lambda|, B)>0$.
\end{enumerate}
In particular, $\calQ'$ is a new cubical model for $\hull_{\calX}(F \cup \Lambda)$.

\end{thmi}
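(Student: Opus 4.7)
The plan is to verify the theorem in four steps: (i) confirm that $\{\hT'_U\}_{U \in \calU}$ satisfies the axioms of a hierarchical family of trees, (ii) show $\Delta$ is well-defined as a map $\calQ \to \calQ'$ and is coordinate-wise $1$-Lipschitz, (iii) establish the lower bound via a careful accounting of the length collapsed per tree, and (iv) finish with coarse surjectivity and the median property. The first step is essentially bookkeeping: since each $\Delta_U$ is a tree collapsing that only touches subtrees in the complementary components of $\hT^e_U$, the marked-point set $\hT^c_U$ (encoding leaves and $\rho$-clusters) injects into $\hT'_U$, so the orthogonality, nesting, and transversality relations and the $\hat\rho$-data all transfer verbatim. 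Composing projections with $\Delta_U$ gives the new projection maps, and the consistency equations of Definition \ref{defn:HFT} are preserved because the combinatorial structure of marked points and the order in which they appear along tree geodesics are preserved under $\Delta_U$.

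For step (ii), define $\Delta$ coordinate-wise on $\prod_U \hT_U$ and restrict to $\calQ$. To see $\Delta(\calQ) \subset \calQ'$, note that $0$-consistency in $\calQ$ is determined by relative positions of coordinates with respect to $\hat\rho$-sets, and these positions are preserved by $\Delta_U$ since we do not collapse through marked points. The upper bound is immediate: each $\Delta_U$ is distance-non-increasing on $\hT_U$, so $d_{\calQ'}(\Delta q_1,\Delta q_2) \leq d_{\calQ}(q_1,q_2)$.

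Step (iii) is the main obstacle. In each tree $\hT_U$, the loss $d_{\hT_U}(x,y)-d_{\hT'_U}(\Delta_U x,\Delta_U y)$ equals the total length of collapsed subtrees met by the $[x,y]$-geodesic; by hypothesis this is at most $B^2(m_U+1)$, where $m_U$ is the number of marked points of $\hT^c_U$ crossed along the geodesic. The key point is that each such crossed marked point in $\hT_U$ either is a leaf (giving the $+1$ per endpoint) or is a $\hat\rho$-cluster representing a nested relevant domain $V \nest U$ that separates $q_1$ from $q_2$, and hence contributes at least the cluster-separation threshold to $d_{\hT_V}(q_1,q_2)$. Summing over $U \in \calU$, the Bounding Large Containers machinery of Proposition \ref{prop:bounding containers} (combined with Strong Passing-Up, Proposition \ref{propi:SPU}) yields a constant $C_1 = C_1(\calX,|F\cup\Lambda|)$ such that $\sum_U m_U \leq C_1 \cdot d_{\calQ}(q_1,q_2) + C_1$. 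Therefore the total loss is at most $B^2 C_1 \cdot d_\calQ(q_1,q_2) + B^2 C_1$, and choosing $B_{tt}$ so that $B_{tt}^2 C_1 \leq 1/2$ gives a bi-Lipschitz lower bound $d_{\calQ'}(\Delta q_1,\Delta q_2) \geq \tfrac{1}{2} d_{\calQ}(q_1,q_2) - L'$, whence $\Delta$ is an $(L,L)$-quasi-isometric embedding for an appropriate $L = L(\calX,|F\cup\Lambda|,B)$.

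Finally, for step (iv), coarse surjectivity follows because for any $q' \in \calQ'$, one may lift each coordinate to some $x_U \in \Delta_U^{-1}(q'_U)$, which is either a point or a subtree of diameter $\leq B$; selecting canonical representatives and invoking the realization-style characterization of $\calQ$ (using that $\calQ$ is the cubical hull of $\widehat{F}\cup\widehat{\Lambda}$, Theorem \ref{thmi:main model}(3)) produces a genuine preimage in $\calQ$ within uniform distance of any lift. The $0$-median property is immediate: the median in $\calQ$ is computed coordinate-wise in each tree, each $\Delta_U$ is a tree collapsing and hence median-preserving, so $\Delta$ intertwines the medians exactly. This gives the required $0$-median $(L,L)$-quasi-isometry and, together with step (i), shows $\calQ'$ is a cubical model for $\hull_\calX(F\cup\Lambda)$.
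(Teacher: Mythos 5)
Your skeleton for the proof has the right ingredients (Lipschitz from above, attributing the loss to $\nest_{\calU}$-minimal domains, medians preserved coordinate-wise), but two steps fail as written.

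\textbf{The lower bound (step iii).} You deduce from Proposition~\ref{prop:bounding containers} and Strong Passing-Up that $\sum_U m_U \leq C_1 d_{\calQ} + C_1$ with $C_1 = C_1(\calX,|F\cup\Lambda|)$, and then propose to choose $B_{tt}$ small enough that $B_{tt}^2 C_1 \leq \tfrac12$. But there is no reason for $C_1$ to be small; in fact if it is $\geq 1/2$, your $B_{tt}$ would have to be $<1$ and the theorem becomes vacuous. The mechanism that actually makes this work is the opposite one: each non-sporadic cluster crossed on $[\hx_U,\hy_U]$ is labeled by a $\nest_{\calU}$-minimal bipartite domain $V$ for which $\hx_V,\hy_V$ sit at opposite ends of the interval $\hT_V$, so $d_{\hT_V}(\hx_V,\hy_V)\succ K$ by item (6) of Lemma~\ref{lem:collapsed tree control}. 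Each such $V$ nests into only boundedly-many $U\in\calU$ by the Covering Lemma~\ref{lem:covering}, so these minimal domains can absorb the lost length with a multiplicative overhead proportional to $B^2/K$; since $K$ can be taken large relative to $B$, the constant is controlled. Notice that this is why the paper's $B_{tt}$ depends on $K$, not just on $\calX$ and $|F\cup\Lambda|$: the stated $B_{tt} = B_{tt}(\mathfrak S,|F\cup\Lambda|,K)$ grows with $K$, whereas your argument produces a $B_{tt}$ that would shrink as $C_1$ grows. You also need to treat the sporadic clusters (marked-point clusters and internal clusters with no $10E'$-bipartite domains) separately, since these are bounded across all $\hT_U$ but do not carry a long-interval minimal domain; they only contribute an additive error.

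\textbf{Surjectivity (step iv).} This is the substantial part of the paper's proof and your sketch does not address the core difficulty: a coordinate-wise lift of a $0$-consistent tuple $\hy'\in\calQ'$ need not be $0$-consistent in $\calQ$, because $\Delta_U$ can identify several marked or cluster points (and, in the infinite case, collapse an end of $\hT_U$ to a cluster point). Deciding which preimage to select requires an inductive argument on the $\nest_{\calU}$-lattice and several consistency claims (the paper's Claims~\ref{claim:choose one}--\ref{claim:good choice}, which are exact analogues of the cluster-honing machinery of Section~\ref{sec:Q to Y}), plus a Helly-type argument for infinite clusters. ``Selecting canonical representatives'' does not resolve this. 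Moreover, your invocation of Theorem~\ref{thmi:main model}(3) is circular: coarse surjectivity of $\hPsi: H \to \calQ$ is proved in Section~\ref{sec:coarsely surjective} using Tree Trimming~\ref{thm:tree trimming}, so the cubical-hull property cannot be assumed here. Finally, you should also verify that $\Delta$ preserves canonicality and non-canonicality (Definition~\ref{defn:Q consistent}), which has some content when $\Lambda\neq\emptyset$: collapsing can turn a boundary point of $\hT_U$ into an interior cluster point, and one must check that this does not turn a non-canonical tuple into a canonical one, which the paper handles via Remark~\ref{rem:collapsing rays}.
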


\end{minipage}
}
\end{center}
\vspace{.1in}

Notice that Theorem \ref{thmi:tree trimming} allows one to delete arbitrarily many---and perhaps infinitely-many---small pieces from the various trees $\hT_U$, as discussed in the example in Subsection \ref{subsec:ray example}, while only distorting the metric on the resulting model a bounded amount.  Since the hyperplanes in our cubical models come directly from the collapsed trees, this ability to modify the trees at a small scale provides us great flexibility in controlling the cubical geometry of the models.  For instance, Theorem \ref{thmi:curve graph} is a direct application of this philosophy.  We will discuss some of the details of these applications in the next subsection.

\subsection{Outline of the paper and some proofs} \label{subsec:sketch}

Having given the broad strokes of the paper with some emphasis on various parts of the argument, we will now go into a bit more detail of how all of the pieces fit together.

In Section \ref{sec:background}, we give some background on HHSes, cube complexes, and (coarse) median spaces.  In Section \ref{sec:ray projections}, we state the main definitions of the objects we are modeling in this paper: hierarchy rays, boundary points, and their hierarchical hulls.  Some of this analysis is new, like the construction of hierarchy rays whose projections encode those of a basepoint and boundary point pair (Proposition \ref{prop:ray replace boundary}), and Definition \ref{defn:extended consistency} and Proposition \ref{prop:extended consistency}, which extends the hierarchical notion of consistency to rays and boundary points.

In Section \ref{sec:qpu}, we develop the hierarchical accounting techniques we discussed in Subsection \ref{subsec:PU intro} above.  As discussed there, this is where we prove the Strong Passing-up Proposition \ref{prop:SPU}, which is useful for understanding how domain data spreads out in the various hyperbolic spaces, as well as the Bounding Containers Proposition \ref{prop:bounding containers}, which is important for distance formula-type arguments.  In other words, the material here is foundational for the rest of the paper.

At this point, our hierarchical foundation is set, and in Section \ref{sec:constants}, we set some notation, constants, and basic assumptions that we will carry throughout the rest of the paper.

In Section \ref{sec:reduce to tree}, we begin the construction of the cubical models in earnest.  Given a finite set of interior points, boundary points, and hierarchy rays $F \cup \Lambda$, the starting point is encoding the information of the projection of $F \cup \Lambda$ to each relevant domain $\calC(U)$ for $U \in \calU$ and its accompanying $\rho$-data into a Gromov modeling tree $T_U$ adorned with analogous $\rho$-data; see Definition \ref{defn:tree projections} and Lemma \ref{lem:tree control}, as well as Definition \ref{defn:tree consistency} for the related notion of consistency.  This reduction is quite useful, as the product of trees $\prod_{U \in \calU} T_U$ is a natural landing spot both for the coarse information coming from the hierarchical hull $H = \hull_{\calX}(F \cup \Lambda)$, as well as the exact information coming from the eventual cubical model $\calQ$.

In Section \ref{sec:Q}, we explain the procedure for building the collapsed trees $\hT_U$ from the modeling trees $T_U$, by collapsing down nesting data.  Part of this analysis involves translating the coarse hierarchical information encoded into the $T_U$ into exact hierarchical information in the $\hT_U$, see Definition \ref{defn:Q project} and Lemma \ref{lem:collapsed tree control}.  Definition \ref{defn:Q consistent} gives the precise definition of $0$-consistency, thus of the model $\calQ$, and the map $\hPhi:H \to \calQ$.  This also involves developing a notion of canonicality, which is necessary for differentiating between interior points and points at infinity.

In Section \ref{sec:distance estimates}, we prove our distance comparison estimates between $H$ and $\calQ$ (Theorem \ref{thm:Q distance estimate}).  In particular, we prove that the distance formula sum in $H$ is coarsely the $\ell^1$-distance in $\calQ$.  This statement is useful for proving the upper-bound of the Distance Formula \ref{thm:DF}, as discussed in Subsection \ref{subsec:PU intro} above.  More importantly, assuming the distance formula, this proves that $\hPhi:H \to \calQ$ is a uniform quasi-isometric embedding.

In Section \ref{sec:Q to Y}, we build our map $\hO: \calQ \to H$, the main part of which is defining a coordinate-wise map $\Omega: \calQ \to \prod_{U \in \calU} T_U$.  This involves a detailed analysis because of all of the information that has been collapsed $T_U \to \hT_U$.  More specifically, given a canonical $0$-consistent tuple $\hx \in \calQ$, almost every coordinate $\hx_U$ coincides with a \emph{marked point}, which corresponds to a possibly arbitrarily-large (if not infinite-diameter) cluster $C \subset T_U$.  Deciding where to send $\hx_U$ in $C$ involves considering essentially all of the domains nesting into $U$ whose nesting data is contained in $C$.   See Subsection \ref{subsec:honing clusters} for a detailed description of the \emph{cluster honing} process.  As mentioned above, this part of the argument has a distinctly cubical flavor, as it depends on the ubiquity of the two-sided \emph{bipartite domains}, and proving their ubiquity involves Strong Passing-up \ref{prop:SPU}.

In Section \ref{sec:tree trimming}, we develop our tree-trimming techniques, proving the Tree Trimming Theorem \ref{thm:tree trimming} and the fact that each of the collapsed trees $\hT_U$ can be replaced by a simplicial tree, where the marked points are vertices (Corollary \ref{cor:simplicial structure}).  As discussed in Subsection \ref{subsec:PU intro}, Theorem \ref{thm:tree trimming} is a distance-formula type result, and the quasi-isometry statement depends directly on the Bounding Containers Proposition \ref{prop:bounding containers}, as well as an ``exact'' version of the cluster honing process from the above definition of $\hO:\calQ \to H$.

In Section \ref{sec:coarsely surjective}, we prove that $\hPhi:H \to \calQ$ is coarsely surjective, the argument for which uses Theorem \ref{thm:tree trimming} in a crucial way.  This involves proving Theorem \ref{thm:coarse inverse}, which  states that the composition $\hPsi \circ \hO:\calQ \to \calQ$ is coarsely constant, where $\hPsi:H \to \calQ$ and $\hO:\calQ \to H$ are the model maps.  This is the point in the article where the coarse nature of the hull meet most plainly with the exact nature of the model.  In the course of the proof, one needs to compare the coordinate tuples for a point $\hx \in \calQ$ and its image $\hy = \hPsi \circ \hO (\hx) \in \calQ$.  Using coarse techniques, one can show that all but boundedly-many of these coordinates $\hx_U, \hy_U$ are within bounded distance of each other (see Proposition \ref{prop:coarsely surjective, trees}).  However, one must show that all but boundedly-many coordinates are exactly the same, since $d_{\calQ}(\hx,\hy) = \sum_U d_{\hT_U}(\hx_U,\hy_U)$.  Tree Trimming \ref{thm:tree trimming} solves this problem by allowing us to trim away the (a priori infinitely-many) intervals connecting the distinct $\hx_U, \hy_U$ which are at bounded distance.  Since the resulting map on cubical models is a uniform quasi-isometry with the images of $\hx,\hy$ at bounded distance, it follows that $d_{\calQ}(\hx,\hy)$ was bounded after all; see Subsection \ref{subsec:proof of coarsely surjective}.

In Section \ref{sec:walls in Q}, we analyze the induced wall-space structure on $\calQ$.  In the interior point case, this involves moving to the exact setting of cube complexes built from hierarchical family of trees (Definition \ref{defn:HFT}), an axiomatization of the setup we extract from hierarchical hulls.  The main purpose of this section is to develop a description of the walls in $\calQ$, as well as how their half-spaces nest and intersect (Subsection \ref{subsec:walls and halfspaces}), as well as characterizing infinite descending chains (Proposition \ref{prop:DCC}).  This description is crucial for our proof in Section \ref{sec:Q is cubical}, in which we prove that $\calQ$ is isometric to its cubical dual, while also showing that non-canonical tuples in $\calQ$ (namely, those encoding points at infinity), are in bijection with the $0$-cubes at infinite in the cubical dual of $\calQ$.  It is also important for our analysis of the simplicial boundary of $\calQ$ in Section \ref{sec:boundary compare}.

In Section \ref{sec:medians}, we complete the proof of Theorem \ref{thmi:main model} by analyzing and synchronizing the (coarse) median structures of the various objects involved, in particular by showing that the maps $\hO:H \to \calQ$ and between $\calQ$ and its cubical dual are median-preserving.  We have isolated the content of this section mainly because the straightforward nature of this discussion helps make transparent how the various parts of the construction play together. 

In Section \ref{sec:HP and DF}, we pause to observe how to use the cubical geometry of the model of a pair of points to prove the existence of hierarchy paths (Proposition \ref{prop:cube paths hp}) and the upper-bound in the distance formula (Corollary \ref{cor:DF lower bound}).  For this, we give a direct proof that any combinatorial geodesic (ray) in any cubical model $\calQ$ is mapped to a hierarchy path (or ray) back in $\calX$.  We then observe that the length of such image of such a \emph{cubical path} (Definition \ref{defn:cubical path}) between a pair of point $x,y \in \calX$ is coarsely the distance formula sum by Theorem \ref{thm:Q distance estimate}.

In Section \ref{sec:boundary compare}, we prove our boundary isomorphism statement, Theorem \ref{thm:boundary talk}, which we discussed in Subsection \ref{subsec:infinity intro}.  This involves laying out some background material, including Hagen's simplicial boundary for cube complexes \cite{Hagen_simplicial}  and the simplicial structure on the HHS boundary.  The key step for the isomorphism is Lemma \ref{lem:minimal spread}, which uniquely identifies a $0$-simplex in the HHS boundary of the hull $H$ to any $0$-simplex (i.e., a minimal UBS) in the simplicial boundary of a model $\calQ$, which combines with Hagen's decomposition Theorem \ref{thm:minimal decomp} to provide a hierarchical representative in the general case, and thus a simplicial embedding $\partial_{\Delta} \calQ \to \partial H$.  Surjectivity requires extra work, and proceeds by showing that $\partial_{\Delta} \calQ$ is visible, i.e. every point admits a combinatorial ray representative, which, moreover, can be chosen to have whatever limiting hierarchical data is required (Lemma \ref{lem:boundary surj}).

Finally, in Section \ref{sec:sep hyp}, we explain how to modify the cubical model construction to encode curve graph distance in the hyperplanes of the model, as in Theorem \ref{thmi:curve graph}.  To modify the cubical construction, we take our inspiration from the surface world.  Two simple closed curves $\alpha, \beta$ \emph{fill} a compact surface $S$ if the complement of their union is a union of disks, and it is not hard to show that if $\alpha_1, \dots, \alpha_n$ are a pairwise filling collection of curves on $S$, then $\diam_{\calC(S)}(\alpha_1 \cup \cdots \cup \alpha_n) \asymp n$.  We introduce an analogous notion of filling for any pair of domains in an HHS (Definition \ref{defn:filling}), and observe that the $\rho$-sets of a filling collection of domains must spread out in the top-level hyperbolic space.  As an application using our Tree Trimming \ref{thm:tree trimming} techniques, we explain how to construct a cubical model for any pair of points so that two hyperplanes are $0$-separated if and only if their domain labels are filling.  Hence long $0$-separated chains produce large distances in the top-level hyperbolic space; see Subsection \ref{subsec:well-sep cube}.

\subsection{Acknowledgements}

This work was supported in part by NSF grant DMS-1906487.  I would also like to thank Cornell University for their hospitality during the academic year of 2021-22, during which some of this work was completed.  I am also indebted to the following people:

\begin{itemize}
\item I would like to thank Mark Hagen for fielding many of my questions about HHSes and cube complexes, and always being interested in hearing my ideas.  Mark is generous with his time and an invaluable resource to the geometric group theory community.  His encouragement was necessary for me to finish this paper.  
\item I would like to thank Abdul Zalloum for numerous conversations about HHSes, cube complexes, and especially infinite cubical models.  This paper was born in part out of the incompleteness of the limiting cubical models we developed in our joint work \cite{DZ22}, and I would have neither started nor finished this project without Abdul's interest and encouragement.
\item Thanks to Mark Hagen, Alessandro Sisto, and Abdul Zalloum for helpful comments on an earlier draft of this paper.  Thanks in particular for Sisto's question about the relationship between \cite{CRHK} and this paper, and Hagen for an illuminating subsequent discussion. 
\item Thanks to Daniel Groves, with whom we realized that a different construction of the cubical models would be useful while working on another project, and for preliminary discussions about my approach.
\item Thanks to Yair Minsky and Alessandro Sisto for our collaboration on \cite{DMS20}, where we developed the idea of clustering relative projections into subtrees.  In particular, the clarity that the decomposition of our stable trees provided in that project also helped me see that collapsing the cluster subtrees to points was the way to go.  I also thank both of them for interesting conversations about and their interest in this project.
\item Thanks to Johanna Mangahas for inviting me to give a seminar talk at Buffalo, and for her interest in and insightful questions about my alternative construction of the cubical models.
\item Thanks to Montse Casals-Ruiz, Ilya Kazachkov, Harry Petyt, and Davide Spriano, for making time to understand some of this project in more depth, and to the organizers of semester long program on Geometric Group Theory during the spring of 2023 at CRM in Montreal for providing a place for these conversations to happen.
\item Thanks to the UCR graduate students in my Fall 2022 topics course during which I presented part of this work, especially Jacob Garcia, William Sablan, Elliott Vest, and Sean Wakasa for their useful questions and interest in the work.
\item Thanks to many friends, colleagues, and collaborators with whom I have spoken about this project and related ideas over the past few years, including Matt Cordes, Ruth Charney, R\'{e}mi Coulon, Spencer Dowdall, Ben Dozier, Talia Fern\'{o}s, Jonah Gaster, Anna Gilbert, Thomas Koberda, Chris Leininger, Marissa Loving, Katie Mann, Jason Manning, Chandrika Sadanand, Jenya Sapir, Rishi Sonthalia, and Sam Taylor.
\item Thanks to all of the participants at the Riverside Workshop in Geometric Group Theory 2023, for their questions and interest in my minicourse which involved part of this work.
\end{itemize}

\section{Background on hierarchically hyperbolic spaces} \label{sec:background}

In this section, we give the basic definitions and properties of hierarchically hyperbolic spaces.  Most of the material is standard here, with \cite{HHS_2} the main reference.  See also Sisto's expository article \cite{Sisto_HHS}.

\subsection{Coarse (in)equality notation}

Many of the calculations in this paper are estimates which only hold up to some multiplicative and additive constants, which can depend on various background objects or quantities.  In at attempt to limit notation burden, we will write
\begin{itemize}
\item $A \prec B$ when there exist constants $K\geq 1, C \geq 0$ so that $A \leq K \cdot B + C$.
\item We define $A \succ B$ similarly, and set $A \asymp B$ to mean $A \prec B$ and $A \succ B$.
\item When we say ``the constants in $A \prec B$ depend on'' certain quantities or objects, we mean that the constants $K,C$ can be written as functions of those quantities or quantities related to those objects.
\item In particular, when we say that a coarse (in)equality depends only on an HHS $\calX$, this means that they can be writing as functions of the fundamental constants associated to the HHS, which we lay out in the next subsection.  See also Section \ref{sec:constants} for a simplification of the constants associated to a fixed HHS.
\end{itemize}

\subsection{HHS axioms and key lemmas}

Hierarchically hyperbolic spaces were introduced in \cite{HHS_1, HHS_2}.  Since we need several of them, we will include the list of axioms, and some of their important consequences.  See \cite{CRHK} for a thorough introduction and \cite{Sisto_HHS} for a more philosophical survey.

We begin with the basic definition from \cite{HHS_2}:

\begin{definition}\label{defn:HHS}
The $q$--quasigeodesic space  $(\calX,\dist_{\calX})$ is a \textbf{\em hierarchically hyperbolic space} if there exists $\delta\geq0$, an index set $\mathfrak S$, and a set $\{\calC(W):W\in\mathfrak S\}$ of $\delta$--hyperbolic spaces $(\calC(U),\dist_U)$,  such that the following conditions are satisfied:\begin{enumerate}
\item\textbf{(Projections.)}\label{item:dfs_curve_complexes} There is
a set $\{\pi_W: \calX\rightarrow2^{\calC(W)}\mid W\in\mathfrak S\}$
of \textbf{\em projections} sending points in $\calX$ to sets of diameter
bounded by some $\xi\geq0$ in the various $\calC(W)\in\mathfrak S$.
Moreover, there exists $K$ so that for all $W\in\mathfrak S$, the coarse map $\pi_W$ is $(K,K)$--coarsely
Lipschitz and $\pi_W(\calX)$ is $K$--quasiconvex in $\calC(W)$.

 \item \textbf{(Nesting.)} \label{item:dfs_nesting} $\mathfrak S$ is
 equipped with a partial order $\nest$, and either $\mathfrak
 S=\emptyset$ or $\mathfrak S$ contains a unique $\nest$--maximal
 element; when $V\nest W$, we say $V$ is \textbf{\em nested} in $W$.  (We
 emphasize that $W\nest W$ for all $W\in\mathfrak S$.)  For each
 $W\in\mathfrak S$, we denote by $\mathfrak S_W$ the set of
 $V\in\mathfrak S$ such that $V\nest W$.  Moreover, for all $V,W\in\mathfrak S$
 with $V\nest W$ there is a specified subset
 $\rho^V_W\subset\calC(W)$ with $\diam_{\calC(W)}(\rho^V_W)\leq\xi$.
 There is also a \textbf{\em projection} $\rho^W_V: \calC(W)\rightarrow 2^{\calC(V)}$.  (The similarity in 
notation is justified by viewing $\rho^V_W$ as a coarsely constant map $\calC(V)\rightarrow 2^{\calC(W)}$.)
 
 \item \textbf{(Orthogonality.)} 
 \label{item:dfs_orthogonal} $\mathfrak S$ has a symmetric and
 anti-reflexive relation called \textbf{\em orthogonality}: we write $V\perp W$ when $V,W$ are orthogonal.  Also, whenever $V\nest W$ and $W\perp
 U$, we require that $V\perp U$.  We require that for each
 $T\in\mathfrak S$ and each $U\in\mathfrak S_T$ for which
 $\{V\in\mathfrak S_T\mid V\perp U\}\neq\emptyset$, there exists $W\in
 \mathfrak S_T-\{T\}$, so that whenever $V\perp U$ and $V\nest T$, we
 have $V\nest W$.  Finally, if $V\perp W$, then $V,W$ are not
 $\nest$--comparable.
 
 \item \textbf{(Transversality and consistency.)}
 \label{item:dfs_transversal} If $V,W\in\mathfrak S$ are not
 orthogonal and neither is nested in the other, then we say $V,W$ are
 \textbf{\em transverse}, denoted $V\pitchfork W$.  There exists
 $\kappa_0\geq 0$ such that if $V\pitchfork W$, then there are
  sets $\rho^V_W\subseteq\calC(W)$ and
 $\rho^W_V\subseteq\calC(V)$ each of diameter at most $\xi$ and 
 satisfying: 
 \begin{equation}\label{eq:transverse consistency}
   \min\left\{\dist_{
 W}(\pi_W(x),\rho^V_W),\dist_{
 V}(\pi_V(x),\rho^W_V)\right\}\leq\kappa_0
 \end{equation}
 for all $x\in\calX$.

 For $V,W\in\mathfrak S$ satisfying $V\nest W$ and for all
 $x\in\calX$, we have: 
\begin{equation}\label{eq:nested consistency}
\min\left\{\dist_{ W}(\pi_W(x),\rho^V_W),\diam_{\calC(V)}(\pi_V(x)\cup\rho^W_V(\pi_W(x)))\right\}\leq\kappa_0. 
\end{equation}

 The preceding two inequalities are the \textbf{\em consistency inequalities} for points in $\calX$.
 
 Finally, if $U\nest V$, then $\dist_W(\rho^U_W,\rho^V_W)\leq\kappa_0$ whenever $W\in\mathfrak S$ satisfies either $V\nest W$ or $V\pitchfork W$ and $W,U$ not $\perp$.
 
 \item \textbf{(Finite complexity.)} \label{item:dfs_complexity} There exists $n\geq0$, the \textbf{\em complexity} of $\calX$ (with respect to $\mathfrak S$), so that any set of pairwise--$\nest$--comparable elements has cardinality at most $n$.
  
 \item \textbf{(Large links.)} \label{ax:LL} There
exist $\lambda\geq1$ and $E\geq\max\{\xi,\kappa_0\}$ such that the following holds.
Let $W\in\mathfrak S$ and let $x,x'\in\calX$.  Let
$N=\lambda\dist_{_W}(\pi_W(x),\pi_W(x'))+\lambda$.  Then there exists $\{T_i\}_{i=1,\dots,\lfloor
N\rfloor}\subseteq\mathfrak S_W-\{W\}$ such that for all $T\in\mathfrak
S_W-\{W\}$, either $T\in\mathfrak S_{T_i}$ for some $i$, or $\dist_{
T}(\pi_T(x),\pi_T(x'))<E$.  Also, $\dist_{
W}(\pi_W(x),\rho^{T_i}_W)\leq N$ for each $i$.

 \item \textbf{(Bounded geodesic image.)}
 \label{ax:BGIA} There exists $E>0$ such that 
 for all $W\in\mathfrak S$,
 all $V\in\mathfrak S_W-\{W\}$, and all geodesics $\gamma$ of
 $\calC(W)$, either $\diam_{\calC(V)}(\rho^W_V(\gamma))\leq E$ or
 $\gamma\cap \Neb_{E}(\rho^V_W)\neq\emptyset$.
 
 \item \textbf{(Partial Realization.)} \label{item:dfs_partial_realization} There exists a constant $\alpha$ with the following property. Let $\{V_j\}$ be a family of pairwise orthogonal elements of $\mathfrak S$, and let $p_j\in \pi_{V_j}(\calX)\subseteq \calC(V_j)$. Then there exists $x\in \calX$ so that:
 \begin{itemize}
 \item $\dist_{V_j}(x,p_j)\leq \alpha$ for all $j$,
 \item for each $j$ and 
 each $V\in\mathfrak S$ with $V_j\nest V$, we have 
 $\dist_{V}(x,\rho^{V_j}_V)\leq\alpha$, and
 \item if $W\pitchfork V_j$ for some $j$, then $\dist_W(x,\rho^{V_j}_W)\leq\alpha$.
 \end{itemize}

\item\textbf{(Uniqueness.)} For each $\kappa\geq 0$, there exists
$\theta_u=\theta_u(\kappa)$ such that if $x,y\in\calX$ and
$\dist_{\calX}(x,y)\geq\theta_u$, then there exists $V\in\mathfrak S$ such
that $\dist_V(x,y)\geq \kappa$.\label{item:dfs_uniqueness}
\end{enumerate}
 
We refer to $\mathfrak S$, together with the nesting
and orthogonality relations, and the projections as a \textbf{\em hierarchically hyperbolic structure} for the space $\calX$, and denote the structure by $(\calX, \mathfrak S)$.
\end{definition}

We usually suppress the projection
map $\pi_U$ for $U \in \mathfrak S$ when writing distances in $\calC(U)$, e.g., given $x,y\in\calX$ and
$p\in\calC(U)$  we write
$\dist_U(x,y)$ for $\diam_{\calC(U)}(\pi_U(x)\cup\pi_U(y))$ and $\dist_U(x,p)$ for
$\diam_{\calC(U)}(\pi_U(x)\cup\{p\})$.
Given $A\subset \calX$ and $U\in\mathfrak S$ 
we let $\pi_{U}(A)$ denote $\displaystyle \bigcup_{a\in A}\pi_{U}(a)$.

\subsection{Basic HHS facts}

We now collect some useful facts about HHSes from various places.

The following definition from \cite{Behr06, BKMM, HHS_2}, will be important for us:

\begin{definition}[Consistent tuple]\label{defn:consistency}
Let $\kappa\geq 0$ and let $(b_U)\in\prod_{U\in\mathfrak S}2^{\calC(U)}$ be a tuple such that for each $U\in\mathfrak S$, 
the $U$--coordinate  $b_U$ has diameter $\leq\kappa$.  Then $(b_U)$ is \textbf{\em $\kappa$--consistent} if for all 
$V,W\in \mathfrak S$, we have $$\min\{\dist_V(b_V,\rho^W_V),\dist_W(b_W,\rho^V_W)\}\leq\kappa$$ whenever $V\pitchfork W$ and 
$$\min\{\dist_W(x,\rho^V_W),\diam_V(b_V\cup\rho^W_V)\}\leq\kappa$$ whenever $V\nest W$.
\begin{itemize}
\item We refer to the above inequalities as the \emph{consistency inequalities}.
\end{itemize}
\end{definition}

\medskip

The following is essentially immediate from Axiom \ref{item:dfs_transversal}:

\begin{lemma}\label{lem:base consistency}
There exists $\theta = \theta(\mathfrak S)>0$ so that if $x \in \calX$, then the tuple $(\pi_U(x)) \in \prod_{U \in \mathfrak S} \calC(U)$ is $\theta$-consistent.
\end{lemma}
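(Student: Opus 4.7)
The plan is to take $\theta = \max\{\kappa_0, \xi\}$, where $\kappa_0$ and $\xi$ are the constants from Axiom \ref{item:dfs_transversal} (Transversality and consistency) and Axiom \ref{item:dfs_curve_complexes} (Projections) respectively, and verify the two consistency inequalities in Definition \ref{defn:consistency} directly from the axioms. Both $\kappa_0$ and $\xi$ depend only on the HHS structure $\mathfrak{S}$, so $\theta$ will as well.

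First I would verify the diameter hypothesis: Definition \ref{defn:consistency} requires each coordinate $b_U = \pi_U(x)$ to have diameter at most $\theta$. Since the projections $\pi_U \colon \calX \to 2^{\calC(U)}$ in Axiom \ref{item:dfs_curve_complexes} send points to subsets of diameter at most $\xi$, this holds as soon as $\theta \geq \xi$.

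Next I would handle the two substantive inequalities. For any $V, W \in \mathfrak{S}$ with $V \pitchfork W$, inequality \eqref{eq:transverse consistency} of Axiom \ref{item:dfs_transversal} applied to the point $x$ is exactly
\[
\min\{\dist_V(\pi_V(x), \rho^W_V),\ \dist_W(\pi_W(x), \rho^V_W)\} \leq \kappa_0 \leq \theta,
\]
which is the transverse consistency inequality for the tuple $(\pi_U(x))$. Similarly, for $V \nest W$, inequality \eqref{eq:nested consistency} gives
\[
\min\{\dist_W(\pi_W(x), \rho^V_W),\ \diam_{\calC(V)}(\pi_V(x) \cup \rho^W_V(\pi_W(x)))\} \leq \kappa_0 \leq \theta,
\]
which is the nesting consistency inequality (interpreting $\rho^W_V$ in the definition of a consistent tuple as $\rho^W_V(b_W) = \rho^W_V(\pi_W(x))$, per the convention in Axiom \ref{item:dfs_nesting}).

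There is no real obstacle here — the lemma is a bookkeeping statement recording that the consistency inequalities built into the HHS axioms for a point $x \in \calX$ are precisely the conditions used to define a consistent tuple. The only mild subtlety is the dual role of $\rho^W_V$ (as a set for transverse pairs, as a coarsely Lipschitz map for nested pairs), which is resolved by reading the nested term in Definition \ref{defn:consistency} via the map notation, matching \eqref{eq:nested consistency} exactly.
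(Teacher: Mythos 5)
Your proof is correct and matches the paper's intent: the paper states this lemma without proof, remarking only that it is ``essentially immediate from Axiom \ref{item:dfs_transversal},'' which is precisely the bookkeeping you carry out. Your inclusion of the diameter bound via $\xi$ from Axiom \ref{item:dfs_curve_complexes} is a nice touch of completeness that the paper leaves implicit.
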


The following Realization Theorem says that any tuple satisfying these inequalities can be coarsely realized as a point in $\calX$.  It is \cite[Theorem 4.3]{BKMM} for mapping class groups, \cite[Theorem 3.2]{EMR_rank} for Teichm\"uller space with the Teichm\"uller metric, and \cite[Theorem 3.1]{HHS_2} for the general HHS case:

\begin{theorem}[Realization]\label{thm:realization}
For each $\kappa \geq 1$ there exists $\theta_e, \theta_u \geq 0$ such that the following holds.  Let $\mathbf{b} = (b_U)_{U \in \mathfrak S} \in \prod_{U \in \mathfrak S} 2^{\calC(U)}$ be $\kappa$-consistent.

Then there exists $x \in X$ so that $d_U(b_U, \pi_W(x)) \leq \theta_e$ for all $U \in \mathfrak S$.  Moreover, $x$ is \emph{coarsely unique} in the sense that 
$$\diam_{\calX} \{x \in \calX| d_W(b_W, \pi_W(x)) \leq \theta_e \textrm{ for all } U \in \mathfrak S\} < \theta_u.$$
\end{theorem}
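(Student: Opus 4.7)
My plan is to separate uniqueness from existence. For \emph{uniqueness}, suppose $x, x' \in \calX$ both satisfy $d_U(b_U, \pi_U(\cdot)) \leq \theta_e$ for every $U \in \mathfrak{S}$. Since $\diam_{\calC(U)}(b_U) \leq \xi$, the triangle inequality yields $d_U(x, x') \leq 2\theta_e + \xi$ for every $U$. The Uniqueness Axiom applied with threshold $2\theta_e + \xi$ then produces a constant $\theta_u$ bounding $d_{\calX}(x, x')$, which gives the coarse-uniqueness conclusion. For \emph{existence}, I would induct on the complexity $n$ of $(\calX, \mathfrak{S})$. The base case $n = 1$ is handled by Partial Realization applied to the singleton family $\{S\}$ with $p_S$ any point of $b_S$, yielding $x$ with $d_S(x, b_S) \leq \alpha$, so $\theta_e = \alpha$ suffices.

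For the inductive step, let $S$ be the $\nest$-maximal element and use Partial Realization to obtain an initial $y_0 \in \calX$ with $d_S(y_0, b_S) \leq \alpha$. The key structural claim is that the set of ``bad'' domains
$\mathfrak{B} = \{U \in \mathfrak{S} : d_U(y_0, b_U) > \theta_0\}$
(for a sufficiently large threshold $\theta_0$ depending on $\kappa, \kappa_0, \alpha$) is contained in $\mathfrak{S}_S \setminus \{S\}$. Indeed, if $U \pitchfork S$ or $S \nest U$ with $U \neq S$, then the transverse/nested consistency inequalities applied both to the point $y_0$ and to the tuple $\mathbf{b}$, combined with $d_S(y_0, b_S) \leq \alpha$, force $d_U(y_0, b_U)$ to be bounded by a function of the HHS constants. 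The Large Links Axiom (with $W = S$) then groups the bad domains into a bounded collection $V_1, \ldots, V_k$ of proper containers nested in $S$, each of strictly smaller complexity.

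Apply the inductive hypothesis inside each sub-HHS on $\mathfrak{S}_{V_i}$ to realize the restricted tuple, and patch the local realizations into $y_0$ by moving along the associated product regions (constructed from Partial Realization applied to orthogonal complements of the $V_i$). Bounded Geodesic Image ensures these motions do not disturb the $S$-coordinate beyond a controlled error, since the relevant geodesic segments in $\calC(S)$ stay away from $\rho^{V_i}_S$ outside a bounded portion; orthogonality ensures that the adjustments for different $V_i$ interact only boundedly. Iterating at most $n$ times yields the required $x$, with $\theta_e$ a function only of $\kappa$ and the HHS constants $\xi, \kappa_0, \alpha, E, \lambda$. The main obstacle will be the bookkeeping of constants across the induction: at each of the at-most-$n$ layers the acceptable threshold must be enlarged enough to absorb the error introduced by the previous patching step, while the final $\theta_e$ must remain bounded independently of the particular tuple $\mathbf{b}$. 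Keeping the patching adjustments inside pairwise-compatible product regions so that they do not undo one another is the delicate part.
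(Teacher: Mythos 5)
The paper does not prove Theorem~\ref{thm:realization}; it quotes it as an external result, citing \cite{HHS_2} (Theorem~3.1), with the mapping class group and Teichm\"uller cases traced to \cite{BKMM} and \cite{EMR_rank}. So there is no in-paper proof to compare against, only the proof in the cited reference.

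Measured against that source, your sketch captures the right skeleton: uniqueness from the Uniqueness Axiom exactly as you describe; existence by induction on complexity with Partial Realization both as the base case and to produce the initial top-level realization $y_0$; a consistency argument confining the ``bad'' domains to $\mathfrak{S}_S \setminus \{S\}$; grouping into boundedly many smaller-complexity containers; and patching through product regions, with BGI controlling the leakage into the $S$-coordinate. Your closing remark that the delicate step is keeping the corrections inside pairwise-compatible product regions and tracking the constants across at most $n$ layers is exactly where the real work lives.

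One step needs more care than you allow it. You invoke the Large Links Axiom with $W = S$ to group the bad domains into containers $V_1, \ldots, V_k$, but that axiom as stated takes two \emph{points} $x, x' \in \calX$, not an abstract $\kappa$-consistent tuple $\mathbf{b}$, and you only have the single tentative realization $y_0$ in hand. Consistency of $\mathbf{b}$ does give you that for each bad domain $U$, $\rho^U_S$ lies in a bounded ball around $\pi_S(y_0)$, but that alone is not the hypothesis of Large Links. In \cite{HHS_2} this grouping is achieved through the consistency inequalities together with a passing-up/finite-complexity argument rather than a verbatim application of the axiom to $\mathbf{b}$; indeed, the present paper points out in Proposition~\ref{prop:strong ll} that Large Links and Passing-Up are logically equivalent, and it is the passing-up form that interfaces cleanly with abstract consistent tuples. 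Without spelling out what pair of points you would feed into Large Links, or substituting a passing-up argument, this part of the existence proof has a genuine gap. The rest of the outline is sound.
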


\begin{remark}
Notably, the proof of Theorem \ref{thm:realization} in \cite{HHS_2} does not involve the hyperbolicity of the associated spaces $\{\calC(U)\}_{U \in \mathfrak S}$.  In this sense, realization is a purely hierarchical fact which plays a similar role to the existence of a coarse median in Bowditch's analysis \cite{Bowditch_hulls}. 
\end{remark}

Perhaps the main technique in HHSes is taking an object in the ambient space $\calX$ and projecting it to all of the hyperbolic spaces, doing hyperbolic geometry, and then reassembling the pieces via hierarchical tools like the Realization Theorem \ref{thm:realization}.  Toward that end, and recognizing that HHSes are fundamentally coarse spaces, we will usually want to restrict ourselves to the \emph{relevant} collection of hyperbolic spaces to which an object has a sufficiently large projection.

\begin{definition}[Relevant domains] \label{defn:relevant}
Given a subset $A \subset \calX$ and $K > 0$, we say that $U \in \mathfrak S$ is $K$-\emph{relevant} for $A$ if $\diam_U(A) \geq K$.  We denote the set of $K$-relevant domains for $A$ by $\Rel_K(A)$.
\end{definition}

The following Passing-Up lemma is a foundational fact about HHSes.  Indeed, we will develop multiple more sophisticated version of it in Section \ref{sec:qpu} below, and, in Proposition \ref{prop:strong ll}, prove that it is equivalent to the Large Links Axiom \ref{ax:LL}.

Roughly speaking, the passing-up lemma says that if $x,y \in \calX$ have a large number of lower complexity relevant domains, then there must be some higher complexity relevant domain containing at least one of them, thereby allowing us to ``pass up'' the $\nest$-lattice. 

\begin{lemma}[Passing-up]\label{lem:passing-up}
Let $\calX$ be an HHS with constant $E$.  For every $D>0$ there is an integer $P=P(D)>0$ such that if $V \in \mathfrak S$ and $x,y \in \calX$ satisfy $d_{U_i}(x,y)>E$ for a collection of domains $\{U_i\}_{i=1}^{P}$ with $U_i \in \mathfrak S_V$, then there exists $W \in \mathfrak S_V$ with $U_i \sqsubsetneq W$ for some $i$ such that $d_W(x,y)>D.$
\end{lemma}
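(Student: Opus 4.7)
The plan is to prove this by induction on the complexity $n$ of the domain $V$, using the Large Links Axiom \ref{ax:LL} as the main engine to produce the required $W$. The rough idea is: if $V$ itself already sees a large projection, then $W = V$ works; otherwise Large Links forces the many $U_i$ to cluster inside a bounded number of proper sub-containers, so pigeonhole produces one sub-container that houses many $U_i$, and we either take it as $W$ or recurse.

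For the base case, when $V$ is $\nest$-minimal we have $\mathfrak S_V = \{V\}$, so the hypothesis ``there exist distinct $U_i \sqsubsetneq V$'' is vacuous once $P \geq 2$. For the inductive step, assume the result holds for all domains of complexity strictly less than $n$ with some threshold $P_0 = P_0(D)$, and let $V$ have complexity $n$. Set
\[ P \;:=\; \bigl(\lfloor \lambda D + \lambda \rfloor \bigr)\cdot P_0 \;+\; 2, \]
and split on whether $d_V(x,y) > D$ or $d_V(x,y) \leq D$. In the first case take $W = V$: since $P \geq 2$, at least one $U_i$ is strictly nested in $V$, so $W$ has the required property. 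In the second case, Large Links applied to $V$ and $x,y$ produces a set $\{T_j\}_{j=1}^{N}$ with $N \leq \lfloor \lambda D + \lambda \rfloor$ of elements of $\mathfrak S_V \setminus \{V\}$ such that every $T \in \mathfrak S_V \setminus \{V\}$ with $d_T(x,y) \geq E$ nests into some $T_j$. At most one $U_i$ can equal $V$, so at least $P - 1 = N \cdot P_0 + 1$ of the $U_i$ nest into some $T_j$; pigeonhole yields an index $j_0$ with at least $P_0 + 1$ of the $U_i$ nested in $T_{j_0}$. If $d_{T_{j_0}}(x,y) > D$, take $W = T_{j_0}$: since $P_0 + 1 \geq 2$, at least one of the nested $U_i$ is strictly below $T_{j_0}$, and $T_{j_0} \in \mathfrak S_V$. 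Otherwise $d_{T_{j_0}}(x,y) \leq D$, so the inductive hypothesis applied inside $T_{j_0}$ (whose complexity is strictly less than $n$ because $T_{j_0} \sqsubsetneq V$) with the $P_0$-many $U_i$'s strictly nested into $T_{j_0}$ produces some $W \in \mathfrak S_{T_{j_0}} \subseteq \mathfrak S_V$ with $U_i \sqsubsetneq W$ and $d_W(x,y) > D$.

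The main obstacle I anticipate is the bookkeeping around \emph{strict} nesting: the lemma demands $U_i \sqsubsetneq W$, not merely $U_i \nest W$, which is why one must add a small buffer (the ``$+2$'' above) when choosing $P$ so that some $U_i$ survives as a proper subdomain at every stage, and why the base case is best stated vacuously. A secondary check is that the resulting bound depends only on $D$ and on the fixed HHS data: unrolling the recursion $P(D,n) = \lfloor \lambda D + \lambda \rfloor \cdot P(D, n-1) + 2$ up to the maximum complexity of $\calX$ gives a closed-form bound depending only on $D$, $\lambda$, $E$ and the complexity, as required.
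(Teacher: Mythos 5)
Your proof is correct. Note that the paper itself does not prove this lemma—it is stated as a known result and cited (from \cite[Lemma 2.5]{HHS_1} and \cite[Lemma 3.2]{DHS_boundary}); the only new contribution of the present paper on this point is the \emph{converse} implication, namely that Passing-Up implies Large Links (Proposition~\ref{prop:strong ll}). Your argument is the standard proof of the forward direction, and it matches the one in the cited literature: induct on complexity, split on whether $d_V(x,y)$ is already large, apply Large Links to bound the number of containers by roughly $\lambda D + \lambda$, and pigeonhole to push down into a single sub-container. The bookkeeping for strict nesting via the ``$+2$'' buffer, and the observation that the recursion $P(D,n) = \lfloor \lambda D + \lambda\rfloor\, P(D,n-1) + 2$ terminates after boundedly many steps by finite complexity (Axiom~\ref{item:dfs_complexity}), are both handled correctly.
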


We observe the following corollary, which is embedded in the proof of Theorem \ref{thm:realization} in \cite[Theorem 3.1]{HHS_2}, but not explicitly stated.  We give a proof for completeness:

\begin{corollary}\label{cor:rel sets are finite}
For any $x,y \in \calX$, the set $\Rel_E(x,y)$ is finite.
\end{corollary}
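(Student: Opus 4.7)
The plan is to combine the Passing-Up Lemma \ref{lem:passing-up} with the coarse Lipschitz control on projections to get a uniform bound on $|\Rel_E(x,y)|$, rather than just finiteness. Since this is a short corollary, I expect the argument to be essentially two steps with no real obstacle.

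First, I would exploit Axiom \ref{item:dfs_curve_complexes}: each projection $\pi_U:\calX\to\calC(U)$ is $(K,K)$--coarsely Lipschitz for a uniform constant $K$, and $\calX$ is a $q$--quasigeodesic space, so $d_{\calX}(x,y)<\infty$. Hence there is a uniform ceiling
\[
D_0 := K\,d_{\calX}(x,y) + K
\]
with $d_U(x,y)\leq D_0$ for every $U\in\mathfrak S$.

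Second, I would apply Passing-Up with threshold $D_0+1$. Let $P=P(D_0+1)$ be the integer produced by Lemma \ref{lem:passing-up}, and let $S\in\mathfrak S$ denote the $\nest$--maximal element guaranteed by Axiom \ref{item:dfs_nesting}, so $\mathfrak S_S=\mathfrak S$. Suppose for contradiction that $|\Rel_E(x,y)|\geq P$; pick distinct $U_1,\dots,U_P\in\Rel_E(x,y)$, so that $d_{U_i}(x,y)\geq E$ for each $i$ and $U_i\in\mathfrak S_S$. Lemma \ref{lem:passing-up} applied with $V=S$ then produces $W\in\mathfrak S_S$ with $U_i\sqsubsetneq W$ for some $i$ and $d_W(x,y)>D_0+1$, contradicting the uniform ceiling from the first step. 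Hence $|\Rel_E(x,y)|<P$, which is finite (in fact bounded in terms of $d_{\calX}(x,y)$ and the constants of $\mathfrak S$).

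The only mildly subtle point is checking that the domains produced actually lie under a single $V$ so that Passing-Up can be invoked, but this is automatic since $\mathfrak S$ has a $\nest$--maximal element $S$ and every domain nests into $S$. Nothing in the argument uses the specific threshold $E$ beyond the statement of the corollary, so the same reasoning will in fact yield a uniform bound $|\Rel_K(x,y)|\leq P(K,d_{\calX}(x,y))$ for any fixed threshold $K$, which is really the statement one wants to quote in later sections.
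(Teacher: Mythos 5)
Your argument is correct and is essentially identical to the paper's proof: both bound $d_U(x,y)$ uniformly by the coarse Lipschitz constant applied to $d_{\calX}(x,y)$, then apply Passing-Up \ref{lem:passing-up} with that bound (plus a margin) as the threshold to get a contradiction. The extra remarks about the $\nest$-maximal element $S$ and the uniform bound $P$ are fine but not a different route.
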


\begin{proof}
Let $C$ be the coarse Lipschitz constant for all of the projections $\pi_U$ for $U \in \mathfrak S$, and set $D > C d_{\calX}(x,y) + C$.  If $\# \Rel_E(x,y) > P(D)$, then the Passing-Up Lemma \ref{lem:passing-up} provides a domain $W \in \mathfrak S$ with $d_W(x,y)>D$, which is impossible.
\end{proof}

Another fundamental fact about HHSes is the following distance formula, which originated in work of Masur-Minsky \cite{MM00}, and found various iterations elsewhere \cite{Brock_WP, Rafi_hyp, Dur16, KK_HHS} before being generalized to all HHSes in \cite{HHS_2}:

\begin{theorem}[Distance formula]\label{thm:DF}
There exists $K_0 = K_0(\mathfrak S)>0$ so that if $K>K_0$ and $x,y \in \calX$, then
$$d_{\calX}(x,y) \asymp \sum_{U \in \mathfrak S} [d_U(x,y)]_K$$where the coarse equality $\asymp$ depends only on $\mathfrak S$ and $K$.
\end{theorem}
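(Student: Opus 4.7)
The plan is to prove the two inequalities separately, since they require quite different arguments.

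For the lower bound $\sum_U [d_U(x,y)]_K \prec d_{\calX}(x,y)$, I would pick a quasi-geodesic $\gamma$ in $\calX$ from $x$ to $y$, subdivide it into unit-length sub-segments $\gamma = \gamma_0 \cup \gamma_1 \cup \cdots \cup \gamma_n$ with $n \asymp d_{\calX}(x,y)$, and use coarse Lipschitzness of the projections to conclude that $d_U(x,y) \leq \sum_j d_U(\gamma_j,\gamma_{j+1})$ with each summand of size $O(1)$. The key observation is that for each $j$, only a uniformly bounded number of domains $U$ satisfy $d_U(\gamma_j,\gamma_{j+1}) \geq E$; this follows from Corollary~\ref{cor:rel sets are finite} applied to the pair $(\gamma_j, \gamma_{j+1})$, whose $\calX$-distance is $1$, together with a quantitative inspection of the Passing-Up Lemma~\ref{lem:passing-up}. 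Rearranging the double sum gives a linear bound in $n \asymp d_{\calX}(x,y)$.

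For the upper bound $d_{\calX}(x,y) \prec \sum_U [d_U(x,y)]_K$, I would proceed by induction on the complexity $n$ of $(\calX,\mathfrak S)$. In the base case $n=1$ the index set is a singleton and the Uniqueness Axiom gives the bound directly. For the inductive step, let $S$ be the $\nest$-maximal element, let $\alpha$ be a $\calC(S)$-geodesic from $\pi_S(x)$ to $\pi_S(y)$, and apply the Large Links Axiom~\ref{ax:LL} to obtain $T_1,\dots,T_N \in \mathfrak S_S - \{S\}$ with $N \prec d_S(x,y)$ such that every $V \in \mathfrak S_S - \{S\}$ with $d_V(x,y) \geq E$ is nested into some $T_i$, with $\rho^{T_i}_S$ lying near $\alpha$. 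The plan is then to subdivide $\alpha$ into sub-arcs of bounded length whose endpoints ``avoid'' the various $\rho^{T_i}_S$ when possible, and to use Partial Realization~\ref{item:dfs_partial_realization} to produce intermediate points $x = z_0, z_1, \dots, z_M = y$ with $\pi_S(z_j)$ close to the $j$-th subdivision point. By Bounded Geodesic Image~\ref{ax:BGIA}, any $V \nest S$ not nested in a $T_i$ satisfies $d_V(z_j, z_{j+1}) < E$; and between consecutive $z_j$'s that straddle a $\rho^{T_i}_S$-region, the inductive hypothesis applied inside the standard product region for $T_i$ (which inherits an HHS structure of strictly smaller complexity) bounds $d_{\calX}(z_j, z_{j+1})$ by $\sum_{V \nest T_i} [d_V(x,y)]_K$. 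Summing and using the triangle inequality yields the upper bound.

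The hard part will be the accounting in the inductive step: a single domain $V$ could \emph{a priori} be nested in several $T_i$'s, and naively applying the inductive hypothesis inside each $T_i$ would double-count its contribution. Handling this requires exploiting the consistency inequalities \eqref{eq:nested consistency} and \eqref{eq:transverse consistency} to argue that once $\alpha$ has passed beyond $\rho^{T_i}_S$, the $T_i$-coordinate of every subsequent $z_j$ has coarsely stabilized at $\pi_{T_i}(y)$, so the block of the sum coming from $\mathfrak S_{T_i}$ is genuinely localized to one passage of $\alpha$ through $\rho^{T_i}_S$. A secondary (but standard) obstacle is verifying that each $\nest$-proper sub-hierarchy $\mathfrak S_{T_i}$ really does furnish a lower-complexity HHS structure on the associated standard product region, so that the inductive hypothesis applies; this is the classical content of the product region discussion in \cite{HHS_1, HHS_2}.
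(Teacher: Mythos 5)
The paper does not prove Theorem~\ref{thm:DF} in-place: it is stated as a citation to \cite{MM00, HHS_2}, and the one inequality re-established in the paper, $\sum_U [d_U(x,y)]_K \prec d_{\calX}(x,y)$, appears much later as Corollary~\ref{cor:DF lower bound}, as a corollary of the cubical model machinery (Theorem~\ref{thm:Q distance estimate} and Proposition~\ref{prop:cube paths hp}); the reverse inequality $d_{\calX}(x,y)\prec\sum_U [d_U(x,y)]_K$ is recovered from the existence of cubical hierarchy paths. You are therefore proposing a direct proof along classical lines, entirely independent of the cubical model. Your inductive argument for $d_{\calX}\prec\sum$ via Large Links, Partial Realization, and product-region induction is the standard route of \cite{HHS_2}, and you have correctly flagged the two usual subtleties (double-counting of nested domains across the $T_i$ and the inherited HHS structure on product regions).

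There is, however, a genuine gap in your argument for $\sum_U [d_U(x,y)]_K \prec d_{\calX}(x,y)$. Rearranging the double sum $\sum_U\sum_j d_U(\gamma_j,\gamma_{j+1})$ does not yield a linear bound in $n$: for a fixed segment index $j$, the inner sum over $U\in\Rel_K(x,y)$ has at most a uniformly bounded number $A$ of terms exceeding $E$, but it can have as many as $|\Rel_K(x,y)|$ positive terms \emph{below} $E$, since $\pi_U$ is coarsely Lipschitz but not a coarse contraction. Summing over $j$, these small terms contribute an error of order $n\cdot E\cdot |\Rel_K(x,y)|$. Since $|\Rel_K(x,y)|\le\frac{1}{K}\sum_U[d_U(x,y)]_K$, the resulting inequality has the shape $\sum \le C'd_{\calX} + \frac{EC''}{K}\,d_{\calX}\cdot\sum$, which does not close once $d_{\calX}(x,y)$ is large; the argument has become circular. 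The standard repair needs two ingredients your sketch omits: \emph{(a)} replace $\gamma$ by a hierarchy path, so that each $\pi_U\circ\gamma$ is an unparameterized quasi-geodesic and cannot backtrack, and \emph{(b)} an ``active interval'' accounting, using the transverse consistency inequality~\eqref{eq:transverse consistency} to show that the parameter intervals over which distinct transverse domains absorb progress are essentially disjoint, so that at any moment only boundedly many domains (controlled by the complexity and orthogonality width of $\mathfrak S$) are active. This makes the Fubini step linear in $n$, at the cost of establishing hierarchy paths first — which is exactly why \cite{HHS_2} proves $d_{\calX}\prec\sum$ (via hierarchy paths) before $\sum\prec d_{\calX}$. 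The present paper avoids the path-based accounting altogether by showing (Proposition~\ref{prop:bounding containers}, Theorem~\ref{thm:Q distance estimate}) that the distance-formula sum is, up to uniform constants, the $\ell^1$-distance in the exact model $\calQ$, so the counting happens hierarchically in the index set $\calU$ rather than along any path in $\calX$.
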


As mentioned in the introduction, we will give a new proof of the more difficult lower bound in the distance formula in Corollary \ref{cor:DF lower bound} below.

\subsection{Normalized HHSes}

Sometimes, one is handed an HHS whose hyperbolic spaces contain more information than necessary.  Following \cite[Proposition 1.16]{DHS_boundary}, we assume that every HHS $\calX$ that we worked satisfies the following assumption:

\begin{definition}[Normalized HHS]\label{defn:normalized}
An HHS $(\calX, \mathfrak S)$ is \emph{normalized} if there exists $C>0$ so that for all $U \in \mathfrak S$, we have $\calC(U) = \calN_C(\pi_U(\calX)))$.
\end{definition}

That is, we will always assume that our domain projections $\pi_U: \calX \to \calC(U)$ are uniformly coarsely surjective.

\subsection{Hierarchical quasi-convexity, gates, and induced HHS structures}

The following notion of quasi-convexity generalizes the hyperbolic setting \cite[Definition 5.1]{HHS_2}:

\begin{definition}\label{defn:hqc}
A subset $A \subset \calX$ is $k$-\emph{hierarchically quasiconvex} for some $k:[0,\infty) \to [0,\infty)$ if
\begin{enumerate}
\item $\pi_U(A)$ is $k(0)$-quasiconvex for each $U \in \mathfrak S$;
\item For all $\kappa\geq 0$ and $\kappa$-consistent tuples $(b_U) \subset \prod_{U \in \mathfrak S} \calC(U)$ with $b_U \subset \pi_U(A)$ for all $U \in \mathfrak S$, if $x \in \calX$ satisfies $d_U(x, b_U)<\theta_e(\kappa)$ (where $\theta_e(\kappa)$ is as in Theorem \ref{thm:realization}), then $d_{\calX}(x,A)<k(\kappa)$.
\end{enumerate}
\end{definition}

There is a natural notion of projection to a hierarchically quasi-convex subset, described in \cite{HHS_2} as follows:

\begin{lemma}\label{lem:gates exist}
Let $A \subset \calX$ be $k$-hierarchically quasi-convex.  For any $x \in \calX$, define a tuple $\gate(x) \in \prod_{U \in \mathfrak S} 2^{\calC(U)}$ by taking $\gate(x)_U$ to be closest point projection to $\pi_U(A)$.  Then $\gate(x)$ is $\alpha$-consistent, where $\alpha = \alpha(k(0), \calX)$.   
\end{lemma}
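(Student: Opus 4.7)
The plan is to verify the two consistency inequalities for the tuple $(\gate(x)_U)$ at an arbitrary pair $V, W \in \mathfrak S$ that are either transverse or nested. The two tools used throughout are: first, the fact that $(\pi_U(x))_{U \in \mathfrak S}$ is already $\kappa_0$-consistent (Lemma~\ref{lem:base consistency}), so it suffices to control the error introduced by replacing each coordinate with its closest-point projection onto the $k(0)$-quasi-convex set $\pi_U(A) \subset \calC(U)$; and second, the standard fellow-traveling property of closest-point projections in $\delta$-hyperbolic spaces, namely that for any $u,v \in \calC(W)$ with closest projections $g_u,g_v \in \pi_W(A)$, the geodesic $[u,v]$ stays uniformly close to the concatenation $[u,g_u] \cup [g_u,g_v] \cup [g_v,v]$. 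The hierarchical ingredient is the Bounded Geodesic Image Axiom~\ref{ax:BGIA}.

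For the transverse case $V \pitchfork W$, Lemma~\ref{lem:base consistency} lets me assume $d_W(\pi_W(x), \rho^V_W) \leq \kappa_0$. Split on whether $\pi_W(A)$ reaches within some threshold $K'$ of $\rho^V_W$ or not. In the first subcase, some point of $\pi_W(A)$ lies within $K' + \kappa_0$ of $\pi_W(x)$, so $d_W(\gate(x)_W, \pi_W(x)) \leq K' + \kappa_0 + k(0)$ and hence $d_W(\gate(x)_W, \rho^V_W) \leq K' + 2\kappa_0 + k(0)$. In the second, choosing $K' > \kappa_0 + k(0)$ forces every $a \in A$ to satisfy $d_W(\pi_W(a), \rho^V_W) > \kappa_0$, so the transverse consistency of $(\pi_U(a))$ gives $d_V(\pi_V(a), \rho^W_V) \leq \kappa_0$; thus $\pi_V(A)$, and hence $\gate(x)_V$, lies in the $(\kappa_0 + k(0))$-neighborhood of $\rho^W_V$.

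The nested case $V \nest W$ is the main difficulty. Assume $d_W(\gate(x)_W, \rho^V_W) > D$ for a threshold $D$ to be chosen, and bound $\diam_V(\gate(x)_V \cup \rho^W_V(\gate(x)_W))$. Again split on whether $\pi_W(A)$ stays $D_0$-far from $\rho^V_W$ (Case~I) or not (Case~II). In Case~I, nested consistency gives $d_V(\pi_V(a), \rho^W_V(\pi_W(a))) \leq \kappa_0$ for every $a \in A$; since the $k(0)$-quasi-convex set $\pi_W(A)$ avoids $\Neb_E(\rho^V_W)$ for $D_0 > E + k(0)$, the Bounded Geodesic Image Axiom makes $\rho^W_V$ coarsely constant on $\pi_W(A)$, so $\pi_V(A)$ has uniformly bounded diameter and sits near $\rho^W_V(\gate(x)_W)$. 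In Case~II, pick $a' \in A$ with $d_W(\pi_W(a'), \rho^V_W) \leq D_0 + k(0)$. By fellow-traveling, the geodesic $[\pi_W(x), \pi_W(a')]$ passes uniformly close to $\gate(x)_W$; together with $d_W(\gate(x)_W, \rho^V_W) > D$, a short triangle estimate shows the sub-segment $[\pi_W(x), \gate(x)_W]$ stays at distance $\geq D - 2D_0 - O(\delta + k(0))$ from $\rho^V_W$, which for $D$ large exceeds $E$, so BGI gives $d_V(\rho^W_V(\pi_W(x)), \rho^W_V(\gate(x)_W)) \leq E$. The same estimate forces $d_W(\pi_W(x), \rho^V_W) > \kappa_0$, so nested consistency of $(\pi_U(x))$ yields $d_V(\pi_V(x), \rho^W_V(\pi_W(x))) \leq \kappa_0$. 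Chaining, $\pi_V(x)$ sits close to $\rho^W_V(\gate(x)_W)$; picking $a_W \in A$ with $\pi_W(a_W)$ close to $\gate(x)_W$, nested consistency for $(\pi_U(a_W))$ (using $d_W(a_W, \rho^V_W)$ large) gives $\pi_V(a_W)$ close to $\rho^W_V(\gate(x)_W)$ as well. So $\pi_V(x)$ is close to $\pi_V(a_W) \in \pi_V(A)$, and the closest-point projection $\gate(x)_V$ therefore sits close to $\rho^W_V(\gate(x)_W)$, completing the estimate.

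I expect the main obstacle to be Case~II of the nested case: $\pi_W(A)$ can dip close to $\rho^V_W$ at a point $a'$ while $\gate(x)_W$ itself stays far, so one cannot apply BGI to arbitrary geodesics in $\pi_W(A)$. Fellow-traveling is what localizes $[\pi_W(x), \gate(x)_W]$ on the ``$\pi_W(x)$-side'' of $\pi_W(A)$ and keeps it away from the ``$a'$-side'' where $\rho^V_W$ lies, and bookkeeping the thresholds $D \gg D_0 \gg E, \kappa_0, k(0), \delta$ so that every application of BGI kicks in is the technical heart of the argument.
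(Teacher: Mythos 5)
The paper does not actually prove this lemma: after stating it, the text simply cites \cite{HHS_2} (this is Lemma 5.5 of Behrstock--Hagen--Sisto's HHS~II), so there is no in-paper proof to compare against. Judged on its own merits, your argument is correct and is essentially the standard one from that source. The structure is right: use base consistency of $(\pi_U(x))$ to reduce to controlling the error introduced by closest-point projection onto each $\pi_U(A)$; split on whether $\pi_W(A)$ approaches $\rho^V_W$; and in the hard nested subcase, use the ``concatenation is a quasigeodesic'' fact about projections to a quasiconvex set to keep a geodesic segment out of $\Neb_E(\rho^V_W)$ so that BGI applies. One small remark on Case~II: you can avoid the fellow-traveling step entirely by using the elementary identity $d(p,\pi_W(A)) \geq d(p,g)$ for $p \in [\pi_W(x), g]$ with $g = \gate(x)_W$ (this needs no $\delta$ and no $k(0)$: if $q \in \pi_W(A)$ is closest to $p$, then $d(x,p)+d(p,q) \geq d(x,q) \geq d(x,g) = d(x,p)+d(p,g)$), which together with $\rho^V_W$ being $(D_0+k(0))$-close to $\pi_W(A)$ immediately keeps the whole segment $[\pi_W(x),g]$ out of $\Neb_E(\rho^V_W)$ once $D$ is large. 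Your stated distance lower bound $D - 2D_0 - O(\delta + k(0))$ is not exactly what either route gives, but since the only requirement is that the lower bound exceed $E$ for $D$ large enough, this discrepancy is harmless. The remaining chain of estimates (BGI on $[\pi_W(x),g]$, nested consistency of $x$, nested consistency of an $a_W$ with $\pi_W(a_W)$ realizing $\gate(x)_W$, and a final closest-point projection in $\calC(V)$) is correct.
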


The map $\gate_A:\calX \to A$ is referred to as a \emph{gate map} in much of the HHS literature because these hierarchically quasi-convex sets are median convex \cite{HHS_quasi, RST18} and coarsely coincide with the median gate associated to the coarse median structure on $\calX$.

Another useful fact is that hierarchically quasi-convex subsets admit an induced HHS structure as follows (see \cite[Proposition 5.6]{HHS_2}):

\begin{lemma}\label{lem:hqc induce}
Let $A \subset \calX$ be a $k$-hierarchically quasi-convex subset.  Then considering $A$ with the metric restricted from $\calX$, the following give an HHS structure on $A$ called the \emph{induced HHS structure}:
\begin{itemize}
\item Fixing a $K = K(A, k,\calX)>0$ sufficiently large, the index set is $\Rel_K(A) = \{U \in \mathfrak S| \diam_U(A)>K\}$.
\item The sets of hyperbolic spaces is $\{\pi_U(A)| U \in \Rel_K(A)\}$.
\item All relations, projections, and relative projections are those coming from the ambient structure $(\calX, \mathfrak S)$, where the projections involve a further composition with the retraction $r_U:\calC(U) \to \pi_U(A)$ provided by quasi-convexity of $\pi_U(A)$ in $\calC(U)$. 
\end{itemize}
\end{lemma}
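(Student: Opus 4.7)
The plan is to verify each of the HHS axioms for the proposed structure $(A, \mathrm{Rel}_K(A))$, choosing $K$ large enough that several conflicting constraints are satisfied simultaneously. First, I pick $K$ larger than $E$ (the Large Links/BGI constant), larger than the distance formula threshold $K_0$, and larger than $\kappa_0$ and $\alpha$ appearing in the consistency and partial realization axioms, as well as any coarseness constants associated to $k(\cdot)$. With this choice, the inherited data works as follows: $\pi_U(A)$ is $k(0)$-quasiconvex in the $\delta$-hyperbolic space $\calC(U)$, hence it is itself $\delta'$-hyperbolic for some $\delta' = \delta'(\delta, k(0))$; the coarse Lipschitz retraction $r_U\from \calC(U)\to\pi_U(A)$ exists by quasiconvexity, and $\pi^A_U := r_U\circ\pi_U|_A$ is uniformly coarsely Lipschitz with quasiconvex (in fact, coarsely dense) image. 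The relations $\nest,\perp,\pitchfork$ and the relative projections $\rho^V_U$ (composed on the target with $r_U$ when $U\in\mathrm{Rel}_K(A)$) are pulled back from $\mathfrak S$.

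The routine axioms I would dispatch first: nesting has the unique $\nest$-maximal element $\{S\}$ replaced by picking the $\nest$-maximal element $S_A$ of $\mathrm{Rel}_K(A)$ (or showing the ambient $S$ is $K$-relevant for $A$ if $\diam_{\calC(S)}(A)$ is large enough; else one handles the case $A$ bounded separately). Orthogonality inherits the required ``container'' domain, shrunk to $\mathrm{Rel}_K(A)$ by replacing the ambient container $W$ with its gate of $\pi$-image into $A$. Finite complexity and bounded geodesic image are immediate restrictions of the ambient versions. The consistency inequalities \eqref{eq:transverse consistency} and \eqref{eq:nested consistency} for points in $A\subset\calX$ hold with constant $\kappa_0$ in $\calC(U)$; composing one coordinate with the uniformly coarsely Lipschitz retraction $r_U$ gives the same inequalities in $\pi_U(A)$ at the cost of a constant depending on the Lipschitz constant of $r_U$. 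Uniqueness in $A$ follows from ambient uniqueness: any $a,b\in A$ with $d_A(a,b)\geq \theta_u$ have $d_\calX(a,b)\geq \theta_u$, producing $U\in\mathfrak S$ with $d_U(a,b)\geq\kappa$, and if $\kappa>K$ then automatically $U\in\mathrm{Rel}_K(A)$; also $\pi^A_U$ and $\pi_U$ agree up to an additive constant on $A$ since $\pi_U(a)\in\pi_U(A)$ already.

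The two axioms requiring a genuine argument are \textbf{Large Links} and \textbf{Partial Realization}, and these are where hierarchical quasiconvexity enters essentially. For Large Links in $A$: given $a,a'\in A$ and $U\in\mathrm{Rel}_K(A)$, apply ambient Large Links in $\calX$ to get domains $T_1,\dots,T_{\lfloor N\rfloor}\in\mathfrak S_U$ capturing all $T\in\mathfrak S_U$ with $d_T(a,a')\geq E$; then observe that any $V\in\mathrm{Rel}_K(A)_U$ with $d_V(a,a')\geq K>E$ must lie in some $\mathfrak S_{T_i}$, and if $T_i\not\in\mathrm{Rel}_K(A)$ one can replace it by its $\nest$-maximal $K$-relevant descendant containing $V$ (using finite complexity to iterate). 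This produces the required list in $\mathrm{Rel}_K(A)_U$ of length at most $\lambda d_U(a,a')+\lambda$. For Partial Realization, given pairwise orthogonal $\{V_j\}\subset\mathrm{Rel}_K(A)$ and $p_j\in\pi_{V_j}(A)$, the ambient axiom produces $x\in\calX$ with $d_{V_j}(x,p_j)\leq\alpha$ and controlled coordinates on all $W$ transverse to or nested over some $V_j$. Assemble the tuple $(b_U)$ where $b_{V_j}=p_j$ and $b_U$ is chosen inside $\pi_U(A)$ near $\rho^{V_j}_U$ (or arbitrary in $\pi_U(A)$ elsewhere); this tuple is $\kappa$-consistent for a uniform $\kappa$, and condition (2) of Definition \ref{defn:hqc} produces $a\in A$ with $d_U(a,b_U)\leq\theta_e(\kappa)$ for all $U$, which is precisely partial realization in $A$ with constant $\alpha_A = \alpha+\theta_e(\kappa)+O(1)$.

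The main obstacle I anticipate is juggling the threshold $K$ and the induced coarseness constants coherently, particularly in the Large Links step where one must pass from ambient large-link domains to $K$-relevant ones without losing any domain $V\in\mathrm{Rel}_K(A)_U$. This is handled by iterating finite complexity to replace each $T_i$ by a $K$-relevant dominator, bounded in depth by the complexity $n$ and hence uniformly. After these verifications, the induced projections, relations, and $\rho$-sets assemble into an HHS structure on $A$ whose constants are controlled by $k(\cdot)$, $K$, and the ambient HHS constants of $(\calX,\mathfrak S)$.
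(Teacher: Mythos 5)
The paper does not prove this lemma; it cites \cite[Proposition~5.6]{HHS_2} and relies on that reference. Your outline follows the only sensible route—verify each axiom for $(A,\Rel_K(A))$ with $K$ large—and several of your steps (consistency inheritance via $r_U$, uniqueness, BGI, finite complexity, the Partial Realization argument via realization plus condition~(2) of hierarchical quasiconvexity) are correct and essentially what one has to do.

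However, two of your ``routine'' dispatches hide genuine gaps. First, the unique $\nest$-maximal element: you propose ``picking the $\nest$-maximal element $S_A$ of $\Rel_K(A)$,'' but $\Rel_K(A)$ need not have a unique $\nest$-maximal element. For $K$ large the $\nest$-maximal elements of $\Rel_K(A)$ are pairwise orthogonal (a transverse pair would violate consistency of points of $A$), so if $A$ lies deep in a product region there may be several; in that case the ambient $S$ need not be $K$-relevant for $A$, and none of the relevant domains dominates the others. The resolution is to allow the index set to include a $\nest$-maximal domain whose associated hyperbolic space $\pi_S(A)$ has \emph{bounded} diameter—this is permitted by Definition~\ref{defn:HHS}—rather than insisting the index set be exactly $\Rel_K(A)$. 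Second, the orthogonality container: your phrase ``replacing the ambient container $W$ with its gate of $\pi$-image into $A$'' does not parse—the container is a domain, not a point, and one cannot gate a domain. The substantive issue is that an ambient container $W$ may satisfy $\diam_W(A)\leq K$ (nested domains of $A$-relevant domains need not be $A$-relevant), so $W\notin\Rel_K(A)$. Again the fix is to include such containers with bounded associated hyperbolic spaces, or to argue more carefully about the poset structure of $\Rel_K(A)$; your sketch does not address this.

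Your Large Links argument has the right idea—recurse through non-$K$-relevant $T_i$ using ambient Large Links inside $T_i$, with depth controlled by finite complexity—but as stated the count is not obviously $\prec d_U(a,a')$: each level of recursion multiplies the number of domains by a factor depending on $K$ and $\lambda$, so one gets a bound like $N\cdot(\lambda K+\lambda)^n$; this \emph{is} of the required linear form in $d_U(a,a')$ with constants depending on $K,\lambda,n$, but you should make this explicit rather than gesturing at ``bounded in depth.'' These are exactly the technical points worked out carefully in \cite[Proposition~5.6]{HHS_2}, and a complete proof would need to address them rather than treat them as routine.
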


\subsection{Product regions}\label{subsec:product region}

Another important set of structures in an HHS are its product regions.  The construction below, from \cite{HHS_2}, generalizes Minsky's Product Regions Theorem \cite{Minsky_prod} for Teichm\"uller space with the Teichm\"uller metric.

Roughly, each domain $U \in \mathfrak S$ determines a product region in $\calX$, which at the largest scale, consists of two components corresponding to those domains nesting into $U$ and those orthogonal to $U$.  Each of these components is in fact an HHS itself, where the underlying hierarchy is induced from the respective subcollections of domains in $\mathfrak S$.  In more details:

Let $U \in \mathfrak S$.  Let $\mathfrak S_U = \{V \in \mathfrak S | V \sqsubseteq U\}$ and note that $U$ is the $\nest$-minimal domain in $\mathfrak S$ so that $W \nest U$ for all $W \in \mathfrak S_U$.  On the other hand, observe that the container part of the orthogonality Axiom \ref{item:dfs_orthogonal} provides a domain $A \in \mathfrak S$ so that $V \nest A$ for all $V \nest U$, though we need not have $A \perp U$.  Nonetheless, set $\mathfrak S^{\perp}_U = \{W \in \mathfrak S| W \perp U\} \cup \{A\}$.

Let $\bF_U$ denote the set of $\theta$-consistent partial tuples in $\prod_{V \in \mathfrak S_U} 2^{\calC(V)}$, and let $\bE_U$ denote the set of $\theta$-consistent partial tuples in $\prod_{W \in \mathfrak S^{\perp}_U} 2^{\calC(W)}$.  Here, consistent partial tuples are exactly those satisfying the Consistency Inequalities \ref{defn:consistency} over a given restricted domain set.  Moreover, both $\bF_U$ and $\bE_U$ are HHSes with their structures just the restrictions of the structure on $\calX$ to the relevant pieces (see \cite[Construction 5.10]{HHS_2}).

We can now define a coarsely well-defined map $P_U:\bF_U \times \bE_U \to \prod_{V \in \mathfrak S} 2^{\calC(V)}$ as follows.  To any pair of tuples $(x,y) \in \bF_U \times \bE_U$ and each $V \in \mathfrak S$, we define 
$$\pi_V(P_U(x,y)) = \begin{cases}
x_V & V \sqsubseteq U\\
y_V & V \perp U\\
\rho^U_V & V \pitchfork U\\
\rho^U_V & U \nest V.
\end{cases}$$

The image of $P_U$ is $\theta$-consistent \cite[Construction 5.10]{HHS_2}, and hence defines a coarsely well-defined map $\PP_U:\bF_U \times \bE_U \to \calX$.  We denote the image by $\PP_U$.  Note that $\PP_U$ contains many \emph{parallel copies} of both $\bF_U$ and $\bE_U$.  We also have the following (see \cite[Proposition 5.11 and Remark 5.14]{HHS_2}):

\begin{lemma}\label{lem:PR hqc}
For every $U \in \mathfrak S$, we have that $\PP_U$ and every parallel copy of $\bF_U$ and $\bE_U$ are uniformly hierarchically quasi-convex, with constant depending only on $\calX$.
\end{lemma}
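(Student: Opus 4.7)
The plan is to verify the two clauses of Definition \ref{defn:hqc} for each of the three families of subsets, treating them uniformly by analyzing how projections behave relative to the structural relations with $U$. I will first handle $\PP_U$ itself, and then observe that the arguments go through essentially unchanged for parallel copies of $\bF_U$ and $\bE_U$ with one side ``frozen.''

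For clause (1), the quasiconvexity of $\pi_V(\PP_U)$, the defining formula for $P_U$ suggests splitting into four cases according to the relation between $V$ and $U$. When $V\in\mathfrak S_U$, I would use Partial Realization \ref{item:dfs_partial_realization} inside the induced HHS structure on $\bF_U$, combined with normalization (Definition \ref{defn:normalized}), to show $\pi_V(\PP_U)$ is coarsely equal to all of $\calC(V)$; the symmetric argument handles $V\perp U$. When $V\pitchfork U$ or $U\nest V$, the defining formula forces $\pi_V(\PP_U)$ to lie within bounded Hausdorff distance of $\rho^U_V$, which has diameter at most $\xi$ by Axiom \ref{item:dfs_nesting} and Axiom \ref{item:dfs_transversal}. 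In both regimes, $\pi_V(\PP_U)$ is uniformly quasiconvex with constant depending only on $\calX$. For a parallel copy of $\bF_U$ determined by a fixed $y_0\in\bE_U$, projections for $V\in\mathfrak S_U$ remain coarsely surjective, while projections for $V\perp U$ are now bounded (fixed by $y_0$), and the transverse/nested cases are unchanged; the analogous statement for parallel copies of $\bE_U$ is symmetric.

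For clause (2), suppose $(b_V)_{V\in\mathfrak S}$ is a $\kappa$-consistent tuple with $b_V$ within the projection of the subset in question. The strategy is to produce a partial realization in $\PP_U$ and then invoke the coarse uniqueness part of Theorem \ref{thm:realization}. The restricted tuple $(b_V)_{V\in\mathfrak S_U}$ is $\kappa$-consistent (the consistency inequalities involving only domains in $\mathfrak S_U$ are a sub-collection of the original ones), so Realization \ref{thm:realization} applied to the induced HHS $\bF_U$ yields $x\in\bF_U$ with $d_V(x,b_V)\le\theta_e(\kappa)$ for all $V\sqsubseteq U$. Symmetrically, Realization on $\bE_U$ yields $y\in\bE_U$ handling the $V\perp U$ coordinates. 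Then $P_U(x,y)$ is a point of $\PP_U$ whose coordinates agree with $(b_V)$ up to a uniform constant on $\mathfrak S_U\cup\mathfrak S^\perp_U$ by construction and on the remaining transverse/nested domains because $b_V$ was already forced near $\rho^U_V$. By coarse uniqueness in Theorem \ref{thm:realization}, any $\calX$-realization of $(b_V)$ lies within a bounded distance of $P_U(x,y)$, hence of $\PP_U$. For a parallel copy of $\bF_U$ specified by fixed $y_0\in\bE_U$, the orthogonal coordinates of $(b_V)$ are automatically near those of $y_0$ by the quasiconvexity analysis above, so running the same argument with $y:=y_0$ shows the realization lies near the parallel copy; parallel copies of $\bE_U$ are handled dually.

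The main obstacle is bookkeeping: carefully tracking how the consistency constants propagate through the restriction to $\mathfrak S_U$ and $\mathfrak S^\perp_U$, how $\theta_e$ and coarse-uniqueness constants from two applications of Realization compose, and how the $\rho$-set consistency statement at the end of Axiom \ref{item:dfs_transversal} (comparing $\rho^U_V$ to $\rho^{U'}_V$ for nested $U'\nest U$) feeds into the transverse/nested cases. None of this is conceptually deep, but each constant must be named as a function of $\calX$ alone so that the final hierarchical quasi-convexity gauge $k$ depends only on $\calX$ and not on $U$, which is exactly the uniformity asserted in the lemma.
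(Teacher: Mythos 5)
The paper does not include its own proof of this lemma; it cites \cite[Proposition 5.11 and Remark 5.14]{HHS_2}, and your argument is a correct reconstruction of the standard proof given there: split the domains by their relation to $U$ for clause~(1), then use Realization in each factor $\bF_U$, $\bE_U$ together with coarse uniqueness of the ambient Realization Theorem for clause~(2), and freeze one coordinate to handle parallel copies. One small simplification worth noting: for $V\in\mathfrak S_U$ you do not actually need coarse surjectivity of $\pi_V$ onto $\calC(V)$ via Partial Realization and normalization --- since $\pi_V(\PP_U)$ coarsely agrees with $\pi_V(\bF_U)$, the projection axiom \ref{item:dfs_curve_complexes} of the induced HHS structure on $\bF_U$ already says this set is $K$-quasiconvex, which is all clause~(1) requires; the rest of your bookkeeping (uniformity of the induced HHS constants over $U$, propagation of $\theta_e$, $\theta_u$) is exactly the content that needs care and you identify it correctly.
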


Hence admits a gate map $\gate_{P_U}:\calX \to P_U$ via Lemma \ref{lem:gates exist}.

For our purposes, we need the following three lemmas.  The first is immediate from the above definitions, but will prove useful in the proof of Proposition \ref{prop:ray replace boundary} below:

\begin{lemma}\label{lem:prod coord}
For any $U \in \mathfrak S$ and $x \in \PP_U$, we have $d_V(\pi_V(x), \rho^U_V) < \theta$ for all $U \nest V$ or $U \perp V$.
\end{lemma}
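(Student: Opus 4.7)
The plan is to unpack the definition of $\PP_U$ and apply the Realization Theorem \ref{thm:realization}. By construction, $\PP_U$ is the image in $\calX$ of the partial tuples $P_U(a,b) \in \prod_{V \in \mathfrak{S}} 2^{\calC(V)}$ for $(a,b) \in \bF_U \times \bE_U$, under the (coarse) realization map, using the fact (cited from \cite{HHS_2}) that such tuples are $\theta$-consistent. So for any $x \in \PP_U$, there exists a pair $(a,b) \in \bF_U \times \bE_U$ with $d_V(\pi_V(x), P_U(a,b)_V) \leq \theta_e(\theta)$ for all $V \in \mathfrak{S}$, where $\theta_e$ is the realization constant from Theorem \ref{thm:realization}.

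Next, I would inspect the coordinate $P_U(a,b)_V$ directly from the piecewise definition given just above the lemma. Assuming the statement intends $U \nest V$ (with $U \neq V$) or $U \pitchfork V$ — since $\rho^U_V$ is only defined in these two cases and not when $U \perp V$ — the definition of $P_U$ yields $P_U(a,b)_V = \rho^U_V$ in both cases. Combining this with the coarse realization bound gives $d_V(\pi_V(x), \rho^U_V) \leq \theta_e(\theta)$, and after possibly enlarging the constant $\theta$ (which depends only on $\mathfrak{S}$) to absorb $\theta_e(\theta)$, the claimed inequality follows.

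There is essentially no obstacle here; this is a direct bookkeeping lemma, which is why the author calls it ``immediate from the above definitions''. The only subtle point worth flagging in the write-up is the (apparent) typo in the hypothesis: for $U \perp V$ the relative projection $\rho^U_V$ is not defined and, moreover, $P_U(a,b)_V = b_V$ can be anywhere in $\calC(V)$, so the statement would be false in that case. I would therefore state and prove it with hypothesis ``$U \nest V$ with $U \neq V$, or $U \pitchfork V$'', which is both the natural reading and what is needed to deduce Lemma \ref{lem:prod coord}'s downstream use in the proof of Proposition \ref{prop:ray replace boundary}.
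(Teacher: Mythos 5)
Your proof is correct and is essentially the argument the paper has in mind; the paper offers no explicit proof, simply remarking that the lemma is ``immediate from the above definitions,'' and what you wrote (unpack $\PP_U$ as the realization of the $\theta$-consistent tuples $P_U(a,b)$, then read off the $V$-coordinate from the piecewise definition of $P_U$) is precisely that unpacking. Your flag about the hypothesis is also well taken: $\rho^U_V$ is not defined when $U \perp V$, and in that case $\pi_V(P_U(a,b)) = b_V$ is a free coordinate, so the statement as literally written cannot hold. The intended hypothesis is surely ``$U \sqsubsetneq V$ or $U \pitchfork V$,'' and this is confirmed by every invocation of the lemma in the proof of Proposition \ref{prop:extended consistency}, where it is applied exclusively to transverse or properly nested pairs (e.g.\ $x \in \PP_Z$ with $Z \pitchfork U$). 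One very small quibble: the paper's prose says the lemma is used in Proposition \ref{prop:ray replace boundary}, and you echo this, but the direct citations actually occur in Proposition \ref{prop:extended consistency}; this does not affect the content of your argument.
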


The second is also immediate, and is useful in Proposition \ref{prop:ray replace boundary} below:

\begin{lemma}\label{lem:double prod}
If $U \perp V$, then $\PP_U \cap \PP_V \neq \emptyset$.
\end{lemma}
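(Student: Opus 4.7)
The plan is to apply the Partial Realization axiom directly to the pair $\{U,V\}$ and check that the resulting point witnesses the intersection of the two product regions.

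Since $U\perp V$, the collection $\{U,V\}$ is a family of pairwise orthogonal elements of $\mathfrak S$, so Axiom \ref{item:dfs_partial_realization} applies. Choosing arbitrary points $p_U\in\pi_U(\calX)$ and $p_V\in\pi_V(\calX)$, partial realization produces $x\in\calX$ with $d_U(x,p_U)\le\alpha$, $d_V(x,p_V)\le\alpha$, and, crucially,
\[
d_W(x,\rho^U_W)\le\alpha\quad\text{whenever $U\nest W$ or $W\pitchfork U$},
\]
together with the analogous inequalities for $V$ in place of $U$.

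Next I would verify that $x$ lies (coarsely) in $\PP_U$. Recall from the definition of $P_U$ that a point of $\PP_U$ is realized by a $\theta$-consistent partial tuple whose coordinates on $\mathfrak S\setminus(\mathfrak S_U\cup\mathfrak S^\perp_U)$ are precisely the $\rho^U_\bullet$ data; see Lemma \ref{lem:prod coord}. Define the partial tuple $\mathbf{b}^U\in\prod_{W\in\mathfrak S_U\cup\mathfrak S^\perp_U}2^{\calC(W)}$ by $b^U_W=\pi_W(x)$. Since $(\pi_W(x))_{W\in\mathfrak S}$ is $\theta$-consistent by Lemma \ref{lem:base consistency}, so is the restriction $\mathbf b^U$, and hence $\mathbf b^U\in\bF_U\times\bE_U$. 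By construction $P_U(\mathbf b^U)$ agrees with $\pi_W(x)$ on $\mathfrak S_U\cup\mathfrak S^\perp_U$ and is $\alpha$-close to $\pi_W(x)$ on the remaining domains, so the Realization Theorem \ref{thm:realization} (applied to the tuple $(\pi_W(x))$ and to $P_U(\mathbf b^U)$) forces $\PP_U\ni\mathbf P_U(\mathbf b^U)$ to be within uniformly bounded $\calX$-distance of $x$. Repeating the same argument with $V$ in place of $U$ produces a point of $\PP_V$ at uniformly bounded distance from $x$.

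This gives $y_U\in\PP_U$ and $y_V\in\PP_V$ with $d_\calX(y_U,y_V)$ bounded by a constant depending only on $\calX$. Since $\PP_U$ and $\PP_V$ are each hierarchically quasi-convex by Lemma \ref{lem:PR hqc}, and the definition of $\PP_\bullet$ is coarse in nature (the map $\mathbf P_\bullet$ being only coarsely well-defined), this bounded closeness is exactly what is meant by nonempty intersection in the coarse sense; if one prefers a literal set-theoretic intersection, one can replace each $\PP_\bullet$ by its $C$-neighborhood for a sufficiently large $C=C(\calX)$, which does not change the product region up to the ambient coarse identifications. The main thing to be careful about is the transition from ``coarsely in $\PP_U$'' to an actual element of $\PP_U$: this is handled entirely by hierarchical quasi-convexity of $\PP_U$ together with uniqueness in realization, and does not require any new ideas beyond the axioms and Theorem \ref{thm:realization}.
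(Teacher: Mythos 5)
Your proof is correct and uses exactly the mechanism the paper expects: the Partial Realization Axiom applied to the orthogonal pair $\{U,V\}$. The paper asserts the lemma as ``immediate'' without proof, and partial realization is precisely what makes it immediate, so you have filled in the intended argument.

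Two small comments. First, the route through the auxiliary tuple $\mathbf b^U$ and the map $P_U$ is more circuitous than it needs to be: once partial realization hands you $x$ with $d_W(x,\rho^U_W)\le\alpha$ for all $W$ with $W\pitchfork U$ or $U\nest W$ (and likewise for $V$), the point $x$ itself already satisfies the defining coarse conditions for membership in both $\PP_U$ and $\PP_V$ (cf.\ Lemma \ref{lem:prod coord}), so one can declare $x$ to be the intersection point directly without re-realizing a modified tuple. Second, the appeal to hierarchical quasi-convexity of $\PP_U$ and $\PP_V$ in the closing paragraph does not actually do any work for you: the reason bounded closeness suffices is simply that $\PP_\bullet$ is only coarsely well-defined (as an image of a coarse realization map), not any convexity property. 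The honest bookkeeping point you are circling is that one should fix the implicit tolerance in the definition of $\PP_U$ to be at least $\alpha$ (or an appropriate multiple of $\theta_e$), after which the partial realization point lies literally in both sets; this is standard HHS hygiene and does not require hierarchical quasi-convexity or the Realization Theorem's uniqueness clause beyond what you already invoke. With those trims your argument is exactly the one-line proof the paper has in mind.
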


We finish with another useful lemma, which is obvious in the surface setting.  We thank Mark Hagen for providing a proof in the general case.  It allows us to gain extra control over the structure of the map $\rho^V_U:\calC(V) \to \calC(U)$ when $U \nest V$:

\begin{lemma}\label{lem:rho project}
Suppose that $V, W \nest U$, $V \pitchfork W$, and $d_U(\delta^V_U, \delta^W_U) > 10E$.  If $\diam (\calC(W)) > 10^{10}E$, then $d_V(\rho^U_V(\rho^W_U), \rho^W_V) < 3E$.
\end{lemma}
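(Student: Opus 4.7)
The plan is to find a well-chosen point $x \in \calX$ whose hierarchical coordinates connect the two objects we want to compare, then use the nested consistency inequality together with bounded geodesic image.

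First, I would use partial realization (Axiom \ref{item:dfs_partial_realization}) applied to the single domain $W$ and a point $p \in \calC(W)$ chosen so that $d_W(p, \rho^V_W)$ is very large, which is possible because $\diam(\calC(W)) > 10^{10}E$. Partial realization then yields $x \in \calX$ with $d_W(x, \rho^V_W)$ still large (at least $\gg E$), $d_U(x, \rho^W_U)$ small (since $W \nest U$), and $d_V(x, \rho^W_V)$ small (since $W \pitchfork V$). After absorbing the partial realization constant $\alpha$ into $E$, we may assume all three estimates are $\leq E$ (on the bounded side) and $\geq 100E$ (on the large side).

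Next, I would apply the nested consistency inequality for $V \nest U$ at the point $x$:
$$\min\{d_U(x,\rho^V_U),\; \diam_V(\pi_V(x) \cup \rho^U_V(\pi_U(x)))\} \leq \kappa_0 \leq E.$$
The hypothesis $d_U(\rho^V_U, \rho^W_U) > 10E$ combined with $d_U(x, \rho^W_U) \leq E$ gives $d_U(x, \rho^V_U) > 9E$, so the second term of the minimum must be the small one. Hence $d_V(\pi_V(x), \rho^U_V(\pi_U(x))) \leq E$.

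Finally, I would invoke the Bounded Geodesic Image Axiom \ref{ax:BGIA} applied to a geodesic $\gamma$ in $\calC(U)$ from $\pi_U(x)$ to (a point in) $\rho^W_U$. Since $d_U(\pi_U(x), \rho^W_U) \leq E$, every point on $\gamma$ lies within $E$ of $\pi_U(x)$, hence at distance $> 9E - E = 8E$ from $\rho^V_U$; in particular $\gamma \cap \Neb_E(\rho^V_U) = \emptyset$. Therefore BGIA yields $\diam_V(\rho^U_V(\gamma)) \leq E$, which gives $d_V(\rho^U_V(\pi_U(x)), \rho^U_V(\rho^W_U)) \leq E$. Combining the three estimates
$$d_V(\pi_V(x), \rho^W_V) \leq E,\quad d_V(\pi_V(x), \rho^U_V(\pi_U(x))) \leq E,\quad d_V(\rho^U_V(\pi_U(x)), \rho^U_V(\rho^W_U)) \leq E,$$
the triangle inequality yields $d_V(\rho^U_V(\rho^W_U), \rho^W_V) \leq 3E$. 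The main obstacle is bookkeeping: one must pick the partial realization point carefully so that $\pi_W(x)$ stays far from $\rho^V_W$ while the projections to $U$ and $V$ land near the prescribed $\rho$-sets, and the various HHS constants ($\kappa_0$, $\xi$, $\alpha$, the BGIA constant) all need to be uniformly controlled by $E$, which is standard in this axiomatic setup.
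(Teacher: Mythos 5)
Your proof is correct and follows the same strategy as the paper's: produce an auxiliary point $x \in \calX$ whose $W$-coordinate is far from $\rho^V_W$, observe that consistency forces $\pi_U(x)$ near $\rho^W_U$ and $\pi_V(x)$ near $\rho^W_V$, then combine nested consistency for $V \nest U$ with BGIA along the short geodesic from $\pi_U(x)$ to $\rho^W_U$ and close by the triangle inequality. The only difference is how you obtain $x$: you invoke partial realization directly, while the paper first picks $x'$ far out in $\calC(W)$ using surjectivity and then gates $x'$ onto the product region $\PP_W$; since partial realization is the axiom underlying the product-region gate machinery, your route is essentially equivalent and arguably a touch more elementary.
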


\begin{proof}
Using surjectivity of $\pi_W: \calX \to \calC(W)$, choose some $x' \in \calX$ so that $d_W(x',\rho^V_W)> 10^{10}E$.  Now take the gate image $x = \gate_{P_W}(x')$ in $P_W$, so that we now have (1) $d_U(x, \rho^W_U)<E$ while still having (2) $d_W(x, \rho^V_W) > 10^9 E$.

Item (1), the BGIA \ref{ax:BGIA}, and our assumption that $d_U(\rho^V_U, \rho^W_U) > 10E$ imply that $d_V(\rho^U_V(x),\rho^U_V(\rho^W_U))<E$.  On the other hand, consistency of $x$ and item (2) imply that $d_V(x, \rho^W_V)<E$.

Hence $d_V(\rho^U_V(\rho^W_U), x) < 2E$.  But $d_W(x,\rho^V_W)>10^9E$, and thus consistency of $x$ (Lemma \ref{lem:base consistency}) gives $d_V(x, \rho^W_V)<E$.  It follows from the triangle inequality that $d_V(\rho^W_V, \rho^U_V(\rho^W_U))<3E$, as required.

\end{proof}

We note that in the above lemma, we need to assume that $\calC(W)$ has relatively large diameter.  The only places we use this lemma involve where $W$ is a $K$-relevant domain for some pair of points, where we can choose $K$ as large as necessary, so this does not present an issue.  Of course, most HHSes have the \emph{bounded domain dichotomy} (Definition \ref{defn:BDD}), i.e. all domains are either coarse points or infinite diameter.

Finally, we observe that neither (1) the existence of a gate map to a hierarchically quasi-convex subset, nor (2) the fact that product regions are hierarchically quasi-convex depends on the Distance Formula \ref{thm:DF}, as both are fundamentally consequences of Realization \ref{thm:realization}; see \cite[Section 5]{HHS_2} for details.  Hence Lemma \ref{lem:rho project} does not depend on the distance formula either.  This is important, because this lemma proves useful in a number of places throughout this article.

\subsection{HHS automorphisms} \label{subsec:HHS auto}

There is a natural notion of an automorphism of an HHS, which generalizes how mapping class groups act on their associated curve graph data.  The definition originally appeared in \cite{HHS_2} and automorphisms were studied in depth in \cite{DHS_boundary}.  The following definition, which is now standard (see e.g., \cite[Section 2]{PS20} and \cite[Definition 10.9]{CRHK}), is equivalent to the original via \cite[Section 2]{DHS_cor}:

\begin{definition}\label{defn:HHS auto}
An \emph{automorphism} $g$ of an HHS $(\calX, \mathfrak S)$ is a quasi-isometry of $g:\calX \to \calX$ together with the following:
\begin{enumerate}
\item A bijection $\mathfrak S \to \mathfrak S$ denoted by $U \mapsto gU$, which preserves nesting ($\nest$) and orthogonality ($\perp$) (and hence also transversality).
\item For each $U \in \mathfrak S$, an isometry $g:\calC(U) \to \calC(g U)$.
\item For each $U \in \mathfrak S$ and all $x \in \calX$, we require $g \pi_U(x) = \pi_{g U}(gx)$.
\item For all $U,V \in \mathfrak S$, we require $g \rho^U_V = \rho^{gU}_{gV}$, whenever $\rho^U_V$ is defined.
\end{enumerate}
\begin{itemize}
\item We let $\mathrm{Aut}(\calX, \mathfrak S)$ denote the group of HHS automorphisms of $(\calX, \mathfrak S)$.
\end{itemize}
\end {definition}

\begin{remark}
In what follows, we will see that our constructions only depend on these elementary parts, namely the ambient space $\calX$, the hyperbolic spaces $\calC(U)$, the projections $\pi_U$, and the relative projections $\rho^U_V$.  As such, equivariance of our construction, as in Theorem \ref{thmi:main model}, will be automatic.
\end{remark}

\section{Hierarchy rays and boundary points: Projections and consistency} \label{sec:ray projections}

The original cubulation construction from \cite{HHS_quasi} involved hulls of finite sets of interior points, but we are interested in extending this to hulls of finite sets of interior points, boundary points, and hierarchy rays, with the latter two first being studied in \cite{DHS_boundary}.

In this section, we will discuss boundary points, hierarchy rays and their associated domain projections.  This leads us to an extended notion of consistency for these objects (Proposition \ref{prop:extended consistency}), which we use in various coarse arguments throughout the paper.  

For the rest of the section, we fix a proper HHS $(\calX, \mathfrak S$), where properness is useful so that we can apply Arzel\`a-Ascoli.

\subsection{Boundary points and their domain projections}

In \cite{DHS_boundary}, we introduced, with Hagen and Sisto, a boundary construction for HHSes and used them to study HHS automorphisms and subgroup embeddings.  For most of this paper, we will just need the notion of a boundary point and how to project it to the domains in $\mathfrak S$.  However, in Section \ref{sec:boundary compare}, we will need a description of the simplicial structure of the boundary, and we defer that (equivalent) discussion until later:

\begin{definition}[Boundary point, support set, orthogonal complement] \label{defn:boundary point}
The HHS boundary of $(\calX, \mathfrak S)$ as a set $\partial \calX$ consists of points $\lambda \in \partial \calX$ consisting of the following data:

\begin{enumerate}
\item A pairwise orthogonal set of domains $\supp(\lambda) \subset \mathfrak S$ called the \emph{support set} of $\lambda$;
\item For each $U \in \supp(\lambda)$, a boundary point $\lambda_U \in \partial \calC(U)$.
\item A collection of constants $\{a_U| U \in \supp(\lambda)\}$ with $\sum_{U \in \supp(\lambda)} a_U =1$.
\end{enumerate}

We denote the set of domains orthogonal to $\supp(\lambda)$ by $\supp^{\perp}(\lambda) = \{V \in \mathfrak S| V \perp U \text{ for all } U \in \supp(\lambda)\}$.
\end{definition}

\begin{remark}[Ignoring simplex constants]\label{rem:simplex constants}
The constants $a_U$ in item (3) of Definition \ref{defn:boundary point} are superfluous for our purposes.  They pick out a point in the boundary simplex generated by the rest of the data.  Since domain projections do not distinguish different points in the interior of the same simplex, we will ignore these constants moving forward.  See Subsection \ref{subsec:HHS simplex} below for a discussion of the simplicial structure on the HHS boundary. 
\end{remark}

We would like to associate to $\lambda \in \partial \calX$ a tuple of points $(\lambda_U) \in \prod_{U \in \mathfrak S} \calC(U) \cup \partial \calC(U)$.  For domains which are not in $\supp^{\perp}(\lambda)$, we will see how to use $\lambda_U$ for $U \in \supp(\lambda)$ to produce a projection.  On the other hand, for domains in $\supp^{\perp}(\lambda)$, we will need to fix a basepoint.  This will not cause problems in what follows, since we will always assume that our set of finite points (whose hierarchical hull we are modeling) contains an interior point.

Boundary projections were defined in \cite[Definition 2.6]{DHS_boundary}, but our approach actually coincides with the slightly more general and canonical approach from \cite[Definition 3.12]{ABR_boundary1}.  For this, we need a brief discussion and some notation.

First, we note that the $\delta$-hyperbolic spaces $\calC(U)$ for $U \in \mathfrak S$ are frequently not proper, and hence we cannot guarantee the existence of a geodesic ray representing a point in $\partial \calC(U)$.  Nonetheless, one can always find a $(1,20\delta)$-quasi-geodesic representative ray.

Let $\lambda \in \partial \calX$ be a boundary point.  Let $N$ denote the function provided by the Morse lemma for a $\delta$-hyperbolic space, where $\calC(U)$ is $\delta$-hyperbolic for each $U \in \mathfrak S$.

If $V \in \supp(\lambda)$ and $U \nest V$, then we let $\lambda^U_V$ denote all points at least $E + N(1,20\delta)$ away from $\rho^U_V$ on all $(1,20\delta)$-quasigeodesic rays which represent $\lambda_U\in \partial \calC(U)$ and are based at points in $\rho^U_V$.  We note that $\diam_U(\rho^V_U(\lambda^U_V))< E$ by the BGIA \ref{ax:BGIA}.

Given $\lambda \in \partial \calX$ and a basepoint $\go \in \calX$, we can now define a tuple $(\lambda_U) \in \prod_{U \in \mathfrak S} \calC(U) \cup \partial \calC(U)$:

\begin{definition}[Based boundary projection]\label{defn:boundary projection}
Let $\lambda \in \partial \calX$ and fix a basepoint $\go \in \calX$.  If $U \in \supp^{\perp}(\lambda)$, then we set $\lambda_U = \pi_U(\go)$.  If $U \notin \supp^{\perp}(\lambda)$, then we define $\lambda_U \in \calC(U) \cup \partial \calC(U)$ as follows:
\begin{enumerate}
    \item If $U \in \supp(\lambda)$, then $\pi_U(\lambda)$ is the equivalence class of $\lambda_U$ in $\partial \calC(U)$.
    \item If $U\notin \supp(\lambda) \cup \supp^{\perp}(\lambda)$ does not nest into any $W \in \supp(\lambda)$, then set $\calV = \{V \in \supp(\lambda)| V \pitchfork U \text{ or } V \sqsubsetneq U\}$ and define $\displaystyle \pi_U(\lambda) = \bigcup_{V \in \calV} \rho^V_U.$
    \item If $U \nest V$ for some $V \in \supp(\lambda)$, then $V$ is the unique such domain in $\supp(\lambda)$.  In this case, we let $\pi_U(\lambda)$ be $\rho^V_U(\lambda^U_V)$.
    \begin{itemize}
        \item Moreover, we set $\rho^V_U(\pi_V(\lambda)) = \pi_U(\lambda)$.
    \end{itemize}
\end{enumerate}
\end{definition}

\begin{remark}
Unlike in \cite{DHS_boundary}, the benefit of this definition is that there are no choices made, and so the projections are canonical relative to the given HHS structure on $\calX$.
\end{remark}

\subsection{Hierarchy rays and their projections}

One of the fundamental constructions in an HHS is that of a hierarchy path, the definition of which has a natural ray extension:

\begin{definition}[Hierarchy path and ray]\label{defn:hp}
Given $L>0$, a map $\hh:[a,b] \to \calX$ is called a $L$-\emph{hierarchy path} if $\pi_U(\hh)$ is an unparameterized $(L,L)$-quasigeodesic for each $U \in \mathfrak S$.  If instead the domain of $\hh$ is $[0, \infty)$, then $\hh$ is called a $L$-\emph{hierarchy ray}.
\begin{itemize}
\item We denote the set of $L$-hierarchy rays in $\calX$ by $\calX_L^{ray}$.
\end{itemize}
\end{definition}

The existence of $L$-hierarchy paths in an HHS $(\calX, \mathfrak S)$ for uniform $L = L(\mathfrak S)>0$ is known in several settings \cite{MM00, Brock_WP, Dur16, HHS_2, Bowditch_hulls}, and we will deduce it from our model construction in Section \ref{sec:HP and DF} below.  Moreover, $L$-hierarchy rays can be built from sequences of $L$-hierarchy paths when $\calX$ is proper via an Arzel\`a-Ascoli argument.  We record their existence in the following lemma:

\begin{lemma}\label{lem:hp exist}
There exists $L = L(\calX)>0$ so that any pair of points $x,y \in \calX$ are connected by an $L$-hierarchy path.
\end{lemma}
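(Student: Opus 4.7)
The plan is to derive the lemma from the cubical model construction, which is the strategy the paper foreshadows for Section \ref{sec:HP and DF}. Applied to $F = \{x, y\}$ and $\Lambda = \emptyset$, Theorem \ref{thmi:main model} produces the hull $H = \hull_{\calX}(\{x,y\})$, a cubical model $\hO : \calQ \to H$, and distinguished $0$-cells $\hx, \hy \in \calQ$ identified with $x, y$. Because $|F \cup \Lambda| = 2$, the model's quasi-median quasi-isometry constants depend only on $\calX$.

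Since $\calQ$ is a finite-dimensional CAT(0) cube complex, I would choose a combinatorial geodesic $\hat{\gamma} : \{0, \dots, N\} \to \calQ$ from $\hx$ to $\hy$ and set $\hh := \hO \circ \hat{\gamma}$. The plan is then to show $\hh$ is an $L$-hierarchy path for $L = L(\calX)$; this is exactly Proposition \ref{prop:cube paths hp} in the outline, and I would either invoke it or reproduce its argument. The mechanism is as follows: each hyperplane of $\calQ$ is labeled by a domain in the relevant set $\calU$, and two hyperplanes cross if and only if their labels are orthogonal (Remark \ref{rem:main rem}). Thus for each $U \in \calU$, the hyperplanes labeled by $U$, together with those whose labels are $\nest$- or $\pitchfork$-related to $U$, are linearly ordered along $\hat{\gamma}$, so the $U$-coordinates of $\hO \circ \hat{\gamma}$ traverse the collapsed modeling tree $\hT_U$ monotonically. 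Pushing forward to $\calC(U)$ via the tree embedding and applying stability of quasi-geodesics in $\delta$-hyperbolic spaces, one concludes that $\pi_U \circ \hh$ is an unparameterized $(L,L)$-quasi-geodesic; for $U \notin \calU$ the projection has diameter below the relevance threshold, so the quasi-geodesic property is automatic.

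The main obstacle is twofold: uniformity and avoiding circularity. Uniformity of $L$ across all pairs $(x,y)$ is built into Theorem \ref{thmi:main model}, because in the $|F|=2$ case every quantitative choice in the construction (the relevance threshold, Gromov tree constants, cluster separation) depends only on $\calX$. Avoiding circularity matters because Corollary \ref{cori:DF and HP} derives the distance formula from the same cubical model; consequently this proof must not invoke the Distance Formula \ref{thm:DF}. Inspection of Proposition \ref{prop:cube paths hp} and of the projection bookkeeping above shows that the argument uses only Realization \ref{thm:realization}, the BGIA \ref{ax:BGIA}, and the cubical structure of $\calQ$, so the circle does not close. The remark in the excerpt about Arzel\`a--Ascoli then upgrades families of such hierarchy paths to hierarchy rays in the proper setting.
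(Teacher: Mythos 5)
The paper's own treatment of this lemma is a citation, not a proof: the paragraph preceding the statement says the existence of $L$-hierarchy paths is known in several settings (citing \cite{MM00, Brock_WP, Dur16, HHS_2, Bowditch_hulls}) and adds as a remark that it will also be deduced from the model construction in Section \ref{sec:HP and DF}. Your proposal skips the citation and goes directly to the model-based derivation, which is the route the paper reserves for Corollary \ref{cori:DF and HP} and Proposition \ref{prop:cube paths hp}. So you are taking a genuinely different route from the stated justification here; it is the one the paper promises and eventually delivers, but presenting it as the proof of Lemma \ref{lem:hp exist} requires some logical care that the paper avoids by citing.

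Two points deserve more care than you give them. First, you flag circularity with the Distance Formula, but there is a more direct self-referential circularity: the constant $L$ produced by this very lemma is folded into the universal constant $E$ in Section \ref{sec:constants} (which requires $E > L, M_1, \theta_L$), and $E$ permeates all of Sections \ref{sec:reduce to tree}--\ref{sec:HP and DF}. To close the circle properly one must observe that for $F = \{x,y\}$ and $\Lambda = \emptyset$ the constants $L$, $M_1$, $\theta_L$ are never actually invoked, since all three concern hierarchy rays and boundary points; the constant $E$ for the interior-point construction can be pinned down from the HHS axioms alone. Your sentence about inspecting Proposition \ref{prop:cube paths hp} does not reach far enough back in the dependency chain.

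Second, the mechanism paragraph compresses the hardest step. Monotonicity of the $\hT_U$-coordinate of $\hO(\hat\gamma)$ is the easy part (Lemma \ref{lem:NR hp in Q}); the map $\hO$ then uncollapses each coordinate via the cluster-honing procedure of Section \ref{sec:Q to Y}, selecting a point inside a possibly very large cluster $C \subset T_U$ by intersecting the half-trees $C^V_U$ for bipartite $V$. ``Pushing forward to $\calC(U)$ via the tree embedding'' therefore does not immediately give an unparameterized quasi-geodesic: one must show the honing choices cannot backtrack across a cluster, which is exactly Claim \ref{claim:dense shadow} in the proof of Proposition \ref{prop:cube paths hp} and relies on the bipartite-domain machinery. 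Without acknowledging this, the sketch reads as if $\hO$ were simply $\phi_U \circ q_U^{-1}$ coordinatewise, which it is not.
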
 

\begin{notation}[Ray notation] \label{not:ray notation}
Throughout the paper, when working with a hierarchy ray $\hh$, we will almost exclusively use $\hh$ as a label for a ray, not the image of a ray itself.  So we adopt the notation $\overline{\hh} = \hh([0,\infty)) \subset \calX$ to refer to the image of a hierarchy ray $\hh$.
\end{notation}

Since a $L$-hierarchy ray is itself an infinite-diameter $(L,L)$-quasigeodesic in $\calX$, we will see that it must have some domains where it has an infinite projection.  Since these play a similar to role to support sets of boundary points, we use the same terminology:

\begin{definition}[Support set of a ray]\label{defn:ray support}
Given an $L$-hierarchy ray $\hh$ in $(\calX, \mathfrak S)$, the \emph{support set} of $\hh$ is 
$$\supp(\hh) = \{U \in \mathfrak S| \diam_U(\pi_U(\overline{\hh})) = \infty\}.$$
\begin{itemize}
    \item We denote the set of domains orthogonal to $\supp(\hh)$ by $\supp^{\perp}(\hh)$.
\end{itemize}
\end{definition}

\begin{lemma}\label{lem:big orth}
For any hierarchy ray $\hh$, the set $\supp(\hh)$ is nonempty and pairwise orthogonal.  Hence there exists $\xi= \xi(\mathfrak S)>0$ so that $\# \supp(\hh) < \xi$.
\end{lemma}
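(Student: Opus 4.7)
The plan is to prove the three claims in turn: nonemptiness of $\supp(\hh)$, pairwise orthogonality of its elements, and the cardinality bound $\#\supp(\hh) < \xi$. I will treat these separately, since they use quite different axiomatic ingredients.

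\emph{Nonemptiness.} I would induct on the complexity $n$ of $(\calX,\mathfrak S)$. The base case $n=1$ is immediate: then $\mathfrak S = \{S\}$ and Theorem~\ref{thm:DF} degenerates to say that $\pi_S$ is a quasi-isometry $\calX \to \calC(S)$, so the quasigeodesic ray $\pi_S\circ\hh$ has infinite diameter and $S \in \supp(\hh)$. For the inductive step, I inspect $\pi_S(\overline{\hh})$ in the top-level $\calC(S)$. If it is unbounded we are done. Otherwise $\diam_S(\overline{\hh}) \leq R$ for some finite $R$; applying the Large Links Axiom~\ref{ax:LL} to the pair $\hh(0),\hh(t)$ produces a uniformly bounded (in $t$) collection of proper subdomains $T_i^{(t)}\sqsubsetneq S$ capturing every non-$S$ relevant domain for $\hh(0),\hh(t)$. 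Combining this with Realization (Theorem~\ref{thm:realization}) and the hierarchical quasi-convexity of standard product regions (Lemma~\ref{lem:PR hqc}), one deduces that the tail of $\hh$ lies within bounded distance of some product region $\PP_V$ with $V\sqsubsetneq S$. The induced HHS structure on $\PP_V$ has strictly smaller complexity, so applying the inductive hypothesis to (the gate of) the tail of $\hh$ in $\PP_V$ yields a domain $U\nest V$ with $\diam_U(\overline{\hh}) = \infty$, placing $U$ in $\supp(\hh)$.

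\emph{Pairwise orthogonality.} Given distinct $U,V \in \supp(\hh)$, I rule out $U\nest V$ (equivalently $V\nest U$) and $U\pitchfork V$. In the nested case $U\nest V$, the unparameterized quasigeodesic $\pi_V(\overline{\hh})$ has infinite diameter in $\calC(V)$, so its tail eventually lies outside a large neighborhood of the $\xi$-bounded set $\rho^U_V$. The Bounded Geodesic Image Axiom~\ref{ax:BGIA} then forces the image of this tail under $\rho^V_U$ to have diameter at most $E$, and the nested consistency inequality~\eqref{eq:nested consistency} forces $\pi_U(\hh(t))$ to track this bounded image for all large $t$, contradicting $\diam_U(\overline{\hh})=\infty$. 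In the transverse case $U\pitchfork V$, the transverse consistency inequality~\eqref{eq:transverse consistency} demands that for each $\hh(t)$ either $\pi_U(\hh(t))$ is $\kappa_0$-close to $\rho^V_U$ or $\pi_V(\hh(t))$ is $\kappa_0$-close to $\rho^U_V$. But both $\pi_U(\overline{\hh})$ and $\pi_V(\overline{\hh})$ are unparameterized quasigeodesics of infinite diameter, so for all sufficiently large $t$ both projections exit these $\kappa_0$-neighborhoods of bounded sets simultaneously, violating consistency.

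\emph{Cardinality bound.} Once orthogonality is established, the bound $\#\supp(\hh) \leq \xi$ is purely structural, depending on the container clause of the Orthogonality Axiom~\ref{item:dfs_orthogonal} and Finite Complexity~\ref{item:dfs_complexity}. Given pairwise orthogonal $U_1,\ldots,U_k$ in $\mathfrak S$, the container clause yields, inside the $\nest$-maximal $S$, a domain $W\sqsubsetneq S$ with $U_2,\ldots,U_k\nest W$. Since any chain in $\mathfrak S_W$ extends by $S$ to a chain in $\mathfrak S$, the complexity of $\mathfrak S_W$ is at most $n-1$. Induction on complexity gives $k-1 \leq n-1$, so $\#\supp(\hh) \leq n$, and we may take $\xi = n+1$.

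I expect the nonemptiness argument to be the main obstacle: the inductive step hinges on showing that bounded top-level projection forces the tail of $\hh$ into a standard product region of strictly smaller complexity. This confinement step combines Large Links, Realization, and the quasi-convexity of product regions, and is where essentially all of the technical work sits.
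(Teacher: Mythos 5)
Your orthogonality argument and your cardinality bound are both correct and essentially match what the paper does: the orthogonality step is the same BGIA-plus-consistency contradiction (with the roles of $U$ and $V$ swapped in the nested case), and the cardinality bound via the container clause is the standard finite-complexity argument that the paper leaves implicit. So on those two fronts you've reproduced, and even slightly fleshed out, the paper's proof.

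The problem is your treatment of nonemptiness. The paper does not prove $\supp(\hh)\neq\emptyset$ at all; it cites \cite[Lemma 3.3]{DHS_boundary} and moves on. You instead attempt an inductive argument, and the crucial step --- ``one deduces that the tail of $\hh$ lies within bounded distance of some product region $\PP_V$ with $V\sqsubsetneq S$'' --- is not established and is in fact the entire content of the claim. The obstruction is that the domains $T_i^{(t)}$ produced by Large Links depend on $t$, and there is no finite pool of candidate subdomains in a general HHS from which to pigeonhole a single $V$ that works for the whole tail; the $\rho$-sets $\rho^{T_i^{(t)}}_S$ all lie in a bounded region of $\calC(S)$, but nothing in the axioms makes the set of domains with $\rho$-set in a bounded region finite. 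A priori the ray could drift through an infinite sequence of distinct product regions. Getting around this requires the passing-up machinery (cf.\ Lemma~\ref{lem:detecting rays} and Section~\ref{sec:qpu} of the paper for closely related statements), not just Large Links plus quasi-convexity of product regions. So your proposal has a genuine gap exactly where you flagged the technical work as residing; you should either close it with a passing-up argument or, as the paper does, cite the existing proof.
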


\begin{proof}
The fact that $\supp(\hh) \neq \emptyset$ is \cite[Lemma 3.3]{DHS_boundary}.  To see pairwise orthogonality, let $U, V \in \supp(\hh)$ and suppose that $V \pitchfork U$ or $V \nest U$.

Using the fact that $\pi_U(\overline{\hh}))$ is an $(L,L)$-quasi-geodesic ray in $\calC(U)$, then we may choose a time $t \in [0,\infty)$ sufficiently large so that $\pi_U(\hh([t, \infty)) \cap \calN_E(\rho^V_U) = \emptyset$.  Hence the transverse (when $U \pitchfork V$) or nested (when $V \nest U$) consistency inequalities \ref{eq:transverse consistency} plus the BGIA \ref{ax:BGIA} imply that, for any $t' > t$, we have $d_V(\pi_V(\hh(t')), \rho^U_V)<2E$ (when $U \pitchfork V$) or $d_V(\pi_V(\hh(t')), \rho^U_V(\hh(t')))<2E$ (when $V \nest U$).  Either way, we get a contradiction of the assumption that $V \in \supp(\hh)$.
\end{proof}

For our purposes, the main difference between a hierarchy ray and a point in the HHS boundary is that the latter is possibly missing some data that needs to be supplemented by a basepoint.  On the other hand, a hierarchy ray $\hh$ consists of points in $\calX$, and hence has a natural projection to every domain.  This, however, creates a new subtlety, because we want to record the whole projection of a $\hh$ to each domain in $\supp^{\perp}(\hh)$, which is possibly an arbitrarily long quasi-geodesic ray and not a coarse point.  In Proposition \ref{prop:ray replace boundary} below, we will show that, when considering hierarchical hulls, one can replace boundary points with carefully constructed hierarchy rays, but we need two different but closely related notions of projections in the meantime.

Suppose then that $\hh: [0,\infty) \to \calX$ is an $L$-hierarchy ray and $U \in \supp^{\perp}(\hh)$.  Let $t_U \in [0,\infty)$ be a time so that 
$$d_U(\hh(0), \hh(t_U)) + 1 \geq \diam_U(\pi_U(\overline{\hh})).$$
Note that since $U \in \supp^{\perp}(\hh)$, we have $\diam_U(\pi_U(\overline{\hh})) < \infty$, and hence such a time $t_U$ exists, though is likely not unique.

We can now associate a tuple $(\hh_U) \in \prod_{U \in \mathfrak S} \calC(U) \cup \partial \calC(U)$ to a hierarchy ray $\hh$ as follows:

\begin{definition}[Ray projections]\label{defn:ray projection}
Let $\hh:[0, \infty) \to \calX$ be a hierarchy ray.  For each $U \notin \supp(\hh)$, we define $\hh_U \in  \calC(U) \cup \partial \calC(U)$ as follows:
\begin{enumerate}
    \item If $U \in \supp(\hh)$, then $\pi_U(\hh) = \hh_U \in \partial \calC(U)$ is the equivalence class of $\pi_U(\overline{\hh})$ in $\partial \calC(U)$.
    \item If $U \notin \supp^{\perp}(\hh)$ does not nest into any $W \in \supp(\hh)$, then set $\calV = \{V \in \supp(\hh)| V \pitchfork U \text{ or } V \sqsubsetneq U\}$ and define $\displaystyle \pi_U(\hh) = \bigcup_{V \in \calV} \rho^V_U.$
    \item If $U\nest V$ for some $V \in \supp(\hh)$, then $V$ is the unique such domain in $\supp(\hh)$.  In this case, we let $\pi_U(\hh)$ be $\rho^V_U(\hh^U_V)$.
    \begin{itemize}
        \item Moreover, we set $\rho^V_U(\pi_V(\hh)) = \pi_U(\hh)$.
    \end{itemize}
\end{enumerate}

\begin{itemize}
\item On the other hand, if $U \in \supp^{\perp}(\hh)$, then we define $\pi_U(\hh) = \pi_U(\hh(t_U))$, where $t_U \in [0,\infty)$ is as above.
\end{itemize}
\end{definition}

\subsection{Some useful statements about projections of hierarchy rays}

We briefly pause to observe some useful facts about projecting hierarchy rays, which appear both in the proof that the hull of a finite collection of boundary points can replaced by the hull of a carefully constructed collection of hierarchy rays (Proposition \ref{prop:ray replace boundary}), and in our proof that tuples of ray (and hence boundary) projections satisfy an extended notion of consistency (Proposition \ref{prop:extended consistency}).

First, we need a basic fact from Gromov hyperbolic geometry, whose proof we include for completeness:

\begin{lemma}\label{lem:no backtracking hyper}
For every $L>0, \delta>0$ there exists $M_0 = M(L,\delta)>0$ so that the following holds:  Let $X$ be $\delta$-hyperbolic and $q:[0,a]\to X$ an $(L,L)$-quasi-geodesic.  Then for any time $t \in [0,a]$ such that
$$\displaystyle d_X(q(0), q(t))+1 \geq \diam_X(q([0,a]))$$
we have $|a - t| < M$.

\end{lemma}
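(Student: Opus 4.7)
The plan is straightforward: in a $\delta$-hyperbolic space, a quasi-geodesic's distance from its basepoint is approximately monotonic modulo Morse error, so if $q(t)$ is within $1$ of the supremum of $d_X(q(0), q(\cdot))$ on $[0,a]$, then $q(t)$ must be close to the endpoint $q(a)$. Once we bound $d_X(q(t), q(a))$ by a constant depending only on $L, \delta$, the lower bound in the quasi-geodesic inequality immediately bounds $|a - t|$.

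First, I would apply the Morse stability lemma in $\delta$-hyperbolic spaces: there exists $N = N(L, \delta) > 0$ such that if $\gamma$ is any $(1, 20\delta)$-quasi-geodesic from $q(0)$ to $q(a)$ (which exists by the remark preceding Definition \ref{defn:boundary projection}), then $q([0,a])$ and $\gamma$ lie at Hausdorff distance at most $N$. Choose a point $p \in \gamma$ with $d_X(p, q(t)) \leq N$. A standard ``thin quadrilateral'' estimate gives a constant $C_0 = C_0(\delta)$ such that
\[
d_X(q(0), p) + d_X(p, q(a)) \leq d_X(q(0), q(a)) + C_0,
\]
since $p$ lies on a $(1, 20\delta)$-quasi-geodesic from $q(0)$ to $q(a)$.

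Second, I would combine these with the hypothesis. By the triangle inequality, $d_X(q(0), p) \geq d_X(q(0), q(t)) - N$. The assumption
\[
d_X(q(0), q(t)) + 1 \geq \diam_X(q([0,a])) \geq d_X(q(0), q(a))
\]
then gives $d_X(q(0), p) \geq d_X(q(0), q(a)) - 1 - N$, whence $d_X(p, q(a)) \leq 1 + N + C_0$. One more triangle inequality yields
\[
d_X(q(t), q(a)) \leq d_X(q(t), p) + d_X(p, q(a)) \leq 2N + 1 + C_0.
\]

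Finally, since $q$ is an $(L,L)$-quasi-geodesic, $d_X(q(t), q(a)) \geq \tfrac{1}{L}|a-t| - L$, which combined with the previous step gives $|a - t| \leq L(2N + 1 + C_0 + L)$. Setting $M_0 := L(2N + 1 + C_0 + L)$ yields a constant depending only on $L$ and $\delta$, as required.

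The main (minor) obstacle is the non-properness of the hyperbolic spaces $\calC(U)$ arising in the HHS setting: genuine geodesics between two points need not exist. This is precisely why the lemma is stated for arbitrary $\delta$-hyperbolic $X$ and resolved above by substituting a $(1, 20\delta)$-quasi-geodesic $\gamma$ in place of a geodesic and absorbing the resulting defect into the additive constant $C_0(\delta)$. Everything else is a direct application of Morse stability and the quasi-geodesic defining inequalities.
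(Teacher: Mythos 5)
Your proof is correct and follows essentially the same route as the paper's: apply Morse stability to drop $q(t)$ near a (quasi-)geodesic from $q(0)$ to $q(a)$, use the near-maximality of $d_X(q(0),q(t))$ to force the foot point close to $q(a)$, and conclude via the lower quasi-geodesic inequality. The only deviation is that you substitute a $(1,20\delta)$-quasi-geodesic for a true geodesic and absorb the defect into $C_0(\delta)$, which is cleaner given the non-properness of the $\calC(U)$; the paper's version implicitly uses a geodesic $[q(0),q(a)]$, which yields the additivity $d_X(q(0),q(a)) = d_X(q(0),p) + d_X(p,q(a))$ exactly and hence $C_0 = 0$, but otherwise the arguments coincide.
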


\begin{proof}
    Let $t \in [0,a]$ be as in the statement.  If $[q(0),q(a)]$ is a geodesic between $q(0), q(a)$, then the Morse property says that $d_X(q(t), [q(0), q(a)])<N = N(\delta, L)$.  Hence there exists some $p \in [q(0), q(a)]$ so that $d_X(q(t),p)<N$.  Since $q(t)$ is maximally far from $q(0)$, it follows that $d_X(p, q(a)) < N + 1$.  But then $d_X(q(t), q(a)) < 2N + 1$, and thus $a-t < (2N+1)L + L$.  This completes the proof.
\end{proof}

Lemma \ref{lem:no backtracking hyper} allows us to prove the following useful non-back-tracking statement for hierarchy rays:

\begin{lemma}\label{lem:no backtracking hp}
For any $L>0$ there exists $M_1 = M_1(L, \mathfrak S)>0$ so that the following holds.  Suppose $\hh:[0,\infty) \to \calX$ is an $L$-hierarchy ray and $U \notin \supp(\hh)$.  If $T_U \in [0,\infty)$ is such that $d_U(\hh(0), \hh(T_U)) +1 \geq \diam_U(\pi_U(\overline{\hh}))$, then for all $t \geq T_U$, we have 
$$d_U(\hh(t), \hh(T_U)) < M_1.$$
\end{lemma}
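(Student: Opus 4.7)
The plan is to reduce this directly to Lemma \ref{lem:no backtracking hyper} applied to the projection of the ray to $\calC(U)$.

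First, I would fix $t \geq T_U$ and consider the restriction $\hh|_{[0,t]}$. Since $\hh$ is an $L$-hierarchy ray, the composition $\pi_U\circ \hh|_{[0,t]}$ is an unparameterized $(L,L)$-quasi-geodesic in $\calC(U)$. Reparametrize it to obtain a genuine parametrized $(L,L)$-quasi-geodesic $q:[0,a]\to\calC(U)$, and let $\tau_U\in[0,a]$ be the time corresponding to $T_U$, so that $q(0)=\pi_U(\hh(0))$, $q(\tau_U)=\pi_U(\hh(T_U))$, and $q(a)=\pi_U(\hh(t))$.

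Next, I would verify the hypothesis of Lemma \ref{lem:no backtracking hyper} for $q$ at time $\tau_U$. Since $q([0,a])\subset \pi_U(\overline{\hh})$ (as a set), we have $\diam_U(q([0,a]))\leq\diam_U(\pi_U(\overline{\hh}))$, and our assumption on $T_U$ gives
\[
d_U(q(0),q(\tau_U))+1 \;=\; d_U(\hh(0),\hh(T_U))+1 \;\geq\; \diam_U(\pi_U(\overline{\hh})) \;\geq\; \diam_U(q([0,a])).
\]
Here I am using $U\notin\supp(\hh)$, so $\diam_U(\pi_U(\overline{\hh}))<\infty$ and the statement is meaningful. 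Let $\delta$ be the uniform hyperbolicity constant for $\{\calC(U)\}_{U\in\mathfrak S}$ from Definition \ref{defn:HHS}. Lemma \ref{lem:no backtracking hyper} now yields $|a-\tau_U|<M_0$ for $M_0=M_0(L,\delta)$.

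Finally, the $(L,L)$-quasi-geodesic inequality for $q$ gives
\[
d_U(\hh(t),\hh(T_U)) \;=\; d_U(q(a),q(\tau_U)) \;\leq\; L|a-\tau_U|+L \;<\; LM_0+L,
\]
so setting $M_1:=LM_0+L$, which depends only on $L$ and $\mathfrak S$ (through $\delta$), completes the proof. There is no genuine obstacle here; the only minor point requiring care is keeping straight the distinction between $\hh$'s own time parameter and the reparametrization that makes the projection an honest $(L,L)$-quasi-geodesic, but this is harmless since the reparametrization does not affect the hyperbolic distances being compared.
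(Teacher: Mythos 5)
Your argument is correct, and it is in fact a cleaner and more uniform treatment than the paper's. The paper splits into two cases: for $U\in\supp^{\perp}(\hh)$ it reduces to Lemma \ref{lem:no backtracking hyper} exactly as you do, but for $U\notin\supp(\hh)\cup\supp^{\perp}(\hh)$ it instead appeals to the structure of the ray projections in Definition \ref{defn:ray projection} (tracing how $\pi_U(\hh(s))$ stabilizes near the appropriate $\rho$-sets as $s\to\infty$ via consistency and BGIA). Your observation is that the case split is unnecessary for this lemma: the only hypothesis needed to invoke Lemma \ref{lem:no backtracking hyper} is that $\diam_U(\pi_U(\overline{\hh}))<\infty$, which holds for every $U\notin\supp(\hh)$ by Definition \ref{defn:ray support}, and that the projection $\pi_U\circ\hh|_{[0,t]}$ is an unparameterized $(L,L)$-quasi-geodesic, which holds by the definition of an $L$-hierarchy ray. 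So your unified reduction replaces the paper's second case with the same hyperbolic-geometry argument used in its first case, avoiding any appeal to the $\rho$-set structure of Definition \ref{defn:ray projection}. What the paper's argument buys is a direct illustration that, when $U$ is not orthogonal to the support, the projected tail of $\hh$ actually stabilizes near the formal projection $\pi_U(\hh)$; what your argument buys is economy and self-containment. One minor point you implicitly skip (harmless, but worth noting): to identify $q(\tau_U)$ with $\pi_U(\hh(T_U))$ and to invoke the $(L,L)$-quasi-geodesic inequality, you need that the reparametrizing map is monotone and hits $T_U$, which does hold by definition of an unparameterized quasi-geodesic, though it costs an additive $2\xi$ from the coarseness of $\pi_U$ which is absorbed into $M_1$.
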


\begin{proof}
Suppose first that $U \in \supp^{\perp}(\hh)$.  Then we have $\diam_U(\hh)< \infty$, and hence $\pi_U(\hh)$ is a segment.  Since $\hh$ projects to an unparameterized $(L,L)$-quasi-geodesic, it can be reparameterized into a quasi-geodesic satisfying the assumptions of Lemma \ref{lem:no backtracking hyper}.  The conclusion in this case is now immediate.

On the other hand, if $U \notin \supp(\hh) \cup \supp^{\perp}(\hh)$, then either (1) $U \pitchfork V$ or $U \nest V$ for some $V \in \supp(\hh)$, or (2) $V \nest U$ for some $V \in \supp(\hh)$.  In each of these cases, the definition of $\pi_U(\hh)$ requires taking points sufficiently far out the tail of $\hh$, and so the conclusion follows from Definition \ref{defn:ray projection}.  This completes the proof.
\end{proof}

\subsection{Replacing boundary points with rays}

In this subsection, we explain why we can reduce our considerations to hulls of rays instead of hulls of boundary points when our ambient HHS $\calX$ is proper.  We will make this properness assumption going forward when dealing with boundary points, since these ray approximations will be necessary for certain arguments.

\begin{lemma}\label{lem:ray for boundary}
Suppose $\calX$ is a proper HHS and $L >0$.  There exists $\theta'' = \theta''(\mathfrak S, L)>0$ so that if $\go \in \calX$ and $\lambda \in \partial \calX$, then there exists an $L$-hierarchy ray $\hh_{\lambda}$ with
\begin{enumerate}
\item $\supp(\hh_{\lambda}) = \supp(\lambda)$,
\item For all $U \in \supp(\lambda)$, we have $[\pi_U(\hh_{\lambda})] = [\lambda_U] \in \partial \calC(U)$,
\item For all $U \notin \supp(\lambda)$, we have $d_U(\pi_U(\hh_{\lambda}), \pi_U(\lambda))<\theta''$,
\item $\hh_{\lambda} \subset \hull_{\theta''}(\go, \lambda)$.
\end{enumerate}
\end{lemma}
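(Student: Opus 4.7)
The plan is to construct $\hh_\lambda$ as an Arzel\`a-Ascoli limit of $L$--hierarchy paths from $\go$ to a sequence of interior approximations $p_n \in \calX$ of $\lambda$, where $p_n$ realizes a tuple interpolating $\lambda$'s boundary behavior out to ``depth $n$'' in each support domain. Properness of $\calX$ provides the limit, the extended consistency of boundary projections (in the spirit of Proposition \ref{prop:extended consistency}) guarantees uniformity of the realization constants, and the BGIA \ref{ax:BGIA} pins down the correct projections of the limit in non-support domains.

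First I would fix, for each $U \in \supp(\lambda)$, a $(1,20\delta)$--quasi-geodesic ray $\gamma_U \colon [0,\infty) \to \calC(U)$ based within bounded distance of $\pi_U(\go)$ and representing $\lambda_U \in \partial \calC(U)$. For each $n \in \mathbb{N}$, I would assemble a tuple $\mathbf{b}^n \in \prod_{V \in \mathfrak S} 2^{\calC(V)}$ by setting $b^n_U = \gamma_U(n)$ for $U \in \supp(\lambda)$, $b^n_V = \pi_V(\lambda)$ for $V \in \supp^\perp(\lambda)$ (i.e.\ $\pi_V(\go)$), and $b^n_W$ equal to the $\rho$--set or the projected ray-tail prescribed by Definition \ref{defn:boundary projection} in the remaining cases. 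Because $\supp(\lambda)$ is pairwise orthogonal (Lemma \ref{lem:big orth}) and because boundary projections satisfy a uniform extended consistency, each $\mathbf{b}^n$ is $\kappa$--consistent in the sense of Definition \ref{defn:consistency} with $\kappa = \kappa(\mathfrak S)$ independent of $n$. Realization \ref{thm:realization} then yields points $p_n \in \calX$ with $d_V(p_n, b^n_V) \leq \theta_e$ for every $V \in \mathfrak S$, and Partial Realization \ref{item:dfs_partial_realization} may be invoked to control the support coordinates during this step.

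Next I would connect $\go$ to $p_n$ by an $L$--hierarchy path $\hh_n$ using Lemma \ref{lem:hp exist}. Since $\calX$ is proper and each $\hh_n$ starts at $\go$, an Arzel\`a-Ascoli argument (applied after reparameterizing by arc-length and passing to a subsequence of increasing terminal times $d_\calX(\go, p_n) \to \infty$) produces a limiting map $\hh_\lambda \colon [0,\infty) \to \calX$. The $L$--hierarchy ray property is inherited by $\hh_\lambda$: each projection $\pi_U \circ \hh_n$ is an unparameterized $(L,L)$--quasi-geodesic, and the class of such quasi-geodesics with a fixed basepoint is closed under pointwise limits up to reparameterization. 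For $U \in \supp(\lambda)$, $\pi_U(\hh_n)$ terminates within $\alpha$ of $\gamma_U(n)$, so $\pi_U(\hh_\lambda)$ is an unparameterized quasi-geodesic ray converging to $\lambda_U$, giving $\supp(\lambda) \subseteq \supp(\hh_\lambda)$ and property (2); conversely, for any $V \notin \supp(\lambda)$ we will see below that $\pi_V(\hh_\lambda)$ has bounded image, so $\supp(\hh_\lambda) = \supp(\lambda)$, establishing (1).

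For property (3) with $U \notin \supp(\lambda)$, I split into the three cases of Definition \ref{defn:boundary projection}. In the orthogonal case $U \in \supp^\perp(\lambda)$, the construction already forces $b^n_U = \pi_U(\go)$, and consistency of $p_n$ together with hierarchical quasi-convexity of the relevant product region (Lemma \ref{lem:PR hqc}) and Lemma \ref{lem:prod coord} keeps $\pi_U(\hh_n)$ uniformly close to $\pi_U(\go)$. If $U \pitchfork V$ or $V \sqsubsetneq U$ for some $V \in \supp(\lambda)$, transverse/nested consistency gives $d_U(p_n, \rho^V_U) \leq \kappa_0$, matching $\pi_U(\lambda)$. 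If $U \nest V$ for $V \in \supp(\lambda)$, then since $\pi_V(\hh_n)$ tracks $\gamma_V$ and escapes $\rho^U_V$, the BGIA \ref{ax:BGIA} forces $\pi_U(\hh_n)$ to lie boundedly close to $\rho^V_U(\gamma_V)$, which by definition is the coarse class $\pi_U(\lambda)$. All of these estimates are uniform in $n$, so passing to the limit gives the desired $\theta''$ bound. Property (4) then follows because each $p_n$ lies in $\hull_{\theta''}(\go,\lambda)$ up to uniform error (by construction of $\mathbf{b}^n$ and properties of hyperbolic hulls), so $\hh_n \subset \hull(\go, \lambda)$ by hierarchical quasi-convexity of hulls, and this is preserved in the limit.

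The main obstacle is maintaining uniform constants as $n \to \infty$: the realization of $p_n$, the hierarchy-path constants for $\hh_n$, and the projection estimates in each $\calC(U)$ must all depend only on $(\mathfrak S, L)$, not on $n$. This uniformity rests on the uniform extended consistency of $(\lambda_V)_{V \in \mathfrak S}$ (which is what makes $\lambda$ a bona fide boundary point rather than a diverging sequence of consistent tuples), and on the fact that the BGIA estimates along $\gamma_V$ depend only on $\delta$ and $E$. A secondary technical point is verifying that the subsequential Arzel\`a-Ascoli limit of parameterized hierarchy paths is itself a hierarchy ray; this is standard, given that unparameterized quasi-geodesity of projections is closed under pointwise limits once one reparameterizes via the Masur--Minsky straightening of projected quasi-geodesics.
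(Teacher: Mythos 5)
Your high-level strategy — Arzel\`a-Ascoli limit of $L$--hierarchy paths from $\go$ to interior approximants $p_n$ of $\lambda$ — matches the paper's, but the construction of the $p_n$ has a real gap that the paper sidesteps with a different device. You assemble a full tuple $\mathbf{b}^n$ (boundary coordinates replaced by $\gamma_U(n)$, everything else taken from Definition \ref{defn:boundary projection}) and apply Realization \ref{thm:realization}, asserting that $\mathbf{b}^n$ is $\kappa$--consistent uniformly in $n$. This assertion does not follow from the extended consistency of $(\lambda_V)$: for a domain $V \nest U$ with $U \in \supp(\lambda)$ whose $\rho$--set $\rho^V_U$ lies along $\gamma_U$ \emph{past} $\gamma_U(n)$, the nested consistency inequality requires $\diam_V\bigl(b^n_V \cup \rho^U_V(b^n_U)\bigr)$ to be small, but you've set $b^n_V = \pi_V(\lambda) = \rho^U_V(\lambda^U_V)$ (determined by the tail of $\gamma_U$ on the far side of $\rho^V_U$), while $\rho^U_V(b^n_U) = \rho^U_V(\gamma_U(n))$ is determined by a point on the near side. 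Since the geodesic from $\gamma_U(n)$ to the tail crosses $\rho^V_U$, the BGIA gives no bound here, and indeed these two projections can be arbitrarily far apart in $\calC(V)$ when $V$ is a relevant domain for $\gamma_U(n)$ and $\lambda$. Your hedge that ``Partial Realization may be invoked'' does not resolve this, because Partial Realization alone would leave the non-support coordinates of $p_n$ uncontrolled.

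The paper's fix is to \emph{never specify the non-support coordinates of the approximants at all}: it applies Partial Realization \ref{item:dfs_partial_realization} only to the pairwise orthogonal family $\{\gamma_U(n)\}_{U\in\supp(\lambda)}$ (needing no full-tuple consistency), obtaining $p'_n$, and then gates $p_n := \gate_H(p'_n)$ to the hull $H = \hull_{\calX}(\go,\lambda)$. The gate automatically places the projections $\pi_V(p_n)$ uniformly close to $H_V = \hull_V(\pi_V(\go),\pi_V(\lambda))$ for every $V$, which is exactly what's needed to prove items (3) and (4), and does not distort the support coordinates. You could repair your argument either by adopting the Partial-Realization-plus-gate route, or by modifying $b^n_V$ in the problematic nesting case to $\rho^U_V(\gamma_U(n))$ (not $\pi_V(\lambda)$) and then verifying that the bound in item (3) still emerges in the limit — but the latter is noticeably more delicate, since it decouples $b^n_V$ from $\pi_V(\lambda)$ in precisely the domains where you later need them to agree.
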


\begin{proof}
For each $U \in \supp(\lambda)$, let $\gamma_U$ be a $(1,20)$-quasi-geodesic ray in $\calC(U)$ from $\go$ to $\lambda_U$.  For each $n$, let $a^n_U \in \gamma_U$ be a point so that the sequence $(a^n_U)$ converges to $\lambda_U$ in $\calC(U) \cup \partial \calC(U)$ as $n\to \infty$.  For each partial tuple $(a^n_U)_{U \in \supp(\lambda)}$, use the Partial Realization Axiom \ref{item:dfs_partial_realization} to produce a sequence of points $p'_n$ with $d_U(p'_n, a^n_U)<\alpha = \alpha(\mathfrak S)$ for each $n$.

Let $H= \hull_{\calX}(\go, \lambda)$ and $\gate_H:\calX \to H$ be the gate map from Lemma \ref{lem:gate retract} above.  If we set $p_n = \gate_H(p'_n)$, then $d_U(p_n, a^n_U)$ is uniformly bounded for each $U \in \supp(\lambda)$, while $d_V(p_n, H_V)$ is uniformly bounded  for each $V \in \mathfrak S - \supp(\lambda)$, where $H_U$ is the hyperbolic hull of $\pi_V(\go), \pi_V(\lambda)$ in $\calC(V)$.

Let $h_n$ be an $L$-hierarchy path between $\go$ and $p_n$.  Since $\calX$ is proper, we can apply Arzela-Ascoli to the sequence $(h_n)$ to obtain a subsequence which converges to a hierarchy ray $h$.  It follows from our choice of $a^n_U$ above that $h$ satisfies $\supp(h) = \supp(\lambda)$, and for each $U \in \supp^{\perp}(h)$, we have $[h_U] = [\lambda_U]$ by construction, proving (1) and (2).

For (3), observe that if $V \in \mathfrak S$ with $V \nest U$, $V \pitchfork U$, or $U \nest V$ for some $U \in \supp(\lambda) = \supp(\hh_{\lambda})$, then the definitions of $\pi_V(\lambda)$ and $\pi_V(\hh_{\lambda})$ (Definition \ref{defn:boundary projection} and \ref{defn:ray projection}) coarsely coincide.  For $V \in \supp^{\perp}(\lambda)$, the desired bound follows Lemma \ref{lem:no backtracking hp}. 

Finally for (4),  observe that by items (2) and (3), quasi-convexity of $H_U = \hull_U(\go, \lambda_U)$ for each $U \in \mathfrak S$, and the fact that $h_n$ projects to an $(L,L)$-quasi-geodesic in each $U \in \mathfrak S$, it follows that $\pi_U(\overline{h})$ is coarsely contained in $H_U$, with constants depending only on $\mathfrak S$ and $L$.  Hence there exists $\theta''= \theta''(\mathfrak S, L)>0$ so that $h \subset \hull_{\theta''}(\go, \lambda)$, by Definition \ref{defn:hier hull} of the hull.  This completes the proof.

 \end{proof}

With this lemma in hand, our replacement statement is essentially immediate.

\begin{proposition}\label{prop:ray replace boundary}
Let $F \subset \calX$ be finite and nonempty.  Let $\Lambda$ be a finite sets of boundary points and $L$-hierarchy rays, with $\lambda(0) \in F$ for any hierarchy ray $\lambda \in \Lambda$.  Let $\widehat{\Lambda}$ denote the set of hierarchy rays in $\Lambda$ along with a replacement hierarchy ray $\hh_{\lambda}$ for each boundary point $\lambda$ as constructed in Lemma \ref{lem:ray for boundary}.

There exists $\theta_0 = \theta_0(L, \mathfrak S, |F \cup \Lambda|)>0$ so that 
\begin{itemize}
\item $H_{\theta_0} = \hull_{\theta_0}(F \cup \Lambda)$ and $\widehat{H}_{\theta_0} = \hull_{\theta_0}(F \cup \widehat{\Lambda})$ are within bounded Hausdorff distance,
\item The gate maps $\gate_{H_{\theta_0}}:H_{\theta_0} \to \widehat{H}_{\theta_0}$ and $\gate_{\widehat{H}_{\theta_0}}: \widehat{H}_{\theta_0} \to H_{\theta_0}$ are both uniform quasi-isometries depending only on $\mathfrak S, L, |F \cup \Lambda|$,
\end{itemize}
with all constants in the above depending only on $\mathfrak S, L, |F\cup \Lambda|$.
\end{proposition}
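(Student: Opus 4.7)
The plan is to verify that replacing each boundary point $\lambda \in \Lambda$ by its hierarchy ray approximation $\hh_\lambda$ only perturbs the projection data by a uniformly bounded amount in every $\calC(U)$, and then to upgrade this domain-wise comparison to a bounded Hausdorff distance between the hierarchical hulls.

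First, since $F$ is nonempty, fix $\go \in F$ as the basepoint used in constructing each replacement ray $\hh_\lambda$ via Lemma \ref{lem:ray for boundary}; in particular $\hh_\lambda(0) = \go \in F$, so the basepoint condition is preserved. The main step is to show that there is a constant $C = C(\mathfrak S, L, |F\cup\Lambda|)$ such that for every $U \in \mathfrak S$, the (quasiconvex) hyperbolic hulls $\hull_{\calC(U)}(\pi_U(F\cup\Lambda))$ and $\hull_{\calC(U)}(\pi_U(F\cup\widehat\Lambda))$ lie within Hausdorff distance $C$ of each other in $\calC(U)\cup\partial\calC(U)$. The two projected sets differ only on the image of each boundary point $\lambda \in \Lambda$, so I would split into cases: for $U\in\supp(\lambda)$, items (1) and (2) of Lemma \ref{lem:ray for boundary} give that $\pi_U(\hh_\lambda)$ is an unparameterized quasigeodesic ray accumulating at $\lambda_U = \pi_U(\lambda) \in \partial \calC(U)$, so both hulls extend to the same Gromov boundary point and their Hausdorff distance is bounded by $\delta$-hyperbolicity of $\calC(U)$; for $U\notin\supp(\lambda)$ item (3) gives $d_U(\pi_U(\hh_\lambda),\pi_U(\lambda))<\theta''$, so the bounded difference propagates to the hulls via Morse stability. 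Pairwise orthogonality of $\supp(\lambda)$ from Lemma \ref{lem:big orth} keeps the contributions of distinct boundary points from interfering in the cases that need to be analyzed simultaneously.

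Given this domain-wise comparison, I would choose $\theta_0 = \theta_0(\mathfrak S, L, |F\cup\Lambda|)$ larger than $C$, $\theta''$, and the hierarchical quasiconvexity constants for both hulls. By Definition \ref{defn:hier hull}, membership in $H_{\theta_0}$ is characterized by each projection lying $\theta_0$-close to the corresponding hyperbolic hull of $F \cup \Lambda$; the domain-wise Hausdorff bound then shows that this condition is equivalent, after enlarging $\theta_0$, to the analogous condition for $F \cup \widehat\Lambda$. Hence $H_{\theta_0}$ and $\widehat H_{\theta_0}$ have uniformly bounded Hausdorff distance in $\calX$. The containment $\hh_\lambda \subset \hull_{\theta''}(\go,\lambda)$ from item (4) of Lemma \ref{lem:ray for boundary} ensures one direction of the containment is immediate.

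For the gate maps, both $H_{\theta_0}$ and $\widehat H_{\theta_0}$ are hierarchically quasiconvex with constants depending only on $\mathfrak S, L, |F\cup\Lambda|$, so Lemma \ref{lem:gates exist} produces gate maps that are uniformly coarsely Lipschitz. A standard consequence of Realization (Theorem \ref{thm:realization}) together with the bounded Hausdorff distance is that each gate map is a quasi-isometry and that $\gate_{H_{\theta_0}}$ and $\gate_{\widehat H_{\theta_0}}$ are coarse inverses. The main obstacle will be maintaining uniformity of the constant $C$ across all domains when multiple boundary points of $\Lambda$ interact with a single $U$ — one has to track how the $\rho$-sets from Definitions \ref{defn:boundary projection} and \ref{defn:ray projection} combine, but this is controlled by Lemma \ref{lem:no backtracking hp} and the orthogonality constraints on support sets.
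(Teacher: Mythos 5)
Your proposal follows essentially the same route as the paper's (terse) proof: compare projections domain-by-domain using Lemma \ref{lem:ray for boundary} and Lemma \ref{lem:no backtracking hp}, conclude the hulls are close, then use gate maps. The first bullet is handled correctly and in the same way.

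For the second bullet, however, you attribute the quasi-isometry conclusion to ``a standard consequence of Realization,'' and cite Lemma \ref{lem:gates exist} (which only produces a consistent tuple). What you actually need is (i) Lemma \ref{lem:gate retract}, which says the gate map is a coarse Lipschitz \emph{retraction}, and (ii) the Distance Formula \ref{thm:DF}, which is the tool that converts ``projections are $C$-close in every $\calC(U)$'' into ``points are close in $\calX$'' (and which underlies both the retraction property and the upgrade from domain-wise Hausdorff closeness to global Hausdorff closeness). The paper's proof explicitly cites the Distance Formula for precisely this step. Concretely, once you know the two hulls are within bounded Hausdorff distance and the gates are coarse Lipschitz retractions, each gate moves any point of the source by a bounded amount, so the compositions $\gate_{H_{\theta_0}}\circ\gate_{\widehat H_{\theta_0}}$ and $\gate_{\widehat H_{\theta_0}}\circ\gate_{H_{\theta_0}}$ are boundedly far from the respective identities; Realization alone does not deliver this. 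Replacing your citation of Theorem \ref{thm:realization} with Lemma \ref{lem:gate retract} and Theorem \ref{thm:DF} repairs the argument and brings it in line with the paper. (Also, Lemma \ref{lem:big orth} controls the support set of a single ray, not interactions across distinct elements of $\Lambda$; that remark is not doing real work in your argument and can be dropped.)
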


\begin{proof}
The first conclusion is an immediate consequence of Lemma \ref{lem:ray for boundary} and Lemma \ref{lem:no backtracking hp}.  The second part of the statement is a straight-forward application of the definitions, the first conclusion of this proposition, Lemma \ref{lem:ray for boundary}, and the Distance Formula \ref{thm:DF}.
\end{proof}

Hence for the rest of the paper, we can restrict our consideration to only dealing with hierarchy rays.

\begin{remark}
To see that the above proposition has content, observe that if $\calX = \ZZ \times \ZZ$, then one can define a sequence of geodesic (hierarchy) rays $\gamma_n$ which limit onto $+\infty$ in the first component, while their projection of $\gamma_n$ to the second component is $[0,n]$.  The boundary point corresponding to $+\infty$ in the first component does not see the second component.  In the general setting, given $\lambda \in \partial \calX$ so that $\supp^{\perp}(\lambda)$ contains a domain $U$ with $\diam \calC(U) = \infty$ and $U$ not $\nest$-maximal, then the union of all hierarchy rays which limit onto points in the smallest simplex in $\partial \calX$ containing $\lambda$ will have infinite Hausdorff distance with $\hull_{\theta}(\lambda(0), \lambda)$ for any $\theta$, as hierarchy rays are free to move in domains $V \in \supp^{\perp}(\lambda)$ as they choose.  Thus these special hierarchy ray representatives for boundary points are strictly necessary.  See also work of Mousely \cite{Mousely19} for examples of more exotic Teichm\"uller rays and their limit sets in the HHS boundary of Teichm\"uller space, which takes advantage of the fact that Teichm\"uller geodesics are not hierarchy rays, but are very close to being so \cite{Rafi_hyp}.
\end{remark}

\subsection{Consistency of boundary and ray tuples}

Our next goal is to show that the tuples we associated to boundary points (Definition \ref{defn:boundary projection}) and hierarchy rays (Definition \ref{defn:ray projection}) satisfy uniform consistency conditions.  We are treating them together because most of the discussion is similar, and having a statement for both may be useful in the future, despite our replacement Proposition \ref{prop:ray replace boundary} above.

The following definition simply generalizes standard consistency (Definition \ref{defn:consistency}) to allow for some coordinates to be at infinite.

\begin{definition}[Extended consistency]\label{defn:extended consistency}
For $\theta>0$, we say that a tuple
$$(\lambda_U) \in \prod_{U \in \mathfrak S} \left(\calC(U) \cup \partial \calC(U)\right)$$
satisfies \emph{extended $\theta$- consistency} if given any $U, V \in \mathfrak S$, we have the following:
	\begin{itemize}
            \item If $U\pitchfork V$ or $V \nest U$, then $\min\{d_U(\lambda_U, \rho^V_U), d_V(\lambda_V, \rho^U_V)\} < \theta$.
            \item If $U \nest V$, then $\min\{d_V(\lambda_V, \rho^U_V), \diam_U(\lambda_U \cup \rho^V_U(\lambda_V)\}< \theta$.
        \end{itemize}
\end{definition}

\begin{remark}
When $U \in \supp(\lambda)$, then $\lambda_U \in \partial \calC(U)$, and so necessarily $d_U(\lambda_U, \rho^V_U) = \infty$ when $V \pitchfork U$ or $V \nest U$, and $\diam_U(\lambda_U \cup \rho^V_U(\lambda_V)) = \infty$ when $U \nest V$.  Hence when $(\lambda_U)$ is $\theta$-consistent, then in all of these cases we have $d_V(\lambda_V, \rho^U_V)<\theta$.
\end{remark}

Observe that an immediate consequence of this definition is that if $(\lambda_U)$ is $\theta$-consistent, then $\{U \in \mathfrak S| \lambda_U \in \partial \calC(U)\}$ is pairwise orthogonal.

We can now prove our consistency statement for tuples associated to boundary points and hierarchy rays.

\begin{proposition}\label{prop:extended consistency}
    For every $L>0$, there exists $\theta_L = \theta_L(L, \mathfrak S)>0$ so that if $\lambda$ is either an $L$-hierarchy ray or a point in $\partial \calX$ with a basepoint $\go \in \calX$, then the tuple of projections 
    $$(\lambda_U) \in \prod_{U \in \mathfrak S} \left(\calC(U) \cup \partial \calC(U)\right)$$
    is $\theta_L$-consistent.

\end{proposition}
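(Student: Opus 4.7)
The plan is to reduce both cases to standard (base) consistency of interior points and then transfer the inequalities to the ``boundary'' tuple using the explicit formulas in Definitions \ref{defn:boundary projection} and \ref{defn:ray projection}. First I would dispose of the boundary-point case via Proposition \ref{prop:ray replace boundary}: given $\lambda\in\partial\calX$ and a basepoint $\go$, replace $\lambda$ by an $L$-hierarchy ray $\hh_\lambda$ whose projections satisfy items (2)--(3) of Lemma \ref{lem:ray for boundary}. Since $\pi_U(\hh_\lambda)$ and $\pi_U(\lambda)$ agree up to error $\theta''$ on all $U\notin\supp(\lambda)$ and represent the same boundary points for $U\in\supp(\lambda)$, extended $\theta$-consistency of the ray tuple implies extended $(\theta+O(\theta''))$-consistency of the boundary tuple. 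So it suffices to prove the proposition for hierarchy rays.

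Fix an $L$-hierarchy ray $\hh$ with support $\calS=\supp(\hh)$ (pairwise orthogonal by Lemma \ref{lem:big orth}). For an arbitrary pair $U,V\in\mathfrak S$, I would choose a threshold time $T_{U,V}$ so that for all $t\ge T_{U,V}$: (a) whenever $W\in\{U,V\}$ and $W\notin\calS$, the point $\pi_W(\hh(t))$ lies within $M_1=M_1(L,\mathfrak S)$ of $\hh_W$ in the sense of Lemma \ref{lem:no backtracking hp}, using the explicit formulas of Definition \ref{defn:ray projection}; and (b) whenever $W\in\{U,V\}\cap\calS$, the point $\pi_W(\hh(t))$ lies arbitrarily far along a $(1,20\delta)$-quasi-geodesic ray in $\calC(W)$ representing $\hh_W\in\partial\calC(W)$. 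Then I apply base consistency (Lemma \ref{lem:base consistency}) to $\hh(t)$ for $t\ge T_{U,V}$, obtaining $\theta$-consistency of the pair $(\pi_U(\hh(t)),\pi_V(\hh(t)))$; transferring via (a) handles all pairs where both $\hh_U,\hh_V$ are ``finite.''

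The remaining cases are those where at least one coordinate is at infinity; these I would break up as follows. If $U\in\calS$ and $V\pitchfork U$ (or $V\nest U$), then by (b) we have $d_U(\hh(t),\rho^V_U)\to\infty$, so base consistency forces $d_V(\hh(t),\rho^U_V)<\theta$ for all large $t$; combined with (a), this gives $d_V(\hh_V,\rho^U_V)<\theta+M_1$ as required by Definition \ref{defn:extended consistency}. The symmetric cases ($V\in\calS$ with $U\pitchfork V$ or $U\nest V$, but $U\notin\calS$) are identical. Finally if $U\nest V$ with $V\in\calS$, the nested half of extended consistency only asks for $\diam_U(\hh_U\cup\rho^V_U(\hh_V))<\theta_L$; but $\hh_U$ is \emph{defined} to be $\rho^V_U(\hh^U_V)$ in item (3) of Definition \ref{defn:ray projection}, and $\rho^V_U$ applied to any $(1,20\delta)$-quasi-geodesic representative of $\hh_V$ has diameter bounded in terms of $E$ and $\delta$ by the BGIA \ref{ax:BGIA} once the representative leaves a neighborhood of $\rho^U_V$, which is built into the definition of $\hh^U_V$. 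This yields the bound with a uniform constant.

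The hardest step, and the one I would spend the most care on, is verifying this last ``$U\nest V$, $V\in\calS$'' case when $U$ itself is not orthogonal to the other coordinates in question, because one needs to simultaneously track: (i) that the finite tuple entry $\hh_U$ defined via $\rho^V_U$ coincides coarsely with $\pi_U(\hh(t))$ for $t\geq T$ (so it is compatible with the limiting argument from base consistency), and (ii) that pairs $(U,W)$ with $U\nest V\in\calS$ and $W$ transverse to $V$ do not introduce new behavior---this follows from the BGIA applied via Lemma \ref{lem:rho project} and the finite complexity Axiom \ref{item:dfs_complexity}, so that the case analysis terminates. Collecting the constants from (a), (b), base consistency, and the BGIA yields the desired uniform $\theta_L=\theta_L(L,\mathfrak S)$.
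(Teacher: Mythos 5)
The proof you propose is a genuinely different route from the paper's. The paper proves the extended consistency inequalities directly, treating $\lambda\in\partial\calX$ and $\lambda\in\calX^{ray}$ in parallel and, in the cases where neither coordinate lies in $\supp^\perp(\lambda)$, constructing auxiliary witness points: a point $x\in\PP_Z$ in a product region, a point realizing two product regions simultaneously via Lemma~\ref{lem:double prod}, or a point $x\in\calX$ lifted via normalization ($E$-coarse surjectivity of $\pi_W$) from a far-out spot on a representative ray in $\calC(W)$, then invokes base consistency (Lemma~\ref{lem:base consistency}) of that auxiliary point. Your proposal instead uses $\hh(t)$ itself for large $t$ as the single auxiliary point, after first reducing boundary points to hierarchy rays via Proposition~\ref{prop:ray replace boundary}. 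That is a cleaner organization, and it matches what the paper actually does in its Case~(3) ($U,V\in\supp^\perp$) for rays.

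There is, however, a real gap in your step~(a). Lemma~\ref{lem:no backtracking hp} asserts only that $d_W(\hh(t),\hh(T_W))<M_1$ for $t\ge T_W$; it says the projection of the ray \emph{stabilizes}, not that the stable value is the \emph{defined} projection $\hh_W$ from Definition~\ref{defn:ray projection}. For $W\in\supp^\perp(\hh)$ the two coincide by definition. But for $W\notin\supp(\hh)\cup\supp^\perp(\hh)$, $\hh_W$ is a union of $\rho$-sets (when $W$ is transverse to or contains a support domain) or a BGIA-image $\rho^V_W(\hh^W_V)$ (when $W\nest V\in\supp(\hh)$), and showing that $\pi_W(\hh(t))$ converges to these sets requires base consistency of $\hh(t)$ plus a sub-case analysis on how $W$ relates to $\supp(\hh)$ — essentially the same diagram-chasing that makes up the paper's Cases (1) and (2). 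So your approach defers rather than avoids the case analysis. Relatedly, the transfer of the nested inequality $\diam_U(\lambda_U\cup\rho^V_U(\lambda_V))$ under a small perturbation of $\lambda_V$ is not a bare triangle inequality: it needs the BGIA to control $\rho^V_U$ along the path from $\lambda_V$ to $(\hh_\lambda)_V$, and only applies when $\lambda_V$ is far from $\rho^U_V$ (which is fine, since otherwise the other term of the min is small). Finally, note a dependency wrinkle in the boundary-to-ray reduction: Lemma~\ref{lem:ray for boundary}'s proof uses the gate map to $\hull_\calX(\go,\lambda)$, which relies on hierarchical quasi-convexity of that hull (Lemma~\ref{lem:hull is hqc}, proved in a later section). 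The paper's own organization accepts this forward reference, so you can too, but a fully self-contained proof of the present proposition — as the paper gives — has the virtue of not inheriting that dependency.
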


\begin{proof}

Let $U, V \in \mathfrak S$.  The proof is a case-wise analysis depending on how $U,V$ relate to each other and to the domains in $\supp(\lambda)$.

First, note that $U \perp V$ when $U,V \in \supp(\lambda)$, so we may exclude this.  Similarly, when $U \in \supp(\lambda)$, then $\lambda_V$ is given by Definitions \ref{defn:boundary projection} and \ref{defn:ray projection}, and we may that take $\theta_L = E$ in these cases.

There are three main cases: (1) $U,V \notin \supp^{\perp}(\lambda)$; (2) $U \notin \supp^{\perp}(\lambda)$ and $V \in \supp^{\perp}(\lambda)$; (3) $U, V \in \supp^{\perp}(\lambda)$.

\textbf{\underline{(1) $U,V \notin \supp^{\perp}(\lambda)$}}: There are three main subcases: (1.a) $U \pitchfork V$; (1.b) $U \nest V$, with the case of $V \nest U$ dealt with by the same argument as (1.b).

\underline{(1.a) $U \pitchfork V$ and $U,V \notin \supp^{\perp}(\lambda)$}:  First suppose that both $\calW_U = \{W \in \supp(\lambda)| W \pitchfork U \text{ or } W \nest U\} \neq \emptyset$ and $\calW_V \neq \emptyset$.  Then $\lambda_U = \bigcup_{W \in \calW_U} \rho^W_U$ and $\lambda_V = \bigcup_{Z \in \calW_V} \rho^Z_V$.

We deal with further subcases:

\begin{itemize}
\item There is some domain $Z \in \calW_U \cap \calW_V$.
\begin{itemize}
\item If $Z \nest U$, then $d_V(\rho^Z_V, \rho^U_V) < E$, and hence $d_V(\lambda_V, \rho^U_V) < 2E$.  Similarly, $d_U(\lambda_U, \rho^V_U)<2E$ when $Z \nest V$.  So we are done in this case.
\item Suppose instead that $Z \pitchfork U$ and $Z \pitchfork V$.  Let $x \in \PP_Z$, the product region for $Z$ in $\calX$ (see Subsection \ref{subsec:product region}).  Then $d_U(x,\rho^Z_U)<E$ and $d_V(x,\rho^Z_V)<E$ by Lemma \ref{lem:prod coord}.  Since the tuple $(\pi_U(x))$ is $\theta= \theta(\mathfrak S)$-consistent, it follows that one of $d_U(\rho^Z_U, \rho^V_U)<\theta + E$ or $d_V(\rho^Z_V, \rho^U_V)< \theta + E$ must hold, so we are done in this case.
\end{itemize}
\item Next, suppose that $\calW_U \cap \calW_V = \emptyset$.  Now let $W_U \in \calW_U$ and $W_V \in \calW_V$, and choose $x \in \PP_{W_U} \cap \PP_{W_V}$, which is possible by Lemma \ref{lem:double prod}.  Then $d_U(x, \rho^{W_U}_U)< E$ and $d_V(x, \rho^{W_V}_V)<E$, and one of $d_U(\rho^{W_U}_U, \rho^V_U)<\theta + E$ or $d_V(\rho^{W_V}_V, \rho^U_V)< \theta + E$ must hold, and so we are done in this case.
\end{itemize}

The next subcase of (1.a) is when $\calW_U \neq \emptyset$ but $\calW_V = \emptyset$.  Then there exists some (unique) $Z \in \supp(\lambda)$ so that $V \nest Z$, and $\lambda_V = \lambda^Z_V$, while $\lambda_U = \bigcup_{W \in \calW_U} \rho^W_U$.  Since $U \pitchfork V$, we cannot have that $Z \perp U$ and hence $Z \in \calW_U$.  But then $d_U(\lambda_U, \rho^Z_U) < E$ and so $d_U(\lambda_U, \rho^V_U)<2E$ since $V \nest Z$.  This proves this case.

The final subcase of (1.a) is when $\calW_U = \calW_V = \emptyset$.  Then our assumptions that $U \pitchfork V$ and $U,V \notin \supp^{\perp}(\lambda)$ provide a domain $W \in \supp(\lambda)$ with $U,V \nest W$.  Let $\gamma_U, \gamma_V \subset \calC(W)$ be $(1,20\delta)$-quasi-geodesic rays based at $\rho^U_W, \rho^V_W$, respectively, which are both  asymptotic to $\lambda_W \in \partial \calC(W)$.  Since they have the same limit, they are eventually $100\delta$-close.  Using $E$-coarse surjectivity of $\pi_W:\calX \to \calC(W)$ (Definition \ref{defn:normalized}), we can choose $x \in \calX$ so that $\pi_W(x)$ is as far away from $\rho^U_W, \rho^V_W$ as we want and within $E + 100\delta$ of both $\gamma_U,\gamma_V$.  The BGIA \ref{ax:BGIA} then implies that $d_U(x,\lambda_U)<E$ and $d_V(x,\lambda_V) < E$, while $\theta$-consistency of the tuple $(\pi_Z(x))$ implies that one of $d_U(\lambda_U, \rho^V_U)< E + \theta$ or $d_V(\lambda_V, \rho^U_V)<E + \theta$.  So we are done in this case.

This completes the proof of subcase (1.a).

\underline{(1.b) $U \nest V$ and $U,V \notin \supp^{\perp}(\lambda)$}

For this subcase, we note that if $Z \in \calW_U$, then $Z \in \calW_V$ since $U \nest V$ and therefore we cannot have either $Z \perp V$ or $V \nest Z$.  Hence we have two cases, when $\calW_U \neq \emptyset$ and when $\calW_U = \calW_V = \emptyset$.

So first suppose that $Z \in \calW_U \cap \calW_V$.  There are three subcases:
    \begin{itemize}
        \item If $Z \nest U$, then $Z \nest U \nest V$, in which case $d_V(\lambda_V, \rho^U_V) < 2E$ because $d_V(\rho^U_V, \rho^Z_V) < E$, and we are then done.
        \item If $Z \pitchfork U$ and $Z \nest V$, then let $x \in \PP_Z$.  By definition of the projections, we have $d_V(\lambda_V, \rho^Z_V)<E$ and $d_U(\lambda_U, \rho^Z_U)<E$, while $x \in \PP_Z$ implies that $d_U(x, \rho^Z_U)<E$.  Now if $d_V(\lambda_V, \rho^U_V)>20E + \theta$, then $\theta$-consistency of the tuple $(\pi_W(x))$  provides that $d_U(\rho^V_U(x_V),x_U)<\theta$ while Lemma \ref{lem:rho project} implies that $d_U(\rho^V_U(\rho^Z_V), \rho^Z_U)<3E$.  An application of the BGIA \ref{ax:BGIA} gives us that $d_U(\rho^V_U(\lambda_V), \rho^V_U(\rho^Z_V))<E$, and hence $d_U(\rho^V_U(\lambda_V), \lambda_U)<\theta + E$, completing the proof of this subcase.
        \item Suppose $Z \pitchfork U$, $Z \pitchfork V$, so that $d_U(\lambda_U, \rho^Z_U)<E$ and $d_V(\lambda_V, \rho^Z_V)<E$.  Suppose $d_V(\rho^U_V, \lambda_V)>10E$ and $x \in \PP_Z$.  Then $d_V(\lambda_V, \rho^Z_V)<E$ by definition of $\lambda_V$, while $d_V(\rho^Z_V, x)<\theta$ because $x \in \PP_Z$ (Lemma \ref{lem:prod coord}).  Now $\theta$-consistency says $d_U(\rho^V_U(x),x)<\theta$, while $d_U(x, \rho^Z_U)<\theta$ because $x \in \PP_Z$.  It follows then that $d_U(\lambda_U, \rho^V_U)<10\theta + 10E$. as required.
    \end{itemize}

The final subcase of (1.b) is when $\calW_U = \calW_V = \emptyset$.  Since $U \nest V$ and $\calW_V = \emptyset$, there must be a (unique) domain $W \in \supp(\lambda)$ so that $U \nest V \nest W$.  Note that $d_W(\rho^V_W, \rho^U_W)<E$.  Using $E$-surjectivity of $\pi_W:\calX \to \calC(W)$, we can choose a point $x \in \calX$ so that $\pi_W(x)$ is as far from $\rho^V_W,\rho^V_W$ as necessary and within $E+100\delta$ of two $(1,20\delta)$-quasi-geodesic rays limiting to $\lambda_W$ and based at $\rho^U_W,\rho^V_W$, respectively (which eventually $100\delta$-fellow travel).  By definition of $\lambda_U,\lambda_V$ and the BGIA \ref{ax:BGIA}, we have $d_U(\rho^W_U(x),\lambda_U)<2E$ and $d_V(\rho^W_V(x),\lambda_V)<2E$.  On the other hand $\theta$-consistency and the fact that $d_W(x,\rho^V_W)>10E$ implies that $d_V(x, \rho^W_V(x))<\theta$ and $d_U(x,\rho^W_U(x))<\theta$.  So if $d_V(\lambda_V, \rho^U_V)>10E + \theta$, then $d_U(\rho^V_U(\pi_V(x)),\rho^V_U(\lambda_V))<E$ and $d_U(\rho^V_U(\pi_V(x)), \pi_U(x))<\theta$.  Hence $d_U(\lambda_U, \rho^V_U(\lambda_V))<2E + \theta$, as required.

\underline{(2) $U \notin \supp^{\perp}(\lambda)$ and $V \in \supp^{\perp}(\lambda)$}:  There are three main subcases:

\begin{itemize}
    \item $U \pitchfork V$
    \begin{itemize}
        \item If $\calW_U \neq \emptyset$, then there is some $W \in \supp(\lambda)$ so that $W \pitchfork U$ or $W \nest U$, and $d_U(\lambda_U, \rho^W_U)<E$.  But since $V \in \supp^{\perp}(\lambda)$, we must have $W \perp V$.  Hence $d_U(\rho^V_U, \rho^W_U)<E$, and it follows that $d_U(\lambda_U, \rho^V_U)<2E$, and we are done.
        \item On the other hand, we cannot have $\calW_U = \emptyset$, for then there is some $W \in \supp(\lambda)$ with $U \nest W$, which implies either $V \pitchfork W$ or $V \nest W$, both of which are impossible because $V \perp W$.
    \end{itemize}
    \item It is not possible for $U \nest V$ under the current assumptions, which imply that there is some $W \in \supp(\lambda)$ satisfying one of the following:
    \begin{itemize}
        \item $W \pitchfork U$ and hence $W \nest V$ or $W \pitchfork V$, contradicting $V \in \supp^{\perp}(\lambda)$;
        \item $W \nest U$ and hence $W \nest V$, another contradiction;
        \item Or $U \nest W$, in which case either $V \nest W$ or $W \nest V$, both of which are contradictions.
    \end{itemize}
    \item Finally, if $V \nest U$, then there must be some $W \in \supp(\lambda)$ satisfying $W \pitchfork U$ or $W \nest U$, since if $U \nest W$ then $V \nest U \nest W$, which contradicts $V \in \supp^{\perp}(\lambda)$.  So when $W \pitchfork U$ or $W \nest U$, we have $d_U(\lambda_U, \rho^V_U)<2E$ since $d_U(\lambda_U, \rho^W_U) < E$ and $d_U(\rho^W_U, \rho^V_U)<E$ because $V \perp W$.
\end{itemize}

This completes the proof of (2).

\underline{(3) $U, V \in \supp^{\perp}(\lambda)$}: This subcase has two main subcases, where $\lambda \in \partial \calX$ and when $\lambda \in \calX^{ray}$.

In fact, when $\lambda \in \partial \calX$, we have $\lambda_U = \pi_U(\go)$ and $\lambda_V = \pi_V(\go)$ for our fixed basepoint $\go \in \calX$.  Hence consistency in this case follows from consistency of the projections of $\go$.

It remains to consider when $\lambda \in \calX^{ray}$.  Let $t_U,t_V \in [0,\infty)$ be times so that $d_U(\lambda(0), \lambda(t_U))+1 \geq \diam_U(\overline{\lambda})$ and similarly for $t_V$, as in Definition \ref{defn:ray projection}.

Suppose first that $U \pitchfork V$, and we may assume that $t_U \geq t_V$.  Then Lemma \ref{lem:no backtracking hp} implies that $d_V(\lambda_V, \lambda(t_U)) = d_V(\lambda(t_U), \lambda(t_V))< M_1 = M_1(\mathfrak S, L)$.  So if $d_U(\lambda(t_U),\rho^V_U) + E \geq d_U(\lambda_U, \rho^V_U)>\theta + E$, then $d_V(\lambda_V, \rho^U_V)<M_1+\theta+ E$, a required.

Finally, suppose that $U \nest V$ and $d_V(\lambda_V, \rho^U_V)>10E + 10M_1 + 10\theta$.  There will be two last subcases, depending on whether $t_U \geq t_V$ or $t_V \geq t_U$.

If $t_V \geq t_U$, then $d_U(\rho^V_U(\lambda_V), \rho^V_U(\lambda(t_V))) < E$ and $d_U(\lambda(t_V), \rho^V_U(\lambda(t_V))<\theta$, from which it follows that $d_U(\rho^V_U(\lambda_V), \lambda(t_V))< \theta + E$.  But then $d_U(\lambda_U, \rho^V_U(\lambda(V)) < E + \theta + M_1$, as required.

On the other (and final) hand, if $t_U \geq t_V$, then $d_V(\lambda(t_V), \lambda_V) < M_1$ and $d_V(\lambda(t_V), \lambda(t_U)) < M_1$.  Thus $d_V(\lambda(t_V), \rho^U_V)>E$, then the BGIA \ref{ax:BGIA} implies that $d_U(\rho^V_U(\lambda_V), \lambda_U) < 2E$, as required.

This completes the proof of (3) and of the lemma.

\end{proof}

\section{Hierarchical accounting techniques} \label{sec:qpu}

In this section, we establish some useful statements about the basic structure of HHSes.  They all elaborate on the Passing-Up Lemma \ref{lem:passing-up}, which is a core property of HHSes that follows from finite complexity and the Large Link Axiom \ref{ax:LL}.  In fact, we use the passing-up lemma to prove a strong version of the large links axiom in Proposition \ref{prop:strong ll} below, showing that the former can replace the latter as an axiom.

One of the main complications in this section is that it takes a bit of work to generalize Lemma \ref{lem:passing-up} to allow for hierarchy rays instead of just internal points of $\calX$.  While this makes the narrative a bit circuitous, it is necessary for the rest of the paper. 

Our first main goal is to prove a generalization of Lemma \ref{lem:passing-up} to hierarchy rays, which we do in Lemma \ref{lem:PU rays} below.  This proof will rely on a stronger version of Lemma \ref{lem:passing-up}, namely Strong Passing-up Proposition \ref{prop:strong pu}, which we will also want to extend to allow for hierarchy rays.  In order to minimize repetition of statements, we will begin by proving an extension of the Petyt-Spriano Lemma \ref{lem:PS} assuming the extended version of Lemma \ref{lem:passing-up}, namely Lemma \ref{lem:PU rays}.  Using the unextended versions of Lemmas \ref{lem:passing-up} and Lemma \ref{lem:PS}, we will prove the unextended version of the Strong Passing-up Proposition \ref{prop:strong pu}.  This will allow us to prove Extended Passing-up, Lemma \ref{lem:PU rays}, completing the loop and the actual proof of the extended version of the Petyt-Spriano Lemma \ref{lem:PS}, which then gives a proof of Extended Strong Passing-up Proposition \ref{prop:SPU}.

After this proof cycle is complete, we proceed with proving some other new facts.  These include a strong version of the Large Links property \ref{ax:LL} in Proposition \ref{prop:strong ll}, and some other useful consequences, namely: the Covering Lemma \ref{lem:covering} which we use frequently; the Bounding Containers Proposition \ref{prop:bounding containers}, which we need for various distance estimates; and finally an independent account of why changing thresholds in the distance formula works, Proposition \ref{cor:DF accounting}.

\subsection{No dense clusters, interior points}

In addition to Lemma \ref{lem:passing-up}, we need the following useful consequence due to Petyt-Spriano \cite[Lemma 5.4]{PS20}.  Roughly, it says that whenever a large number of relevant domains for a pair $x,y$ nests maximally into another relevant domain $W$, then their $\rho$-points cannot cluster into a small area along any geodesic $[x,y] \subset \calC(W)$.  In Lemma \ref{lem:PS} below, we will extend this to allow for hierarchy rays and boundary points with the same conclusion, and essentially the same proof just with appropriately modified arguments.

\begin{lemma}\label{lem:PS interior}
Let $(\calX, \mathfrak S)$ be an HHS and $D>50E$.  Let $x,y \in \calX$ and $W \in \mathfrak S$ have $d_W(x,y)>D$.  Suppose that \begin{enumerate}
    \item $U_1, \dots, U_n \sqsubsetneq W$ satisfy $d_{U_i}(x,y)>3E$.
    \item If $U_i \sqsubsetneq V \sqsubsetneq W$ for some $i$, then $d_V(x,y)\leq D$.
\end{enumerate}
Then if $n \geq P(D + 2E)$, then
$$\diam_W (\bigcup_{i=1}^n \rho^{U_i}_W) > D - 30E.$$
\end{lemma}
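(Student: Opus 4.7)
The plan is to argue by contradiction: suppose $\diam_W(\bigcup_{i=1}^n \rho^{U_i}_W) \leq D - 30E$, so all the $\rho$-sets cluster in a region $B \subset \calC(W)$ of diameter at most $D - 30E$. Since $d_W(x,y) > D$ and the cluster has small diameter, the geodesic $[\pi_W(x), \pi_W(y)]$ has combined tail length strictly greater than $30E$ outside $B$, so after swapping the roles of $x$ and $y$ if necessary I may assume $\pi_W(x)$ lies at distance greater than $15E$ from $B$. The strategy is to locate an auxiliary point $z \in \calX$ with $\pi_W(z)$ just past the $y$-side end of the cluster, and apply the Passing-Up Lemma~\ref{lem:passing-up} to the sub-pair $(z,y)$ to produce an intermediate domain $V \sqsubsetneq W$ above some $U_i$ with $d_V(x,y) > D$, contradicting hypothesis~(2).

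I would pick $\pi_W(z)$ on a geodesic $[\pi_W(x), \pi_W(y)]$ at distance roughly $E + \kappa_0$ past the last $\rho^{U_k}_W$ encountered on the $y$-side of the cluster, using $E$-coarse surjectivity of $\pi_W$. By construction, for every $i$ the point $\rho^{U_i}_W$ lies on the $x$-side of $\pi_W(z)$, so $d_W(z, \rho^{U_i}_W) > \kappa_0$, and the nested consistency inequality~\eqref{eq:nested consistency} yields $d_{U_i}(z, \rho^W_{U_i}(\pi_W(z))) \leq \kappa_0$ and $d_{U_i}(x, \rho^W_{U_i}(\pi_W(x))) \leq \kappa_0$. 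Applying BGIA~\ref{ax:BGIA} to the geodesic from $\pi_W(x)$ to $\pi_W(z)$, which lies at distance at least $E$ from every $\rho^{U_i}_W$, bounds $d_{U_i}(\rho^W_{U_i}(\pi_W(x)), \rho^W_{U_i}(\pi_W(z))) \leq E$, and hence $d_{U_i}(x, z) \leq 3E$ for every $i$. Combining with $d_{U_i}(x,y) > 3E$ and a slight sharpening of the same consistency estimates upgrades this to $d_{U_i}(z, y) > E$ for every $i$.

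Applying Lemma~\ref{lem:passing-up} with threshold $D + 2E$ to $(z,y)$ and the collection $\{U_i\}_{i=1}^n$ yields $V \in \mathfrak S_W$ with $U_i \sqsubsetneq V$ for some $i$ and $d_V(z, y) > D + 2E$. Provided $d_W(z, y) \leq D + 2E$ (which holds when $\pi_W(y)$ is not too far beyond the cluster), we conclude $V \neq W$, hence $V \sqsubsetneq W$. Rerunning the consistency and BGIA estimate with $V$ in place of $U_i$, using that $\rho^V_W$ is $\kappa_0$-close to $\rho^{U_i}_W$ by consistency of nested relative projections, yields $d_V(x, z) \leq 3E$, so
\[
d_V(x,y) \;\geq\; d_V(z,y) - d_V(x,z) \;>\; D,
\]
contradicting hypothesis~(2). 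The main obstacle is controlling $d_W(z,y)$: if $\pi_W(y)$ itself lies far from the cluster, then $d_W(z, y)$ may exceed $D + 2E$ and passing-up cannot immediately rule out $V = W$. In that sub-case the same argument runs symmetrically with the roles of $x$ and $y$ reversed and $\pi_W(z)$ placed just past the cluster on the $x$-side; the careful case analysis needed to handle all configurations of the endpoints relative to $B$ is the principal technicality.
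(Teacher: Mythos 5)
Your overall strategy — localize, apply Passing-Up, contradict hypothesis (2) — is the right one, and is indeed the Petyt--Spriano argument that the paper cites. But there are two genuine problems in the execution.

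The first is that your BGIA application is misdirected. You place $\pi_W(z)$ on the $y$-side of the cluster $B$, so that every $\rho^{U_i}_W$ lies between $\pi_W(x)$ and $\pi_W(z)$ on the geodesic. The geodesic $[\pi_W(x),\pi_W(z)]$ therefore passes \emph{through} the cluster and comes within $E$ of every $\rho^{U_i}_W$ (this is precisely what BGIA, applied to $[\pi_W(x),\pi_W(y)]$ and the hypothesis $d_{U_i}(x,y)>3E$, forces). So the hypothesis of BGIA fails on $[\pi_W(x),\pi_W(z)]$, and the bound $d_{U_i}(x,z)\leq 3E$ does not follow. What BGIA actually controls is $d_{U_i}(y,z)$, since $\pi_W(y)$ and $\pi_W(z)$ are on the same side of $B$ and $[\pi_W(y),\pi_W(z)]$ does stay far from the $\rho^{U_i}_W$'s; this gives $d_{U_i}(y,z)$ \emph{small}, hence $d_{U_i}(x,z)$ large. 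That is the opposite of what your downstream Passing-Up application needs, which was applied to the pair $(z,y)$.

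The second, deeper issue is that a single auxiliary point cannot close the argument. If you fix the first issue by placing $z$ on the $x$-side instead, then $d_{U_i}(x,z)$ is small and $d_{U_i}(z,y)>E$, and you apply Passing-Up to $(z,y)$ — but to rule out $V=W$ you need $d_W(z,y)\leq D$, i.e.\ you need $\pi_W(y)$ to be close to $B$. Your symmetric swap places $z$ on the $y$-side and applies Passing-Up to $(x,z)$, which needs $\pi_W(x)$ close to $B$. Since $d_W(x,y)>D$ and $\diam_W(B)\leq D-30E$, the combined tail length outside $B$ exceeds $28E$, but nothing prevents \emph{both} tails from being arbitrarily long; when that happens neither orientation of your construction controls the $\calC(W)$-distance of the pair you pass up on. This is exactly the case your ``technicality'' sentence glosses over, and the swap does not resolve it. The correct fix, which the paper's proof of the ray version (Lemma~\ref{lem:PS}) makes explicit, is to introduce \emph{two} auxiliary points $z^-,z^+$, one just past each end of $B$ (or set $z^\pm$ equal to the relevant endpoint if that endpoint is already within $\sim 10E$ of $B$). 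Then $d_W(z^-,z^+)\leq D$ is forced by the diameter bound on $B$, BGIA on the two ``outer'' subgeodesics gives $d_{U_i}(x,z^-)\leq E$ and $d_{U_i}(y,z^+)\leq E$ so $d_{U_i}(z^-,z^+)>E$, and Passing-Up applied to $(z^-,z^+)$ yields $V\sqsubsetneq W$ with $d_V(z^-,z^+)>D+2E$, whence $d_V(x,y)>D$ by BGIA again, the contradiction. With the two-point modification, your outline becomes the paper's argument.
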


\subsection{Strong passing-up}\label{subsec:strong pu}

Our first goal is to prove a strengthened version of the Passing-Up Lemma \ref{lem:passing-up}, which will allow us to generalize that lemma to allow for hierarchy rays.

Roughly, the stronger version says that not only does having a very large number of relevant domains between $x,y \in \calX \cup \calX^{ray}$ guarantee the existence of a very large domain $W$ between them, but we can both force arbitrarily many of the relevant domains to be contained in $W$ while also forcing the $\rho$-sets of this colelction to spread out in $\calC(W)$.

The second conclusion also requires some notation and a discussion.  The idea is that not only can we force the $\rho$-sets to spread out, but we can force them to spread out in a relatively efficient manner.  For this, we need a notion of subdivision of an efficient path between points in $\calC(W) \cup \partial \calC(W)$.

\begin{definition}[$\sigma$-subdivision] \label{defn:sigma subdivision}
Let $\gamma:I \to \calC(W)$ be a $(1,20\delta)$-quasi-geodesic path (i.e., segment, ray, or bi-infinite) in $\calC(W)$ between $\pi_W(x),\pi_W(y)$ where $x,y\in \calX \cup \calX^{ray}$.  For $\sigma>0$, we say that a subdivision $\{x_i\}$ of $I$ is a $\sigma$-\emph{subdivision} of $\gamma$ if the $x_i$ decompose $I$ into a collection of subintervals $[x_i, x_{i+1}]$ so that for all but at most one $i$ we have $|x_{i+1}-x_i| = \sigma$, with the (possibly nonexistent) extra subinterval for which $|x_{i+1} - x_i| < \sigma$.
\end{definition}

We note that when one or both of $x$ or $y$ is a ray with $W \in \supp(x) \cup \supp(y)$, then a $\sigma$-subdivision does not determine an extra possibly shorter subinterval.

Suppose now that $x,y \in \calX \cup \calX^{ray}$, $K_1 \geq 50E$, $\calV' \subset \Rel_{K_1}(x,y)$ and $W \in \Rel_{K_1}(x,y)$ so that $V \nest W$ for all $V \in \calV'$.  Let $\gamma:I \to \calC(W)$ be a $(1,20\delta)$-quasi-geodesic path between $x,y$ in $\calC(W)$.

Given a $\sigma$-subdivision of $\gamma$, let $\calW_i$ denote the set of domains $V \in \calV'$ so that $p_{\gamma}(\rho^V_W) \cap [x_i, x_{i+1}] \neq \emptyset$, where $p_{\gamma}: \calC(W) \to \gamma$ is closest point projection.  Note that any given $V \in \calV'$ belongs in at most two $\calW_i$ when $\sigma \geq 10E + 20\delta$.  See Figure \ref{fig:qpu}.

\begin{figure}
    \centering
    \includegraphics[width=\textwidth]{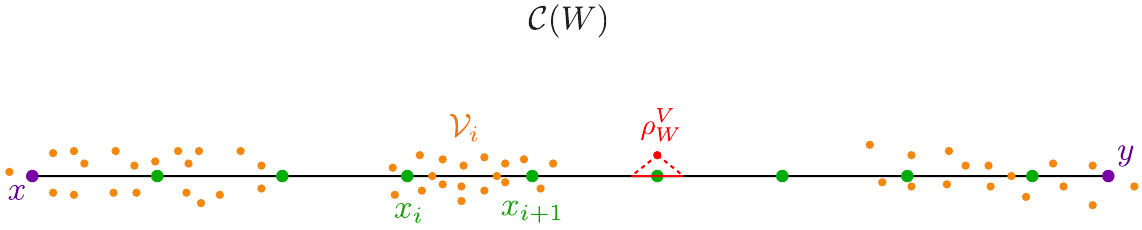}
    \caption{A $\sigma$-subdivision: In the proof of Proposition \ref{prop:strong pu}, subdividing $\gamma$ allows us to iteratively apply Lemma \ref{lem:PS interior}.  The $\rho$-points (in orange) must lie $E$-close to $\gamma$, and have closest point projections to $\gamma$ with diameter bounded by $2E$, as highlighted by $\rho^V_W$ (in red).  Each such projection therefore intersects at most two of the $[x_i,x_{i+1}]$, and hence the $
    \#\calW_i$ coarsely account for $\#\calV'$.}
    \label{fig:qpu}
\end{figure}

The second conclusion of the proposition says that, given any $n$, we can increase the number of domains in $\calV$ so that at least $n$ of the $\calV_i$ are nonempty.  This specific conclusion will be necessary in Subsection \ref{subsec:bipartite}.

The proof of this stronger proposition proceeds via an inductive argument using the Petyt-Spriano Lemma \ref{lem:PS interior} as a hammer.

\begin{proposition}[Strong passing-up]\label{prop:strong pu}
For any $K_2 \geq K_1 \geq 50E$, there exists $P_1 = P_1(K_1, K_2)>0$ so that for any $x, y\in \calX$, if $\calV \subset \Rel_{K_1}(x,y)$ with $\#\calV > P_1$, then there exists $W \in \Rel_{K_2}(x,y)$ and $\calV' \subset \calV$ so that $V \nest W$ for all $V \in \calV'$ and 
$$\diam_W\left(\bigcup_{V \in \calV'} \rho^V_W\right) > K_2.$$

Moreover, for any $\sigma\geq 10E + 20\delta$ and $n \in \mathbb{N}$, there exists $P_2(K_1,K_2,\sigma, n)>0$ so that if $\#\calV > P_2$, then we can arrange the following to hold:
\begin{itemize}
    \item If $\gamma:I \to \calC(W)$ is a $(1,20\delta)$-quasi-geodesic in $\calC(W)$ between $x,y$ and $\{x_i\}$ is a $\sigma$-subdivision of $\gamma$ determining sets $\calW_i$ as above, then 
    $$\#\{1 \leq i \leq k| \calW_i \neq \emptyset\} \geq n.$$
\end{itemize}
\end{proposition}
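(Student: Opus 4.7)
\emph{Strategy and first part.} The plan is to combine Lemma \ref{lem:PS interior} with iterated Passing-Up (Lemma \ref{lem:passing-up}) and finite complexity (Axiom \ref{item:dfs_complexity}) for bookkeeping. I would set $D = K_2 + 30E$ so that Lemma \ref{lem:PS interior}'s spread conclusion reads $> K_2$. For each $V \in \calV$, assign a $\nest$-maximal $\tilde V \in \Rel_D(x,y)$ containing $V$; only boundedly many $V$ can lack such an ancestor, since applying Lemma \ref{lem:passing-up} to the exceptional set would immediately produce one. The set $\{\tilde V\}$ is an antichain of $\nest$-maximal elements in $\Rel_D$, and a second application of Lemma \ref{lem:passing-up} to it (with threshold $D+1$) would contradict maximality; hence $\#\{\tilde V\} \leq P(D+1)$. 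Pigeonhole then selects a common $W \in \Rel_D$ that is $\tilde V$ for a large subset $\calV_W \subset \calV$. To enforce hypothesis (2) of Lemma \ref{lem:PS interior} (no intermediate $\Rel_D$-domain strictly between a $U_i$ and $W$), I perform a recursive descent inside $W$: if many $V \in \calV_W$ have no strict $\Rel_D$-intermediate ancestor in $\mathfrak S_W$, apply Lemma \ref{lem:PS interior} directly to that subfamily; otherwise, the strict intermediate ancestors $V^{**}$ form an antichain in $\mathfrak S_W$ of size $\leq P(D+1)$, pigeonhole yields a common $V^{**}_0 \sqsubsetneq W$ shared by many $V$, and the argument recurses inside $V^{**}_0$. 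Finite complexity terminates the recursion in at most $n$ steps, giving $P_1(K_1, K_2) \approx P_{\mathrm{PS}}(D+2E) \cdot P(D+1)^n$, where $P_{\mathrm{PS}}$ is the constant from Lemma \ref{lem:PS interior}.

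\emph{Moreover part.} I would argue by contradiction and iteration. Suppose some $\sigma$-subdivision of a $(1,20\delta)$-quasigeodesic $\gamma$ in $\calC(W)$ has fewer than $n$ nonempty $\calW_i$. Then the $\rho^V_W$ for $V \in \calV'$ are confined to at most $n-1$ clusters along $\gamma$; pigeonhole extracts $\calV'' \subset \calV'$ whose $\rho$-sets lie in a ball of radius $O(\sigma)$ in $\calC(W)$. Using Partial Realization (Axiom \ref{item:dfs_partial_realization}), I produce auxiliary points $x', y' \in \calX$ with $\pi_W(x'), \pi_W(y')$ straddling this cluster, and the Bounded Geodesic Image Axiom \ref{ax:BGIA} gives $d_V(x', y') \succ d_V(x, y)$ for all $V \in \calV''$. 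Applying Lemma \ref{lem:passing-up} to $\calV''$ at $(x', y')$ produces an ancestor $W^* \supsetneq V$ with $d_{W^*}(x', y') > D+1$; since $d_W(x', y')$ is bounded by the ball radius, $W^* \neq W$, and consistency forces $W^* \sqsubsetneq W$. This $W^*$ is a new strict container inside $W$ on which to iterate the first part of the proposition. Finite complexity again bounds the number of such iterations, yielding the uniform $P_2(K_1, K_2, \sigma, n)$.

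\emph{Main obstacle.} The chief difficulty is the pigeonhole bookkeeping: the recursive descent enforcing hypothesis (2) of Lemma \ref{lem:PS interior} may collapse distinct $V$'s at each level, so $\#\calV$ must be taken large enough to absorb $O(n)$ rounds of pigeonhole loss, where $n$ is the complexity of $(\calX, \mathfrak S)$. A secondary difficulty in the moreover part is converting a geometric $\rho$-set cluster into a hierarchical statement producing the new intermediate container $W^*$; this requires coordinated use of Partial Realization and the Bounded Geodesic Image Axiom to transfer curve-graph data between $W$ and the clustered $V$'s, together with consistency (Axiom \ref{item:dfs_transversal}) to constrain $W^*$ to lie in $\mathfrak S_W$ rather than being transverse or orthogonal to $W$.
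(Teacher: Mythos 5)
Your argument for the first conclusion takes essentially the same route as the paper: both set the working threshold at $D = K_2 + 30E$, both use the Passing-Up Lemma plus the antichain bound to select a common maximal container $W$ by pigeonhole, and both perform a recursive descent down the $\nest$-lattice (terminating by finite complexity) so that Lemma \ref{lem:PS interior} applies at the bottom level. The only quibble is a naming collision: you use $n$ simultaneously for the complexity of $(\calX, \mathfrak S)$ and for the target count in the ``moreover'' clause.

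Your ``moreover'' argument, however, takes a genuinely different and substantially longer route than the paper's. The paper's approach is direct: since each $V \in \calV_m$ is $\nest_{\calU}$-maximal below $U_m$, the subfamily $\calW_i$ landing in a single subdivision interval satisfies the hypotheses of Lemma \ref{lem:PS interior} with $D = \max\{\sigma, 52E\}$, so (by the contrapositive) $\#\calW_i$ is uniformly bounded; since $\calV_m = \bigcup_i \calW_i$ with each $V$ in at most two $\calW_i$, taking $\#\calV_m$ large forces at least $n$ nonempty $\calW_i$. You instead argue by contradiction: cluster the $\rho$-sets, build straddling points $x', y'$, and apply Passing-Up to push into a strict subdomain $W^* \sqsubsetneq W$, then iterate via finite complexity. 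This works in outline, but it essentially re-derives the localization mechanism already packaged into Lemma \ref{lem:PS interior}, which you could simply have cited as the paper does. Two loose ends you should tighten if you go your route: (i) the tool you need to produce $x', y'$ with prescribed $\pi_W$-projections near the cluster is \emph{normalization} (coarse surjectivity of $\pi_W$, Definition \ref{defn:normalized}), not Partial Realization --- the latter concerns realizing tuples over pairwise-\emph{orthogonal} domains, which is not what is happening here; and (ii) when you pass to $W^*$ and iterate, you silently assume the diameter conclusion of the first part re-establishes itself at $W^*$, which requires re-running the Lemma \ref{lem:PS interior} descent at that lower level rather than just observing $W^* \in \Rel_{K_2}$. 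The paper's direct bound on $\#\calW_i$ avoids both complications.
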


\begin{proof}

In the proof, we first set $\calU = \Rel_{K_2+30E}(x,y)$.  Let $\calU_0$ be the set of $\nest_{\calU}$-maximal domains in $\calU$.  Let $\calV'$ denote the set of $V \in \calV$ so that $V$ does not nest into any domain in $\calU_0$.  Note that
\begin{itemize}
    \item $\#\calU_0 < P(K_2)$, otherwise the Passing-Up Lemma \ref{lem:passing-up} would produce a domain in $\calU$ containing some domain in $\calU_0$, violating our $\nest$-maximality assumption on $\calU_0$;
    \item Similarly, $\#\calV'< P(K_2)$, otherwise Lemma \ref{lem:passing-up} would produce some domain in $\calU$ containing some domain in $\calV'$, violating its definition.
\end{itemize}

Hence there is some $U_0 \in \calU_0$ so that if $\calV_0 = \{V \in \calV | V \nest 
U_0\}$, then we can take $\#\calV_0$ to be as large as necessary.

Now either $U_0$ satisfies both conclusions of the proposition, or not.  If not, then let $\calU_1$ be the set of $\nest_{\calU_0}$-maximal domains in $\calU_0$.  As above, we have $\#\calU_1 < P(K_2)$ and all but at most $P(K_2)$ domains in $\calV_0$ must nest into some domain in $\calU_1$.  Hence there exists some $U_1 \in \calU_1$ so that if $\calV_1 = \{V \in \calV_0 | V \nest U_1\}$, then we can take $\#\calV_1$ to be as large as necessary.

We can repeat this argument until arriving at the second level from the bottom of the $\nest_{\calU}$-lattice, say at step $m$.  This produces for us some $U_m \in \calU_m$ and $\calV_m = \{V \in \calV_{m-1}| V \nest U_m\}$.  But now all $V \in \calV_m$ satisfy $V \nest_{\calU_m} U_m$, and hence we may apply Lemma \ref{lem:PS interior}, which says that if $\#\calV_m > P(K_2 + 30E + 2E)$, then
$$\diam_W\left( \bigcup_{V \in \calV_n} \rho^V_{U_m}\right) > K_2 + 30E - 30E = K_2.$$

To complete the proof of the first conclusion, we observe that at each step of the argument, we only ignored at most $P(K_2)$-many domains from $\calV_i$ to guarantee they each nested into some domain in $\calU_{i+1}$, and then only needed to divide the number of domains in $\calV_i$ by at most $P(K_2)$ to find some very large subset all of which nested into $U_{i+1}$.  Finally, the number of steps was bounded by the number of levels in the $\nest_{\calU}$-lattice, which is bounded above by the number of levels in the $\nest_{\mathfrak S}$-lattice, which is controlled solely by $\mathfrak S$.

The ``moreover'' part is essentially now a consequence of Lemma \ref{lem:PS interior} as follows.  Let $\calV_m$ and $U_m$ be as above in this proof, and the $\calW_i$ be defined as the discussion before the proposition.  Each $V \in \calV_m$ is contained in at most two $\calW_i$, while we have $\calV_m = \bigcup_i \calW_i$.  Since each $V \in \calV_m$ is $\nest_{\calU}$-maximal in $U_m$, we can use Lemma \ref{lem:PS interior} to bound $\calW_i$ by $2P(\max\{\sigma, 52E\})$ for each $i$.  Hence given some $n$, we can increase $\#\calV_m$ as necessary to force at least $n$-many indices $i$ so that $\calW_i \neq \emptyset$, as required.  This completes the proof. 
\end{proof}

\subsection{Passing-up for hierarchy rays}

We now prove the hierarchy ray version of the Passing-up Lemma \ref{lem:passing-up}.

The proof involves approximating the rays by points along the hierarchy rays which have coarsely the same projections as the rays to the small domains in question, similar to the proof of Lemma \ref{lem:PS} above.  The idea is then to use some sort of passing-up type statement to produce a large domain $W$ for the approximation points which contains at least one of the small domains for the rays.  We need Proposition \ref{prop:strong pu}, instead of Lemma \ref{lem:passing-up}, to deal with the case where $W$ is not in the support of either ray, since it allows us to force the geodesic between the projections of the rays to be long because it must pass close to a wide set of $\rho$-sets from our small domains.

\begin{lemma}\label{lem:PU rays}
There exists $PU:\mathbb N \to \mathbb N$, depending only on $\calX$ and $L$ such that the following holds:  Let $x,y \in \calX \cup \calX^{ray}$, and let $D > 50E$.  If $V_1, \dots, V_N \in \Rel_{5E}(x,y)$ with $N> PU(D)$, then there exists $W \in \Rel_D(x,y)$ and $i$ with $V_i \nest W$.  Moreover, if $V_i \nest Z$ for all $i$, then we can take $W \nest Z$.
\end{lemma}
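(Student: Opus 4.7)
The plan is to approximate any hierarchy rays by sufficiently far-out interior points, invoke the interior-point Strong Passing-up (Proposition \ref{prop:strong pu}) on those approximations, and then transfer the result back to the original rays using extended consistency (Proposition \ref{prop:extended consistency}) and the BGIA (Axiom \ref{ax:BGIA}).

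First, whenever $x$ is an $L$-hierarchy ray $\hh_x$, I use Lemma \ref{lem:no backtracking hp} to choose a time $t_x \in [0,\infty)$ large enough that the interior point $x' := \hh_x(t_x)$ simultaneously satisfies $d_{V_j}(x', \pi_{V_j}(\hh_x)) < E/2$ for every $V_j \notin \supp(\hh_x)$ in the finite collection $\{V_1,\dots,V_N\}$, and such that $\pi_{V_j}(x')$ lies at least $10E$ past any projection point of $y$ along a quasi-geodesic ray representing $\pi_{V_j}(\hh_x) \in \partial\calC(V_j)$ for every $V_j \in \supp(\hh_x)$ in the collection. If $x$ is already interior, set $x' = x$; define $y'$ analogously. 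Since the collection is finite, such times exist, and a case check over whether $V_j$ lies in $\supp(\hh_x) \cup \supp(\hh_y)$ shows that the resulting interior approximations satisfy $d_{V_j}(x', y') > 4E$ for each $j$.

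Next, apply Proposition \ref{prop:strong pu} to the pair $x',y' \in \calX$ with $K_1 = 4E$ and $K_2 = D + CE + C\theta_L$, where $C$ is a constant to be chosen. Setting $PU(D) := P_1(4E, K_2)$, the hypothesis $N > PU(D)$ produces a domain $W$ and a subcollection $\calV' \subseteq \{V_1,\dots,V_N\}$ with $V_i \nest W$ for each $V_i \in \calV'$ and
\[
\diam_W\Bigl(\bigcup\nolimits_{V_i \in \calV'} \rho^{V_i}_W\Bigr) > K_2.
\]
To conclude $W \in \Rel_D(x,y)$ for the original objects, split into cases. If $W \in \supp(\hh_x) \cup \supp(\hh_y)$, then $\pi_W(x)$ or $\pi_W(y)$ lies in $\partial\calC(W)$, so $d_W(x,y)$ is infinite. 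Otherwise $\pi_W(x),\pi_W(y) \in \calC(W)$, and I claim any $(1,20\delta)$-quasi-geodesic $\gamma$ between them must enter $\Neb_{E+\theta_L}(\rho^{V_i}_W)$ for each $V_i \in \calV'$. Indeed, if not, then both endpoints lie at distance exceeding $\theta_L$ from $\rho^{V_i}_W$, so extended consistency (Proposition \ref{prop:extended consistency}) places $\pi_{V_i}(x)$ within $\theta_L$ of $\rho^W_{V_i}(\pi_W(x))$ and likewise for $y$, while BGIA forces $\diam_{V_i}(\rho^W_{V_i}(\gamma)) \leq E$, contradicting $d_{V_i}(x,y) > 5E$ once $C$ is large enough. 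The spread of the $\rho^{V_i}_W$ then forces $\gamma$ to have length exceeding $D$, so $d_W(x,y) > D$. For the moreover clause, when all $V_i \nest Z$ I restrict the inductive container-finding inside the proof of Proposition \ref{prop:strong pu} to $\mathfrak S_Z$, which is closed under the operations used there, so the output $W$ automatically satisfies $W \nest Z$.

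The main obstacle is this transfer step: Proposition \ref{prop:strong pu} only guarantees a large $d_W(x',y')$ for the approximations, not for the original (possibly infinite) objects. The crucial feature I exploit is that Strong Passing-up provides not just a large distance but an intrinsic \emph{spread} of the $\rho^{V_i}_W$ inside $\calC(W)$, a property that forces a long $\gamma$ via BGIA plus extended consistency regardless of which interior approximations are used.
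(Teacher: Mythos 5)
Your overall strategy---approximate the rays by interior points via Lemma \ref{lem:no backtracking hp}, apply Strong Passing-Up \ref{prop:strong pu} to those approximations, and then transfer the spread of the $\rho^{V_i}_W$ back to the original objects via extended consistency and the BGIA---is precisely the route the paper takes. However, there is one genuine gap in your case analysis for the produced domain $W$.

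You assert that if $W \in \supp(\hh_x) \cup \supp(\hh_y)$, then one of $\pi_W(x),\pi_W(y)$ lies in $\partial\calC(W)$ and ``so $d_W(x,y)$ is infinite.'' This fails when $W \in \supp(\hh_x) \cap \supp(\hh_y)$ and $[\pi_W(x)] = [\pi_W(y)] \in \partial\calC(W)$: then $\pi_W(x)$ and $\pi_W(y)$ coincide as boundary points and $d_W(x,y) = \diam_{\calC(W)}(\pi_W(x)\cup\pi_W(y))$ is $0$, not infinite. The paper's proof explicitly confronts this sub-case and rules it out: when $[\pi_W(x)]=[\pi_W(y)]$, Definition \ref{defn:ray projection}(3) defines $\pi_{V_i}(x)$ and $\pi_{V_i}(y)$ (for $V_i\nest W$) via the tails of quasi-geodesic rays converging to the \emph{same} boundary point, so these projections must be $E$-close, contradicting $V_i \in \Rel_{5E}(x,y)$. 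Without that argument your case (1) does not close; you need to show this coincidence is impossible, not that the distance is infinite there.

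Two smaller points. First, you feed $K_1 = 4E$ into Proposition \ref{prop:strong pu}, whose hypothesis requires $K_1 \geq 50E$; the paper's own proof is a bit loose on this point too, but you should either raise the approximation threshold or note explicitly that the constants in Proposition \ref{prop:strong pu} can be relaxed, since as written $4E$ does not satisfy the hypothesis. Second, the paper discards the (at most $2\xi$) domains $V_i$ lying in $\supp(x)\cup\supp(y)$ before approximating, which sidesteps the care you have to take in verifying that your fixed cutoff times $t_x,t_y$ simultaneously make all the approximated projections large even in support domains; your version is workable but more delicate, and this cleaner reduction also interacts with the missing sub-case above.
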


\begin{proof}
Observe that if both $x,y \in \calX$, then this is just Lemma \ref{lem:passing-up}.  We deal with the case where both $x,y \in \calX^{ray}$, and indicate how to deal with the other cases afterwards.

Since the support sets of $x,y$ are pairwise-orthogonal and hence have cardinality bounded by $\xi = \xi(\mathfrak S)>0$, we may assume that the $V_i \notin \supp(x) \cup \supp(y)$ for all $i$ by increasing $PU(D)$ by $2\xi$.

By Lemma \ref{lem:no backtracking hp}, there exist times $t_{V_1}, \dots, t_{V_N}$ so that $d_{V_i}(x(t), x) < E$ and $d_{V_i}(y(t), y)<E$.  Choosing $t > \max\{t_{V_1}, \dots, t_{V_N}\}$ then gives
$$d_{V_i}(x(t), y(t)) - 2E > d_V(x,y) > 3E.$$

We now want to apply a passing-up type statement.  The effect of this will be to produce a larger $W$ which contains at least one of the $V_i$, and for which we can make $d_W(x(t),y(t))$ as large as necessary to conclude that $d_W(x,y)> D$.  When $W \in \supp(x) \cup \supp(y)$, the original Passing-up Lemma \ref{lem:passing-up} suffices.  The difficulty will come when $W \notin \supp(x) \cup \supp(y)$, and this is where we need the main part of Proposition \ref{prop:strong pu}.

 The Strong Passing-up Proposition \ref{prop:strong pu} implies that if $N > P_1(D + 10E)$, then there exists $W \in \Rel_{2D + 10E}(x(t),y(t))$ so that $V_i \nest W$ for all $i$ and
 $$\diam_W(\bigcup_{i}\rho^{V_i}_W) < 2D+10E.$$

There are two main cases: (1) $W \in \supp(x) \cup \supp(y)$, and (2) $W \notin \supp(x) \cup \supp(y)$.

In case (1), if $W \in \supp(x) - \supp(y)$, then $d_W(x,y) = \infty$ and we are done, and similarly when $W \in \supp(y) - \supp(x)$.  On the other hand, if $W \in \supp(x) \cap \supp(y)$, then we are also done if $[x] \neq [y] \in \partial \calC(W)$ for then $d_W(x,y) = \infty$.  But if $[x] = [y]$, then by definition of $\pi_{V_i}(x), \pi_{V_i}(y)$, we have $d_{V_i}(x,y) < E$, which is a contradiction of the assumption that $V_i \in \Rel_{5E}(x,y)$.  This takes care of case (1).

For case (2), both $\pi_W(x), \pi_W(y)$ are interior points of $\calC(W)$.  Since $V_i \in \Rel_{5E}(x,y)$ for all $i$, the BGIA \ref{ax:BGIA} implies that any geodesic $[x,y]_W$ between $x,y$ in $\calC(W)$ must pass within $E$ of every $\rho^{V_i}_W$.  But the diameter of these $\rho$-sets is $2D+10E$, and it follows then that the length of $[x,y]_W$ is at least $D$, and hence $W \in \Rel_D(x,y)$, as required.  We note that we only needed the approximators $x(t),y(t)$ to force the wide spread of these $\rho$-sets

Finally, if all domains $V_i\nest Z$ for some $Z \in \mathfrak S$, then we can choose $W \nest Z$ because Proposition \ref{prop:strong pu} allows it.  This completes the proof.
\end{proof}

\subsection{No dense clusters for rays}

We are now ready to prove the extended version of the Petyt-Spriano Lemma \ref{lem:PS interior} for rays:

\begin{lemma}[No dense clusters]\label{lem:PS}
Let $(\calX, \mathfrak S)$ be an HHS and $D>50E$.  Let $x,y \in \calX \cup \calX^{ray}$ and $W \in \mathfrak S$ have $d_W(x,y)>D$.  Suppose that \begin{enumerate}
    \item $U_1, \dots, U_n \sqsubsetneq W$ satisfy $d_{U_i}(x,y)>5E$.
    \item If $U_i \sqsubsetneq V \sqsubsetneq W$ for some $i$, then $d_V(x,y)\leq D$.
\end{enumerate}
Then if $n \geq P(D + 2E) + 2\xi$, then
$$\diam_W (\bigcup_{i=1}^n \rho^{U_i}_W) > D - 30E.$$
\end{lemma}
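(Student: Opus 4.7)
The strategy is to reduce to the interior-point case (Lemma \ref{lem:PS interior}) by replacing each hierarchy ray with a sufficiently distal interior approximation point. By Lemma \ref{lem:big orth}, $\#\supp(x), \#\supp(y) \leq \xi$, so at most $2\xi$ of the $U_i$ lie in $\supp(x)\cup\supp(y)$. Discarding these leaves at least $P(D+2E)$ of the $U_i$, none of which is a support domain of $x$ or $y$; since dropping domains only decreases $\diam_W(\bigcup_i \rho^{U_i}_W)$, it suffices to prove the conclusion for this reduced collection.

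For each of these remaining $U_i$, each intermediate domain $V$ with $U_i \sqsubsetneq V \sqsubsetneq W$, and (when $W \notin \supp(x)\cup\supp(y)$) for $W$ itself, Lemma \ref{lem:no backtracking hp} provides a threshold time past which the tail of the ray projects within a uniform constant of the ray's projection. Pick $t$ larger than all of these finitely many thresholds and set $\tilde x = x(t)$ if $x \in \calX^{ray}$ and $\tilde x = x$ otherwise; define $\tilde y$ analogously. If $W \in \supp(x) \cup \supp(y)$, enlarge $t$ further so that $d_W(\tilde x, \tilde y) > D$ (automatic since $d_W(x,y)=\infty$ in that case). By construction, $d_{U_i}(\tilde x, \tilde y) > 3E$ (the extra $2E$ in the $5E$ hypothesis absorbs the two-sided approximation error), $d_W(\tilde x, \tilde y) > D$ up to a uniform constant, and $d_V(\tilde x, \tilde y) \leq D$ up to a uniform constant for each intermediate $V$. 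Applying Lemma \ref{lem:PS interior} to $(\tilde x, \tilde y)$ at a threshold mildly inflated from $D$ (absorbed into additive constants in the conclusion and accommodated by the slack in the count) yields
\[
\diam_W\left(\bigcup_{i} \rho^{U_i}_W\right) > D - 30E.
\]

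The principal obstacle is constant bookkeeping: the no-backtracking constant $M_1 = M_1(L,\mathfrak S)$ is generally larger than $E$, so the transition from $(x,y)$ to $(\tilde x, \tilde y)$ must be paired with a uniform inflation of the interior threshold, and all such inflation must ultimately be absorbed into the stated constants. The slack built into the hypotheses, namely $5E$ versus $3E$ in condition (1) and the $+2\xi$ term in the count, is designed precisely to accommodate these adjustments without altering the form of the conclusion.
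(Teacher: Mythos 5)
Your reduction has a genuine gap at the step ``Pick $t$ larger than all of these finitely many thresholds.'' The collection of intermediate domains $V$ with $U_i \sqsubsetneq V \sqsubsetneq W$ is in general \emph{infinite} --- the finite-complexity axiom only bounds lengths of $\nest$-chains, not the number of domains at a given level (think of all the subsurfaces strictly between a fixed subsurface and the whole surface). Lemma \ref{lem:no backtracking hp} produces a threshold time $T_V$ for each such $V$, but these times need not be bounded, so no single $t$ works. Without that, hypothesis (2) of Lemma \ref{lem:PS interior} cannot be verified for $(\tilde x,\tilde y)$ with a uniform threshold $D'$: there may be intermediate $V$ with $d_V(x,y)\le D$ but $d_V(\tilde x,\tilde y)$ arbitrarily large because $\tilde x$ (or $\tilde y$) was cut off before its projection to $\calC(V)$ had settled. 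Selecting $t$ to handle only the $U_i$ and (when necessary) $W$ is fine; it is the universally-quantified hypothesis over intermediate domains that your substitution does not control.

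The paper's proof sidesteps this precisely. It uses the approximation $x(t)$ only to guarantee $d_{U_i}(x(t),y)>3E$ and (in the relevant case) to place a far-out point on the ray toward $\pi_W(x)$ in $\calC(W)$; it never needs the approximation to behave well in intermediate domains. It then re-runs the Petyt--Spriano localization directly in $\calC(W)$ (choosing $z^\pm$ near the $\rho^{U_i}_W$ along the geodesic, via normalization), applies the ray version of passing-up (Lemma \ref{lem:PU rays}) to $(z^-,z^+)$ to produce an intermediate $V$ with $d_V(z^-,z^+)>D+2E$, and then crucially uses the BGIA to push this estimate back to the \emph{original} $(x,y)$: since $\rho^V_W$ is $E$-close to the $\rho^{U_i}_W$, the geodesics from $z^-$ out toward $\pi_W(x)$ and from $z^+$ toward $\pi_W(y)$ avoid $\rho^V_W$, giving $d_V(x,z^-)\le E$ and $d_V(y,z^+)\le E$, hence $d_V(x,y)>D$. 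That contradicts hypothesis (2) \emph{for the original pair} --- no approximation control over intermediate domains is ever needed. If you want to salvage your reduction, you would need to adopt this BGIA transfer step rather than try to quantify over all intermediate domains when choosing $t$.
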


\begin{proof}

First, suppose the $U_i \nest W$ are as in the statement, but $\diam_W(\bigcup_{i=1}^n \rho^{U_i}_W) \leq D - 30E$.

Next, observe by ignoring the at-most $2\xi$-many domains contained in $\supp(x) \cup \supp(y)$ when $x,y \in \calX^{ray}$, we may assume that $\pi_{U_i}(x), \pi_{U_i}(y) \in \calC(U_i)$ for each $i$.

There are three cases: (1) $W \in \supp(x) \cap \supp(y)$, (2) $W \in \supp(x) - \supp(y)$ (and same with $x,y$ switching roles), and (3) $W \notin \supp(x) \cap \supp(y)$.  The arguments are similar and all use a localization trick from Petyt-Spriano's proof, and in fact their proof works essentially verbatim in case (3).  We will only present the mixed case (2), since it covers the other two.

Suppose then that $W \in \supp(x) - \supp(y)$.  Then $\pi_W(y) \in \calC(W)$ is an interior point, while $\pi_W(x) \in \partial \calC(W)$ determines a boundary point.  By Lemma \ref{lem:no backtracking hp}, there exists times $\{t_{U_1}, \dots, t_{U_n}\}$ so that if $T > t_{U_i}$ for all $i$ and $t \geq T$, then $d_{U_i}(x(t), x) < E$ for all $i$ (since we are assuming that $E > M_1$ from that lemma; see Section \ref{sec:constants}).  It follows from our assumptions and the first condition then that $d_{U_i}(x(t),y) > 3E$ when $t >T$.  In addition, since $\pi_W(x)$ is an unparameterized $(L,L)$-quasigeodesic, we may choose $t>T$ that $d_W(x(t), \rho^{U_i}_W) > 10D$ for all $i$. 

For the Petyt-Spriano localization argument, we will choose two points $z^-,z^+ \in \calX$ as local representatives of $x(t),y$, respectively, as follows.  If $d_W(y, \rho^{U_i}_W) < 10E$ for some $i$, then set $z^+ = y$.  Otherwise, let $y'$ be a point on $[x(t),y]_W \subset \calC(W)$ with $d_W(y', \rho^{U_i}_W) \leq D - 20E$ and $d_W(y',y) < d_W(\rho^{U_i}_W, y) - 5E$ for all $i$.  Since $\calX$ is normalized (Definition \ref{defn:normalized}), $\pi_W$ is $E$-coarsely onto, so we can choose $z^+ \in \calX$ with $d_W(z^+, y')\leq E$.  Similarly define $z^-$ by using $x(t)$ instead of $y$.

By assumption, $y, x(t)$ cannot simultaneously be $10E$-close to the union of the $\rho^{U_i}_W$, so $d_W(z^-,z^+)\leq D$ by construction.  On the other hand, the BGIA \ref{ax:BGIA} implies that $d_{U_i}(x(t),z^-)\leq E$ and $d_{U_i}(y,z^+)\leq E$, and hence $d_{U_i}(z^-,z^+)> E$ for all $i$.

Thus if $n \geq P(D+2E) + 2\xi$, then the Extended Passing-up Lemma \ref{lem:PU rays} provides some $V \nest W$ with $d_U(z^-,z^+)>D+2E$ and $U_i \nest V$ for some $i$.  Since $d_W(z^-,z^+)\leq D$, we have $V \sqsubsetneq W$ and thus $d_W(\rho^V_W, \rho^{U_i}_W) \leq E$ for each $i$.  The BGIA \ref{ax:BGIA} then implies that $d_U(x,z^-)\leq E$ and $d_U(y,z^+) \leq E$, and hence $d_U(x,y)>D$, which contradicts our assumptions.  This completes the proof.

\end{proof}

\begin{remark}
Besides the fact that it works for rays, the above statement is different from the original in \cite{PS20} in that it has a slightly larger smaller threshold, $5E$ versus $3E$, and a slightly larger lower bound for $n$, namely $P(D+ 2E) + 2\xi$.  The latter is to allow us to ignore the at-most $2\xi$-many domains in the support of the rays.
\end{remark}

\subsection{Strong passing-up for rays}

Finally, we get our full version of Strong Passing-up, in which we use the same notation:

\begin{proposition}\label{prop:SPU}
    For any $K_2 \geq K_1 \geq 50E$, there exists $P_1 = P_1(K_1, K_2)>0$ so that for any $x, y\in \calX \cup \calX^{ray}$, if $\calV \subset \Rel_{K_1}(x,y)$ with $\#\calV > P_1$, then there exists $W \in \Rel_{K_2}(x,y)$ and $\calV' \subset \calV$ so that $V \nest W$ for all $V \in \calV'$ and 
$$\diam_W\left(\bigcup_{V \in \calV'} \rho^V_W\right) > K_2.$$

Moreover, for any $\sigma\geq 10E$ and $n \in \mathbb{N}$, there exists $P_2(K_1,K_2,\sigma, n)>0$ so that if $\#\calV > P_2$, then we can arrange the following to hold:
\begin{itemize}
    \item If $\gamma$ is any geodesic in $\calC(W)$ between $x,y$ and $\{x_i\}$ is a $\sigma$-subdivision of $\gamma$ determining sets $\calV_i$ as before, then 
    $$\#\{i| \calV_i \neq \emptyset\} \geq n.$$
\end{itemize}

\end{proposition}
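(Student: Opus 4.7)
}

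The plan is to follow the proof of Proposition \ref{prop:strong pu} essentially verbatim, substituting the ray-aware analogues at each step. Every appeal to the interior-point Passing-up Lemma \ref{lem:passing-up} gets replaced by the hierarchy-ray version Lemma \ref{lem:PU rays}, and every appeal to the no-dense-clusters Lemma \ref{lem:PS interior} gets replaced by its extension Lemma \ref{lem:PS}. Both extended lemmas have the same logical form as their predecessors; they only pay a bounded additive surcharge (roughly $2\xi$) to ignore domains in $\supp(x) \cup \supp(y)$, whose cardinality is uniformly bounded by Lemma \ref{lem:big orth}. In particular, I will begin by discarding the at most $2\xi$-many domains of $\calV$ lying in these support sets, so that for all remaining $V \in \calV$ the projections $\pi_V(x)$ and $\pi_V(y)$ are interior coarse points in $\calC(V)$.

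Concretely, I would set $\calU = \Rel_{K_2+30E}(x,y)$ and use finite complexity to choose the set $\calU_0$ of $\nest_{\calU}$-maximal domains. Extended Passing-up (Lemma \ref{lem:PU rays}) bounds $\#\calU_0$ by a constant depending only on $K_2$ (otherwise a subcollection would pass up to a strictly larger $\calU$-domain, violating maximality), and likewise bounds the number of $V \in \calV$ not nesting into some element of $\calU_0$. Pigeonhole then produces $U_0 \in \calU_0$ together with a proportionally large subcollection $\calV_0 \subset \calV$ of domains nested in $U_0$. If $U_0$ already satisfies the spread conclusion, we stop; otherwise we iterate the same argument inside $\mathfrak S_{U_0}$, producing $U_1, U_2, \dots$, which must terminate in at most $n$ steps by finite complexity. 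At the final level every surviving $V$ is $\nest_{\calU_m}$-maximal in $U_m$, so the extended Lemma \ref{lem:PS} forces
\[
  \diam_{U_m}\Bigl(\bigcup_{V \in \calV_m} \rho^V_{U_m}\Bigr) > K_2,
\]
provided $\#\calV_m \geq P(K_2+30E+2E) + 2\xi$. Since each of the at most $n$ levels costs only a bounded multiplicative factor, choosing $P_1$ large enough in terms of these finitely-many losses guarantees enough domains survive, yielding the first conclusion with $W = U_m$ and $\calV' = \calV_m$.

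For the moreover clause, note that each $V \in \calV_m$ satisfies $\diam_W(\rho^V_W) \leq \xi \leq E$ and, by the BGIA \ref{ax:BGIA}, $\rho^V_W$ lies within $E$ of any $(1,20\delta)$-quasigeodesic $\gamma$ between $\pi_W(x)$ and $\pi_W(y)$. Hence the closest point projection of $\rho^V_W$ to $\gamma$ has diameter at most $3E$, so once $\sigma \geq 10E + 20\delta$ each $V$ contributes to at most two buckets $\calW_i$. Applying Lemma \ref{lem:PS} locally to each subinterval $[x_i, x_{i+1}]$ bounds $\#\calW_i$ above in terms of $\sigma$ and $\mathfrak S$, so to force $n$ nonempty buckets it suffices to make $\#\calV_m \geq 2n \cdot (\text{per-bucket bound})$, which can be arranged by enlarging $P_1$ to the required $P_2(K_1, K_2, \sigma, n)$.

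The main anticipated obstacle is that for hierarchy rays $\Rel_K(x,y)$ is typically countably infinite rather than finite (cf.\ Lemma \ref{lem:rel sets countable} in the paper), so one must verify that $\nest$-maximal elements of $\calU$ exist and that the pigeonhole step is valid at each level. Both concerns are dispatched by finite complexity: strictly ascending $\nest$-chains have length at most $n$, so $\nest$-maximal elements always exist in any nonempty $\calU$, and Lemma \ref{lem:PU rays} supplies the same uniform upper bound on $\#\calU_0$ used in the interior case. With these two inputs the inductive counting argument from Proposition \ref{prop:strong pu} carries over without further change.
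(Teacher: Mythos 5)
Your proposal matches the paper's proof exactly: the paper simply states that the argument for Proposition \ref{prop:strong pu} goes through verbatim once Lemma \ref{lem:passing-up} is replaced by Lemma \ref{lem:PU rays}, which is precisely your plan (and, as you note, the companion substitution of Lemma \ref{lem:PS} for Lemma \ref{lem:PS interior} is implicitly included). Your additional remarks about discarding support domains and the $2\xi$ surcharge, and about finite complexity handling the possibly infinite $\Rel_{K_1}(x,y)$, correctly anticipate the details the paper leaves implicit.
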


\begin{proof}
The proof of Proposition \ref{prop:strong pu} works essentially verbatim, replacing Lemma \ref{lem:passing-up} with Lemma \ref{lem:PU rays}.

\end{proof}

As a quick corollary, we observe the following useful Covering Lemma, which first appeared as \cite[Lemma 2.15]{DMS20} for interior points; see also \cite[Lemma 11.13]{CRHK} for a different proof of the interior point case:

\begin{lemma}[Covering]\label{lem:covering}
For any HHS $(\calX, \mathfrak S)$, there exists $N = N(\mathfrak S)>0$ so that the following holds.  For any $x,y \in \calX \cup \calX^{ray}$ and all $U \in \Rel_{50E}(x,y)$, there are at most $N$ domains $V \in \Rel_{50E}(x,y)$ so that $U \nest V$.
\end{lemma}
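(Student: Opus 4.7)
The plan is to argue by contradiction using the Strong Passing-up Proposition \ref{prop:SPU} together with the ``bottom line'' of the Transversality and Consistency Axiom \ref{item:dfs_transversal}: namely that whenever $U \nest V \nest W$, the relative projections $\rho^U_W$ and $\rho^V_W$ are $\kappa_0$-close in $\calC(W)$. This tight control on nested $\rho$-sets is exactly what will clash with the spreading conclusion of Strong Passing-up.

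Fix $x,y \in \calX \cup \calX^{ray}$ and $U \in \Rel_{50E}(x,y)$, and let $\calV = \{V \in \Rel_{50E}(x,y) \mid U \nest V\}$. Suppose for contradiction that $\#\calV > P_1(50E, 100E)$, where $P_1$ is the function provided by Proposition \ref{prop:SPU}. Then Proposition \ref{prop:SPU} (applied with $K_1 = 50E$ and $K_2 = 100E$, to the family $\calV \subset \Rel_{50E}(x,y)$) yields a domain $W \in \Rel_{100E}(x,y)$ and a subcollection $\calV' \subset \calV$ with $V \nest W$ for every $V \in \calV'$, and
$$\diam_W\left(\bigcup_{V \in \calV'} \rho^V_W\right) > 100E.$$

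On the other hand, for any $V \in \calV'$ we have $U \nest V \nest W$. The last clause of the Transversality and Consistency Axiom \ref{item:dfs_transversal}, applied with the pair $U \nest V$ and the ambient domain $W$ (which satisfies $V \nest W$), gives $d_W(\rho^U_W, \rho^V_W) \leq \kappa_0 \leq E$. Since every $\rho^V_W$ has diameter $\leq \xi \leq E$ in $\calC(W)$ and lies $E$-close to the fixed set $\rho^U_W$, the whole union $\bigcup_{V \in \calV'} \rho^V_W$ has diameter at most $4E$ in $\calC(W)$. This directly contradicts the lower bound $> 100E$ obtained above.

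Therefore the cardinality of $\calV$ must be at most $P_1(50E, 100E)$, and we may take $N = P_1(50E, 100E)$, which depends only on $\mathfrak S$. The main (indeed only) conceptual step is the observation that the nesting transitivity $U \nest V \nest W$ forces the $\rho$-sets $\rho^V_W$ to cluster next to $\rho^U_W$; everything else is an immediate application of Strong Passing-up. I do not anticipate significant obstacles, as the ray case is already absorbed into the statement of Proposition \ref{prop:SPU}.
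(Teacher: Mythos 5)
Your proof is correct and takes essentially the same approach as the paper: apply Strong Passing-up to the collection $\calV = \{V \in \Rel_{50E}(x,y) \mid U \nest V\}$ to produce a domain $W$ in which the $\rho$-sets of some subcollection spread out, then observe that $U \nest V \nest W$ forces all those $\rho^V_W$ to be $\kappa_0$-close to the fixed set $\rho^U_W$, a contradiction. The paper's version uses $K_1 = K_2 = 50E$ and states the clustering via $d_W(\rho^V_W, \rho^{V'}_W) < 2E$ rather than comparing each $\rho^V_W$ to $\rho^U_W$, but these are cosmetic differences and your reasoning via the last clause of the Transversality and Consistency Axiom is precisely what underlies it.
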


\begin{proof}
Let $U \in \Rel_{50E}(x,y)$ and $\calV = \{V \in \Rel_{50E}(x,y)| U \nest V\}$.  If $\#\calV > P_1(50E,50E)$, then the Strong Passing-Up Proposition \ref{prop:SPU} provides $W \in \Rel_{50E}(x,y)$ and a subcollection $\calV' \subset \calV$ so that $\diam_W \left(\bigcup_{V \in \calV'} \rho^{V'}_W\right)>50E$.  But this is impossible, since $U \nest V$ for all $V \in \calV'$, which forces that $d_W(\rho^V_W, \rho^{V'}_W) < 2E$, for any $V,V' \in \calV'$.  Hence $\#\calV < P_1(50E,50E)$, which depends only on $\mathfrak S$, as required.
\end{proof}

\subsection{Detecting rays from collections of domains}

In Sections \ref{sec:walls in Q} and \ref{sec:boundary compare}, we will need to be able to convert infinite collections of domains in the relevant set of a finite number of interior points and hierarchy rays into the data associated to one of the rays.  Since basic ideas required for such statements are contained in this section, we include them as the following lemma.

The first is an iterative applciation of Strong Passing-Up \ref{prop:SPU} for rays:

\begin{lemma}\label{lem:detecting rays}
Let $x, y \in \calX \cup \calX^{ray}$.  For any $K_1 \geq 50E$, if $\calV \subset \Rel_{K_1}(x,y)$ has $\#\calV = \infty$, then there exists an infinite subcollection $\calV' \subset \calV$ and a domain $W \in \supp(x) \cup \supp(y)$ so that $V \nest W$ for all $V \in \calV'$ and $\diam_W(\bigcup_{V \in \calV'} \rho^V_W) = \infty$.
\end{lemma}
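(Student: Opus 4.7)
The strategy is to take $W$ to be a $\nest$-minimal domain containing infinitely many $V \in \calV$, then use Strong Passing-Up \ref{prop:SPU} together with the consistency axioms to force $W \in \supp(x) \cup \supp(y)$ with its $\rho$-sets spreading infinitely in $\calC(W)$.

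Let $\mathcal{W} = \{W \in \mathfrak{S} : \#\{V \in \calV : V \nest W\} = \infty\}$. The $\nest$-maximal element of $\mathfrak{S}$ lies in $\mathcal{W}$ since $\calV$ is infinite, and finite complexity (Axiom \ref{item:dfs_complexity}) supplies a $\nest$-minimal $W^* \in \mathcal{W}$. Set $\calV^* = \{V \in \calV : V \nest W^*\}$, which is infinite. It suffices to prove that $\diam_{W^*}\bigl(\bigcup_{V \in \calV^*} \rho^V_{W^*}\bigr) = \infty$: this is the desired second conclusion with $\calV' = \calV^*$, and by the BGIA \ref{ax:BGIA} it forces $d_{W^*}(x,y) = \infty$ (any geodesic in $\calC(W^*)$ between $\pi_{W^*}(x)$ and $\pi_{W^*}(y)$ passes within $E$ of every $\rho^V_{W^*}$, so a finite geodesic cannot carry unbounded $\rho$-spread), placing $W^* \in \supp(x) \cup \supp(y)$.

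Suppose for contradiction this diameter is some finite $D$. For each $K_2 > D$, apply Strong Passing-Up \ref{prop:SPU} to a sufficiently large finite subset of $\calV^*$ to obtain $W_{K_2}$ and a finite $\calV''_{K_2} \subset \calV^*$ with every $V \in \calV''_{K_2}$ nesting into $W_{K_2}$ and $\diam_{W_{K_2}}\bigl(\bigcup \rho^V_{W_{K_2}}\bigr) > K_2$. Since every such $V$ satisfies both $V \nest W^*$ and $V \nest W_{K_2}$, the orthogonality-inheritance clause of Axiom \ref{item:dfs_orthogonal} rules out $W_{K_2} \perp W^*$; and the final clause of Axiom \ref{item:dfs_transversal} (giving $d_Z(\rho^V_Z, \rho^{W^*}_Z) \leq \kappa_0$ whenever $W^* \nest Z$ or $W^* \pitchfork Z$ with $V \not\perp Z$), combined with the spread bound $K_2$, rules out both $W^* \pitchfork W_{K_2}$ and $W^* \sqsubsetneq W_{K_2}$ strict. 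Hence $W_{K_2} \nest W^*$ in every case.

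If $W_{K_2} = W^*$ for any $K_2 > D$ the spread bound immediately contradicts $D$, so $W_{K_2} \sqsubsetneq W^*$ strictly for every $K_2$. If $\{W_{K_2}\}$ is finite, pigeonhole gives a single $W \sqsubsetneq W^*$ realized as $W_{K_2}$ for infinitely many $K_2$; unbounded growth of the $\rho$-spread in $\calC(W)$ together with the $\xi$-bound on individual $\rho$-sets forces $\#\bigcup_{K_2} \calV''_{K_2} = \infty$, so $W \in \mathcal{W}$, contradicting minimality. If $\{W_{K_2}\}$ is infinite, apply Strong Passing-Up \ref{prop:SPU} to $\{W_{K_2}\}$ and re-run the identical consistency argument to produce $W' \nest W^*$: either $W' = W^*$ (contradicting $D$, since each $\rho^{W_{K_2}}_{W^*}$ is $\kappa_0$-close to $\rho^V_{W^*}$ for $V \in \calV''_{K_2}$) or $W' \sqsubsetneq W^*$ strictly with infinitely many $V \in \calV$ nesting into it (using Covering Lemma \ref{lem:covering} to control the multiplicity with which each $V$ is counted across the $W_{K_2}$), again contradicting minimality. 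The main obstacle is exactly this consistency-based rigidity step, which traps every Strong Passing-Up output inside $W^*$; without it the iteration could escape $W^*$ and the minimality of $W^*$ would not close up the contradiction.
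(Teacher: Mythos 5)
Your strategy is genuinely different from the paper's. The paper iterates Strong Passing-Up \emph{upward} through the nesting lattice: each pass produces larger and larger witnesses $W^k_n$, with no need to locate them relative to a fixed domain, and termination comes from finite complexity applied to the chains $V_1 \sqsubsetneq V_2 \sqsubsetneq \cdots \sqsubsetneq W^k_n$ that accumulate across passes. You instead fix a $\nest$-minimal $W^*$ with infinitely many elements of $\calV$ nesting into it, and rely on the consistency axioms to trap every Strong Passing-Up output inside $W^*$ — a rigidity step the paper does not need. That trapping argument is correct as you give it (orthogonality-inheritance and the final clause of Axiom \ref{item:dfs_transversal} do the work), and it is a clean observation.

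However, your Case B has a genuine gap. A \emph{single} application of Strong Passing-Up to $\{W_{K_2}\}$ with a fixed large parameter yields only a finite subcollection of the $W_{K_2}$ nesting into the output $W'$, and hence only finitely many $V \in \calV$ nesting into $W'$. The conclusion you assert — ``$W' \sqsubsetneq W^*$ strictly with infinitely many $V \in \calV$ nesting into it'' — does not follow from one application, and the Covering Lemma alone cannot upgrade ``finitely many'' to ``infinitely many.'' To close this, you must repeat the Case A pattern at the second level: apply Strong Passing-Up to large finite subsets of $\{W_{K_2}\}$ for all choices of the parameter, split on whether the resulting outputs $\{W'_{K'_2}\}$ are finite (pigeonhole gives a repeating $W'$ with unbounded $\rho$-spread, hence infinitely many distinct $W_{K_2}$ and hence — via the Covering Lemma — infinitely many distinct $V$ nesting in), or infinite (recurse one level deeper). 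Only then does finite complexity (bounding the length of the chains $V \sqsubsetneq W_{K_2} \sqsubsetneq W' \sqsubsetneq \cdots \sqsubsetneq W^*$ that this recursion builds) guarantee termination, after which some domain strictly below $W^*$ lands in $\mathcal{W}$, giving the contradiction. Once you carry out that recursion explicitly, your proof is essentially a reorganization of the paper's iterative argument, with the upfront minimality taking the place of the paper's explicit chain-length bound.
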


\begin{proof}

The proof is inductive.  At each step, we either construct some domain in the support of $x$ or $y$, in which case we are done, or otherwise we construct a properly nested chain of relevant domains for $x,y$, which must have bounded length by finite complexity, Axiom \ref{item:dfs_complexity}.

For each $n>0$, apply Strong Passing-Up \ref{prop:SPU} to obtain a collection $\calV^1_n \subset \calV$ and a domain $W^1_n \in \Rel_{n}(x,y)$ so that $V \nest W^1_n$ for all $V \in \calV^1_n$ and 
$$\diam_{W^1_n}(\bigcup_{V \in \calV^1_n} \rho^V_{W^1_n}) > 50En.$$

Consider the sequence $W^1_1, W^1_2, \dots \in \Rel_{50E}(x,y)$ of domains produced by this process.  If there exists a domain $W$ so that $W  = W^1_n$ for infinitely-many $n$, then we must have that $W \in \supp(x) \cup \supp(y)$, since $d_W(x,y) \succ n$ for infinitely-many $n$, with the constants of $\succ$ depending only on $\calX$.  Moreover, we can accomplish this infinite spread in $\calC(W)$ with domains in $\calV$ as follows.  First, throw out the $n$ for which $W^1_n \neq W$ and similarly relabel the $\calV^1_n$, then set $\calZ_1 = \bigcup_n \calV^1_n$.  Then it follows that $\diam_W(\bigcup_{V \in \calZ_1} \rho^Z_W)= \infty$, as required.

Suppose then that no such $W$ exists after this first iteration.  Hence we have an infinite distinct collection $\calV^1 = \{W^1_1, W^1_2, \dots\} \subset \Rel_{50E}(x,y)$.  Repeating this process, for each $n$, we obtain a subcollection $\calV^2_n \subset \calV^1$ and a domain $W^2_n$ so that $V \nest W^2_n$ for all $V \in \calV^2_n$ and $\diam_{W^2_n}(\bigcup_{V \in \calV^2_n} \rho^V_{W^2_n}) > 50En$.

This time, however, the domains in the $\calV^2_n$ need not be in $\calV$.  Nonetheless, we can find \emph{$\calV$-representatives} for each $\calV^2_n$ as follows.  Observe that for each $n$, there exists $V_2 \in \calV^2_n$ with $V_2 \sqsubsetneq W^2_n$.  However, by construction, $V_2 = W^1_k$ for some $k$, and hence there is some $V_1 \in \calV^1_k \subset \calV$ so that $V_1 \sqsubsetneq V_2 \sqsubsetneq W^2_n$.  This domain $V_1 \in \calV$ represents $W^2_n$ because $V_1 \nest W^2_n$ implies that $d_Z(\rho^{V_1}_Z, \rho^{W^2_n}_Z) < E$ for any domain $Z$ into which both $V_1$ and $W^2_n$ nest.  

Again, if there exists some $W$ so that $W = W^2_n$ for infinitely-many $n$, then necessarily $W \in \supp(x) \cup \supp(y)$, as before.  Moreover, as observed above, we can realize this infinite diameter in $\calC(W)$ by a spread of domains in $\calV$ by taking their $\calV$-representatives.  More specifically, we throw out the $n$ for which $W^2_n \neq n$, relabel, and set $\calZ_2 = \bigcup_n \calV^2_n$.  Then we can consider the collection $\calV'_2 \subset \calV$ of $\calV$-representatives for the domains in $\calZ_2$, and we get that
$$\diam_W(\bigcup_{V \in \calV'_2} \rho^V_W) = \infty,$$
as required.

If no such $W$ exists, then we can repeat this process.  By finite complexity (Axiom \ref{item:dfs_complexity}), this process can only be repeated boundedly-many times (controlled by $\calX$), because at each step $k$ and each $n$, we create a properly nested chain $V_1 \sqsubsetneq V_2 \sqsubsetneq \cdots \sqsubsetneq W^k_n$, where $V_1$ is the $\calV$-representative of $W^k_n$.  Hence at some finite step $k$, we must have that there exists some $W \in \mathfrak S$ so that we have $W = W^k_n$ for infinitely-many $n$.  Setting $\calZ = \bigcup_n \calV^k_n$, it follows then
$$\diam_W\left(\bigcup_{V \in \bigcup \calZ} \rho^V_W\right) = \infty,$$

while these $\rho$-sets must also be $(E+20\delta)$-close to any $(1,20\delta)$-quasi-geodesic ray (or biinfinite path) between $x,y$ in $\calC(W)$ by the BGIA \ref{ax:BGIA}.

Finally, if we set $\calV'$ to be the $\calV$-representatives for the domains in $\calV^k = \{W^k_1, W^k_2, \dots\}$, then our observation above forces 
$$\diam_W\left(\bigcup_{V \in \bigcup \calV'} \rho^V_W\right) = \infty,$$
as required.  This completes the proof.
\end{proof}

\subsection{Bounding containers}

Our next goal is to prove some statements which allow us to manage multiple thresholds in distance formula-like estimates.  The first lemma says that the number of domains nesting maximally into $W$ is coarsely bounded above by $d_W(x,y)$, thereby reversing the inequality.  Its proof is essentially an iterative application of Lemma \ref{lem:PS}.

\begin{lemma}[Quantitative passing-up]\label{lem:qpu}
For any $K_1\geq 3E$ and $K_2 \geq K_1 + 47E$ the following holds.  Let $x,y \in \calX \cup \calX^{ray}$, and set $\calW \subset \Rel_{K_2}(x,y)$ and $\calU \subset \Rel_{K_1}(x,y)$.  Suppose that $W \in \calW$ and let $\calV = \{V \in \calU  | V \nest_{\calU} W\}$.  Then $\#\calV \prec d_W(x,y)$, with constants depending only on $\mathfrak S$ and $K_1$.

\end{lemma}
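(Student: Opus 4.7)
The strategy is to bound the density of the $\rho$-points $\{\rho^V_W\}_{V\in\calV}$ along a quasigeodesic $\gamma$ in $\calC(W)$, applying the No Dense Clusters Lemma \ref{lem:PS} on each piece of a subdivision, and then summing.

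First I would fix a $(1,20\delta)$-quasigeodesic $\gamma$ between $\pi_W(x)$ and $\pi_W(y)$ in $\calC(W)$; its length is comparable to $d_W(x,y)$. Since $d_V(x,y) > K_1 \geq 3E > E$ for every $V\in\calV$, BGIA (Axiom \ref{ax:BGIA}) forces $\gamma$ to meet $\Neb_E(\rho^V_W)$, so each $\rho^V_W$ has a well-defined (up to $3E$) closest-point projection on $\gamma$. Next, pick a subdivision scale $\sigma$ depending only on $\calX$ and $K_1$ (large relative to $E$ but small enough that the gap $K_2 - K_1 \geq 47E$ leaves room for applying Lemma \ref{lem:PS}), and take a $\sigma$-subdivision of $\gamma$ in the sense of Definition \ref{defn:sigma subdivision}. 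Assigning each $V \in \calV$ to a subinterval $I_i$ containing a point of $\Neb_E(\rho^V_W)\cap\gamma$ yields a partition $\calV=\calV_1\sqcup\cdots\sqcup\calV_k$ (with each $V$ in at most two classes) where $k\prec d_W(x,y)/\sigma+1$ and $\diam_W(\bigcup_{V\in\calV_i}\rho^V_W)\leq\sigma+4E$ for each $i$.

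The heart of the argument is to bound $\#\calV_i$ by a constant independent of $i$. Aside from the at-most-one exceptional case $V=W$, every $V\in\calV_i$ satisfies the hypotheses of Lemma \ref{lem:PS}: (a) $V\sqsubsetneq W$; (b) $d_V(x,y)>K_1$; and (c) every strict container $V\sqsubsetneq V'\sqsubsetneq W$ has $V'\notin\calU$ by $\nest_\calU$-maximality of $V$ in $\calV$, hence $d_{V'}(x,y)\leq K_1$. Choosing $D$ with $K_1\leq D<d_W(x,y)$ and $D-30E\geq\sigma+4E$ (which is possible since $d_W(x,y)>K_2\geq K_1+47E$ and $\sigma$ was chosen accordingly), the contrapositive of Lemma \ref{lem:PS} applied with threshold $D$ forces $\#\calV_i\leq P(D+2E)+2\xi$, a constant depending only on $\mathfrak S$ and $K_1$. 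Summing over $i$ gives $\#\calV\leq k\cdot(P(D+2E)+2\xi)\prec d_W(x,y)$, as required.

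The main delicacy is calibrating $\sigma$ and $D$ against the various thresholds so that Lemma \ref{lem:PS} is simultaneously applicable on every piece: the inequality $K_2\geq K_1+47E$ is exactly what provides the room to choose such a $D$, and matching the $>5E$ threshold in Lemma \ref{lem:PS} with the $\geq 3E$ hypothesis on $K_1$ may require either a mild strengthening of the hypothesis or a slightly refined reading of Lemma \ref{lem:PS}'s proof, neither of which affects the qualitative shape of the argument.
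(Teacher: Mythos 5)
Your proof is correct and follows essentially the same route as the paper: subdivide a (quasi)geodesic $\gamma$ in $\calC(W)$ into pieces of bounded length, note that the BGIA places each $\rho^V_W$ near $\gamma$ so each $V\in\calV$ is assigned to at most two pieces, bound the number of $V$ per piece by a constant via Lemma \ref{lem:PS}, and sum. The threshold mismatch you flag at the end is a genuine (if minor) defect in the paper's stated hypotheses rather than in your argument---Lemma \ref{lem:PS} requires $d_V(x,y)>5E$ while Lemma \ref{lem:qpu} only assumes $K_1\geq 3E$, and the paper's proof silently glosses over this; in all downstream applications $K_1>50E$, so the stated hypothesis should simply be tightened.
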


\begin{proof}

It suffices to consider the cases where either $W \notin \supp(x) \cup \supp(y)$, since otherwise $d_W(x,y) = \infty$ and the statement is vacuous.

Let $\gamma \subset \calC(W)$ be a geodesic between $\pi_W(x)$ and $\pi_W(y)$, and let $p_{\gamma}: \calC(W) \to \gamma$ denote closest point projection.  By the BGIA \ref{ax:BGIA}, if $V \in \calU$, then $d_
W(\rho^V_W, \gamma) < E$.  Since also $\diam_W(\rho^V_W) < E$, it follows that $\diam_W(p_{\gamma}(\rho^V_W))< 2E$.

Let $x = x_0, x_1, x_2, \dots, x_n = y$ be a subdivision of $\gamma$ into subintervals $[x_i,x_{i+1}]$ of length at most $10E$, so that $d_W(x_i, x_{i+1}) = 10E$ for all $i < n-1$.  Note that $n \geq 5$ by our assumption on $K_2$.

As in the proof of Proposition \ref{prop:strong pu}, for each $i$, let $\calW_i = \{V \in \calV| p_{\gamma}(\rho^V_U) \cap [x_i, x_i+1] \neq \emptyset\}$.  Since any given $V \in \calV$ belongs to at most $2$ of the $\calW_i$ while also $\calV = \bigcup_{i=1}^n \calW_i$, it suffices to bound the cardinality of each $\calW_i$ since $10En = \lfloor d_W(x,y) \rfloor$.

For this, we observe that each $\calV_i$ satisfies the assumptions of Lemma \ref{lem:PS}, and hence must have cardinality at most $P(52E)$.  This completes the proof.

\end{proof}

\begin{remark}
The need for the above proof, instead of using Lemma \ref{lem:passing-up} directly, comes from the fact that the passing-up function $P$ is likely to be exponential in the general setting.  If it were linear, then we could directly apply it to obtain the conclusion of Lemma \ref{lem:qpu}.
\end{remark}

\begin{remark}
We note that Lemma \ref{lem:qpu} is not unrelated to \cite[Lemma 5.1]{RS09} about certain anti-chains in relevant sets in the mapping class group setting.  The main difference is that the roles of their double-thresholds are reversed: where we are trying to control medium-sized domains via large domains, they were controlling large domains via medium-sized domains.  Nonetheless, we suspect that an general HHS version of their anti-chain statement can be derived from the ideas in this section, though we have not done so presently. 
\end{remark}

The proof of Lemma \ref{lem:qpu} has a useful corollary:

\begin{lemma}\label{lem:rel sets countable}
Suppose that $x, y \in \calX \cup \calX^{ray}$.  Then for any $K>50E$, we have $\Rel_K(x,y)$ is a countable set.
\end{lemma}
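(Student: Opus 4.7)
The plan is to mirror the subdivision-and-pigeonhole strategy already used in the proof of Lemma \ref{lem:qpu}, but allow the subdivision to have countably many pieces when the relevant geodesic in $\calC(W)$ is infinite. I would first reduce the problem to showing that, for every $W \in \mathfrak S$, the set $\calV_W := \{V \in \Rel_K(x,y) : V \nest W\}$ is countable; taking $W$ to be the $\nest$--maximal element of $\mathfrak S$ then yields the conclusion. The argument proceeds by induction on complexity $c(W)$, where the base case $c(W)=1$ is immediate since $\calV_W \subseteq \{W\}$.

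For the inductive step, let $\calV^{\max}_W$ denote the set of $V \in \calV_W \setminus \{W\}$ that are $\nest_{\calV_W}$--maximal. Since every element of $\calV_W \setminus \{W\}$ is nested in some element of $\calV^{\max}_W$, we have
$$\calV_W \subseteq \{W\} \cup \bigcup_{V \in \calV^{\max}_W} \calV_V,$$
and by the inductive hypothesis each $\calV_V$ on the right is countable (as $V \sqsubsetneq W$ forces $c(V) < c(W)$ via finite complexity, Axiom \ref{item:dfs_complexity}). Hence it suffices to prove that $\calV^{\max}_W$ is countable. When $d_W(x,y) < \infty$, Lemma \ref{lem:qpu} (applied with $\calU = \Rel_K(x,y)$) bounds $\#\calV^{\max}_W$ by a linear function of $d_W(x,y)$ and is even finite.

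When $d_W(x,y) = \infty$---which by Definitions \ref{defn:boundary projection} and \ref{defn:ray projection} can only happen if $W \in \supp(x) \cup \supp(y)$---I would fix a $(1,20\delta)$--quasigeodesic $\gamma \subseteq \calC(W)$ realizing the relevant projections of $x$ and $y$. Depending on the support data, $\gamma$ is either an infinite ray (if $W$ lies in the support of exactly one of $x,y$) or a biinfinite path (if both). Subdivide $\gamma$ into countably many subintervals of length $10E$. For each $V \in \calV^{\max}_W$, the condition $d_V(x,y) > K > 50E$ together with BGIA \ref{ax:BGIA}---applied to a sufficiently long finite subsegment of $\gamma$, in combination with Lemma \ref{lem:no backtracking hp} controlling the tail of any ray projection---forces $\rho^V_W$ to lie within $E$ of $\gamma$, hence to have closest-point projection meeting some subinterval. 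Grouping elements of $\calV^{\max}_W$ according to the subinterval they hit, Lemma \ref{lem:PS} (applied with $D$ slightly larger than $50E$, valid by $\nest_{\calV_W}$--maximality) bounds the number of elements in each group by a constant depending only on $E$. Thus $\calV^{\max}_W$ is a countable union of finite sets, completing the induction.

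The main obstacle is the final case where $\gamma$ is infinite, since BGIA is typically invoked only for finite geodesics; reducing to finite subsegments requires verifying that the $\rho$--sets appear at a bounded location along $\gamma$ independent of how far out we look, which is precisely what the non-backtracking Lemma \ref{lem:no backtracking hp} supplies. Everything else is bookkeeping.
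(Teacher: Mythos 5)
Your proof is correct and follows essentially the same route as the paper's: an induction on complexity (which the paper leaves implicit by pointing back to ``the argument in Lemma \ref{lem:qpu}''), applying Lemma \ref{lem:qpu} when $d_W(x,y)<\infty$ and a countable $10E$-subdivision together with Lemma \ref{lem:PS} when $d_W(x,y)=\infty$. One small correction: to verify condition (2) of Lemma \ref{lem:PS} from $\nest_{\calV_W}$-maximality you should take $D \geq K$ rather than ``slightly larger than $50E$,'' since intermediate domains not in $\Rel_K(x,y)$ are only guaranteed to satisfy $d_V(x,y)\leq K$; the resulting per-subinterval bound then depends on $K$ as well as $E$, which is harmless for countability.
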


\begin{proof}
Note that if $x,y \in \calX$, then $\Rel_K(x,y)$ is finite by Corollary \ref{cor:rel sets are finite}.

Suppose that $\Rel_{K}(x,y)$ is infinite.  By Lemma \ref{lem:qpu}, only finitely-many domains in $\Rel_K(x,y)$ can nest into any domain not in $\supp(x) \cup \supp(y)$.  On the other hand, if $W \in \supp(x) \cup \supp(y)$, then Lemma \ref{lem:PS} implies that only boundedly-many $\rho$-sets for domains can cluster near any bounded-diameter segment of a $(1,20\delta)$-quasi-geodesic path between $\pi_W(x), \pi_W(y)$, as in the argument in Lemma \ref{lem:qpu}.  Hence we can decompose $\gamma$ into an infinite union of bounded length segments $[x_i,x_{i+1}]$, and realize the set of domains in $\Rel_K(x,y)$ which nest into $W$ as the union of the sets $\calV_i$ of domains whose $\rho$-sets are close to $[x_i, x_{i+1}]$.  Hence this set is countable, and this works for any $W \in \supp(x) \cup \supp(y)$, and hence all of $\Rel_K(x,y)$ is countable, completing the proof. 
\end{proof}

We can now prove the following useful proposition, which will allow us to control medium-size domains in various distance estimates and double threshold arguments.  First we set some notation.

Choose thresholds $K_1 > 50E$ and $K_2 > K_1 + 100E$.  Let $x,y \in \calX\cup \calX^{ray}$.  Let $\calU_1 \subset \Rel_{K_1}(x,y)$ and $\calU_2 \subset \Rel_{K_2}(x,y)$ and $\calV = \calU_1 - \calU_2$.  The $\calV$ are ``medium-size'' domains which we will want to control.

The proposition says two things.  First, it says that all but boundedly-many $V \in \calV$ nest into some $W \in \calU_2$.  Second, the lemma bounds the number of $V \in \calV$ which can nest into any given $W \in \calU_2$ terms of $d_W(x,y)$, $\mathfrak S$ and $K_2$.

\begin{proposition}[Bounding large containers] \label{prop:bounding containers}
Let $K_1 >50E$, $K_2 \geq K_1$, and $x,y \in \calX^{ray}$ with $\calU_1 \subset \Rel_{K_1}(x,y), \calU_2 \subset \Rel_{K_2}(x,y),$ and $\calV \subset \calU_1 - \calU_2$.  Then the following hold:
\begin{enumerate}
    \item For all but $PU(K_2)-1$ domains $V \in \calV$, there exists $W \in \calU_2$ with $V \nest W$.
    \item For any $W \in \calU_2$, if $\calW = \{V \in \calV| V \nest W\}$ then we have $\#\calW \prec d_W(x,y)$ with constants depending only on $\mathfrak S$ and $K_2$.
\end{enumerate}
\end{proposition}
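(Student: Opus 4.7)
The plan for Part (1) is a direct contrapositive using Extended Passing-up Lemma \ref{lem:PU rays}. Suppose, toward a contradiction, that $PU(K_2)$ distinct domains $V_1, \ldots, V_{PU(K_2)} \in \calV$ each fail to nest into any $W \in \calU_2$. Since $\calV \subset \Rel_{K_1}(x,y) \subset \Rel_{5E}(x,y)$ (using $K_1 > 50E$), Lemma \ref{lem:PU rays} applied with target threshold $D = K_2$ yields some $W \in \Rel_{K_2}(x,y)$ with $V_i \nest W$ for at least one $i$. Under the natural reading in the proposition's intended applications, namely $\calU_2 = \Rel_{K_2}(x,y)$, this $W$ lies in $\calU_2$, contradicting the assumption on the $V_i$.

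For Part (2), I plan to exploit the forest structure that $\calW$ inherits from $\nest$, rooted at $W$, and induct on the level of a node in this forest. Finite complexity (Axiom \ref{item:dfs_complexity}) caps the depth by $n = n(\mathfrak S)$, so it suffices to bound the branching at each level and multiply. At the top level, the $\nest_\calV$-maximal elements of $\calW$ sitting below $W$ are bounded by $\prec d_W(x,y)$ via Quantitative Passing-up Lemma \ref{lem:qpu}, applied with $\calU = \calV$ (and using that $W \in \calU_2 \subset \Rel_{K_2}$ with $K_2 \geq K_1 + 47E$). At any internal node $V' \in \calW$, we have $V' \in \calV = \calU_1 - \calU_2$, so $d_{V'}(x,y) < K_2$ (again under the reading $\calU_2 = \Rel_{K_2}$), and Lemma \ref{lem:qpu} applied with $V'$ in the role of $W$ bounds the number of $\nest_\calV$-maximal children of $V'$ in $\calW$ by an absolute constant $C = C(K_2, \mathfrak S)$. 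Iterating over the $\leq n$ levels gives $\#\calW \prec C^n \cdot d_W(x,y) \prec d_W(x,y)$, with constants depending only on $\mathfrak S$ and $K_2$, as required.

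The main obstacle is more subtle than the tree-counting suggests: when applying Lemma \ref{lem:qpu} (or Lemma \ref{lem:PS} directly) at an internal node $V'$, the definition of ``children'' must respect $\nest$-maximality in a set that includes \emph{all} intermediate domains of $\mathfrak S$ with large projection, not just those in $\calW$. In particular, domains in $\calU_2 \setminus \calV$ could sit between $V'$ and a would-be child in $\calW$, potentially violating the maximality hypothesis of Lemma \ref{lem:PS} (i.e., the condition that no intermediate $V'' \in \mathfrak S$ has $d_{V''}(x,y) > D$). This is managed by invoking Lemma \ref{lem:qpu} with the enlarged ambient set $\calU = \calU_1 \cup \calU_2 \subset \Rel_{K_1}(x,y)$: first bound the $\nest_\calU$-maximal descendants of $V'$ by $\prec d_{V'}(x,y) < K_2$, then observe that each $V \in \calW$ nesting into $V'$ appears at the top of a length-$\leq n$ chain within $\calU$, with constant-bounded branching at each step by the same mechanism. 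The bound $d_{V'}(x,y) < K_2$ for all non-root nodes is precisely what converts the per-level estimate from a function of $d_{V'}(x,y)$ into a constant, which is the crucial ingredient keeping the final bound linear in $d_W(x,y)$ rather than polynomial.
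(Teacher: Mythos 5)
Your proof takes the same route as the paper's: Lemma \ref{lem:PU rays} for part (1), and an iteration of Lemma \ref{lem:qpu} down the bounded-depth $\nest$-lattice for part (2), branching $\prec d_W(x,y)$ at the root and $\prec K_2$ below. Your observation that both parts silently read $\calU_2 = \Rel_{K_2}(x,y)$ is correct --- the paper's proof does the same (e.g., inferring $d_V(x,y) < K_2$ from $V \notin \calU_2$), and this is the only case appearing downstream.

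One caution on the intermediate-domain subtlety you flag in part (2): your repair via the enlarged ambient set $\calU_1 \cup \calU_2$ does not fully close it, since a chain from $V \in \calW$ to $W$ can still route through some $D \in \calU_2$, where the per-node branching bound from Lemma \ref{lem:qpu} is $\prec d_D(x,y) \geq K_2$ rather than a constant. But this exposure is shared by the paper's own iteration through $\nest_{\calU_1}$, which visits only $\calV$-nodes after the root, so it is not a defect specific to your argument. What both proofs actually deliver --- and all that the applications in Corollary \ref{cor:DF accounting}, Proposition \ref{prop:upper bound}, and Theorem \ref{thm:tree trimming} require --- is the assignment form: up to the bounded error from part (1), each $V \in \calV$ is allocated to a single $W \in \calU_2$ (the first $\calU_2$-domain reached by ascending the $\nest_{\calU_1}$-chain from $V$) with allocation count $\prec d_W(x,y)$.
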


\begin{proof}
For part (1), we use an iterative argument via Lemma \ref{lem:PU rays} as follows: Let $n > PU(K_2)$ and enumerate $\calV$, which is possible since $\calV \subset \calU_1$ is countable by 
Lemma \ref{lem:rel sets countable}.  By applying Lemma \ref{lem:PU rays} to $V_1, \dots, V_n$, we obtain $W_1 \in \calU_2$ so that, up to relabeling, $V_1 \nest W_1$.  Now apply the lemma again to $V_2, \dots, V_{n+1}$ to obtain $W_2 \in \calU_2$ so that, up to relabeling, $V_2 \nest W_2$.  Repeating this process finds containers in $\calU_2$ for all but $PU(K_2)-1$ domains (when $\calU_1$ is finite), and these only impact the additive error in the desired estimate from part (2) of the statement.

For part (2), now fix $W \in \calU_2$.  We may assume that $W \notin \supp(x) \cup \supp(y)$, for otherwise $d_W(x,y) = \infty$ and the statement is vacuous.  We will bound $\#\calW$ by iteratively applying Lemma \ref{lem:qpu}.

For ease of notation within this proof, let $A \nest_{\calB} C$ denote the case where $A \neq C$ and $D \notin \calB$ whenever $A \sqsubsetneq D \sqsubsetneq C$, noting that we are not requiring that $A \in \calB$ for this notation.

Proceeding iteratively, if $\calV_1 = \{V \in \calV| V \nest_{\calU_1} W\}$, then $\#\calV_1 \prec d_W(x,y)$ by Lemma \ref{lem:qpu}.  Now if $V \in \calV_1$, then $\#\{Z \in \calV| Z \nest_{\calU_1} V\} \prec d_V(x,y) < K_2$, by definition of $\calV$ and again Lemma \ref{lem:qpu}.  We can then proceed down the $\nest_{\calU_1}$-lattice through $\calV_1$, which has boundedly-many levels by finite complexity of $\mathfrak S$ (Axiom \ref{item:dfs_complexity}).  This bounds $\#\{Z \in \calV | Z \nest V\}$ for each $V \in \calV_1$ in terms of $\mathfrak S$ and $K_2$, whereas we already bounded $\#\calV_1$ in terms of $d_W(x,y)$.  Combining these gives the desired bound on $\#\calW$.
\end{proof}

\subsection{Strong Large Links}

We briefly observe that the standard Passing-Up Lemma \ref{lem:passing-up} implies the statement of the Large Links Axiom \ref{ax:LL}.

The proof of Lemma \ref{lem:PS} from \cite{PS20} only involves the BGIA \ref{ax:BGIA}, the Passing-Up Lemma \ref{lem:passing-up}, and the assumption that the ambient HHS is normalized (namely that $\pi_U:\calX \to \calC(U)$ is coarsely surjective for each $U \in \mathfrak S$).  Hence the flow of logic from the Passing-Up Lemma \ref{lem:passing-up} to Lemma \ref{lem:qpu} via Lemma \ref{lem:PS} does not depend on the Large Link Axiom \ref{ax:LL}.  The following corollary then allows us to, for instance, replace the Large Links Axiom \ref{ax:LL} with the Passing-Up Lemma \ref{lem:passing-up} as an axiom in the definition of an HHS.

Note that the conclusion is actually stronger than the Large Links Axiom, in three ways.  First, we get that the containers $T_i$ are relevant domains themselves.  Second, we get that the $T_i$ are $\nest$-maximal in $W$ with respect $\Rel_{E'}(x,y)$.  Third, we can force close proximity of $\rho^{T_i}_W$ to any geodesic between $\pi_W(x)$ and $\pi_W(y)$, and not just rough proximity to $x$.  We will make use of these additional features.

\begin{proposition} [Strong large links]\label{prop:strong ll}
There exist $\lambda\geq1$ and $E' = E'(\mathfrak S)>50E$ such that the following holds.
Let $W\in\mathfrak S$ and let $x,x'\in\calX$ and set $\calW = \Rel_{E'}(x,x')$.  Let
$N=\lambda d_{W}(\pi_W(x),\pi_W(x'))+\lambda$.  Then there exists $\{T_i\}_{i=1,\dots,\lfloor
N\rfloor}\subseteq\mathfrak S_W-\{W\}$ such that for all $T\in\mathfrak
S_W-\{W\}$, either $T\in\mathfrak S_{T_i}$ for some $i$, or $d_{
T}(\pi_T(x),\pi_T(x'))<E'$.  Moreover, we have:
\begin{enumerate}
    \item $T_i \in \calW$ for each $i$.
    \item $T_i \nest_{\calW} W$ for each $i$;
    \item $d_{W}([x,x']_W,\rho^{T_i}_W)\leq E'$ for each $i$ and any geodesic $[x,x']_W$ between $\pi_W(x), \pi_W(x')$;
\end{enumerate}
\end{proposition}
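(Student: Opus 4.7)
The plan is to take the $T_i$ to be the $\nest$-maximal elements of the collection of relevant domains strictly nested in $W$, verify the covering property and conditions (1)--(3) essentially for free, and then spend the bulk of the work bounding $\#\{T_i\}$ linearly in $d_W(x,x')$ via the quantitative passing-up machinery already developed in this section.

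First, choose $E' > 50E$. Set $\calW_W = \{U \in \Rel_{E'}(x,x') : U \sqsubsetneq W\}$, and let $\calT = \{T_1, T_2, \ldots\}$ be the collection of $\nest_{\calW_W}$-maximal elements. Properties (1) and (2) then hold by construction. The covering property is immediate from finite complexity (Axiom \ref{item:dfs_complexity}): if $T \in \mathfrak S_W - \{W\}$ has $d_T(x,x') \geq E'$, then $T \in \calW_W$ is contained in a maximal ascending chain ending at some $T_i \in \calT$, so $T \in \mathfrak S_{T_i}$; otherwise $d_T(x,x') < E'$ directly. For property (3), since each $T_i$ has $d_{T_i}(x,x') > E' > E$, the consistency Axiom \ref{item:dfs_transversal} applied at the endpoints of any geodesic $[x,x']_W$ forces $\pi_W(x)$ or $\pi_W(x')$ to be $E$-close to $\rho^{T_i}_W$ (unless both projections to $\calC(T_i)$ are close to $\rho^W_{T_i}$, which is ruled out by $d_{T_i}(x,x') > E'$), and then the Bounded Geodesic Image Axiom \ref{ax:BGIA} yields that $[x,x']_W$ passes within $E$ of $\rho^{T_i}_W$.

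The heart of the argument is the linear bound $\#\calT \leq \lambda d_W(x,x') + \lambda$. I would split into two regimes. If $d_W(x,x') \geq E' + 50E$, then $W$ itself lies in $\Rel_{E' + 50E}(x,x')$, and the Quantitative Passing-Up Lemma \ref{lem:qpu}, applied with $K_1 = E'$, $K_2 = E' + 50E$, $\calU = \calW = \Rel_{E'}(x,x')$, gives $\#\{V \in \calU : V \nest_{\calU} W\} \prec d_W(x,x')$ with multiplicative and additive constants depending only on $\mathfrak S$ and $E'$; since $\calT$ is exactly this set, this is the desired bound. If instead $d_W(x,x') < E' + 50E$, apply the ordinary Passing-Up Lemma \ref{lem:passing-up} to the collection $\calT$ with threshold $D = E' + 50E$: were $\#\calT > P(E' + 50E)$, we would obtain some $V \in \mathfrak S_W$ with $T_i \sqsubsetneq V$ for some $i$ and $d_V(x,x') > E' + 50E$, so $V \neq W$ (using the assumed smallness of $d_W(x,x')$) and $V \in \calW_W$, contradicting the $\nest_{\calW_W}$-maximality of $T_i$. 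Thus $\#\calT \leq P(E' + 50E)$, an additive constant, and the two regimes combine to yield the linear bound.

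The main obstacle is Case A, i.e., extracting a \emph{linear} (rather than polynomial or exponential) bound in $d_W(x,x')$, because the passing-up function $P$ is in general far from linear. This is precisely what Lemma \ref{lem:qpu} supplies, by replacing one global application of Passing-Up with a geodesic-subdivision argument in $\calC(W)$ that uses the No Dense Clusters Lemma \ref{lem:PS} on each short subsegment; property (3) for the $T_i$, proved above via BGIA, is exactly the geometric input that lets Lemma \ref{lem:qpu} apply. Finally, since Lemma \ref{lem:PS} (and thus Lemma \ref{lem:qpu}) is derived only from the BGIA, normalization, and the standard Passing-Up Lemma \ref{lem:passing-up}, the entire argument never invokes the Large Links Axiom \ref{ax:LL}, which justifies the claim in the paragraph preceding the proposition that Passing-Up may replace Large Links in the axiomatic framework.
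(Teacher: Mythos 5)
Your proposal is correct and follows essentially the same route as the paper: take the $T_i$ to be the $\nest_{\calU}$-maximal elements of $\{U \in \Rel_{E'}(x,x') : U \sqsubsetneq W\}$ and bound their cardinality linearly in $d_W(x,x')$ via the Quantitative Passing-Up Lemma \ref{lem:qpu}, with the covering property and (1), (2) following directly from the choice and finite complexity, and (3) from consistency plus the BGIA. Two remarks are worth making. First, your Case B (when $d_W(x,x')$ is small, use ordinary Passing-Up \ref{lem:passing-up} to bound $\#\calT$ by $P(E'+50E)$) is a genuinely useful addition: Lemma \ref{lem:qpu} formally requires $W \in \Rel_{K_2}(x,x')$ for $K_2 \geq K_1 + 47E$, and the paper's proof invokes it without commenting on the regime where $d_W(x,x')$ is below that threshold; your additive-constant bookkeeping cleanly closes this. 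Second, your argument for (3) is slightly misordered as written: you say ``the consistency axiom forces $\pi_W(x)$ or $\pi_W(x')$ to be close to $\rho^{T_i}_W$ unless both projections to $\calC(T_i)$ are close to $\rho^W_{T_i}$, which is ruled out by $d_{T_i}(x,x') > E'$'' --- but the ``unless'' case is not directly ruled out by $d_{T_i}(x,x') > E'$ alone, since $\rho^W_{T_i}(\pi_W(x))$ and $\rho^W_{T_i}(\pi_W(x'))$ could a priori be far apart in $\calC(T_i)$. The clean version is a proof by contradiction: assume $[x,x']_W$ avoids the $E$-neighborhood of $\rho^{T_i}_W$, apply BGIA to get $\diam_{T_i}(\rho^W_{T_i}([x,x']_W)) \leq E$, then use nested consistency at $x$ and $x'$ (both of whose $W$-coordinates lie on $[x,x']_W$) to conclude $d_{T_i}(x,x') \lesssim 2\kappa_0 + E < E'$, contradicting $T_i \in \calW$. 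The paper compresses this to ``follows from the BGIA,'' so your more explicit version is welcome even if the phrasing needs that reordering.
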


\begin{proof}
Let $W \in \mathfrak S$ and $x,y \in \calX$.  Let $\calU = \Rel_{50E}(x,y)$ and $\calV = \{V \in \calU | V \nest_{\calU} W\}$.  By the Quantitative Passing-Up Lemma \ref{lem:qpu}, there exists $\lambda = \lambda(50E, \mathfrak S)>0$ so that $\#\calV < \lambda d_W(x,y) + \lambda$.

Hence if $T \in \mathfrak S_W - \{W\}$ and $d_T(x,y) \geq 50E$, then $T \nest V$ for some $V \in \calV$ by definition of $\calV$.  The fact that $d_W(x,\rho^V_W)\leq N$ for each $V \in \calV$ follows from the BGIA \ref{ax:BGIA} and by increasing $\lambda$ a bounded amount, as necessary.  This completes the proof.
\end{proof}

\subsection{Changing thresholds}

We observe a final foundational application of Proposition \ref{prop:bounding containers}, which we believe has expositional value.

In \cite{HHS_2} and this paper, the proof of the Distance Formula \ref{thm:DF} is intertwined with the proof of the existence of hierarchy paths.  As a consequence, it is less than transparent that changing the relevance threshold should only affect the distance estimate up to coarse equality, because changing the relevance threshold requires one to build a new hierarchy path.

The following corollary explains why this sort of basic domain accounting works, regardless of whether the distance formula holds or hierarchy paths exist.  The proof is a straight-forward application of the ideas in this section, which the reader can take as a warm-up for its more sophisticated analogues later in the paper.

\begin{corollary}\label{cor:DF accounting}
For any $K_1 > 50E$, $K_2 \geq K_1 + 100E$, and any $x,y \in \calX$, we have
$$\sum_{U \in \mathfrak S} [d_U(x,y)]_{K_1} \asymp \sum_{U \in \mathfrak S} [d_U(x,y)]_{K_2},$$
where the constants for the coarse equality $\asymp$ depend only on $K_2, \mathfrak S$.
\end{corollary}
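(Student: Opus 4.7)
The plan is to observe that the lower-bound direction is essentially trivial and then reduce the upper bound to bounding the total size of a set of ``medium" domains via the Bounding Containers Proposition \ref{prop:bounding containers}.

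First, since $K_1 < K_2$, the pointwise inequality $[t]_{K_1} \geq [t]_{K_2}$ for all $t \geq 0$ immediately yields $\sum_U [d_U(x,y)]_{K_1} \geq \sum_U [d_U(x,y)]_{K_2}$, so the left-hand sum dominates the right. The substance of the corollary is the reverse coarse inequality. For this, I would set $\calU_2 = \Rel_{K_2}(x,y)$ and $\calV = \Rel_{K_1}(x,y)\setminus \calU_2$, noting that each $V \in \calV$ satisfies $K_1 \leq d_V(x,y) < K_2$. Splitting the $K_1$-sum over these two sets gives
$$\sum_{U \in \mathfrak S} [d_U(x,y)]_{K_1} \ \leq \ \sum_{U \in \calU_2} d_U(x,y) + K_2 \cdot \#\calV,$$
so everything reduces to controlling $\#\calV$ linearly by $\sum_{W \in \calU_2} d_W(x,y)$, up to an additive error depending only on $K_2$ and $\mathfrak S$.

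The main step would be to invoke Proposition \ref{prop:bounding containers}. Part (1) tells us that all but at most $PU(K_2)-1$ domains $V \in \calV$ admit some container $W \in \calU_2$ with $V \nest W$; I would fix a choice function $V \mapsto W(V) \in \calU_2$ on this cofinite subset. Part (2) then provides constants $A,B$ depending on $\mathfrak S$ and $K_2$ such that $\#\{V \in \calV \mid V \nest W\} \leq A \cdot d_W(x,y) + B$ for every $W \in \calU_2$. Since the fibers of $V \mapsto W(V)$ are contained in the nesting sets just bounded, summing over $W \in \calU_2$ yields
$$\#\calV \ \leq \ (PU(K_2)-1) + A \sum_{W \in \calU_2} d_W(x,y) + B \cdot \#\calU_2,$$
and the final term is absorbed via the trivial bound $\#\calU_2 \leq K_2^{-1}\sum_{W \in \calU_2} d_W(x,y)$, since each $W \in \calU_2$ contributes at least $K_2$ to that sum.

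The only technical point to be careful about is avoiding double-counting when converting the per-container bound in Proposition \ref{prop:bounding containers}(2) into a bound on $\#\calV$; the choice function $V \mapsto W(V)$ handles this cleanly. I do not expect any serious obstacle here, because the Bounding Containers Proposition was designed precisely to package the accounting needed to relate sums taken at different thresholds, and the corollary is just a direct consequence of combining its two parts with the obvious $K_2 \cdot \#\calU_2 \leq \sum_{W \in \calU_2} d_W(x,y)$ estimate.
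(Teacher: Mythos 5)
Your proof is correct and follows the same route as the paper's: both reduce the upper bound to bounding $\#\calV$ for $\calV = \Rel_{K_1}(x,y)\setminus\Rel_{K_2}(x,y)$ via the two parts of Proposition \ref{prop:bounding containers}. Your version is slightly more explicit about the bookkeeping (the choice function $V\mapsto W(V)$ to avoid double-counting, and absorbing $\#\calU_2$ via $K_2\cdot\#\calU_2 \leq \sum_{W\in\calU_2}d_W(x,y)$), but the substance is identical.
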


\begin{proof}
Since the right-hand sum (RHS) is smaller than the left-hand sum (LHS), it suffices to coarsely bound the LHS by the RHS.  Note that each side is finite by Corollary \ref{cor:rel sets are finite} to the Passing-up Lemma \ref{lem:passing-up}.

Denote the domains determining terms on the LHS but not the RHS by $\calV = \Rel_{K_1}(x,y) - \Rel_{K_2}(x,y)$, i.e $d_V(x,y) < K_2$ if $V \in \calV$.

Then Proposition \ref{prop:bounding containers} says that up to ignoring all but $P(K_2)$-many domains in $\calV$, for each $V \in \calV$ there exists some domain $W \in \Rel_{K_2}(x,y)$ determining a term on the RHS with $V \nest W$.  Moreover, it says that the number of domains $V \in \calV$ for which each such $W$ accounts in this way satisfies $\#\calW \leq A\cdot d_W(x,y) + A$, for some $A = A(K_2, \mathfrak S)>0$.

Hence, we have the desired conclusion, namely that
$$\sum_{U \in \mathfrak S} [d_U(x,y)]_{K_1}\leq A \cdot \sum_{U \in \mathfrak S} [d_U(x,y)]_{K_2} + A + P(K_2).$$

\end{proof}

\section{Coralling constants and standing assumptions}\label{sec:constants}

In this section, we briefly pause to set up some constants and standing assumptions which will be used throughout the rest of the paper.

\subsection{The constant $E$}

There are many constants which appear in the various basic statements associated to an HHS, and it is convenient to choose one which rules them all.  We set this constant to be $E$, and we choose $E> L, M_1, \theta_L$, where:

\begin{itemize}
\item $L=L(\calX)>0$ is the hierarchy path constant from Lemma \ref{lem:hp exist}.
\item $M_1 = M_1(\calX, L)>0$ is the constant from Lemma \ref{lem:no backtracking hp}.
\item $\theta_L = \theta_L(\calX, L)>0$ is the constant so that all points $x \in \calX$, boundary points in $\partial \calX$, and $L$-hierarchy rays are $\theta_L$-consistent, as in Definition \ref{defn:extended consistency} (via Proposition \ref{prop:extended consistency}).
\end{itemize}.  Also, fixing $K$ to be as large as necessary.

\subsection{The largeness threshold $K$ and the set $\calU$}\label{subsec:K}

For the rest of the paper, we fix a constant $K = K(\mathfrak S, |F \cup \Lambda|)>0$, whose size will be determined at various points along the way.  Importantly, we will always be able to choose $K$ to be larger than any of the other constants in the construction.

We set $\calU = \Rel_K(F \cup \Lambda) = \{U \in \mathfrak S | \diam_U(F \cup \Lambda)> K\}$.  This will be the main set of domains of focus in the rest of the paper, and our task in this section is to reduce our analysis to studying the Gromov modelings trees of $F \cup \Lambda$ in this set of domains.

\section{Hulls of rays: reducing to products of trees}\label{sec:reduce to tree}

In this section, we introduce the notion of a hierarchical hull for a finite collection of hierarchy rays, and make a first reduction of our analysis of these hulls to a coarse subset of the product of the associated Gromov modeling trees coming from each relevant domain.

This quasi-hierarchical setup replaces the hulls $H_U$ of each $\pi_U(F \cup \Lambda)$ for each $U \in \calU = \Rel_K(F \cup \Lambda)$ with Gromov modeling trees $T_U$, along with appropriate notions of projections and relative projections.  The main technical statements are Lemma \ref{lem:tree control}, which encodes the important HHS features, like the BGIA \ref{ax:BGIA} into this setup, and Lemma \ref{lem:consist to tree}, which says that the image of $H$ and $\Lambda$ into this product of the $T_U$ satisfies a notion of consistency.

\subsection{Gromov trees associated to the hull of rays}\label{subsec:ray trees}

Let $(\calX, \mathfrak S)$ be an HHS.  Fix $F \subset \calX$ and $\Lambda \subset \calX^{ray}$, so that if $\lambda \in \Lambda$ is an $L$-hierarchy ray, then $\lambda(0) \in F$.

The first step toward understanding the hierarchical hull of $F \cup \Lambda$ in $\calX$ is understanding the hyperbolic hull of $\pi_U(F \cup \Lambda)$ for each $U \in \mathfrak S$.

Gromov \cite{Gromov1987} proved that the hyperbolic hull of a finite set of points in a hyperbolic space, i.e. the union of all geodesics between the points, is quasi-isometric to a tree.  We need a generalization of this that allows for points at infinity.

\begin{definition}\label{defn:hyp hull infty}
If $X$ is $\delta$-hyperbolic with $F \subset X$ and $\Lambda \subset \partial X$ both finite, then we let $\hull_X(F \cup \Lambda)$ denote the union of the following:
\begin{itemize}
\item all geodesics between pairs of points in $F$,
\item all $(1,20\delta)$-quasi-geodesic rays between points in $F$ and points in $\Lambda$,
\item all bi-infinite $(1,20\delta)$-quasi-geodesics between points in $\Lambda$. 
\end{itemize}
\end{definition}

The following is an exercise in $\delta$-hyperbolic geometry whose proof we sketch for the reader's convenience:

\begin{lemma}\label{lem:ray trees exist}
Let $X$ be $\delta$-hyperbolic, with $F \subset X$ and $\Lambda \subset \partial X$ both finite.  Then there exists a tree $T$ and a $(1,C)$-quasi-isometric embedding $\phi:T \to X$ which is within Hausdorff distance $C$ of $\hull_X(F \cup \Lambda)$, where $C = C(\delta, |F \cup \Lambda|)>0$.
\end{lemma}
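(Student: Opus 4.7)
The plan is to establish the lemma by induction on $n = |F \cup \Lambda|$, mirroring the classical Gromov argument for finite subsets of a $\delta$-hyperbolic space, but with $(1,20\delta)$-quasi-geodesics playing the role of geodesics whenever a boundary endpoint is involved. The base case $n = 1$ is trivial: take $T$ to be a point if $F \neq \emptyset$, or a $(1,20\delta)$-quasi-geodesic ray based at any chosen point in $X$ if $\Lambda$ is a single boundary point. For $n = 2$, I would simply take $T$ to be the abstract parametrization (an interval, half-line, or line) of a chosen $(1,20\delta)$-quasi-geodesic between the two specified endpoints. The key point at each stage is that, by the Morse Lemma in a $\delta$-hyperbolic space, any two $(1,20\delta)$-quasi-geodesics between the same (possibly ideal) endpoints are within Hausdorff distance bounded by some $N = N(\delta)$, so the specific choices we make are immaterial up to bounded error.

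For the inductive step, assume the result for any collection of $n-1$ elements, producing a tree $T'$ and map $\phi' \colon T' \to X$ whose image is within Hausdorff distance $C' = C(\delta, n-1)$ of $\hull_X((F \cup \Lambda) \setminus \{z\})$ for any chosen $z \in F \cup \Lambda$. To attach $z$, I would pick a $(1,20\delta)$-quasi-geodesic $\gamma_z$ from $z$ (or from a basepoint if $z \in \Lambda$, ending at $z$) to some well-chosen point of the existing hull, then show that the union $\phi'(T') \cup \gamma_z$ is within Hausdorff distance depending only on $(\delta, n)$ of a new tree $T$. This ``attachment point'' is produced by taking $y \in F \cup \Lambda \setminus \{z\}$ and considering the $(1,20\delta)$-quasi-geodesic between $y$ and $z$; a thin-triangle (or thin-ideal-triangle) argument in $X$ shows that this quasi-geodesic fellow-travels within distance $N'(\delta)$ of the union of a subpath of a chosen quasi-geodesic in $\phi'(T')$ and a terminal segment going off toward $z$. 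Cutting at the divergence point and reconnecting combinatorially in $T'$ produces $T$.

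The main technical step that requires care is bounding the additive constant $C(\delta,n)$ uniformly in the induction. At each inductive step, the Morse lemma provides Hausdorff control between the various choices of $(1,20\delta)$-quasi-geodesics, and the $\delta$-thinness of (ideal) triangles and quadrilaterals — which holds uniformly in $\delta$-hyperbolic spaces regardless of properness, with the standard modification that ``ideal'' sides are $(1,20\delta)$-quasi-geodesic rays — gives uniform bounds on the ``Steiner point'' where the new branch attaches. Each step therefore inflates the constant by an additive amount depending only on $\delta$, yielding $C(\delta, n)$ as claimed.

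The main obstacle I anticipate is not conceptual but bookkeeping: one must verify that the tree $T$ produced abstractly (as a simplicial tree with edge lengths) admits a map $\phi \colon T \to X$ which is globally a $(1,C)$-quasi-isometric embedding, not merely locally on each branch. For this, when one attaches a new branch at a point $p \in T'$, the tripod comparison in $\delta$-hyperbolic space guarantees that distances in $X$ between points on distinct branches of $T$ agree, up to $C(\delta,n)$, with the sum of distances along the tree. Combined with the fact that every point of $\hull_X(F \cup \Lambda)$ lies within $N(\delta)$ of one of the finitely many pairwise $(1,20\delta)$-quasi-geodesics (themselves encoded as paths in $T$), the Hausdorff control and the quasi-isometric embedding follow simultaneously, closing the induction.
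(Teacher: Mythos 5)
Your proposal is correct and follows essentially the same approach as the paper's own sketch: both proceed by induction, attaching new points one at a time via a $(1,20\delta)$-quasi-geodesic whose attachment point is controlled by closest-point projection / thin-triangle arguments, with the Morse lemma providing the uniform $C(\delta, |F\cup\Lambda|)$ Hausdorff control at each step. The only cosmetic difference is that the paper starts from the classical Gromov tree for the finite interior set $F$ and then attaches the boundary points of $\Lambda$ one by one, whereas you run the induction uniformly over all $n = |F\cup\Lambda|$ elements; this changes nothing substantive.
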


\begin{proof}
The idea is to start with the hyperbolic hull of $F$ and then to add each point in $\Lambda$, one at a time.  In particular, each $\lambda \in \Lambda$ closest point projects to a bounded diameter set $\pi_{H_F}(\lambda)$ on $H_F = \hull_X(F)$, since $H_F$ is uniformly quasiconvex.  Moreover, any geodesic ray representing $\lambda$ which is based at point in $F$ must pass through a bounded neighborhood of $\pi_{H_F}(\lambda)$, with the bound depending on $|F|$ and $\delta$.  This allows us to choose a basepoint on the standard Gromov tree $T_F$ for $H_F$ and a representative ray for $\lambda$ based on $T$ near $\pi_{H_F}(\lambda)$, thereby extending $T_F$ to a new tree $T$ representing $\hull_X(F \cup \{\lambda\})$.  One can repeat this process for each ray in $\Lambda$, with the bounds increasing at each step.  The details are a straightforward exercise in $\delta$-hyperbolic geometry and we leave them to the reader.
\end{proof}

\begin{remark}
In what follows, we will only need that the map $\phi:T \to X$ is a $(C,C)$-quasi-isometric embedding for $C = C(\delta, |F \cup \Lambda|)>0$, so that is what we will assume.  In fact, this assumption is necessary for some applications, since the stable trees from \cite{DMS20} are not almost isometrically embedded.
\end{remark}

We refer to any tree $\phi:T\to X$ satisfying the conclusions of Lemma \ref{lem:ray trees exist} as a \emph{Gromov modeling tree} for $\hull_X(F \cup \Lambda)$.

\subsection{Hierarchical hulls with their retracts and induced HHS structures} \label{subsec:hier hull}

The notion of a \emph{hierarchical hull} for a finite set of interior points in the mapping class group was introduced in \cite{BKMM} (as $\Sigma$-hulls), was extended to Teichm\"uller space in \cite{EMR_rank}, and then fully realized in the HHS setting in \cite{HHS_2}.  The definition we will give extends this to finite sets of interior points, hierarchy rays, and boundary points, but first we fix some notation.

Hierarchical hulls are built out of consistent tuples of points whose coordinates are contained in the hyperbolic hulls in each domain.  More specifically, for each, $U \in \mathfrak S$, we set $H_U = \hull_U(\pi_U(F \cup \Lambda))$.

\begin{definition}[Hierarchical hull]\label{defn:hier hull}
Given a constant $\theta>0$, we define the \emph{$\theta$-hull} of $F \cup \Lambda$ to be all $x \in \calX$ so that $d_U(x, H_U)<\theta$ for all $U \in \mathfrak S$.
\begin{itemize}
\item We denote the $\theta$-hull of $F \cup \Lambda$ by $H_{\theta} = \hull_{\theta}(F \cup \Lambda)$.
\end{itemize}
\end{definition}

As with the hyperbolic hull in a hyperbolic space, hierarchical hulls are hierarchically quasi-convex, in the sense of Definition \ref{defn:hqc}.  The following is a definition chase, of which we provide a sketch:

\begin{lemma}\label{lem:hull is hqc}
For any $\theta>0$, there exists $k:[0,\infty) \to [0,\infty)$ depending only on $\mathfrak S, \theta$ so that $H_{\theta}$ is $k$-hierarchically quasiconvex.
\end{lemma}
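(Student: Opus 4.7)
The plan is to verify the two conditions of Definition~\ref{defn:hqc} applied to $A=H_\theta$, with the Realization Theorem~\ref{thm:realization} doing the real work. I will treat $\theta$ as at least some threshold $\theta_0(\mathfrak S)$ so that the realized point lands in $H_\theta$ itself; since $k$ is allowed to depend on $\theta$, this absorbs all dependence.

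For condition (1), each $H_U=\hull_U(\pi_U(F\cup\Lambda))$ is, by Definition~\ref{defn:hyp hull infty}, a union of finitely many geodesics, $(1,20\delta)$-quasi-geodesic rays, and bi-infinite $(1,20\delta)$-quasi-geodesics among the points of $\pi_U(F)$ and the boundary points $\pi_U(\lambda)\in\partial\calC(U)$ coming from rays $\lambda\in\Lambda$ with $U\in\supp(\lambda)$. A standard $\delta$-thin-triangle argument shows $H_U$ is $D_0$-quasiconvex with $D_0=D_0(\delta,|F\cup\Lambda|)$. Since $\pi_U(H_\theta)\subseteq\calN_\theta(H_U)$ by the definition of $H_\theta$, $\pi_U(H_\theta)$ is $(D_0+\theta+\xi)$-quasiconvex, which gives $k(0)$.

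For condition (2), suppose $(b_U)$ is a $\kappa$-consistent tuple with $b_U\subset\pi_U(H_\theta)$ and $x\in\calX$ satisfies $d_U(x,b_U)<\theta_e(\kappa)$ for all $U$. I define an auxiliary tuple $(c_U)$ by taking $c_U\subset H_U$ to be a coarse closest-point set to $b_U$ in $\calC(U)$; then $d_U(b_U,c_U)\le\theta+\xi$. A triangle-inequality case analysis over the transverse and nested consistency inequalities from Definition~\ref{defn:consistency} (orthogonality imposes no condition) shows $(c_U)$ is $\kappa'$-consistent with $\kappa'=\kappa+2\theta+2\xi$. Applying Realization~\ref{thm:realization} to $(c_U)$ produces $y\in\calX$ with $d_U(y,c_U)<\theta_e(\kappa')$ for every $U$. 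In particular $d_U(y,H_U)\le\theta_e(\kappa')+\xi$, which lies below $\theta$ by our standing assumption on $\theta$, so $y\in H_\theta$.

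Finally, the coordinates of $x$ and $y$ both lie within a uniformly bounded $U$-distance of $c_U$ in every domain (bounded by $\theta_e(\kappa)+\theta+\xi+\theta_e(\kappa')$), so the uniqueness clause of Theorem~\ref{thm:realization} yields $d_\calX(x,y)\le\theta_u(\kappa'')$ for some $\kappa''=\kappa''(\kappa,\theta,\mathfrak S)$. Setting $k(\kappa):=\max\{\theta_u(\kappa''),D_0+\theta+\xi\}$ completes the verification. The one genuinely technical step is the consistency check for $(c_U)$ in the third paragraph; everything else is a direct appeal to Realization and its uniqueness clause.
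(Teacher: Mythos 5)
Your plan matches the paper's approach at a high level — realize a consistent tuple and apply the coarse uniqueness clause of Theorem~\ref{thm:realization} — but a key step is wrong.

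The claim that the realized point $y$ lands in $H_\theta$ ``by our standing assumption on $\theta$'' does not hold. Condition (2) of Definition~\ref{defn:hqc} universally quantifies over $\kappa$, and your auxiliary tuple $(c_U)$ has consistency constant $\kappa'$ that grows with $\kappa$, so the realization error $\theta_e(\kappa')$ is unbounded as $\kappa$ grows. For any fixed $\theta$ there will be $\kappa$ so large that $\theta_e(\kappa')+\xi > \theta$, and then your $y$ only satisfies $y \in H_{\theta''}$ for some $\theta''$ depending on $\kappa$, not $y\in H_\theta$. You cannot fix this by enlarging $\theta$ once and for all, because $\theta$ is fixed before $\kappa$ is chosen.

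The paper's proof handles exactly this by observing that for $\theta' > \theta$ (above a basepoint threshold), $H_\theta$ and $H_{\theta'}$ are within bounded Hausdorff distance, with bound depending only on $\theta, \theta', \mathfrak S$, a fact it credits to the Distance Formula~\ref{thm:DF}. With that comparison, $d_\calX(y, H_\theta)$ is controlled by a function of $\kappa, \theta, \mathfrak S$, and combined with the uniqueness bound on $d_\calX(x,y)$ this closes condition (2). Without it your argument does not conclude, and you cannot appeal to gate maps (Lemma~\ref{lem:gates exist}) since that would be circular — gate maps presuppose hierarchical quasiconvexity. This Hausdorff comparison is the essential content that your write-up omits.

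Two smaller points. For condition (1) the paper leans on Lemma~\ref{lem:no backtracking hp} to control projections of the rays in $\Lambda$, while your direct thin-triangle argument is also fine. And your claimed $\kappa' = \kappa + 2\theta + 2\xi$ for $(c_U)$ is too optimistic for the nested inequality: $\rho^W_V$ is not globally coarsely Lipschitz, and controlling $\diam_V(\rho^W_V(b_W)\cup\rho^W_V(c_W))$ requires an appeal to the Bounded Geodesic Image Axiom~\ref{ax:BGIA}, as in the argument behind Lemma~\ref{lem:gates exist}. Neither of these is the main issue, but both deserve a sentence if you revise.
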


\begin{proof}
The first property is an easy consequence of the definitions and Lemma \ref{lem:no backtracking hp}.  For the second, observe that if $\theta'>\theta$, then $\hull_{\theta}(F \cup \Lambda) \subset \hull_{\theta'}(F \cup \Lambda)$, and in fact they are in bounded Hausdorff distance of each other depending only on $\theta, \theta', \mathfrak S$ by the Distance Formula \ref{thm:DF}.  Hence using the Realization Theorem \ref{thm:realization}, we can realize any tuple in the second condition, which then must be contained in $\hull_{\theta'}(F \cup \Lambda)$ for $\theta'$ depending only on $\theta, \mathfrak S$.  We leave the details to the reader.
\end{proof}

Thus $H_{\theta}$ admits a gate map $\gate_{H_{\theta}}: \calX \to H_{\theta}$ via Lemma \ref{lem:gates exist}, and \cite[Proposition 6.3]{HHS_2} provides says that this map is a coarse Lipschitz retract:

\begin{lemma}\label{lem:gate retract}
For any $\theta>0$, the gate map $\gate_{H_{\theta}}:\calX \to H_{\theta}$ is a $(J,J)$-coarse Lipschitz retract, where $J = J(L, \theta, \mathfrak S)>0$.
\end{lemma}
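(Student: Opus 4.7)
The plan is to verify the two defining properties of a coarse Lipschitz retract: namely that (i) $\gate_{H_\theta}$ is coarsely Lipschitz on $\calX$, and (ii) $\gate_{H_\theta}$ restricted to $H_\theta$ is coarsely the identity, both with constants depending only on $\theta, L, \mathfrak S$. By Lemma \ref{lem:hull is hqc}, the hull $H_\theta$ is $k$-hierarchically quasi-convex with $k$ depending only on $\theta$ and $\mathfrak S$, so Lemma \ref{lem:gates exist} applies: $\gate_{H_\theta}(x)$ is defined as the coarse realization (Theorem \ref{thm:realization}) of the $\alpha$-consistent tuple $(r_U(\pi_U(x)))_{U \in \mathfrak S}$, where $r_U: \calC(U) \to \pi_U(H_\theta)$ is closest point projection onto the $k(0)$-quasi-convex subset $\pi_U(H_\theta)$ of $\calC(U)$.

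For (i), I would work coordinate-wise. Each $r_U$ is uniformly coarsely Lipschitz with constants depending only on $\delta$ and $k(0)$ — this is a standard fact about closest point projection to quasi-convex subsets of $\delta$-hyperbolic spaces. Composing with the uniformly coarsely Lipschitz projections $\pi_U$ (Axiom \ref{item:dfs_curve_complexes}), we get $d_U(\gate_{H_\theta}(x), \gate_{H_\theta}(y)) \prec d_U(x,y)$ with constants uniform in $U \in \mathfrak S$. Plugging this into the Distance Formula \ref{thm:DF} with a sufficiently large threshold (depending only on $\mathfrak S$) converts the uniform coordinate-wise bound into the desired global coarse Lipschitz bound $d_{\calX}(\gate_{H_\theta}(x), \gate_{H_\theta}(y)) \leq J d_{\calX}(x,y) + J$, with $J = J(\theta, \mathfrak S)$.

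For (ii), suppose $x \in H_\theta$. By Definition \ref{defn:hier hull}, $d_U(\pi_U(x), H_U) < \theta$ for every $U$, and since $H_U \subset \pi_U(H_\theta)$ up to uniformly bounded error, we get that $\pi_U(x)$ is uniformly close to $\pi_U(H_\theta)$. Thus $d_U(r_U(\pi_U(x)), \pi_U(x))$ is bounded by a constant depending only on $\theta$ and $\mathfrak S$ in every coordinate. Coarse uniqueness in Realization (Theorem \ref{thm:realization}) then forces $\gate_{H_\theta}(x)$ to lie within bounded distance of $x$ in each $\calC(U)$, and another application of the Distance Formula \ref{thm:DF} upgrades this to a bound in $\calX$.

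The only subtlety is the bookkeeping: ensuring the quasi-convexity constant $k(0)$ of $\pi_U(H_\theta)$ is uniform in $U$ and controlled only by $\theta$ and $\mathfrak S$ (supplied by Lemma \ref{lem:hull is hqc}), and ensuring that the realization point genuinely lies in $H_\theta$ (possibly after replacing $\theta$ with $\theta' = \theta'(\theta, \mathfrak S)$), which follows from the second clause of hierarchical quasi-convexity (Definition \ref{defn:hqc}) applied to the consistent gate tuple. Since the dependence on the ray constant $L$ enters only through how $H_\theta$ sees the support sets of rays in $\Lambda$, and this is already absorbed into the hierarchical quasi-convexity function $k$, all constants ultimately depend only on $L, \theta, \mathfrak S$, as claimed.
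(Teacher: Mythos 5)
Your argument is correct as a mathematical matter, and it reproduces the essential shape of the proof of the cited reference (the paper itself does not prove this lemma; it simply cites [HHS\_2, Proposition~6.3]). The key steps — hierarchical quasi-convexity of the hull, coordinate-wise coarse Lipschitz closest-point projection, Realization, and coarse uniqueness — are exactly right.

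However, there is a meaningful difference you should be aware of: you invoke the Distance Formula (Theorem~\ref{thm:DF}) twice, once to globalize the coordinate-wise Lipschitz bound in (i) and once to upgrade the coordinate bounds to a bound in $\calX$ in (ii). The paper's remark immediately following the lemma explicitly flags that the retraction admits a Distance-Formula-free proof (citing \cite[Lemmas 4.6 and 4.15]{HHS_2}), and it flags this precisely because the retraction is an ingredient in the definition of $\hO$ (via $\Phi_\alpha = \ret_H \circ \Real_\beta \circ \Phi$), which in turn powers the paper's new proof of (part of) the distance formula in Section~\ref{sec:HP and DF}. Using the distance formula in the proof of this lemma would feed that circularity. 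Both of your distance-formula invocations are avoidable. For~(ii), it is actually unnecessary: the ``coarse uniqueness'' clause of Realization (Theorem~\ref{thm:realization}) already bounds $d_\calX(\gate_{H_\theta}(x), x)$ directly, without passing back through the coordinate spaces, because both $x$ and $\gate_{H_\theta}(x)$ realize the same consistent tuple up to $\theta_e$. For~(i), the standard replacement is: use the uniform coordinate bound together with the Uniqueness Axiom~\eqref{item:dfs_uniqueness} to get a coarse Lipschitz bound for $x, y$ at $\calX$-distance at most $1$, and then concatenate along a quasi-geodesic from $x$ to $y$ (recall $\calX$ is a quasi-geodesic space by assumption). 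This yields the same estimate with constants $J(L, \theta, \mathfrak S)$, but without the distance formula, matching the logical economy the paper needs.
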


\begin{remark}
In our proof of the distance formula, we use the existence of such a retract to the hull of two interior points to construct the map from the cubical model of the hull back to the hull (Section \ref{sec:Q to Y}), which in turn is necessary to prove that combinatorial geodesics in the model are sent to hierarchy paths in the hull (Section \ref{sec:HP and DF}).  However, the existence of such a retraction does not require the distance formula, see \cite[Lemmas 4.6 and 4.15]{HHS_2}.
\end{remark}

Finally, since each hull $H$ is hierarchically quasi-convex, it has an induced HHS structure as in Lemma \ref{lem:hqc induce}, which we spell out in more detail here:

\begin{lemma}\label{lem:hull induced structure}
For any sufficiently large $K = K(\calX, |F\cup \Lambda|)>0$, the hierarchical hull $\hull_{\calX}(F \cup \Lambda)$ admits an HHS structure with the following data:
\begin{itemize}
\item The index set is $\calU = \Rel_K(F \cup \Lambda)$.
\item The sets of hyperbolic spaces is $\{\hull_U(F \cup \Lambda)| U \in \calU\}$.
\item All relations, projections, and relative projections are those coming from the ambient structure $(\calX, \mathfrak S)$, where the projections involve a further composition with the retraction $r_U:\calC(U) \to \hull_U(F \cup \Lambda)$ provided by quasiconvexity of $\hull_U(F \cup \Lambda)$ in $\calC(U)$. 
\end{itemize}
\end{lemma}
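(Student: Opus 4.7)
The plan is to deduce this lemma directly from the general machinery for hierarchically quasi-convex subsets (Lemma \ref{lem:hqc induce}), applied to the hierarchical hull $H = \hull_{\calX}(F \cup \Lambda)$, using Lemma \ref{lem:hull is hqc} to supply the hierarchical quasi-convexity hypothesis. The main subtlety is reconciling the index set and hyperbolic spaces produced by Lemma \ref{lem:hqc induce} with those stated here, namely $\calU = \Rel_K(F \cup \Lambda)$ and the hyperbolic hulls $\hull_U(F \cup \Lambda) \subset \calC(U)$.

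First, I would fix $\theta = \theta(\mathfrak S, L, |F \cup \Lambda|)$ large enough so that $H = H_{\theta}$ contains all of $F$, together with (replacements of) all rays/boundary points in $\Lambda$, via Proposition \ref{prop:ray replace boundary} and Definition \ref{defn:hier hull}. Then Lemma \ref{lem:hull is hqc} gives that $H$ is $k$-hierarchically quasi-convex for a gauge $k$ depending only on $\mathfrak S$, $L$, $\theta$, and Lemma \ref{lem:hqc induce} produces an induced HHS structure on $H$ whose index set is $\Rel_{K_0}(H) = \{U \in \mathfrak S : \diam_U(H) > K_0\}$, for some threshold $K_0 = K_0(\mathfrak S, k)$, with hyperbolic spaces $\pi_U(H)$ and all relations and (relative) projections inherited from $(\calX, \mathfrak S)$ composed with the closest-point retraction $r_U : \calC(U) \to \pi_U(H)$.

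Next, I would identify the index set and the hyperbolic spaces with the stated ones. By the definition of $H = H_{\theta}$ we have $\pi_U(H) \subset \calN_{\theta + E}(\hull_U(F \cup \Lambda))$ in $\calC(U)$, while the reverse inclusion $\hull_U(F \cup \Lambda) \subset \calN_{\theta'}(\pi_U(H))$ (for $\theta'$ depending only on $\mathfrak S, L, \theta, |F \cup \Lambda|$) follows because $H$ already contains $F$ and uniform hierarchy-ray representatives of the boundary points in $\Lambda$ (Proposition \ref{prop:ray replace boundary}), whose domain projections coarsely realize $\hull_U(F \cup \Lambda)$ by Lemma \ref{lem:no backtracking hp}. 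Consequently $\diam_U(H) \asymp \diam_U(F \cup \Lambda)$ up to additive constants depending only on $\mathfrak S, \theta, L, |F \cup \Lambda|$, so choosing $K$ larger than $K_0$ plus these additive errors makes $\Rel_K(H) = \Rel_K(F \cup \Lambda) = \calU$. Replacing each $\pi_U(H)$ by the uniformly Hausdorff-close quasi-convex subspace $\hull_U(F \cup \Lambda) \subset \calC(U)$ changes the hyperbolic spaces only by a uniform quasi-isometry, hence preserves the HHS structure (with constants absorbed into $K$).

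The hardest part is bookkeeping the constants so that $K$ can be taken uniformly, depending only on $\calX$ and $|F \cup \Lambda|$, while simultaneously (i) exceeding the threshold $K_0$ from Lemma \ref{lem:hqc induce}, (ii) absorbing the Hausdorff-distance error between $\pi_U(H)$ and $\hull_U(F \cup \Lambda)$, and (iii) guaranteeing that the domains where $H$ has large projection are precisely those where $F \cup \Lambda$ has large projection. Each of these is finitely controlled by the normalization of $\calX$, the coarse Lipschitz constant of the $\pi_U$, and the hierarchy-path/ray replacement constants from Section \ref{sec:ray projections}, so a single choice of $K = K(\calX, |F \cup \Lambda|)$ works. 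Finally, the verification that the inherited relations $(\nest, \perp, \pitchfork)$ and (relative) projections $\rho^U_V$, $\rho^V_U$ satisfy all the HHS axioms on $\calU$ is immediate from Lemma \ref{lem:hqc induce}, since these atoms are just restrictions of the ambient hierarchical data, which completes the proof.
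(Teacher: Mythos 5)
Your approach matches the paper's: the paper gives no independent proof of this lemma, stating it only as a specialization of Lemma~\ref{lem:hqc induce} (``which we spell out in more detail here''), applied via the hierarchical quasi-convexity established in Lemma~\ref{lem:hull is hqc}. Your reduction and the identification of $\pi_U(H)$ with $\hull_U(F\cup\Lambda)$ up to uniform Hausdorff distance are exactly the intended steps.

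One small overstatement: you claim that increasing $K$ makes the set equality $\Rel_K(H) = \Rel_K(F\cup\Lambda)$ hold. That cannot be forced for any single $K$; all one gets from $\diam_U(H)\asymp\diam_U(F\cup\Lambda)$ is an interleaving $\Rel_{K_3}(F\cup\Lambda)\subset\Rel_{K_2}(H)\subset\Rel_{K_1}(F\cup\Lambda)$ with $K_3>K_2>K_1$ differing by the additive error. What saves the argument is that Lemma~\ref{lem:hqc induce} is stated for any sufficiently large threshold and that the resulting HHS structures on $H$ for different admissible index sets are equivalent (cf.\ the threshold-changing discussion around Corollary~\ref{cor:DF accounting}); so $\calU=\Rel_K(F\cup\Lambda)$ is an admissible choice even if not literally equal to $\Rel_K(H)$. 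Phrasing it as ``$\Rel_K(F\cup\Lambda)$ contains $\Rel_{K'}(H)$ for some $K'$ depending only on $\calX$ and $|F\cup\Lambda|$, and is itself contained in $\Rel_{K''}(H)$, so it serves as a valid index set'' would be more accurate.
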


This structure will come up in two places: when we discuss the coarse median structure of $H$ in Section \ref{sec:medians}, and when we analyze the HHS boundary of $H$ in Subsection \ref{subsec:H simplex}.

Since the hull $H$ is actually a coarsely-defined set made of consistent tuples over the index set $\mathfrak S$, it is natural to consider $H$ with its induced structure as a set of consistent tuples over the index set $\calU$.  Each tuple of the latter type is only a partial tuple of the formal type, but we can complete such a partial tuple $(x_U)_{U \in \calU}$ by setting $x_V = \pi_V(f)$ for some fixed $f \in F$.  This defines a map:

$$I_H: \prod_{U \in \calU} \hull_U(F \cup \Lambda) \to \prod_{V \in \mathfrak S} \calC(V).$$

The following lemma says that this map defines a quasi-isometric embedding $H \to \calX$.  The proof is a straight-forward exercise in the definitions, Realization \ref{thm:realization}, and the Distance Formula \ref{thm:DF}, which we leave to the reader:

\begin{lemma}\label{lem:H qie}
For any sufficiently large $K=K(\calX, |F \cup \Lambda)>0$ and $\theta>0$, there exists $\theta_K = \theta_K(K, \theta, \calX, |F\cup \Lambda|)>0$ so that if $(x_U)_{U \in \calU}$ is a $\theta$-consistent tuple in the induced HHS structure on $H$, then $I_H((x_U)_{U \in \calU})$ is a $\theta_K$-consistent tuple in $\calX$.  Moreover, this identification is a $(\theta_K, \theta_K)$-quasi-isometric embedding.
\end{lemma}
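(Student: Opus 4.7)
The plan is twofold: first establish the extended consistency using the Realization Theorem \ref{thm:realization} applied inside the induced HHS structure on $H$ provided by Lemma \ref{lem:hull induced structure}, and then compare distance formulas on $H$ and $\calX$ to extract the quasi-isometric embedding. The key observation making both steps go through is that for $V \in \mathfrak S\setminus \calU$ we have $\diam_V(F\cup\Lambda)\leq K$ by definition of $\calU$, so the hyperbolic hulls $H_V = \hull_V(\pi_V(F\cup\Lambda))$ have uniformly bounded diameter depending only on $K$, $\mathfrak S$, and $|F\cup\Lambda|$.

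For consistency, let $(x_U)_{U\in \calU}$ be a $\theta$-consistent tuple in the induced structure. Apply Realization \ref{thm:realization} inside the induced HHS $(H,\calU)$ to produce $x\in H$ with $d_U(\pi_U(x),x_U)\leq \theta_e(\theta)$ for all $U\in\calU$. Since $x\in H\subset \calX$, the full tuple $(\pi_V(x))_{V\in \mathfrak S}$ is $\theta_0$-consistent by Lemma \ref{lem:base consistency}. For $V\notin \calU$, both $\pi_V(x)$ and $\pi_V(f)$ lie within $\theta$ of $H_V$ by definition of $H$, so $d_V(\pi_V(x),\pi_V(f))\leq \diam_V(H_V)+2\theta$, which is bounded in terms of $K,\theta,\mathfrak S,|F\cup \Lambda|$. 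Thus $I_H((x_U))$ is coordinate-wise within a uniformly bounded distance of the $\theta_0$-consistent tuple $(\pi_V(x))$. A routine check in each of the transverse, nested, and container cases of Definition \ref{defn:consistency} then shows that coordinate-wise proximity of tuples preserves consistency up to an additive constant, yielding $\theta_K$-consistency of $I_H((x_U))$ for an appropriate $\theta_K$.

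For the quasi-isometric embedding, given two such tuples $(x_U),(y_U)$ with realizations $x,y\in H\subset \calX$, apply the Distance Formula \ref{thm:DF} in $\calX$ at a threshold $K'$ chosen larger than the uniform bound on $\diam_V(H_V)+2\theta$ for $V\notin \calU$. Then all terms indexed by $V\notin \calU$ vanish, leaving
\[ d_\calX(x,y)\asymp \sum_{U\in \calU} [d_U(x,y)]_{K'}. \]
Corollary \ref{cor:DF accounting} lets us swap $K'$ for any other threshold (say the canonical threshold for the induced HHS distance formula on $(H,\calU)$) at the cost of coarse constants depending only on $K,\mathfrak S,|F\cup\Lambda|$. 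Since $d_U(x,y)$ agrees with $d_U(x_U,y_U)$ up to $2\theta_e(\theta)$, this sum coarsely equals the induced-structure distance between the two tuples, so $(x_U)\mapsto x$ is a $(\theta_K,\theta_K)$-quasi-isometric embedding into $\calX$.

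The only nontrivial technical point is the coordinate-wise stability of the consistency inequalities alluded to above; the remaining work is bookkeeping that ultimately hinges on the uniform bound $\diam_V(H_V)\prec K$ for $V\notin \calU$ and on Realization in the induced HHS structure on $H$. Neither step uses anything beyond the results already quoted in the excerpt, so no genuine obstacle is expected.
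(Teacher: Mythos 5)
Your proof is correct and follows essentially the approach the paper suggests but leaves to the reader: realization inside the induced structure, the observation that $\diam_V(H_V)$ is uniformly bounded for $V \notin \calU$, and the distance formula at a suitable threshold. The only step you gloss over slightly is the "coordinate-wise stability of consistency" claim, which in the nested case genuinely needs the Bounded Geodesic Image Axiom \ref{ax:BGIA} (moving $\pi_W(x)$ by a bounded amount could a priori move $\rho^W_V(\pi_W(x))$ a lot, but BGIA rules this out unless the first branch of the min is already satisfied); since this is a standard HHS exercise and you flag it as the main technical point, the omission is acceptable.
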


\subsection{Marked points on modeling trees}

We briefly want to highlight a slightly complicating observation.

In what follows, the projections of the points of $F$ and $\Lambda$ will play an unsurprisingly pivotal role in our arguments.  And while often one thinks of the points being modeled by a Gromov modeling tree $T$ in a hyperbolic space $X$ as being leaves of $T$---either abstractly or in $X$---this need not be the case.

For instance, if one chooses $F$ to be three points which lie on a geodesic in $X$, then a perfectly fine modeling tree is the (sub-)geodesic between the outer-two most points, and so the middle point can be arbitrarily far from any leaf, both in the modeling tree $T$ and its image in $X$.

This motivates the following terminology:

\begin{definition}[Marked points on trees]\label{defn:marked}
Let $\calX$ be an HHS, $F \subset \calX$ a finite set of internal points, and $\Lambda$ a finite set of $L$-hierarchy rays, with $F \neq \emptyset$.  Let $U \in \mathfrak S$, and let $\phi_U:T_U \to \calC(U)$ any $(C,C)$-approximating model tree for $\pi_U(F \cup \Lambda)$.

\begin{itemize}
\item Each element of $F \cup \Lambda$ determines either an internal point on $T_U$ or a boundary point of $\partial T_U$.  When $x \in F \cup \Lambda$ determines an internal point, we call it a \emph{marked point}.  Otherwise, we call it a \emph{ray} for convenience.
\end{itemize}
\end{definition}

Note that if $U \notin \supp(\lambda)$, then $\lambda_U \in T_U$ is a marked point.  In the end, the fact that marked points need not be leaves of our modeling trees $T_U$ will not play a significant role, but it does complicate our arguments in a few places.

\begin{remark}[Geodesics to and between rays in $T_U$] \label{rem:geodesic rays in trees}
Another point to note is that if $\lambda \in \Lambda$, $U \in \supp(\lambda)$, and $ x \in T_U$ is another point, then the ``geodesic between'' $x$ and $\lambda$ is actually the unique geodesic ray based at $x$ in the equivalence class of $\lambda_U$ in the Gromov boundary $\partial T_U$.  Similarly, if $\lambda_U, \lambda'_U \in \Lambda$ satisfy $U \in \supp(\lambda)\cap \supp(\lambda')$, then the ``geodesic between'' $\lambda_U, \lambda'_U$ will refer to the unique bi-infinite geodesic ray in $T_U$ whose ends are in the equivalences classes of $\lambda_U, \lambda'_U$ in $\partial T_U$.  Notably, in the above cases we have $d_{T_U}(x,\lambda_U) = \infty$ and $d_{T_U}(\lambda_U, \lambda'_U) = \infty$.  In what follows, we will not make a fine point of this.
\end{remark}

\subsection{Tree projections and shadows}\label{subsec:tree proj}

For each $U \in \calU$, let $H_U = \hull_{\calC(U)}(F \cup \Lambda)$.  Let $\phi_U:T_U \to \calC(U)$ be the $(C,C)$-quasi-isometric embedding with $d_{Haus}(\phi_U(T_U), H_U)<C$ for $C = C(\mathfrak S, |F \cup \Lambda|)>0$ provided by Lemma \ref{lem:ray trees exist}, and let $\phi_U^{-1}$ be its quasi-inverse.  Let $p_U:\calC(U) \to T_U$ be coarse closest-point projection to $T_U$.  Set $\beta_U = p_U \circ \pi_U: \calX \to T_U$.

We begin by using the hierarchical projections between the $\calC(U)$ to define analogous projections between the trees $T_U$:

\begin{definition}[Tree projections]\label{defn:tree projections}
For $U,V \in \calU$ with $U,V$ not orthogonal, we define \emph{tree projections} $\delta^U_V$ and $\delta^V_U$ as follows:

\begin{enumerate}
    \item If $U \pitchfork V$, then we set $\delta^U_V = \hull_{T_V}(\phi_V^{-1} \circ p_V(\rho^U_V)) \subset T_V$, and define $\delta^V_U$ similarly.
    \item If $U \nest V$, then
    \begin{itemize}
        \item We define $\delta^U_V = \hull_{T_V}(\phi_V^{-1} \circ p_V(\rho^U_V)) \subset T_V$.
        \item We define a map $\delta^V_U:T_U \to T_V$ by $\delta^V_U(x)= \phi_V^{-1} \circ p_V \circ \rho^U_V \circ \phi_U(x)$ for any $x \in T_U$.
    \end{itemize}
\end{enumerate}

\begin{itemize}
    \item When $U \nest V$, we call $\delta^U_V \subset T_V$ the \emph{shadow} of $U$ on $T_V$.
\end{itemize}
\end{definition}

\begin{figure}
    \centering
    \includegraphics[width=.6\textwidth]{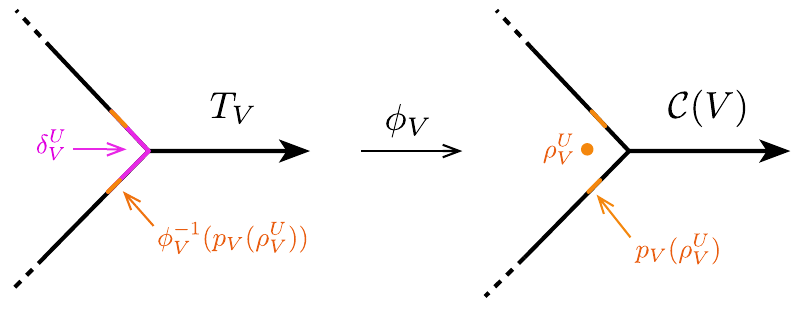}
    \caption{When $U \nest V$, then $\rho^U_V$ (in orange) is close to the image of $\phi_V:T_V \to \calC(V)$, and hence $p_V(\rho^U_V)$ (orange) defines a bounded diameter subset of $\phi_V(T_V)$, possibly with multiple components.  Pulling this back gives bounded diameter subsets $\phi^{-1}_V(p_V(\rho^U_V))$ of $T_V$ (orange).  Their convex hull of these is the \emph{shadow} $\delta^U_V$ of $U$ on $T_V$ (the union of the pink and orange), which is then a bounded diameter subtree of $T_V$.}
    \label{fig:shadow}
\end{figure}

This following is a basic lemma which allows us to control the tree projections like the projections in an HHS:

\begin{lemma}\label{lem:tree control}
There exist $R= R(\mathfrak S, |F \cup \Lambda|)>0$ and $E' = E'(\mathfrak S, |F \cup \Lambda|)>0$ so that the following hold:

\begin{enumerate}

    \item If $U \pitchfork V \in \calU$, then both $\delta^V_U \subset T_U$ and $\delta^U_V \subset T_V$ are sets of diameter at most $R$ which are within $R$ of some marked point in $F \cup \Lambda$ of $T_U,T_V$, respectively.  Moreover, $d_U(\phi_U(\delta^V_U), \rho^V_U)<R$, and similarly for $\delta^U_V$.
    \item If $U \nest V \in \calU$, then $\delta^U_V \subset T_V$ has diameter bounded by $R$.  Moreover, $d_V(\phi_V(\delta^U_V), \rho^U_V)<R$.
     \item If $U, V \nest W \in \calU$ with $U,V$ not transverse, then $\diam_{T_W}(\delta^U_W \cup \delta^V_W) < R$.
    \item If $U \pitchfork V$ and $W \nest V$ and $W,U$ not orthogonal, then $\diam_{T_U}(\delta^V_U \cup \delta^W_U)<R$.
    \item(Bounded geodesic image) If $U \nest V \in \calU$, then any component $C \subset T_V - \calN_{E'}(\delta^U_V)$ satisfies $\diam_{T_U}(\delta^V_U(C))<E'$.
    \item If $U,V,W \in \calU$ with $V,W \nest U$, $V \pitchfork W$, and $d_{T_U}(\delta^V_U,\delta^W_U)>10E'$, then $d_{T_V}(\delta^U_V(\delta^W_U), \delta^W_V)< E'$.
\end{enumerate}
\end{lemma}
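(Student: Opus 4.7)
All six conclusions are obtained by pulling the corresponding HHS statements back through the quasi-isometries $\phi_U \colon T_U \to \calC(U)$ and the closest-point projections $p_U \colon \calC(U) \to T_U$. Since each $\phi_U$ is a $(C,C)$-quasi-isometric embedding whose image lies within Hausdorff distance $C$ of the quasi-convex hull $H_U \subset \calC(U)$, the composition $p_U \circ \rho^U_V \circ \phi_U$ and the set $p_U(\rho^V_U)$ are each distorted only a bounded amount compared with their $\calC(U)$-counterparts. Thus the diameter bounds and ``close to $\rho$-set'' clauses of (1), (2), (3), (4) follow, with $R$ absorbing $C$, $\xi$, $\kappa_0$, and the Lipschitz constants of the projections.

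The only nontrivial clause in (1), (2) is the assertion that the shadow $\delta^V_U$ sits within $R$ of a marked point of $T_U$. For this I would invoke consistency. By Lemma~\ref{lem:base consistency} (or Proposition~\ref{prop:extended consistency} when the relevant point is a ray or boundary point), for each $x \in F \cup \Lambda$ one has $\min\{d_U(x,\rho^V_U), d_V(x,\rho^U_V)\} \leq \theta_L$. Since $V \in \calU = \Rel_K(F \cup \Lambda)$ with $K$ chosen much larger than $\theta_L$, the diameter of $\pi_V(F \cup \Lambda)$ in $\calC(V)$ exceeds $K$, so at least one element $x \in F \cup \Lambda$ must satisfy $d_V(x,\rho^U_V) > \theta_L$ and therefore $d_U(x,\rho^V_U) \leq \theta_L$. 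Pulling this element back to $T_U$ produces a marked point within bounded distance of $\delta^V_U$. The same argument works on $T_V$, using that $U$ is also $K$-relevant.

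For (3) and (4), I would just translate the last bullet of Axiom~\ref{item:dfs_transversal} (and its consequence that nested $\rho$-sets coarsely coincide in any common container) through $\phi$ and $p$; the diameter control on $\delta^U_W \cup \delta^V_W$ and on $\delta^V_U \cup \delta^W_U$ is immediate once the bound is known in $\calC(W)$ and $\calC(U)$, respectively. For (5), given a component $C$ of $T_V \setminus \calN_{E'}(\delta^U_V)$, its image $\phi_V(C)$ stays uniformly far from $\rho^U_V$ once $E'$ is chosen larger than $C + E$; any geodesic inside $\phi_V(C)$ therefore avoids an $E$-neighborhood of $\rho^U_V$, and the Bounded Geodesic Image Axiom~\ref{ax:BGIA} gives a uniform bound on $\rho^V_U(\phi_V(C)) \subset \calC(U)$, which transfers to a uniform bound on $\delta^V_U(C) \subset T_U$ after applying $p_U \circ \phi_U^{-1}$.

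Conclusion (6) is a direct pullback of Lemma~\ref{lem:rho project}: the hypothesis $d_{T_U}(\delta^V_U,\delta^W_U) > 10E'$ translates (after absorbing $C$ and $R$) into the hypothesis $d_U(\rho^V_U,\rho^W_U) > 10E$ of that lemma, and the size hypothesis on $\calC(W)$ is automatic because $W \in \calU$ with $K$ taken sufficiently large. The $3E$-bound in $\calC(V)$ then becomes an $E'$-bound in $T_V$ after one last composition with $p_V \circ \phi_V^{-1}$. The only genuine obstacle is bookkeeping: choosing $K$, $R$, and $E'$ in the correct order so that all the implicit thresholds needed for consistency, for the ``marked-point'' clause, and for the BGIA application are simultaneously satisfied; this is just a matter of fixing $K$ first (to force $K$-relevance and activate consistency), then $E'$ (to dominate all the HHS constants plus $C$), and finally $R$ (to absorb everything else).
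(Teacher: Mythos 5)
Your proposal is correct and follows essentially the same pullback-through-$\phi_U$ argument as the paper's proof: the marked-point clause in (1)--(2) uses the same $K$-relevance-plus-consistency argument to locate an $f\in F\cup\Lambda$ with $d_U(f,\rho^V_U)$ small, (3)--(4) translate the coarse $\rho$-equality from Axiom~\ref{item:dfs_transversal} through the quasi-isometries, and (6) is a direct pullback of Lemma~\ref{lem:rho project}. The one place to be more careful is (5): since $\phi_V(C)$ is only a quasi-geodesic image, you should explicitly invoke the Morse lemma to conclude that the geodesic straightening of $\phi_V(\gamma)$ (not $\phi_V(\gamma)$ itself) avoids the $E$-neighborhood of $\rho^U_V$ once $E'$ dominates the Morse constant, which is precisely how the paper phrases it before applying the BGIA.
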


\begin{proof}

Item (1): By choosing $K=K(\mathfrak S, |F \cup \Lambda|)>0$ sufficiently large and the fact that $|F \cup \Lambda|>1$, for at least one $f \in F \cup \Lambda$, we have $d_U(f, \rho^V_U) > E$.  Hence $d_V(f, \rho^U_V)<E$ by the $\pitchfork$-consistency Axiom \ref{eq:transverse consistency} or Proposition \ref{prop:extended consistency} when $f \in \Lambda$, and so $f \in T_U$ and $\phi_U^{-1} \circ p_U(\rho^V_U)$ are within $R = R(\mathfrak S, |F\cup \Lambda|)>0$, as required.  A similar argument for $\phi_V^{-1} \circ p_V(\rho^U_V)$ completes the proof.

Item (2): This follows immediately from the observations in item (1), plus the fact that $\phi_U$ and $\phi_V$ are $(C,C)$-quasi-isometries with $C= C(\mathfrak S, |F \cup \Lambda|)>0$.

Items (3) and (4) are straight-forward applications of items (1) and (2), plus the facts that the analogous bounds hold for their $\rho$-versions.

Item (5): The fact that $\phi_U$ is a $(C,C)$-quasi-isometry and the Morse lemma imply that there exists an $E' = E'(\mathfrak S, |F \cup \Lambda|)>0$ so that if $\gamma \subset T_V$ is a geodesic segment which avoids the $E'$-neighborhood of $\delta^U_V$, then $\phi_U(\gamma)$ is a $(C,C)$-quasi-geodesic whose geodesic straightening avoids the $E$-neighborhood of $\rho^U_V$.  The BGIA \ref{ax:BGIA} then implies that $\diam_{\calC(U)}(\rho^V_U(\phi_U(\gamma)) < E$, and hence $\delta^V_U(\gamma) = \phi^{-1}(\rho^V_U(\phi_U(\gamma))$ has diameter bounded solely in terms of the quasi-constants of $\phi^{-1}_U$, and hence in terms of $\mathfrak S$ and $|F \cup \Lambda|$.

Item (6): This is an easy consequence of Lemma \ref{lem:rho project} and the other items in this lemma.  We leave the proof for the reader.

This completes the proof.

\begin{figure}
    \centering
    \includegraphics[width=.8\textwidth]{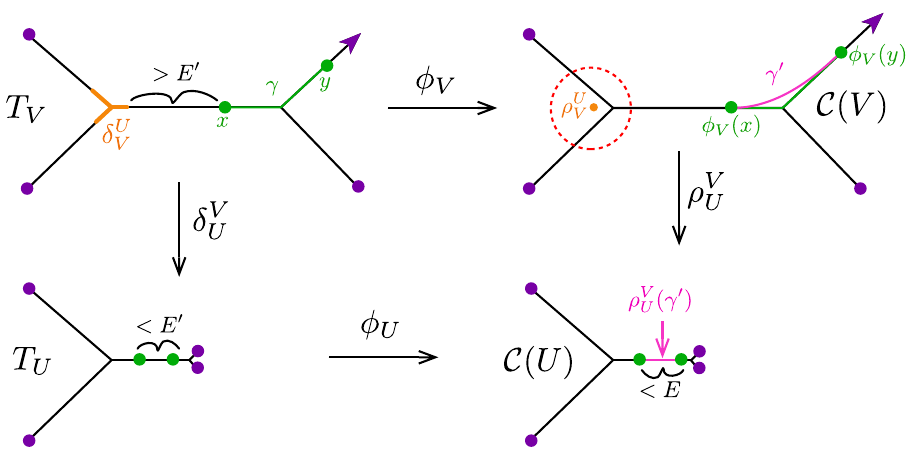}
    \caption{A cartoon of the BGI property for the tree projections, which constrains the map $\delta^V_U: T_V \to T_U$ when $U \nest V$.  Choosing $x,y$ (green) sufficiently far from $\delta^V_U$ in $T_V$ means that any straightening $\gamma'$ (pink) of $\phi_V(\gamma)$ (green) in $\calC(V)$ will avoid the $E$-neighborhood of $\rho^U_V$ (red).}
    \label{fig:tree_bgia}
\end{figure}

\end{proof}

\begin{remark}
While the largeness constant $K$ does appear in item (1) of the above lemma, it's role is simply forcing the existence of the point $f \in F \cup \Lambda$.  Any sufficiently large $K$ will do, and so our ability to independently increase $K$ in what follows is not affected.
\end{remark}

\subsection{Extended tree consistency}

One of our ultimate goals is to think of points in the hierarchical hull of finites sets of interior points and hierarchy rays as $0$-consistent tuples in some product of trees.  Our first reduction toward that is replacing points in the hull (including at infinity) with points in the product of associated trees $T_U$.  Toward that end, we need the following notion of (extended) consistency for points in $\prod_{U \in \calU} T_U \cup \partial T_U$, which is a direct analogue of Definition \ref{defn:consistency} and Definition \ref{defn:extended consistency}:

\begin{definition}[Extended tree consistency]\label{defn:tree consistency}
Given $\alpha\geq 0$, we say that a tuple $(x_U) \in \prod_{U \in \calU} T_U \cup \partial T_U$ is \emph{$\alpha$-consistent} if the following hold:

\begin{enumerate}
    \item If $U,V\in \calU$ and $U \pitchfork V$, then we have $$\min\{d_U(x_U,\delta^V_U),d_V(x_V,\delta^U_V)\}\leq\alpha.$$
    \item If $U, V \in \calU$ and $U \nest V$, then we have
$$\min\{d_V(x_V,\delta^U_V),\diam_U(x_U\cup(\delta^V_U(x_V))\}\leq\alpha.$$
\end{enumerate}

Given some $\alpha\geq 0$, we set
\begin{itemize}
    \item $\calZ_{\alpha}$ to be the set of $\alpha$-consistent tuples in $\prod_{U \in \calU}T_U$.
    \item $\oZ_{\alpha}$ to be the set of extended $\alpha$-consistent tuples in $\prod_{U \in \calU}T_U \cup \partial T_U$.
    \item Set $\calZ^{\infty} = \oZ - \calZ$, namely the set of tuples for which at least one component .

\end{itemize}
\end{definition}

There is a map $\Psi: H \to \prod_{U \in \calU} T_U$ given by the product of the compositions $\psi_U: = \phi^{-1}_U \circ p_U\circ \pi_U:H \to T_U$ for each $U \in \calU$, which associates a tuple of points in $\prod_{U \in \calU} T_U$ to any point in $H$.  More generally, given an $L$-hierarchy ray $\lambda \in \Lambda$, we can associate a tuple $(\lambda_U) \in \prod_{U \in \calU} T_U \cup \partial T_U$ as follows:
\begin{itemize}
\item If $U \in \supp(\lambda)$, then $\lambda_U$ corresponds to the geodesic ray in $\partial T_U$ in the class of the quasi-geodesic ray $\psi_U(\lambda)$.
\item If $U \notin \supp(\lambda)$, then we take $\lambda_U = \psi_U(\lambda)$.
\end{itemize}

First, we observe that $\Psi(H) \subset \calZ_{\alpha}$ and $\Psi(\Lambda) \subset \calZ^{\infty}_{\alpha}$ for an appropriate choice of $\alpha$:

\begin{lemma}\label{lem:consist to tree}
There exists $\alpha_0 = \alpha_0(\mathfrak S, |F \cup \Lambda|)>0$ such that for any $x \in H$, we have $\Psi(x) \in \calZ_{\alpha_0}$, and any $\lambda \in \Lambda$, we have $\Psi(\lambda) \in \calZ^{\infty}_{\alpha_0}$.
\end{lemma}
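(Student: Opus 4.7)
The strategy is to port each relevant consistency inequality from $\prod_{U \in \mathfrak S} (\calC(U) \cup \partial \calC(U))$ into $\prod_{U \in \calU} (T_U \cup \partial T_U)$ by means of the coordinate maps $\psi_U = \phi^{-1}_U \circ p_U \circ \pi_U$ and the dictionary provided by Lemma \ref{lem:tree control}. The input is base consistency for points (Lemma \ref{lem:base consistency}) and extended consistency for rays (Proposition \ref{prop:extended consistency}), both with a uniform constant $\theta' = \max\{\theta, \theta_L\}$ depending only on $\mathfrak S$ and $L$.

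I would first record a uniform constant $R' = R'(\mathfrak S, |F \cup \Lambda|)$ such that for every $U \in \calU$ and every $x \in H \cup \Lambda$ with $U \notin \supp(x)$ (taking $\supp(x) = \emptyset$ for $x \in H$), the point $\pi_U(x)$ lies within $R'$ of $\phi_U(T_U)$; this follows from the definition of $H$ (Definition \ref{defn:hier hull}) together with $d_{Haus}(\phi_U(T_U), H_U) < C$. Consequently $\phi_U(x_U)$ is within $R' + C$ of $\pi_U(x)$ in $\calC(U)$, so $\phi_U$ transports coordinates and relative projections between $T_U$ and $\calC(U)$ with uniformly bounded error.

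For the transverse case $U \pitchfork V$ and for the first branch of the min in the nested case $U \nest V$, the translation is then immediate: a $\theta'$-bound on $d_U(\pi_U(x), \rho^V_U)$, combined with $\phi_U(\delta^V_U) \subset \calN_R(\rho^V_U)$ from Lemma \ref{lem:tree control}(1)--(2) and the quasi-isometry constants of $\phi_U$, yields a bound on $d_{T_U}(x_U, \delta^V_U)$ depending only on $\mathfrak S$ and $|F \cup \Lambda|$. The subtle case is the diameter branch of the nested consistency, where one must compare $\rho^V_U(\pi_V(x))$ with $\phi_U(\delta^V_U(x_V)) \approx \rho^V_U(\phi_V(x_V))$. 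Since $\pi_V(x)$ and $\phi_V(x_V)$ are within $R'$ of each other in $\calC(V)$, I would perform a case split via the BGIA Axiom \ref{ax:BGIA}: either both points lie within $R' + E$ of $\rho^U_V$, in which case one instead gets a bound on $d_{T_V}(x_V, \delta^U_V)$ (the other branch of the min), or any geodesic joining them avoids the $E$-neighborhood of $\rho^U_V$, forcing $\diam_U(\rho^V_U(\pi_V(x)) \cup \rho^V_U(\phi_V(x_V))) < E$, so the diameter branch passes through $\phi^{-1}_U$ with only a bounded extra error.

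The ray case $x = \lambda \in \Lambda$ is handled in parallel: when $U \in \supp(\lambda)$ the convention $\lambda_U \in \partial T_U$ makes some terms of the relevant $\min$ formally infinite, and extended consistency (Proposition \ref{prop:extended consistency}) guarantees the other term is bounded by $\theta_L$, so the same translation applies. Taking $\alpha_0$ to be the maximum of the finitely many constants produced across these cases finishes the proof. The main obstacle to anticipate is precisely the $\rho^V_U$ comparison in the diameter branch, which cannot be done via a naive Lipschitz bound because $\rho^V_U$ is only coarsely constant away from $\rho^U_V$; the BGIA case split above is the correct substitute.
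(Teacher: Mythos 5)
Your proposal is correct and is essentially the elaboration the paper intends when it writes ``a straight-forward argument using Lemma \ref{lem:tree control} gives the required consistency constant $\alpha_0$'': you start from the same inputs (base/extended consistency plus $\pi_U(x)$ landing in a uniform neighborhood of $\phi_U(T_U)$) and push the consistency inequalities through $\phi_U^{-1} \circ p_U$. Your explicit identification of the diameter branch of the nested inequality as the only nontrivial point, handled by a BGIA case split comparing $\rho^V_U(\pi_V(x))$ with $\rho^V_U(\phi_V(x_V))$, is exactly the detail the paper leaves to the reader, and it is the right tool since $\rho^V_U$ is not coarsely Lipschitz near $\rho^U_V$.
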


\begin{proof}
Let $x \in H$.  The corresponding tuple $(\pi_U(x_U))$ is $E$-consistent by the Consistency Axioms \ref{item:dfs_transversal} and our choice of $E$ in Section \ref{sec:constants}.  Moreover, by definition of $H = H_{\theta}(F \cup \Lambda)$, we have $d_U(\pi_U(x_U), H_U) < \theta = \theta(\mathfrak S, |F \cup \Lambda|)$, and hence $d_U(\pi_U(x_U), \phi_U(T_U))<\theta'$ for some $\theta' = \theta'(\mathfrak S, |F \cup \Lambda|)>0$.  A similar conclusion holds when replacing $x$ with $\lambda \in \Lambda$ by Proposition \ref{prop:extended consistency}, where the proximity of $\pi_U(\lambda)$ for $U\notin \supp(\lambda)$ is guaranteed by the fact that we have defined $H_U$ to be the hull of $\pi_U(F \cup \Lambda)$.  At this point, a straight-forward argument using Lemma \ref{lem:tree control} gives the required consistency constant $\alpha_0$.  We leave the details to the reader.
\end{proof}

We also want to be able to move in the opposite direction.

The family of maps $\phi_U:T_U \to \calC(U)$ for $U \in \calU$ combine into a component-wise map $\prod_{U \in \calU} T_U \to \prod_{U \in \calU} \calC(U)$.  We can extend this to a map $\Phi:\prod_{U \in \calU} T_U \to \prod_{U \in \mathfrak S} \calC(U)$ by choosing a point $f \in F \neq \emptyset$, setting $T_V = \{f\}$ for each $V \in \mathfrak S - \calU$, and defining $\phi_V:T_V \to \calC(V)$ by $\phi_V(f) = \pi_V(f)$.  The idea here is that $\diam_{\calC(V)}(F \cup \Lambda)<K$ if $V \notin \calU$.

The following proposition says that the tuples in $\Phi(\calZ_{\alpha})$ are once again consistent.  It is a simple application of Lemma \ref{lem:tree control} and the definitions.  In the next subsection, we will use this fact to extend $\Phi$ to a map to $H$.

\begin{proposition}\label{prop:consist from tree}
For any $\alpha \geq 0$, there exists $\beta= \beta(\mathfrak S, |F \cup \Lambda|, K, \alpha)>0$ so that if $x = (x_U) \in \calZ_{\alpha}$, then $\Phi(x)$ is $\beta$-consistent.
\end{proposition}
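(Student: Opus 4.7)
The plan is to split the argument into two steps: (i) translate the tree-consistency of $(x_U)_{U\in\calU}$ into HHS-consistency of the tuple $(\phi_U(x_U))_{U\in\calU}$ in the induced HHS structure on $H$ from Lemma \ref{lem:hull induced structure}, and then (ii) apply Lemma \ref{lem:H qie} to extend this ``internal'' consistency across all of $\mathfrak S$ via the map $\Phi$.

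For step (i), the key input is Lemma \ref{lem:tree control}, which records that the tree projections $\delta^U_V$ faithfully model the HHS projections $\rho^U_V$ up to bounded error: items (1) and (2) give $d_{\calC(V)}(\phi_V(\delta^U_V), \rho^U_V) \leq R$, item (5) is a BGI-type property for the map $\delta^V_U : T_V \to T_U$ controlled by $E'$, and every $\phi_U$ is a $(C,C)$-quasi-isometric embedding. Given the transverse tree-consistency inequality $\min\{d_{T_V}(x_V, \delta^W_V),\, d_{T_W}(x_W, \delta^V_W)\} \leq \alpha$, these facts convert $d_{T_V}(x_V,\delta^W_V)\leq\alpha$ into $d_V(\phi_V(x_V), \rho^W_V) \leq C\alpha + C + R$, and similarly on the $W$-side, so the minimum survives up to bounded loss. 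The nested inequality is handled analogously: item (5), pushed through $\phi_V$ and $\phi_W$, translates $\diam_{T_V}(x_V \cup \delta^W_V(x_W)) \leq \alpha$ into a bound on $\diam_V(\phi_V(x_V) \cup \rho^W_V(\phi_W(x_W)))$, and item (2) bounds $d_V(\phi_V(\delta^W_V),\rho^W_V)$. Post-composing each coordinate with the uniformly coarse-Lipschitz retraction $r_U:\calC(U)\to\pi_U(H)$ only changes each $\phi_U(x_U)$ by at most $C$, and thus produces a constant $\beta' = \beta'(\alpha, \mathfrak S, |F\cup\Lambda|)$ for which $(r_U\circ\phi_U(x_U))_{U\in\calU}$ is $\beta'$-consistent in the induced HHS structure on $H$ (Lemma \ref{lem:hull induced structure}).

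For step (ii), the extension $I_H$ of a partial tuple over $\calU$ to a full tuple over $\mathfrak S$ by setting the $V$-coordinate to $\pi_V(f)$ for $V \notin \calU$ is, up to the bounded modification from $r_U$, exactly the map $\Phi$. Applying Lemma \ref{lem:H qie} then produces a consistency constant $\beta = \theta_K(K, \beta', \calX, |F\cup\Lambda|)$, and since $\beta'$ depends only on $\alpha, \mathfrak S, |F\cup\Lambda|$, the final $\beta$ depends only on the quantities listed in the statement.

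The main obstacle is step (i), specifically the nesting subcase, where one must show that the tree BGI (item (5) of Lemma \ref{lem:tree control}) faithfully translates into the ambient BGI comparison needed for HHS consistency---i.e., that $\phi_V \circ \delta^W_V$ coarsely agrees with $\rho^W_V \circ \phi_W$ on $T_W$, with constants depending only on $\mathfrak S$ and $|F \cup \Lambda|$. Once this is done, the remaining estimates are straightforward bookkeeping with the quasi-constants $C$, the distortion $R$, the BGI constant $E'$, and $\alpha$, and step (ii) is an immediate invocation of Lemma \ref{lem:H qie}.
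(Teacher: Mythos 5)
Your proposal is correct, but it takes a genuinely different route from the paper's. The paper verifies consistency directly, coordinate-by-coordinate over all of $\mathfrak S$: it first disposes of domains $V\in\mathfrak S-\calU$ in one line (observing that for such $V$, the relevant $\rho$-sets cluster within a $K$-controlled neighborhood of $\pi_V(f)$, so the consistency inequality holds with a constant depending on $K$), and then runs, for pairs in $\calU$, exactly the estimates you describe in step (i) --- pushing tree consistency through $\phi_U$ using items (1), (2), and (5) of Lemma \ref{lem:tree control} and the $(C,C)$-quasi-isometry property, with the nesting subcase handled via the coarse identity $\phi_U\circ\delta^V_U \approx \rho^V_U\circ\phi_V$ that you correctly flag as the key point. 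You instead factor through the induced HHS structure on $H$ (Lemma \ref{lem:hull induced structure}) and then invoke Lemma \ref{lem:H qie} to extend over $\mathfrak S-\calU$.

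Both routes work, but there is a reason the paper avoids Lemma \ref{lem:H qie} here: as stated in the text, that lemma's proof uses the Distance Formula \ref{thm:DF}, and Proposition \ref{prop:consist from tree} sits on the logical path that the paper later uses to \emph{re-prove} the upper bound of the distance formula from the cubical model (Corollary \ref{cori:DF and HP}, via $\Phi_{\alpha}$, $\hO$, and Proposition \ref{prop:cube paths hp}). Your step (ii) would, on its face, introduce a circular dependency into that program. In truth only the quasi-isometric-embedding half of Lemma \ref{lem:H qie} needs the distance formula, not the consistency-transfer half, so the logic can be patched; but the paper's direct verification makes the independence from the distance formula manifest without any caveats, and is shorter since the extension to $\mathfrak S-\calU$ is handled by a single observation rather than a citation. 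If you adopt your route, you should at minimum note that the part of Lemma \ref{lem:H qie} you invoke does not itself depend on the distance formula.
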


\begin{proof}

Observe that if $V \in \mathfrak S - \calU$ and $U \in \calU$ with either $U \pitchfork V$ or $U \nest V$, then $\delta^U_V \cap \mathcal{N}_A(f_V)$ for some $A = A(\mathfrak S, |F \cup \Lambda|, K)>0$ and any $f \in F$.  Hence, by choosing $f \in F$ and setting $x_V = f_V$ for each $V \in \mathfrak S - \calU$, we need only concern ourselves with the domains in $\calU$.  Notably, for the domains in $\calU$, we will get a consistency constant independent of $K$.

For each $U \in \calU$, let $x'_U = \phi_U(x_U)$.  There are two main cases to consider:

\underline{$U \pitchfork V$}: Without loss of generality, $\alpha$-consistency of $(x_U)$ says that $d_{T_U}(x_U, \delta^V_U)<\alpha$.   Item (1) of Lemma \ref{lem:tree control} says that $d_U(\phi_U(\delta^V_U), \rho^V_U)< R$, while $\phi_U$ is a $(C,C)$-quasi-isometric embedding, and so $d_U(\phi_U(x_U), \rho^V_U)<R + C\alpha + C$, which depends only on $\mathfrak S$, $|F\cup \Lambda|$, and $\alpha$, as required.

\underline{$U \nest V$}: By $\alpha$-consistency, we have $\min \{d_{T_V}(x_V, \delta^V_U), \diam_{T_U}(x_U \cup \delta^V_U(x_V))\}<\alpha$.  The case where $d_{T_V}(x_V, \delta^V_U)<\alpha$ follows similarly from Item (2) of Lemma \ref{lem:tree control} as argued above.  The remaining case is when $\diam_{T_U}(x_U \cup \delta^V_U(x_V))<\alpha$ and $d_{T_V}(x_V, \delta^V_U)\geq \alpha$.

For this, observe that $\phi_U \circ \delta^V_U = \phi_U \circ \phi^{-1}_U \circ \rho^V_U$, where $\phi_U$ and $\phi^{-1}_U$ are $C$-coarse inverses.  Since $\phi_U$ is a $(C,C)$-quasi-isometric embedding, it follows that $\diam_{U}(\phi_U(x_U) \cup \phi_U(\delta^V_U(x_V))) < C \alpha + C$.  Hence, $\diam_U(\phi_U() \cup \rho^V_U(\phi_V(x_V))) < C\alpha + 2C$.  This completes the proof.

\end{proof}

\section{Collapsed trees and the model space $\calQ$}\label{sec:Q}

In this section, we will develop the basic structure of our cubical models.  Our first goal is to explain how to modify the collection of trees $\{T_U\}_{U \in \calU}$ from the last section by collapsing certain subtrees arising from the shadows of nested large domains.  The idea is that these shadows encode redundant information on the trees, and so collapsing them removes this redundancy.  In particular, removing these redundancies removes the coarseness from relative projections and consistency, giving us an exact model for the hull.

The main technical statements in this section are Lemma \ref{lem:collapsed tree control}, which is a collapsed version of Lemma \ref{lem:tree control} and explains how the hierarchical structure is encoded in these collapsed trees, as well as Lemma \ref{lem:encoding PU}, which explains how the structural information provided by Strong Passing-up \ref{prop:SPU} is encoded in the collapsed data.

\subsection{Abstracting the tree reduction setup}\label{subsec:tree axioms}

For this section, we let $F\cup \Lambda$ and $\calU = \Rel_K(F \cup \Lambda)$ be as in Section \ref{sec:reduce to tree}.  In what follows, it will only be necessary that the rest of the setup satisfies the main outputs of the last section, namely Lemma \ref{lem:tree control}, Lemma \ref{lem:consist to tree}, and Proposition \ref{prop:consist from tree}.  For this, we only need the following slightly more general setup which is useful for applications, see in particular Subsection \ref{subsec:reduce tree for stable trees} below:

\begin{definition}[Reduced tree system]\label{defn:tree axioms}
A \emph{reduced tree system} for $F \cup \Lambda$ and $\calU$ is the following set of data:
\begin{enumerate}
\item For each $U \in \calU$, we have a tree $T_U$ and a $C$-quasi-median $(C,C)$-quasi-isometric embedding $\phi_U:T_U \to \calC(U)$ so that $d^{Haus}_U(\phi_U(T_U), H_U)<C$.
\item For each $a \in F \cup\Lambda$ and $U \in\calU$, there exists a marked point or ray $a_U \in T_U \cup \partial T_U$ with $\phi_U(a_U) = \pi_U(a) \in \calC(U) \cup \partial \calC(U)$.
\item There exists $R = R(\calX, |F \cup \Lambda|)>0$ and for any $V, U \in \calU$ with $V \nest U$ or $V \pitchfork U$, there exists $\delta^V_U \subset T_U$ so that items (1)--(4) and (6) of Lemma \ref{lem:tree control} hold.
\item For any $V, U \in \calU$ with $U \nest V$, the composition $\rho^V_U = \phi_U^{-1} \circ \rho^V_U \circ p_V \circ \phi_V: T_V \to T_U$ satisfies item (5) of Lemma \ref{lem:tree control}.
\end{enumerate}
\end{definition}

See Definition \ref{defn:quasi-median} below for the definition of quasi-median, which we include here for future applications of this setup; see also Lemma \ref{lem:median hyp}.

In particular, we have the following:

\begin{proposition}\label{prop:convert to tree system}
Any family of Gromov modeling trees $\{T_U\}_{U \in \calU}$ for $F \cup \Lambda$ can be converted into a reduced tree system which satisfies the conclusions of Lemma \ref{lem:consist to tree} and Proposition \ref{prop:consist from tree}.
\end{proposition}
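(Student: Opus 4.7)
The plan is to verify that the construction already carried out in Section \ref{sec:reduce to tree} produces a reduced tree system, and that the conclusions of Lemma \ref{lem:consist to tree} and Proposition \ref{prop:consist from tree} follow axiomatically, so that they transfer to any system satisfying Definition \ref{defn:tree axioms}.

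First, for axioms (1) and (2), I would apply Lemma \ref{lem:ray trees exist} to each $U \in \calU$ to produce $\phi_U \colon T_U \to \calC(U)$, a $(C,C)$-quasi-isometric embedding with $d^{Haus}_U(\phi_U(T_U), H_U) < C$ for $C = C(\mathfrak S, |F \cup \Lambda|)$. The quasi-median property of $\phi_U$ is a standard fact: trees are median spaces, $\delta$-hyperbolic spaces are coarsely median with medians realized (up to bounded error depending on $\delta$) by Gromov centers of geodesic triangles, and any quasi-isometric embedding from a tree into a hyperbolic space intertwines the two median structures up to an additive constant depending only on $\delta$ and the quasi-isometry constants. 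For the marked-point condition, I would refine the Gromov tree construction so that each $a \in F \cup \Lambda$ has a dedicated vertex (or boundary point, for rays in $\supp(a)$) $a_U \in T_U \cup \partial T_U$ with $\phi_U(a_U) = \pi_U(a)$; this only affects $\phi_U$ by a bounded perturbation and so does not change its quasi-isometry constants beyond $C$.

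For axioms (3) and (4), I would adopt Definition \ref{defn:tree projections} verbatim to produce the shadows $\delta^V_U$ when $V \nest U$ or $V \pitchfork U$, and the coarse maps $\delta^V_U \colon T_V \to T_U$ when $U \nest V$. Lemma \ref{lem:tree control} then supplies items (1)--(6) directly, which is precisely what Definition \ref{defn:tree axioms} demands.

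Finally, the conclusions of Lemma \ref{lem:consist to tree} and Proposition \ref{prop:consist from tree} only invoke the inputs packaged into Definition \ref{defn:tree axioms}: the quasi-isometric embedding $\phi_U$ with controlled Hausdorff distance to $H_U$, the fact that $\delta^V_U$ sits close to $\rho^V_U$ under $\phi_U$, and items (1)--(6) of Lemma \ref{lem:tree control}. In particular, the proof of Lemma \ref{lem:consist to tree} only uses consistency of HHS (and extended-consistency Proposition \ref{prop:extended consistency}) together with items (1)--(2) of Lemma \ref{lem:tree control} to convert the $E$- or $\theta_L$-consistency of $(\pi_U(x))$ into $\alpha_0$-consistency of $(\psi_U(x))$; the proof of Proposition \ref{prop:consist from tree} similarly uses only the control given by items (1)--(5) to push tree-consistency back to HHS-consistency with constant $\beta$. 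Since these arguments use no special feature of the particular Gromov modeling trees beyond the axioms of Definition \ref{defn:tree axioms}, the same conclusions apply to any reduced tree system, completing the proof. The main (mild) obstacle is checking quasi-medianness of $\phi_U$, but this reduces to the standard fact above and does not require genuinely new work.
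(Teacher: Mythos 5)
Your proposal is correct and matches the paper's (implicit) approach: the paper gives no explicit proof of this proposition, treating it as a summary of what is established in Section \ref{sec:reduce to tree} — namely Lemma \ref{lem:ray trees exist} for the trees, Lemma \ref{lem:median hyp} for the quasi-median property (which you rederive via the Morse lemma argument; the paper just cites that lemma), Definition \ref{defn:tree projections} for the $\delta$-sets and maps, and Lemma \ref{lem:tree control} for items (1)--(6), with Lemma \ref{lem:consist to tree} and Proposition \ref{prop:consist from tree} then following since their proofs use only these packaged axioms. The only cosmetic caveat is that the identity $\phi_U(a_U) = \pi_U(a)$ in item (2) of Definition \ref{defn:tree axioms} must be read up to bounded perturbation (as you note, since $\pi_U(a)$ is set-valued), and your general assertion that QI embeddings from trees into hyperbolic spaces are quasi-median is correct but is precisely the content of Lemma \ref{lem:median hyp} rather than a citation-free ``standard fact.''
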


\subsection{Clusters and the tree decomposition} \label{subsec:shadows}

In what follows, we let $F, \Lambda$ be finite with $F \neq \emptyset$, and $\calU = \Rel_K(F \cup \Lambda)$ be the set of relevant domains for $F \cup \Lambda$ with $K$ as in Subsection \ref{subsec:K}.  Fix a reduced tree system for this setup as defined in Definition \ref{defn:tree axioms}, for which we can freely assume the relevant statements from Section \ref{sec:reduce to tree} by Proposition \ref{prop:convert to tree system}.  We will now proceed to explain how to cluster the $\delta$-set data as our first step for producing our collapsed trees.

Let $U,V \in \calU$.  In Lemma \ref{lem:tree control}, we saw that when $V \nest U$ or $V \pitchfork U$, that the shadow $\delta^V_U$ of $\rho^V_U$ on $T_U$ has diameter bounded by $R = R(\mathfrak S, |F \cup \Lambda)>0$.  Moreover, in Definition \ref{defn:tree projections}, we constructed these shadows to be subtrees of $T_U$.

We now fix $r = r(\mathfrak S, |F \cup \Lambda|)>0$, called the \emph{cluster separation constant}, to be determined later (see Lemma \ref{lem:collapsed tree control}). In particular, our threshold constant $K$ can be chosen independently of $r$.

Now let $T^c_U$ to be the union of 
\begin{enumerate}
    \item the $2R$-neighborhoods in $T_U$ of the shadows $\delta^V_U$ of the $V \in \mathfrak S$ for $V \nest U$,
    \item the $2R$-neighborhoods in $T_U$ of the marked points of $T_U$ (i.e., the points of $F$ and $\Lambda$ which are not rays, Definition \ref{defn:marked}), and
    \item The convex hull of any collection of sets from (1) and (2) which are within distance $r$ of each other in $T_U$. 
\end{enumerate}

\begin{remark}
The reason for the choice of $2R$ above becomes apparent in the proof of Proposition \ref{prop:Q-consistent} below, in which we build a map from our model $\oQ$ (Definition \ref{defn:Q consistent} below), back into $\calY$.
\end{remark}

We call the components of $T^c_U$ from item (3) \emph{clusters}\footnote{We borrow the term ``cluster'' from \cite{DMS20}, though with a slightly different usage.}.  Our flexibility in choosing $r$ says that we can force clusters to be as far apart as we need.  This is important in Theorem \ref{thm:Q distance estimate} below, in which we compare the distance in our model to the Distance Formula \ref{thm:DF} sum in the hull $H$.

Set $T^e_U = T_U - T^c_U$.  We call the components of $T^e_U$ \emph{edge components}.  Notice that since each component of $T^c_U$ is convex by construction, we have that $T^c_U$ and $T^e_U$ are both disjoint unions of subtrees of $T_U$.

\begin{remark}[Cluster philosophy]
Roughly speaking,the components of $T^c_U$ encode parts of $T_U$ which are already encoded in other domains not orthogonal to $U$, while $T^e_U$ encodes the parts of $T_U$ which are unique to $U$.  That is, $T^c_U$ contains redundant information, and $T^e_U$ contains information unique to $U$.  This is the same principle for the subdivision points in the original cubulation construction from \cite[Section 2.1]{HHS_quasi}, as discussed in Subsections \ref{subsec:BHS cube} and \ref{subsec:Dur cube} of the Introduction \ref{sec:intro}.
\end{remark}

\subsection{A reduced tree system for stable trees} \label{subsec:reduce tree for stable trees}

The stable tree construction from \cite[Theorem 3.1]{DMS20} produces trees with exactly this sort of decomposition $T_U = T^c_U \cup T^e_U$, where the relative projection data for nested domains is encoded in the components of $T^c_U$.  More specifically, there is a uniform $C$-quasi-median $(C,C)$-quasi-isometric embedding $\phi_U:T_U \to \calC(U)$ satisfying
\begin{itemize}
\item For each component $D \subset T^c_U$, there is an associated collection of domains $\calU_U(D) \subset \calU$ so that
\item For each $V \in \calU$ with $V \nest U$, there exists a unique component $D_V \subset T^c_U$ so that $V \in \calU_U(D_V)$,
\item $V \nest U$ for each $V \in \calU_U(D)$, and
\item $d^{Haus}_U(\phi_U(D), \bigcup_{V \in \calU_U(D)} \rho^V_U)<C$.
\end{itemize}

As such, given $V \nest U \in \calU$, we can define $\delta^V_U = \phi^{-1}_U(p_{D_V}(\rho^V_U))$, and this family of $\delta$-sets, along with the $\delta$-sets and -maps for $V \pitchfork U$ and $U \nest V$ will satisfy Definition \ref{defn:tree axioms}.  Moreover, \cite[Theorem 3.1]{DMS20} also provides a cluster separation constant $r = r(\calX, |F \cup \Lambda|)>0$ so that the components of $T_U = T^c_U \cup T^e_U$ satisfying the properties discussed in Subsection \ref{subsec:shadows} above.

We have made these observations to set ourselves up for future work, in which we will want to be able to plug in constructions like these stable trees into the cubulation machine we develop in this article.  We will elaborate on how these different machines work together this in a forthcoming expository paper \cite{Dur_expo}.

\subsection{Collapsed trees}

Fixing the setup as before, in particular at the level of generality of a reduced tree system for our finite sets $F,\Lambda$, we now define our main trees of interest:

\begin{definition}[Collapsed trees and rays]\label{defn:collapsed tree}
Let $q_U:T_U \to \hT_U$ be the quotient map obtained by collapsing all components of $T^c_U$ to points.  Since each shadow is convex, the quotient space $\hT_U$ is a tree, which we call the \emph{collapsed tree} associated to $U$.  The points in $F \cup \Lambda$ determine \emph{marked cluster points} or rays in $\hT_
    U$.  In particular, when $\lambda \in \Lambda$ with $U \in \supp(\lambda)$, then either
    \begin{itemize}
    	\item There exists a unique cluster $C_{\lambda_U} \subset T^c_U$ containing a geodesic representative $\gamma_U$ of $\lambda_U \in \partial T_U$, in which we set $q_U(\lambda_U)\in \hT^c_U$ to be equal to the cluster point $q_U(C_{\lambda_U})$.
	\item Otherwise, any geodesic representative of $\lambda_U$ projects to an unparameterized geodesic ray in $\hT_U$, and we set $q_U(\lambda_U) \in \partial \hT_U$ to be the class of that ray. 
	\end{itemize}
\end{definition}

The following is clear from this discussion:

\begin{lemma}\label{lem:component preserve}
The quotient map $q_U: T_U \to \hT_U$ preserves the decomposition $\hT_U = \hT^c_U \sqcup \hT^e_U$, into clusters whose images in $\hT_U$ are points, and edge components, on each of which $q_U$ restricts to an isometry. 
\end{lemma}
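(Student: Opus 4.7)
The proof will be essentially a direct unpacking of the definitions from Subsection \ref{subsec:shadows}, requiring no new ideas.

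The plan is to verify three claims in order. First, I would observe that each component $C$ of $T^c_U$ is a convex subtree of $T_U$: the components of type (1) and (2) in the construction are convex because they are metric neighborhoods of convex subtrees (shadows are subtrees by Definition \ref{defn:tree projections}, and marked points are points), and the components of type (3) are convex hulls of finite collections of convex sets in a tree, hence convex. Thus $T^c_U$ is a disjoint union of pairwise-disjoint convex subtrees of $T_U$, and $T^e_U = T_U - T^c_U$ is a disjoint union of (relatively open) subtrees called edge components.

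Second, I would set $\hT^c_U := q_U(T^c_U)$ and $\hT^e_U := q_U(T^e_U)$. By definition of $q_U$, each component of $T^c_U$ is sent to a single point of $\hT_U$, so $\hT^c_U$ is a discrete set of \emph{cluster points}. Moreover, $q_U$ is injective on $T^e_U$: if $x,y \in T^e_U$ are identified under $q_U$, then they must lie in the same cluster, contradicting $x, y \notin T^c_U$. Combined with injectivity on edge components and disjointness from $\hT^c_U$, this gives the disjoint decomposition $\hT_U = \hT^c_U \sqcup \hT^e_U$.

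Third, I would verify the isometry claim on each edge component $E$. Given $x, y \in E$, the unique geodesic $[x,y] \subset T_U$ lies entirely inside the closure $\overline{E}$; indeed, if $[x,y]$ crossed some cluster component $C$, then since $C$ is convex and disjoint from $E$, the geodesic would enter and exit $C$ through its (at most two) gate points, forcing $x$ or $y$ to lie in a different component of $T^e_U$ than the other. Thus $[x,y]$ intersects $T^c_U$ only possibly at endpoints of $\overline{E}$, and since $q_U$ collapses no positive-length subinterval of $[x,y]$, its image has the same length, giving $d_{\hT_U}(q_U(x), q_U(y)) = d_{T_U}(x,y)$.

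There is no substantive obstacle here: the lemma is a packaging statement recording a consequence of the convexity built into the definition of clusters in Subsection \ref{subsec:shadows}. The only minor subtlety is the observation in the third step that geodesics inside $T_U$ between points of a single edge component cannot dip into the interior of a cluster; this is immediate once one notes that components of $T^c_U$ are convex and $T_U$ is a tree.
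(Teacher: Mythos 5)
The paper gives no explicit proof—it introduces the decomposition $T_U = T^c_U \sqcup T^e_U$ in Subsection 6.2, notes that components of $T^c_U$ are convex by construction, and then states that the lemma is ``clear from this discussion.'' Your unpacking is exactly what that discussion leaves implicit, and all three of your steps are correct: convexity of cluster components, collapse to points, and the isometry on each edge component follows since no subinterval of a geodesic between two points of the same edge component gets collapsed. Your phrasing in the third step is a touch informal (the ``at most two gate points'' language; a cleaner route is to note that in a tree, any interior point of $[x,y]$ lying in $T^c_U$ would separate $x$ from $y$ in $T^e_U$), and in the first step the collections in item (3) need not be finite, but neither affects the argument. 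This is the intended proof.
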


In what follows, this decomposition---especially the fact that all relative projections are now collapsed to points---will simplify many of our hierarchical arguments.  See Figure \ref{fig:cluster_collapse} for a schematic.

\begin{figure}
    \centering
    \includegraphics[width=.7\textwidth]{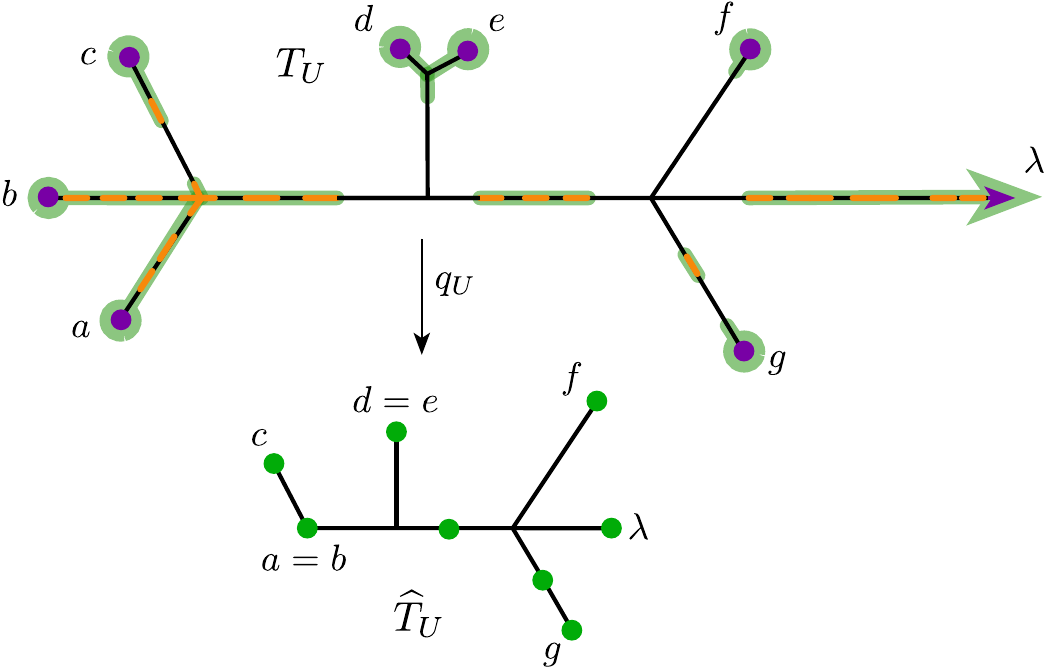}
    \caption{The shadows $\delta^V_U$ (orange) combine together with leaves (purple) to form clusters (green).  This can involve collapsing nearby leaves to a point (for $d$ and $e$), far away leaves to a point (for $a$ and $b$), or even a ray covered with shadows to a point (as with $\lambda$).  Other leaves (like $f$ and $g$) are essentially unaffected.  }
    \label{fig:cluster_collapse}
\end{figure}

\subsection{Relative projections for collapsed trees}

We finish our initial discussion by defining new relative projections for our collapsed trees.

Consider the product space $\hY = \prod_{U \in \calU} \hT_U$, as well as the extended product $\oY = \prod_{U \in \calU} \hT_U \cup \partial \hT_U$.  These spaces inherit a ``quasi-hierarchical'' structure, in the sense that the index set $\calU$ inherits the relations $\nest, \pitchfork, \perp$, and we have natural projections as follows:

\begin{definition}[Collapsed tree projections] \label{defn:Q project}
The projections and relative projections between $\oY$ and the collapsed trees $\hT_U \cup \partial \hT_U$ for $U \in \calU$ are as follows:
\begin{itemize}
    \item The domain projections are just the natural component-wise projections $\pi_{\hT_U}: \oY \to \hT_U \cup \partial \hT_U$.
    \item When $U \pitchfork V$, the relative projection from $\hT_U$ to $\hT_V$ is $\hd^U_V = q_V(\delta^U_V)\in \hT_V$.
    \item When $U \nest V$, the relative projection from $\hT_U$ to $\hT_V$ is $\hd^U_V = q_V(\delta^U_V)$.
    \item Finally, when $U \nest V$, the relative projection from $\hT_V$ to $\hT_U$ is $\hd^V_U = q_U \circ \delta^V_U \circ q^{-1}_V$.
\end{itemize}
\end{definition}

\begin{remark}
The above structure is not a genuine hierarchical structure because it does not satisfy the Large Links Axiom \ref{ax:LL} or, equivalently, the Passing-Up Lemma \ref{lem:passing-up}.  In some sense, that is the whole point of this construction, namely collapsing redundant information contained up the $\nest$-lattice.
\end{remark}

\subsection{$0$-consistency and the model space $\calQ$}

Since we have a set of relative projections for $\oY$, we now can define $0$-consistency:

\begin{definition}[$0$-consistency and $\calQ$]\label{defn:Q consistent}
A tuple $(\hx_U) \in \oY$ is $0$-\emph{consistent} if we have
\begin{enumerate}
    \item If $U \pitchfork V \in \calU$, then either $\hx_U = \hd^V_U \in \hT_U$ or $\hx_V = \hd^U_V \in \hT_V$.
    \item If $U \nest V \in \calU$, then either $\hx_V = \hd^U_V \in \hT_V$ or $\hx_U \in \hd^V_U(\hx_V) \subset \hT_U$.
\end{enumerate}
\begin{itemize}
\item We set $\oQ$ to be the set of $0$-consistent tuples in $\oY$.
    \item (Canonicality) We let $\calQ \subset \oY$ denote the set of tuples $\hx \in \oQ$ which are also \emph{canonical} in the sense that 
    \begin{itemize}
    \item There does not exist $U \in \calU$ so that $\hx_U \in \partial \hT_U$ represents a ray, and
    \item $\#\{U \in \calU| \hx_U \neq \ha_U\} < \infty$ for all $a \in F$.
	\end{itemize}
\end{itemize}

\end{definition}

\begin{remark}\label{rem:canonicality}
Canonicality is a finiteness condition, and plays a similar role to the canonical condition on sets of hyperplane orientations in a cube complex, hence the name.  It is only necessary in the definition of $\calQ$ when $\Lambda \neq \emptyset$, since otherwise each $\hT_U$ is a finite tree, $\calU$ is finite, and therefore $\calQ = \oQ$.  Its purpose is to pick out ``interior'' points in $\oY$, and it is strictly necessary because one can build a variety of examples of an HHS $\calX$ and a hierarchy ray $h \subset \calX$ whose projection to $\calC(S)$ (where $S$ is $\nest$-maximal) has infinite diameter, while the quasi-geodesic ray $\pi_S(h)$ is coarsely covered by $\calV = \{\rho^U_S| U \in \calS - \{S\}\}$, with each $U \in \calV$ being $\nest$-minimal.  Consequently, $\hT_S$ would be a point, but tuples representing the basepoint of $h$ and $h$ itself would both be $0$-consistent and yet differ on infinitely-many domains.  See Subsection \ref{subsec:ray example} for an explicit example, as well as \cite[Proposition 7.21]{DZ22}.  See also \cite{LLR, BLMR, CMW} for explicit examples in Teichm\"uller space.
\end{remark}

As a consequence of the canonical condition in Definition \ref{defn:Q consistent}, we get that the $\ell^1$-sum on $\calQ$
$$d_{\calQ}(x,y) = \sum_{U \in \calU} d_{\hT_U}(\hx_U,\hy_U)$$
defines a metric on $\calQ$, where $\hx,\hy \in \calQ$.  In particular, the canonical condition guarantees that the sum has finitely-many non-zero terms, which, in turn, guarantees that the triangle inquality holds.

\subsection{The map $\hPsi:H \to \calQ$} \label{subsec:hPsi defined}

Our next goal is to prove a consistency result for points in $H \cup \Lambda$, which will complete the reduction of the coarseness of consistency to nothing.  First, we need some notation and a lemma.

Recall from Definition \ref{defn:tree consistency}, when $\alpha>0$, then $\oZ_{\alpha}$ denotes the set of $\alpha$-consistent tuples in $\oY = \prod_{U \in \calU} T_U \cup \partial T_U$.  Moreover, in Lemma \ref{lem:consist to tree}, we saw that there exists $\alpha_0 = \alpha_0(\mathfrak S, |F \cup \Lambda|)>0$ so that for any $x \in H \cup \Lambda$, we had that $\Psi(x) \in \oZ_{\alpha_0}$.  We can extend this to a map to 
$$\hPsi:H\cup \Lambda \to \oY$$
by applying the quotient maps $q_U$ coordinate-wise.  In particular, for each $U \in \calU$, we have a map $\psi_U:\calX \to T_U$ given by $\psi_U = \phi^{-1}_U\circ p_U \circ \pi_U$.  We can extend this to a map $\hpsi_U:\calX \to \hT_U$ to the collapsed trees by composing with the quotient maps, namely $\hpsi_U:\calX \to \hT_U$, where $\hpsi_U = q_U \circ \psi_U$.

For a point $x \in H \cup \Lambda$, we will write $\psi_U(x) = x_U$ and $\hpsi_U(x) = \hx_U$.  Thus when $x,y \in H \cup \Lambda$, we will write $d_{T_U}(x_U,y_U)$ and $d_{\hT_U}(\hx_U,\hy_U)$ instead of $d_{T_U}(\psi_U(x), \psi_U(y))$ and $d_{\hT_U}(\hpsi_U(x), \hpsi_U(y))$, respectively.

Before our consistency proof, we need to record some basic features of the construction. It will sometimes be useful to highlight coordinates $\hx_U$ of $\hx \in \oQ$ which live in edge components of $\hT_U$.  For a given $\hx \in \oQ$, we denote the corresponding set of domains by $E(\hx) = \{U \in \calU | \hx_U \in \hT^e_U\}$.

\begin{lemma}\label{lem:ct basics}
The following hold:
\begin{enumerate}

\item If $U \pitchfork V \in \calU$, then both $\hd^V_U \in \hT_U$ and $\hd^U_V \in \hT_V$ coincide with marked cluster points.
    
    \item If $U \nest V \in \calU$, then $\hd^U_V$ coincides with a cluster point.
    
    \item For any $U \in \calU$ and any $x,y \in H$, we have $d_U(x,y) \asymp d_{T_U}(x_u,y_U)\geq d_{\hT_U}(\hx_U,\hy_U)$, with the constants in $\asymp$ depending only on $\mathfrak S$ and $|F \cup \Lambda|$.
    
    \item  If $\hx \in \oQ$, then $\{U \in \calU| \hx_U \neq \ha_U \text{ for some } a \in F \cup \Lambda\}$ is pairwise non-transverse, and hence has cardinality bounded in terms of $\mathfrak S$ and $|F \cup \Lambda|$.
\end{enumerate}
\end{lemma}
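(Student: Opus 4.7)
Parts (1) and (2) will follow directly from Lemma \ref{lem:tree control} and the cluster construction in Subsection \ref{subsec:shadows}. For (1), if $U \pitchfork V$, then Lemma \ref{lem:tree control}(1) gives that $\delta^V_U \subset T_U$ has diameter at most $R$ and lies within $R$ of a marked point. Items (1) and (2) of the cluster definition each force the $2R$-neighborhoods of $\delta^V_U$ and of that marked point to lie in $T^c_U$, and since these neighborhoods overlap, item (3) places them in the same cluster. Hence $\delta^V_U \subset T^c_U$ collapses under $q_U$ to a single cluster point, namely $\hd^V_U$; the argument for $\hd^U_V$ is identical. For (2), when $U \nest V$, Lemma \ref{lem:tree control}(2) gives $\diam_{T_V}(\delta^U_V) \leq R$, so item (1) of the cluster construction alone places $\delta^U_V$ in $T^c_V$, and $\hd^U_V = q_V(\delta^U_V)$ is a cluster point.

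Part (3) is essentially bookkeeping with the quasi-isometry constants. Unwinding $\hx_U = q_U(\phi_U^{-1}(p_U(\pi_U(x))))$, the fact that $\phi_U$ is a $(C,C)$-quasi-isometric embedding and that $\pi_U(x)$ lies uniformly close to $\phi_U(T_U)$ (since $x \in H$) yields $d_{T_U}(x_U, y_U) \asymp d_U(x,y)$ with constants controlled by $\mathfrak S, |F\cup \Lambda|$. The inequality $d_{T_U}(x_U,y_U) \geq d_{\hT_U}(\hx_U,\hy_U)$ is immediate since $q_U$ is a tree collapse and therefore $1$-Lipschitz.

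Part (4) is the substantive statement and will be proved by contradiction, reading the conclusion in the canonicality sense, i.e., applied to the set $S = \{U \in \calU : \hx_U \neq \ha_U \text{ for every } a \in F \cup \Lambda\}$. Suppose $U, V \in S$ and $U \pitchfork V$. By $0$-consistency of $\hx$, assume without loss of generality $\hx_U = \hd^V_U$. For each $a \in F \cup \Lambda$, the tuple $\hPsi(a)$ is itself $0$-consistent: indeed, Lemma \ref{lem:consist to tree} gives $\alpha_0$-consistency for $\Psi(a)$, and choosing the cluster separation constant $r$ large compared to $\alpha_0$ ensures that after the quotient $q_U$ the coarse alternative ``$d_{T_U}(\ha_U, \delta^V_U) \leq \alpha_0$'' becomes the exact statement ``$\ha_U = \hd^V_U$.'' Applying this to each $a$, either $\ha_U = \hd^V_U = \hx_U$ (contradicting $U \in S$) or $\ha_V = \hd^U_V$. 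Thus $\ha_V = \hd^U_V$ for every $a$, meaning every marked point $a_V \in T_V$ and every ray representative $\lambda_V$ for $\lambda \in \Lambda$ with $V \in \supp(\lambda)$ lies in the single cluster $C \subset T_V$ collapsing to $\hd^U_V$. Since $T_V$ is the (approximate) convex hull of its marked points and ray representatives and $C$ is convex, it follows that $T_V = C$, so $\hT_V$ is the single point $\{\hd^U_V\}$, forcing $\hx_V = \hd^U_V = \ha_V$ and contradicting $V \in S$. The cardinality bound is then a direct consequence of finite complexity (Axiom \ref{item:dfs_complexity}) together with the standard bound on pairwise-orthogonal subfamilies in an HHS.

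\textbf{Main obstacle.} The delicate step is ensuring that the tuples $\hPsi(a)$ are genuinely $0$-consistent and not merely coarsely consistent; this is where the freedom to choose $r = r(\mathfrak S, |F \cup \Lambda|)$ large, as flagged in Subsection \ref{subsec:shadows}, is spent. Once exact consistency of the $\hPsi(a)$ is in hand, the ``$T_V$ collapses to a point'' dichotomy drives the argument cleanly.
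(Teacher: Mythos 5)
Your treatments of parts (1)--(3) are essentially right, with one small misattribution in (1): for $V \pitchfork U$, the $2R$-neighborhood of $\delta^V_U$ is \emph{not} placed in $T^c_U$ by item (1) of the cluster construction, since that clause only covers shadows of \emph{nested} domains. The correct mechanism is the one you actually invoke in the second half of your sentence: by item (1) of Lemma \ref{lem:tree control}, $\delta^V_U$ has diameter at most $R$ and lies within $R$ of a marked point $a_U$, hence $\delta^V_U \subset B_{2R}(a_U) \subset T^c_U$ by item (2) of the cluster construction alone. The conclusion survives, but the citation is off.

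For part (4), your argument is correct but takes a substantially longer route than the paper's. The paper applies item (1) at the outset: if $U \in S$ and $U \pitchfork V$, then $\hd^V_U$ is a marked cluster point, i.e.\ $\hd^V_U = \ha_U$ for some $a$, so $\hx_U \neq \hd^V_U$; by $0$-consistency $\hx_V = \hd^U_V$, which is again a marked cluster point by item (1), so $V \notin S$. Done. You instead assume WLOG $\hx_U = \hd^V_U$ — which, if you immediately invoked item (1), would give $\hx_U = \ha_U$ for some $a$ and end the proof on the spot — and then, without using item (1), work through $0$-consistency of every $\hPsi(a)$ to conclude that $T_V$ degenerates to a single cluster, making $\hT_V$ a point. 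That reasoning is sound, and the degeneration argument is a genuine observation about the construction, but it is a detour: the ``$\ha_V = \hd^U_V$ for all $a$'' branch you land in is actually vacuous, because item (1) guarantees the first branch ($\ha_U = \hd^V_U$) always fires for some $a$. The paper's route is both shorter and makes the dependence of (4) on (1) transparent. What your version buys is a self-contained argument that sidesteps item (1), at the cost of length and an extra structural observation about degenerate $\hT_V$.
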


\begin{proof}
Item (1) follows from item (2) of Lemma \ref{lem:tree control}.  Item (2) follows directly from the construction.  Item (3) is an immediate consequence of the definition of the hull $H$ and the construction in this subsection.

Finally, for item (4): Suppose $U, V$ are transverse and $\hx_U \neq \ha_U$ for any $a \in F \cup \Lambda$.  Since $\hd^V_U$ coincides with a marked cluster point by item (1), it follows from $0$-consistency of $\hx$ that $\hx_V = \hd^U_V$, and hence $\hx_V$ coincides with a marked cluster point again by item (1).  This completes the proof.

\end{proof}

With this in hand, we can prove our consistency result:

\begin{proposition}\label{prop:hPsi defined}
If $R > \alpha_0 + E'$, then for any $(x_U)\in \oQ_{\alpha_0}$, we have $(q_U(x_U)) \in \oQ$.
\end{proposition}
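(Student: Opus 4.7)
The plan is to verify directly the two conditions in Definition \ref{defn:Q consistent} for the tuple $\hat{x} = (q_U(x_U))_{U \in \calU}$, exploiting the hypothesis $R > \alpha_0 + E'$ together with the $2R$-fattening that defines the cluster decomposition $T_U = T^c_U \sqcup T^e_U$. The core mechanism is that any point of $T_U$ at distance $\leq \alpha_0 < 2R$ from a shadow $\delta^W_U$ (or from a marked point) automatically lies in the same cluster of $T^c_U$ as that shadow, so the two get collapsed to a common cluster point by $q_U$. Throughout, I write $\hat{x}_U = q_U(x_U)$, and I note that by Lemma \ref{lem:ct basics}(1)--(2) all relevant $\hd^W_U$ coincide with cluster points, so equalities of the form $q_U(y) = \hd^W_U$ are meaningful.

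I would first dispatch the transverse case $U \pitchfork V$. The $\alpha_0$-consistency of $(x_U)$ yields, without loss of generality, $d_{T_U}(x_U,\delta^V_U) \leq \alpha_0 < 2R$, so $x_U$ lies inside the cluster $C \subset T^c_U$ that contains $\delta^V_U$ (since $C$ contains the $2R$-neighborhood of $\delta^V_U$ by construction). Consequently $\hat{x}_U = q_U(x_U) = q_U(C) = \hd^V_U$, which is exactly condition (1).

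For the nesting case $U \nest V$, the $\alpha_0$-consistency offers two alternatives. If $d_{T_V}(x_V, \delta^U_V) \leq \alpha_0$, the same $2R$-fattening argument as above shows that $x_V$ lies in the cluster of $T^c_V$ containing $\delta^U_V$, so $\hat{x}_V = \hd^U_V$, which is the first alternative of condition (2). If instead $\diam_{T_U}(x_U \cup \delta^V_U(x_V)) \leq \alpha_0$, the plan is to prove $\hat{x}_U \in \hd^V_U(\hat{x}_V)$. Unfolding Definition \ref{defn:Q project}, $\hd^V_U(\hat{x}_V) = q_U(\delta^V_U(q^{-1}_V(\hat{x}_V)))$, and since $x_V \in q^{-1}_V(\hat{x}_V)$ trivially, it suffices to find $z \in \delta^V_U(x_V)$ with $q_U(x_U) = q_U(z)$.

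The main obstacle is controlling where $\delta^V_U(x_V)$ sits in $T_U$ relative to the cluster structure, and I would handle it with two sub-cases. In the first, $\delta^V_U(x_V)$ meets some shadow or marked point defining a cluster $C' \subset T^c_U$: then any $z \in \delta^V_U(x_V) \cap C'$ satisfies $d_{T_U}(x_U,z) \leq \alpha_0 < 2R$, forcing $x_U \in C'$ as well, hence $q_U(x_U) = q_U(z)$. In the second, $\delta^V_U(x_V)$ lies entirely in an edge component of $T^e_U$; here the BGI property (item (5) of Lemma \ref{lem:tree control}) combined with $d_{T_V}(x_V,\delta^U_V) > \alpha_0$ (with $\alpha_0 < R - E'$) confines $\delta^V_U(x_V)$ to diameter $< E'$, and choosing the cluster separation constant $r > \alpha_0 + E'$ guarantees that the entire set $x_U \cup \delta^V_U(x_V)$ of diameter $\leq \alpha_0$ fits inside a single edge component on which $q_U$ acts as an isometric identification, so picking $z$ to be $x_U$ itself (noting it belongs to $\delta^V_U(x_V)$ up to collapsing, by unwinding the definition of $\delta^V_U$ on a BGI-constrained region) yields $q_U(x_U) = q_U(z)$. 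In both sub-cases we conclude $q_U(x_U) \in q_U(\delta^V_U(x_V)) \subseteq \hd^V_U(\hat{x}_V)$, completing the verification that $\hat{x} \in \oQ$.
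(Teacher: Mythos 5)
Your transverse case and the first alternative of the nested case match the paper's argument. The trouble is in the second alternative of the nested case ($U \nest V$, $d_{T_V}(x_V,\delta^U_V) > \alpha_0$), where you miss the observation that makes the paper's proof close cleanly: by the BGI property (item (5) of Lemma~\ref{lem:tree control}), every point in the component of $T_V - \calN_{E'}(\delta^U_V)$ containing $x_V$ has $\delta^V_U$-image within $E'$ of $\delta^V_U(x_V)$, and that component always contains a marked point or ray for some $f \in F \cup \Lambda$; combined with $\alpha_0$-consistency of the tuple $(f_U)$ from Lemma~\ref{lem:consist to tree}, this forces $\delta^V_U(x_V)$ (and hence $x_U$) to lie within $\alpha_0 + E' < R$ of the marked point $f_U$, whose $2R$-neighborhood is part of $T^c_U$ by construction. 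So both $x_U$ and $\delta^V_U(x_V)$ always land in the same cluster, and your sub-case 2 is vacuous.

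The problem is not merely that sub-case 2 is unnecessary: the argument you offer for it is also broken. You want to conclude $q_U(x_U) = q_U(z)$ for some $z \in \delta^V_U(x_V)$, but you have placed everything in a single edge component on which $q_U$ acts as an isometric identification, so $q_U(x_U) = q_U(z)$ would require $x_U = z$, i.e. $x_U \in \delta^V_U(x_V)$, which is not established by $\diam_{T_U}(x_U \cup \delta^V_U(x_V)) \leq \alpha_0$. The parenthetical "noting it belongs to $\delta^V_U(x_V)$ up to collapsing" is circular --- there is no collapsing available on $T^e_U$. To repair the proof you should discard the case split, invoke BGI plus consistency of the marked-point tuples to land near a marked point, and then (as the paper does) note that $q^{-1}_V(\hx_V)$ stays in the component of $T_V - \calN_{E'}(\delta^U_V)$ containing $x_V$, which gives $q_U(\delta^V_U(x_V)) = \hd^V_U(\hx_V)$ directly from Definition~\ref{defn:Q project}.
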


\begin{proof}
Let $(x_U) \in \oZ_{\alpha_0}$.  We will show that $(q_U(x_U)) \in \oY$ is $0$-consistent (Definition \ref{defn:Q consistent}).

First suppose that $U \pitchfork V \in \calU$.  Since $(x_V)$ is $\alpha_0$-consistent by Lemma \ref{lem:consist to tree}, we have, without loss of generality, that $d_{T_U}(x_U, \delta^V_U)< \alpha_0 < R$.  Hence $q_U(x_U) = \hd^V_U$.

Next suppose that $U \nest V \in \calU$.  If $d_{T_V}(x_V, \delta^U_V)< \alpha_0 < R$, then $q_V(x_V) = \hd^U_V$, and we are done.  Otherwise, we have $\diam_{T_U}(x_U \cup \delta^V_U(x_V))<\alpha_0$ while also $d_{T_V}(x_V, \delta^U_V)>\alpha_0$.  By taking $\alpha_0 > E'$, we have that $\diam_{T_U}(\delta^V_U(x_V)\cup f_U)< E'$ for some $f \in F \cup \Lambda$ by the BGI property in item (5) of Lemma \ref{lem:tree control}.  In particular, both $x_U$ and $\delta^V_U(x_V)$ are within $\alpha_0 + E'< R$ of $f_U$, and hence $q_U(x_U) = q_V(\delta^V_U(x_V))$.  Finally, observe that $q^{-1}_V(q_V(x_V))$ is contained in the component of $T_V - \mathcal{N}_{E'}(\delta^U_V)$ containing $x_V$, and hence $q_V(\delta^V_U(x_V)) = \hd^V_U(q_V(x_V))$.  This completes the proof.

 \end{proof}

\begin{remark}
It is not clear, a priori, that the map $\hPsi: H \to \calQ$ is coarsely-dense.  This will follow from Theorem \ref{thm:coarse inverse}, in which we build a map $\hPhi:\calQ \to H$ and show that $d_{\calQ}(x, \hPhi \circ \hPsi(x))$ is bounded independent of $x \in \calQ$.  Unsurprisingly, this will depend crucially on a version of coarse surjectivity for $\Psi:H \to \calZ_{\alpha_0}$, which we prove first below in Proposition \ref{prop:coarsely surjective, trees}.
\end{remark}

\subsection{Collapsed tree control}

We finish this section with the analogue of the Tree Control Lemma \ref{lem:tree control} for our collapsed trees, which we will cite many times.  In order to make citations easier in the proofs that follow, we have made it an omnibus lemma by including some of the conclusions of Lemma \ref{lem:ct basics} and Proposition \ref{prop:hPsi defined}.

\begin{lemma}[Controlling the collapsed trees]\label{lem:collapsed tree control}
The following hold:
\begin{enumerate}
    \item For any $x \in H \cup \Lambda$, the tuple $(\hx_U)$ is $0$-consistent.
    
    \item If $U \pitchfork V \in \calU$, then both $\hd^V_U \in \hT_U$ and $\hd^U_V \in \hT_V$ coincide with marked cluster points.
    
    \item If $U \nest V \in \calU$, then $\hd^U_V$ coincides with a cluster point.
    
    \item For any $U \in \calU$ and any $x,y \in H$, we have $d_U(x,y) \asymp d_{T_U}(x_u,y_U)\geq d_{\hT_U}(\hx_U,\hy_U)$, with the constants in $\asymp$ depending only on $\mathfrak S$ and $|F \cup \Lambda|$.

    \item There exists a cluster separation constant $r = r(\mathfrak S, |F \cup \Lambda|)>0$ so that if $x,y \in H$ and $U \in \calU$ and $\hpsi_U(x), \hpsi_U(y)$ lie in distinct components of $\hT^c_U$, then $d_U(x,y) > 50E$.
    
    \item If $U \in \calU$ is $\nest_{\calU}$-minimal and $a,b \in F \cup \Lambda$ have $d_U(a,b) > K$, then $d_{\hT_U}(\ha_U,\hb_U) \succ K$, with the constant in $\succ$ depending only on $\mathfrak S$ and $|F \cup \Lambda|$.
    
    \item (Bounded geodesic image) Suppose $V \nest U$, $C \subset \hT_U - \hd^V_U$ is a component, then $\hd^U_V(C)$ is a point.  In particular, if $f \in F \cup \Lambda$ with $\hf_U$ contained in $C$, then $\hd^U_V(C) = \hf_V.$
    
    \item If $\hx \in \calQ^{\infty}$, then $E(\hx) = \{U \in \calU| \hx_U \in \hT^e_U\}$ is pairwise orthogonal.
    
    \item If $\hx \in \calQ^{\infty}$, then $\{U \in \calU| \hx_U \neq \ha_U \text{ for some } a \in F \cup \Lambda\}$ is pairwise non-transverse, and hence has cardinality bounded in terms of $\mathfrak S$ and $|F \cup \Lambda|$.
    \item If $U,V, W \in \calU$ with $V,W \nest U$, $V \pitchfork W$, and $\hd^V_U \neq \hd^W_U)$, then $\hd^U_V(\hd^W_U) = \hd^W_V$.
\end{enumerate}
\end{lemma}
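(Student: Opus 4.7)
The plan is to dispatch the ten items by leveraging the analogous statements already established for the uncollapsed trees (Lemma \ref{lem:tree control}) and the extended consistency of projections (Lemma \ref{lem:consist to tree}, Proposition \ref{prop:extended consistency}), together with bookkeeping arguments about the cluster decomposition $\hT_U = \hT^c_U \sqcup \hT^e_U$. Items (2), (3), (4), (9) are essentially restatements of Lemma \ref{lem:ct basics} and require no new argument. Item (1) follows by composing Lemma \ref{lem:consist to tree} (which gives $\alpha_0$-consistency of $\Psi(x)$ in the uncollapsed product) with Proposition \ref{prop:hPsi defined} (which says $q$-collapsing an $\alpha_0$-consistent tuple yields a $0$-consistent tuple), noting that the same proof works for $x \in \Lambda$ using the extended consistency bounds from Proposition \ref{prop:extended consistency}.

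Item (7) is the key new BGI statement in the collapsed setting. The plan is: given a component $C \subset \hT_U - \hd^V_U$, lift it to a connected subset $q_U^{-1}(C) \subset T_U$ in the complement of the subtree containing $\delta^V_U$ (using that $\hd^V_U$ is the image of a cluster containing $\delta^V_U$). By item (5) of Lemma \ref{lem:tree control}, the map $\delta^V_U : T_V \to T_U$ sends this complementary component into a set of diameter $<E'$. By choosing $r > E'$ in the cluster construction (consistent with Proposition \ref{prop:hPsi defined}'s requirement that $R > \alpha_0 + E'$), this whole image lies within a single cluster, and hence collapses to the single point $\hd^U_V(C)$. The ``in particular'' clause is then immediate since $\hf_U \in C$ lifts to $f_U \in T_U$ with $\delta^V_U(f_U) = f_V$ by construction of the reduced tree system.

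Items (5) and (6) control how collapsing distorts metric information and will require choosing $r$ carefully. For (5), if $\hpsi_U(x), \hpsi_U(y)$ lie in distinct components of $\hT^c_U$, then the geodesic in $T_U$ between $\psi_U(x)$ and $\psi_U(y)$ must cross an edge component of length at least $r$. Since $\phi_U$ is a $(C,C)$-quasi-isometric embedding, this produces a subsegment of $\pi_U([x,y]_{\calX})$ (via coarse closest-point projection) of length $\succ r$, so choosing $r$ large in terms of $C$ and $E$ forces $d_U(x,y) > 50E$. For (6), $\nest_{\calU}$-minimality of $U$ means no $V \in \calU$ with $V \sqsubsetneq U$ contributes a shadow to $\hT^c_U$; the only clusters come from (i) marked points/rays and (ii) shadows of domains $V \notin \calU$ which have $\diam_{\calC(V)}(F \cup \Lambda) \leq K$. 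A passing-up style argument (invoking Lemma \ref{lem:PS} or Proposition \ref{prop:bounding containers}) bounds the total length of subtree of $T_U$ that can be covered by such clusters in terms of the ambient constants, independent of $K$, so collapsing cannot reduce $d_{T_U}(\ha_U,\hb_U) \asymp d_U(a,b) > K$ below a linear function of $K$.

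Items (8) and (10) are structural. For (8), suppose $U, V \in E(\hx)$ and $U \pitchfork V$ or $U \nest V$. Item (2) says $\hd^V_U$ is a cluster point in $\hT^c_U$ and item (3) says $\hd^U_V$ is a cluster point in $\hT^c_V$; but $0$-consistency of $\hx$ forces $\hx_U = \hd^V_U \in \hT^c_U$ or $\hx_V \in \hT^c_V$ (contradicting $\hx_U \in \hT^e_U$, $\hx_V \in \hT^e_V$), and in the nested case the analogous conclusion from Definition \ref{defn:Q consistent}(2) also lands $\hx$ inside $\hT^c$ for one of the two. Hence $U \perp V$. For (10), this is the collapsed version of item (6) of Lemma \ref{lem:tree control}: the hypothesis $\hd^V_U \neq \hd^W_U$ means $\delta^V_U$ and $\delta^W_U$ lie in distinct clusters of $T^c_U$, forcing $d_{T_U}(\delta^V_U, \delta^W_U) > r > 10E'$, so the conclusion $d_{T_V}(\delta^U_V(\delta^W_U), \delta^W_V) < E'$ of Lemma \ref{lem:tree control}(6) applies, and then $q_V$ collapses both into a common cluster (by taking $r > E'$), yielding $\hd^U_V(\hd^W_U) = \hd^W_V$.

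The main obstacle will be bookkeeping the interplay between the cluster separation constant $r$, the BGI constant $E'$ from Lemma \ref{lem:tree control}(5), and the consistency constant $\alpha_0$ from Lemma \ref{lem:consist to tree}, to ensure a single choice $r = r(\mathfrak S, |F \cup \Lambda|)$ (larger than $\alpha_0 + E'$ and a suitable multiple of the quasi-isometry constant $C$ for $\phi_U$) simultaneously validates items (5), (6), (7), and (10). All these are sequential increases, so a single sufficiently large $r$ works; the argument is a matter of stating this once and citing it throughout.
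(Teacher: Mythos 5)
Your treatment of items (1)--(5) and (7)--(10) matches the paper's proof: cite Lemma \ref{lem:ct basics} and Proposition \ref{prop:hPsi defined} for the restated items, run the BGI property of Lemma \ref{lem:tree control}(5) together with the choice $R>\alpha_0+E'$ for item (7), use the cluster separation constant plus item (4) for item (5), and derive (8) from (7) and $0$-consistency. (Minor slip in (7): the map is $\delta^U_V\colon T_U\to T_V$, not $\delta^V_U\colon T_V\to T_U$, and the ``in particular'' clause comes from $\alpha_0$-consistency of the tuple $(f_U)$ rather than ``by construction.'')

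The one genuine problem is your item (6). You posit that for a $\nest_{\calU}$-minimal $U$ the clusters in $T_U$ come from marked points \emph{and} from shadows of domains $V\notin\calU$ with $\diam_{\calC(V)}(F\cup\Lambda)\leq K$, and you propose to bound the latter's coverage by a passing-up argument. That argument cannot work: Lemma \ref{lem:PS} and Proposition \ref{prop:bounding containers} control domains with \emph{large} projections, and say nothing about where the $\rho$-sets of irrelevant domains sit --- in a mapping class group, infinitely many subsurfaces with tiny projections can have boundary curves densely distributed along a geodesic in $\calC(U)$, so no uniform bound on their coverage exists. The argument is rescued by the construction itself: the shadows $\delta^V_U$ are only defined (Definition \ref{defn:tree projections}) and only clustered for $V\in\calU$, so when $U$ is $\nest_{\calU}$-minimal the quotient $q_U$ collapses nothing but the $2R$-neighborhoods of the at most $|F\cup\Lambda|$ marked points, and the claim follows by the paper's one-line count: $\ha_U,\hb_U$ are separated by boundedly many clusters, each of diameter at most $2R|F\cup\Lambda|$.
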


\begin{proof}
Item (1) is a consequence of Proposition \ref{prop:hPsi defined}, in addition to Lemma \ref{lem:consist to tree}, which together show that $\Psi(H \cup \Lambda) \subset \oZ_{\alpha_0}$ and hence $\hPsi(F \cup \Lambda)\subset \oQ$.

Items (2)--(4) are items (1)--(3) of Lemma \ref{lem:ct basics}.

For item (5), if $\om_U(x),\om_U(y)$ are in distinct clusters, then $d_{T_U}(x,y) > r$, and hence $d_U(x,y) \succ r$ by item (4).  Hence choosing $r$ large enough gives the desired lower bound.

For item (6): When $U \in \calU$ is $\nest_{\calU}$-minimal with respect to $\calU$, then $q_U:T_U \to \hT_U$ only collapses the $2R$-neighborhoods of leaves in $T_U$.  Hence if $d_U(a,b) >K$ so that $d_{T_U}(a_U,b_U) \succ K$, then $\ha_U,\hb_U$ are only separated by at most $|F \cup \Lambda| - 2$ clusters, each of which has diameter at most $2R|F\cup \Lambda|$.  The claim follows.

For item (7): By item (1), if $\hf_U$ is a marked point or ray in $C$, then $0$-consistency $(\hf_V)$ implies that $\hd^U_V(\hf_U) = \hf_V$ because $d_{T_U}(\delta^V_U(f_U),f_V)<\alpha < R$.  On the other hand, since $C \cap \hd^V_U = \emptyset$, we have $d_{T_U}(q^{-1}_U(C), \delta^V_U)>R > E'$.  Hence item (5) of Lemma \ref{lem:tree control} says that $\diam_{T_V}(\delta^U_V(C))<E'$, while also $\delta^U_V(f_U) \subset \delta^V_U(C)$.  Hence $d_{T_V}(\delta^U_V(C), f_V)<\alpha + E'<R$, where we guarantee the last inequality by increasing $R$ as necessary.  It follows then that $\hd^U_V(C) = \hf_V$, as claimed.

For item (8): Supposing that $U,V$ are not orthogonal.  When 
$U \pitchfork V$ or $V \nest U$ and $\hx_U = \hd^V_U$, we are done by items (1) and (3).  When $V \nest U$ and $\hx_U \neq \hd^V_U$, item (7) implies there is some $f \in F \cup \Lambda$ so that $\hd^U_V(\hx_U) = \hf_V$, while $0$-consistency says that $\hx_V = \hd^U_V(\hx_U)$, giving $\hx_V = \hd^U_V(\hx_U) \in \hT^c_U$.

Finally, item (9) is item (4) of Lemma \ref{lem:ct basics}, while item (10) follows from item (6) of Lemma \ref{lem:tree control}.
\end{proof}

\subsection{(Non-)canonicality of $F$ and $\Lambda$}

We pause to observe the following:

\begin{lemma}\label{lem:F and Lambda}
We have $\hPsi(H) \subset \calQ$ and $\hPsi(\Lambda) \subset \calQ^{\infty}$.
\end{lemma}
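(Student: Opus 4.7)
The two inclusions come from Lemma \ref{lem:collapsed tree control}(1), which already places $\hPsi(H \cup \Lambda) \subseteq \oQ$; what remains is to verify canonicality in the first case and to rule it out in the second.

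For $\hPsi(H) \subseteq \calQ$, let $x \in H$. Since $x$ is an interior point, each $\pi_U(x)$ is a coarse point, so no coordinate $\hx_U = q_U(\phi_U^{-1}(p_U(\pi_U(x))))$ lies in $\partial \hT_U$; thus the first canonicality clause holds. For the finite-support clause, fix $a \in F$. The plan is to show $\{U \in \calU : \hx_U \neq \ha_U\} \subseteq \Rel_{50E}(x,a)$, which is finite by Corollary \ref{cor:rel sets are finite}. If $d_U(x,a) \leq 50E$, then item (4) of Lemma \ref{lem:collapsed tree control} together with the $(C,C)$-quasi-isometric embedding $\phi_U:T_U \to \calC(U)$ forces $d_{T_U}(x_U, a_U)$ to be bounded by a constant depending only on $\calX$ and $|F \cup \Lambda|$. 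Since the constant $R$ in the construction of $T^c_U$ (Subsection \ref{subsec:shadows}) depends only on $\calX$ and $|F \cup \Lambda|$ and can be enlarged independently of all other thresholds, we may take $2R$ to exceed this bound. Then $x_U$ lies in the $2R$-neighborhood of the marked point $a_U$, hence in the same cluster of $T^c_U$, and so $\hx_U = \ha_U$.

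For $\hPsi(\Lambda) \subseteq \calQ^{\infty}$, let $\lambda \in \Lambda$. By Lemma \ref{lem:big orth}, $\supp(\lambda) \neq \emptyset$; fix $V \in \supp(\lambda)$, so $\lambda_V \in \partial T_V$ has a geodesic-ray representative of infinite length in $T_V$. If this representative is eventually disjoint from every cluster of $T^c_V$, then by Definition \ref{defn:collapsed tree} we have $\hlam_V \in \partial \hT_V$, violating the first canonicality clause. Otherwise the tail of the ray is contained in a single cluster $C_V$, and since $C_V$ must have infinite diameter (to contain a geodesic ray), it is built from infinitely many shadows $\delta^{W_i}_V$ with $W_i \nest V$ and $W_i \in \calU$ (the $|F \cup \Lambda|$-many marked-point contributions are bounded). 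As consecutive shadows in $C_V$ are joined by segments of length at most the cluster-separation constant $r$, the infinite ray must cross (enter and exit) infinitely many of these shadows.

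For each crossed shadow $\delta^{W_i}_V$, the points $\lambda(0)_V$ and the tail of $\lambda_V$ lie in distinct components of $T_V \setminus \mathcal{N}_{E'}(\delta^{W_i}_V)$. Chaining consistency of the extended tuples for $\lambda(0)$ and $\lambda$ (Proposition \ref{prop:extended consistency}) with the tree-BGI property (item (5) of Lemma \ref{lem:tree control}) shows that $\pi_{W_i}(\lambda(0))$ and $\pi_{W_i}(\lambda)$ sit near $\rho^V_{W_i}$ applied to the two opposite sides of $\rho^{W_i}_V$, hence are separated in $\calC(W_i)$ by a distance on the order of $\diam_{W_i}(F \cup \Lambda) > K$. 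Choosing $K$ above the scale needed to force distinct collapsed coordinates (via item (6) of Lemma \ref{lem:collapsed tree control}, after descending to a $\nest_\calU$-minimal domain below $W_i$ when $W_i$ itself is not $\nest_\calU$-minimal), we obtain $\hlam_{W_i} \neq \hlam(0)_{W_i}$. Since $\lambda(0) \in F$, this produces infinitely many coordinates where $\hPsi(\lambda)$ differs from $\hPsi(\lambda(0))$, violating the second canonicality clause.

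The main obstacle is the last step: translating ``the ray crosses the shadow'' into ``the collapsed coordinates differ'' when $W_i$ is not $\nest_\calU$-minimal, since clusters in $\hT_{W_i}$ may have arbitrary diameter. The cleanest route is to pass from each $W_i$ to a $\nest_\calU$-minimal descendant (using Lemma \ref{lem:covering} to guarantee that distinct $W_i$ yield distinct descendants up to bounded multiplicity) and then apply item (6) of Lemma \ref{lem:collapsed tree control} directly in that $\nest$-minimal domain, where clusters are bounded by $2R|F \cup \Lambda|$.
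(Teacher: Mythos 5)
Your argument for the first inclusion, $\hPsi(H) \subseteq \calQ$, is correct and in fact takes a cleaner route than the paper. The paper proceeds by contradiction: it restricts to an infinite subset where $\hx_V, \ha_V$ sit at distinct cluster points, applies Passing-Up to produce domains with arbitrarily large projection distance, and then invokes the Distance Formula to contradict finiteness of $d_{\calX}(x,a)$. You instead show directly that $\{U \in \calU : \hx_U \neq \ha_U\} \subseteq \Rel_{50E}(x,a)$ (this only needs $2R$ to dominate the $d_{T_U}$-distance forced by $d_U(x,a)\leq 50E$, a constant controlled by $\calX$ and $|F\cup\Lambda|$), and then conclude via Corollary \ref{cor:rel sets are finite}, which rests solely on the Passing-Up Lemma and the coarse Lipschitz property of the projections. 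Your route thus avoids the Distance Formula entirely --- a small but worthwhile improvement, since the paper adds a remark after this lemma specifically to defend its use of the Distance Formula here.

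The second inclusion, however, has a gap that your closing paragraph only partially diagnoses. You correctly flag that one needs $\nest_{\calU}$-minimal domains so clusters in $\hT_{W_i}$ are controlled, but the more fundamental issue is that having $\lambda(0)_V$ and the tail of $\lambda_V$ on opposite sides of $\delta^{W_i}_V$ does \emph{not} by itself force $d_{W_i}(\lambda(0),\lambda)$ to be on the order of $\diam_{W_i}(F\cup\Lambda) > K$. Other elements $b \in F\cup\Lambda - \{\lambda\}$ may have $b_V$ in a different component of $T_V \setminus \mathcal{N}_{E'}(\delta^{W_i}_V)$ (the tree can branch near the shadow), and then the pair realizing the $K$-diameter in $\calC(W_i)$ may not involve $\lambda(0)$ and $\lambda$ at all. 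The paper's fix is to demand the shadows be $100K$-far from every branch point and marked point of $T_U$: once the shadow lies deep along the single edge toward $\lambda_U$, only $\lambda$ sits on the far side, so BGI collapses all of $F\cup\Lambda - \{\lambda\}$ to a single coarse point in $T_V$, and since $V \in \Rel_K(F\cup\Lambda)$ the large-diameter pair must involve $\lambda$. That same demand, combined with density of minimal shadows along the ray (cf.\ Lemma \ref{lem:BM dense}), hands you $\nest_{\calU}$-minimal $V$'s directly, making the descent-to-minimal-descendants step in your last paragraph unnecessary --- and also avoids the separate question you do not address, namely whether the large projection gap survives descent. Without the ``far from branch and marked points'' stipulation, your argument as written is incomplete.
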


\begin{proof}
We first show that $(\hx_U)$ is canonical in the sense of Definition \ref{defn:Q consistent} when $x \in H$.  Let $a \in F$.  Since $x \in H$, it follows that $\hx_U \not \partial \hT_U$ for any $U$.  So it suffices to prove that $\#\{U \in \calU | \ha_U \neq \hx_U\} < \infty$.  We may assume that $\Lambda \neq \emptyset$, since this is trivial otherwise.

To see this, assume for a contradiction that $\#\{U \in \calU | \ha_U \neq \hx_U\} = \infty$.  By item (4) of Lemma \ref{lem:ct basics}, $\hx_U$ coincides with a marked cluster point for all but boundedly-many $U \in \calU$.  Hence, there exists an infinite subset $\calV \subset \{U \in \calU | \ha_U \neq \hx_U\}$ so that $\hx_V, \ha_V$ are at distinct cluster points in $\hT_V$, and hence $d_V(x,a)>E$ for each $V \in \calV$.  Thus the Passing-up Lemma \ref{lem:passing-up} provides that for any $N>0$, if $V_1, \dots, V_N \in \calV$ are distinct, then there exists $W \in \calU$ with $d_W(x,a)>N$.  However, the Distance Formula \ref{thm:DF} provides that $\sum_{U \in \calU} [d_U(a,x)]_K \asymp d_{\calX}(a,x)< \infty$ is finite, which is a contradiction, since the previous fact says that the sum should be infinite.  This completes the proof that $(\hx_U) \in \calQ$.

Finally, we prove that $(\hlam_U) \in \calQ^{\infty}$ when $\lambda \in \Lambda$.  Since we are done if $\hlam_U \in \partial \hT_U$ for some $U$, suppose not.  Let $U \in \supp(\lambda)$, and since $\hlam_U \notin \partial \hT_U$, it follows that $\hlam_U$ coincides with a cluster containing the collapsed relative projections $\hd^V_U$ for infinitely-many domains $V \nest U$.  Moreover, we have that the corresponding tree relative projections $\delta^V_U$ form an $r$-dense subset of any ray in $T_U$ corresponding to $\lambda_U$.  Thus we can choose an infinite collection of domains $\calV\subset \calU$ so that for each $V \in \calV$ we have
\begin{itemize}
\item $V \nest U$,
\item $V$ is $\nest_{\calU}$-minimal,
\item $\hd^V_U = \hlam_U$, and
\item $\delta^V_U$ is at least $100K$-away from any branched point or marked point of $T_U$.
\end{itemize}

By construction, we have that $d_{T_V}(a_V,b_V) < E'$ for each $V \in \calV$ and any $b \in F \cup \Lambda - \{\lambda\}$.  Since $V \in \calU = \Rel_K(F \cup \Lambda)$, it follows that there exists some $b \in F \cup \Lambda - \{\lambda\}$ so that $V \in \Rel_K(b, \lambda)$, and so we must have $d_V(a,\lambda) > K - E$.  The proof of item (6) of Lemma \ref{lem:collapsed tree control} then implies that $d{\hT_V}(\ha_V, \hlam_V) \succ K$, meaning $\ha_V \neq \hlam_V$ for all $V \in \calV$.  Since $\calV$ is infinite by construction, we are done.  This completes the proof.

\end{proof}

\begin{remark}
We invoked the Distance Formula \ref{thm:DF} in the proof of Lemma \ref{lem:F and Lambda}.  This is only necessary when $\Lambda \neq \emptyset$, and hence does not affect our proof of the lower bound of the distance formula in Corollary \ref{cor:DF lower bound}.
\end{remark}

\subsection{Encoding passing-up}

In this final subsection, we observe how Strong Passing-Up \ref{prop:SPU} is actually encoded into the collection of domains $\calU$ and their collapsed trees.  The following structural lemma is necessary for studying $\calQ^{\infty}$, in particular as a cubical boundary.  Its proof is actually an application of Lemma \ref{lem:detecting rays}.

Roughly, it says that any infinite collection of domains in $\calU$ can be converted into a collection of domains whose $\hd$-sets (i.e., collapsed relative projections) run out a ray end of one of the trees.

\begin{lemma}\label{lem:encoding PU}
For any infinite collection $\calV \subset \calU$, there exists an infinite subcollection of domains $\{V_1, V_2, \dots\} \subset \calV$, elements $a \in F$ and $\lambda \in \Lambda$, so that the following hold:
\begin{enumerate}
\item $V_j \pitchfork V_i$ for $i \neq j$;
\item For $j>i$, we have $\hd^{V_j}_{V_i} = \hlam_{V_i}$ and $\hd^{V_i}_{V_j} = \ha_{V_j}$;
\item For $k<i$, we have $\hd^{V_k}_{V_i} = \ha_{V_i}$ and $\hd^{V_i}_{V_k} = \hlam_{V_k}$. 
 \end{enumerate}
\end{lemma}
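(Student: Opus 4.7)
The plan is to combine pigeonhole over pairs in $F \cup \Lambda$ with Lemma \ref{lem:detecting rays} to reduce to an infinite subfamily of $\calV$ nesting into a common large container, then order that subfamily along a quasi-geodesic ray in the container and read off the $\hd$-identifications using the BGIA \ref{ax:BGIA} and the consistency-type estimate Lemma \ref{lem:rho project}. Since $\calV \subset \calU = \Rel_K(F \cup \Lambda)$ is infinite and $F \cup \Lambda$ is finite, pigeonhole produces a pair $(b_0, c_0) \in F \cup \Lambda$ and an infinite subfamily $\calV' \subset \calV$ with $d_V(b_0, c_0) > K$ for every $V \in \calV'$. Because $\Rel_K(x, y)$ is finite when $x, y \in F$ (Corollary \ref{cor:rel sets are finite}), at least one of $b_0, c_0$ lies in $\Lambda$; relabel $\lambda := c_0 \in \Lambda$ and fix any $a \in F$. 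Applying Lemma \ref{lem:detecting rays} to $(b_0, \lambda)$ and $\calV' \subset \Rel_K(b_0, \lambda)$, and passing to a further infinite subfamily if $b_0 \in \Lambda$ also, yields an infinite $\calV'' \subset \calV'$ and a domain $W \in \supp(\lambda)$ with $V \nest W$ for all $V \in \calV''$ and $\diam_W\bigl(\bigcup_{V \in \calV''} \rho^V_W\bigr) = \infty$.

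Next, I would fix a $(1, 20\delta)$-quasi-geodesic ray $\gamma \subset \calC(W)$ from $\pi_W(a)$ to $\lambda_W \in \partial \calC(W)$; each $\rho^V_W$ for $V \in \calV''$ lies within a uniform distance of $\gamma$ (the $\rho$-sets cluster near the hull $H_W$, which is uniformly close to $\gamma$), so closest-point projection orders the family along $\gamma$. I select an infinite subsequence $V_1, V_2, \dots$ whose projections march toward $\lambda_W$ with consecutive separation at least $100 E$. This separation forces pairwise transversality (item (1) of the conclusion): if $V_i \nest V_j$, the last clause of Axiom \ref{item:dfs_transversal} gives $d_W(\rho^{V_i}_W, \rho^{V_j}_W) \le E$; if $V_i \perp V_j$, Lemma \ref{lem:double prod} provides $x \in \PP_{V_i} \cap \PP_{V_j}$ with $d_W(x, \rho^{V_i}_W), d_W(x, \rho^{V_j}_W) < E$ by Lemma \ref{lem:prod coord}; both contradict the spacing.

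For items (2) and (3), the engine is Lemma \ref{lem:rho project}: with $V_i, V_j \nest W$, $V_i \pitchfork V_j$, and $d_W(\rho^{V_i}_W, \rho^{V_j}_W) > 10 E$, we obtain $d_{V_i}(\rho^W_{V_i}(\rho^{V_j}_W), \rho^{V_j}_{V_i}) < 3 E$. For $j > i$, the point $\rho^{V_j}_W$ lies far along $\gamma$ past $\rho^{V_i}_W$ toward $\lambda_W$, so the BGIA combined with item (3) of Definition \ref{defn:boundary projection} identifies $\rho^W_{V_i}(\rho^{V_j}_W)$ with $\pi_{V_i}(\lambda) = \lambda_{V_i}$ up to uniform error; then $\delta^{V_j}_{V_i}$ and the marked ray/point $\lambda_{V_i}$ lie in a common cluster of $T_{V_i}$, and applying $q_{V_i}$ gives $\hd^{V_j}_{V_i} = \hlam_{V_i}$. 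Dually, $\rho^{V_i}_{V_j}$ is close to $\rho^W_{V_j}(\rho^{V_i}_W)$, and $\rho^{V_i}_W$ sits on the $\pi_W(a)$-side of $\rho^{V_j}_W$, so consistency of the point $a$ identifies $\rho^W_{V_j}(\rho^{V_i}_W)$ with $\pi_{V_j}(a) = a_{V_j}$, yielding $\hd^{V_i}_{V_j} = \ha_{V_j}$. The $k < i$ half of item (3) is symmetric.

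The main obstacle is coralling the uniform constants across multiple applications of Lemma \ref{lem:rho project}, the BGIA, and the boundary projection identification: the $100 E$ spacing must dominate all of $E, R, \alpha_0$, and the cluster separation $r$, so that both $\delta^{V_j}_{V_i}$ and $\lambda_{V_i}$ (resp.\ $a_{V_j}$) collapse to the same cluster point under $q_{V_i}$ (resp.\ $q_{V_j}$). This is achievable because Lemma \ref{lem:detecting rays} guarantees $\diam_W\bigl(\bigcup \rho^V_W\bigr) = \infty$, so we may pass to a subsequence with arbitrarily large consecutive separation along $\gamma$.
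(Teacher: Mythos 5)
Your proposal is correct and follows essentially the same route as the paper: pigeonhole over $F\cup\Lambda$, apply Lemma \ref{lem:detecting rays} to get an infinite family nesting into a support domain $W$ with $\rho$-sets of infinite diameter, pass to a widely separated subsequence ordered along a ray toward $\lambda_W$, deduce pairwise transversality from the separation (your argument via Lemmas \ref{lem:double prod} and \ref{lem:prod coord} plus the last clause of Axiom \ref{item:dfs_transversal} fills in what the paper only asserts), and then read off the $\hd$-identifications from the BGIA. The only organizational difference is in the last step: the paper invokes the already-established $0$-consistency of the tuples $\ha$ and $\hlam$ (Proposition \ref{prop:hPsi defined}) together with monotonicity to determine which branch of the transverse consistency alternative holds, whereas you rederive the identifications directly from Lemma \ref{lem:rho project}, the BGIA, and item (3) of Definition \ref{defn:boundary projection}; both derivations hinge on the same underlying BGIA computation, and you correctly flag the need to take the pairwise separation large enough to dominate the cluster threshold $r$, which the paper handles by taking separation $>100K$ and by an explicit reduction showing $V\in\Rel_{K-E}(c,\lambda)$ for every $c\in F\cup\Lambda-\{\lambda\}$.
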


\begin{proof}
Let $\calV \subset \calU$ be infinite.  Up to passing to an infinite subsequence, we may assume that there are $a,b \in F \cup \Lambda$ with $\calV \subset \Rel_K(a,b)$, and note that at least one of $a \in \Lambda$ or $b \in \Lambda$, and possibly both.

By Lemma \ref{lem:detecting rays}, there exists an infinite subcollection $\calV'$ and a support domain $W \in \supp(a) \cup \supp(b)$ so that $\diam_W(\bigcup_{V \in \calV'} \rho^V_W) = \infty$.  By the BGIA \ref{ax:BGIA}, we have that $d_W(\rho^V_W, [a,b])<E$, where $[a,b]$ is any $(1,20\delta)$-quasi-geodesic between $a,b$ in $\calC(W)$.  Up to passing to a further infinite subcollection of $\calV'$ if necessary, we may also assume that $d_W(\rho^{V}_W, \rho^{V'}_W) > 100K$ for any $V \neq V' \in \calV'$.  In particular, we may assume that $\calV'$ is pairwise transverse.

Without loss of generality, assume that $b = \lambda \in \Lambda$.  We now explain how to reduce to the case where $a \in F$.  To see this, observe that we can again pass to a further infinite subcollection of $\calV'$ so that if $f \in F$, then $\rho^V_W$ is at least $K$-far away from $f$ and within $100E\delta$ of any $(1,20\delta)$-quasi-geodesic ray between $c$ and $\lambda$ in $\calC(W)$.  As such, the BGIA \ref{ax:BGIA} implies that $d_V(a,c) < E$ for any $c \in F \cup \Lambda - \{\lambda\}$, which implies that $V \in \Rel_{K-E}(c,\lambda)$ for all $V \in \calV'$.

Thus we may assume that $a \in F$ and $\lambda \in \Lambda$.  Since $\calU$ is countable (Lemma \ref{lem:rel sets countable}), enumerate $\calV' = \{V_1, V_2, \dots\}$, where we are taking $\calV'$ to be the subcollection as refined above.  Moreover, up to reordering, we may assume that the $\rho^{V_i}_W$ run monotonically (with $i$) out the end of any $(1,20\delta)$-quasi-geodesic ray from $a$ to $\lambda$ in $\calC(W)$, with, as above, $d_W(\rho^{V_i}_{W}, \rho^{V_j}_W)>100K$ for each $i \neq j$.

Item (1) of the lemma is immediate from the above.  Items (2) and (2) of the lemma are now straight-forward to verify using $0$-consistency of $a$ and $\lambda$ (Proposition \ref{prop:hPsi defined}), and the fact that the $\rho^{V_i}_W$ escape out toward $\lambda \in \partial \calC(W)$ monotonically.  This completes the proof.
\end{proof}

We will also later need the following lemma, in which $\supp^{\nest}(\lambda)$ denotes the set of domains in $\calU$ which nest into some domain in $\supp(\lambda)$.

\begin{lemma}\label{lem:forced support}
For any $\lambda \in \Lambda$ and $a \in F$, we have $\# \{U \in \calU| \hlam_U \neq \ha_U\} - (\supp(\lambda) \cup \supp^{\nest}(\lambda))< \infty$.
\end{lemma}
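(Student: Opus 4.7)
The plan is to argue by contradiction using the ray-detection machinery of Lemma \ref{lem:detecting rays}, so suppose
\[
\calV' := \{U \in \calU \mid \hlam_U \neq \ha_U\} \setminus \bigl(\supp(\lambda) \cup \supp^{\nest}(\lambda)\bigr)
\]
is infinite. The first (and main) reduction is to show $\calV' \subset \Rel_{50E}(a,\lambda)$. Fix $U \in \calV'$. Since $U \notin \supp(\lambda)$, Definition \ref{defn:ray projection} (via the reduced tree system, Definition \ref{defn:tree axioms}(2)) gives that $\lambda_U \in T_U$ is a marked point rather than a boundary ray, and of course $a_U \in T_U$ is a marked point as well. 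By the very definition of $T^c_U$ in Subsection \ref{subsec:shadows}, the $2R$-neighborhood of every marked point is part of $T^c_U$, so both $\lambda_U$ and $a_U$ lie inside clusters of $T_U$; since $\hlam_U \neq \ha_U$, these clusters must be distinct. The argument from the proof of item (5) of Lemma \ref{lem:collapsed tree control} --- which relies only on the cluster separation constant $r$ and the quasi-isometry constants of $\phi_U$, \emph{not} on $\lambda$ being an interior point --- then yields $d_U(\lambda,a) > 50E$. Thus $\calV' \subset \Rel_{50E}(a,\lambda)$.

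Next I apply Lemma \ref{lem:detecting rays} to the pair $(a,\lambda)$ and the infinite collection $\calV'$: this produces an infinite subcollection $\calV'' \subset \calV'$ and a domain $W \in \supp(a) \cup \supp(\lambda)$ with $V \nest W$ for every $V \in \calV''$ and $\diam_W\bigl(\bigcup_{V\in\calV''}\rho^V_W\bigr) = \infty$. Because $a \in F \subset \calX$ is an interior point, $\supp(a) = \emptyset$, forcing $W \in \supp(\lambda)$. But then each $V \in \calV''$ satisfies $V \nest W \in \supp(\lambda)$, i.e.\ $V \in \supp^{\nest}(\lambda)$, which directly contradicts $\calV'' \subset \calV'$. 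Hence $\calV'$ is finite.

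The only step with substance is the reduction $\calV' \subset \Rel_{50E}(a,\lambda)$: one needs to know that the extension of Lemma \ref{lem:collapsed tree control}(5) beyond interior points (from $H$ to $\Lambda$) is unproblematic, which amounts to observing that the quantitative control of $\phi_U$ and the separation of clusters in $T^c_U$ are properties of the reduced tree system itself and do not depend on whether the preimage lies in $H$ or is a ray endpoint outside $\supp(\lambda)$. Once this is in place, Lemma \ref{lem:detecting rays} does all of the remaining work, because $\supp^{\nest}(\lambda)$ is precisely the obstruction produced by that lemma.
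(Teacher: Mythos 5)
Your proof is correct and takes essentially the same approach as the paper: reduce to showing the set lies in $\Rel_{50E}(a,\lambda)$ via the cluster separation constant $r \gg 50E$, then apply Lemma \ref{lem:detecting rays} to produce a domain $W \in \supp(\lambda)$ into which infinitely many of the domains nest, contradicting the exclusion of $\supp^{\nest}(\lambda)$. You have merely made explicit two points the paper leaves implicit --- that $\supp(a) = \emptyset$ for the interior basepoint $a$, and that the distance-lower-bound argument from Lemma \ref{lem:collapsed tree control}(5) transfers to $\lambda$ in domains outside $\supp(\lambda)$ --- both of which are correct.
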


\begin{proof}
Let $\calV$ be the set in the statement.  Then for any $V \in \calV$, the fact that $\hlam_V \neq \ha_V$ implies that $V \in \Rel_{50E}(a, \lambda)$ since our cluster separation constant satisfies $r \gg 50E$.  Thus if $\#\calV = \infty$, Lemma \ref{lem:detecting rays} provides $W \in \supp(\lambda)$ and an infinite subcollection $\calV' \subset \calV$ so that $V \nest W$ for all $V \in \calV'$, which contradicts the definition of $\calV$.  This proves the lemma.
\end{proof}

\subsection{Equivariance for $\calQ$} \label{subsec:auto model}

In this subsection, we briefly observe that the construction of our models can be done equivariantly with respect to HHS automorphisms (Subsection \ref{subsec:HHS auto}).  The basic observation is that the construction depends only on the fundamental parts of an HHS, which any automorphism preserves.

\begin{corollary}[Equivariant models]\label{cor:auto model}
If $g \in \mathrm{Aut}(\calX, \mathfrak S)$ is an HHS automorphism, then one can construct a cubical model $\calQ_g$ for $g \cdot (F \cup \Lambda)$ so that $g$ induces an isometry $\calQ \to \calQ_g$, and moreover a bijection $\calQ^{\infty} \to \calQ_g^{\infty}$.
\end{corollary}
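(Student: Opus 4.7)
The plan is to observe that every ingredient in the construction of $\calQ$---the relevant domain set $\calU$, the hyperbolic hulls $H_U$, the Gromov modeling trees $T_U$, the tree projections $\delta^V_U$, the cluster decomposition $T_U = T^c_U \sqcup T^e_U$, the collapsed trees $\hT_U$, and the $0$-consistency relation---is built solely from data (projections $\pi_U$, relative projections $\rho^V_U$, hyperbolic spaces $\calC(U)$, nesting/orthogonality/transversality, and the fixed threshold constants $K, r, R, E'$) that HHS automorphisms preserve by Definition \ref{defn:HHS auto}. Thus each step of the construction can be carried out equivariantly, and we can simply \emph{choose} the construction for $g \cdot (F \cup \Lambda)$ to be the image of the construction for $F \cup \Lambda$ under $g$.

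More concretely, I would proceed as follows. First, observe that since $g$ acts as a quasi-isometry on $\calX$ that commutes with projections, it sends interior points to interior points and $L$-hierarchy rays to $L$-hierarchy rays; it also sends $\partial\calX$ to itself preserving support sets. Consequently, $g$ induces a bijection $\calU \to g\calU := \Rel_K(g\cdot(F\cup\Lambda))$ given by $U \mapsto gU$, preserving $\nest$, $\perp$, and $\pitchfork$. Next, for each $U \in \calU$, since $g:\calC(U) \to \calC(gU)$ is an isometry sending $\pi_U(F \cup \Lambda)$ to $\pi_{gU}(g\cdot (F\cup\Lambda))$, it sends the hyperbolic hull $H_U$ isometrically to $H_{gU}$. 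We then \emph{define} the modeling tree system for $g \cdot (F \cup \Lambda)$ by setting $T_{gU} := T_U$ with $\phi_{gU} := g \circ \phi_U$; one checks this satisfies Definition \ref{defn:tree axioms}, since $g\rho^V_U = \rho^{gV}_{gU}$ implies that the induced $\delta$-sets satisfy $\delta^{gV}_{gU} = \delta^V_U$ (as abstract subsets of $T_U = T_{gU}$), and the BGI property and the other conclusions of Lemma \ref{lem:tree control} transfer verbatim.

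With this choice, the cluster/edge decomposition built in Subsection \ref{subsec:shadows} is identical for both systems---the same cluster separation constant $r$ produces the same clusters---so the collapsed trees satisfy $\hT_{gU} = \hT_U$ and $q_{gU} = q_U$. The collapsed relative projections therefore satisfy $\hd^{gV}_{gU} = \hd^V_U$, and the $0$-consistency conditions in Definition \ref{defn:Q consistent} for a tuple $(\hx_U)_{U \in \calU}$ translate identically to the $0$-consistency conditions for the tuple $(\hx_{g^{-1}V})_{V \in g\calU}$. The map $\calQ \to \calQ_g$ sending $(\hx_U)_{U \in \calU} \mapsto (\hx_{g^{-1}V})_{V \in g\calU}$ is then a bijection by construction, and since
\[
d_{\calQ_g}\bigl((\hx_{g^{-1}V}),(\hy_{g^{-1}V})\bigr) = \sum_{V \in g\calU} d_{\hT_V}(\hx_{g^{-1}V},\hy_{g^{-1}V}) = \sum_{U \in \calU} d_{\hT_U}(\hx_U,\hy_U) = d_{\calQ}(\hx,\hy),
\]
it is an isometry.

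Finally, the canonicality condition---requiring that $\{U \in \calU \mid \hx_U \neq \ha_U\}$ be finite for $a \in F$ and that no coordinate lies in $\partial \hT_U$---is clearly preserved under the bijection $U \mapsto gU$, as is its negation, so $\calQ^\infty$ is sent bijectively to $\calQ_g^\infty$. There is no genuine obstacle here; the only minor bookkeeping point is that the construction involves a choice of Gromov modeling trees (Lemma \ref{lem:ray trees exist}), so equivariance is a statement about the existence of a \emph{choice} of $\calQ_g$ matching $g\cdot \calQ$, rather than a canonical identification of any two cubical models. Since we are free to make the choice described above, the corollary follows.
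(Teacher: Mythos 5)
Your proof follows the same approach as the paper's: both observe that every ingredient of the construction (the relevant domain set, the Gromov trees, the $\delta$-sets, clusters, collapsed trees, and $0$-consistency) is built from data that Definition \ref{defn:HHS auto} declares invariant, and that one may therefore \emph{choose} the modeling tree system for $g\cdot(F\cup\Lambda)$ to be the $g$-image of the one for $F\cup\Lambda$. You spell out the coordinate-wise isometry and the canonicality check a bit more explicitly than the paper (which leaves these details to the reader), but the argument is the same.
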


\begin{proof}
By Definition \ref{defn:HHS auto}, $g$ preserves projection sizes (up to relabeling), and hence $g \cdot \Rel_K(F \cup \Lambda) = \Rel_K(g \cdot (F \cup \Lambda))$.   Moreover, given $U \in \calU$, any Gromov modeling tree $\phi_U:T_U \to \calC(U)$ for $\pi_U(F \cup \Lambda)$ can be turned into a modeling tree $T_{gU}$ for $\pi_{gU}(g \cdot (F \cup \Lambda))$ in the obvious way, by composing $T_U \to \calC(U) \to \calC(g U)$, with the last map the isometry $\calC(U) \to \calC(gU)$ induced by $g$.  Similarly, since $g$ preserves the relative projections, one can construct cluster and the collapsed tree $\hT_{gU}$ from $\hT_U$ in a similar fashion.  Hence $g$ induces an isometry between $\hT_U \to \hT_{gU}$ for each $U \in \calU$ which preserves labels of marked points and rays, in that $g\cdot \ha_U = \widehat{g \cdot a}_{g U}$ for each $a \in F \cup \Lambda$, while also preserving cluster points, in that $g \cdot \hd^V_U = \hd^{g \cdot V}_{g \cdot U}$.

As such, $g$ induces a bijection between $0$-consistent sets $\oQ \to \oQ_g$, which preserves canonicality.  We leave the details to the reader.  This completes the proof.
\end{proof}

This proves part of the equivariance statement of Theorem \ref{thmi:main model} from the introduction.  We will show in Corollary \ref{cor:auto boundary} that the map $\calQ \to \calQ_g$ is a cubical isomorphism and induces a simplicial isomorphism between their simplicial boundaries.

\section{Distance estimates for $\calQ$} \label{sec:distance estimates}

In this section, we prove the following theorem, which allows us to estimate distances in $H = \hull_{\calX}(F \cup \Lambda)$ by distances in $\calQ$.  Notably, the sum on the left is the sum from the Distance Formula (Theorem \ref{thm:DF}), though our proof is completely independent of it and, in fact, will be used to give the upper bound in the distance formula in Corollary \ref{cor:DF lower bound} in the Section \ref{sec:HP and DF}.

In the following, for numbers $A,K$ the notation $[A]_K$ mean $0$ when $A <K$ and $A$ when $A \geq K$.  For $x \in H$ and $U \in \calU$, we will write 

\begin{itemize}
    \item $x = \pi_U(x) \in \calC(U)$;
    \item $x_U = \psi_U(x) \in T_U$;
    \item $\hx_U = \hpsi_U(x) \in \hT_U$.
\end{itemize}

\begin{theorem}\label{thm:Q distance estimate}
There exists $K_1 = K_1(\mathfrak S, |F \cup \Lambda|)>0$ so that for any $K_2 \geq K_1$ and any $x, y \in H$, we have that
$$\sum_{U \in \mathfrak S} [d_U(x,y)]_{K_2} \asymp \sum_{U \in \calU} d_{\hT_U}(\hx_U,\hy_U)$$
where the constants in $\asymp$ only depend on $\mathfrak S, |F \cup \Lambda|$ and the choice of $K_2$.
\end{theorem}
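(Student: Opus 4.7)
The plan is to reduce both sums to their contributions from $U \in \calU$ and then exploit the tree decomposition $T_U = T^c_U \sqcup T^e_U$. For the LHS, any $U \in \mathfrak S \setminus \calU$ has $\diam_U(F \cup \Lambda) < K$, so the hyperbolic hull $H_U$ has diameter at most some $C_0 = C_0(\mathfrak S, |F \cup \Lambda|)$; since $x, y \in H$ project $\theta$-close to $H_U$, this forces $d_U(x,y) \leq C_0 + 2\theta$. Choosing $K_1 > C_0 + 2\theta$ (which is permitted by the flexibility of $K$) kills every such term in the LHS once $K_2 \geq K_1$. For $U \in \calU$, Lemma~\ref{lem:collapsed tree control}(4) gives $d_U(x,y) \asymp d_{T_U}(x_U, y_U)$, and since $q_U$ is isometric on edge components and collapses clusters to points, we have the structural identity
\[
d_{T_U}(x_U, y_U) \;=\; d_{\hT_U}(\hx_U, \hy_U) \;+\; \sum_{C \in \calC_U} \ell_C,
\]
where $\calC_U$ is the set of clusters of $T^c_U$ met by the geodesic $[x_U, y_U]$ and each $\ell_C \leq \diam_{T_U}(C)$. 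Both directions of the theorem will be read off from this identity.

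For the easy direction, $\sum d_{\hT_U} \prec \sum [d_U(x,y)]_{K_2}$, I would use the cluster-separation clause of Lemma~\ref{lem:collapsed tree control}(5): any $U \in \calU$ with $\hx_U \neq \hy_U$ satisfies $d_U(x,y) > 50E$, so these domains lie in $\Rel_{50E}(x,y)$. Split them into \emph{large} ($d_U \geq K_2$, directly bounded by the RHS since $d_{\hT_U} \leq d_{T_U} \asymp d_U$) and \emph{small} ($50E \leq d_U < K_2$, each contributing at most $\prec K_2$). Apply the Bounding Large Containers Proposition~\ref{prop:bounding containers} with thresholds $50E$ and $K_2$: all but $PU(K_2)-1$ small domains nest into some large $W$, with the number of small domains nested in each $W$ bounded by $\prec d_W(x,y)$. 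Multiplying by $K_2$ and summing over large $W$ absorbs the small-domain contribution into the RHS up to an additive constant depending only on $K_2$ and the structure.

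The hard direction, $\sum [d_U(x,y)]_{K_2} \prec \sum d_{\hT_U}$, requires controlling the total cluster traversal $\sum_U \sum_{C \in \calC_U} \ell_C$ by the RHS. Each cluster $C \subset T^c_U$ is a union of shadows $\delta^V_U$ for $V \nest U$ together with marked-point neighborhoods, all joined within separation $r$, so $\diam_{T_U}(C) \prec r \cdot n_C$ where $n_C$ is the number of components. Thus it suffices to bound the total component count $\sum_U \sum_{C \in \calC_U} n_C$. The marked-point components contribute at most $|F \cup \Lambda|$ per $U$, and there are finitely many large $U$ (since $x, y$ are interior and $\Rel_{K_2}(x,y)$ is finite by Corollary~\ref{cor:rel sets are finite}), so this is absorbed directly. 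For shadow components $\delta^V_U$ on the path, the Covering Lemma~\ref{lem:covering} bounds the number of $U$'s in which a fixed $V$ can appear by a uniform constant $N$; my plan is to charge each such $V$ either against $d_{\hT_V}(\hx_V, \hy_V) \geq 1$ on the RHS (when $V \in \calU$ with $\hx_V \neq \hy_V$), or against its ``large container'' via Proposition~\ref{prop:bounding containers} when $V$ is small. The main obstacle will be orchestrating this charging scheme cleanly, in particular handling shadows from $V \in \mathfrak S \setminus \calU$: I expect consistency together with $\diam_V(F \cup \Lambda) < K$ to force $\rho^V_U$ into marked-point neighborhoods so that such shadows are absorbed rather than contributing new cluster components, but verifying this precisely and inducting down the finite $\nest_{\calU}$-lattice (finite by Definition~\ref{defn:HHS}(5)) to close everything off will be the most delicate step.
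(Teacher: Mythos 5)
Your preliminary reduction (killing terms with $U \in \mathfrak S \setminus \calU$ by taking $K_1$ larger than the ambient $H$-proximity constant) and your structural identity $d_{T_U}(x_U,y_U) = d_{\hT_U}(\hx_U,\hy_U) + \sum_C \ell_C$ match the paper's setup, and your easy direction is essentially the paper's Proposition~\ref{prop:upper bound} (you gloss over the boundedly-many domains where $\hx_U$ or $\hy_U$ lands in $\hT^e_U$, which the paper handles via item~(8) of Lemma~\ref{lem:collapsed tree control}, but that is a benign additive error). The gap is in the hard direction.

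In the hard direction you set up the right counting scheme — Covering Lemma~\ref{lem:covering} to cap how often a fixed $V$ can appear across the clusters of different $U$, then charge each shadow against the RHS — but you leave open whether the charging target is actually nonzero: you write ``charge against $d_{\hT_V}(\hx_V,\hy_V)\geq 1$ \emph{when} $\hx_V \neq \hy_V$,'' and propose a fallback through Proposition~\ref{prop:bounding containers} for the remaining $V$. That fallback does not close the argument: if $\hx_V = \hy_V$ then $V$ contributes nothing to $\sum_U d_{\hT_U}(\hx_U,\hy_U)$, and nesting $V$ into a large container $W$ only relates $\#\{\text{such }V\}$ to $d_W(x,y)$, which is the quantity you are trying to bound — the move is circular. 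The missing ingredient is that there is nothing to handle in the fallback case: Lemma~\ref{lem:big minimal}, together with the BGI property and $\alpha_0$-consistency of the tuples $(a_U),(b_U),(x_U),(y_U)$, shows that for every $\nest_{\calU}$-minimal $V$ whose shadow $\delta^V_U$ sits on $[x_U,y_U]_{T_U}$ inside the truncated cluster $C'_i$ (i.e., at least $\alpha_0+E'$ from $x_U$ and $y_U$), up to $N_1(\mathfrak S,|F\cup\Lambda|)$ exceptions near branch points, the shadow separates some $a,b \in F\cup\Lambda$ with $d_V(a,b) > K$, and consistency forces $\hx_V = \ha_V$, $\hy_V = \hb_V$. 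Then item~(6) of Lemma~\ref{lem:collapsed tree control} gives $d_{\hT_V}(\hx_V,\hy_V) \succ K$, so every such $V$ contributes heavily, not just $\geq 1$, to the RHS. Your stated ``main obstacle'' (shadows from $V \in \mathfrak S\setminus\calU$) is also misplaced: shadows $\delta^V_U$ are only defined for $V \in \calU$ (Definition~\ref{defn:tree projections}), so such $V$ do not enter the cluster decomposition at all. The delicate step is precisely the Lemma~\ref{lem:big minimal}/consistency argument showing the charging targets are automatically large, not the $\mathfrak S\setminus\calU$ shadows or a fallback through containers.
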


Before beginning the proof, we make a couple of simplifying observations:
\vspace{.1in}

First, note that if $x,y \in H$ and $V \in \Rel_K(x,y)$ but $V \notin \calU$, then $d_V(x,y) < K + \epsilon$ for some $\epsilon = \epsilon(\mathfrak S, |F \cup \Lambda|)>0$.  Hence by taking $K_2 \geq K_1 > K + \epsilon$, we may increase the threshold to remove any extraneous domains.  That is:
$$\sum_{U \in \mathfrak S} [d_U(x,y)]_{K_2} = \sum_{U \in \calU} [d_U(x,y)]_{K_2}.$$

We next observe that for any $x,y \in H$, we have $d_U(x,y) \asymp d_{T_U}(x_U, y_U)$, with constants depending only on $\mathfrak S$ and $|F \cup \Lambda|$.

We prove the two bounds in Theorem \ref{thm:Q distance estimate} separately.  For the upper bound, we only need the basic details from the construction of the $\hT_U$ and the passing-up type arguments from Section \ref{sec:qpu}.  Note that this proposition is used in proving the existence of hierarchy paths and the upper bound of the distance formula, which were summarized in Corollary \ref{cori:DF and HP}.

\begin{proposition}[Upper bound]\label{prop:upper bound}
There exists $K_1= K_1(\mathfrak S, |F \cup \Lambda|)>0$ so that for any $K_2 >K_1$ and any $x, y \in H$ we have
\begin{equation}\label{eqn:upper}
    \sum_{U \in \calU} [d_U(x,y)]_{K_2} \succ \sum_{U \in \calU} d_{\hT_U}(\hx_U,\hy_U)
\end{equation}
with the constants in $\succ$ depending only on $\mathfrak S,|F \cup \Lambda|,$ and $K_2$.
\end{proposition}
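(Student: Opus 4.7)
The plan is to split the sum on the right into contributions from two classes of domains and bound each using material already established. Set $\calU_{K_2} = \Rel_{K_2}(x,y) \cap \calU$ and $\calV = \calU \setminus \calU_{K_2}$, so that for $V \in \calV$ we have $K < d_V(x,y) < K_2$. By item (4) of the Collapsed Tree Control Lemma \ref{lem:collapsed tree control}, there is a constant $A = A(\mathfrak S, |F \cup \Lambda|) > 0$ such that for every $U \in \calU$,
\[
d_{\hT_U}(\hx_U, \hy_U) \leq d_{T_U}(x_U, y_U) \leq A\, d_U(x,y) + A.
\]

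Handling the domains in $\calU_{K_2}$ is immediate: since $d_U(x,y) \geq K_2$, the term $[d_U(x,y)]_{K_2} = d_U(x,y)$ appears on the left-hand side, and by the display above, $d_{\hT_U}(\hx_U,\hy_U) \leq A\, [d_U(x,y)]_{K_2} + A$. Summing gives
\[
\sum_{U \in \calU_{K_2}} d_{\hT_U}(\hx_U, \hy_U) \leq A \sum_{U \in \calU_{K_2}} [d_U(x,y)]_{K_2} + A \cdot \#\calU_{K_2},
\]
and $\#\calU_{K_2}$ is itself bounded by the left-hand sum (each term is at least $K_2$). The main work, then, is to absorb the sum $\sum_{V \in \calV} d_{\hT_V}(\hx_V, \hy_V)$ into the left-hand side, since these medium-sized domains contribute nothing to the left sum.

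For each $V \in \calV$, the bound $d_{\hT_V}(\hx_V,\hy_V) \leq A K_2 + A$ is uniform in $V$, so we only need to bound the number of such medium domains by a multiple of the left-hand sum. This is precisely what the Bounding Containers Proposition \ref{prop:bounding containers} provides, applied with $\calU_1 = \calV$ (the $K_1$-relevant medium domains, for $K_1$ large enough) and $\calU_2 = \calU_{K_2}$: all but at most $PU(K_2) - 1$ domains $V \in \calV$ nest into some $W \in \calU_{K_2}$, and for each such $W$, the set $\calW = \{V \in \calV \mid V \sqsubsetneq W\}$ satisfies $\#\calW \leq B\, d_W(x,y) + B$ for some $B = B(\mathfrak S, K_2) > 0$. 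Hence
\[
\sum_{V \in \calV} d_{\hT_V}(\hx_V, \hy_V) \leq (A K_2 + A)\sum_{W \in \calU_{K_2}} \#\calW + (A K_2 + A)(PU(K_2) - 1).
\]

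Combining the bounds for $\calU_{K_2}$ and $\calV$ produces a bound of the form
\[
\sum_{U \in \calU} d_{\hT_U}(\hx_U,\hy_U) \leq C \sum_{U \in \calU_{K_2}} [d_U(x,y)]_{K_2} + C = C \sum_{U \in \calU} [d_U(x,y)]_{K_2} + C
\]
for some $C$ depending only on $\mathfrak S, |F \cup \Lambda|,$ and $K_2$, which is exactly inequality \eqref{eqn:upper}. There is no genuine obstacle here beyond choosing $K_1$ large enough to invoke Proposition \ref{prop:bounding containers} (in particular $K_1 > 50E + K + \epsilon$ so that relevant medium domains are genuinely relevant and the extraneous domains in $\mathfrak S \setminus \calU$ contribute nothing to the left-hand sum at threshold $K_2$). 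The argument is completely independent of the Distance Formula, as required for the downstream application in Corollary \ref{cor:DF lower bound}.
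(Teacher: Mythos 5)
There is a genuine gap. Early on you claim that ``for $V \in \calV$ we have $K < d_V(x,y) < K_2$,'' but the lower bound is false: $\calU = \Rel_K(F\cup\Lambda)$ is defined relative to the fixed configuration $F\cup\Lambda$, not to the arbitrary pair $x,y\in H$. A domain $V$ can have $d_V(F\cup\Lambda)>K$ while $d_V(x,y)$ is small — indeed it can be $0$. Consequently you cannot take $\calU_1 = \calV$ in Proposition \ref{prop:bounding containers}, since that proposition requires $\calU_1 \subset \Rel_{K_1}(x,y)$ for some $K_1 > 50E$, and $\calV$ need not be contained in any $\Rel_{K_1}(x,y)$. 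Without that hypothesis the conclusion of the proposition is unavailable, and your count of medium-sized domains has no a priori relation to the left-hand sum.

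The missing step is the cluster-point reduction. One must first discard the (possibly nonzero but uniformly bounded) set of $U\in\calU$ for which $\hx_U$ or $\hy_U$ lies in an edge component $\hT^e_U$; item (8) of Lemma \ref{lem:collapsed tree control} bounds the number of such domains in terms of $\mathfrak S$ and $|F\cup\Lambda|$, and by item (4) each contributes at most $AK_2+A$, so they are absorbed into the additive constant. After this, every remaining $V\in\calV$ with $\hx_V\neq\hy_V$ has $\hx_V,\hy_V$ at \emph{distinct} cluster points, which by item (5) of the same lemma forces $d_V(x,y) > 50E$. Only then is the surviving collection of medium domains contained in $\Rel_{50E}(x,y)$, so Proposition \ref{prop:bounding containers} applies with $K_1 = 50E$. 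With this repair your argument matches the paper's proof; as written, it silently assumes exactly the conclusion that items (5) and (8) exist to justify.
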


\begin{proof}

First, item (4) of Lemma \ref{lem:collapsed tree control} says that $d_U(x,y) \succ d_{\hT_U}(\hx_U,\hy_U)$ for all $U \in \calU$, where the constants in $\succ$ depend only on $\calX, |F \cup \Lambda|$.

Now note that if $U \in \calU$ satisfies $\hx_U = \hy_U$, then the corresponding term disappears on the right-hand side.  Hence it suffices to account for the domains in $\calV = \{V \in \calU | d_V(x,y) < K_2 \textrm{ and } \hpsi(x) \neq \psi(y)\}$, which appear on the left side but not the right.

For this, item (8) of Lemma \ref{lem:collapsed tree control} says that $\hx_U$ is not contained in some cluster point of $\hT_U$ for only boundedly-many $U$, and the same for $\hy_U$.  This means that, up to ignoring boundedly-many domains, if $V \in \calV$ then $\hx_V$ and $\hy_V$ are at distinct cluster points, and so $d_V(x,y) > 50E$ by item (5) of Lemma \ref{lem:collapsed tree control}.

We are now done by Proposition \ref{prop:bounding containers}, which says that up to ignoring boundedly many such $V$, there is some $W \in \calU$ with $d_W(x,y) > K_2$ and $V \nest W$ for each $V \in \calV$.  Moreover, for any given $W \in \calU - \calV$ with $d_W(x,y) > K_2$, we have $\#\{V \in \calV|V \nest W\} \prec d_W(x,y)$.  That is, each $V \in \calV$ is accounted for by some $W \in \calU - \calV$, and each such $W$ accounts for boundedly-many $V \in \calV$.  Hence we can adjust the multiplicative constant to allow the terms in the distance estimate in \eqref{eqn:upper} associated to the collection of such $W$ to account for the missing $\calV$ terms, and the additive constants to account for the boundedly-many terms associated to the domains in $\calV$ which were left out of this process.

\end{proof}

It remains to prove the following:

\begin{proposition}[Lower bound]\label{prop:lower bound}
There exists $K_1= K_1(\mathfrak S, |F \cup \Lambda|)>0$ so that for any $K_2 > K_1$ and any $x, y \in H$ we have
\begin{equation}\label{eqn:lower}
    \sum_{U \in \calU} [d_U(x,y)]_{K_2} \prec \sum_{U \in \calU} d_{\hT_U}(\hx_U,\hy_U)
\end{equation}
with the constants in $\prec$ depending only on $\mathfrak S,|F \cup \Lambda|,$ and $K_2$.
\end{proposition}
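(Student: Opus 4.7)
The plan is to prove the lower bound by induction on \emph{rank} in the $\nest_{\calU}$-lattice: for $W \in \calU$ set $\mathrm{rk}(W)$ to be the length of the longest proper $\nest_{\calU}$-chain in $\{V \in \calU : V \sqsubseteq W\}$. The target inductive statement is
$$[d_W(x,y)]_{K_2} \prec d_{\hT_W}(\hx_W,\hy_W) + \sum_{V \in \calU,\, V \sqsubsetneq W} d_{\hT_V}(\hx_V,\hy_V) + 1,$$
with constants depending only on $\mathfrak S, |F \cup \Lambda|, K_2$. Once this is established, summing over $W \in \calU$ and applying the Covering Lemma~\ref{lem:covering} (which bounds $|\{W \in \calU : V \nest W\}|$ uniformly) yields~\eqref{eqn:lower}.

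The base case, $\mathrm{rk}(W)=1$, is essentially item~(6) of Lemma~\ref{lem:collapsed tree control}: when $W$ is $\nest_{\calU}$-minimal, the collapse $q_W : T_W \to \hT_W$ only crushes the $2R$-neighborhoods of the finitely many marked points, so $d_{\hT_W}(\hx_W,\hy_W) \succ d_{T_W}(x_W,y_W) - O(1) \asymp d_W(x,y)$. For the inductive step, decompose $d_{T_W}(x_W,y_W) = d_{\hT_W}(\hx_W,\hy_W) + \ell_W$, where $\ell_W$ is the total length the geodesic $[x_W,y_W]_{T_W}$ spends inside clusters of $T^c_W$. By construction each cluster is built from shadows and marked points agglomerated within distance $r$ of one another, so the diameter of each cluster is at most $(r+2R)$ times its number of constituent shadows plus marked points. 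Consequently
$$\ell_W \prec |\calV_W| + |F \cup \Lambda|, \qquad \calV_W := \{V \in \calU : V \nest W,\ \delta^V_W \text{ lies in a cluster meeting } [x_W,y_W]_{T_W}\}.$$

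The heart of the argument is bounding $|\calV_W|$ by the RHS terms coming from $V \sqsubsetneq W$. Partition $\calV_W$ as $\calV_W^{\mathrm{large}} \sqcup \calV_W^{\mathrm{med}} \sqcup \calV_W^{\mathrm{small}}$ according to whether $V$ lies in $\Rel_{K_2}(x,y)$, in $\Rel_{K_1}(x,y) - \Rel_{K_2}(x,y)$ (for $K_1$ as in the Bounding Containers Proposition~\ref{prop:bounding containers}), or outside $\Rel_{K_1}(x,y)$. For $V \in \calV_W^{\mathrm{large}}$, the inductive hypothesis applied to $V$ gives $d_{\hT_V}(\hx_V,\hy_V) \succ 1$ and in particular $|\calV_W^{\mathrm{large}}| \prec \sum_{V \in \calV_W^{\mathrm{large}}} d_{\hT_V}(\hx_V,\hy_V)$, which is absorbed into the RHS. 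For $V \in \calV_W^{\mathrm{med}}$, Proposition~\ref{prop:bounding containers}(2) (with $W$ as large container) yields $|\calV_W^{\mathrm{med}}| \prec d_W(x,y)$; after a standard rearrangement (making the inductive multiplicative constant sufficiently small by choosing $K_2 \gg K_1$) this is absorbable into the LHS. The main obstacle is the last piece $\calV_W^{\mathrm{small}}$: these are domains $K$-relevant for $F \cup \Lambda$ whose shadow lies in a cluster meeting $[x_W,y_W]$, yet which have $d_V(x,y) < K_1$. The plan is to control them using the No Dense Clusters Lemma~\ref{lem:PS} applied to each of the boundedly many pairs $(a,b) \in F \cup \Lambda$ with $d_V(a,b) > K$ (forcing $\rho^V_W$ close to the $(a,b)$-geodesic via BGIA), combined with Strong Passing-Up~\ref{prop:SPU} to handle any region of $[x_W,y_W]_{\calC(W)}$ where such shadows accumulate too densely: such accumulation would produce a container in $\Rel_{K_2}(a,b)$, and since $(a,b)$ ranges over a finite set, the total contribution is once again bounded by $d_W(x,y) + O(1)$. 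Modulo this delicate accounting, the induction closes and summing over $W$ via the Covering Lemma completes the proof.
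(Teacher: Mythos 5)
Your proposal takes a genuinely different route from the paper (induction on $\nest_{\calU}$-rank rather than the paper's direct cluster-accounting), and unfortunately it has gaps that the paper's approach is designed precisely to avoid.

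The paper's proof works domain-by-domain with no induction. For a single $U \in \calU$, the length lost under $q_U: T_U \to \hT_U$ is the total cluster length of $[x_U,y_U]_{T_U}$. The paper accounts for this by choosing a $(2r+4R,r)$-net of $\nest_{\calU}$-\emph{minimal} domains $V$ with $\delta^V_U$ inside the shrunken clusters $C_i'$, and then proves (Lemma~\ref{lem:big minimal} plus BGI/consistency) that for all but boundedly many such $V$ there exist $a,b \in F \cup \Lambda$ with $\ha_V = \hx_V$, $\hb_V = \hy_V$ and $d_{\hT_V}(\ha_V,\hb_V) \succ K$. Since $V$ is $\nest_{\calU}$-minimal and $K$-relevant, $\hT_V$ is a long interval and each such $V$ contributes coarsely $K \geq K_2$ to the RHS \emph{regardless of any a priori bound on $d_V(x,y)$}. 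The Covering Lemma then bounds the overcounting (each $V$ nests into boundedly many $U$). There is no need to partition into large/medium/small and no need for Bounding Containers in the delicate way you propose.

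This is the insight your argument misses, and it is where the gap lies. Concretely: in your large case, the inductive hypothesis says $[d_V(x,y)]_{K_2} \prec d_{\hT_V}(\hx_V,\hy_V) + \sum_{Z \sqsubsetneq V} d_{\hT_Z}(\hx_Z,\hy_Z) + 1$, which does \emph{not} give $d_{\hT_V}(\hx_V,\hy_V) \succ 1$; the $\hT_V$-term could be zero with all the weight sitting in strictly smaller domains, so $|\calV_W^{\mathrm{large}}| \prec \sum_V d_{\hT_V}(\hx_V,\hy_V)$ does not follow. The small case is worse: you need to absorb $|\calV_W^{\mathrm{small}}| \prec d_W(x,y)$ into the LHS $[d_W(x,y)]_{K_2}$, but unlike the medium case there is no free threshold parameter to drive the multiplicative constant below 1, so the absorption you invoke is not available, and the induction does not close. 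The paper sidesteps all of this by showing that, for the $\nest_{\calU}$-minimal net domains $V$ with interior cluster shadows, $d_V(x,y)$ is automatically $\succ K$ (via the marked-point substitution $\ha_V = \hx_V$, $\hb_V = \hy_V$); there essentially are no ``small'' net domains, which is what makes the direct argument work without induction.
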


For the lower bound, we have to work a bit harder.  Observe that since $d_U(x,y) \prec d_{T_U}(x_U,y_U)$ for each $U$, it suffices to show that 
$$\sum_{U \in \calU} [d_{T_U}(x_U,y_U)]_{K_2} \prec \sum_{U \in \calU} d_{\hT_U}(\hx_U,\hy_U).$$

The trouble comes from domains $U \in \calU$ with $d_{T_U}(x_U,y_U) > K_2$ but there is a discrepancy $d_{T_U}(x_U,y_U) > d_{\hT_U}(\hx_U,\hy_U)$.  Since $\hT_U$ is obtained from $T_U$ by collapsing clusters, some analysis of these collapsed clusters is needed.  In particular, any difference in distances occurs because of distinct clusters of shadows $C_1, \dots, C_n$ occurring in order along the unique geodesic $[x_U,y_U]_{T_U}$.  To each such cluster $C_i$, we will associate a collection $\calV_i$ of $\nest_{\calU}$-minimal domains $V \in \calU$ with $\delta^V_U \in C_i$, and then use the Bounding Large Containers Proposition \ref{prop:bounding containers} (and a localization argument) to conclude that $\#\calV_i \asymp \diam_{T_U}(C_i \cap [x_U,y_U]_{T_U})$ for each $i$.  The fact that the $V \in \calV_i$ are $\nest_{\calU}$-minimal will say that, up to ignoring boundedly many such $V$, we have $d_V(x,y) \succ K$ by item (6) of Lemma \ref{lem:collapsed tree control}.  Hence each such minimal domain makes a large contribution to the distance sum for $\calQ$, and since each can only nest into boundedly-many larger domains by  Covering Lemma \ref{lem:covering}, it follows that the collection of minimal domains can account for the distance in each $U$ lost by collapsing clusters, up to increasing the multiplicative constant a bounded amount.

We note that the proof uses an auxiliary lemma, which we state and prove after the proof since it is notation-dependent.

\begin{proof}[Proof of Proposition \ref{prop:lower bound}]
Continuing with the above notation, for each $i$, let $C'_i = C_i \cap [x_U,y_U]{T_U} - \left(B_{\alpha_0 + E'}(x_U) \cup B_{\alpha_0 + E'}(y_U)\right)$.

Observe that for each $i$, if we set $\calV_i = \{V \in \calU|V \nest U \text{ and } \delta^V_U \subset C_i\}$ then $\{\delta^V_U| V \in \calV_i\}$ is $(r+2R)$-dense in $C_i$.  Hence we can find a subcollection $\calV_i'$ so that
\begin{itemize}
    \item Each $V \in \calV_i'$ is $\nest_{\calU}$-minimal;
    \item For each $V \in \calV'$, we have $\delta^V_U \subset C_i'$;
    \item $\{\delta^V_U| V \in \calV_i\}$ is a $(2r+4R, r)$-net on $C_i'$ (see Definition \ref{defn:net} below).
\end{itemize}
Note that the increase in the net constants in the third item is required to guarantee the second item.

In particular, we have $\#\calV_i \succ \diam_{T_U}(C_i') \geq \diam_{T_U}(C_i)- 2E'$.

Let $V \in \calV_i'$.  Then, by construction, $d_{T_U}(\delta^V_U, x_U)> \alpha_0 + E'$ and $d_{T_U}(\delta^V_U, y_U)>\alpha_0 + E'$.  Moreover, up to ignoring at most $N_1=N_1(\mathfrak S, |F \cup \Lambda|)>0$ domains $V \in \calV$, Lemma \ref{lem:big minimal} provides $a,b \in F \cup \Lambda$ where $C'$ separates $a_U$ from $b_U$ in $T_U$ and $V \in \Rel_K(a,b)$.  Hence the BGI property (5) of Lemma \ref{lem:tree control} says that $d_{T_V}(\delta^U_V(a_V),\delta^U_V(x_V))<E'$ and $d_{T_V}(\delta^U_V(b_V),\delta^U_V(y_V))<E'$. 

Lemma \ref{lem:consist to tree} says that the tuples $(a_U), (b_U), (x_U), (y_U)$ are all $\alpha_0$-consistent.  Thus since $d_{T_U}(a_U, \delta^V_U)>E' + \alpha_0$ and similarly for $b_U,x_U,y_U$, $\alpha_0$-consistency forces that $d_{T_V}(a_V, \delta^U_V(a_U)) < \alpha_0$, and similarly for $b_V,x_V,y_V$.  As such, we get that $d_{T_V}(x_V,a_V) < \alpha_0 + E'$ and $d_{T_V}(y_V,b_V) < \alpha_0 + E'$, and so that $d_{T_V}(x_V,y_V) \geq d_{T_V}(a_V,b_V) - 2\alpha_0 - 2E'$, from which we can conclude that $\ha_V = \hx_V$ and $\hb_V = \hy_V$ by requiring $R > \alpha_0 + E'$.

On the other hand, since each $V \in \calV_i'$ is $\nest_{\calU}$-minimal and $d_V(a,b) > K$, item (4) of Lemma \ref{lem:collapsed tree control} says that $d_{\hT_V}(\hat{a}_V,\hat{b}_V) \succ K$.  Hence we obtain that $d_{\hT_V}(\hx_V,\hy_V)  = d_{\hT_V}(\ha_V,\hb_V) \succ K$.

Thus for each $U \in \calW$, we have
\begin{eqnarray*}
d_{T_U}(x_U,y_U) & = & d_{\hT_U}(\hx_U,\hy_U) + \sum_{i=1}^n \diam_{T_U}(C_i \cap [x_U,y_U]_{T_U})\\
&\prec& d_{\hT_U}(\hx_U,\hy_U) + \sum_{i=1}^n \#\calV_i'\\
&<& d_{\hT_U}(\hx_U,\hy_U) + \sum_{i=1}^n K\cdot \#\calV_i'\\
&\prec& d_{\hT_U}(\hx_U,\hy_U) + \sum_{i=1}^n \left(\sum_{V \in \calV_i'} d_{\hT_V}(\hx_V,\hy_V)\right).
\end{eqnarray*}

Finally, the Covering Lemma \ref{lem:covering} says that any domain $V \in \calU$ can nest into only boundedly-many domains in $\calU$, with bounds determined by $\mathfrak S, |F \cup \Lambda|$ since Lemma \ref{lem:covering} applies to each pair $a,b \in F \cup \Lambda$.  Hence, each $\nest_{\calU}$-minimal domain $V \in \calU$ must account for at most boundedly-many $U \in \calW$ in the above way.  Thus we can account for the distance lost by collapsing clusters in each domain in $\calW$ with these large $\nest_{\calU}$-minimal domains by increasing the multiplicative constant for the estimate in \eqref{eqn:upper} by a bounded amount.  This completes the proof.

\begin{figure}
    \centering
    \includegraphics[width=.6\textwidth]{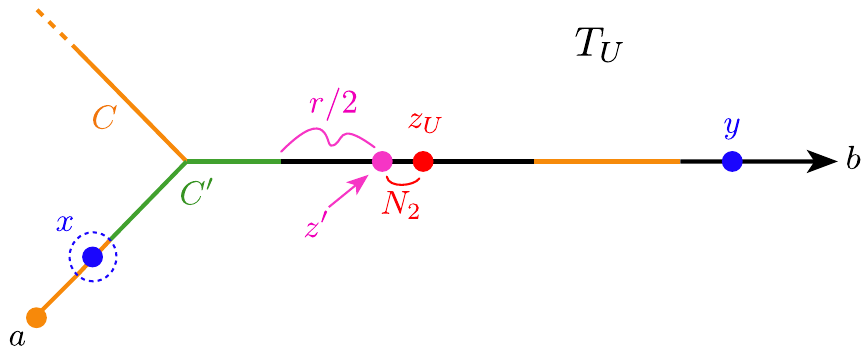}
    \caption{Proof of Proposition \ref{prop:lower bound}: The subcluster $C'$ isolates the portion of $C$ between the $R$-neighborhood of $x_U$, $\calN_R(x_U)$, and $y_U$.  The point $z \in H$ coarsely projects the same as $y$ and the rest of the ray $b$ to all domains with shadows in $C'$.}
    \label{fig:kappa_nbhd}
\end{figure}

\end{proof}

The following definition was used in the proof of Proposition \ref{prop:lower bound}:

\begin{definition}[Net]\label{defn:net}
Given constants $a,A\geq 0$, a $(a,A)$-\emph{net} on a subspace $Y \subset X$ of a metric space is a collection $Z \subset Y$ of points so that:
\begin{enumerate}
\item $Z$ is $a$-coarsely dense in $Y$, and
\item For any $z,z' \in Z$, we have $d_X(z,z') \geq A$.
\end{enumerate}
\end{definition}

The following lemma was also used in the proof, and we use the same notation:

\begin{lemma}\label{lem:big minimal}
There exists $N_1 = N_1(\mathfrak S, |F\cup \Lambda|)>0$ so that for each $i$ and for all but $N_1$-many domains $V \in \calV_i'$, there exists $a,b \in F \cup \Lambda$ so that $d_{T_V}(a_V,b_V)\succ K$.
\end{lemma}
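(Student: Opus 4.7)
The approach is to exploit the fact that $V \in \calU$ forces $\diam_{T_V}(\{f_V : f \in F \cup \Lambda\}) \succ K$, and then to identify a pair $a,b \in F \cup \Lambda$ realizing this large diameter whose $T_U$-coordinates lie in distinct components of $T_U - \calN_{E'}(\delta^V_U)$. For such a pair the tree geodesic $[a_U, b_U]_{T_U}$ is automatically forced through $\delta^V_U \subset C'_i$, so that $C'_i$ separates $a_U$ from $b_U$; meanwhile the large $T_V$-separation gives $d_{T_V}(a_V, b_V) \succ K$. Both of these are what the lemma is used for in the proof of Proposition \ref{prop:lower bound}.

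First I would bound the exceptions. Call $V \in \calV_i'$ \emph{bad} if there exists $f \in F \cup \Lambda$ with $d_{T_U}(f_U, \delta^V_U) < E'$. Since $\{\delta^V_U\}_{V \in \calV_i'}$ forms a $(2r + 4R, r)$-net on $C'_i$ by construction, any ball of radius $E'$ in $T_U$ meets only $O(E'/r)$ of these $\delta$-sets, a bound depending only on $\mathfrak S$ and $|F \cup \Lambda|$. Summing the count over the $|F \cup \Lambda|$ marked points bounds the number of bad $V$ by some $N_1 = N_1(\mathfrak S, |F \cup \Lambda|)$, which is the promised exception count.

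For a good $V$, every marked point $f_U$ lies at $T_U$-distance at least $E' > \alpha_0$ from $\delta^V_U$, so $f_U$ sits in some component of $T_U - \calN_{E'}(\delta^V_U)$. The bounded geodesic image property, item (5) of Lemma \ref{lem:tree control}, contracts each such component under $\delta^U_V$ to a set of diameter less than $E'$ in $T_V$, and $\alpha_0$-consistency of $(f_U)$ (Lemma \ref{lem:consist to tree}), combined with $d_{T_U}(f_U, \delta^V_U) > \alpha_0$, forces $d_{T_V}(f_V, \delta^U_V(f_U)) < \alpha_0$. Hence any two marked points in the same component have $T_V$-coordinates within $2\alpha_0 + E'$ of each other. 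Since $V \in \calU$ and $\phi_V$ is a $(C,C)$-quasi-isometric embedding, $\diam_{T_V}(\{f_V\}) \succ K$; choosing $K$ substantially larger than $2\alpha_0 + E'$ forces some pair $a, b \in F \cup \Lambda$ to lie in distinct components with $d_{T_V}(a_V, b_V) \succ K$. A standard tree argument then shows that the geodesic in $T_U$ between points in distinct components of the complement of the convex subtree $\calN_{E'}(\delta^V_U)$ must pass through $\delta^V_U$ itself, and thus through $C'_i$. The main point to verify with care is the exception count, which is where the density of the net of $\delta$-sets and the finiteness of $|F \cup \Lambda|$ do the real work; the rest is a clean application of the BGI and consistency tools already in place for reduced tree systems.
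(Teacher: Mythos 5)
Your proof follows essentially the same three-step skeleton as the paper's (exception count via the $(2r+4R,r)$-net, BGI contraction of complementary components, then invoke $V \in \Rel_K(F\cup\Lambda)$ to find a far pair) but substitutes a different, and in one respect cleaner, exception criterion. The paper throws out those $V\in\calV'_i$ whose $\delta^V_U$ lies within $E'$ of a \emph{branch point} of $T_U$, which guarantees $T_U - \delta^V_U$ has exactly two complementary halves; you instead discard $V$ whose $\delta^V_U$ is near a \emph{marked point}. Your condition is precisely the one the $\alpha_0$-consistency step needs (one must know $f_U$ is far from $\delta^V_U$ to conclude $d_{T_V}(f_V,\delta^U_V(f_U))$ is small), and this is left implicit in the paper's argument. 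Both exception sets are bounded by the net spacing together with the finiteness of $|F\cup\Lambda|$ (respectively the bounded branching of $T_U$), so both yield a bound $N_1 = N_1(\mathfrak S, |F\cup\Lambda|)$; and after the exclusion, both proofs run identical BGI-plus-consistency reasoning to conclude that some $a,b$ must land in distinct components of $T_U - \calN_{E'}(\delta^V_U)$.

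There is one genuine slip in the constants. You assert $E' > \alpha_0$, but the paper fixes $\alpha_0 > E'$ (see the choice ``by taking $\alpha_0 > E'$'' in the proof of Proposition~\ref{prop:hPsi defined}). Consequently, knowing that $\delta^V_U$ is at distance at least $E'$ from every $f_U$ does \emph{not} give $d_{T_U}(f_U,\delta^V_U) > \alpha_0$, which is exactly the hypothesis needed for $\alpha_0$-consistency of $(f_U)$ to force $d_{T_V}(f_V, \delta^U_V(f_U)) \leq \alpha_0$. The fix is straightforward: define bad $V$ using the threshold $\alpha_0 + E'$ (and, to absorb $\diam_{T_U}(\delta^V_U) \leq R$, one can pad to $\alpha_0 + E' + R$); the net argument still bounds the count by a constant depending only on $\mathfrak S$ and $|F\cup\Lambda|$ since $r$ may be chosen large relative to $\alpha_0, E', R$. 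With that correction the argument is sound.
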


\begin{proof}
By construction, the set of $\delta^V_U$ for $V \in \calV_i'$ is a $(2r+4R, r)$-net in $C'$ (Definition \ref{defn:net}), and hence only boundedly-many $\delta^V_U$ sets are within $E'$ of a branch point of $T_U$ depending on the uniform bound on branching of $T_U$.  Each remaining $\delta^V_U$ lies along an edge of $T_U$, and hence separates $T_U = T_x \cup T_{z}$ into two components, the component containing $x_V$ and the component containing $z_V$.  Let $l_x$ be the marked points and rays of $T_U$ in $T_x$, and similarly for $l_{z}$.  Since each such $\delta^V_U$ is $E'$-away from any branch point, the BGI property in item (5) of Lemma \ref{lem:tree control} implies that the geodesics in $T_U$ between any pair of marked points or rays in $p,q \in l_x$ must avoid the $E'$-neighborhood of $\delta^V_U$, and thus $d_{T_V}(p_V,q_V) < E'$, and similarly for marked points and rays of $l_{z}$.

Yet $V \in \calU = \Rel_K(F \cup \Lambda)$, and so there exists $a,b \in F \cup \Lambda$ so that $d_{T_V}(a_V,b_V)\succ K$.  By the above argument, we must have $a \in l_x$ and $b \in l_{z}$, as required.
\end{proof}

\begin{remark}
In the proof above, we used the observation that all but boundedly many shadows for $V \in \calV'$ avoided the branch points of $T_U$.  In fact, this something much stronger is true, namely that all but boundedly-many domains in $U \in \calU$ have the property that $\delta^U_W$ avoids the branch points of $T_W$ for each $W \in \calU$ with $U \nest W$.  See Subsection \ref{subsec:bipartite} below.
\end{remark}

\section{The map $\Omega: \calQ \to \calY$}\label{sec:Q to Y}

The purpose of this section is to define a map $\Omega: \calQ \to \calY$ and analyze its features.  In particular, the main goal is to prove the following:

\begin{proposition}\label{prop:Q-consistent}
There exist $\alpha_{\omega}=\alpha_{\omega}(\mathfrak S, |F \cup \Lambda|)>0$ and a map $\Omega: \calQ \to \calY$ so that for any $\hx \in \calQ$, the tuple $\Omega(\hx) \in \calZ_{\alpha_{\omega}}$.  Moreover, if $\Omega(\hx) = (x_U)$, then $q_U(x_U) = \hx_U$ for each $U \in \calU$.
\end{proposition}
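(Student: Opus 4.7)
The plan is to construct $\Omega$ coordinate-wise: for each $\hx \in \calQ$ and each $U \in \calU$, we select $x_U \in T_U$ with $q_U(x_U) = \hx_U$. When $\hx_U$ lies in an edge component $\hT^e_U$, Lemma~\ref{lem:component preserve} says $q_U$ restricts to an isometry on the preimage component, so the choice $x_U = q_U^{-1}(\hx_U)$ is forced. The substance of the proposition lies in the case that $\hx_U$ coincides with a cluster point corresponding to some cluster $C \subset T^c_U$; here the fiber $q_U^{-1}(\hx_U)$ is the entire subtree $C$ and we must select a preferred point from the remaining coordinates of $\hx$. Call this selection the \emph{cluster honing} process.

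Honing runs as follows. For each $V \in \calU$ not orthogonal to $U$ with $\delta^V_U \subset C$, the BGI property in item (5) of Lemma~\ref{lem:tree control} says each component of $T_U \setminus \mathcal{N}_{E'}(\delta^V_U)$ is mapped by $\delta^U_V$ to a set of diameter at most $E'$ in $T_V$; combined with $\alpha_0$-consistency and the fact that marked points and rays are spread among these components, this image is uniformly close to $\ha_V$ for some $a \in F \cup \Lambda$ depending on the component. Thus the coordinate $\hx_V$ designates a preferred component of $T_U \setminus \mathcal{N}_{E'}(\delta^V_U)$, namely the one where $\hd^U_V$ coarsely matches $\hx_V$. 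We take $x_U$ to be any point in the intersection of $C$ with all such preferred components, over all relevant $V$. Nonemptiness of this intersection follows from pairwise compatibility of the side selections: if $V, V' \in \calU$ both have shadows in $C$, then $0$-consistency of $\hx$ together with item (10) of Lemma~\ref{lem:collapsed tree control} forces the two preferred sides to be compatible inside $T_U$. Bounded diameter of the intersection uses canonicality of $\hx$: all but finitely many relevant $V$ satisfy $\hx_V = \ha_V$ for a fixed $a \in F$, so their selections unanimously locate $x_U$ near $a_U$, and the remaining finitely many deviating $V$ shift this choice by only a bounded amount.

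With $\Omega$ in hand, $\alpha_\omega$-consistency of $\Omega(\hx) = (x_U)$ is checked pairwise. For $U \pitchfork V$, item (2) of Lemma~\ref{lem:collapsed tree control} says both $\hd^V_U$ and $\hd^U_V$ are marked cluster points, and $0$-consistency of $\hx$ yields $\hx_U = \hd^V_U$ or $\hx_V = \hd^U_V$; the corresponding honed coordinate then lies within bounded distance of $\delta^V_U$ or $\delta^U_V$ respectively. For $U \nest V$, the side-selections performed against $\delta^V_U$ during honing encode precisely the tree version of nested consistency, with bounded error controlled by $E'$ and the cluster separation constant $r$. The identity $q_U(x_U) = \hx_U$ is built into the construction of each coordinate.

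The main obstacle is the case of an infinite cluster $C$, which occurs precisely when $\Lambda \neq \emptyset$ and some cluster covers a tail of a ray in $T_U$, as in the example of Subsection~\ref{subsec:ray example}. Handling this requires canonicality to reduce to finitely many deviations from a fixed basepoint $a \in F$, and further requires that the shadows $\delta^V_U$ contributing meaningfully to honing are predominantly two-sided (bipartite) within $C$, so that the notion of ``preferred side'' is genuinely restrictive rather than trivially selecting all of $C$. The ubiquity of such bipartite contributors, guaranteed by Strong Passing-Up (Proposition~\ref{prop:SPU}) and foreshadowed in the introduction, is ultimately what makes the honing intersection nonempty and of bounded diameter.
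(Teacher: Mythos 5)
Your high-level plan — collapse back into $T_U$ by honing each cluster $C$ via the coordinates of $\hx$ in domains whose shadows land in $C$, then intersect the resulting side-selections and verify consistency — matches the paper's approach. However, there are two concrete gaps in the execution.

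First, you run the honing intersection over every non-orthogonal $V \in \calU$ with $\delta^V_U \subset C$, whereas the paper's $B_U(\hx)$ intersects only over $\BM(C)$, the $\nest_{\calU}$-minimal bipartite domains for which $U$ is a \emph{witness}. This restriction is not cosmetic: for such $V$, Lemma~\ref{lem:bipartite properties} guarantees that $T_U - \calN_{E'}(\delta^V_U)$ has exactly two components each carrying all the marked points of one side, and that $\hT_V$ is a compact interval whose two endpoints are precisely the two cluster images of those sides. That is what makes the ``preferred component'' dictated by $\hx_V$ unambiguous and forces $C^V_U$ to be a genuine half of $C$. For an arbitrary $V$ with shadow in $C$ (e.g. a non-minimal or non-bipartite one, or one with shadow near a branch point), the coordinate $\hx_V$ need not single out one component, so your intersection is not well defined as stated.

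Second, your justification for pairwise compatibility of side selections cites item~(10) of Lemma~\ref{lem:collapsed tree control}, but that item carries the hypothesis $\hd^V_U \neq \hd^W_U$, which \emph{fails} for $V, W$ both having shadows inside the same cluster $C$ (there $\hd^V_U = \hd^W_U$ by definition). The paper's pairwise compatibility (Lemma~\ref{lem:bdd int pair}) instead works at the level of the uncollapsed trees $T_U$ via item~(6) of Lemma~\ref{lem:tree control} together with the bipartite interval structure of $\hT_V, \hT_W$, not via item~(10). Relatedly, the paper's nonemptiness argument for infinite $C$ (Lemma~\ref{lem:extended helly}) requires a careful cut-down to a finite Helly problem using canonicality to pick threshold domains $V_\lambda$ past all branch points along each ray end of $C$; merely asserting ``pairwise compatibility gives nonemptiness'' does not handle the infinite intersection. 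Your bounded-diameter heuristic via canonicality is different from the paper's (which uses density of $\BM(C)$-shadows in Lemma~\ref{lem:BM dense} and a trimming argument in Lemma~\ref{lem:bdd int total}, not canonicality) and would need to be coupled to the density statement to close.
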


With $\Omega:\calQ \to \calZ_{\alpha_{\omega}}$ in hand, we will be able to use our realization result (Proposition \ref{prop:consist from tree}) to define the map $\hO:\calQ \to H$.  In Section \ref{sec:coarsely surjective}, we will show that $\hPsi$ is a coarse inverse of $\hO$, and therefore $\hPsi$ is surjective.  Combining this with the distance estimates from Section \ref{sec:distance estimates} will prove that $\hPsi:H \to \calQ$ is a quasi-isometric embedding.

The proof of Proposition \ref{prop:Q-consistent} involves analyzing the structure of clusters in depth.  This is arguably the most technical part of the construction of the cubical model, and undoubtedly where cubical arguments first appear in spirit though not explicitly.

\subsection{Bipartite domains}\label{subsec:bipartite}

The first step is to make the observation that for all but boundedly-many domains in $\calU$, we have that $\hT_U$ is an interval.

\begin{definition}\label{defn:bipartite}
A domain $U \in \calU$ is called $\chi$-\emph{bipartite} if there is some $V \in \calU$ with $U \nest V$ so that $T_V - \calN_{\chi}(\delta^U_V)$ has two components which contain all marked points and branch points in $T_V$.
\begin{itemize}
    \item We denote the set of bipartite domains by $\calB(\chi)$.
    \item If $U \in \calB(\chi)$ and $V \in \calU$ are as above, then we refer to $V$ as a \emph{witness} for $U$.
\end{itemize}
\end{definition}

The following lemma explains the bipartite terminology, since it says their collapsed trees are bipartite graphs, i.e. intervals:

\begin{lemma}\label{lem:bipartite properties}
For any $\chi>E' + \alpha_0$, if $U \in \calB(\chi)$ and $V\in \calU$ is a witness for $U$, then the complement $T_V - \calN_{E'}(\delta^U_V) = C \sqcup C'$ has two components.  Moreover, the collapsed tree $\hT_U$ is a finite-length interval whose endpoints $l_{C}, l_{C'}$ coincide with the projections under $\hd^U_V$ of the marked points and rays in $q_V(C)$ and $q_V(C')$, respectively.  In particular, if $U \in \calB(\chi)$, then $U \notin \supp(\lambda)$ for any $\lambda \in \Lambda$.
\end{lemma}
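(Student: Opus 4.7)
The plan is to establish the lemma in three stages: (a) show $T_V - \calN_{E'}(\delta^U_V)$ has exactly two components $C \sqcup C'$; (b) transport the bipartite structure of $T_V$ through the relative projection $\delta^V_U$ to show that $T_U$ has a ``double-broom'' shape whose two ends collapse to the endpoints of an interval $\hT_U$; and (c) observe that no ray can be supported on a bipartite domain since its collapsed tree is then bounded.

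For (a): By hypothesis $T_V - \calN_\chi(\delta^U_V)$ has exactly two components $C_0, C_0'$, and these already contain all branch points and all marked points/rays of $T_V$. Since $\chi > E' + \alpha_0 > E'$, the annular region $\calN_\chi(\delta^U_V) - \calN_{E'}(\delta^U_V)$ consists only of degree-$2$ vertices of $T_V$. A standard tree-combinatorial argument (the number of components of $T_V$ minus a connected subtree is counted by boundary edges, and passing a degree-$2$ vertex between the subtree and its complement preserves this count) shows that $T_V - \calN_{E'}(\delta^U_V)$ also has exactly two components $C, C'$, obtained by extending $C_0, C_0'$ into the annular region.

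For (b): Partition $F \cup \Lambda = L_C \sqcup L_{C'}$ by whether $a_V \in C$ or $a_V \in C'$. For $a \in L_C$ we have $d_{T_V}(a_V, \delta^U_V) > \chi > \alpha_0$, so the $\alpha_0$-consistency of $(a_U)$ from Lemma \ref{lem:consist to tree} and the nested clause of Definition \ref{defn:tree consistency} force $d_{T_U}(a_U, \delta^V_U(a_V)) \leq \alpha_0$. The Bounded Geodesic Image property (item (5) of Lemma \ref{lem:tree control}) gives $\diam_{T_U}(\delta^V_U(C)) \leq E'$, so the set $\{a_U : a \in L_C\}$ has diameter at most $E' + 2\alpha_0$ in $T_U$, and symmetrically for $L_{C'}$. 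Since $T_U$ is a Gromov modeling tree of the hull of the marked points and rays from $F \cup \Lambda$, its branching is supported on $\{a_U, : a \in F \cup \Lambda\}$; because both $L_C$-points and $L_{C'}$-points are concentrated in bounded subtrees, $T_U$ is a \emph{double-broom}---two bounded-diameter subtrees joined by a single geodesic path. Choosing the cluster separation constant $r$ larger than $E' + 2\alpha_0 + 10R$ (which depends only on $\mathfrak S$ and $|F \cup \Lambda|$, and is compatible with our freedom to enlarge $r$) guarantees that the $2R$-neighborhoods of all marked points in $L_C$ together with any shadows in the $L_C$-broom amalgamate into a single cluster $\calC_C \subseteq T^c_U$, and symmetrically into a single cluster $\calC_{C'}$. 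Collapsing via $q_U$ sends $\calC_C, \calC_{C'}$ to points $l_C, l_{C'}$; any additional clusters produced by shadows on the connecting geodesic path of $T_U$ collapse to interior points of the resulting finite-length interval $\hT_U$ from $l_C$ to $l_{C'}$. Finally, the endpoint identification is immediate: for $a \in L_C$, the point $\delta^V_U(a_V)$ lies in $\calC_C$, so (reading $\hd^U_V$ as the relative projection map $\hd^V_U: \hT_V \to \hT_U$ provided by Definition \ref{defn:Q project}) we get $\hd^V_U(\ha_V) = q_U(\delta^V_U(a_V)) = l_C$, and symmetrically on the $C'$-side.

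For (c): If $U \in \supp(\lambda)$ for some $\lambda \in \Lambda$ then $\lambda_U$ represents an infinite geodesic ray in $T_U$; but $T_U$ has bounded diameter by the double-broom description, a contradiction.

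The main obstacle is verifying the double-broom structure of $T_U$ rigorously: one must rule out branching of $T_U$ in the interior of the connecting path. This follows because the tree $T_U$ is determined by the convex hull of its marked points and rays, all of which (both endpoints of rays in the Gromov boundary and finite marked points) are forced into the two bounded-diameter end-regions by the consistency-plus-BGI argument above. All remaining data along the connecting path consists of shadows of domains nested in $U$, which produce cluster points along the interval but no additional branching.
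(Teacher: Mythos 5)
Your part (a) is correct and actually treats the ``exactly two components at radius $E'$'' claim more carefully than the paper does, which works directly at radius $\chi$. The BGI-plus-consistency mechanism you use in (b) is also the right one and is essentially the paper's.

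However, the argument as organized between (b) and (c) is circular. In (b) you deduce from $d_{T_V}(a_V, \delta^U_V) > \chi > \alpha_0$ and $\alpha_0$-consistency that $d_{T_U}(a_U, \delta^V_U(a_V)) \leq \alpha_0$, and from there that the two label sets $L_C, L_{C'}$ sit in bounded-diameter subtrees of $T_U$, so $T_U$ is a bounded double-broom and $\hT_U$ a finite interval. But the nested consistency inequality is $\min\{d_V(a_V, \delta^U_V), \diam_U(a_U \cup \delta^V_U(a_V))\} \leq \alpha_0$. When $a = \lambda$ with $U \in \supp(\lambda)$, so $\lambda_U \in \partial T_U$, the diameter term is $\infty$, and consistency instead forces $d_{T_V}(\lambda_V, \delta^U_V) \leq \alpha_0$ --- which contradicts $d_{T_V}(\lambda_V, \delta^U_V) > \chi$. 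You never flag this case; in fact in the ``main obstacle'' paragraph you assert that ``endpoints of rays in the Gromov boundary'' are forced into the bounded end-regions, which cannot be literally true for a point of $\partial T_U$. So your (b) implicitly assumes the conclusion of (c) (no $a_U$ is a ray), and your (c) then appeals to the bounded diameter of $T_U$ supplied by (b) to rule out rays, completing a circle.

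The repair is short and is essentially what the paper does: separate the interval claim from the finite-length claim. First establish (via BGI alone) that all $\delta^V_U$-images of marked points and rays in $C$ (resp. $C'$) lie within $E'$ of each other and hence collapse in $\hT_U$, so $\hT_U$ is an interval with the two stated endpoints --- no bound on its length yet. Then, to see the interval is finite, suppose not; one endpoint must be $\hlam_U$ for some $\lambda$ with $\lambda_V \in C$ (say), and then $d_{T_V}(\lambda_V, \delta^U_V) > \chi > \alpha_0$ while $\lambda_U \in \partial T_U$ forces $d_{T_V}(\lambda_V, \delta^U_V) \leq \alpha_0$ by consistency, a contradiction. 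This contradiction argument simultaneously proves finiteness of $\hT_U$ and the ``in particular'' clause $U \notin \supp(\lambda)$ --- it is the engine, not a corollary, of the finite-diameter claim.
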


\begin{proof}
By definition of $U \in \calB(\chi)$ and $V$ being a witness for $U$, the complement $T_V - \calN_{\chi}(\delta^U_V)$ has two components $C' \sqcup C''$.  By the BGI property in item (5) of Lemma \ref{lem:tree control} and the assumption that $\chi>E'$, the marked points and leaves of $C'$ which are marked points or rays of $T_V$, denoted $l_{C'}$, project $E'$-close in $T_U$, and hence coincide in $\hT_U$, and similarly for $C''$.  Hence $\hT_U$ is an interval with endpoints $q_U(\delta^V_U(l_{C'}))$ and $q_U(\delta^V_U(l_{C''}))$, as required.

To see that the diameter of $\hT_U$ is finite, suppose not.  Without loss of generality, we may assume that the endpoint $q_U(\delta^V_U(l_{C'}))$ coincides with $\hlam_U$ for some $\lambda \in \Lambda$.  Hence the only marked point in $C'$ is $\lambda_V$.  But by assumption $d_{T_V}(\delta^U_V, \lambda_V)>E' + \alpha_0$, and this violates $\alpha_0$-consistency of $(\lambda_U)$ (Lemma \ref{lem:consist to tree}), which requires that $d_{T_V}(\delta^U_V, \lambda_V) < \alpha_0$ since $\lambda_U \in \partial T_U$.  This completes the proof.
\end{proof}

\begin{figure}
    \centering
    \includegraphics[width=.6\textwidth]{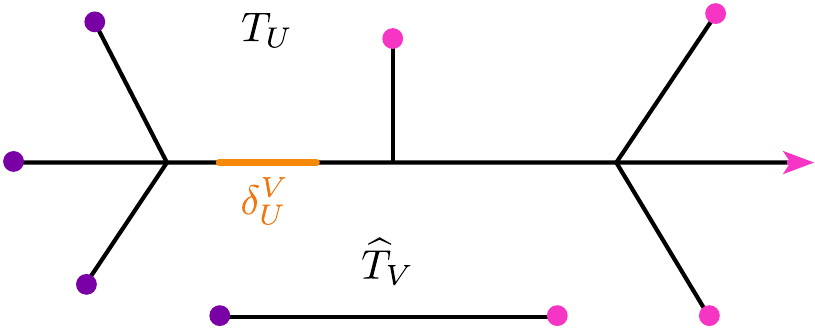}
    \caption{$V$ is a bipartite domain with witness $U$ because $\delta^V_U$ is far away from all branch and marked points of $T_U$.  The two endpoints of $\hT_V$ correspond to the way that $\delta^V_U$ partitions the marked points of $T_U$.} 
    \label{fig:bipartite}
\end{figure}

We are focusing on bipartite domains is because almost every domain in $\calU$ is bipartite:

\begin{proposition}
\label{prop:bipartite}
For any $\chi>E'$, there exists $B_{\chi} = B_{\chi}(\mathfrak S, |F \cup \Lambda|, \chi)>0$ so that $\#(\calU - \calB(\chi)) < B_{\chi}$.
\end{proposition}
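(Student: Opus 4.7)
The plan is to argue by contradiction via Strong Passing-Up (Proposition \ref{prop:SPU}). First observe that $U \in \calU$ fails to lie in $\calB(\chi)$ precisely when, for every container $V \in \calU$ with $U \nest V$, the shadow $\delta^U_V \subset T_V$ fails to sit on an ``interior edge'' of $T_V$ far from branch points, marked points, and ray endpoints. Call these collectively the \emph{special points} of $T_V$; since $T_V$ is a Gromov modeling tree for the at most $|F \cup \Lambda|$ points/rays in $\pi_V(F \cup \Lambda)$, the number of special points is at most some $C_0 = C_0(|F \cup \Lambda|)$. The forbidden region $\bigcup_s \calN_\chi(s) \subset T_V$ thus has total tree-length at most $2C_0\chi$. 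Also, the at most $PU(K)$-many $\nest_\calU$-maximal domains contribute only a bounded number to $\calU - \calB(\chi)$, so it suffices to bound the non-maximal non-bipartite domains.

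Suppose $|\calU - \calB(\chi)|$ exceeds a threshold $B_\chi$ to be determined. Since $\calU = \bigcup_{a,b \in F\cup\Lambda}\Rel_K(a,b)$ (Lemma \ref{lem:rel sets countable}), by pigeonhole we may find a pair $a, b \in F \cup \Lambda$ and a subcollection $\calV \subset (\calU - \calB(\chi)) \cap \Rel_K(a,b)$ of size at least $B_\chi / \binom{|F\cup\Lambda|}{2}$. Set $\sigma := 10E + 20\delta$ and $n := 100\lceil C C_0 \chi/\sigma\rceil + 10$, where $C = C(\mathfrak S, |F \cup \Lambda|)$ is the quasi-isometry constant for the maps $\phi_V:T_V \to \calC(V)$. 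Apply Proposition \ref{prop:SPU} to $\calV$ with $K_1 = K$, $K_2 = K+100E$, subdivision $\sigma$, and cell count $n$. If $B_\chi$ is large enough that $|\calV| > P_2(K, K+100E, \sigma, n)$, we obtain $W \in \Rel_{K+100E}(a,b) \subset \calU$ and $\calV' \subset \calV$ with $V \nest W$ for every $V \in \calV'$, and, moreover, such that along any $(1,20\delta)$-quasi-geodesic $\gamma \subset \calC(W)$ from $a$ to $b$, a $\sigma$-subdivision has at least $n$ cells containing some $\rho^V_W$ (for $V \in \calV'$).

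Transport to $T_W$: since $V \in \Rel_K(a,b)$, the BGIA (Axiom \ref{ax:BGIA}) and item (5) of Lemma \ref{lem:tree control} place each $\delta^V_W$ within $E'$ of the unique $T_W$-geodesic from $a_W$ to $b_W$. The forbidden region in $T_W$ has tree-length at most $2C_0\chi$, and hence its image under $\phi_W$ in $\calC(W)$ has length at most $2CC_0\chi + C_0$. Therefore, at most $\lceil 2CC_0\chi/\sigma\rceil + 2 < n/10$ of the $n$ non-empty cells can lie over the forbidden region; in particular, we may select some $V \in \calV'$ whose $\delta^V_W$ lies in an interior edge of the $a_W$-$b_W$ subtree at distance $>\chi$ from every special point of $T_W$. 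For such a $V$, the complement $T_W - \calN_\chi(\delta^V_W)$ splits into exactly two components, one containing $a_W$ and one containing $b_W$, and since the $a_W$-$b_W$ path contains every branch point of $T_W$ that separates the two sides, every special point of $T_W$ lies in one of these two components. Hence $W$ witnesses that $V \in \calB(\chi)$, contradicting $V \in \calU - \calB(\chi)$. Setting $B_\chi := PU(K) + \binom{|F\cup\Lambda|}{2} \cdot P_2(K, K+100E, \sigma, n)$ yields the bound.

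The main obstacle is ensuring that a ``safe'' $\delta^V_W$ automatically verifies both clauses of bipartite-ness: (i) the complement has exactly two components and (ii) both contain marked/branch points. Clause (i) requires $\delta^V_W$ to be in the interior of a tree-edge of $T_W$, which is enforced by keeping $\delta^V_W$ away from branch points; clause (ii) requires $\delta^V_W$ to separate the tree into two nonempty ``special'' halves, which is automatic once we know $\delta^V_W$ sits on the $a_W$-$b_W$ path interior (so $a_W$ and $b_W$ lie on opposite sides). The interplay between the subdivision in $\calC(W)$ and the tree geometry of $T_W$, mediated by the quasi-isometry $\phi_W$, is where the constants have to be selected carefully.
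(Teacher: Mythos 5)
Your argument takes essentially the same route as the paper's proof: reduce to a single pair $a,b$ by pigeonhole, invoke the ``moreover'' clause of Strong Passing-Up \ref{prop:SPU} with a $\sigma$-subdivision, use the bounded total length of the ``forbidden'' region near special points of $T_W$ (transported via the quasi-isometry $\phi_W$) to exhibit a $V\in\calV'$ whose shadow $\delta^V_W$ lands on an interior edge of $T_W$ far from all branch/marked points, and conclude $V$ is bipartite with witness $W$. This matches the paper's use of Lemma \ref{lem:tree subdivision} plus the strong passing-up count; your argument is correct in outline.

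Two small remarks. First, the step where you separately exclude the $\nest_\calU$-maximal domains is unnecessary: in the contradiction argument, Strong Passing-Up produces $\calV'\subset\calV=\calU-\calB(\chi)$ with every $V\in\calV'$ nesting into $W$, so the witness $V$ it exhibits is automatically non-$\nest_\calU$-maximal yet bipartite, which already is the contradiction. The exclusion is harmless but costs you a slightly awkward bound (and ``at most $PU(K)$-many'' really should be something like $\binom{|F\cup\Lambda|}{2}\cdot P(K)$ once you range over all pairs). Second, the claim that $\bigcup_s\calN_\chi(s)$ has total tree-length at most $2C_0\chi$ is slightly off: a $\chi$-ball about a branch point of valence $k$ has length $k\chi$, so the correct bound is $C_0\cdot\mathbf{b}\cdot\chi$ where $\mathbf{b}$ is the uniform valence/branching bound, still controlled by $|F\cup\Lambda|$. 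This only affects the choice of $n$, which is flexible, so the proof goes through once the constant is adjusted.
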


\begin{proof}
Let $\calV \subset \calU$.  We will prove that if $\#\calV$ is sufficiently large, then $\calV$ contains a bipartite domain.

Up to dividing $\#\calV$ by at most $2^{|F \cup \Lambda|}$, we may assume that $\calV \subset \Rel_K(a,b) \subset \Rel_{50E}(a,b)$ for some fixed $a,b \in F \cup \Lambda$ while maintaining $\#\calV$ to be as large as necessary.

Let $\sigma = \sigma(\mathfrak S, |F \cup \Lambda|)>0$ and $\nu = \nu(\mathfrak S, |F \cup \Lambda|)>0$ be as in Lemma \ref{lem:tree subdivision} below.  Let $\mathbf{b} = \mathbf{b}(\mathfrak S, |F \cup \Lambda|)>0$ be the uniform bound on the number of branch points of $T_U$ for any $U \in \calU$.  Let $a= a_0, a_1, a_2, \dots, a_k = b$ be a $\sigma$-subdivision of $\gamma = [a,b]_W$.

Now we invoke Proposition \ref{prop:strong pu}, which provides $P_2(50E, K_2, \sigma, 100\mathbf{b}\nu|F \cup \Lambda|, \chi)>0$ (which depends only on $\mathfrak S, |F \cup \Lambda|$, and $\chi$) so that if $\#\calV > P_2$, then there exists $W \in \Rel_{K_2}(a,b)$ and a subcollection $\calV' \subset \calV$ so that 
$$\diam_W(\bigcup_{V \in \calV'} \rho^V_W) >K_2.$$

Moreover, if $\calW_i' = \{V \in \calV'| p_{\gamma}(\delta^V_U) \cap [a_i, a_{i+1}] \neq \emptyset\}$, then the proposition also implies that $\#\{1 \leq i \leq k-1| \calW'_i \neq \emptyset\} \geq 100\mathbf{b}\nu|F \cup \Lambda|\chi$.

Since the $\delta^V_W$ for $V \in \calV'$ can only intersect at most two subintervals in the $10E'\chi$-subdivision of $[a_W,b_W]_{T_W}$, Lemma \ref{lem:tree subdivision} and the lower bound above implies that the number of subintervals containing some $\delta^V_U$ far exceeds $2\mathbf{b}|F \cup \Lambda|$, which bounds the number of subintervals of $[a_W,b_W]_{T_W}$ containing branch points and marked points.  Hence we were able to force $\calV$ to contain at least one $\chi$-bipartite domain only by requiring it to have at most boundedly-many domains, as required.

\end{proof}

The following lemma was used in the proof above:

\begin{lemma}\label{lem:tree subdivision}
For any $\chi>10E'$, there exist $\sigma = \sigma(\mathfrak S, |F \cup \Lambda|,\chi)>0$ and $\nu = \nu(\mathfrak S, |F \cup \Lambda|,\chi)>0$ so that the following holds:

Suppose $a,b \in F \cup \Lambda$ and $W \in \Rel_{50E}(a,b)$.  Let $a = a_0, a_1, a_2, \dots, a_k = b$ be a $\sigma$-subdivision of a $(1,20\delta)$-quasi-geodesic $\gamma$ between $a_U,b_U$ in $\calC(W)$. Set $a_i' = \phi_{W}^{-1}\circ p_W (a_i)$ for each $i$, where $p_W:\calC(W) \to \phi_W(T_W)$ is closest point projection, and $\phi_W:T_W \to \calC(W)$ is any uniform quality tree modeling the hull $H_W$ of $\pi_W(F \cup \Lambda) \subset \calC(W)$.

Then for any $10E'\chi$-subdivision of $[a_W,b_W]_{T_W} \subset T_W$, each $a_i', a_j'$ are contained in distinct subintervals when $i\neq j$.  Moreover, we have that $d_{T_W}(a'_i, a'_{i+1}) < \nu$ for each $0 \leq i \leq k-1$.
\end{lemma}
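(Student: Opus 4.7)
The plan is to show that $\phi_W^{-1} \circ p_W$, restricted to the $(1,20\delta)$-quasi-geodesic $\gamma$, is a uniform coarse quasi-isometric embedding onto a subset of $T_W$ that lies within bounded distance of the (possibly unbounded) path $[a_W, b_W]_{T_W}$. The upper bound $d_{T_W}(a'_i, a'_{i+1}) < \nu$ will be immediate from coarse Lipschitzness, with $\nu$ linear in $\sigma$. The lower bound---that consecutive $a'_i$'s are separated by more than $10E'\chi$ in $T_W$, hence lie in distinct $10E'\chi$-subintervals of $[a_W, b_W]_{T_W}$---will follow by taking $\sigma$ large enough relative to $\chi$ and the fixed quasi-isometry constants.

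First I would assemble three standard ingredients. By Lemma \ref{lem:ray trees exist} (and the standing reduced tree system setup), $\phi_W \colon T_W \to \calC(W)$ is a $(C,C)$-quasi-isometric embedding with $C = C(\mathfrak{S}, |F \cup \Lambda|)$, so $\phi_W(T_W)$ is uniformly quasi-convex in the $\delta$-hyperbolic space $\calC(W)$. Consequently closest-point projection $p_W$ onto $\phi_W(T_W)$ is $(A_0, B_0)$-coarsely Lipschitz with constants depending only on $\delta$ and $C$. Finally, since $\pi_W(a), \pi_W(b)$ are within $C$ of $\phi_W(T_W)$ (being marked points or boundary points of the modeling tree), the Morse lemma for $(1,20\delta)$-quasi-geodesics in a $\delta$-hyperbolic space yields a constant $D = D(\delta, C)$ such that for any $s, t \in \gamma$,
\[
d_W\bigl(p_W(s), p_W(t)\bigr) \;\geq\; d_W(s, t) - D,
\]
i.e.\ projection along $\gamma$ is coarsely monotone and essentially length-preserving.

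Composing with the $C$-coarse inverse $\phi_W^{-1}$ then yields: for all $i,j$,
\[
\tfrac{1}{C}\bigl(|i-j|\sigma - D\bigr) - C \;\leq\; d_{T_W}(a'_i, a'_j) \;\leq\; A_1 |i-j|\sigma + B_1,
\]
with $A_1, B_1$ depending only on $\mathfrak{S}, |F \cup \Lambda|$. Setting $\nu = A_1 \sigma + B_1$ handles the upper bound. To force distinct subintervals, choose $\sigma$ large enough, depending on $\chi$, so that the lower bound for $|i-j|=1$ exceeds $10E'\chi$; then all consecutive $a'_i$'s are more than $10E'\chi$ apart in $T_W$, hence so are all non-consecutive pairs. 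Because $T_W$ is a tree and each $a'_i$ lies within a bounded additive distance of the (unique) path $[a_W, b_W]_{T_W}$, projecting orthogonally onto $[a_W, b_W]_{T_W}$ preserves the pairwise separations up to a uniform additive constant, which can be absorbed by enlarging $\sigma$ a bounded amount.

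The main obstacle is the coarse monotonicity step: guaranteeing that each additional $\sigma$ of progress along $\gamma$ produces genuine forward motion of the projected points, despite the fact that closest-point projection in a $\delta$-hyperbolic space only moves monotonically up to an additive defect. This is precisely what the Morse-lemma step above handles, at the cost of the defect $D$, and it is also the reason $\sigma$ must be taken large relative to $\chi$. All dependencies remain in terms of $\mathfrak{S}, |F \cup \Lambda|$, and $\chi$, as required.
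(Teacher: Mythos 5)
Your proof is correct and takes essentially the same route the paper's one-paragraph sketch gestures at: the quality constants of $\phi_W$ and the Morse lemma make $\phi_W^{-1}\circ p_W$ a coarse bi-Lipschitz map along $\gamma$ (with constants in $\mathfrak{S},|F\cup\Lambda|$), so one chooses $\sigma$ large enough relative to $\chi$ that the lower bound on consecutive separations exceeds $10E'\chi$, and then $\nu$ is simply the corresponding Lipschitz upper bound. One small remark: the paper's sketch says to take ``$\sigma$ small enough'' to force the $a_i'$ into distinct subintervals, but the correct direction is large, exactly as you have it, and you also correctly observe that the $a_i'$ need only lie uniformly close to $[a_W,b_W]_{T_W}$ rather than on it, which is absorbed into $\sigma$.
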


\begin{proof}
The proof is a straight-forward application of the techniques from Section \ref{sec:reduce to tree}.  The one subtle point is choosing $\sigma$ to be as small enough to guarantee that the subdivision points of $\gamma$ land in separate subintervals along the tree geodesic, but large enough to guarantee that their images are at a bounded distance controlled by $\mathfrak S, |F \cup \Lambda|$.  The point is that the subdivision width of $[a_W,b_W]_{T_W} \subset T_W$ is fixed (at $10E'\chi$), while we get to control both the subdivision width $\sigma$ in $\calC(W)$ and its separation constant $\nu$ under the uniform quality quasi-inverse $\phi^{-1}_W:\phi_W(T_W) \to T_W$.  We leave the details to the interested reader.
\end{proof}

The reason for the extra variable $\chi$ will become clear in what follows.  That is, we need to be able to choose $\chi$ sufficiently large, depending on the relevant context, but always depending only on $\mathfrak S, |F \cup \Lambda|$.

Hence for the rest of this section, we fix $\chi = \chi(\mathfrak S, |F \cup \Lambda|)>0$, to be determined below.  In the end, taking $\chi> 5E' + \alpha_0$ works.

\subsection{Honing clusters}\label{subsec:honing clusters}

We will now explain how to define a map $\Omega:\calQ \to \calY$, so that:

\begin{itemize}
    \item $\Omega(\calQ) \subset \calZ_{\omega}$ for some $\omega = \omega(\mathfrak S, |F \cup \Lambda|)>0$, and
    \item when $\hx  = (\hx_U)\in \calQ$ and $(x_U) = \Omega(\hx)$, then $q_U(x_U) = \hx_U$ for each $U \in \calU$.
\end{itemize}

In this subsection, we set notation and give a description of the map $\Omega$.  We will prove that it has the desired properties in Proposition \ref{prop:Q-consistent} below.  Unsurprisingly, the definition is domain-wise in the $U \in \calU$.

Let $\hx = (\hx_U) \in \calQ$.  When $U\in \calU$ has $\hx_U \in \hT^e_U$, then the definition is simple: $x_U = q_U^{-1}(\hx_U)$, which is a unique point.  For the other domains, we need some more setup.

Suppose that $U \in \calU$ and $\hx_U \notin \hT^e_U$.  The first step is to associate a collection of marked points $l_V\subset F \cup \Lambda$ to each $\nest_{\calU}$-minimal bipartite domain $V \in \calB(\chi)$.  We then use these $l_V$ to hone the clusters of the larger $U \in \calU$ in which such $V$ are involved.

Suppose then that $U \in \calU$ is $\nest_{\calU}$-minimal.  Since $\hT_U$ has no internal cluster points, $\hat{x}_U$ is either a marked cluster point or a point in $\hT^e_U$.  If the former, let $l_U$ be the set of marked points coming from points in $F \cup \Lambda$ identified in the cluster associated to $\hat{x}_U$.  If the latter, set $l_U = \emptyset$.

Now suppose that $U \in \calU$ is not $\nest_{\calU}$-minimal.  We have that either $\hat{x}_U \in \hT^e_U$, in which case we set $l_U = \emptyset$ and $x_U = q_U^{-1}(\hat{x}_U)$.  Otherwise, $\hat{x}_U$ is contained in a collapsed cluster $q_U(C)$ for a cluster $C \subset T_U$.  Let $\calB_{\min}(C)$ be the domains $V \in \calB(\chi)$ so that
\begin{itemize}
    \item $V \nest U$ with $\delta^V_U \subset C$, and
    \item $V$ is $\nest_{\mathcal U}$-minimal.
    \item $U$ is a witness for $V$.
\end{itemize}

Observe that if $\BM(C) = \emptyset$, then $\diam_{T_U}(C)$ is bounded in terms of $R,r$ and the total branching of $T_U$, and hence in terms of $\mathfrak S$ and $|F \cup \Lambda|$.

We now explain the honing process for domains in $\BM(C)$, which will associate to $\hx_U$ a bounded diameter subset of the cluster $C$ in $T_U$ (Lemma \ref{lem:bdd int total}).

Let $V \in \BM(C)$.  Then we set 
$$C^V_U = \hull_C(C \cap (\calN_{r}(\delta^V_U) \cup l_V)),$$
which is a subtree of $C$, see Figure \ref{fig:C^V_U}.  Finally, let $B_U(\hx) = \bigcap_{V \in \BM(C)} C^V_U$ be the total intersection of the $C^V_U$.

\begin{figure}
    \centering
    \includegraphics[width=.6\textwidth]{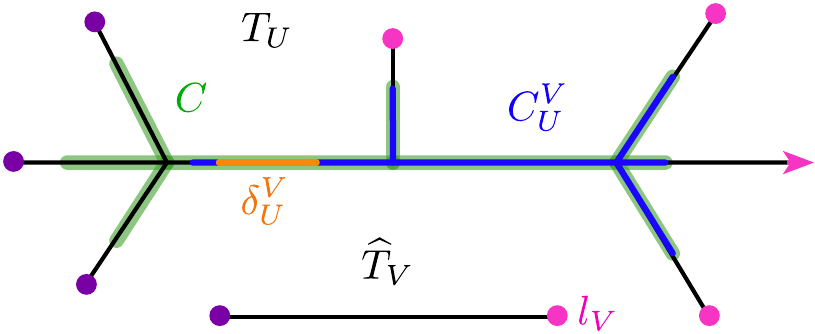}
    \caption{The cluster $C$ (green) in $T_U$ is identified with $\hx_U$ in $\hT_U$.  Since $V \in \BM(C)$, its shadow $\delta^V_U$ (orange) avoids all branches of $T_U$.  Lemma \ref{lem:leaves choose half-space} implies all marked points and rays in $T_U$ are identified to the two marked clusters of $\hT_V$.  The marked points and leaves $l_V$ (pink) chosen by $\hx_V$ determine the half-subtree $C^V_U$ (blue) of $C$ chosen by $V$.} 
    \label{fig:C^V_U}
\end{figure}

In Lemma \ref{lem:bdd int pair} we show that if $V, W \in \BM(C)$, the $C^V_U \cap C^W_U \neq \emptyset$.  Since each $C^V_U$ is a subtree, we can make an argument in Lemma \ref{lem:extended helly} using the Helly property for trees to conclude that $B_U(x)$ is nonempty, though this takes some care because $C$ and the $C^V_U$ can be infinite-diameter when $\Lambda \neq \emptyset$.  Finally, in Lemma \ref{lem:bdd int total} we prove that this intersection has diameter bounded in terms of $\mathfrak S$ and $|F \cup \Lambda|$.

Thus we can define a tuple $\Omega(\hx) = (x_U)_{U \in \calU}$ as follows:
\begin{itemize}
    \item For $U \in \calU$ with $U \in E(\hx)$, we set $x_U = q_U^{-1}(\hx_U)$.
    \item For $U \in \calU$ with $U \notin E(\hx)$ and $U \nest_{\calU}$-minimal, we let $x_U$ be any point in $q_U^{-1}(\hx_U)$.
    \item Otherwise, we let $x_U \in B_U(\hx)$.
\end{itemize}

The following is immediate from the construction:

\begin{lemma}\label{lem:B_U in cluster}
For every $\hx \in \calQ$ and $U \in \calU$, we have $q_U(x_U) = \hx_U$.
\end{lemma}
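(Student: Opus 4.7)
The plan is essentially to observe that this lemma is a direct bookkeeping consequence of the three-case definition of $x_U$ given immediately before its statement, together with Lemma \ref{lem:component preserve} on how $q_U$ behaves on the edge components. I would proceed case-by-case, matching the cases in the definition of $\Omega(\hx)$.

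In the first case, $U \in E(\hx)$, so $\hx_U$ lies in some edge component of $\hT_U$. By Lemma \ref{lem:component preserve}, $q_U$ restricts to an isometry on each component of $T^e_U$, so the preimage $q_U^{-1}(\hx_U)$ is a single point and the equation $q_U(x_U) = \hx_U$ is immediate from $x_U := q_U^{-1}(\hx_U)$. In the second case, where $U$ is $\nest_{\calU}$-minimal and $\hx_U$ is a marked cluster point, the point $x_U$ is chosen in $q_U^{-1}(\hx_U)$ by definition, so again there is nothing to verify.

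The third case is the only one requiring a small observation. Here $\hx_U$ lies in a collapsed cluster $q_U(C) = \{\hx_U\}$ for some cluster $C \subset T_U$, and $x_U$ is chosen in $B_U(\hx) = \bigcap_{V \in \BM(C)} C^V_U$. The point is that for each $V \in \BM(C)$, the subtree $C^V_U = \hull_C\!\bigl(C \cap (\calN_r(\delta^V_U) \cup l_V)\bigr)$ is by construction a subset of $C$, hence $B_U(\hx) \subset C$. Since $q_U$ collapses the entire cluster $C$ to the single point $\hx_U$, we conclude $q_U(x_U) \in q_U(C) = \{\hx_U\}$.

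There is no real obstacle here: the genuine content of the map $\Omega$ lies not in this lemma but in showing that $B_U(\hx)$ is non-empty and has uniformly bounded diameter (the forthcoming Lemmas~\ref{lem:bdd int pair}, \ref{lem:extended helly}, and \ref{lem:bdd int total}) and in establishing the consistency statement of Proposition~\ref{prop:Q-consistent}. The present lemma is simply the statement that the coordinate-wise definition of $\Omega$ is compatible with the quotient maps $q_U$, which is built into how $x_U$ is chosen in each case.
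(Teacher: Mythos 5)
Your proof is correct and is essentially what the paper means when it declares the lemma ``immediate from the construction''; the case split matches the coordinate-wise definition of $\Omega(\hx)$ exactly, and the key observation in the third case — that each $C^V_U$ is by definition a subset of the cluster $C$, hence so is $B_U(\hx)$, so $q_U(x_U) \in q_U(C) = \{\hx_U\}$ — is the right one. The only caveat worth noting is the degenerate case $\BM(C) = \emptyset$: there your ``for each $V \in \BM(C)$'' argument is vacuous and does not by itself give $B_U(\hx) \subset C$, so you should say explicitly that the empty intersection is taken within the ambient cluster $C$ (which is the intended convention, as the paper's own remark about $\BM(C) = \emptyset$ implies); with that understanding the argument goes through in all cases.
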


In Proposition \ref{prop:Q-consistent} below, we prove that $(x_U)$ is a $\omega$-consistent tuple for $\omega = \omega(\mathfrak S, |F \cup \Lambda|)>0$.

\subsection{Bounding intersections}

We carry forward the notation from the Subsection \ref{subsec:honing clusters}.

We begin with two simple but important observations.  The first is a direct application of Lemma \ref{lem:bipartite properties} and the definition of $C^V_U$:

\begin{lemma}\label{lem:leaves choose half-space}
Let $U,V \in \calU$ and $C\subset T_U$ a cluster with $V \in \BM(C)$.  Let $T_U - \calN_{E'}(\delta^V_U) = C' \cup C''$, and let $l_{C'}, l_{C''}$ denote the marked points and rays contained in $C',C''$ respectively.

Then $l_V \in \{l_{C'}, l_{C''}\}$, and if $l_V = l_{C'}$ then $C^V_U = \calN_{r}(\delta^V_U) \cup (C' \cap C)$, and similarly when $l_V = l_{C''}$.
\end{lemma}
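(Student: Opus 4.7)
The plan is to establish both assertions by unpacking the bipartite structure of $V$ through Lemma~\ref{lem:bipartite properties} and a direct convex-hull computation in the tree $C$.

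First I would apply Lemma~\ref{lem:bipartite properties} with the roles of ``$U$'' and ``$V$'' swapped: since $V \in \BM(C)$, the domain $V$ is $\chi$-bipartite with witness $U$, so that lemma describes $\hT_V$ as a finite-length interval whose two endpoints coincide with the $\hd^U_V$-images of the marked points and rays contained in $q_U(C')$ and $q_U(C'')$, respectively. Because $V$ is $\nest_{\calU}$-minimal, the cluster set $T^c_V$ receives contributions only from $2R$-neighborhoods of marked points and rays (no nested-domain shadows are thrown in), so the only cluster points of $\hT_V$ are these two endpoints, corresponding exactly to the classes $l_{C'}$ and $l_{C''}$. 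Thus whenever $\hx_V$ sits at a marked cluster point of $\hT_V$ (the case in which $l_V$ is nonempty and the lemma has content), it lands at one of these two endpoints, giving $l_V \in \{l_{C'}, l_{C''}\}$.

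Next I would verify the formula for $C^V_U$ assuming $l_V = l_{C'}$; the case $l_V = l_{C''}$ is symmetric. Interpreting $l_V \subset F \cup \Lambda$ as marked points of $T_U$, we have $l_V \subset C'$, while $\calN_r(\delta^V_U) \cap C$ is a neighborhood of the shadow inside the cluster. The convex hull $\hull_C(C \cap (\calN_r(\delta^V_U) \cup l_V))$ therefore contains $\calN_r(\delta^V_U) \cap C$ together with the geodesic in $C$ from $\delta^V_U$ to each marked element of $l_V \cap C$; all these paths sit on the $C'$-side of $\calN_{E'}(\delta^V_U)$, which gives the inclusion $C^V_U \subseteq \calN_r(\delta^V_U) \cup (C' \cap C)$. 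For the reverse inclusion, every point of $C' \cap C$ belongs to the cluster $C$ and hence, by the iterative $r$-closure built into the definition of $T^c_U$, lies on an $r$-chain of $2R$-neighborhoods of $\delta^V_U$, of marked points in $l_{C'}$, and of auxiliary shadows $\delta^W_U$ with $W \nest U$. Bipartiteness of $V$ and $\nest_{\calU}$-minimality of $V$, together with the BGI property in item~(5) of Lemma~\ref{lem:tree control}, force every such $\delta^W_U$ that lies in $C' \cap C$ to project under $\hd^U_W$ to the same cluster as some $f_W$ with $f \in l_{C'}$, which constrains $\delta^W_U$ to lie on the segment of $C' \cap C$ between $\delta^V_U$ and some marked point of $l_V$. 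Hence each point of $C' \cap C$ lies on a tree-geodesic in $C$ between $\delta^V_U$ and some element of $l_V \cap C$, yielding the reverse inclusion.

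The main obstacle is not the first conclusion---which is a direct reading of Lemma~\ref{lem:bipartite properties}---but the second. The delicate step is controlling the branches of $C' \cap C$ that could arise from shadows $\delta^W_U$ with $W \nest U$ distinct from $V$; one must check that these branches are still swept up by the convex hull, which is precisely where the bipartite witness condition and the BGI property combine to route every auxiliary shadow along the $\delta^V_U$-to-$l_V$ geodesics within $C$.
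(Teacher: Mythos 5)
The paper offers no proof for this lemma: it is presented as ``a direct application of Lemma~\ref{lem:bipartite properties} and the definition of $C^V_U$.'' Your argument for the first conclusion, $l_V \in \{l_{C'},l_{C''}\}$, is correct and is exactly what the paper intends: you apply Lemma~\ref{lem:bipartite properties} with the roles of $U$ and $V$ swapped (so $V \in \BM(C)$ is bipartite with witness $U$), and you correctly note that $\nest_{\calU}$-minimality of $V$ means $\hT_V$ has no cluster points coming from nested shadows, so $\hT_V$ is an interval whose only two cluster points are the marked clusters labeled by $l_{C'}$ and $l_{C''}$.

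Your argument for the reverse inclusion $C'\cap C \subseteq C^V_U$, however, has a genuine gap. You claim that ``the BGI property in item~(5) of Lemma~\ref{lem:tree control} forces every such $\delta^W_U$ that lies in $C'\cap C$ to project under $\hd^U_W$ to the same cluster as some $f_W$ with $f\in l_{C'}$, which constrains $\delta^W_U$ to lie on the segment of $C'\cap C$ between $\delta^V_U$ and some marked point of $l_V$.'' That inference does not follow: BGI controls the image $\delta^U_W(A)\subset T_W$ of subsets $A\subset T_U$ far from $\delta^W_U$; it says nothing about where $\delta^W_U$ itself sits in $T_U$. In particular, nothing in your argument excludes a branch of $C$ on the $C'$-side terminating in the $2R$-neighborhood of some shadow $\delta^W_U$ ($W\nest U$, $W\neq V$) that is \emph{not} on any geodesic from $\calN_r(\delta^V_U)$ to a marked point of $l_V\cap C$; the cluster $C$ is the convex hull of all such shadow-neighborhoods chained within $r$ of each other, and those chains need not terminate at marked points. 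Such a branch would lie in $C'\cap C$ but not in $\hull_C\bigl((\calN_r(\delta^V_U)\cup l_V)\cap C\bigr)$, so your reverse inclusion is not established. (It is worth flagging that the equality $C^V_U = \calN_r(\delta^V_U)\cup(C'\cap C)$ as stated in the paper appears to assume exactly this branch-free structure without justification, so the issue is not unique to your write-up; but as a proof, your argument does not close the gap you yourself identify as the delicate step, and the specific mechanism you invoke does not do the required work.)
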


The second follows directly from the construction of the clusters (Subsection \ref{subsec:shadows}), the definition of $\BM(C)$, and the uniform bound on branching of the trees $T_U$ and the size of $|F \cup \Lambda|$:

\begin{lemma}\label{lem:BM dense}
There exists $r' = r'(\mathfrak S, |F \cup \Lambda|)>0$ so that the following holds:  Suppose that $C \subset T^c_U$ is a cluster for some $U \in \calU$, and $C' \subset C$ is a (connected) subtree.  Then either:
\begin{itemize}
    \item $\BM(C') = \emptyset$ and $\diam_{T_U}(C') < 4r'$, or
    \item $\BM(C') \neq \emptyset$ and then there exists $\calV \subset \BM(C')$  so that $\{\delta^V_U|V \in \calV\}$ forms a $(2r',r')$-net on $C'$ (Definition \ref{defn:net}).
\end{itemize}
\end{lemma}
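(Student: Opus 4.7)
The plan is to deduce Lemma \ref{lem:BM dense} from the construction of the clusters (Subsection \ref{subsec:shadows}), the consistency bounds in Lemma \ref{lem:tree control}, and the bipartite characterization from Lemma \ref{lem:bipartite properties} and Proposition \ref{prop:bipartite}. I begin by establishing a baseline density statement: by the very definition of $T^c_U$, any point of the cluster $C$ lies within $2R + r/2$ of either a shadow $\delta^V_U$ with $V \nest U$, $V \in \calU$, $\delta^V_U \subset C$, or a marked point of $T_U$ lying in $C$. Then, for any such $V$, finite complexity (Axiom \ref{item:dfs_complexity}) produces a $\nest_\calU$-minimal $V' \nest V$ in $\calU$, and since $V' \nest V \nest U$, item (3) of Lemma \ref{lem:tree control} gives $\diam_{T_U}(\delta^V_U \cup \delta^{V'}_U) < R$, so $\delta^{V'}_U \subset C$ as well. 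Hence the set $S_{\min}$ of shadows of $\nest_\calU$-minimal domains in $\calU$ lying in $C$, together with marked points of $C$, is $(r + 3R)$-dense in $C$.

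Next, I isolate the ``bad locus'' $T_U^{bad} \subset T_U$, defined as the closed $\chi$-neighborhood of the union of the branch points of $T_U$ and the marked points coming from $F \cup \Lambda$. Because $T_U$ is a $C$-quasi-isometric embedding of $(\calX, \mathfrak S)$-controlled complexity (Lemma \ref{lem:ray trees exist}), both the number of branch points and the number of marked points are bounded by $\mathbf{b} = \mathbf{b}(\mathfrak S, |F \cup \Lambda|)$, so $T_U^{bad}$ is a union of boundedly many balls of radius $\chi$ whose total length is bounded by some $D_\chi = D_\chi(\mathfrak S, |F \cup \Lambda|)$. The decisive observation, using Definition \ref{defn:bipartite}, is that a $\nest_\calU$-minimal $V' \in \calU$ with $\delta^{V'}_U \subset C'$ lies in $\BM(C')$ if and only if $\delta^{V'}_U \cap T_U^{bad} = \emptyset$.

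For Case 1, suppose $\BM(C') = \emptyset$. Then every element of $S_{\min}$ whose shadow lies in $C'$ must meet $T_U^{bad}$, so its shadow is within $\chi + R$ of some branch or marked point. Combined with the $(r+3R)$-density of $S_{\min} \cup \{\text{marked points of } C\}$ in $C$, I conclude that every point of $C'$ is within $r + 4R + \chi$ of the branch/marked point set (using that both marked points of $C$ and marked points of $T_U$ sit inside $T_U^{bad}$ by definition). Therefore $C' \subset \calN_{r + 4R + \chi}(\text{branch and marked points})$, whose total length is bounded by $D_\chi + 2\mathbf{b}(r + 4R + \chi)$. Choosing $r'$ larger than a quarter of this bound gives $\diam_{T_U}(C') < 4r'$, as required.

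For Case 2, suppose $\BM(C') \neq \emptyset$. I first argue density of $\{\delta^V_U : V \in \BM(C')\}$ in $C'$: for any $p \in C' \setminus T_U^{bad}$, the $(r + 3R)$-nearby $\nest_\calU$-minimal shadow or marked point must itself lie outside $T_U^{bad}$ (since $p \notin T_U^{bad}$ and everything nearby in $S_{\min}$ refining our density bound differs by at most a controlled amount), so in fact the nearby shadow belongs to $\BM(C')$, possibly after shifting $p$ slightly into a neighboring edge of $T_U$. For $p \in C' \cap T_U^{bad}$, the connected component of $C' \cap T_U^{bad}$ containing $p$ has diameter at most $D_\chi$, so one can walk at most $D_\chi$ within $C'$ to reach a point outside $T_U^{bad}$, which is then within $r + 3R$ of a $\BM(C')$-shadow. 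This yields a total density constant $D_0 = D_\chi + r + 3R$. Choosing $r' \geq D_0$, a greedy selection of a maximal $r'$-separated subfamily $\calV \subset \BM(C')$ automatically produces a $(2r', r')$-net on $C'$, as required. The main bookkeeping obstacle is handling how $C'$ can meet the boundary of $C$ and how bad regions interact with $C'$; both issues are absorbed by taking $r' = r'(\mathfrak S, |F \cup \Lambda|)$ larger than all of $D_\chi$, $r + 4R + \chi$, and the branching and marked-point contributions established above.
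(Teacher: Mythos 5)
Your overall strategy is the right one and is presumably what the paper has in mind: establish that $\nest_\calU$-minimal shadows together with marked points are uniformly dense in $C$, observe that such a shadow $\delta^V_U\subset C'$ belongs to $\BM(C')$ precisely when it avoids the $\chi$-neighborhood $T_U^{bad}$ of branch and marked points, and then use the bound on the number of branch/marked points. However, both of your cases contain the same gap: the density you establish is density \emph{in $C$}, but the conclusions you draw require the nearby shadow to sit \emph{inside $C'$}. In Case 1 you assert $C'\subset\calN_{r+4R+\chi}(\text{branch and marked points})$, and this is simply false. Take $C$ a long segment with branch/marked points only near its two ends and $\nest_\calU$-minimal, $U$-witnessed shadows at every integer with diameter $<1/4$; then a sub-interval $C'$ of length $<1/2$ centered tightly around an integer so as to exclude the shadow there has $\BM(C')=\emptyset$ yet lies far from every branch/marked point. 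The conclusion $\diam(C')<4r'$ of course still holds there, but your intermediate containment does not, and the bound you extract from it is therefore not justified. The missing ingredient is the following: if $p\in C'$ lies within the density constant $D$ of a shadow $\delta$ that is \emph{not} contained in $C'$, then $p$ lies within $D+R$ of a leaf of $C'$ interior to $C$. Since $C'$ has at most $\mathbf{b}=\mathbf{b}(\mathfrak S,|F\cup\Lambda|)$ such leaves, the correct covering statement is $C'\subset\calN_{D+R+\chi}\bigl(\{\text{branch points}\}\cup\{\text{marked points}\}\cup\{\text{leaves of }C'\text{ interior to }C\}\bigr)$, a union of boundedly many bounded balls; connectedness of $C'$ then gives the diameter bound.

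The same oversight appears in Case 2, and there it is compounded by a second error: you claim that for $p\in C'\setminus T_U^{bad}$ the $(r+3R)$-nearby minimal shadow ``must itself lie outside $T_U^{bad}$,'' but the justification (``since $p\notin T_U^{bad}$ ... differs by at most a controlled amount'') does not work, because $T_U^{bad}$ has radius $\chi$ while the shadow can be $r+3R$ away from $p$, and nothing in the paper forces $\chi>r+3R$. The nearby shadow may also stick out of $C'$, which your ``shifting $p$ slightly into a neighboring edge'' remark does not address. The repair is the same as above: show that for $p$ at distance $>D+R+\chi$ from every branch/marked point of $T_U$ \emph{and} every leaf of $C'$ interior to $C$, the nearby $\nest_\calU$-minimal shadow is contained in $C'$ (using distance to leaves of $C'$) and disjoint from $T_U^{bad}$ (using distance to branch/marked points plus $\diam(\delta^V_U)\leq R$), hence lies in $\BM(C')$; the remaining portion of $C'$ is covered by boundedly many balls of bounded radius, and since $\BM(C')\neq\emptyset$ and $C'$ is connected, every such point is a bounded distance in $C'$ from a $\BM(C')$-shadow. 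After that, the greedy extraction of a maximal $r'$-separated subfamily gives the $(2r',r')$-net. In short: the idea and the ``bad locus'' dichotomy are right, but you must enlarge the set of exceptional centers to include the leaves of $C'$ interior to $C$, and drop the unsupported claim about nearby shadows automatically escaping $T_U^{bad}$.
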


We can now prove that the $C^V_U$ for $V \in \BM(C)$ have pairwise nonempty intersection:

\begin{lemma}\label{lem:bdd int pair}
If $V, W \in \BM(C)$, then $C^V_U \cap C^W_U \neq \emptyset$.  Moreover, if $d_{T_U}(\delta^V_U, \delta^W_U)>r$, then we have the following:

\begin{enumerate}
    \item At least one of $\delta^V_U \subset C^W_U$ or $\delta^W_U \subset C^V_U$ holds.
    \item If $\delta^V_U$ is not a subset of $C^W_U$, then $\delta^V_U \cap C^W_U = \emptyset$, and similarly for $\delta^W_U$.
\end{enumerate}
\end{lemma}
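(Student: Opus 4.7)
The plan is to reduce to cases based on the $\nest_\calU$-minimality of $V,W\in\BM(C)$, which rules out any proper nesting between them, so we only need to handle $V=W$, $V\perp W$, and $V\pitchfork W$ separately. The case $V=W$ is immediate, and the case $V\perp W$ is quick: item (3) of Lemma~\ref{lem:tree control} gives $\diam_{T_U}(\delta^V_U\cup\delta^W_U)<R$, and by choosing the cluster-separation constant $r$ to satisfy $r>R$ (which is allowed since both depend only on $\mathfrak S,|F\cup\Lambda|$), we obtain $\delta^W_U\subset\calN_r(\delta^V_U)\subset C^V_U$. This makes the intersection nonempty and shows the ``moreover'' hypothesis $d_{T_U}(\delta^V_U,\delta^W_U)>r$ is never satisfied in this case, so (1) and (2) hold vacuously.

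The substantive case is $V\pitchfork W$, which I would attack using $0$-consistency of $\hx\in\calQ$ together with Lemma~\ref{lem:collapsed tree control}(10). Consistency gives that at least one of $\hx_V=\hd^W_V$ or $\hx_W=\hd^V_W$ holds; suppose $\hx_V=\hd^W_V$. Since $V$ is bipartite with witness $U$, Lemma~\ref{lem:bipartite properties} says $\hT_V$ is an interval whose two endpoints are the collapsed images of the marked-point groups lying on the two sides of $\delta^V_U$ in $T_U$. If $\hd^V_U=\hd^W_U$, the defining clustering relation forces $d_{T_U}(\delta^V_U,\delta^W_U)\leq r$ so $\delta^W_U\subset\calN_r(\delta^V_U)\subset C^V_U$; otherwise Lemma~\ref{lem:collapsed tree control}(10) gives $\hd^U_V(\hd^W_U)=\hd^W_V=\hx_V$, and unraveling the definition of $\hd^U_V$ (i.e., the BGI property, item (7) of that lemma) identifies $l_V$ with the set of marked points on the side of $\delta^V_U$ containing $\delta^W_U$. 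Then Lemma~\ref{lem:leaves choose half-space} gives $\delta^W_U\subset(C\cap C_{l_V})\cup\calN_r(\delta^V_U)=C^V_U$. In either sub-subcase, $\delta^W_U\subset C^V_U\cap C^W_U$, proving nonemptiness and also giving one of the two inclusions in (1).

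For (1) in the $V\pitchfork W$ case, note that the consistency dichotomy covers both inclusions symmetrically: whichever of $\hx_V=\hd^W_V$ or $\hx_W=\hd^V_W$ holds yields $\delta^W_U\subset C^V_U$ or $\delta^V_U\subset C^W_U$ respectively. For (2), suppose $\delta^V_U\not\subset C^W_U$. Then by the symmetric argument, we cannot have $\hx_W=\hd^V_W$, so $0$-consistency forces $\hx_V=\hd^W_V$ exclusively. In particular $\hx_W$ is the \emph{opposite} endpoint of $\hT_W$, so $l_W$ lies on the side of $\delta^W_U$ in $T_U$ opposite to $\delta^V_U$. Using the hypothesis $d_{T_U}(\delta^V_U,\delta^W_U)>r$ (and $r>E'$), the subtree $\delta^V_U$ lies entirely in the component of $T_U-\calN_{E'}(\delta^W_U)$ not containing $l_W$, disjoint from $\calN_r(\delta^W_U)$; hence $\delta^V_U\cap C^W_U=\emptyset$, and symmetrically for $\delta^W_U$.

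The main delicate point is keeping track of how the two alternatives of $0$-consistency interact with the bipartite-endpoint structure of $\hT_V$ and $\hT_W$: the argument hinges on the identification $\hd^U_V(\hd^W_U)=\hd^W_V$ from Lemma~\ref{lem:collapsed tree control}(10), which is only available when $\hd^V_U\neq\hd^W_U$. Thus the slightly subtle bookkeeping is confirming the constants fit together so that $d_{T_U}(\delta^V_U,\delta^W_U)>r$ indeed implies $\hd^V_U\neq\hd^W_U$, and choosing $r$ large enough (relative to $R,E'$) at the cluster-formation stage to ensure this; both adjustments stay inside the dependency $r=r(\mathfrak S,|F\cup\Lambda|)$ already allowed in Subsection~\ref{subsec:shadows}.
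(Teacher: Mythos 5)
There is a genuine gap in the $V\pitchfork W$ case, and it unravels most of the argument. The case split on whether $\hd^V_U=\hd^W_U$ is never actually a split: since $V,W\in\BM(C)$ means $\delta^V_U,\delta^W_U\subset C$ and $q_U$ collapses the entire cluster $C$ to a single point, you always have $\hd^V_U = q_U(C) = \hd^W_U$. So the ``otherwise'' branch, where you invoke Lemma~\ref{lem:collapsed tree control}(10), is vacuous by construction --- and the constant-juggling you flag at the end (``confirming the constants fit together so that $d_{T_U}(\delta^V_U,\delta^W_U)>r$ implies $\hd^V_U\neq\hd^W_U$'') cannot be made to work, because the implication is simply false for domains in $\BM(C)$.

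Worse, the branch you do land in is also wrong: ``$\hd^V_U=\hd^W_U$ forces $d_{T_U}(\delta^V_U,\delta^W_U)\leq r$'' is not what the clustering construction says. Clusters are formed by iteratively hulling together sets within distance $r$ of each other, so a single cluster can have arbitrarily large (even infinite) diameter, and two shadows in the same cluster can be arbitrarily far apart. That is precisely the hard situation this lemma needs to handle; your argument declares it impossible. The correct tool here is item~(6) of Lemma~\ref{lem:tree control} (the uncollapsed version, with hypothesis $d_{T_U}(\delta^V_U,\delta^W_U)>10E'$), not item~(10) of Lemma~\ref{lem:collapsed tree control}. The paper's proof uses~(6) together with the choice of auxiliary labels $a\in\gamma_V-\gamma_W$ and $b\in\gamma_W-\gamma_V$ to identify $\hd^W_V$ and $\hd^V_W$ with specific endpoints of $\hT_V$ and $\hT_W$, then runs a case analysis on which of $V,W$ lie in $E(\hx)$ (using item~(8) of Lemma~\ref{lem:collapsed tree control}) to derive a contradiction of $0$-consistency. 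Your reductions to the $V=W$ and $V\perp W$ cases are fine (and the $\perp$ case with $r>R$ is correct), and the high-level structure --- bipartiteness of $\hT_V,\hT_W$, the consistency dichotomy, the half-tree interpretation via Lemma~\ref{lem:leaves choose half-space} --- is the right starting point. But you need to replace the appeal to the collapsed relative-projection identity with the uncollapsed one, and you need an argument that actually bites when the shadows are far apart inside the same cluster.
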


\begin{proof}

Item (2) is an immediate consequence of the main part of the statement, and we shall observe item (1) during the course of the proof of the statement.

By definition of $C^V_U = \hull_C((\calN_{r}(\delta^V_U) \cup l_V)\cap C)$, we may assume that $\calN_{r}(\delta^V_U) \cap \calN_{r}(\delta^W_U) = \emptyset$, and hence that $V \pitchfork W$ by item (3) of Lemma \ref{lem:tree control}.

Let $\gamma_V$ be the marked points and rays of $T_U$ which are also the marked points and rays in the component of $T_U - \calN_{E'}(\delta^W_U)$ containing $\delta^V_U$.  Define $\gamma_W$ similarly.

A standard argument using Lemma \ref{lem:leaves choose half-space} and the BGI property in item (7) of Lemma \ref{lem:collapsed tree control} show that $\hT_V$ and $\hT_W$ are intervals with the labels of the endpoints of $\hT_V$ coinciding with $\gamma_V - \gamma_W$ and $\gamma_W$, while the labels of the endpoints of $\hT_W$ coincide with $\gamma_W - \gamma_V$ and $\gamma_V$.

Now if $a \in l_V \cap \gamma_W \subset F \cup \Lambda$, then $\hx_V = \ha_V$, and thus $l_V = \gamma_W$.  Hence the labels in $l_V$ choose the half-tree of $T_U$ containing $\delta^W_U$ precisely when $l_V \cap \gamma_W \neq \emptyset$, and similarly for $l_W$.  Note that this proves item (1).

Hence, by the Lemma \ref{lem:leaves choose half-space}, we may assume, for a contradiction, that $l_V \cap \gamma_W = l_W \cap \gamma_V = \emptyset$.

Choose labels $a \in \gamma_V - \gamma_W$ and $b \in \gamma_W - \gamma_V$.  Since $a,b$ determine distinct marked clusters or rays in all of $\hT_U, \hT_V, \hT_W$, their distances in the trees $T_U,T_V,T_W$ are at least $r$, and hence by choosing $r$ sufficiently large depending only on $\mathfrak S, |F \cup \Lambda|$, we can arrange that $d_{T_U}(a_U,b_U) > 10E'$, and similarly for $V,W$.  Item (6) of Lemma \ref{lem:tree control} then says that $d_{T_V}(\delta^U_V(\delta^W_U), \delta^W_V) < E'$, which implies that $\hd^W_V = \hb_V = q_V(\gamma_W)$, and similarly $\hd^V_W = \ha_W = q_W(\gamma_V)$.

Since $V \pitchfork W$, item (8) of Lemma \ref{lem:collapsed tree control} says that we cannot have both of $V,W \in E(\hx)$.  Hence without loss of generality, there are now two cases (1) $W \in E(\hx)$ and $V \notin E(\hx)$, and (2) when $V, W \notin E(\hx)$.

In case (1), then $l_V \neq \emptyset$, so that $a \in l_V = \gamma_V - \gamma_W$, and hence $\hx_V = \ha_V = q_V(l_V)$.  But the above says that $\hx_V \neq \hd^W_V$, while $\hx_W$ is not in a marked cluster, and this contradicts $0$-consistency of $(\hx_U)$.

In case (2), now $a \in l_V$ and $b \in l_W$, and the above forces that $\hx_V = \ha_V$ is in a different marked cluster than $\hd^W_V = \hb_V$, so that $\hx_V \neq \hd^W_V$.  Similarly $\hx_W \neq \hd^V_W$.  Together, these also contradict $0$-consistency.

This completes the proof.
\end{proof}

\begin{figure}
    \centering
    \includegraphics[width=.6\textwidth]{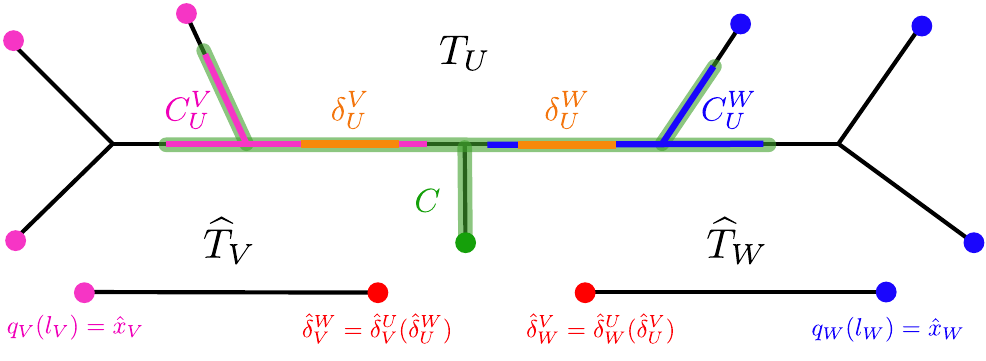}
    \caption{The impossible situation in which $V\pitchfork W \in \BM(C)$ choose non-intersecting sides $C^V_U$ (pink) and $C^W_U$ (blue) of $C$ (green).  This forces inconsistency of $\hx_V$ and $\hx_W$.} 
    \label{fig:cluster_int}
\end{figure}

The next step is to conclude that the total intersection is nonempty, which uses but does not follow directly from the Helly property for finite trees:

\begin{lemma}\label{lem:extended helly}
For each $U \in \calU$, we have
$$B_U = \bigcap_{V \in \BM(C)} C^V_U \neq \emptyset.$$
\end{lemma}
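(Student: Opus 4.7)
The plan is to combine canonicality of $\hx \in \calQ$ with the Helly property for finite families of subtrees of a tree and a gate argument. Since $\BM(C)$ may be infinite when $C$ is an infinite-diameter cluster (e.g., containing rays toward some $\lambda \in \Lambda$ with $U \in \supp(\lambda)$), standard Helly does not apply directly to $\{C^V_U\}_{V \in \BM(C)}$; canonicality provides the missing compactness-like input by reducing to an essentially finite intersection problem up to a controlled ``direction'' in $C$.

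First I fix $a \in F$ (possible since $F \neq \emptyset$) and split $\BM(C) = \BM_0(C) \sqcup \BM_1(C)$, where $\BM_0(C) := \{V \in \BM(C) : \hx_V = \ha_V\}$ and $\BM_1(C)$ is its complement. By canonicality of $\hx$ (Definition \ref{defn:Q consistent}), $\BM_1(C)$ is finite, while every $V \in \BM_0(C)$ satisfies $a \in l_V$ by definition of $l_V$. Let $p_0 \in C$ be the unique gate of $a_U$ in the subtree $C \subset T_U$. The key geometric claim is that $p_0 \in C^V_U$ for every $V \in \BM_0(C)$: by Lemma \ref{lem:leaves choose half-space}, $C^V_U = \calN_r(\delta^V_U) \cup (C' \cap C)$ with $C'$ the component of $T_U - \calN_{E'}(\delta^V_U)$ containing $a_U$. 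The unique $T_U$-geodesic from $a_U$ to $p_0$ meets $C$ only at its endpoint $p_0$, so it either avoids $\calN_{E'}(\delta^V_U)$ (forcing $p_0 \in C' \cap C$) or it meets $\delta^V_U$ only at that endpoint (forcing $p_0 \in \delta^V_U \subset \calN_r(\delta^V_U)$, provided $r > E'$, which we may freely arrange in the construction of clusters).

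Next I apply Helly for subtrees to the finite family $\{C^V_U\}_{V \in \BM_1(C)}$, which is pairwise intersecting by Lemma \ref{lem:bdd int pair}, to obtain a nonempty subtree $D := \bigcap_{V \in \BM_1(C)} C^V_U$ of $C$ (with $D = C$ when $\BM_1(C) = \emptyset$). Likewise, for each individual $V \in \BM_0(C)$, the finite family $\{C^V_U\} \cup \{C^W_U : W \in \BM_1(C)\}$ is pairwise intersecting, so $D \cap C^V_U \neq \emptyset$. Let $p_D$ be the gate of $p_0$ in $D$. For each $V \in \BM_0(C)$ choose $q_V \in D \cap C^V_U$; since $p_0$ and $q_V$ both lie in the convex subset $C^V_U \subset T_U$, the geodesic $[p_0, q_V]$ is contained in $C^V_U$, and the gate property forces $p_D \in [p_0, q_V]$, so $p_D \in C^V_U$. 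Combined with $p_D \in D$, this yields $p_D \in B_U$, completing the proof.

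The main obstacle is establishing $p_0 \in C^V_U$ for every $V \in \BM_0(C)$ in the second paragraph: this is the step that converts the cofiniteness granted by canonicality into a genuine geometric constraint on the infinite subfamily, and without it the gate-based bootstrapping in the third paragraph collapses. Everything else amounts to standard Helly and gate geometry in trees, plus a bookkeeping choice of the tunable cluster separation constant $r$ relative to the BGI constant $E'$.
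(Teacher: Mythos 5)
Your approach is genuinely different from the paper's proof. The paper handles the infinite case by building a compact subtree $C_{fin}$: for each infinite end of $C$ (labeled by some $\lambda \in \Lambda_C$), it uses canonicality to find a domain $V_\lambda$ far enough out that $C^{V_\lambda}_U$ cuts off that end, sets $C_{fin} = \bigcap_\lambda C^{V_\lambda}_U$, and then verifies the triple intersections $C^V_U \cap C^W_U \cap C_{fin} \neq \emptyset$ before appealing to Helly. You instead anchor at a single gate point $p_0 = \gate_C(a_U)$, show it lies in all but finitely many of the $C^V_U$, and fold in the finite exceptional family via a second gate argument. Your route is more uniform (it treats the bounded and unbounded cases identically) and the gate-of-gate trick in your final paragraph is a clean way to combine the two pieces.

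There is, however, a genuine gap in your key geometric claim. You assert that the geodesic $[a_U, p_0]$ ``either avoids $\calN_{E'}(\delta^V_U)$ or it meets $\delta^V_U$ only at $p_0$.'' This dichotomy is not exhaustive: since $\calN_{E'}(\delta^V_U)$ protrudes $E'$ beyond $\delta^V_U$ and hence beyond $C$, the geodesic can enter $\calN_{E'}(\delta^V_U)$ at points outside $C$ without ever touching $\delta^V_U$, and then your argument does not place $p_0$ in either piece of $C^V_U$. Fortunately the missing case yields to the same gate property you invoke elsewhere: for any $q \in [a_U, p_0)$ one has $d_{T_U}(q, C) = d_{T_U}(q, p_0)$, so if $q \in \calN_{E'}(\delta^V_U)$ then (using $\delta^V_U \subset C$) $d_{T_U}(q, p_0) \leq E'$ and hence $d_{T_U}(p_0, \delta^V_U) \leq 2E'$. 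Thus $p_0 \in \calN_r(\delta^V_U) \cap C \subset C^V_U$ provided the cluster separation constant satisfies $r \geq 2E'$ (note this is slightly stronger than the $r > E'$ you requested, but is still freely arrangeable). With this third case filled in, the claim $p_0 \in C^V_U$ holds for all $V \in \BM_0(C)$ and the remainder of your argument goes through.
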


\begin{proof}
If $C$ has bounded diameter, then $C$ and all of the $C^V_U$ are finite diameter trees, and the conclusion is immediate from the Helly property for finite trees.

Suppose instead that $C$ has infinite diameter.  We will produce a finite diameter subtree $C_{fin} \subset C$ satisfying $C^V_U \cap C^W_U \cap C_{fin} \neq \emptyset$ for all $V,W \in \BM(C)$, while also $B_U \subset C_{fin}$.  It will then follow from the Helly property for trees that $B_U \neq \emptyset$. 

Let $\Lambda_C = \{\lambda \in \Lambda| U \in \supp(\lambda) \text{ and } \lambda_U \in C\}$ denote all of the rays in $\Lambda$ which determine rays in $T_U$ whose ends are contained in $C$.  For each $\lambda \in \Lambda_C$, choose $V_{\lambda} \in \BM(C)$ so that:
\begin{itemize}
\item $l_{V_{\lambda}} = (F \cup \Lambda) - \{\lambda\}$;
\item $\calN_r(\delta^{V_{\lambda'}}_U) \subset C^{V_{\lambda}}_U$ for all $\lambda \neq \lambda' \in \Lambda(C)$
\item If $W \in \BM(C)$ so that some point in $\delta^W_U$ separates $\delta^{V_{\lambda}}_U$ from $\lambda_U$ in $T_U$, then $l_W = l_{V_{\lambda}} = (F \cup \Lambda) - \{\lambda\}$.
\end{itemize}

Note that such a $V_{\lambda}$ exists for each $\lambda \in \Lambda_C$ because of the canonical property in Definition \ref{defn:Q consistent} for the tuple $(\hx_U)$, since otherwise there would be an infinite sequence $\{V_i\} \subset \BM(C)$ with the sets $\delta^{V_i}_U$ running out the end of $T_U$ corresponding to $\lambda_U$ with $\hx_{V_i} = \hlam_{V_i}$ for all $i$, thereby violating the finiteness condition.  Hence we may choose such a $V_{\lambda}$ by running far out the end of $C$ corresponding to $\lambda_U$ for each $\lambda \in \Lambda(C)$.

Set $C_{fin} = \bigcap_{\lambda \in \Lambda_C} C^{V_{\lambda}}_U$, see Figure \ref{fig:inf_Helly}.  Observe that $B_U \subset C_{fin}$.  It suffices to prove the following claim:

\begin{claim}\label{claim:C_fin}
For all $V, W \in \BM(C)$, we have $C^V_U \cap C^W_U \cap C_{fin} \neq \emptyset$.
\end{claim}

\begin{figure}
    \centering
    \includegraphics[width=.4\textwidth]{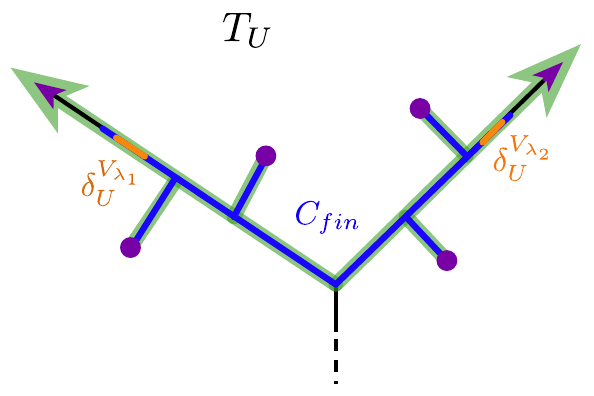}
    \caption{The proof of Lemma \ref{lem:extended helly}: For each of the finitely-many $\lambda \in \Lambda$ determining rays in $T_U$, we choose a domain $V_{\lambda}$ which is located past every branch point along that ray, then take the intersection of the $C^{V_{\lambda}}_U$ across all such $\lambda$.}
    \label{fig:inf_Helly}
\end{figure}

To see the claim, let  $E(\Lambda, C)$ denote the set of $W \in \BM(C)$ so that $\delta^W_U$ contains a point which separates $\lambda_U$ from $\delta^{V_{\lambda}}_U$ for some $\lambda \in \Lambda(C)$.

Observe the following:
\begin{enumerate}
\item Each $W \in E(\Lambda, C)$ uniquely determines such a $\lambda$, and that $l_W = l_{V_{\lambda}} = (F \cup \Lambda) - \{\lambda\}$ by construction.  Hence by convexity of the $\delta$-sets, we have that $C_{fin} \subset C^{V_{\lambda}}_U \subset C^W_U$.

\item If $W \notin E(\Lambda, C)$, then for every $\lambda \in \Lambda(C)$, some point of $\delta^{V_{\lambda}}_U$ separates $\delta^W_U$ from $\lambda$ in $T_U$.  It follows from the definition of $C^{V_{\lambda}}_U$ that $N_r(\delta^W_U) \subset C^{V_{\lambda}}_U$ for all $\lambda$, and hence that $N_r(\delta^W_U) \subset C_{fin}$.
\end{enumerate}

Following these observations, we can now argue by cases.  If $V,W \in E(\Lambda, C)$, then observation (1) shows that $C_{fin} \subset C^V_U \cap C^W_U$, and we are done.  If instead $V \in E(\Lambda, C)$ while $W \notin E(\Lambda,C)$, then combining both observations provides that $\delta^W_U \subset C_{fin} \subset C^V_U$, and we are done.

Finally, if $V,W \notin E(\Lambda, C)$, then one of two things holds.  The first possibility is that $d_{T_U}(\delta^V_U,\delta^W_U)>r$, in which case Lemma \ref{lem:bdd int pair} says, with loss of generality, that $\delta^V_U \subset C^W_U$ while the second observation says that $\delta^V_U \subset C_{fin}$, and hence $\delta^V_U \subset C^V_U\cap C^W_U \cap C_{fin}$, as required.   Otherwise, $d_{T_U}(\delta^V_U,\delta^W_U)<r$, and we may also assume that neither $\delta^V_U \subset C^W_U$ nor $\delta^W_U \subset C^V_U$.  But then $C^W_U \cap C^V_U \subset \calN_r(\delta^V_U) \cap \calN_r(\delta^W_U)$, which is contained in $C_{fin}$ by the second observation above.  This completes the proof.
\end{proof}

Finally, we prove that $B_U = \bigcap_{V \in \BM(C)} C^V_U$ has bounded diameter:

\begin{lemma}\label{lem:bdd int total}
There exists $B_1 = B_1(\mathfrak S, |F\cup \Lambda|)>0$ such that for all $U \in \calU$, we have 
$\diam_{T_U}(B_U) < B_1$.
\end{lemma}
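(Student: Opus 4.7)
The plan is to derive a contradiction from the assumption that $\diam_{T_U}(B_U)$ is large by producing a single $V \in \BM(C)$ whose associated half-tree $C^V_U$ cannot contain all of $B_U$. First observe that $B_U$ is a subtree of $C$: each $C^V_U$ is convex in $T_U$ (being the convex hull of a connected set), and the intersection of any family of convex subtrees of a tree is convex, so $B_U$ is either empty or a subtree. Emptiness is ruled out by Lemma~\ref{lem:extended helly}. Set $B_1 := 100(r + r' + R + E' + \chi)$ and suppose for contradiction that $\diam_{T_U}(B_U) > B_1$.

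Because $B_U$ is a subtree of diameter exceeding $B_1$, we can pick a geodesic segment $\gamma \subset B_U$ of length $L > B_1$, parametrized as $\gamma : [0,L] \to T_U$, and let $p = \gamma(L/2)$, which lies in $C$ since $B_U \subset C$. Let $C^* \subset C$ be the subtree of $C$ obtained by intersecting $C$ with the ball of radius $10r'$ around $p$; then $\diam_{T_U}(C^*) > 4r'$ provided $L/2 > 10r'$, which holds by choice of $B_1$. The first bullet of Lemma~\ref{lem:BM dense} is therefore ruled out, so the second bullet applies and $\{\delta^V_U \mid V \in \calV\}$ forms a $(2r', r')$-net on $C^*$ for some $\calV \subset \BM(C^*) \subset \BM(C)$. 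In particular, there exists $V \in \BM(C)$ with $d_{T_U}(\delta^V_U, p) \leq 2r'$.

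The shadow $\delta^V_U$ is a subtree of diameter at most $R$ located within $2r'$ of the midpoint $p$ of $\gamma$. Since $L/2 \gg 2r' + R + r + E'$, the geodesic $\gamma$ must exit $\calN_{E' + r}(\delta^V_U)$ in both directions, so there are points $p_-, p_+ \in \gamma$ lying in the two distinct components of $T_U - \calN_{E'}(\delta^V_U)$, each at $T_U$-distance greater than $r$ from $\delta^V_U$. Let $T_U - \calN_{E'}(\delta^V_U) = C' \sqcup C''$ and invoke Lemma~\ref{lem:leaves choose half-space}: either $l_V = l_{C'}$ or $l_V = l_{C''}$, and correspondingly $C^V_U \subset \calN_{r}(\delta^V_U) \cup C'$ or $C^V_U \subset \calN_{r}(\delta^V_U) \cup C''$. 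In either case exactly one of $p_-, p_+$ lies in the excluded component at distance greater than $r$ from $\delta^V_U$, hence outside $C^V_U$. This contradicts $\gamma \subset B_U \subset C^V_U$, completing the proof.

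The main technical point, and really the only delicate step, is ensuring the existence of a nearby bipartite shadow with both of the ``far'' points of $\gamma$ lying outside a small neighborhood of it — i.e., that the net provided by Lemma~\ref{lem:BM dense} yields a $V$ whose shadow is actually interior to $\gamma$. The constants $r, r', R, E', \chi$ all only depend on $\mathfrak S$ and $|F \cup \Lambda|$, so the resulting bound $B_1$ depends only on these as required.
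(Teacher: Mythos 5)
Your proof takes a genuinely different route from the paper's, and it contains a gap precisely at the step you flagged as ``the only delicate step.'' The paper argues by first reducing to a long edge of $T_U$ inside $B_U$ containing no branch points or marked points (which exists by the uniform bound on branching of $T_U$), then extracting \emph{two} nearby shadows $\delta^V_U, \delta^W_U \subset B_U$ from the net on that edge and running the dichotomy of Lemma~\ref{lem:bdd int pair}: if only one containment from Lemma~\ref{lem:bdd int pair}(1) holds, then item~(2) forces $\delta^W_U \cap C^V_U = \emptyset$, contradicting $\delta^W_U \subset B_U$; if both hold, then $C^V_U \cap C^W_U$ is trapped in $\hull_C(\calN_r(\delta^V_U) \cup \calN_r(\delta^W_U))$, whose diameter is $\leq 2r' + 2r$. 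Your proof instead works with a single shadow near the midpoint $p$ of a geodesic $\gamma \subset B_U$ and tries to use Lemma~\ref{lem:leaves choose half-space} to show $C^V_U$ misses one end of $\gamma$.

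The gap: from $d_{T_U}(p, \delta^V_U) \leq 2r'$ and $L/2 \gg 2r' + R + r + E'$, you conclude that $\gamma$ ``must exit $\calN_{E'+r}(\delta^V_U)$ in both directions,'' so that $p_-, p_+$ land in the two distinct components of $T_U - \calN_{E'}(\delta^V_U)$. This does not follow. The geodesic $\gamma$ need not intersect $\calN_{E'}(\delta^V_U)$ at all: the shadow $\delta^V_U$ could sit at distance between $E'$ and $2r'$ from $\gamma$, with a branch point of $T_U$ in between. The witness condition only guarantees no branch or marked points within $\chi$ of $\delta^V_U$, and the paper takes $\chi > 5E' + \alpha_0$, which is not known (and is unlikely, since $r'$ already absorbs the cluster constant $r$) to dominate $2r'$. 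In that regime, a branch point can sit at distance strictly between $\chi$ and $2r'$ from $\delta^V_U$; $\gamma$ can run along that branch through $p$ and have both endpoints in the \emph{same} component $C'$ of $T_U - \calN_{E'}(\delta^V_U)$, so no contradiction arises. You cannot fix this by simply increasing $\chi$, because the density constant $r'$ of Lemma~\ref{lem:BM dense} itself depends on the bipartiteness threshold $\chi$ through $\BM(C)$, making the requirement $\chi > 2r'$ circular. The paper's device of first localizing to a single branch-free edge of $T_U$ inside $B_U$ — possible because $T_U$ has boundedly many branch and marked points — is exactly what eliminates this failure mode, and is the step your argument is missing.
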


\begin{proof}
By Lemma \ref{lem:BM dense}, there exists $r' = r'(\mathfrak S, |F \cup \Lambda|)>0$ so that either $\BM(C) = \emptyset$ and $\diam_{T_U}(C)<2r'$, or for there exists a collection $\calV \subset \BM(C)$ so that $\{\delta^V_U| V \in \calV\}$ forms a $(2r',r')$-net on $C$ (Definition \ref{defn:net}).  Since we are done in the former case, we may assume the latter holds.

We may further suppose that $B_U$ contains an edge of $T_U$ with diameter more than $8r'$ that does not contain any marked or branch points, for we are done if it does not.  Then the net provides distinct $V,W \in \calV$ with $r'< d_{T_U}(\delta^V_U, \delta^{W}_U)< 2r'$ with $\delta^V_U, \delta^W_U \subset B_U$.

By Lemma \ref{lem:bdd int pair}, we have that either $\delta^V_U \subset C^{W}_U$ or $\delta^W_U \subset C^{V}_U$, and possibly both.  Supposing only the first containment holds, then in fact $\delta^{W}_U \cap C^V_U = \emptyset$ by item (1) of Lemma \ref{lem:bdd int pair}, and hence $\delta^{W}_U$ is not a subset of $B_U$, which contradicts our assumption that $\delta^W_U \subset B_U$.

Hence both containments must hold.  Then and moreover $C^V_U \cap C^W_U \subset \hull_{C}(\calN_{r}(\delta^V_U) \cup \calN_{r}(\delta^W_U))$, which has diameter bounded by $2r' + 2r$ since $d_{T_U}(\delta^V_U, \delta^W_U) < 2r'$.

Either way, the diameter of $B_U$ is bounded and we are done.
\end{proof}

\subsection{Lemmas supporting consistency of $\Omega(\hx)$}

We record the following lemmas which will be useful in the proof of consistency in Proposition \ref{prop:Q-consistent} below.

In the following, if $A \subset F \cup \Lambda$, then we let $A_U$ denote the corresponding collection of marked points and rays of $T_U$.  If in addition $C$ is a cluster, then we let $A_U(C)$ be the collection of points and rays obtained by closest-point projecting $A_U$ to $C$ in $T_U$.  The first lemma is used in the arguments for transverse consistency, and the second lemma for nested consistency.

\begin{lemma}\label{lem:pointing at leaf}
Suppose $\hx_U$ is contained in a cluster $C$ in $\hT_U$.  If $A = \bigcap_{V \in \BM(C)} l_V \neq \emptyset$, then $\hull_C(A_U(C)) \subset B_U$.  In particular, $\diam_{T_U}(\hull_C(A_U(C)) < B_1$.
\end{lemma}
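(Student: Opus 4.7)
The plan is to reduce everything to showing $\hull_C(A_U(C)) \subset C^V_U$ for each individual $V \in \BM(C)$, since then taking the intersection over all $V \in \BM(C)$ gives containment in $B_U$, and the diameter bound $B_1$ is immediate from Lemma \ref{lem:bdd int total}. So I would fix an arbitrary $V \in \BM(C)$ and use Lemma \ref{lem:leaves choose half-space} to write $T_U - \calN_{E'}(\delta^V_U) = C' \sqcup C''$ with, without loss of generality, $l_V = l_{C'}$ and $C^V_U = \calN_r(\delta^V_U) \cup (C \cap C')$.

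Next I would show that for every $a \in A$, the projection $a_U(C)$ of $a_U$ onto $C$ lies in $C^V_U$. Since $A \subset l_V = l_{C'}$, the point (or ray) $a_U$ lies in $C' \cup \calN_{E'}(\delta^V_U)$. In the tree $T_U$, the closest-point projection onto the convex subset $C$ is side-preserving relative to the separating neighborhood $\calN_{E'}(\delta^V_U)$: any geodesic from $a_U$ to a point of $C \cap C''$ must pass through $C \cap \calN_{E'}(\delta^V_U)$, so the nearest point on $C$ lies in $(C \cap C') \cup \calN_{E'}(\delta^V_U) \subset C^V_U$, using that $r \geq E'$ by our choice of the cluster separation constant. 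Since $C^V_U$ is a subtree (hence convex) of $C$, this containment passes to $\hull_C(A_U(C)) \subset C^V_U$, and intersecting over $V$ closes the argument.

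The main subtlety I expect is handling the case where some $a \in A$ is a ray, i.e., $a = \lambda \in \Lambda$ with $U \in \supp(\lambda)$, so that $a_U \in \partial T_U$ and projection-to-$C$ is not a priori well-defined as a single point. I would handle this by first observing that such a $\lambda$ cannot have its end in $\partial T_U$ corresponding to an end of $C$: otherwise, arguing as in the construction of $V_\lambda$ in the proof of Lemma \ref{lem:extended helly}, one can find $W \in \BM(C)$ with $\delta^W_U$ arbitrarily far out the $\lambda_U$-end of $C$ and with $l_W = (F \cup \Lambda) - \{\lambda\}$, so $\lambda \notin l_W$ and hence $\lambda \notin A$, a contradiction. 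Given this, the ray $\lambda_U$ has a well-defined closest-point projection onto $C$ (a unique entry point, or an entire bounded segment where the ray enters then leaves), and the same side-preservation argument places this projection in $C^V_U$ for each $V$. Once these cases are handled, the rest of the proof is mechanical and the diameter bound $\diam_{T_U}(\hull_C(A_U(C))) < B_1$ is an immediate corollary of Lemma \ref{lem:bdd int total}.
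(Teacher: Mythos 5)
Your proof is correct and follows the same strategy as the paper's very terse one-paragraph argument: for each $V \in \BM(C)$, deduce from $A \subset l_V$ that $\hull_C(A_U(C)) \subset C^V_U$, then intersect over $V$ and invoke Lemma~\ref{lem:bdd int total}. Your side-preservation argument and the observation (via the $V_\lambda$ construction from Lemma~\ref{lem:extended helly}) that no $\lambda \in A$ can have $\lambda_U$ as an end of $C$ are details the paper leaves implicit; the latter is a genuine subtlety needed to guarantee the hull is bounded, so you have handled it appropriately.
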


\begin{proof}

By assumption, $A \subset l_V$ for all $V \in \BM(C)$.  Moreover, since $A \neq \emptyset$, we have that $\hull_C(A_U(C)) \subset C^V_U$ for all $V \in \BM(C)$, and hence $\hull_C(A_U(C)) \subset B_U$, which has diameter bounded by $B_1$ by Lemma \ref{lem:bdd int total}.
\end{proof}

\begin{lemma}\label{lem:cluster choice contain}
Let $\hx \in \calQ$, and $W,U \nest V \in \calU$ with $d_{T_V}(\delta^W_V, \delta^U_V)>4E'$.  If $C\subset T_V$ is the cluster with $\delta^U_V \subset C$ and $q_V(C) = \hx_V$, and $W \in \BM(C)$ with $\hx_W = \hd^U_W$, then $\delta^U_V \subset C^W_V$.
\end{lemma}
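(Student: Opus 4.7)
My plan is to reduce the problem to a single application of Lemma \ref{lem:leaves choose half-space}, by identifying which side of $\delta^W_V$ is selected by the marked points $l_W$. The hypothesis $\hx_W = \hd^U_W$ should force $l_W = l_{C'}$, where $C'$ is the component of $T_V - \calN_{E'}(\delta^W_V)$ containing $\delta^U_V$.

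First I will verify that $U \pitchfork W$. The existence of $\hd^U_W$ forces $U \not\perp W$, while if $U$ and $W$ were $\nest$-comparable (or equal), item (3) of Lemma \ref{lem:tree control} would give $\diam_{T_V}(\delta^U_V \cup \delta^W_V) < R$, contradicting $d_{T_V}(\delta^W_V, \delta^U_V) > 4E'$ once we arrange $4E' > R$ (which we may, by enlarging $E'$ if necessary). Next, since $d_{T_V}(\delta^W_V, \delta^U_V) > 4E' > E'$, $\delta^U_V$ lies entirely in one component of $T_V - \calN_{E'}(\delta^W_V)$, which I relabel $C'$, with the other component $C''$. By Lemma \ref{lem:bipartite properties}, $\hT_W$ is an interval with endpoints $\hd^V_W(l_{C'})$ and $\hd^V_W(l_{C''})$, where $l_{C'}, l_{C''} \subset F \cup \Lambda$ denote marked points and rays whose images in $T_V$ lie in $C'$ and $C''$ respectively.

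The key claim is that $\hd^U_W$ equals the endpoint of $\hT_W$ labeled by $l_{C'}$. Since $U \pitchfork W$, this follows from item (6) of Lemma \ref{lem:tree control} applied to the triple $U, W \nest V$: after absorbing constants into $E'$, the hypothesis on $d_{T_V}(\delta^W_V, \delta^U_V)$ yields $d_{T_W}(\delta^V_W(\delta^U_V), \delta^U_W) < E'$, so $\delta^U_W$ and $\delta^V_W(\delta^U_V)$ lie in the same cluster of $T_W$. The BGI property (item (5) of Lemma \ref{lem:tree control}) then says $\delta^V_W(C')$ has diameter at most $E'$ and collapses in $\hT_W$ to the endpoint $\hd^V_W(l_{C'})$. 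Hence $\hd^U_W = \hd^V_W(l_{C'})$ in $\hT_W$.

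Since $\hx_W = \hd^U_W$ by hypothesis, $\hx_W$ is precisely the endpoint of $\hT_W$ labeled by $l_{C'}$, so Lemma \ref{lem:leaves choose half-space} gives $l_W = l_{C'}$ and $C^W_V = \calN_r(\delta^W_V) \cup (C' \cap C)$. Finally, $\delta^U_V \subset C$ by hypothesis and $\delta^U_V \subset C'$ by construction, yielding $\delta^U_V \subset C \cap C' \subset C^W_V$. The main bookkeeping obstacle is the constant-juggling in the identification $\hd^U_W = \hd^V_W(l_{C'})$, which requires that $E'$ and $R$ are arranged so that items (3), (5), and (6) of Lemma \ref{lem:tree control} apply simultaneously under the given quantitative hypothesis; this is straightforward given our freedom to enlarge $E'$ throughout the construction.
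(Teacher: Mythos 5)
Your proof is correct and follows essentially the same route as the paper's: apply item (6) of Lemma~\ref{lem:tree control} to identify $\hd^U_W$ with the endpoint of the interval $\hT_W$ corresponding to the component $C'$ of $T_V - \calN_{E'}(\delta^W_V)$ containing $\delta^U_V$, then use the hypothesis $\hx_W = \hd^U_W$ together with Lemma~\ref{lem:leaves choose half-space} to conclude $l_W = l_{C'}$ and hence $\delta^U_V \subset C \cap C' \subset C^W_V$. The only genuine additions are your explicit verification up front that $U \pitchfork W$ (ruling out $U \perp W$ via the existence of $\hd^U_W$ and ruling out $\nest$-comparability via item (3) of Lemma~\ref{lem:tree control}), which the paper leaves implicit, and your honest flag that the separation threshold $4E'$ in the hypothesis must be arranged to dominate the constants appearing in items (3), (5), and (6); the paper's own proof has the same implicit constant-juggling, so this is not a defect of your write-up.
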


\begin{proof}
Since $W \in \BM(C)$, Lemma \ref{lem:leaves choose half-space} gives that $\hT_W$ is an interval with two endpoints labeled by the marked points and rays $l_W$ of $T_V$ in the two complementary components of $T_V - \calN_{E'}(\delta^V_U)$.  By assumption, $\delta^U_V$ is contained in one of those complementary components, and item (6) of Lemma \ref{lem:tree control} gives that $\hd^V_W(\hd^U_V) = \hd^U_W$.  Since $\hx_W = \hd^U_W$ by assumption, it follows from item (5) of Lemma \ref{lem:tree control} that $\delta^U_V$ is contained in the same component $C'$ of $T_V - \calN_{E'}(\delta^W_V)$ chosen by $\hx_W$.  It follows that $\delta^U_V \subset C^W_V = \hull_{C}\left(C \cap \calN_r(\delta^W_U)) \cup l_W\right)$, as required.
\end{proof}

\subsection{Proof of Proposition \ref{prop:Q-consistent}}

Our goal is to prove that there is a consistency constant $\alpha_{\omega} = \alpha_{\omega}(\mathfrak S, |F \cup \Lambda|)>0$ so that $\Omega(\calQ) \subset \calZ_{\alpha_{\omega}}$.  The ``moreover'' part of the statement, namely $q_U(x_U) = \hx_U$ for each $U \in \calU$, where $\hx \in \calQ$ and $\Omega(\hx) = (x_U)$, is the conclusion of Lemma \ref{lem:B_U in cluster}.

We check the consistency inequalities (Definition \ref{defn:tree consistency}) by a case-wise analysis.  Let $U, V \in \calU$.

\medskip

\underline{(1) $U \pitchfork V$ and $\hx_U \neq \hd^V_U$}:  Then we must have that $\hx_V = \hd^U_V$ by $0$-consistency.  Let $C$ denote the marked point cluster of $T_V$ containing $\delta^U_V$, so that $B_V \subset C$ by construction.  We will prove that $B_V$ and $\delta^U_V$ are at a bounded distance.

If $V$ is $\nest$-minimal in $\calU$, then $C$ is a marked point cluster with diameter bounded in terms of $\mathfrak S, |F \cup \Lambda|$, so we may assume otherwise.  Moreover, we may assume that $\BM(C) \neq \emptyset$ by Lemma \ref{lem:BM dense}, otherwise again $C$ has bounded diameter.

Let $a \in F \cup \Lambda$ be some marked point or ray with $d_{T_V}(a_V, \delta^U_V)<E'$, which exists by item (1) of Lemma \ref{lem:tree control}.  Note that since $\delta^V_U \subset C$, we have $a_V \in C$ by construction of the clusters.

If $W \in \BM(C)$, then $d_{T_V}(\delta^W_V, a_V) > \chi > 5E'+ \alpha_0$, by choosing the bipartite threshold $\chi = \chi(\mathfrak S, |F \cup \Lambda|)>5E' + \alpha_0$.  So $d_{T_V}(\delta^W_V, \delta^U_V) > 3E'$, and thus $W \pitchfork U$.  Since $W \nest V$, we have $d_{T_U}(\delta^W_U, \delta^V_U)<E'$, so $\hx_U \neq \hd^W_U = \hd^V_U$, forcing that $\hx_W = \hd^U_W$ by $0$-consistency of $(\hx_U)$.

We claim that $a \in l_W$.  The reason for this is that $\hT_W$ is an interval whose endpoints partition $F \cup \Lambda$ into two sets by Lemma \ref{lem:leaves choose half-space}, where $l_W$ labels the points in $F \cup \Lambda$ coinciding with $\hx_W$.  As such, if $a \notin l_W$ and hence $\ha_W \neq \hx_W$, then $d_{T_W}(a_W, \delta^V_W(a_V))>r > \alpha_0$, while already $d_{T_V}(\delta^W_V, a_V)>5E' + \alpha_0$, which contradicts the fact that $(a_U)$ is an $\alpha_0$-consistent tuple by Lemma \ref{lem:consist to tree}.  Thus we must have $a \in l_W$, and so $\ha_W = \hd^U_W = \hx_W$.  Hence $a \in l_W$ for all $W \in \BM(C)$.  We are now done by Lemma \ref{lem:pointing at leaf}, which implies that $a_V \in B_V$, while $d_{T_V}(\delta^U_V,a_V)<E'$ and $\diam_{T_V}(B_V) < B_1 = B_1(\mathfrak S, |F \cup \Lambda)$ by Lemma \ref{lem:bdd int total}.
\medskip

\underline{(2) $U \pitchfork V$ with both $\hx_U = \hd^V_U$ and $\hx_V = \hd^U_V$}: Suppose instead that $\hx_U$ is contained in the cluster $C$ of $T_U$ containing $\delta^V_U$.  Moreover, assume that $B_U$ (and hence $x_U$) is at least $10r'\cdot B_1\alpha_0$ away from any projection of a marked point or ray of $T_U$ to $C$, where $B_1$ is as in Lemma \ref{lem:bdd int total} and $r'$ is from Lemma \ref{lem:BM dense}.  We will bound the distance between $B_V$ (and hence $x_V$) and $\delta^U_V$ by producing a domain $Z \nest U$ with $Z \pitchfork V$ and $\hx_Z \neq \hd^V_Z$, thereby allowing us to apply the previous case.

\begin{figure}
    \centering
    \includegraphics[width = .8\textwidth]{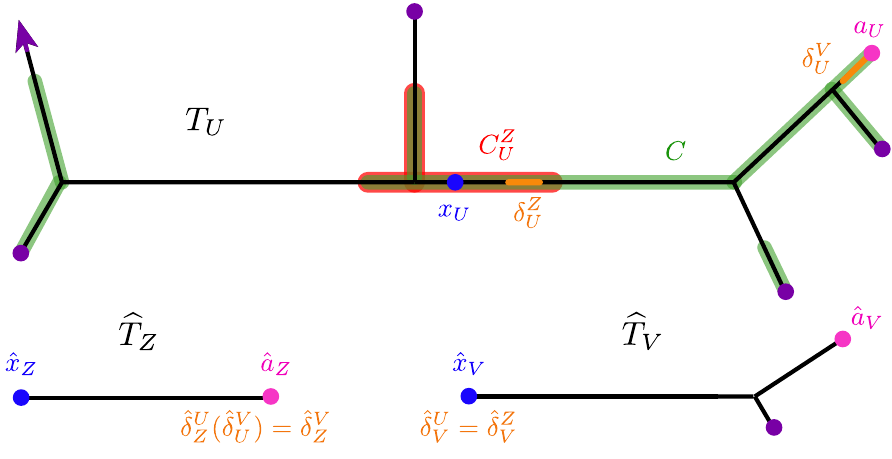}
    \caption{The proof of case (2) of Proposition \ref{prop:Q-consistent}: By assuming that $x_U$ is far from $\delta^V_U$, we can produce a bipartite domain $Z \in \BM(C)$ which chooses the half-tree not containing $\delta^V_U$, thereby forcing $\hx_Z \neq \hd^V_Z$, allowing us to invoke case (1) with $V,Z$ playing the roles of $U,V$, respectively.}
    \label{fig:Q-consistency transverse}
\end{figure}

Let $A\subset F \cup \Lambda$ be the set of marked points and rays $a$ satisfying $d_{T_U}(\delta^V_U, a_U) < E'$, with $A \neq \emptyset$ by item (1) of Lemma \ref{lem:tree control}.  Let $C'  = \hull_C(B_U, A_U) \subset C$.  By assumption $\diam_{T_U}(C') > 10r'B_1$, so Lemma \ref{lem:BM dense} provides a bipartite domain $Z \in \BM(C')$ so that $d_{T_U}(\delta^Z_U, B_U) > r'$ and $d_{T_U}(\delta^Z_U, A_U) > r'$, and so that $\delta^Z_U$ separates $B_U$ from $a_U$ for all $a \in A$.  Since $B_U \subset C^V_U$ by definition, it follows that $l_Z \cap A = \emptyset$, and hence $\hx_Z \neq \ha_Z$ for all $a \in A$.

Moreover, item (6) of Lemma \ref{lem:tree control} implies that $d_{T_Z}(\delta^V_Z, \delta^U_Z(\delta^V_U))<E'$ while item (5) of that lemma implies $d_{T_Z}(\delta^U_Z(\delta^V_U), \delta^U_Z(a_U)) < E'$.  On the other hand, $\alpha_0$-consistency of $(a_U)$ and the assumption that $d_{T_U}(\delta^Z_U, a_U)> 10r'B_1\alpha_0 > \alpha_0$ implies that $d_{T_Z}(a_Z, \delta^V_Z)<\alpha_0$.  Hence we have $d_{T_Z}(\delta^V_Z, a_Z) < 2E' + \alpha_0 < R$, and it follows that $\hd^V_Z = \ha_Z$.  In particular, we have $\hx_Z \neq \hd^V_Z$.

Finally, since $\hx_V = \hd^U_V$ by assumption, we let $C''$ be the marked point cluster of $T_V$ containing $\delta^U_V$, and hence also $\delta^Z_U \subset C''$ since $Z \nest U$.  But now we are in the situation from the first case, where $V,Z$ play the roles of $U,V$ there, since $Z \pitchfork V$ and $\hd_Z \neq \hx_Z$.  The conclusion of that case was that $d_{T_V}(B_V, \delta^Z_V)$ was bounded distance, while we have $d_{T_V}(\delta^Z_V, \delta^U_V)<E'$ since $Z \nest U$, and we are done.

\medskip
\underline{(3) $U \nest V$ and $\hx_U \neq \hd^V_U(\hx_V)$}: By $0$-consistency, we have $\hx_V = \hd^U_V$.  We will prove that $C^W_V \cap \delta^U_V \neq \emptyset$ for all $W \in \BM(C)$.  It will follow then that $d_{T_V}(x_V, \delta^U_V) < 4r$.

Let $C$ denote the cluster of $T_V$ containing $\delta^U_V$ and $x_V$.  First observe that if $W \in \BM(C)$ and $d_{T_V}(\delta^W_V, \delta^U_V) < 4E'$, then $\delta^U_V \cap C^W_V \neq \emptyset$ by definition of $C^W_V$.

On the other hand, if $W \in \BM(C)$ with $4E' < d_{T_V}(\delta^W_V, \delta^U_V)$, then $W \pitchfork U$.  Then item (6) of Lemma \ref{lem:tree control} gives that $d_{T_W}(\delta^U_W, \delta^V_W(\delta^U_V))<3E'$.  Hence $\hd^W_U = \hd^V_U(\hd^W_V) = \hd^V_U(\hx_V) \neq \hx_U$, and so since $\hx_U \neq \hd^W_U$, $0$-consistency implies that $\hx_W = \hd^U_W$.  Lemma \ref{lem:cluster choice contain} gives that $\delta^U_V \subset C^W_V$.  Consequently, we have $C^W_V \cap \delta^U_V \neq \emptyset$ for all $W \in \BM(C)$, and it follows that $d_{T_V}(x_V, \delta^U_V) < 4r$.

\medskip

\underline{(4) $U \nest V$ and $\hx_V \neq \hd^U_V$}:  In this final case, $0$-consistency provides $\hd^V_U(\hx_V) = \hx_U$.  We will bound $\diam_{T_U}(x_U \cup \delta^V_U(x_V))$.

We may assume that $d_{T_V}(x_V, \delta^U_V)> \alpha_0 + E'$, otherwise we are done.  Let $a \in F \cup \Lambda$ be such that $a_V$ is contained in the same component of $T_V - \calN_{\alpha_0+E'}(\delta^U_V)$ as $x_V$.  By the BGI property in item (5) of Lemma \ref{lem:tree control}, we get $d_{T_U}(\delta^V_U(x_V), \delta^V_U(a_V))< E'$.  But since $d_{T_V}(a_V, \delta^U_V)>\alpha_0$ and $(a_U)$ is $\alpha_0$-consistent by Lemma \ref{lem:consist to tree}, we get $d_{T_U}(a_U, \delta^V_U(a_V))<\alpha_0$.  Hence $d_{T_U}(\delta^V_U(x_V), a_U) < \alpha_0 + E'$.

Since $\hx_U = \hd^V_U(\hx_V)$ by assumption, we get that $a_U, x_U$ are in the same cluster $C$ in $T_U$.  We may assume, for a contradiction, that there is some $W \in \BM(C)$ such that $a_U, x_U$ are in separate components of $T_U - \calN_{E'}(\delta^W_U)$, for otherwise we are done.

Since $W \in \BM(C)$, Lemma \ref{lem:leaves choose half-space} says that $\hT_W$ is an interval with two endpoints labeled by the marked points and rays in $T_U$ in the two complementary components of $T_U - \calN_{E'}(\delta^W_U)$.  Since $\delta^W_U$ separates $a_U$ from $x_U$ in $T_U$, it follows that $\ha_W, \hx_W$ are not in the same endpoint cluster.  Moreover, since $x_U \in C^W_U$ by definition of $B_U$, we have $\hx_W \neq \ha_W$.

But this contradicts $0$-consistency of $\hx$, since $\hx_V \neq \hd^U_V = \hd^W_V$ (because $W \nest U \nest V$) while $\hx_W \neq \ha_W = \hd^V_W(\hx_V)$, with the latter equality holding by a similar argument for why $\hd^V_U(\hx_V) = \ha_U$.  This completes the proof of this case, and hence of the proposition.

\begin{figure}
    \centering
    \includegraphics[width=.8\textwidth]{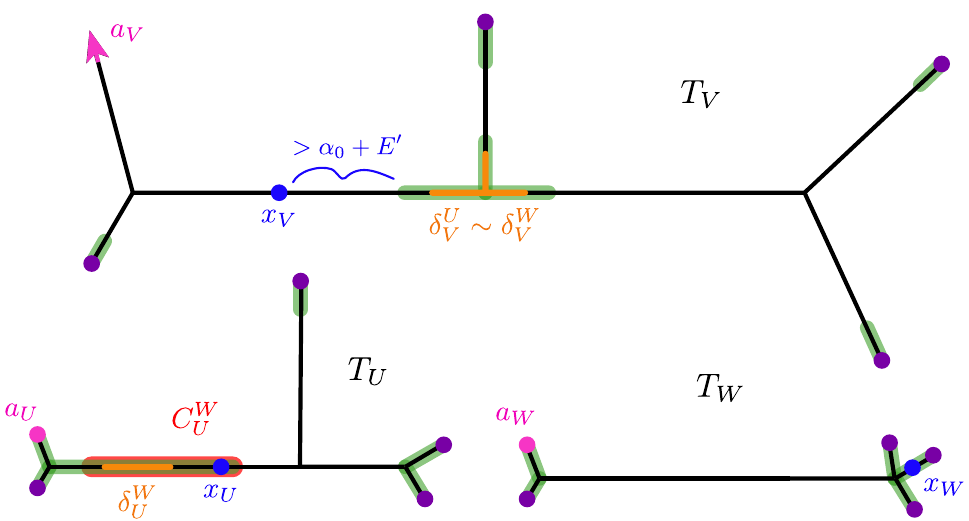}
    \caption{The proof of case (4) of Proposition \ref{prop:Q-consistent}: If $x_V$ (blue) is far enough away from $\delta^U_V$ (orange), then we can find a marked point or ray $a \in F \cup \Lambda$ (pink) so that $a_U \sim \delta^V_U(x_V)$.  If also the cluster $C \subset T_U$ containing $x_U$ (and $a_U$) is wide enough, we can find a domain $W \in \BM(C)$ so that $\delta^W_U$ separates $x_U$ from $a_U$, forcing $a_W$ and $x_W$ to be at opposite ends of $T_W$, because $C^W_U$ (red) has to contain $x_U$ by definition of $B_U$. }
    \label{fig:Q-consistency}
\end{figure}

\section{Tree trimming} \label{sec:tree trimming}

In this section, we give a general construction that allows us to trim the trees $\hT_U$ in various ways while preserving the metric on $\calQ$ up to quasi-isometry.  Applications of these trimming techniques include
\begin{itemize}
\item surjectivity of $\hPsi:H \to \calQ$ (Theorem \ref{thm:coarse inverse}),
\item the ability to convert all of the $\hT_U$ to simplicial trees while preserving $\calQ$ up to quasi-isometry (Corollary \ref{cor:simplicial structure}), which is crucial for proving that $\calQ$ is quasi-median quasi-isometric to a cube complex (Theorem \ref{thmi:main model}), and
\item modifying the cubical model construction to encode curve graph distance in their hyperplanes (Theorem \ref{thmi:curve graph}).
\end{itemize}

As we will see in the Sections \ref{sec:walls in Q} and \ref{sec:Q is cubical}, the hierarchical geometry of $H$ is directly encoded in the cubical geometry of $\calQ$, and so tree trimming will also allow to directly alter the cubical structure of $\calQ$.

Roughly speaking, the proposition says that if we collapse boundedly-many subtrees of bounded diameter in each component of $\hT^e_U$ for each $U \in \calU$, then the resulting collapsing map induces a quasi-isometry from $\hY$ to the resulting product of trees, and this map restricts to a map between the $0$-consistent sets which preserves (non-)canonicality.  First we need some notation.

Let $K$ be our largeness threshold for the construction (Subsection \ref{subsec:K}) and suppose that $B = B(\mathfrak S, |F\cup \Lambda|)>0$ is some other constant with respect to which we can make $K$ as large as we like (as is the case in applications).

Suppose for each $U \in \calU$ and each component $E \subset \hT^e_U$ that $A_1, \dots, A_n \subset E$ is a pairwise disjoint collection of subtrees, with $\diam_{E}(A_i) < B$ and $n < B$ for each $i$.  Let $\hT'_U$ be the tree obtained by collapsing each of the $A_i$ to a point across all components $E \subset \hT^e_U$, and let $\Delta_U: \hT_U \to \hT'_U$ be the quotient map.  We think of this process as ``trimming'' the trees $\hT_U$, even though the collapsed sets can be in the interior of edges of the trees.

As with the original quotients $q_U:T_U \to \hT_U$, we get induced relative projections $(\hd')^V_U$ as follows:
\begin{itemize}
    \item If $V \nest U$ or $V \pitchfork U$, then $(\hd')^V_U = \Delta_U(\hd^V_U)$.
    \item If $U \nest V$, then $(\hd')^V_U = \Delta_U \circ \hd^V_U \circ \Delta_V^{-1}: \hT'_V \to \hT'_U$.
\end{itemize}

We get a notion of $0$-consistency relative to these projections in the product $\oY' = \prod_{U \in \calU} \hT'_U \cup \partial \hT'_U$, and can define $\oQ' \subset \oY'$ to be the set of  $0$-consistent tuples in the sense of Definition \ref{defn:Q consistent}.  Moreover, it is clear that if $\Delta:\oY \to \oY'$ is the corresponding quotient map, then the restriction $\Delta|_{\oQ}: \oQ \to \oY'$ satisfies $\Delta(\oQ) \subset \oQ'$, for trimming a tree will not change the fact that a coordinate coincides with a relative projection.  With these definitions in place, we observe that the conclusions of the Collapsed Tree Control Lemma \ref{lem:collapsed tree control} hold for this trimmed setup.

For each $a \in F$ and $U \in \calU$, we set $\ha'_U = \Delta_U(\ha_U)$.  For $\lambda \in \Lambda$, we can similarly define $\hlam'_U =\Delta_U(\hlam_U)$ for all $U \in \calU$, but the following remark is a word of caution:

\begin{remark}[Collapsing rays]\label{rem:collapsing rays}
Notice that when $\Lambda \neq \emptyset$, then the component quotient maps need not extend to bijections $\partial \Delta_U: \partial \hT_U \to \partial \hT'_U$.  This can occur when, for some $\lambda \in \Lambda$ with $U \in \supp(\lambda)$, any ray in $\hT_U$ representing $\hlam_U \in \partial \hT_U$ is contains an infinite sequence of cluster points with consecutive pairs separated by an edge (in $\hT^e_U$) of diameter less than $B^2$.  As a result, any such ray can be collapsed to a bounded diameter set.  In particular, it follows that there is some infinite collection of domains $\calV \subset \calU$ with $V \nest U$ for all $V \in \calV$, and so that $(\hd^V_U)' = (\hd^W_U)'$ for all $V, W \in \calV$.  In particular, $\hlam'_U = (\hd^V_U)'$ for all $V \in \calV$.  As in the example in Subsection \ref{subsec:ray example} from the introduction, the set $\{\hd^V_U| V \in \calV\}$ represents $\hlam_U \in \partial \hT_U$.
\end{remark}

We can now state our tree trimming theorem:

\begin{theorem}[Tree Trimming] \label{thm:tree trimming}

Using the above notation, there exists $B_{tt} = B_{tt}(\mathfrak S, |F \cup \Lambda|, K)>0$, so that for any $B < B_{tt}$ there exists $L = L(\mathfrak S, |F \cup \Lambda|, B)>0$ so that the quotient map $\Delta: \oY \to \oY'$ satisfies the following:

\begin{enumerate}
\item $\Delta(\oQ) = \oQ'$, $\Delta(\calQ) = \calQ'$, and $\Delta(\calQ^{\infty}) = (\calQ^{\infty})'$.
\item The restriction $\Delta|_{\calQ}:\calQ \to \calQ'$ is an $(L,L)$-quasi-isometry.
\item If the number of domains $U \in \calU$ with collapsed subtrees is bounded by $B'$, then $\Delta|_{\calQ}:\calQ \to \calQ'$ is a $(1,BB')$-quasi-isometry.
\end{enumerate}
\begin{itemize}
\item  Moreover, the map $\Delta|_{\calQ}:\calQ \to \calQ'$ is $0$-median.
\end{itemize}
\end{theorem}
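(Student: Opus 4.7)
The plan is to establish the four conclusions in sequence, with the reverse inequality of the quasi-isometry being the principal obstacle.

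First I would verify conclusion (1), that $\Delta$ restricts to a bijection $\oQ \to \oQ'$ preserving both canonicality and non-canonicality. By items (2) and (3) of Lemma \ref{lem:collapsed tree control}, every relative projection $\hd^V_U$ takes values in cluster points of $\hT_U$, while trimming occurs entirely inside edge components $\hT^e_U$; consequently cluster points are fixed by $\Delta_U$ and have singleton preimages. Each coincidence condition in Definition \ref{defn:Q consistent} is therefore preserved by $\Delta$ and lifts uniquely: given $\hx' \in \oQ'$, any cluster-valued coordinate is forced, and at each non-cluster coordinate $\hx'_U$ lying in the image of a collapsed subtree $A_i \subset \hT^e_U$ any choice of lift $\hx_U \in A_i$ is $0$-consistent, since the consistency inequalities only constrain cluster-valued coordinates. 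Canonicality is a finiteness condition which only weakens under $\Delta$; for the reverse direction, if some boundary coordinate $\hx_U \in \partial \hT_U$ is collapsed to a finite point by $\Delta_U$, then Remark \ref{rem:collapsing rays} produces an infinite family $\calV \subset \calU$ of domains for which $\hx_V \neq \ha_V$ for every $a \in F$, so the image still fails canonicality by the alternative clause of Definition \ref{defn:Q consistent}.

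For the quasi-isometry, the direction $d_{\calQ'}(\Delta\hx, \Delta\hy) \leq d_{\calQ}(\hx, \hy)$ is immediate from the $1$-Lipschitz property of each $\Delta_U$. This also gives conclusion (3) directly: per domain the total loss is bounded in terms of $B$ (at most $B$ subtrees each of diameter at most $B$ per edge component, times the uniformly bounded number of edge components a geodesic in $\hT_U$ can cross, which is controlled by the branching bound on the $T_U$ in terms of $\mathfrak S$ and $|F \cup \Lambda|$), and summing over the at most $B'$ trimmed domains yields the stated $(1, B B')$-quasi-isometry bound up to the implicit constants.

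The main obstacle is the reverse inequality $d_{\calQ}(\hx, \hy) \prec d_{\calQ'}(\Delta\hx, \Delta\hy)$ when arbitrarily many domains are trimmed. My plan parallels the lower bound argument of Proposition \ref{prop:lower bound} and uses Bounding Containers (Proposition \ref{prop:bounding containers}). Fix thresholds $K_2 \gg B^2$ (legitimate since $K$ can be chosen larger than any constant depending on $\mathfrak S, |F \cup \Lambda|, B$) and partition the domains $U \in \calU$ with $\hx_U \neq \hy_U$ into the \emph{stable} class where $d_{\hT_U}(\hx_U,\hy_U) > K_2$, on which $d_{\hT_U}(\hx_U,\hy_U) \leq d_{\hT'_U}(\Delta\hx_U,\Delta\hy_U) + B^2 \prec d_{\hT'_U}(\Delta\hx_U,\Delta\hy_U)$, and the \emph{small} class, each of whose members contributes at most $K_2$. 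Realizing $\hx, \hy$ as points (or rays) $a, b$ in $\calX \cup \calX^{ray}$ by applying Proposition \ref{prop:consist from tree} to $\Omega(\hx), \Omega(\hy)$ from Proposition \ref{prop:Q-consistent} and then Realization (Theorem \ref{thm:realization} or Lemma \ref{lem:ray for boundary}), Proposition \ref{prop:bounding containers} will say that all but boundedly many small-class $V$ nest into some stable-class $W \in \Rel_{K_2}(a,b)$, and moreover the number of small-class $V$ nesting into each fixed $W$ is bounded by $C \cdot d_W(a,b) \prec d_{\hT'_W}(\Delta\hx_W, \Delta\hy_W)$ for $C = C(\mathfrak S, K_2)$. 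Summing, the total contribution of small-class domains to $d_{\calQ}(\hx, \hy)$ is absorbed into a bounded multiple of the stable-class contribution to $d_{\calQ'}(\Delta\hx, \Delta\hy)$, yielding the $(L,L)$-quasi-isometry with $L = L(\mathfrak S, |F \cup \Lambda|, B)$. The delicate step will be handling small-class domains whose entire interval $[\hx_U,\hy_U]$ is eaten by collapsed subtrees, where I expect to invoke an exact analogue of the cluster-honing analysis of Subsection \ref{subsec:honing clusters} to produce the comparable stable-class contribution.

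Finally, the $0$-median claim is the cleanest part: the $\ell^1$ product $\prod_U \hT_U$ has median computed coordinate-wise as the tree median $m_U(\hx_U, \hy_U, \hz_U)$, and each tree quotient $\Delta_U : \hT_U \to \hT'_U$ sends geodesics to geodesics, hence commutes \emph{exactly} with the tree median. Thus $\Delta$ intertwines the coordinate-wise medians on $\calQ$ and $\calQ'$ with no coarseness, making $\Delta|_{\calQ}$ a $0$-median map as claimed.
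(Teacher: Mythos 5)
Your proposal has the right overall shape—the Lipschitz direction, the ``moreover'' claim about medians, and the spirit of parallelling the lower bound argument of Proposition \ref{prop:lower bound}—but there are two substantive gaps, one in the surjectivity argument and one in the quasi-isometry lower bound.

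\textbf{Surjectivity.} You assert that cluster-valued coordinates are ``forced'' and that at non-cluster coordinates any lift inside the collapsed subtree works. The first claim is false: the maps $\Delta_U$ can collapse an \emph{entire} edge component $E \subset \hT^e_U$ (nothing in the setup prevents $\diam E$ from being less than $B^2$, and indeed this must happen infinitely often along rays, per Remark \ref{rem:collapsing rays}), in which case two distinct marked points or cluster points of $\hT_U$ become identified. Given $\hy \in \oQ'$ with $\hy_U$ at such an identified point, a \emph{choice} of preimage is required, and the chosen coordinate must be coherent with the choices at all nested and transverse domains. The paper's proof spends the bulk of the surjectivity argument on exactly this: it runs an induction on $\nest_{\calU}$-level parallel to the cluster-honing construction of Subsection \ref{subsec:honing clusters}, with Claim \ref{claim:choose one} and Claim \ref{claim:choose two} resolving the choice between identified marked and cluster points respectively, Claim \ref{claim:good choice} showing these choices are compatible, and Claim \ref{claim:cluster overlap} and Claim \ref{claim:cluster intersect} providing the Helly-type intersection argument. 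None of this is present in your proposal.

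\textbf{Quasi-isometry lower bound.} Your stable-class estimate $d_{\hT_U}(\hx_U,\hy_U) \leq d_{\hT'_U}(\Delta\hx_U,\Delta\hy_U) + B^2$ fails whenever $U$ is not $\nest_{\calU}$-minimal. The geodesic $[\hx_U,\hy_U]$ in $\hT_U$ may pass through arbitrarily many cluster points (their number is not controlled by the branching of $T_U$, contrary to what you claim in the argument for conclusion (3)—branching controls branch points, not cluster points), hence through arbitrarily many edge components, and each such component can lose up to $B^2$ under trimming. The per-domain loss is therefore unbounded, so the ``stable'' class is not stable, and realizing $\hx,\hy$ back in $\calX$ and invoking Bounding Containers does not repair this: that proposition bounds the number of medium-size domains nesting into a large $W$, not the distance lost inside $W$ itself. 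The paper's mechanism is different: it decomposes $[\hx_U,\hy_U]$ along the \emph{non-sporadic} cluster points, observes that each such cluster coincides with $\hd^{V_i}_U$ for a $\nest_{\calU}$-minimal bipartite domain $V_i$ (whose collapsed tree is a long interval, hence contributes $d_{\hT'_{V_i}}(\hx'_{V_i}, \hy'_{V_i}) > 1$), attributes the per-edge loss between consecutive non-sporadic clusters to these $V_i$, and controls the multiplicity with which each $V_i$ is used via the Covering Lemma \ref{lem:covering}. Your proposal lacks this transfer of lost distance to the bipartite minimal domains, which is the crux of the theorem.
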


\begin{proof}

The median ``moreover'' is proven in Corollary \ref{cor:tt median} below, see that section for the relevant definitions.

We begin by proving item (1).  As noted before the statement, the fact that $\Delta$ preserves consistency follows from the fact that we are only collapsing subtrees, so coordinates which were relative projections remain relative projections.  Hence $\Delta(\oQ) \subset \oQ'$.

\vspace{.1in}

\textbf{\underline{(Non-)canonicality is preserved}}: To see that $\Delta(\calQ) \subset \Delta(\calQ')$, we confirm canonicality (as in Definition \ref{defn:Q consistent}) is also preserved.  Suppose that $\hx \in \calQ$.  Then $\hx_U \notin \partial \hT_U$ for all $U$, and it follows that $\Delta_U(\hx_U) \notin \partial \hT'_U$, since interior points of $\hT_U$ cannot become boundary points of $\hT'_U$ by collapsing subtrees.  On the other hand, since $\#\{U| \hx_U \neq \ha_U\}< \infty$ for every $a \in F$ and we are only collapsing subtrees, it follows that the same property holds for $\hx'$.

To see that $\Delta(\calQ^{\infty}) \subset (\calQ^{\infty})'$, let $\hx \in \calQ^{\infty}$ be non-canonical.  Then either (a) there exists $U \in \calU$ so that $\hx_U \in \partial \hT_U$, or (b) there exists an infinite collection of domains $\calV_U$ and $a \in F$ so that $\hx_V \neq \ha_V$ for all $V \in \calV_U$.

In case (a), set  $\hx' = \Delta(\hx)$ and suppose that $\hx'_U \notin \partial \hT'_U$, for we are done otherwise.  Hence we are in the situation discussed before the statement of the proposition.  Let $\lambda \in \Lambda$ be so that $\hx_U = \hlam_U \in \partial \hT_U$.  Since the map $\Delta_U:\hT_U \to \hT'_U$ collapses at most $B$-many subtrees of diameter at most $B$ in each component of $\hT^e_U$, it follows that there exists an infinite sequence of $\nest_{\calU}$-minimal bipartite domains $V_i \nest U$ so that 
\begin{itemize}
\item the sequence $\hd^{V_n}_U$ monotonically escapes out the end of $\hT_U$ corresponding to $\hlam_U$,
\item each consecutive pair $\hd^{V_i}_U, \hd^{V_{i+1}}_U$ satisfies $d_{\hT_U}(\hd^{V_i}_U, \hd^{V_{i+1}}_U)< B^2$,
\item $(\hd^{V_i}_U)' = \hx'_U$ for all $i$, and
\item $\hlam_{V_i} = \hx_{V_i}$ for all $i$, with $\lambda$ the unique element of $F \cup \Lambda$ labeling $\hx_{V_i}$ for all $i$.
\end{itemize}

As a consequence of the last item, we see that $\hx'_{V_i} \neq \ha'_{V_i}$ for any $a \in F$, and hence $\hx'$ is not canonical, as required.

For the rest of item (1), it remains to prove that $\Delta|_{\calQ}:\calQ \to \calQ'$ and $\Delta|_{\calQ^{\infty}}: \calQ^{\infty} \to (\calQ^{\infty})'$ are surjective.

\vspace{.1in}

\textbf{\underline{Surjectivity of $\Delta$}}:  Let $\hy \in \oQ'$.  We will build $\hx \in \oQ$ with $\Delta(\hx) = \hy$ by a domain-wise argument, by a construction closely related to the construction of $\hO:\calQ \to \calY$ in Section \ref{sec:Q to Y}.  As in that construction, the idea is to pick out a point of $C_U \subset \hT_U$ by using $\nest_{\calU}$-minimal domains, and then show that the corresponding tuple is $0$-consistent.  Set $C_U = \Delta^{-1}_U(\hy_U)$.

We begin by dealing with two easy cases:
\begin{itemize}
\item If $U \in \calU$ be such that $\hy_U = \hlam'_U \in \partial \hT'_U$, then we set $\hx_U = \hlam_U \in \partial \hT_U$.
\item If $\hy_U \in \hT'_U$ is not a marked or cluster point, then $C_U$ contains no marked or cluster points, and we let $\hx_U$ be any point therein.
\end{itemize}

The general case is when $\hy_U \in \hT'_U$ is a marked or cluster point, and we construct $\hx_U$ by induction on $\nest$-level in $\calU$.

The base case is when $U$ is $\nest_{\calU}$-minimal, and hence $\hy_U$ is a marked point of $\hT'_U$ (since $\hT'_U$ has no cluster points).  Note that $\diam_{\hT_U}(C_U)<B^2$, but it is possible that $\Delta_U:\hT_U \to \hT'_U$ identifies distinct marked points of $\hT_U$.  We need to choose one of them.

\begin{claim}\label{claim:choose one}
If $W,Z \in \{V \in \calU| V \pitchfork U, \mathrm{ and } \hspace{.1in} \hy_V \neq (\hd^U_V)'\}$, then $\hd^W_U = \hd^Z_U$.
\end{claim}

\begin{proof}[Proof of Claim \ref{claim:choose one}]
Suppose instead that $\hd^W_U \neq \hd^Z_U$.  Then $W \pitchfork Z$, and Lemma \ref{lem:cluster same} (which appears after the proof of the theorem) implies that $\hd^U_W = \hd^Z_W$ and $\hd^U_Z = \hd^{W}_Z$.  But since $\hy_W\neq \hd^U_W = \hd^Z_W$ and $\hy_Z \neq \hd^{U}_Z = \hd^W_Z$, we get a contradiction of $0$-consistency of $\hy$.
\end{proof}

Observe that we did not require that $U$ was $\nest_{\calU}$-minimal for the above claim, which will be useful later.

What Claim \ref{claim:choose one} allows us to do is define $\hx_U$ for $\nest_{\calU}$-minimal $U$ as follows:

\begin{itemize}
\item If $\calW_U^{\pitchfork} = \{V \in \calU| V \pitchfork U \hspace{.1in} \mathrm{ and } \hspace{.1in} \hy_W \neq (\hd^V_U)'\} \neq \emptyset$, then we set $\hx_U = \hd^W_U$ for any (and hence all, by the claim) $W \in \calW_U^{\pitchfork}$.
\item If $\calW_U^{\pitchfork} = \emptyset$, then we let $\hx_U$ be any marked point in $C_U$.
\end{itemize}

At this point, we also want to define $l_U \in F \cup \Lambda$ to be the collection of elements $a \in F \cup \Lambda$ so that $\hx_U = \ha_U$.  We set $l_U = \emptyset$ when $\hy_U$ (and hence $\hx_U$) is not a marked point.  These labels will play a similar role as they did in the construction of $\hO: \calQ \to H$, see especially Subsection \ref{subsec:honing clusters}.

For the inductive step, suppose that $U$ is not $\nest_{\calU}$-minimal and that we have defined $\hx_V$ for all $V \nest U$ and, in particular, for all $\nest_{\calU}$-minimal $V \in \calU$.

Now if $V \nest U$ and $\hx_V = \hlam_V \in \partial \hT_V$ for some $\lambda \in \Lambda$, then we set $\hx_U = \hd^V_U$.  Similarly, if there exists $V \nest U$ so that $\hx_V \in \hT_V$ is not a marked, ray, or cluster point, then we set $\hx_U = \hd^V_U$.  Note that in these cases, we either have that $\hy_V \in \partial \hT_V'$ or $\hy_V$ is not a marked point, ray, or cluster point, and so $\hy_U = (\hd^V_U)'$ by $0$-consistency of $\hy$, meaning that $\hd^V_U \in C_U$, so that we have $\hx_U \in C_U$, as required.

In the general case, $\hx_V \in \hT_V$ is either a marked or cluster point (and not a ray) for every $V \nest U$.  Set $C = C_U$ and let $\calV_U$ denote the set of $\nest_{\calU}$-minimal domains $V \nest U$ so that $\hd^V_U \in C$ (or equivalently, $(\hd^V_U)' = \hy_U$).  For each $V \in \calV_U$, set $C^V_U = \hull_C(\hd^V_U \cup \pi_C(l_V))$, namely the convex hull in $C$ of $\hd^V_U$ and the projections of the marked points or rays labeled by the elements of $l_V$ down to the convex subset $C$.  The following claim is an ``exact'' analogous version of Lemma \ref{lem:bdd int pair}:

\begin{claim}\label{claim:cluster overlap}
For every $V, W \in \calV_U$, we have $C^V_U \cap C^W_U \neq \emptyset$.  Moreover:
\begin{enumerate}
\item At least one of $\hd^V_U \in C^W_U$ or $\hd^W_U \in C^V_U$ holds.
\item If $\hd^V_U \in C^W_U$ but $\hd^W_U \notin C^V_U$, then $l_V \subset l_W$ and $C^V_U \subset C^W_U$ (and hence $C^V_U = C^V_U \cap C^W_U$).

\end{enumerate}
\end{claim}

\begin{proof}[Proof of Claim \ref{claim:cluster overlap}]
We may assume that $\hd^V_U \neq \hd^W_U$, and hence that $V \pitchfork W$.  If $C^V_U \cap C^W_U = \emptyset$, then $\hd^V_U \notin C^W_U$ and $\hd^W_U \notin C^V_U$, and it follows that $l_V \cap l_W = \emptyset$.  The BGI property of Lemma \ref{lem:collapsed tree control} and Item (6) of Lemma \ref{lem:tree control} imply that $\hd^W_V \notin \Delta^{-1}_V(\hy_V)$ and $\hd^V_W \notin \Delta^{-1}_W(\hy_W)$, and hence $\hy_V \neq (\hd^W_V)'$ and $\hy_W \neq (\hd^V_W)'$, which is a contradiction.  This proves the main part of the claim and also item (1).

For item (2), suppose that $\hd^V_U \in C^W_U$ but $\hd^W_U \notin C^V_U$.  It follows that $\hd^V_U$ is contained in one of the components of $\hT_U - \{\hd^W_U\}$ which is chosen by $l_W$, while the components of $\hT_U-\{\hd^V_U\}$ chosen by $l_V$ do not contain $\hd^W_U$.  Suppose that $a \in l_V - l_W$.  Then $\ha_U$ is a marked point in a component of $\hT_U- \{\hd^V_U\}$ not containing $\hd^W_U$, and so $\hd^V_U$ separates $\ha_U$ from $\hd^W_U$.  Since every leaf of $\hT_U$ is a marked point, it follows that there exists some $b \in l_W$ so that $\hb_U$ is contained in the component of $\hT_U - \{\hd^W_U\}$ containing $\hd^V_U$, with $\hd^V_U$ separating $\hb_U$ from $\hd^W_U$.  It follows then that  the BGI property of Lemma \ref{lem:collapsed tree control} implies that $\ha_W = \hb_W = \hx_W$, which is a contradiction.  Hence $l_V \subset l_W$.  The fact that $C^V_U \subset C^W_U$ now follows from the facts that $l_V \subset l_W$ and $\hd^V_U \in C^W_U$, completing the proof of (2) and the lemma.

\end{proof}

With this claim in hand, we have two main subcases, whose specifics we state as the following claim, part of which is analogous to Lemma \ref{lem:extended helly}:

\begin{claim}\label{claim:cluster intersect}
The following hold:
\begin{enumerate}
\item If $\bigcap_{V \in \calV_U} l_V \neq \emptyset$, then either
	\begin{enumerate}
	\item The diameter of $C_U$ is infinite and there exists some $\lambda \in \Lambda$ so $\hlam_U \in \partial C_U$ and $\bigcap_{V \in \calV_U} l_V = \{\lambda\}$, or
	\item Otherwise, every $a \in \bigcap_{V \in \calV_U} l_V$ labels a marked point of $C_U$ (i.e., does not label an infinite end of $C$).
	\end{enumerate}
\item If $\bigcap_{V \in \calV_U} l_V = \emptyset$ and $\diam (C_U) = \infty$, then there exist $\lambda_1, \dots, \lambda_n\in \Lambda$ labeling the infinite ends of $C_U$, and $V_1, \dots, V_n \in \calV_U$, so that
\begin{enumerate}
\item $l_{V_i} = F \cup \Lambda - \{\lambda_i\}$ for each $i$,
\item If $C_{fin}= \bigcap_i C^{V_i}_U$, then $C_{fin} \neq \emptyset$ and $\diam (C_{fin})< \infty$, and
\item $C^V_U \cap C^W_U \cap C_{fin} \neq \emptyset$ for all $V,W \in \calV_U$.
\end{enumerate}
\end{enumerate}
\begin{itemize}
\item Consequently, combining the above with the Helly property for trees provides that $\bigcap_{V_U} C^V_U \neq \emptyset$ and has finite diameter.
\end{itemize}
\end{claim}

\begin{proof}[Proof of Claim \ref{claim:cluster intersect}]
Suppose first that $\bigcap_{V \in \calV_U} l_V \neq \emptyset$.  Suppose that the diameter of $C_U$ is infinite and that $\lambda \in \Lambda$ with $\hlam_U \in \partial C_U$ (by which we mean that $\hlam_U$ represents a ray in the Gromov boundary of the tree $C_U$), with $\lambda \in \bigcap_{V \in \calV_U} l_V$.  We will prove that $\bigcap_{V \in \calV_U} l_V = \{\lambda\}$.

Since we are assuming that $\hy_U \notin \partial \hT'_U$, there exists a sequence $V_n \in \calV_U$ with $\hd^{V_n}_U$ monotonically escaping out the end of $C_U$ corresponding to $\hlam_U$, as earlier in the proof of this theorem.  Taking an element $V_n$ of this sequence sufficiently far out the end corresponding to $\hlam_U$ gives that $\hd^{V_n}_U$ partitions $\hlam_U$ away from every other marked point and branched point of $\hT_U$.  As a consequence $C^{V_n}_U$ contains only $\hlam_U$, and hence $l_{V_n} = \{\lambda\}$, which forces $\bigcap_{V \in \calV_U} l_V = \{\lambda\}$, as required.

As a consequence, if (1)(a) does not hold, then the only other possibility is that every $a \in \bigcap_{V \in \calV_U} l_V$ labels a marked point of $C_U$ (i.e., $a \in F \cup \Lambda$ but $\ha_U \notin \partial \hT_U$), proving the dichotomy of items (1)(a) and (1)(b).

To deduce (2), suppose that $\bigcap_{V \in \calV_U} l_V = \emptyset$ and $\diam (C_U) = \infty$.  Let $\lambda_1, \dots, \lambda_n \in \Lambda$ label the infinite ends of $C_U$.  For each $i$, there exists a sequence of elements of $\calV_U$ whose $\hd$-sets monotonically escape out the end corresponding to $(\hlam_i)_U$.  Since $\lambda_i \notin \bigcap_{V \in \calV_U} l_V = \emptyset$, it follows that infinitely-many of the elements $V$ in this sequence must satisfy $l_{V_i} = F \cup \Lambda - \{\lambda_i\}$.  Choose such a $V_i \in \calV_U$ for each $i$.  Observe that $C_{fin} = \bigcap_i C^{V_i}_U \neq \emptyset$, as $\lambda_j \in l_{V_i}$ for all $i \neq j$, and also $\diam (C_{fin}) <\infty$, as every infinite end of $C_U$ is cut off in this intersection.

As in the proof of Claim \ref{claim:C_fin}, set $E(\Lambda, C_U)$ to be the set of $W \in \calV_U$ so that $\hd^W_U$ separates  $(\hlam_i)_U$ from $\hd^{V_i}_U$ for some $i$.  As in that proof, we have the following observations:
\begin{enumerate}
\item Each $W \in E(\Lambda, C_U)$ uniquely determines an $i$, and we necessarily have $l_W = F \cup \Lambda - \{\lambda_i\}$ and so $C_{fin} \subset C^{V_i}_U \subset C^W_U$.  \item On the other hand, if $W \notin E(\Lambda,C_U)$, then the definition of each $V_i$ implies that $\hd^W_U \in C^{V_i}_U$ for each $i$, and hence $\hd^W_U \in C_{fin}$.
\end{enumerate}

With these observations in hand, let $V,W \in \calV_U$ and we prove that $C^V_U \cap C^W_U \cap C_{fin} \neq \emptyset$ for all $V, W \in \calV_U$, which will complete the proof of the claim.  If $V, W \in E(\Lambda, C_U)$, then observation (1) implies that $C_{fin} \subset C^V_U \cap C^W_U$.  If $V \in E(\Lambda,C_U)$ while $W$ is not, then combining both observations gives $\hd^W_U \in C_{fin} \subset C^V_U$, which is what we wanted.  Finally, if $V, W \notin E(\Lambda, C_U)$, then Claim \ref{claim:cluster overlap} implies that, without loss of generality, $\hd^V_U \in C^W_U$, while observation (2) says that $\hd^V_U \in C_{fin}$, implying $\hd^V_U \in C^V_U \cap C^W_U \cap C_{fin}$.  This completes the proof of part (2) and hence of the claim.
\end{proof}

Observe that $\bigcap_{V \in \calV_U} C^V_U$ may contain many marked points and cluster points, and we want to be able to choose a unique coordinate $\hx_U$ inside of this intersection.  To do so, we need to use the other information encoded in these marked and cluster points.  Claim \ref{claim:choose one} allows us to decide between the marked points, and the following claim allows us to choose between the cluster points:

First, set $\calW_U^{\nest} = \{V \nest U| (\hd^U_V)'(\hy_U) \neq \hy_V\}$.  We need a claim which is the nesting version of Claim \ref{claim:choose one}:

\begin{claim}\label{claim:choose two}
Suppose $V, W \in \calW_U^{\nest}$, then $\hd^V_U = \hd^W_U$.
\end{claim}

\begin{proof}[Proof of Claim \ref{claim:choose two}]
Suppose for a contradiction that $\hd^W_U \neq \hd^V_U$, so that $V \pitchfork W$.  Then item (10) of Lemma \ref{lem:collapsed tree control} implies that $\hd^V_W = \hd^U_W(\hd^V_U)$ and $\hd^W_V = \hd^U_V(\hd^W_U)$.  But then $\hy_W \neq (\hd^V_W)' = (\hd^U_W)'((\hd^V_U)')$ and $\hy_V \neq (\hd^W_V)' = (\hd^U_V)'((\hd^W_U)')$, which contradicts $0$-consistency of $\hy$, completing the proof.
\end{proof}

What Claim \ref{claim:choose two} allows us to do is choose between cluster points in $\bigcap_{V \in \calV_U} C^V_U$, in the same way Claim \ref{claim:choose one} will allow us to choose between marked points.

We need one final claim, which will allow us to deal with the situation when $\bigcap_{V \in \calV_U} C^V_U$ contains both marked points and cluster points, since it says that we are forced into a consistent choice:

\begin{claim}\label{claim:good choice}
The following holds:
\begin{enumerate}
\item If there exists $V \pitchfork U$ so that $\hy_V \neq (\hd^U_V)'$, then for all $W \in \calV_U$ with $\hd^W_U \neq \hd^V_U$, we have $\hx_W = \hd^V_W$.
\item If there exists $W \in \calV_U$ with $\hy_W \neq (\hd^U_W)'(\hy_U)$, then for all $V \pitchfork U$ with $\hd^W_U \neq \hd^V_U$, we have $\hy_V = (\hd^U_V)'$.
\end{enumerate}
\begin{itemize}
\item In particular, if $V \in \calW_U^{\pitchfork}$ and $W \in \calW_U^{\nest}$, then $\hd^V_U = \hd^W_U$.
\end{itemize}
\end{claim}

\begin{proof}
Suppose $V \pitchfork U$ as in item (1), and let $W \in \calV_U$ with $\hd^W_U \neq \hd^V_U$.  Then $W \pitchfork V$, and since $W \nest U$, we have $(\hd^W_V)' = (\hd^U_V)' \neq \hy_V$, and hence $\hx_W = \hd^V_W$, by definition of $\hx_W$.

Now suppose $W \in \calV_U$, as in item (2), and that $V \pitchfork U$ with $\hd^W_U \neq \hd^V_U$. Then $W \pitchfork V$.  By $0$-consistency of $\hy$, we have that $\hy_U = (\hd^W_U)'$.  But since $\hd^V_U \neq \hd^W_U$, we have $(\hd^V_U)' \neq (\hd^W_U)'$, and so $\hy_V = (\hd^U_V)' = (\hd^W_V)'$ since $W \nest U$.  This completes the proof.
\end{proof}

We are finally ready to define our tuple $\hx$, which we do domain-wise for each $U \in \calU$:

\begin{enumerate}
\item If $\hy_U  = \hlam'_U \in \partial \hT'_U$, then we set $\hx_U = \hlam_U \in \hT_U$.
\item If $\hy_U$ is not a marked or cluster point of $\hT'_U$, then we let $\hx_U \in C_U$ be any point in $\Delta^{-1}_U(\hy_U)$.
\item For any $U$ of types (1) or (2): If $V \pitchfork U$ or $U \nest V$, we set $\hx_V = \hd^U_V$, and if $U \nest V$, we set $\hx_V = \hd^U_V(\hx_U)$.
\item If $U$ is $\nest_{\calU}$-minimal and $\hy_U$ is a marked point of $\hT'_U$, then either \label{item:minimal choice}
\begin{enumerate}
\item The set $\calW_U^{\pitchfork} = \{W \in \calU| W \pitchfork U \hspace{.1in}  \textrm{and } \hspace{.1in} (\hd^U_W)' \neq \hy_W\}$ is nonempty, and we set $\hx_U = \hd^W_U$ for any (and hence all by Claim \ref{claim:choose one}) $W \in \calW_U^{\pitchfork}$;
\item $\calW_U^{\nest} = \{V \nest U| (\hd^U_V)'(\hy_U) \neq \hy_V \hspace{.1in} \textrm{and} \hspace{.1in} (\hd^U_W)'(\hy_U) \neq \hy_W\}$ is nonempty, and we set $\hx_U = \hd^V_U$ for any (and hence all by Claim \ref{claim:choose two}) $V \in \calW_U^{\nest}$; or 
\item $\calW_U^{\pitchfork}$ and $\calW_U^{\nest}$ are both empty, and we set $\hx_U$ to be any point of $\hT^e_U$ contained in $C_U$, in particular a point which is not a cluster or marked point. 
\end{enumerate}
\item Proceeding inductively, if $U$ is not in cases (1)--(4) above, then Claim \ref{claim:cluster intersect} gives the following possibilities: \label{item:general choice}
\begin{enumerate}
\item The diameter of $C_U$ is infinite and there is some $\lambda \in \Lambda$ so that $\hlam_U \in \partial C_U$ and $\bigcap_{V \in \calV_U} l_V = \{\lambda\}$.  In this case, we set $\hx_U = \hlam_U \in \partial \hT_U$.
\item We have $\bigcap_{V \in \calV_U} C^V_U \neq \emptyset$ and has finite diameter.  Then there are three cases:
\begin{enumerate}
\item $\hx_U = \hd^W_U$ for any $W \in \calW_U^{\pitchfork}$ if $ \calW_U^{\pitchfork} \neq \emptyset$, \label{item:choose transverse}
\item $\hx_U = \hd^V_U$ for any $V \in \calW_U^{\nest}$ when $\calW_U^{\nest} \neq \emptyset$, or \label{item:choose nested}
\item Otherwise, both $\calW_U^{\pitchfork}$ and $\calW_U^{\nest}$ are empty, and we set $\hx_U$ to be any point in $\hT^e_U \cap C_U$.
\end{enumerate}
\end{enumerate}
\end{enumerate}

Note that the choice in item \eqref{item:general choice} above is well-defined by Claim \ref{claim:good choice}.

Finally, we prove that $\hx = (\hx_U)$ is $0$-consistent by a case-wise analysis, similar to the proof of Proposition \ref{prop:Q-consistent}.  Showing this will complete the proof of surjectivity of $\Delta: \oQ \to \oQ'$.

\vspace{.1in}

\underline{$U \pitchfork V$:} We may assume that $\hx_V \neq \hd^U_V$, for we are done otherwise.  Then $\hy_V \neq (\hd^U_V)'$, and so $\hy_U = (\hd^V_U)'$ by $0$-consistency of $\hy$.  It follows that $\hd^V_U \in C_U = \Delta^{-1}_U(\hy_U)$, and note that $\hd^V_U = \ha_U$ for some $a \in F \cup \Lambda$ by Lemma \ref{lem:collapsed tree control}, with $\ha_U$ not representing a ray in $\hT_U$.  If $U$ is $\nest_{\calU}$-minimal, then our choice of $\hx_U$ in \eqref{item:minimal choice} says that $\hx_U = \hd^V_U$, as required.

Now suppose that $U$ is not $\nest_{\calU}$-minimal.  Set $\calV_U$ to be the set of $\nest_{\calU}$-minimal $W \nest U$ with $\hd^W_U \in C_U$.  Let $W \in \calV_U$ be so that $\hd^W_U \neq \hd^V_U$.  Then $W \pitchfork V$ and $(\hd^W_V)' = (\hd^U_V)' \neq \hy_V$, and so $\hx_W = \hd^V_W = \ha_W$, with the first equality following from our choice in \eqref{item:minimal choice} and the last equality by $0$-consistency of $a$.  But this says that $a \in l_W$, and hence $\ha_U \in C^W_U$ for all $W \in \calV_U$ (including those for which $\hd^W_U = \hd^V_U = \ha_U$).

It follows then that $\ha_U \in \bigcap_{W \in \calV_U}C^W_U$ and so $\hx_U = \ha_U$ by definition of $\hx$ in item \eqref{item:choose transverse} above.

\vspace{.1in}

\underline{$V \nest U$:} We may assume that $\hx_V \neq \hd^U_V(\hx_U)$, for we are done otherwise.  Then $\hy_V \neq (\hd^U_V)'(\hy_U)$, and $0$-consistency of $\hy$ implies that $\hy_U = (\hd^V_U)'$, and thus $\hd^V_U \in C_U = \Delta^{-1}_U(\hy_U)$.  We claim that for all $W \in \calV_U$, we have $\hd^V_U \in C^W_U$.

To see this, is suffices to consider $W \in \calV_U$ with $\hd^W_U \neq \hd^V_U$.  Then $W \pitchfork V$.  But $\hx_V \neq \hd^U_V(\hx_U)$, so if also $\hd^V_U \notin C^W_U$, then $\hx_V \neq \hd^U_V(\hd^W_U) = \hd^W_V$, while also $\hx_W \neq \hd^U_W(\hd^V_U) = \hd^V_W$, which both use item (10) of Lemma \ref{lem:collapsed tree control}.  But then $\hy_V \neq (\hd^W_V)'$ and $\hy_W \neq (\hd^V_W)'$, which contradicts $0$-consistency of $\hy$.

Hence $\hd^V_U \in C^W_U$ for all $W \in \calV_U$.  It follows that $\hd^V_U \in \bigcap_{W \in \calV_U} C^W_U$, and then \eqref{item:choose nested} says that $\hx_U = \hd^V_U$.

\vspace{.1in}

Hence $(\hx)$ is $0$-consistent.  Since $\Delta(\hx) = \hy$ because $\hx_U \in \Delta^{-1}_U(\hy_U)$ for all $U \in \calU$, we are done with the proof of surjectivity of $\Delta: \oQ \to \oQ$.

\vspace{.1in}

\textbf{\underline{$\Delta|_{\calQ}: \calQ \to \calQ'$ is a quasi-isometry}}: Since $\Delta|_{\calQ}$ is surjective by the above, it suffices to relate distances.  For that, note that $d_{\hY}\geq d_{\hY'}$, so we only need to prove that $d_{\calQ} \prec d_{\calQ'}$, and for this it suffices to bound the distances in each $U \in \calU$.

Let $\hx, \hy \in \calQ$ with $\hx' = \Delta(\hx)$ and $\hy' = \Delta(\hy)$.  We may assume that $\hx_U \neq \hy_U$ for any $U\in \calU$ that we will consider.  We begin by making a useful reduction.

Observe that item (8) of Lemma \ref{lem:collapsed tree control} implies that there are at most boundedly-many domains $U \in \calU$ for which $\hx_U,\hy_U$ are in the same component of $\hT^e_U$, where the bound is controlled by $\mathfrak S$ and $|F \cup \Lambda|$.  For such $U$, we have $d_{\hT_U}(\hx_U, \hy_U) \leq d_{\hT'_U}(\hx'_U, \hy'_U) + B^2$ because at most $B$-many subtrees of diameter $B$ have been deleted from that edge.  Hence we may deal with these boundedly-many domains with an additive constant in the distance estimate comparing $d_{\calQ}(\hx,\hy)$ and $d_{\calQ'}(\hx',\hy')$. Hereafter, we assume that at least one of $\hx_U, \hy_U$ coincides with a cluster, or they are separated by at least one cluster.

\begin{figure}
    \centering
    \includegraphics[width=.8\textwidth]{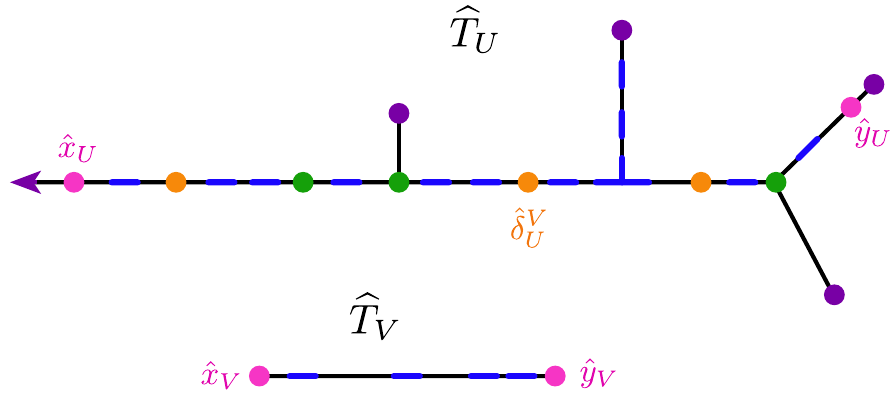}
    \caption{The proof of Theorem \ref{thm:tree trimming}: In case (1), we consider a $\nest_{\calU}$-minimal domain like $V$, which is guaranteed to have a large diameter while also only containing a bounded number of trimming subtrees, and hence trimming only affects the distance between $\hx_V, \hy_V$ by a bounded amount. In case (2), the geodesic in $\hT_U$ between $\hx_U,\hy_U$ (pink) must cross a number of marked cluster points and hence potentially arbitrarily many trimmed subtrees (blue).  Some of these may be sporadic (green), but there are only boundedly-many of these across all $\hT_U$.  Hence the cluster points labeled by $\nest_{\calU}$-minimal bipartite domains decompose $[\hx_U, \hy_U]_{\hT_U}$ into components which contain only boundedly-many trimmed subtrees, allowing for these domains to account for the lost distance by a bounded multiplicative constant.} 
    \label{fig:tree_trim}
\end{figure}

There are two cases, (1) $U$ is $\nest_{\calU}$-minimal and (2) otherwise.

\underline{Case (1)}: $\hT^e_U$ has boundedly-many components, where the bound depends on $|F \cup \Lambda|$, as each collapsed point is a marked point.  So there are at most $B|F \cup \Lambda|$-many subtrees of diameter at most $B$ in $\hT^e_U$ being collapsed.  By item (8) of Lemma \ref{lem:collapsed tree control} and Proposition \ref{prop:bipartite}, there are at most boundedly-many (in $\mathfrak S, |F \cup \Lambda|$) such $\nest_{\calU}$-minimal $U$ for which either one of $\hx_U, \hy_U$ is not a cluster point or $U$ is not bipartite.  We may deal with such $U$ by increasing the additive constant in the distance estimate.

The general case is where $U$ is bipartite and $\hx_U, \hy_U$ are the distinct endpoint clusters of the interval $\hT_U$ (Lemma \ref{lem:bipartite properties}).  In this case, 
$$d_{\hT'_U}(\hx'_U,\hy'_U)  \geq d_{\hT_U}(\hx_U,\hy_U) - B^2 \geq K - B^2 - N,$$ where $N = N(\mathfrak S, |F \cup \Lambda|)>0$ by item (6) of Lemma \ref{lem:collapsed tree control}.   Hence by choosing $K = K(\mathfrak S, |F \cup \Lambda|)>0$ sufficiently large, we can guarantee that
$$d_{\hT_U}(\hx_U,\hy_U) \leq d_{\hT'_U}(\hx'_U,\hy'_U) + B^2 \leq (1+B^2)d_{\hT'_U}(\hx'_U,\hy'_U)$$
and thus account for these domains via a bounded multiplicative error.  Moreover, by choosing $K$ sufficiently large, we can guarantee that $\diam(\hT'_U)$ is as large as necessary, which will allow us to use a multiplicative error and these $\nest_{\calU}$-minimal bipartite domains to account for domains up the $\nest$-lattice in the other case.

\underline{Case (2)}:  Suppose that $U \in \calU$ is not $\nest_{\calU}$-minimal.  Since $\hx_U \neq \hy_U$, there are some $k \leq 2$, intervals $E_1, \dots, E_k \subset \hT^e_U$, and cluster points $C_1, \dots, C_{k-1}$ with $E_i, E_{i+1}$ connected at $C_i$ for $1 \leq i \leq k-1$, and each $C_i$ separating $\hx_U$ from $\hy_U$.

Note that it is possible that each $C_i$ is \emph{sporadic}, namely $C_i$ either is
\begin{itemize}
\item a marked point cluster, or
\item an internal cluster containing no $10E'$-bipartite domains for which $U$ is a witness (Definition \ref{defn:bipartite}).
\end{itemize}

However, the set of such sporadic clusters across $\hT_U$ for all $U \in \calU$ is bounded in terms of $|F \cup \Lambda|$ and $10E'$, which is bounded in terms of $\mathfrak S, |F \cup \Lambda|$. In particular, $k < N_s = N_s(\mathfrak S, |F \cup \Lambda|)$.  In particular, the number of edge components adjacent to such clusters is bounded, and so the total contributed distance loss between $\hx,\hy$ and $\hx',\hy'$ from collapsing at most $B$-many subtrees in any of these components is bounded in terms of $\calX, |F \cup \Lambda|$, and $B$.  Thus we can deal with this lost distance with an additive constant. 

Hence, for every $U \in \calU$ which is not $\nest_{\calU}$-minimal, the geodesic $[\hx_U,\hy_U] \subset \hT_U$ can be decomposed along non-sporadic cluster points $C'_1, \dots, C'_n$ so that each edge in the complement of the $C'_i$ has only boundedly-many bounded diameter subintervals collapsed.  In other words, there exists $B'' = B''(\calX, |F\cup \Lambda|,B)>0$ so that
$$d_{\hT_U}(C'_i, C'_{i+1}) \leq d_{\hT'_U}(\Delta_U(C'_i),\Delta_U(C'_{i+1})) + B''.$$

As the $C'_i$ are non-sporadic clusters, each coincides with $\hd^{V_i}_U$ for some $\nest_{\calU}$-minimal $10E'$-bipartite domain $V_i \in \calU$ for which $U$ is a witness.  This means (Definition \ref{defn:bipartite}) that the $10E'$-neighborhood of $\delta^{V_i}_U$ avoids all marked points and branched points of $T_U$.  Since $\hx_U,\hy_U \neq \hd^{V_i}_U$ for each $i$, we have that $\hx_{V_i}, \hy_{V_i}$ must coincide with the marked point clusters contained in the two components of $T_U - \delta^{V_i}_U$.  In particular, $\hx_{V_i} \neq \hy_{V_i}$ are at distinct marked point clusters, and thus $d_{\hT_{V_i}}(\hx_{V_i}, \hy_{V_i}) \succ K$ because $V_i$ is bipartite and $\nest_{\calU}$-minimal (this uses item (6) of Lemma \ref{lem:collapsed tree control}).  Moreover, by choosing $K$ sufficiently large, we can guarantee that $d_{\hT'_{V_i}}(\hx'_{V_i}, \hy'_{V_i}) > 1$.  As a consequence, we have
\begin{eqnarray*}
d_{\hT_U}(C'_i, C'_{i+1}) &\leq& d_{\hT'_U}(\Delta_U(C'_i),\Delta_U(C'_{i+1}))+ B''\\
&\leq& d_{\hT'_U}(\Delta_U(C'_i),\Delta_U(C'_{i+1})) + B''d_{\hT'_{V_i}}(\hx'_{V_i}, \hy'_{V_i}).
\end{eqnarray*}

Hence we can account for the lost distance along $[\hx_U, \hy_U]$ between each pair of successive non-sporadic clusters $C'_i, C'_{i+1}$ by distances in the $\nest_{\calU}$-minimal bipartite $V_i$.  Similarly, we can use $V_1, V_k$ to account for any lost distance between $\hx_U, C'_1$ and $C'_n, \hy_U$, respectively.  Since each domain in $\calU$ nests into at most boundedly-many domains in $\calU$ by the Covering Lemma \ref{lem:covering}, the proof of case (2) is complete.

Finally, the ``moreover'' part is clear: if only boundedly-many trees are trimmed, then the global distance is only decreased by the sum of all of the diameters of the collapsed subtrees.  Hence only an additive error is necessary.  This completes the proof.
\end{proof}

The following lemma was used in the proof:

\begin{lemma}\label{lem:cluster same}
Suppose that $U, V, W \in \calU$ are pairwise transverse and $\hd^V_U \neq \hd^W_U$.  Then $\hd^U_V = \hd^W_V$ and $\hd^U_W = \hd^V_W$.
\end{lemma}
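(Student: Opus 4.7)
My proof plan for Lemma \ref{lem:cluster same} proceeds as follows. The statement is essentially a strong form of the Behrstock inequality for three pairwise transverse domains translated into the collapsed tree setting, and the heart of the argument is the standard HHS fact that among three pairwise transverse $U,V,W$, at most one of the coarse distances $d_U(\rho^V_U,\rho^W_U)$, $d_V(\rho^U_V,\rho^W_V)$, $d_W(\rho^U_W,\rho^V_W)$ can be large.

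First I would translate the hypothesis into the hyperbolic spaces. Since $\hd^V_U \neq \hd^W_U$, the shadows $\delta^V_U$ and $\delta^W_U$ lie in distinct clusters of $T_U$, so by construction of the clusters in Subsection \ref{subsec:shadows} we have $d_{T_U}(\delta^V_U, \delta^W_U) > r$. Pushing forward through the $(C,C)$-quasi-isometry $\phi_U$ and using item (1) of Lemma \ref{lem:tree control}, this gives $d_U(\rho^V_U, \rho^W_U) \succ r$. By choosing the cluster separation constant $r$ sufficiently large (which is allowed since $r$ is a free parameter of the construction), we can ensure $d_U(\rho^V_U, \rho^W_U) > 100E$.

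Next I would use Partial Realization (Axiom \ref{item:dfs_partial_realization}) applied to the singleton $\{V\}$, with $V$-coordinate any point of $\pi_V(\calX)$, to produce $x \in \calX$ satisfying $d_U(x, \rho^V_U) < \alpha$ and $d_W(x, \rho^V_W) < \alpha$, where $\alpha$ depends only on $\calX$ (here we use that $V \pitchfork U$ and $V \pitchfork W$). From $d_U(x, \rho^V_U) < \alpha$ and $d_U(\rho^V_U, \rho^W_U) > 100E$ we get $d_U(x, \rho^W_U) > 100E - \alpha \gg \kappa_0$, so the transverse consistency inequality \eqref{eq:transverse consistency} applied to the pair $U \pitchfork W$ at the point $x$ forces $d_W(x, \rho^U_W) \leq \kappa_0$. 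Combined with $d_W(x, \rho^V_W) < \alpha$, the triangle inequality yields $d_W(\rho^U_W, \rho^V_W) \leq \kappa_0 + \alpha$, a constant depending only on $\calX$.

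Pulling this bound back to $T_W$ via $\phi_W^{-1}$ gives $d_{T_W}(\delta^U_W, \delta^V_W) \leq C(\kappa_0 + \alpha) + C$, which is bounded solely in terms of $\calX$ and $|F \cup \Lambda|$. Choosing $r$ larger than this bound (again, this is permitted by the construction) forces $\delta^U_W$ and $\delta^V_W$ to lie in the same cluster of $T_W$, so that $q_W(\delta^U_W) = q_W(\delta^V_W)$, i.e.\ $\hd^U_W = \hd^V_W$. The equality $\hd^U_V = \hd^W_V$ is obtained by the symmetric argument, applying Partial Realization to $\{W\}$ in place of $\{V\}$ and swapping the roles of $V$ and $W$ throughout. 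The main (very mild) obstacle is the bookkeeping to ensure the cluster separation constant $r$ is simultaneously larger than the threshold for ``different cluster implies large projection'' and larger than the bounded distance produced by consistency; since both bounds depend only on $\calX$ and $|F \cup \Lambda|$ and $r$ is a free parameter of the same dependence, this causes no circularity.
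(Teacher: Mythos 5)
Your proof is correct and takes essentially the same approach as the paper: both arguments translate the hypothesis into a large gap $d_U(\rho^V_U,\rho^W_U)\gg E$, produce a realization point $x$ lying near the relevant $\rho$-sets, and then invoke the transverse consistency inequality to bound the remaining $\rho$-set distance and collapse it into a single cluster. The only cosmetic difference is that you apply Partial Realization to $\{V\}$ and argue directly, whereas the paper picks $x\in\PP_W$ (the product region for $W$) and derives a contradiction from $E$-consistency of $x$ at the pair $U\pitchfork V$; these are interchangeable realization tools and the underlying mechanism is identical.
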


\begin{proof}
By choosing the cluster separation constant $r$ sufficiently large, we can guarantee that $d_U(\rho^V_U, \rho^W_U)> 100E$ since $\hd^V_U \neq \hd^W_U$.  Thus if also $\hd^U_V \neq \hd^W_V$, we also have $d_V(\rho^U_V, \rho^W_V)>100E$.  Now choose $x \in \PP_W$ the product region in $\calX$ for $W$ (Subsection \ref{subsec:product region}), making $d_U(x,\rho^W_U)<E$ and $d_V(x, \rho^W_V)<E$.  It follows that $d_U(x,\rho^V_U)>E$ and $d_V(x, \rho^U_V)>E$, which contradicts the $E$-consistency of $x$.  This completes the proof.
\end{proof}

\subsection{Replacing $\hT_U$ with simplicial trees}

In this subsection, we quickly observe how to use Tree Trimming \ref{thm:tree trimming} to replace all $\hT_U$ with simplicial trees which encode the relative projection data into the vertices while only changing $\calQ$ up to quasi-isometry with controlled constants.  This refinement will be necessary produce the cubical structure of $\calQ$ via a direct argument later in Section \ref{sec:Q is cubical}.

The goal is to replace each $\hT_U$ with a simplicial tree (i.e., a 1-dimensional cube complex), where all cluster points and marked points are vertices.  The issue is that the intervals between cluster and marked points can have non-integer lengths, and the idea is to trim any such interval by collapsing a subinterval of length less than $1$.  Tree Trimming \ref{thm:tree trimming} will tell us that the resulting quotient map is a uniform quasi-isometry.

To setup notation, fix $U \in \calU$.  Observe that each component $E \subset \hT^e_U$ is a tree with valence bounded by $|F \cup \Lambda|$ and $\calX$, with leaves all cluster points or marked points of $\hT_U$, and with no internal marked or cluster points.  By choosing the cluster separation constant $r$ sufficiently large, we we may choose intervals $J_1, \dots, J_n$, with 
\begin{itemize}
\item $n$ controlled by $\calX$ and $|F \cup \Lambda|$;
\item each $J_i$ adjacent to a distinct leaf of $E$;
\item $|J_i|<1$ for each $i$;
\item the distances between each pair of leaves in $E$ is larger than $2$;
\item and if we collapse each $J_i$ to a point, the resulting tree $E \to E'$ has the property that all distances between leaves in $E'$ are integers.
\end{itemize}

Let $\Delta_U:\hT_U \to \hT'_U$ denote the map that quotients each $J_i$ to a point for all such intervals across all edge components $E \subset \hT^e_U$ for every $U \in \calU$.  Observe that each collapsed interval $J$ is identified with the cluster point to which it was adjacent.  We again refer to the images of marked and cluster points in the $\hT'_U$ using the same terminology.  Note that by construction, every pair of cluster points or marked points in each $\hT'_U$ are at integer distance, and thus we can subdivide the edges of each $\hT'_U$ to make it a simplicial tree with marked and cluster points at vertices.

Finally, set $\calY' = \prod_{U \in \calU} \hT'_U$, let $\oQ'$ be the set of $0$-consistent tuples, and let $\Delta:\calY \to \calY'$ all be as defined in the previous subsection.  It follows from Tree Trimming \ref{thm:tree trimming} that the map $\Delta:\calQ \to \calQ'$ is a uniform quasi-isometry, since only boundedly-many (in $\calX$ and $|F \cup \Lambda|$) subtrees of diameter at most $1$ are collapsed in each edge component.

The following summarizes this discussion:

\begin{corollary}\label{cor:simplicial structure}
For any finite collection $F \cup \Lambda$ of interior points, boundary points, and hierarchy rays in an HHS $\calX$, let $\{\hT_U\}_{U \in \calU}$ and $\calQ$ be the family of collapsed trees and $0$-consistent set as produced in Section \ref{sec:Q}.  Then up to a uniform quasi-isometry of $\calQ$ (in $\calX$ and $|F \cup \Lambda|)$, we may assume that every $\hT_U$ is a simplicial tree with all collapsed points and marked points being vertices.
\end{corollary}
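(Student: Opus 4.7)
The plan is to verify that the collapsing described in the paragraphs preceding the corollary satisfies the hypotheses of the Tree Trimming Theorem \ref{thm:tree trimming}, and then to observe that the resulting trees naturally carry a simplicial structure with marked and cluster points at vertices.

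First, I would make explicit the construction already set up. For each $U \in \calU$ and each edge component $E \subset \hT^e_U$, the number of leaves (which are cluster or marked points of $\hT_U$ adjacent to $E$) is bounded in terms of $\calX$ and $|F \cup \Lambda|$, since the branching of each $\hT_U$ and the total number of marked points and rays are uniformly bounded, and, by choosing the cluster separation constant $r$ sufficiently large, we may assume all leaves of $E$ are pairwise at distance greater than $2$. Thus we can select intervals $J_1, \ldots, J_n \subset E$ adjacent to each leaf, of length strictly less than $1$, and chosen so that collapsing all $J_i$ yields a tree $E'$ in which every pair of leaves is at integer distance. The total number of collapsed intervals in each $\hT^e_U$, and hence in each component of $\hT^e_U$, is bounded by some $B = B(\calX, |F \cup \Lambda|)$, and each has diameter less than $1 < B$.

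Now I would invoke Theorem \ref{thm:tree trimming} directly. By choosing the largeness threshold $K$ sufficiently large (which is permitted, since $B$ depends only on $\calX$ and $|F \cup \Lambda|$, while $K$ can be chosen independently and as large as necessary), we have $B < B_{tt}$. The theorem then gives that $\Delta : \oQ \to \oQ'$ is a bijection sending $\calQ$ onto $\calQ'$ and $\calQ^{\infty}$ onto $(\calQ^{\infty})'$, and that the restriction $\Delta|_{\calQ} \colon \calQ \to \calQ'$ is an $(L, L)$-quasi-isometry for some $L = L(\calX, |F \cup \Lambda|)$. This gives the desired uniform quasi-isometry statement.

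Finally, I would observe that by construction every pair of cluster and marked points in each $\hT'_U$ lies at integer distance. Consequently, subdividing each edge of $\hT'_U$ at integer points endows $\hT'_U$ with the structure of a simplicial tree (i.e.\ a one-dimensional CAT(0) cube complex) in which every marked point and every cluster point is a vertex. This simplicial structure is canonical given the collapsing, and since the product metric on $\calQ'$ is unchanged by merely subdividing edges of the factor trees, the quasi-isometry $\Delta|_{\calQ} \colon \calQ \to \calQ'$ furnished by Tree Trimming is unaffected. No step presents a real obstacle: the entire argument is a verification that the heuristic construction described before the corollary fits the hypotheses of Theorem \ref{thm:tree trimming}, with the only minor care being to ensure (via the flexibility in choosing $r$ and $K$) that the collapsed intervals can simultaneously be made short, bounded in number per edge component, and arranged to yield integer distances between leaves.
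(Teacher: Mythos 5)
Your proposal is correct and takes essentially the same approach as the paper: collapse short intervals adjacent to the leaves of each edge component (using the flexibility in the cluster separation constant $r$) to force integer distances between leaves, invoke the Tree Trimming Theorem \ref{thm:tree trimming} to obtain a uniform quasi-isometry of $\calQ$, and then subdivide each resulting tree to make it simplicial with cluster and marked points at vertices. The one point you make explicit that the paper leaves implicit is that the bound $B < B_{tt}$ is guaranteed by choosing the largeness threshold $K$ sufficiently large.
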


\section{Coarse surjectivity of $\hPsi:H \to \calQ$}\label{sec:coarsely surjective}

In this section, we prove that $\hPsi:H \to \calQ$ is coarsely surjective, which we do by proving that the composition $\hPsi \circ \hO: \calQ \to \calQ$ is coarsely constant.  Namely:

\begin{theorem}\label{thm:coarse inverse}
There exists $B_0 = B_0(\mathfrak S, |F \cup \Lambda|, K)>0$ and a map $\hO:\calQ \to H$ so that for any $\hx \in \calQ$, we have $d_{\calQ}(\hPsi \circ \hO(\hx), \hx)<B_0$.
\end{theorem}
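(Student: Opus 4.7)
The plan is to define $\hO = \Real \circ \Phi \circ \Omega : \calQ \to H$, where $\Omega : \calQ \to \calZ_{\alpha_\omega}$ is the map from Proposition \ref{prop:Q-consistent}, $\Phi$ converts $\alpha_\omega$-consistent tuples in the product of trees into $\beta$-consistent tuples in $\prod_{U \in \mathfrak S}\calC(U)$ via Proposition \ref{prop:consist from tree}, and $\Real$ realizes such a tuple as a point of $\calX$ using Theorem \ref{thm:realization}. Hierarchical quasi-convexity of $H$ (Lemma \ref{lem:hull is hqc}) together with the gate retraction (Lemma \ref{lem:gate retract}) lets us arrange the output to land in $H$ with controlled error.

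First I would establish coarse coordinate-wise agreement: for every $U \in \calU$, the coordinate $\hpsi_U(\hO(\hx))$ lies within some uniform distance $D = D(\mathfrak S, |F \cup \Lambda|)$ of $\hx_U$ in $\hT_U$. Indeed, writing $\Omega(\hx) = (x_U)$, Realization gives $d_U(\pi_U(\hO(\hx)), \phi_U(x_U)) \leq \theta_e$; since $\phi_U$ is a uniform quasi-isometric embedding and $p_U$ is coarsely Lipschitz, this passes to $d_{T_U}(\psi_U(\hO(\hx)), x_U) \leq D'$ for some uniform $D'$, and then $d_{\hT_U}(\hpsi_U(\hO(\hx)), \hx_U) \leq D$ using the \emph{exact} identity $q_U(x_U) = \hx_U$ from Proposition \ref{prop:Q-consistent}. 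Writing $\hy = \hPsi(\hO(\hx))$, we therefore have $d_{\hT_U}(\hx_U, \hy_U) \leq D$ for every $U \in \calU$.

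The main obstacle is that $d_{\calQ}(\hx, \hy) = \sum_{U \in \calU} d_{\hT_U}(\hx_U, \hy_U)$ is an $\ell^1$-sum over the possibly infinite index set $\calU$ (when $\Lambda \neq \emptyset$), so a uniform coordinate-wise bound does not by itself yield a uniform bound on $d_{\calQ}(\hx, \hy)$. To bridge this gap I would invoke the Tree Trimming Theorem \ref{thm:tree trimming}. For each $U \in \calU$ with $\hx_U \neq \hy_U$, the coordinate-wise estimate forces both points to occupy at most two adjacent edge components of $\hT_U^e$, meeting at a single cluster point. In each such adjacent edge component $E$, take the subsegment of $E$ lying within $D$ of the shared cluster point (or the convex hull of the two points if both lie in the same component) as the set to be collapsed by $\Delta_U$. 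This yields at most one subtree of diameter at most $D$ per edge component of $\hT_U^e$, meeting the hypothesis of Theorem \ref{thm:tree trimming} provided $K$ has been chosen large enough that $D < B_{tt}(\mathfrak S, |F \cup \Lambda|, K)$. Tree Trimming then produces a quotient $\Delta : \calQ \to \calQ'$ that is an $(L, L)$-quasi-isometry with $L = L(\mathfrak S, |F \cup \Lambda|, D)$, and by construction $\Delta_U(\hx_U) = \Delta_U(\hy_U)$ for every $U$, so $\Delta(\hx) = \Delta(\hy)$. The quasi-isometry bound then gives $d_{\calQ}(\hx, \hy) \leq L \cdot 0 + L = L$, and we take $B_0 = L$.

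The most delicate step I expect is verifying that the chosen collapsing data meets the hypothesis of Tree Trimming, particularly in the boundary cases where $\hx_U$ or $\hy_U$ is itself a cluster or marked point, and in controlling the interaction with rays in $\Lambda$. Item (8) of Lemma \ref{lem:collapsed tree control} bounds the number of domains at which either point sits in an edge component, and canonicality of $\hx \in \calQ$ rules out the infinite-monotone-sequence pathology of Remark \ref{rem:collapsing rays}; together these confirm that the trimmed object $\calQ'$ is again a valid cubical model for $H$ and that the $\Delta$-images of $\hx$ and $\hy$ genuinely coincide.
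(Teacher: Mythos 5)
Your overall architecture — define $\hO$ as realization-followed-by-retraction applied to the honed tuple $\Omega(\hx)$, establish coordinate-wise agreement in the trees, then invoke Tree Trimming \ref{thm:tree trimming} to collapse the small discrepancies — is exactly the strategy the paper follows. However, there is a genuine gap in the way you feed the coordinate-wise bound into Tree Trimming. You assert that the coordinate-wise error $D$ depends only on $\mathfrak S$ and $|F\cup\Lambda|$, but it does not: the realization error $\theta_e$ is a function of the consistency constant $\beta$ from Proposition \ref{prop:consist from tree}, and $\beta = \beta(\mathfrak S, |F\cup\Lambda|, K, \alpha)$ genuinely depends on $K$ (the dependence enters through the domains in $\mathfrak S \setminus \calU$, whose projection diameters are only bounded by $K$). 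So your $D$ is really $D(\mathfrak S, |F\cup\Lambda|, K)$. Meanwhile, the Tree Trimming threshold $B_{tt}$ also depends on $K$, and a reading of the proof of Theorem \ref{thm:tree trimming} (case (1)) shows that one needs something like $K > B^2$, so $B_{tt}$ scales sublinearly in $K$. Your closing clause ``provided $K$ has been chosen large enough that $D < B_{tt}$'' is therefore not available: both quantities grow with $K$, and there is no a priori reason that $D$ can be brought below $B_{tt}$ by enlarging $K$. The relation actually goes the wrong way.

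The paper sidesteps this circularity by proving a \emph{two-tiered} estimate (Proposition \ref{prop:coarsely surjective, trees}): a $K$-dependent global bound $D_1$ on every coordinate-wise distance $d_{T_U}(x_U, y_U)$, and — crucially — a $K$-\emph{independent} bound $2\alpha_\omega + 3E'$ that holds for all but at most $D_1$-many domains, obtained via a Strong Passing-Up argument. Tree Trimming is then applied only to the cofinitely-many domains where the discrepancy is below $2\alpha_\omega + 3E'$ (which is small enough relative to $K$ to satisfy $B < B_{tt}$), and the finitely-many exceptional domains are absorbed into the additive constant via the global $D_1$ bound. You would need to establish something analogous to this second, $K$-free bound (and the argument for it — iterating Strong Passing-Up \ref{prop:SPU} to show that widely separated coordinate errors force a large domain-level discrepancy contradicting the global bound — is the real content of Proposition \ref{prop:coarsely surjective, trees}). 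A related minor slip: a uniform bound on $d_{\hT_U}(\hx_U, \hy_U)$ alone does not force $\hx_U$ and $\hy_U$ into two adjacent edge components of $\hT_U^e$, since several cluster points could intervene within that distance; the paper's preliminary reductions (using Proposition \ref{prop:bipartite} to pass to bipartite $\nest_{\calU}$-minimal domains) are doing real work here.
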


As a corollary of this and Theorem \ref{thm:Q distance estimate}, we get the following:

\begin{corollary}\label{cor:Q qi to H}
There exists $L' = L'(\mathfrak S, |F \cup \Lambda|, K)>0$ so that $\Phi:H \to \calQ$ is an $(L',L')$-quasi-isometry.  Moreover, $\hPsi$ and $\hO$ are quasi-inverses.
\end{corollary}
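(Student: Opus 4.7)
The plan is to assemble the two deep ingredients already established: the distance comparison Theorem \ref{thm:Q distance estimate}, which equates the distance-formula sum with the $\ell^1$-distance in $\calQ$, and the coarse inverse Theorem \ref{thm:coarse inverse}, which produces coarse surjectivity of $\hPsi$ via the map $\hO$. The Distance Formula \ref{thm:DF} and the quasi-isometric identification of $H$ with its induced consistent-tuple description (Lemma \ref{lem:H qie}) will be used to convert the sum estimate into a genuine metric statement.

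First I would show that $\hPsi : H \to \calQ$ is a quasi-isometric embedding. Fix $K = K(\mathfrak S, |F\cup\Lambda|)$ large enough for both the distance formula (Theorem \ref{thm:DF}) and Theorem \ref{thm:Q distance estimate} to apply. For any $x,y \in H$, combining these gives
\[
d_H(x,y) \; \asymp \; d_{\calX}(x,y) \; \asymp \; \sum_{U \in \mathfrak S} [d_U(x,y)]_K \; \asymp \; \sum_{U \in \calU} d_{\hT_U}(\hx_U,\hy_U) \; = \; d_{\calQ}(\hPsi(x),\hPsi(y)),
\]
where the first coarse equality uses that $H$ is hierarchically quasi-convex (Lemma \ref{lem:hull is hqc}) so that the gate retraction (Lemma \ref{lem:gate retract}) makes the inclusion $H \hookrightarrow \calX$ a quasi-isometric embedding with constants depending only on $\mathfrak S, |F\cup\Lambda|$. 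All constants in this chain depend only on $\mathfrak S, |F\cup\Lambda|,$ and $K$, so there exists $L' = L'(\mathfrak S, |F\cup\Lambda|,K) > 0$ for which $\hPsi$ is an $(L',L')$-quasi-isometric embedding.

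Next, Theorem \ref{thm:coarse inverse} supplies the map $\hO : \calQ \to H$ and a constant $B_0$ with $d_{\calQ}(\hPsi \circ \hO(\hx), \hx) < B_0$ for all $\hx \in \calQ$. This says that the image $\hPsi(H)$ is $B_0$-coarsely dense in $\calQ$, so $\hPsi$ is coarsely surjective. Combined with the first step, absorbing $B_0$ into $L'$ if necessary, we conclude that $\hPsi$ is an $(L',L')$-quasi-isometry. For the ``moreover'' part, the identity $\hPsi \circ \hO \approx \id_{\calQ}$ is precisely Theorem \ref{thm:coarse inverse}. For the other composition, applying Theorem \ref{thm:coarse inverse} at $\hx = \hPsi(x)$ gives $d_{\calQ}(\hPsi(\hO \circ \hPsi(x)), \hPsi(x)) < B_0$, and then the fact that $\hPsi$ is a quasi-isometric embedding from the first step upgrades this to $d_H(\hO \circ \hPsi(x), x) \prec B_0$, with constants depending only on $\mathfrak S, |F\cup\Lambda|, K$. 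Hence $\hO$ and $\hPsi$ are quasi-inverses, completing the proof.

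There is no real obstacle here beyond careful bookkeeping of the constants: all the substantive work has already been carried out in Sections \ref{sec:distance estimates} and \ref{sec:coarsely surjective}. The only point requiring a moment of attention is confirming that the distance formula can be legitimately applied to $d_H$ rather than $d_{\calX}$, which is handled uniformly by the hierarchical quasi-convexity of $H$ and the existence of a coarsely Lipschitz gate retract from $\calX$ onto $H$.
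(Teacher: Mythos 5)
Your proof is correct and follows exactly the same route as the paper: Theorem \ref{thm:Q distance estimate} plus the Distance Formula gives the quasi-isometric embedding, and Theorem \ref{thm:coarse inverse} gives coarse density. You supply some extra (and welcome) bookkeeping — justifying the passage from $d_{\calX}$ to $d_H$ via hierarchical quasi-convexity and the gate retract, and spelling out both halves of the quasi-inverse relation — but the argument is the same as the paper's two-sentence proof.
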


\begin{proof}
The fact that $\Phi$ is a quasi-isometric embedding follows from Theorem \ref{thm:Q distance estimate} plus the Distance Formula Theorem \ref{thm:DF}, while Theorem \ref{thm:coarse inverse} says that $\Phi$ is coarsely-dense, making it a quasi-isometry.
\end{proof}

\begin{remark}
Note that our proof of the Distance Formula \ref{thm:DF} (in Corollary \ref{cor:DF lower bound} below) does not rely on either of the above results.
\end{remark}

We will break the proof of Theorem \ref{thm:coarse inverse} into two steps, first analyzing the composition at the level of the trees $T_U$, and then the collapsed trees $\hT_U$.

\subsection{Coarse surjectivity of $\Psi$}\label{subsec:coarse surject tree}

We first prove that $\Psi: H \to \calZ_{\alpha_0}$ is coarsely surjective, where $\Psi(H) \subset \calZ_{\alpha_0}$ for $\alpha_0 = \alpha_0(\mathfrak S, |F \cup \Lambda|)>0$ by Lemma \ref{lem:consist to tree}.   To do this, we need to define a map from $\Phi_{\alpha}:\calZ_{\alpha} \to H$ for any $\alpha \geq \alpha_0$, which we do as follows:

Recall that Proposition \ref{prop:consist from tree} provides a $\beta = \beta(\calX, |F\cup \Lambda|, K, \alpha)>0$ so that $\Phi(\calZ_{\alpha}) \subset \prod_{U \in \mathfrak S} \calC(U)$ consists of $\beta$-consistent tuples.  Thus we can define a map from $\Phi_{\alpha}:\calZ_{\alpha} \to H$ by
$$\Phi_{\alpha} = \ret_H \circ \Real_{\beta} \circ \Phi: \calZ_{\alpha} \to H,$$
where $\Real_{\beta}$ assigns a point of $\calX$ to any $\beta$-consistent tuple, and $\ret_H:\calX \to H$ is the coarse Lipschitz retraction provided by Lemma \ref{lem:gate retract}, whose quality is controlled by $\calX$ and $|F \cup \Lambda|$.

We now state and prove our first main step of the coarse inverse theorem.  One interpretation is that, with a distance formula-like ``metric'' on $\calZ_{\alpha}$, the composition $\Psi \circ \Phi_{\alpha}$ is coarsely the identity.  But in fact, the proposition gives much more information, since we get explicit bounds on component-wise distances, including a global bound on all component-wise distances (depending on $K$), and a bound on the number of domains satisfying a medium-size distance bound (with this size independent of $K$).

We note that, in the statement below, we need the extra flexibility of dealing with $\calZ_{\alpha}$ for $\alpha \geq \alpha_0$, because Proposition \ref{prop:Q-consistent} provides an $\alpha_{\omega} = \alpha_{\omega}(\mathfrak S, |F \cup \Lambda|)>0$ so that $\Omega: \calQ \to \calZ_{\alpha_{\omega}}$, and likely $\alpha_{\omega}> \alpha_0$.

\begin{proposition}\label{prop:coarsely surjective, trees}
For any $\alpha \geq \alpha_0$, there exist $D_1 = D_1(K,\alpha, \mathfrak S, |F \cup \Lambda|)>0$ so that for all $x = (x_V) \in \calZ_{\alpha}$, we have
\begin{enumerate}
    \item For all $U \in \calU$, we have $d_{T_U}(x_U, \psi_U \circ \Phi_{\alpha}(x))<D_1$
    \item $\#\{U \in \calU| d_{T_U}(x_U, \psi_U \circ \Phi_{\alpha}(x)) > 2\alpha + 3E'\} < D_1$.
\end{enumerate}

\begin{itemize}
    \item In particular, by choosing $L> 2\alpha+E'$, we get
$$\sum_{U \in \calU} [d_{T_U}(x_U, \psi_U \circ \Phi_{\alpha}(x)]_L< D_1^2.$$
\end{itemize}
\end{proposition}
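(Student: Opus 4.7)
\medskip

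\noindent\textbf{Proof plan for Proposition \ref{prop:coarsely surjective, trees}.} The plan is to unpack the definition $\Phi_\alpha = \ret_H \circ \Real_\beta \circ \Phi$ and exploit the quality of each of its three constituent maps. Set $p = \Real_\beta(\Phi(x)) \in \calX$ and $y = \Phi_\alpha(x) = \ret_H(p) \in H$, so that the tree-coordinates of interest are $\psi_U(y) = \phi_U^{-1} \circ p_U \circ \pi_U(y)$. Note that by Proposition \ref{prop:consist from tree} the tuple $\Phi(x)$ is $\beta$-consistent with $\beta = \beta(\mathfrak S, |F\cup\Lambda|, K, \alpha)$, so Realization \ref{thm:realization} produces $p$ with $d_U(\pi_U(p), \phi_U(x_U)) \leq \theta_e(\beta)$ for all $U \in \mathfrak S$.

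For item (1), I will first show $p$ sits close to $H$, so that $\ret_H$ does not move it far. Since $\phi_U(x_U) \in T_U$ lies $C$-close to $H_U$ for each $U \in \calU$ by construction of the modeling trees, realization gives $d_U(\pi_U(p), H_U) \leq \theta_e(\beta) + C$ for each $U \in \calU$, while outside $\calU$ the projection $\pi_U(p)$ is automatically within $K + O(1)$ of $H_U$ by the threshold definition of $\calU$. Hierarchical quasi-convexity of $H$ (Lemma \ref{lem:hull is hqc}), combined with the Realization uniqueness bound $\theta_u$, then gives $d_\calX(p, H) \leq M_0$ for some $M_0 = M_0(\mathfrak S, |F\cup\Lambda|, K, \alpha)$. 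The retraction estimate of Lemma \ref{lem:gate retract} provides $d_\calX(p, y) \leq 2JM_0 + J$, and coarse Lipschitzness of $\pi_U$ yields $d_U(\pi_U(p), \pi_U(y)) \leq E(2JM_0 + J) + E$. Combining with the realization bound and pulling back through the $(C,C)$-quasi-isometry $\phi_U^{-1}$ gives a single constant $D_1$, depending on the stated parameters, with $d_{T_U}(x_U, \psi_U(y)) < D_1$ for every $U \in \calU$, which is item (1).

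For item (2), the idea is that a large tree-discrepancy in $U$ forces the \emph{point-wise} separation $d_U(p,y)$ to exceed a threshold independent of $K$, and then the finiteness results of Section \ref{sec:qpu} bound the number of such $U$. Precisely: if $d_{T_U}(x_U, \psi_U(y)) > 2\alpha + 3E'$, the realization estimate controls $d_{T_U}(x_U, \psi_U(p))$ by a constant depending only on $\mathfrak S, |F\cup\Lambda|, \alpha$, so $d_{T_U}(\psi_U(p), \psi_U(y))$ exceeds some constant $T = T(\mathfrak S, |F\cup\Lambda|, \alpha)$. Since $\phi_U$ is a $(C,C)$-quasi-isometric embedding, this transfers to $d_U(p, y) \geq T/C - C$, placing $U$ in $\Rel_{T'}(p, y)$ for an appropriate $T'$. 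Because $d_\calX(p,y) \leq 2JM_0 + J$ is bounded, Corollary \ref{cor:rel sets are finite} (a direct consequence of the Passing-Up Lemma \ref{lem:passing-up}) bounds the cardinality of $\Rel_{T'}(p,y)$ by a constant depending only on $\mathfrak S, |F\cup\Lambda|, K, \alpha$. Enlarging $D_1$ to absorb this cardinality gives item (2). The main technical point to watch is ensuring the threshold $2\alpha + 3E'$ really is larger than the realization-generated bound on $d_{T_U}(x_U, \psi_U(p))$; this is where the precise numerology of $\alpha$ and $E'$ enters, and it is the only delicate step of an otherwise routine chase.

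The ``in particular'' statement is now immediate: for any $L > 2\alpha + E'$ (equivalently $L$ above the threshold in item (2), after adjusting constants), item (2) limits the sum to at most $D_1$ nonzero terms, and item (1) bounds each such term by $D_1$, giving $\sum_{U \in \calU}[d_{T_U}(x_U, \psi_U(\Phi_\alpha(x)))]_L \leq D_1^2$. The hardest step is the realization-to-hull passage in item (1), specifically arranging constants so that $\pi_U(p)$ lies close enough to $H_U$ in every $U \in \mathfrak S$ — not just $U \in \calU$ — to invoke hierarchical quasi-convexity and bound $d_\calX(p,H)$; everything else is bookkeeping on top of the tools already developed in Sections \ref{sec:reduce to tree} and \ref{sec:qpu}.
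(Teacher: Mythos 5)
Your item (1) is essentially the paper's argument, just phrased by first bounding $d_\calX(p,H)$ and pushing through coordinate projections rather than working coordinate-by-coordinate; both routes arrive at the same $K$-dependent constant $D_1$, and this part is fine.

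Your item (2), however, has a genuine gap. You want to run the implication: large tree-discrepancy $d_{T_U}(x_U,\psi_U(y)) > 2\alpha + 3E'$ $\Rightarrow$ $d_{T_U}(\psi_U(p),\psi_U(y))$ large $\Rightarrow$ $U \in \Rel_{T'}(p,y)$, with $T'$ independent of $K$. This requires the realization error $d_{T_U}(x_U,\psi_U(p))$ to be bounded by a constant \emph{independent of $K$}, so that it can be subtracted from the $K$-independent threshold $2\alpha + 3E'$ and still leave something positive. But the realization error is $\theta_e(\beta)$ where $\beta = \beta(\mathfrak{S},|F\cup\Lambda|,K,\alpha)$ is the consistency constant of the \emph{full} tuple $\Phi(x) \in \prod_{V \in \mathfrak{S}}\calC(V)$ from Proposition \ref{prop:consist from tree}, and $\beta$ is genuinely $K$-dependent: the consistency between a domain $U \in \calU$ and a domain $V \in \mathfrak{S}\setminus\calU$ (where the $V$-coordinate was pinned to $\pi_V(f)$) involves quantities of size roughly $K$. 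Realization \ref{thm:realization} gives a single error bound $\theta_e(\beta)$ applicable to all coordinates, so one cannot conclude that the error is small on the $\calU$-coordinates just because the restricted tuple $(x_U)_{U\in\calU}$ is $\alpha$-consistent. You flag this as a matter of ``precise numerology of $\alpha$ and $E'$,'' but it is not: a $K$-dependent error cannot be made smaller than a $K$-independent threshold by tuning $\alpha$ and $E'$, and without that the implication to $U \in \Rel_{T'}(p,y)$ fails.

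The paper sidesteps this entirely. Its proof of item (2) never uses the realization error at all: it works directly with the two $\alpha$-consistent tuples $x = (x_U)$ and $y = \Psi\circ\Phi_\alpha(x) = (y_U)$ in $\prod_U T_U$, sets $\calV = \{U : d_{T_U}(x_U,y_U) > 2\alpha + 3E'\}$, and shows that if $\#\calV$ exceeds a threshold $P_1(K_1,K_2)$ then Strong Passing-Up \ref{prop:SPU} produces a domain $W$ and transverse $U,V \in \calV$ nested in $W$ with $\delta^U_W,\delta^V_W$ spread far apart in $T_W$. The $\alpha$-consistency of $x$ and $y$ (via the tree consistency inequalities and the BGI property of Lemma \ref{lem:tree control}) then forces $d_{T_W}(x_W,y_W) > D_1$, contradicting item (1). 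Since this argument only uses $\alpha$-consistency of $x,y$ (which is $K$-independent) and contradicts the $K$-dependent bound of item (1), the $K$-dependence of the realization error never enters. If you want to salvage your approach, you would need to establish a $K$-independent bound on the realization error restricted to the domains in $\calU$, which does not follow from the stated form of the Realization Theorem and would require a separate argument.
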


\begin{proof}
The proof of (1) is essentially a definition chase: Let $x = (x_U) \in \calZ_{\alpha}$, let $(x'_V) = \Phi_{\alpha}(x)$ be the $\beta=\beta(\calX, |F \cup \Lambda|, K, \alpha)>0$ consistent tuple provided by Proposition \ref{prop:consist from tree}, and let $y' \in \Real_{\beta}(\Phi_{\alpha}(x))$ be a realization.

Let $\theta_{\beta} = \theta_{\beta}(\alpha_0, \mathfrak S)>0$ be the constant produced by the Realization Theorem \ref{thm:realization} for the $\beta$-consistent tuple $(x'_V)$, so that $d_V(y', x'_V)<\theta_{\beta}$ for each $V \in \mathfrak S$, where we recall that $\mathfrak S$ is the full domain index set for the ambient HHS $(\calX, \mathfrak S)$.

Observe that since the image of $\phi_V:T_V \to \calC(U)$ is within Hausdorff distance $C = C(\calX, |F \cup \Lambda|)>0$ of $H_V = \hull_V(F \cup \Lambda)$, we get that $x'_V \in \calN_C(H_V)$ for each $V \in \mathfrak S$.  It follows that $y' \in \hull^{\calX}_{\theta_{\beta} + C}(F \cup \Lambda)$ (see Definition \ref{defn:hier hull}).  In particular, if $y \in \ret_H(y')$, then $d_V(y,y') < \theta_0 = \theta_0(\beta, \mathfrak S)$ for all $V \in \mathfrak S$.  It follows that $d_V(y, x'_V)< \theta_0 + \theta_{\beta} + C$ for all $V \in \mathfrak S$.

On the other hand, $y = \ret_H(y') \in H$, and so $d_U(y, H_U) < \theta$, where $\theta= \theta(\calX, |F \cup \Lambda|)$ is the hull constant defining $H$.  Then since $d_{Haus}(\phi_U(T_U),H_U)<C$ and $\phi^{-1}_U$ is a $(C,C)$-quasi-isometry, we see that $d_{T_U}(x_U, \psi_U \circ \Phi_{\alpha}(x))< d_{T_U}(x_U, \phi^{-1}_U \circ p_U(y))) < C(\theta_0 + \theta_{\beta} + \theta + 2C) + C$, giving us the desired bound for (1).

For the proof of (2), let $y = (y_U) \in \Psi \circ \Phi_{\alpha}(x)$.  Let $\calV = \{U \in \calU| d_T(x_U, y_U) > 2\alpha + 3E'\}$.  Set $K_1 = \alpha+ E'$ and choose $K_2 = K_2(\mathfrak S, |F|, D_1, \alpha)>0$ to be determined shortly.  By assuming, for a contradiction, that $\#\calV$ is sufficiently large, we will be able to employ Strong Passing-up \ref{prop:SPU} to produce a domain $W \in \calU$ for which $d_{T_W}(x_U, y_U) > D_1$, violating the first conclusion of the current proposition.

By passing to a subset if necessary, we may assume that $\calV \subset \Rel_{K}(a,b)$ for some fixed $a,b \in F \cup \Lambda$ and that $\#V > P_1(K_1,K_2)$, where $P_1$ is the function provided by Proposition \ref{prop:strong pu}, since otherwise $\#\calV$ is bounded as desired.

Hence by Proposition \ref{prop:SPU}, there exists a subset $\calV' \subset \calV$ and $W \in \Rel_{K_2}(a,b)$ so that $V \nest W$ for all $V \in \calV'$ and 
$$\diam_{W} \left(\bigcup_{V \in \calV'} \rho^V_W\right) > K_2.$$

\begin{figure}
    \centering
    \includegraphics[width=.7\textwidth]{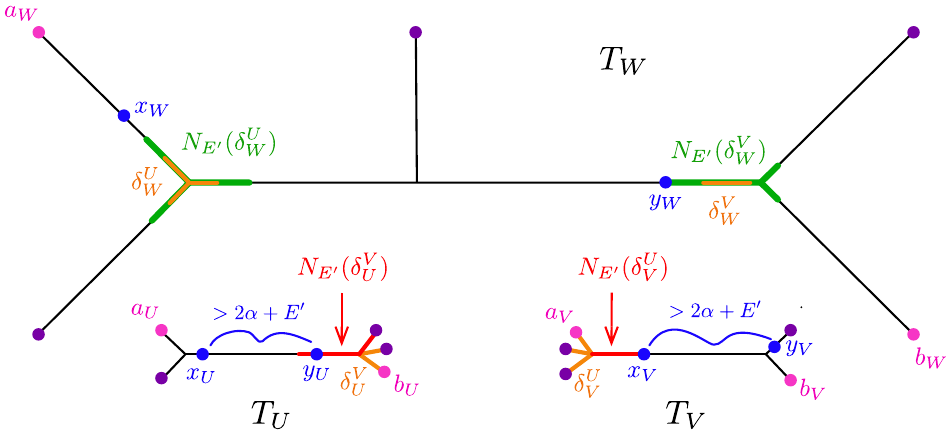}
    \caption{The proof of case (2) of Proposition \ref{prop:coarsely surjective, trees}: Not both of $x_U$ and $y_U$ (both blue) can be close to $\delta^V_U$ in $T_U$, and similarly for $V$.  The BGI property then forces $x_W$ and $y_W$ to be relatively close to $\delta^U_W$ and $\delta^V_W$, respectively, which then forces $d_{T_W}(x_W,y_W)$ to be larger then $D_1$, violating the bound from item (1).}
    \label{fig:coarse_surject_tree}
\end{figure}

Observe that we may choose $K_2 = K_2(\mathfrak S, |F|, D_1, \alpha)>0$ sufficiently large so that $\diam_{T_W}\left(\bigcup_{V \in \calV'} \delta^V_W\right) > D_1 + 2E'$.  Let $U, V \in \calV'$ realize this diameter, hence forcing $U \pitchfork V$.

Now if $d_{T_U}(x_U, \delta^V_U) > \alpha$ and $d_{T_U}(y_U, \delta^V_U)> \alpha$, then $d_{T_V}(x_V, y_V) < 2\alpha$ since both are $\alpha$-consistent, contradicting the assumption that $d_{T_V}(x_V, y_V) > 2\alpha + 3E'$.  Consequently, exactly one of $x_U$ and $y_U$ is $\alpha$-close to $\delta^V_U$ in $T_U$, and similarly for $V$.  Without loss of generality, assume that $d_{T_V}(x_V, \delta^U_V) < \alpha$ and $d_{T_V}(y_V, \delta^U_V) < \alpha$.

Also observe that the geodesic $[a_W,b_W]$ in $T_W$ (which is possibly a bi-infinite geodesic) between $a_W,b_W$ must pass within $E'$ of both $\delta^U_W, \delta^V_W$ by the BGI property (item (5) of Lemma \ref{lem:tree control}).  Since $d_{T_W}(\delta^U_W, \delta^V_W) > K_2$, we may assume, without loss of generality, that $[a_W,b_W]$ first passes close to $\delta^U_W$ then $\delta^V_W$.  It follows then from item (6) of Lemma \ref{lem:tree control} that $d_{T_U}(b_U, \delta^V_U) < 2E'$ and $d_{T_V}(a_V, \delta^U_V)< 2E'$.

It follows then that $d_{T_U}(x_U, b_U) > 2\alpha + 3E' - 2E' > E'$ and $d_{T_V}(y_V, a_V)>2\alpha+ 3E' - 2E' > E'$.  Hence the BGI property (item (5) of Lemma \ref{lem:tree control}) says that the geodesics in $T_W$ between $x_W,b_W$ and $y_W,a_W$ must pass within $E'$ of $\delta^U_W$ and $\delta^V_W$, respectively.  It follows then that $d_{T_W}(x_W,y_W) > d_{T_W}(\delta^U_W, \delta^V_W) - 2E' \geq D_1 + 2E' - 2E' = D_1$, which is a contradiction.

\end{proof}

\subsection{The map $\hO:\calQ \to H$ and coarse surjectivity} \label{subsec:proof of coarsely surjective}

We are now ready to define the map $\hO:\calQ \to H$ using our work from Section \ref{sec:Q to Y} as follows.  Given a tuple $(\hx_U) \in \calQ$, let $(x_U)_{U \in \calU} \in \calZ_{\alpha_{\omega}}$ be a $\alpha_{\omega}$-consistent partial tuple, where $x_U \in B_U$ as defined in Subsection \ref{subsec:honing clusters} for each $U \in \calU$ and $\alpha_{\omega} = \alpha_{\omega}(\calX, |F \cup \Lambda|)>0$ is provided by Proposition \ref{prop:Q-consistent}.  Since $(x_U) \in \calZ_{\alpha_{\omega}}$, we can apply the map $\Phi_{\alpha_{\omega}}: \calZ_{\alpha_{\omega}} \to H$ as defined above in Subsection \ref{subsec:coarse surject tree}.  Hence by composing these we obtain a map:
$$\hO = \Phi_{\alpha_{\omega}} \circ \Omega: \calQ \to H.$$

Since $\hPsi(H) \subset \calQ$ by Lemma \ref{lem:F and Lambda}, we get that $\hPsi \circ \hO: \calQ \to \calQ$.  We are now in a position to prove that this composition is coarsely the identity:

\begin{proof}[Proof of Theorem \ref{thm:coarse inverse}]

We need to prove that there exists $B_0= B_0(\mathfrak S, |F \cup \Lambda|, K)>0$ so that for any $\hx \in \calQ$, if $\hy \in \hPsi \circ \hO(\hx)$, then we have

$$d_{\calQ}(\hx, \hy) < B_0.$$

For this, we will use Proposition \ref{prop:coarsely surjective, trees}, which provides $D_1 = D_1(K, \alpha_{\omega}, \mathfrak S, |F \cup \Lambda)>0$ so if $(x_U) = \Omega(\hx)$ and $(y_U) \in \Psi \circ \hO(\hx)$ then
\begin{enumerate}
    \item $d_{T_U}(x_U, y_U)< D_1$ for all $U \in \calU$, and
    \item $\#\{U \in \calU| d_{T_U}(x_U, y_U) > 2\alpha_{\omega} + 3E'\} < D_1$.
\end{enumerate}

Let $\calV = \{U \in \calU| \hx_U \neq \hy_U\}$.  Item (9) of Lemma \ref{lem:collapsed tree control} says that the number of domains $U \in \calU$ so that $\hx_U$ and $\hy_U$ are both not contained in marked point clusters is bounded solely in terms of $\mathfrak S, |F \cup \Lambda|$.  Hence by the global bound $D_1$ in property (1) above, we may assume that for every $V \in \calV$, both $\hx_V, \hy_V$ are at marked point clusters.

Moreover, by Proposition \ref{prop:bipartite}, all but boundedly-many domains in $\calV$ are bipartite, so we may again use property (1) to further reduce to the case where every $V \in \calV$ is bipartite.

Hence if we consider $\calV_{min}$ the set of $\nest_{\calU}$-minimal bipartite domains $V \in \calV$, then each $\hT_V$ is a non-degenerate interval by Lemma \ref{lem:leaves choose half-space}.  Thus if $V \in \calV_{min}$, then the assumption that $\hx_V \neq \hy_V$ are at distinct marked cluster points implies that $d_{\hT_V}(\hx_V,\hy_V) \succ K$ by item (6) of Lemma \ref{lem:collapsed tree control}.  Since $2\alpha_{\omega} + E'$ is independent of our choice of $K$, we can choose $K$ large enough to guarantee that $\#\calV_{min} < D_1$.

The next step is to bound the number of domains $V$ higher in the $\nest$-lattice where $\hx_V \neq \hy_V$, though we will not do so directly.  Instead, we observe that in all but boundedly-many domains in $\calV$, we have that $d_{T_V}(x_V, y_V) < 2\alpha_{\omega} + 3E'$, which is a constant independent of $K$.  The idea is to now use the Tree Trimming Theorem \ref{thm:tree trimming} as follows.

Set $B = 2\alpha_{\omega} + 3E'$.  Let $\calV' \subset \calV - \calV_{min}$ be the set of domains $V\in \calU$ where $\hx_V \neq \hy_V$ are at distinct marked cluster points and $d_{T_V}(x_V, y_V) < B$, and so that at least one of the cluster points contained in $[\hx_V, \hy_V]$ is labeled by a $\nest_{\calU}$-minimal bipartite domain not in $\calV_{min}$.  Observe that all but boundedly-many cluster points satisfy this last property, by the bound on $\#\calV_{min}$ and the Covering Lemma \ref{lem:covering}, which bounds the number of domains in $\calU$ into which the domains in $\calV_{min}$ can nest.  In particular, $\#(\calV - \calV')$ is bounded in terms of $D_1, \mathfrak S, |F \cup \Lambda|$ by the above reductions. 

For each $V \in \calV'$, let $A_V$ denote the set of these segments and let $\Delta_V: \hT_V \to \hT'_V$ denote the map which collapses each segment to a point.  Theorem \ref{thm:tree trimming} says that the combined map $\Delta:\calQ \to \calQ'$ is a quasi-isometry, with constants depending only on $\mathfrak S, |F \cup \Lambda|$ and $B$.  Moreover, by its construction we have $d_{\hT'_U}(\Delta_U(\hx_U), \Delta_U(\hy_U)) \leq d_{\hT_U}(\hx_U, \hy_U)$ for each $U \in \calU$.

Observe that $\Delta_V(\hx_V) = \Delta_V(\hy_V)$ by construction.  Hence we have
\begin{eqnarray*}
d_{\calQ}(\hx, \hy) &=& \sum_{U \in \calU} d_{\hT_U}(\hx_U,\hy_U)\\
&\asymp& \sum_{U \in \calU} d_{\hT'_U}(\Delta_U(\hx_U), \Delta_U(\hy_U))\\
&=& \sum_{U \in \calU - \calV'} d_{\hT'_U}(\Delta_U(\hx_U), \Delta_U(\hy_U)) + \sum_{V \in \calV'} d_{\hT'_V}(\Delta_V(\hx_V), \Delta_V(\hy_V))\\
&= &\sum_{U \in \calU - \calV'} d_{\hT'_U}(\Delta_U(\hx_U), \Delta_U(\hy_U))\\
\end{eqnarray*}
and this final sum is bounded since each term is at most $D_1$ by property (1) above, and $\#(\calU - \calV')$ is bounded by $D_1, \mathfrak S, |F \cup \Lambda|$, as observed above.

Hence we get that $d_{\calQ}(\hx, \hy)$ is bounded in terms of $\mathfrak S, |F \cup \Lambda|$, and $K$, as required, completing the proof.

\end{proof}

\section{The induced wall-structure on $\oQ$}\label{sec:walls in Q}

In this section, we analyze the combinatorial structure of $\oQ$, the $0$-consistent set in the product of simplicial trees $\oY = \prod_{U \in \calU} \hT_U \cup \partial \hT_U$, as produced in Corollary \ref{cor:simplicial structure}.

Since each of the $\hT_U$ is a tree and thus a cube complex, $\oY$ naturally admits the structure of a cube complex, and we can use its hyperplanes to induce walls on $\oQ$.  The main goal of this section is to study the resulting wall-space, in particular to prove that it forms a pocset (Proposition \ref{prop:pocset}).  Much of this discussion is straight-forward in the interior case (i.e., when $\Lambda = \emptyset$), so the bulk of the work happens in the infinite case.

\subsection{Hierarchical families of trees}\label{subsec:HFT}

It is useful to axiomatize our setup in the finite case, and indeed this proves useful later for the applications in Section \ref{sec:sep hyp}.  The following definition axiomatizes the properties we need for Proposition \ref{prop:pocset} and its supporting lemmas when $\Lambda = \emptyset$.

\begin{definition}[Hierarchical family of trees] \label{defn:HFT}
A \emph{hierarchical family of trees} is the following collection of objects and properties:

\begin{enumerate}
\item A finite index set $\calU$ of \emph{domains} with \emph{relations} $\nest, \pitchfork, \perp$ so that
\begin{itemize}
    \item $\nest$ is anti-reflexive and anti-symmetric, and gives a partial order with a bound on the length of chains;
    \item $\pitchfork$ is symmetric and anti-reflexive;
    \item There is a unique $\nest$-maximal element $S$;
    \item Any pair of domains $U,V \in \calU$ satisfies exactly one of the relations. 
\end{itemize}
\item There is a bound on the size of any pairwise $\perp$-comparable subset of $\calU$. \label{item:finite dim}
\item To each $U \in \calU$ there is an associated finite simplicial tree $\hT_U$. 
\item A finite collection of labels $F$ and, on each tree, a finite set of \emph{marked points}, which are labeled by $F$, with each element of $F$ labeling exactly one marked point of $\hT_U$.  Moreover, for each $U \in \calU$, each leaf of $\hT_U$ is a marked point.
\item A family of \emph{relative projections} determining:
\begin{itemize}
    \item For each $U, V \in \calU$ with $V \nest U$ or $V \pitchfork U$, there is a vertex $\hd^V_U \in \hT^{(0)}_U$.  Moreover, each component of $\hT_U - \hd^V_U$ contains a marked point.
    \item If $U,V,W \in \calU$ with $U \perp V$ and $V \nest W$ and either $U \nest W$ or $U \pitchfork W$, then $\hd^U_W = \hd^V_W$.
\end{itemize}
    \item(BGI) There exist projection maps $\hd^U_V: \hT_U \to \hT_V$ when $V \nest U$, satisfying the following bounded geodesic image property:\label{item:BGI HFT}
    \begin{itemize}
        \item If $C \subset \hT_U - \hd^V_U$ is a component, then $\hd^U_V(C)$ coincides with $\hf_V$ for any $f \in F$ with $\hf_U \in C$. 
    \end{itemize}
\end{enumerate}
\end{definition}

From the above set of data, one can define consistency equations and the \emph{$0$-consistent subset} $\calQ$ of the \emph{total space}  $\calY \subset \prod_{U \in \calU} \hT^{(0)}_U$ as in Definition \ref{defn:Q consistent}.

We observe that our hierarchical setup satisfies this set of properties:

\begin{lemma}\label{lem:Q is HFT}
The family of trees $\{\hT_U\}_{U \in \calU}$ with associated collapsed relative projections $\hd^V_U$, as produced in Section \ref{sec:Q} starting from a reduced tree system (Definition \ref{defn:tree axioms} associated to the hierarchical hull of a finite set of internal points in an HHS $\calX$ satisfies Definition \ref{defn:HFT}.
\end{lemma}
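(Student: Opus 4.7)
The proof is an axiom-by-axiom verification against Definition \ref{defn:HFT}, drawing on the structural results established in Sections \ref{sec:background}--\ref{sec:tree trimming}. Since $\Lambda = \emptyset$, the set $\calU = \Rel_K(F)$ is finite by Corollary \ref{cor:rel sets are finite}, and the relations $\nest, \pitchfork, \perp$ on $\calU$ are simply the restriction of the HHS relations on $\mathfrak S$. The requirements that $\nest$ is an anti-reflexive, anti-symmetric partial order with bounded chain length, that $\pitchfork$ is symmetric and anti-reflexive, and that any pair of domains satisfies exactly one relation, are all inherited directly from the HHS axioms (Definition \ref{defn:HHS}, Axioms (\ref{item:dfs_nesting})--(\ref{item:dfs_transversal})), together with the finite complexity Axiom (\ref{item:dfs_complexity}). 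For the unique $\nest$-maximal element required by Definition \ref{defn:HFT}(1), if $\calU$ does not already contain the $\nest$-maximal element $S \in \mathfrak S$, we formally adjoin it and declare $\hT_S$ to be a single point; this has no effect on the $0$-consistent subset and preserves all subsequent constructions. Item (\ref{item:finite dim}), the bound on pairwise $\perp$-comparable subsets, is a standard consequence of the container part of the orthogonality axiom together with finite complexity.

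For items (3) and (4), the trees $\hT_U$ are finite simplicial trees by Corollary \ref{cor:simplicial structure}, which explicitly puts a simplicial structure on each $\hT_U$ in which every marked point and every cluster point is a vertex. In the finite case each $T_U$ is the Gromov modeling tree of a finite set of interior points in a hyperbolic space, so $T_U$ and hence $\hT_U$ is finite. The marked points are indexed by $F$ via the maps $\hpsi_U: H \to \hT_U$ of Section \ref{sec:Q}. The fact that every leaf of $\hT_U$ is a marked point is the one small observation that requires explicit comment: by the construction in Subsection \ref{subsec:shadows}, every leaf of $T_U$ lies in the $2R$-neighborhood of a marked point and hence in the cluster $T^c_U$, so leaves of $T_U$ project under $q_U$ to cluster points of $\hT_U$ which coincide with marked points (since $\Lambda = \emptyset$, no cluster point can be a pure ray collapse).

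For item (5), the fact that each $\hd^V_U$ is a vertex of $\hT_U$ is immediate from Corollary \ref{cor:simplicial structure} together with items (2) and (3) of Lemma \ref{lem:collapsed tree control}, which identify $\hd^V_U$ with a cluster point. The condition that each component of $\hT_U - \hd^V_U$ contains a marked point follows from the fact just established that leaves are marked points: after removing the single vertex $\hd^V_U$ from the finite tree $\hT_U$, each resulting component is itself a finite subtree with at least one leaf of $\hT_U$ on its closure, and that leaf is a marked point. The compatibility statement, that $U \perp V$ with $V \nest W$ and $U \not\perp W$ forces $\hd^U_W = \hd^V_W$, is a direct consequence of the corresponding statement in $\calC(W)$ coming from the consistency part of Axiom (\ref{item:dfs_transversal}) (namely that $d_W(\rho^U_W, \rho^V_W) \leq \kappa_0$ in this situation), transported through $\phi_W^{-1} \circ p_W$ and $q_W$ with the cluster separation constant $r$ chosen large enough to force the two relative projections into the same cluster.

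Finally, item (\ref{item:BGI HFT}), the bounded geodesic image property, is exactly item (7) of Lemma \ref{lem:collapsed tree control}, which states that if $C$ is a component of $\hT_U - \hd^V_U$ and $\hf_U \in C$ for some $f \in F$, then $\hd^U_V(C) = \hf_V$. The main obstacle in carrying out this plan is not any single step, but rather keeping track of the hypothesis that the cluster separation constant $r$ and the largeness threshold $K$ have been chosen large enough for all of the above coincidences (cluster points of transverse domains agree when required, $\rho$-data of orthogonal subdomains lands in the same cluster, etc.) to hold simultaneously; all of these constants depend only on $\mathfrak S$ and $|F|$, so this poses no issue.
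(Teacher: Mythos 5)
Your overall strategy — verify the axioms of Definition \ref{defn:HFT} one by one against the constructions of Sections \ref{sec:reduce to tree}--\ref{sec:tree trimming} — is the same as the paper's, but you provide considerably more detail than the paper's terse proof, which simply cites Definitions \ref{defn:collapsed tree} and \ref{defn:Q project}. Two of your observations are genuinely worthwhile: noting that $S$ may need to be formally adjoined to $\calU$ (since $\Rel_K(F)$ need not contain the global $\nest$-maximal domain nor have a unique maximal element), and noting that the simplicial structure on the $\hT_U$ comes from Corollary \ref{cor:simplicial structure} rather than directly from Section \ref{sec:Q}. The paper glosses over both points.

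There is one incorrect citation. For the compatibility condition in item (5) of Definition \ref{defn:HFT} — that $U \perp V$, $V \nest W$, and $U$ not orthogonal to $W$ force $\hd^U_W = \hd^V_W$ — you attribute the underlying HHS-level bound $d_W(\rho^U_W, \rho^V_W) \leq \kappa_0$ to the final clause of Axiom \eqref{item:dfs_transversal}. That clause requires $U \nest V$, whereas your hypothesis is $U \perp V$, so it does not apply. The bound is still true, and the correct route is via product regions: by Lemma \ref{lem:double prod}, $\PP_U \cap \PP_V \neq \emptyset$ since $U \perp V$, and then Lemma \ref{lem:prod coord} applied to any $x$ in the intersection gives $d_W(x, \rho^U_W) < \theta$ and $d_W(x, \rho^V_W) < \theta$ (using $U \nest W$ or $U \pitchfork W$, and $V \nest W$, respectively), hence $d_W(\rho^U_W, \rho^V_W) < 2\theta$. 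The rest of your argument for item (5) — pushing this through $\phi_W^{-1} \circ p_W$ and $q_W$ with $r$ large — is then fine.
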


\begin{proof}
Items (1) and (2) follow from the Definition \ref{defn:HHS} of an HHS.  Items (3), (4), and (5) are contained in Definition \ref{defn:collapsed tree}, while item (6) is in Definition \ref{defn:Q project}.

\end{proof}

\begin{remark}[Infinite case, $\Lambda \neq \emptyset$]
While it is possible to axiomatize the infinite setting (when $\Lambda \neq \emptyset$), we were unable to agreeably and succinctly package the required extra axioms, which all essentially encode various structural applications of Strong Passing-Up \ref{prop:SPU}.  As with much of this article, the infinite setting asserts itself several times throughout the discussion, with the finite setting being relatively straight-forward.
\end{remark}

\begin{remark}[Other generalizations]
It seems possible that the construction in this section works in greater generality.  For instance, one could define a \emph{hierarchical family of cube complexes}, where the $\hT_U$ are higher dimensional cube complexes, and perhaps the relative projections are convex subcomplexes, not necessarily points.  As we do not have a direct application in mind, we have not pursued this more general version of this machinery.
\end{remark}

\begin{remark}[Proofs in this section]
We will proceed with the proofs in this section by using the various parts of Definition \ref{defn:HFT} to prove the (usually much simpler) finite case when $\Lambda \neq \emptyset$, and then supplement this setup with hierarchical facts (such as Strong Passing-Up \ref{prop:SPU}) when dealing with the infinite case.  The more general framework in the interior case is necessary for the proof of Theorem \ref{thmi:curve graph} in Section \ref{sec:sep hyp}.
\end{remark}

\subsection{A quick primer on CAT(0) cube complexes, wallspaces, and Sageev's construction}

As we have arrived at the point in the paper where we begin constructing cube complexes, we review some basics.  For a detailed introduction to cube complexes, see Sageev's notes \cite{Sageev_notes} as well as Hruska-Wise \cite{HW_CCC}.  Wallspaces were introduced by Haglund-Paulin \cite{HP_wallspace}, though we most closely follow Roller's approach \cite{Roller} to Sageev's cubulation machine \cite{Sageev_machine}.

A \emph{cube complex} is a CW complex obtained by gluing a collection of unit Euclidean cubes of various dimensions along their faces by a collection of (Euclidean) isometries.  A cube complex $\Sigma$ is \emph{non-positively curved} when it satisfies Gromov's link condition, and every non-positively curved simply connected cube complex admits a unique CAT(0) metric.

A \emph{hyperplane} in a CAT(0) cube complex $\Sigma$ is a connected subspace whose intersection with each cube $\sigma = [0,1]^{n}$ is either $\emptyset$ or the subspace obtained by restricting exactly one coordinate to $\frac{1}{2}$.  In particular, every hyperplane partitions $\Sigma$.  This motivates the definition of a half-spaces and pocsets, following Roller \cite{Roller}.

\begin{definition}[Pocset]\label{defn:pocset}
Let $(\Sigma,<)$ be a partially-ordered set.  A \emph{pocset} structure on $\Sigma$ has the additional structure of an order-reversing involution $A \mapsto A^*$ satisfying
\begin{enumerate}
\item $A \neq A^*$ and $A, A^*$ are $<$-incomparable;
\item $A < B \implies B^* < A^*$.
\end{enumerate}
\begin{itemize}
\item Given $A<B$, the \emph{interval} between $A$ and $B$ is $[A,B] = \{C|A<C<B\}$.
\item The pocset $(\Sigma,<)$ is \emph{locally finite} if every interval is finite.
\item Given $A,B \in \Sigma$, we say $A$ is \emph{transverse} to $B$ if none of $A<B, A<B^*, A^*<B, A^*<B^*$ holds.  The pocset $(\Sigma,<)$ has \emph{finite width} if the maximal size of a pairwise transverse subset is finite.
\end{itemize}
\end{definition}

In a CAT(0) cube complex, every vertex is contained in exactly one half-space associated to each hyperplane.  The following notion abstracts the combinatorial structure of such a family of half-spaces:

\begin{definition}[Ultrafilters and the DCC]\label{defn:ultrafilter}
Given a locally finite pocset $(\Sigma, <)$, an \emph{ultrafilter} $\alpha$ on $\Sigma$ is a subset of $\Sigma$ satisfying
\begin{enumerate}
\item (Choice) Exactly one element of each pair $\{A,A^*\}$ is an element of $\alpha$;
\item (Consistency) $A \in \alpha$ and $A< B \implies B \in \alpha$.
\end{enumerate}
\begin{itemize}
\item An ultrafilter $\alpha$ satisfies the \emph{descending chain condition} (DCC) if every descending chain of elements terminates.
\end{itemize}
\end{definition}

We can now define the dual cube complex to a locally finite, discrete, finite-width pocset:

\begin{definition}[Dual cube complex]\label{defn:dual CCC}
Given a locally finite, discrete, finite-width pocset $(\Sigma,<)$, we define a CW complex $X(\Sigma)$ as follows:
\begin{enumerate}
\item The $0$-cells $X^{(0)}$ of $X$ are the DCC ultrafilters on $(\Sigma, <)$.
\item Two $0$-cells $\alpha, \beta$ are connected by an edge in $X^{(1)}$ if and only if $|\alpha \triangle \beta| = 2$, i.e. $\alpha = (\beta - \{A\}) \cup \{A^*\}$ for some $A \in \Sigma$.
\item Define $X^{(n)}$ inductively by adding an $n$-cube whenever the boundary of one appears in the $X^{(n-1)}$-skeleton.
\end{enumerate}
\end{definition}

See Sageev's notes \cite{Sageev_notes} for a thorough discussion of the above.

The following is a theorem of Roller \cite{Roller}, building on Sageev \cite{Sageev_machine}:

\begin{theorem}[Sageev's machine]\label{thm:Sageev}
Given a locally finite, finite-width pocset $(\Sigma,<)$, the CW complex $X(\Sigma)$ is a finite dimensional CAT(0) cube complex, where the dimension is equal to the width of $(\Sigma, <)$. 
\begin{itemize}
\item We call $X(\Sigma)$ the \emph{dual cube complex} to $(\Sigma, <)$.
\end{itemize}
\end{theorem}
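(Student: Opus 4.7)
The plan is to follow the classical Sageev--Roller strategy for constructing and verifying the dual cube complex, establishing in sequence that $X(\Sigma)$ is nonempty and connected, that its dimension matches the width, and finally that it satisfies the two Gromov-style conditions characterizing CAT(0) cube complexes: the link condition and simple connectivity.

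First, I would verify that $X(\Sigma)^{(0)} \neq \emptyset$ and that the $1$-skeleton is connected. For nonemptiness, given any $A \in \Sigma$, the principal ultrafilter $\alpha_A = \{B \in \Sigma : A \leq B \text{ or } A \leq B \text{ up to sign choice}\}$ can be built by a Zorn-type argument, and local finiteness of intervals forces the DCC. For connectedness of the $1$-skeleton, given DCC ultrafilters $\alpha, \beta$, I would first show that the symmetric difference $\alpha \triangle \beta$ is finite: any element $A \in \alpha \cap \beta^*$ must sit in a finite interval bounded below by DCC elements of $\beta$ and above by DCC elements of $\alpha$, and local finiteness then bounds the total count. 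Edge paths are then produced by flipping minimal elements of $\alpha \triangle \beta$ one at a time, with the DCC guaranteeing such minimal elements exist and that the resulting flips yield ultrafilters.

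Second, the dimension bound is essentially built into the cube-attachment rule: a collection of edges sharing a $0$-cell spans an $n$-cube precisely when the corresponding $n$ pairs $\{A_i, A_i^*\}$ are pairwise transverse, since consistency of the $2^n$ corner ultrafilters forces mutual incomparability. Hence $\dim X(\Sigma)$ equals the width of $(\Sigma, <)$, giving finite-dimensionality from the finite-width hypothesis.

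The main obstacle is the CAT(0) verification. By Gromov's criterion it suffices to check the link condition at each $0$-cell and simple connectivity. For the link condition, three pairwise-crossed hyperplanes meeting a vertex $\alpha$ correspond to three pairwise transverse pairs in $\Sigma$; I would show by a direct pocset argument that all eight sign choices yield consistent ultrafilters (using antisymmetry of $<$ and transversality to rule out forced comparabilities), so the three hyperplanes cobound a $3$-cube, confirming the condition combinatorially. For simple connectivity, I would adapt Sageev's original argument: any combinatorial loop in $X(\Sigma)^{(1)}$ crosses each hyperplane an even number of times, and by induction on the number of hyperplane crossings one can perform local bigon and square moves to null-homotope the loop inside $X(\Sigma)^{(2)}$. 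Both moves are legitimized by the pocset axioms---bigons by the choice axiom and squares by the transversality analysis used above. The finite-width and local-finiteness hypotheses ensure that the induction terminates. Once these pieces are in place, Theorem \ref{thm:Sageev} follows.
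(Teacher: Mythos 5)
The paper does not prove this statement; it is quoted verbatim as background and attributed to Roller \cite{Roller} (building on Sageev \cite{Sageev_machine}), so there is no in-paper argument to compare against. Your sketch is a reasonable outline of the classical Roller--Sageev proof, and the overall skeleton (nonemptiness/connectivity of the $1$-skeleton, dimension $=$ width via transversality, Gromov link condition, simple connectivity via disc-diagram reduction) is the standard route.

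Two spots need tightening. First, the nonemptiness step: "local finiteness of intervals forces the DCC" for a principal ultrafilter is not a proof as written. Local finiteness of intervals bounds individual intervals but says nothing directly about infinite descending chains that do not fit inside a single interval, and the phrase "$A\leq B$ up to sign choice" does not pin down a well-defined ultrafilter; the standard constructions either work in a wallspace (where DCC is automatic for ultrafilters based at a point) or require a genuine argument that a nonempty, discrete, finite-width pocset admits at least one DCC ultrafilter. Second, your argument that $\alpha\triangle\beta$ is finite leans on local finiteness, but the correct ingredients are DCC for $\alpha$ (to forbid infinite descending chains in $\alpha\cap\beta^*$), DCC for $\beta$ (applied to the starred elements, to forbid infinite ascending chains), and finite width (to forbid infinite antichains); a Ramsey-type argument then forces $\alpha\triangle\beta$ finite. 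Local finiteness alone is not the right lever here. The dimension, link-condition, and simple-connectivity paragraphs are in the right spirit and would go through once the earlier steps are repaired.
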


\subsection{Inducing walls on $\oQ$} \label{subsec:walls and halfspaces}

For the rest of this section, fix a finite set $F \cup \Lambda$ of internal points and hierarchy rays in an HHS $\calX$, along with their associated structures, such as the $K$-relevant set $\calU$ and its associated simplicial trees $\hT_U$, as we have been working with in most of this paper.  Our goal is to induce a wallspace structure on the $0$-consistent set $\oQ$, and then prove that it gives a pocset.  In Section \ref{sec:Q is cubical}, we prove that the dual cube complex to this wallspace on $\calQ$ is isometric to $\calQ$, as described in Theorem \ref{thm:dual}.

To begin, observe that $\calY = \prod_{U \in \calU} \hT_U$ is a product of cube complexes.  It is a general fact that a product of cube complexes admits a natural cubical structure, where the global hyperplanes come from the hyperplanes in each component in a natural way.  Using this observation, we can induce a wallspace structure on $\oQ$ as follows.

For each $U\in \calU$, a \emph{component hyperplane} $h_U$ in $\hT_U$ is simply the midpoint of an edge in $\hT_U$.  Each such hyperplane $h_U$ then determines a \emph{global hyperplane} $\bar{h}_U \subset \oY$ of the following form:

$$\bar{h}_U = h_U \times \prod_{V \in \calU - \{U\}} \hT_V \cup \partial \hT_V.$$

Set $\hh_U = \bar{h}_U \cap \oQ$, which we refer to as a $\oQ$-\emph{hyperplane}.  We make some basic observations about these $\oQ$-hyperplanes:

\begin{lemma}\label{lem:hyperplane char}
Let $U \in \calU$, and $h_U \in \hT_U$ be a hyperplane.  The following hold:

\begin{enumerate}
\item If $\hz \in \hh_U$, then the coordinates $\hz_V$ of $\hz$ satisfy:
\begin{enumerate}
\item If $U \nest V$ or $U \pitchfork V$, then $\hz_V = \hd^U_V$.
\item If $V \nest U$, then $\hz_V = \hf_V$ for some $f \in  F \cup \Lambda$.
\end{enumerate}

\item If $h_V \subset \hT_V$ is another component hyperplane, then $\hh_U \cap \hh_V \neq \emptyset$ if and only if $U \perp V$.
\end{enumerate}
\end{lemma}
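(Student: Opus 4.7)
The plan is to exploit the fact that, after Corollary \ref{cor:simplicial structure}, each $\hT_U$ is a simplicial tree whose vertices contain every marked point and every cluster point $\hd^V_U$. Consequently, for any $\hz \in \hh_U$, the coordinate $\hz_U$ is the midpoint of some edge and in particular is \emph{not} a vertex, so $\hz_U \neq \hd^V_U$ for any $V$ and $\hz_U \neq \hf_U$ for any $f \in F \cup \Lambda$. This single observation drives both parts.

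For part (1), I will apply the $0$-consistency dichotomy of Definition \ref{defn:Q consistent} pairwise with $V$ and rule out the alternative to the claimed conclusion. In case (a) with $U \pitchfork V$, one option forces $\hz_U = \hd^V_U$ (impossible), so $\hz_V = \hd^U_V$. With $U \nest V$, the alternative to $\hz_V = \hd^U_V$ is $\hz_U \in \hd^V_U(\hz_V)$; here the BGI property of item (7) of Lemma \ref{lem:collapsed tree control} sends each component of $\hT_V - \hd^U_V$ to a single marked vertex $\hf_U$, again contradicting that $\hz_U$ is an edge midpoint. In case (b) with $V \nest U$, the alternative to $\hz_U = \hd^V_U$ is $\hz_V \in \hd^U_V(\hz_U)$; since $\hz_U$ is not the vertex $\hd^V_U$, it lies in some component $C$ of $\hT_U - \hd^V_U$, and BGI gives $\hd^U_V(C) = \hf_V$ for the element $f \in F \cup \Lambda$ with $\hf_U \in C$, which is the desired conclusion.

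For part (2), the forward direction $(\Rightarrow)$ is immediate from part (1): if $\hz \in \hh_U \cap \hh_V$, both $\hz_U$ and $\hz_V$ are edge midpoints, but part (1) forces one of them to be a vertex whenever $U, V$ are not orthogonal. For $(\Leftarrow)$, I will construct $\hz \in \hh_U \cap \hh_V$ directly. Set $\hz_U, \hz_V$ to be the edge midpoints carrying $h_U, h_V$. For every other $W \in \calU$, define $\hz_W$ based on the relation of $W$ to $U$ and $V$: if $W \nest U$, the orthogonality axiom of an HHS (Definition \ref{defn:HHS}(\ref{item:dfs_orthogonal})) gives $W \perp V$, so we set $\hz_W = \hd^U_W(\hz_U)$, which by BGI equals $\hf_W$ for the appropriate $f$; if $U \nest W$ or $U \pitchfork W$ and $W \perp V$, set $\hz_W = \hd^U_W$; symmetrically for $V$; if $W \perp U$ and $W \perp V$, pick any $\hf_W$.

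The main obstacle is the last case, in which $W$ is non-orthogonal to both $U$ and $V$. Here consistency forces $\hz_W = \hd^U_W = \hd^V_W$, so I must verify this equality. When $V \nest W$ (or $U \nest W$) this is built into condition (5) of Definition \ref{defn:HFT}; when $U \pitchfork W$ and $V \pitchfork W$ it is not axiomatized at the HFT level and requires an HHS argument: Lemma \ref{lem:double prod} provides a point $y \in \PP_U \cap \PP_V$ with $d_W(y, \rho^U_W), d_W(y,\rho^V_W) < E$ by Lemma \ref{lem:prod coord}, so $d_W(\rho^U_W, \rho^V_W) < 2E$, and for the cluster separation constant $r$ chosen sufficiently large these relative projections collapse to the same vertex $\hd^U_W = \hd^V_W$ in $\hT_W$. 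With $\hz_W$ thus defined, verifying $0$-consistency on any pair $(W_1,W_2)$ reduces to a routine case analysis on the relations of $W_1, W_2$ to $U, V$, in each case using that $\hz_{W_i}$ has been set to the forced relative projection.
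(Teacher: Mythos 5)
Your proof of part (1) and the forward implication of part (2) coincides with the paper's argument: the coordinate $\hz_U = h_U$ is an edge midpoint and hence not a marked or cluster vertex, and the $0$-consistency dichotomy together with the BGI property then forces the remaining non-orthogonal coordinates. (You cite the BGI as item (7) of Lemma~\ref{lem:collapsed tree control}; the paper cites item~\eqref{item:BGI HFT} of Definition~\ref{defn:HFT}, which is deliberately chosen so that the proof also works in the abstract hierarchical-family-of-trees framework. The two are equivalent by Lemma~\ref{lem:Q is HFT}.)

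Where you diverge is the converse of (2). Note that the paper's own proof only establishes the $\Rightarrow$ direction and then says ``this completes the proof''; the direction $U\perp V \Rightarrow \hh_U\cap\hh_V\neq\emptyset$ is never argued there (and, as it happens, is never invoked in the rest of the paper either --- every cited use of Lemma~\ref{lem:hyperplane char}(2) is of the forward direction, while the ``orthogonal $\Rightarrow$ transverse'' statement used later is extracted from the half-space analysis in Lemma~\ref{lem:hs int} and Lemma~\ref{lem:hs nest}). So you are attempting to repair an omission rather than reproduce the paper's argument. Your construction has the right skeleton, including the correct identification of the double-transverse obstacle and its resolution via Lemma~\ref{lem:double prod} and Lemma~\ref{lem:prod coord}, and the correct appeal to the orthogonality axiom to eliminate the case $W\nest U$ with $W\notperp V$.

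However, the closing ``routine case analysis'' is not actually routine, and this is where the gap lies. You must verify $0$-consistency for pairs $(W_1,W_2)$ with neither equal to $U$ or $V$. Concretely: suppose $W_1, W_2$ are both transverse to $U$, both orthogonal to $V$, and $W_1\nest W_2$. Your construction sets $\hz_{W_1}=\hd^U_{W_1}$ and $\hz_{W_2}=\hd^U_{W_2}$, and consistency then demands $\hd^U_{W_2}=\hd^{W_1}_{W_2}$ or $\hd^U_{W_1}=\hd^{W_2}_{W_1}(\hd^U_{W_2})$. This is not an HFT axiom; it follows from $E$-consistency of a point of the product region $\PP_U$ projected to $\calC(W_1), \calC(W_2)$ (using Lemma~\ref{lem:prod coord}), together with the cluster separation constant being large. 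A similar check is required for $W_1, W_2$ both nested in $U$ and transverse to each other, where $\hz_{W_i}=\hd^U_{W_i}(\hz_U)=\hf_{i,W_i}$ for possibly \emph{different} $f_1, f_2\in F\cup\Lambda$; consistency of $(W_1,W_2)$ is then not an instance of consistency of a single base point. Finally, in the case $W\perp U$ and $W\perp V$, you should fix a single $f\in F$ and set $\hz_W=\hf_W$ uniformly; allowing different choices across such $W$ can itself break consistency among those coordinates. None of these are insurmountable, but they need to be carried out rather than waved past.

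If the goal is just to recover the paper's proof, one can simply observe that only the $\Rightarrow$ direction is proven and used. If one does want the converse, a cleaner route than a coordinate-by-coordinate build is to take $\hw\in\oQ_U\cap\oQ_V\cap\calQ$ (nonempty by Lemma~\ref{lem:hs int}(1) and Lemma~\ref{lem:hs nonempty}) with $\hw_U,\hw_V$ adjacent to $h_U,h_V$ on the appropriate sides and observe that replacing $\hw_U\mapsto h_U$, $\hw_V\mapsto h_V$ preserves $0$-consistency because $h_U$ and $\hw_U$ lie in the same component of $\hT_U-\hd^W_U$ for every $W$ (and likewise for $V$); but making that starting choice available cleanly uses Tree Trimming or the cubical structure, which at this point in the paper is not yet established.
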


\begin{proof}
For (1), first assume that $U \pitchfork V$.  Then $h_U = \hz_U$ is not a marked or cluster point, and hence $0$-consistency implies that $\hz_U \neq \hd^V_U$.  Similarly, if $U \nest V$ but $\hz_V \neq \hd^U_V$, then the BGI property \eqref{item:BGI HFT} of Definition \ref{defn:HFT} implies that $\hd^V_U(\hz_V) = \hf_V$ for some $f \in F \cup \Lambda$.  But $\hz_U = h_U$ is not a marked point, which is a contradiction, forcing $\hz_V = \hd^U_V$.

Now if $V \nest U$, then $\hz_U = h_U$ is not a marked or cluster point, and so $\hz_U \neq \hd^V_U$, and so $\hz_U = \hd^U_V(\hz_U)$ by $0$-consistency, while the latter equals $\hf_U$ for some $f \in F \cup \Lambda$ by the BGI property \eqref{item:BGI HFT} of Definition \ref{defn:HFT}.

Finally, Now (2) follows from (1), for (1) says that if $U \notperp V$ and $\hz \in \hh_U \cap \hh_V$, then one of $\hz_U,\hz_V$ must be at a marked or cluster point, which is a contradiction.  This completes the proof.
\end{proof}

We observe the following immediate corollary of item (2) of Lemma \ref{lem:hyperplane char} and \eqref{item:finite dim} of Definition \ref{defn:HFT}, namely finite dimensionality of $\oQ$:

\begin{corollary}\label{cor:dimension}
The number of pairwise intersecting $\oQ$-hyperplanes is bounded by the number of pairwise orthogonal domains in $\calU$, and is hence finite.
\end{corollary}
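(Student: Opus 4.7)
The plan is to combine the two cited ingredients directly. Let $\calH = \{\hh_1,\dots,\hh_n\}$ be a pairwise intersecting collection of $\oQ$--hyperplanes, where each $\hh_i$ arises from a component hyperplane $h_i$ of some tree $\hT_{U_i}$. First I would observe that any two hyperplanes $\hh_i, \hh_j$ in $\calH$ must come from distinct domains $U_i \neq U_j$: indeed, if both arose from the same tree $\hT_U$, then since $h_i \neq h_j$ are distinct hyperplanes (i.e.\ midpoints of distinct edges) of the tree $\hT_U$, any point of $\hh_i \cap \hh_j$ would have to have its $U$-coordinate equal simultaneously to $h_i$ and $h_j$, which is impossible.

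Next, having assured that the $U_i$ are pairwise distinct domains in $\calU$, I apply item (2) of Lemma~\ref{lem:hyperplane char}: the fact that $\hh_i \cap \hh_j \neq \emptyset$ forces $U_i \perp U_j$. Hence $\{U_1,\dots,U_n\}$ is a pairwise orthogonal subcollection of $\calU$.

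Finally, item~\eqref{item:finite dim} of Definition~\ref{defn:HFT} bounds the size of any pairwise $\perp$-comparable subset of $\calU$ by a constant depending only on the hierarchical family of trees. In the HHS setting coming from Lemma~\ref{lem:Q is HFT}, this bound is inherited from the analogous bound on pairwise orthogonal subsets of $\mathfrak S$ in $\calX$, which in turn follows from finite complexity (Axiom~\ref{item:dfs_complexity}) and the orthogonal container clause of Axiom~\ref{item:dfs_orthogonal}. Thus $n$ is uniformly bounded, completing the proof.

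There is no serious obstacle here: the argument is a direct two-line chain of implications once one notices the minor point that distinct hyperplanes in a common factor cannot intersect, which rules out the only way a pairwise intersecting family could fail to be indexed by distinct pairwise orthogonal domains.
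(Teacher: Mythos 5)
Your proof is correct and follows essentially the same route as the paper, which cites item (2) of Lemma~\ref{lem:hyperplane char} together with item~\eqref{item:finite dim} of Definition~\ref{defn:HFT}. Your extra observation that two distinct hyperplanes in the same tree $\hT_U$ are disjoint is a worthwhile explicit check of a case the paper's proof of Lemma~\ref{lem:hyperplane char}(2) handles only implicitly via anti-reflexivity of $\perp$.
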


\subsection{Half-spaces in $\oQ$}\label{subsec:Q hs}

We next observe that the $\oQ$-hyperplanes form a wallspace structure on $\oQ$.

If $h_U \in \hT_U$ is a component hyperplane and $\hpi_U:\calY \to \hT_U$ is the coordinate projection, then $\hh_U = \hpi_U^{-1}(h_U)$.  Since $\hT_U - \{h_U\}$ has two components $\hT_U^+, \hT_U^-$, we get a partition 

$$\oQ^+_{\hh_U} \sqcup \oQ^-_{\hh_U} \sqcup \hh_U = \oQ$$

where $\oQ^{\pm}_{\hh_U} = \hpi_U^{-1}(\hT_U^{\pm}) \cap \oQ$, which we call the $\oQ$-\emph{half-spaces} associated to $\hh_U$.

\begin{notation}
In what follows, we will usually not need to refer to both a half-space and its dual.  Thus we will usually refer to the $\oQ$-half-space associated to some tree hyperplane $h_U$ as $\oQ_U$, and the corresponding half-tree of $\hT_U - \{h_U\}$ as $\hT'_U$.
\end{notation}

\vspace{.1in}

The following lemma gives a description of these half-spaces, and in particular shows both are nonempty:

\begin{lemma}\label{lem:partition}
For each $\oQ$-hyperplane $\hh_U$, the following description holds:
\begin{enumerate}
\item If $U \perp V$, then $\hpi_V(\hh_U) = \hpi_V(\oQ_U) = \hT_V$.
\item If $U \pitchfork V$, then
\begin{itemize}
\item $\hpi_V(\hh_U) = \hd^U_V$.
\item If $\hd^V_U \in \hT'_U$, then $\hpi_V(\oQ_U) = \hT_V$.  Otherwise, $\hpi_V(\oQ_U) = \hd^U_V$.
\end{itemize}
\item If $V \nest U$, then 
\begin{itemize}
\item $\hpi_V(\hh_U) = \hd^U_V(h_U)$.
\item If $\hd^V_U \in \hT'_U$, then $\hpi_V(\oQ_U) = \hT_V$.  Otherwise, $\hpi_V(\oQ_U)  = \hd^U_V(\hT'_U)$. \end{itemize}
\item If $U \nest V$, then
\begin{itemize}
\item $\hpi_V(\hh_U) = \hd^U_V$.
\item If $\hpi_V(\oQ_U) = \hull_{\hT_V}(\hd^U_V \cup l_U)$, where $l_U$ are the labels of the marked points in $\hT_U$ contained in $\hT'_U$.
\end{itemize}
\end{enumerate}

\end{lemma}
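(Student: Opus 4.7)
\textbf{Proof plan for Lemma \ref{lem:partition}.} The plan is to proceed by case analysis on the relationship between $U$ and $V$, with each case decomposed into (a) identifying the necessary constraints on the $V$-coordinate imposed by $0$-consistency of Definition \ref{defn:Q consistent} together with the fact that $\hz_U = h_U$ is not a vertex of $\hT_U$ (so in particular $\hz_U$ is neither a marked point nor a relative projection point), and (b) exhibiting, for each candidate value of $\hz_V$, an explicit $0$-consistent (and, in the infinite case, canonical-or-not-as-required) tuple in $\oQ$ realizing that value. The description of $\hpi_V(\hh_U)$ in cases (2)--(4) has already been carried out in Lemma \ref{lem:hyperplane char}(1) modulo the identification $\hd^U_V(h_U) = \hf_V$ in case (3), which is the BGI property \eqref{item:BGI HFT} of Definition \ref{defn:HFT}. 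So the real content is the description of $\hpi_V(\oQ_U)$.

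For case (1), $U \perp V$, the idea is that orthogonality imposes no consistency constraint relating the $U$- and $V$-coordinates, nor any indirect constraint through a third domain (the ``$\perp$-to-$\nest$-to-$\pitchfork$'' part of Definition \ref{defn:HFT}(5) ensures that the relative projections from $U$ and $V$ agree on every common target). Starting from any $a \in F$, I would modify the $U$-coordinate to any chosen side of $h_U$ and the $V$-coordinate to any prescribed $p \in \hT_V$, and check that the resulting tuple is still $0$-consistent; this gives $\hpi_V(\oQ_U) = \hT_V$ (and the same for $\hh_U$). For case (2), $U \pitchfork V$, if $\hd^V_U \in \hT'_U$, then taking a tuple $\hz$ with $\hz_U = \hd^V_U \in \oQ_U$ leaves $\hz_V$ unconstrained by transverse consistency, so one can again produce any $p \in \hT_V$; if instead $\hd^V_U \notin \hT'_U$, then every $\hz \in \oQ_U$ satisfies $\hz_U \neq \hd^V_U$, forcing $\hz_V = \hd^U_V$.

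For case (3), $V \nest U$, the analogous dichotomy applies: if $\hd^V_U \in \hT'_U$, the $U$-coordinate is free to equal $\hd^V_U$, releasing the $V$-coordinate; otherwise $\hz_U \neq \hd^V_U$ throughout $\oQ_U$, so nested consistency forces $\hz_V \in \hd^U_V(\hT'_U)$, and the reverse inclusion (that every point in $\hd^U_V(\hT'_U)$ is realized) is obtained by constructing $\hz$ as a small perturbation of a tuple corresponding to a marked point $\hf_U \in \hT'_U$. For case (4), $U \nest V$, $0$-consistency combined with $\hz_U = h_U \ne \hd^V_U$ gives $\hpi_V(\hh_U) = \hd^U_V$; the description of $\hpi_V(\oQ_U)$ as $\hull_{\hT_V}(\hd^U_V \cup l_U)$ then follows because (i) the BGI property of Definition \ref{defn:HFT}\eqref{item:BGI HFT} sends any marked point of $\hT'_U$ to its label in $\hT_V$, so such labels must lie in $\hpi_V(\oQ_U)$ and the subtree they span together with $\hd^U_V$ is the convex hull, and (ii) conversely any $\hz \in \oQ_U$ has $\hz_V$ either equal to $\hd^U_V$ or pinned by nested consistency to $\hd^U_V(\hz_U) \in \hd^U_V(\hT'_U) \subseteq \hull_{\hT_V}(\hd^U_V \cup l_U)$.

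The main obstacle I anticipate is the \emph{existence} direction in the infinite setting $\Lambda \neq \emptyset$, because merely choosing $\hz_U$ and $\hz_V$ on the desired sides does not automatically yield a well-defined canonical (or appropriately non-canonical) $0$-consistent tuple: one must specify all remaining coordinates and verify $0$-consistency globally while respecting the finiteness condition in Definition \ref{defn:Q consistent}. My approach here would be to start from a base tuple $\ha$ for some $a \in F$ (which lies in $\calQ$ by Lemma \ref{lem:F and Lambda}) and surgically alter finitely many coordinates, using item (4) of Lemma \ref{lem:collapsed tree control} to bound the non-$\ha$-coordinates of $\hz$ and the Covering Lemma \ref{lem:covering} together with the BGI property to propagate the consistency-induced changes on nested and transverse relatives in a controlled way. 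This preserves canonicality outside a finite set, so the resulting $\hz$ lies in $\oQ$, completing the proof.
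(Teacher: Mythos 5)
Your proposal follows essentially the same route as the paper: a case-by-case unwinding of the $0$-consistency constraints from Definition \ref{defn:Q consistent} against the fact that $\hz_U = h_U$ is not a marked or cluster point, with the realization direction handled by constructing tuples explicitly (the paper in fact treats cases (1)--(3) as a ``straightforward exercise'' and only details the forward inclusion in case (4), which you reproduce with the same BGI-based argument). One notational slip: in case (4), where $U \nest V$, the relative projection map is $\hd^V_U \colon \hT_V \to \hT_U$, so the consistency condition pins $\hz_U$ via $\hz_U = \hd^V_U(\hz_V)$, not $\hz_V$ via something like ``$\hd^U_V(\hz_U)$''; the correct converse argument is that $\hz_V$ lies in the component of $\hT_V - \{\hd^U_V\}$ mapping under $\hd^V_U$ to a marked point of $\hT'_U$, and since all leaves of $\hT_V$ are marked points whose labels then lie in $l_U$, that component (and hence $\hz_V$) sits inside $\hull_{\hT_V}(\hd^U_V \cup l_U)$.
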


\begin{proof}
The proof of the description of the component projections of $\oQ_{U}$ is a straightforward exercise in the definitions utilizing Lemma \ref{lem:hyperplane char} and we leave most of it to the reader.  The one exception is item (4), which deserves a comment.

When $U \nest V$, then $\hT_V - \{\hd^U_V\}$ consists of some number of components, each of which contains some marked point or ray.  By the BGI property in item (5) of Definition \ref{defn:HFT}, each of these components projects to a marked cluster point in $\hT_U$.  By construction, $h_U$ partitions all marked points and rays in $\hT_U$, and so each half-space $\hT'_U$ contains some (nonempty) set of marked points or rays, whose labels we denote by $l_U$.  Hence each component of the subtree $\hull_{\hT_V}(\hd^U_V \cup l_U)$, namely the convex hull in $\hT_V$ of $\hd^U_V$ and $l_U$, projects to a marked point contained in $\hT_U$ by the BGI property \eqref{item:BGI HFT} of Definition \ref{defn:HFT}.  This completes the proof.
\end{proof}

\subsection{Intersection and nesting of half-spaces}\label{subsec:hs nest}

In this subsection, we prove some useful descriptions of half-space intersections and nesting.  We note that the first lemma is an exact version of \cite[Lemma 4.6]{DMS20}.

\begin{lemma}\label{lem:hs int}
Let $h_U \in \hT_U$ and $h_V \in \hT_V$ be component hyperplanes with corresponding $\oQ$-hyperplanes $\hh_U, \hh_V$, and $\oQ_U, \oQ_V$ be choices of half-spaces corresponding to half-trees $\hT'_U \subset \hT_U - \{h_U\}$ and $\hT'_V \subset \hT_V - \{h_V\}$, respectively.  Then $\oQ_U \cap \oQ_V \neq \emptyset$ if and only if one of the following holds:

\begin{enumerate}
\item $U \perp V$;
\item $U = V$ and $\hT'_U \cap \hT'_V \neq \emptyset$;
\item $U \pitchfork V$ and either $\hd^V_U \in \hT'_U$ or $\hd^U_V \in \hT'_V$;
\item $U \nest V$ and either $\hd^U_V \in \hT'_V$ or $\hd^V_U(\hT'_V) \in \hT'_U$.
\end{enumerate}

\end{lemma}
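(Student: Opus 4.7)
The plan is to prove the biconditional case-by-case according to the relation between $U$ and $V$, using Lemma \ref{lem:partition} to describe the coordinate projections of each half-space and invoking $0$-consistency (Definition \ref{defn:Q consistent}) to control how candidate coordinates in $\hT'_U$ and $\hT'_V$ can be jointly realized by a single tuple in $\oQ$.

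For the ``if'' direction, I would proceed as follows. In case (1), when $U \perp V$, Lemma \ref{lem:partition}(1) gives $\hpi_V(\oQ_U) = \hT_V$ and $\hpi_U(\oQ_V) = \hT_U$, while orthogonality imposes no constraint between the $U$- and $V$-coordinates, so any pair of points in $\hT'_U \times \hT'_V$ can be completed to a tuple in $\oQ$ by filling in the remaining coordinates using the marked data coming from $F$. Case (2) is immediate: if $U = V$ and $\hT'_U \cap \hT'_V \neq \emptyset$, pick a point in this intersection and extend arbitrarily to a $0$-consistent tuple. In case (3), if $\hd^V_U \in \hT'_U$ then Lemma \ref{lem:partition}(2) gives $\hpi_V(\oQ_U) = \hT_V$, so I can realize a tuple with $\hx_U \in \hT'_U$ and $\hx_V \in \hT'_V$ (symmetrically when $\hd^U_V \in \hT'_V$). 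In case (4), if $\hd^U_V \in \hT'_V$ then $\hpi_U(\oQ_V)$ meets $\hT'_U$ by the description in Lemma \ref{lem:partition}(4), while if $\hd^V_U(\hT'_V) \cap \hT'_U \neq \emptyset$ then I can pick $\hx_V \in \hT'_V$ with $\hd^V_U(\hx_V) \in \hT'_U$ and set $\hx_U = \hd^V_U(\hx_V)$, yielding a $0$-consistent partial assignment to extend.

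For the ``only if'' direction, suppose $\hx \in \oQ_U \cap \oQ_V$, so $\hx_U \in \hT'_U$ and $\hx_V \in \hT'_V$. If $U, V$ are not orthogonal, equal, transverse, or nested then nothing to check; otherwise the relevant condition falls out of $0$-consistency. For $U \pitchfork V$, $0$-consistency forces $\hx_U = \hd^V_U$ or $\hx_V = \hd^U_V$, and whichever occurs puts the corresponding $\hd$-point into the corresponding half-tree, giving (3). For $U \nest V$, either $\hx_V = \hd^U_V \in \hT'_V$ (giving the first alternative in (4)) or $\hx_U \in \hd^V_U(\hx_V) \subseteq \hd^V_U(\hT'_V)$, so $\hd^V_U(\hT'_V)$ meets $\hT'_U$ (giving the second). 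The case $U = V$ is trivial, and $U \perp V$ is vacuous.

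The main obstacle is bookkeeping in the infinite setting ($\Lambda \neq \emptyset$): when building the realizing tuples for the ``if'' direction, one must ensure the chosen partial assignment extends to an actual element of $\oQ$ on all of $\calU$, not just on $\{U, V\}$ and the domains into which they nest. Here I would define the extension coordinate-wise via marked points labeled by some fixed $a \in F$ on all domains not constrained by the data at $U$ and $V$, and then verify $0$-consistency by separately checking pairs of domains using Lemma \ref{lem:hyperplane char}, Lemma \ref{lem:partition}, and the BGI property \eqref{item:BGI HFT} of Definition \ref{defn:HFT}. In places where the constraints propagate upward (e.g., through domains $W$ with $U \nest W$ or $V \nest W$), the rule $\hx_W = \hd^U_W = \hd^V_W$ is forced by the second bullet of Definition \ref{defn:HFT}(5) whenever $U \perp V$, which keeps the extension consistent.
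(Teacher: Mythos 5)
Your proof is essentially the same approach as the paper's: a case analysis by relation type, with Lemma \ref{lem:partition} doing the work. Both directions are correct, and the ``only if'' direction is handled exactly as in the paper (contrapositive plus $0$-consistency of the tuple).

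However, you are doing unnecessary work on the ``if'' direction, and the ``main obstacle'' you identify is illusory. Lemma \ref{lem:partition} is a statement about the subsets $\oQ_U \subset \oQ$, which already consist of complete $0$-consistent tuples. So when it says, e.g., $\hpi_V(\oQ_U) = \hT_V$, this means that for \emph{every} $q \in \hT_V$ there is some full tuple $\hx \in \oQ_U \subset \oQ$ with $\hx_V = q$; choosing $q \in \hT'_V$ immediately produces a point of $\oQ_U \cap \oQ_V$. No separate extension argument, no appeal to the Partial Realization Axiom, and no separate handling of the infinite case is required --- the realization is already baked into the statement of Lemma \ref{lem:partition}. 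Constructing a realizing tuple by hand and re-verifying $0$-consistency pair-by-pair, as you propose, duplicates the content of that lemma.

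One small citation slip in case (4): to conclude from $\hd^U_V \in \hT'_V$ that $\hpi_U(\oQ_V) = \hT_U$ (and hence meets $\hT'_U$), you should invoke Lemma \ref{lem:partition}(3) with the roles of $U$ and $V$ swapped (since $U \nest V$ makes $U$ the nested domain relative to $\oQ_V$), not item (4), which describes $\hpi_V(\oQ_U)$. Alternatively, item (4) can be used directly to show $\hpi_V(\oQ_U) = \hull_{\hT_V}(\hd^U_V \cup l_U)$ contains $\hd^U_V \in \hT'_V$, which also gives nonempty intersection; but then the quantity one projects is $\hpi_V(\oQ_U)$, not $\hpi_U(\oQ_V)$ as you wrote.
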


\begin{proof}
Item (1) follows from item (1) of Lemma \ref{lem:partition}.  Item (2) follows from the definition of the half-spaces as inverse images under component-wise projection.  Item (3) is a consequence of item (2) of Lemma \ref{lem:partition}, for if $\hd^V_U \notin \hT'_U$, then $\hpi_V(\oQ_U) = \hd^U_V$, so if $\hd^V_U \notin \hT'_V$, we cannot have $\oQ_U \cap \oQ_V \neq \emptyset$.  Finally, item (4) is a consequence of items (3) and (4) of Lemma \ref{lem:partition}, for if $\hd^U_V \notin \hT'_V$, then $\hpi_U(\oQ_V) = \hd^V_U(\hT'_V)$ by item (3) of that lemma.  Hence if $\oQ_U \cap \oQ_V \neq \emptyset$, then we must have that $\hd^V_U(\hT'_V) \in \hT'_U$, as required.  This completes the proof.
\end{proof}

A similar analysis gives a description of when half-spaces are nested:

\begin{lemma}\label{lem:hs nest}
Using the same notation as above, we have $\oQ_U \subset \oQ_V$ if and only if one of the following holds:

\begin{enumerate}
\item $U=V$ and $\hT'_U \subset \hT'_V$;
\item $U \pitchfork V$ and $\hd^U_V \in \hT'_V$ and $\hd^V_U \notin \hT'_U$;
\item $U \nest V$ and $\hd^U_V \in \hT'_V$ and $\hT'_U \subset \hd^V_U(\hT'_V)$;
\item $V \nest U$ and both $\hd^V_U \notin \hT'_U$ and $\hd^U_V(\hT'_U) \in \hT'_V$.
\end{enumerate}

\begin{itemize}
\item In particular, if $\oQ_U \neq \oQ_V$ and $U,V$ are not orthogonal or equal, then one of $\oQ_U \subset \oQ_V$, $\oQ_U \subset \oQ^*_V$, $\oQ^*_U \subset \oQ_V$, or $\oQ^*_U \subset \oQ^*_V$ holds.  On the other hand, if $\oQ_U = \oQ_V$ or $U,V$ are orthogonal, then none of the above relations holds.
\end{itemize}
\end{lemma}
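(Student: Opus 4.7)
\textbf{Proof plan for Lemma \ref{lem:hs nest}.} The approach is parallel to the proof of Lemma \ref{lem:hs int}: work case-by-case on the relation between $U$ and $V$, using the component-projection description from Lemma \ref{lem:partition} for each direction of the ``if and only if''. For the forward direction in each case, I would combine two applications of Lemma \ref{lem:hs int}: the nesting $\oQ_U \subset \oQ_V$ is equivalent to the pair of statements $\oQ_U \cap \oQ_V \neq \emptyset$ and $\oQ_U \cap \oQ_V^* = \emptyset$, and each of these translates, via Lemma \ref{lem:hs int}, into a condition on the locations of $\hd^U_V$, $\hd^V_U$, $\hT'_U$, and $\hT'_V$. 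For the reverse direction in each case, I would use the explicit projection descriptions in Lemma \ref{lem:partition} to check coordinate-wise that every tuple lying in $\oQ_U$ automatically lies in $\oQ_V$. The $U \perp V$ case need not appear: Lemma \ref{lem:partition}(1) gives $\hpi_V(\oQ_U) = \hT_V$, so $\oQ_U$ meets both half-spaces of $\hh_V$ and neither nesting relation can hold.

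The four enumerated cases split as follows. In case (1), where $U = V$, the single-component picture reduces everything to the tree $\hT_U$, and the statement is immediate from the fact that $\oQ_U = \hpi_U^{-1}(\hT'_U) \cap \oQ$. In cases (2) and (3), the conditions $\hd^U_V \in \hT'_V$ and $\hd^V_U \notin \hT'_U$ (respectively the BGI version $\hT'_U \subset \hd^V_U(\hT'_V)$) are exactly what Lemma \ref{lem:partition}(2)(3) and Lemma \ref{lem:hs int} translate the nesting into. Case (4) is the only mildly subtle one, because when $U \nest V$ the projection $\hpi_V(\oQ_U)$ is the convex hull $\hull_{\hT_V}(\hd^U_V \cup l_U)$ of the marked labels $l_U$ in $\hT'_U$ together with $\hd^U_V$; the required conditions $\hd^V_U \notin \hT'_U$ and $\hd^U_V(\hT'_U) \in \hT'_V$ ensure that this convex hull lies entirely on the $\hT'_V$-side of $h_V$, and conversely if $\oQ_U \subset \oQ_V$ then the BGI property in item \eqref{item:BGI HFT} of Definition \ref{defn:HFT} forces both conditions.

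For the ``in particular'' dichotomy, I would argue as follows. Suppose $U, V$ are neither orthogonal nor yield $\oQ_U = \oQ_V$. If $U = V$ then the two half-trees $\hT'_U, \hT'_V$ of the same tree are comparable: since $\hT'_U \neq \hT'_V$, one must be contained in the other or in its complement, giving a nesting. If $U \pitchfork V$ or $U \nest V$ or $V \nest U$, then the locations of the relative projections $\hd^U_V$ and $\hd^V_U$ (and $\hd^V_U(\hT'_V)$ or $\hd^U_V(\hT'_U)$ in the nested cases) with respect to the partitions $\hT_U = \hT'_U \sqcup \{h_U\} \sqcup (\hT'_U)^c$ and $\hT_V = \hT'_V \sqcup \{h_V\} \sqcup (\hT'_V)^c$ have only finitely many combinatorial configurations; a direct enumeration shows each configuration realizes exactly one of the four nesting relations between $\oQ_U, \oQ_U^*, \oQ_V, \oQ_V^*$.

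The main obstacle I anticipate is bookkeeping: there are a handful of sub-cases in each of (2), (3), (4), and one has to keep straight which half-tree of $\hT_V$ plays the role of $\hT'_V$ and which half-tree of $\hT_U$ plays the role of $\hT'_U$, particularly for the ``in particular'' enumeration where one must verify that the four configurations really are mutually exclusive and exhaustive. There is also a small but genuine subtlety when $U \nest V$ because $\hd^V_U$ is a map $\hT_V \to \hT_U$ rather than a point, so one must interpret ``$\hT'_U \subset \hd^V_U(\hT'_V)$'' carefully; here the BGI property \eqref{item:BGI HFT} of Definition \ref{defn:HFT}, which says $\hd^V_U$ collapses whole components of $\hT_V - \hd^U_V$ to single marked points, is exactly what makes the argument go through cleanly.
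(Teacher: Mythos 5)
Your proposal follows essentially the same route as the paper: cases (1)--(4) of the lemma are read off directly from the corresponding items of Lemma \ref{lem:partition}, and the ``in particular'' dichotomy is handled by a short enumeration of configurations. The added layer of first translating $\oQ_U\subset\oQ_V$ into a pair of intersection statements via Lemma \ref{lem:hs int} is a reasonable variant but does not change the substance (and in fact needs a small caveat: $\oQ_U\cap\oQ_V\neq\emptyset$ and $\oQ_U\cap\oQ_V^*=\emptyset$ together give $\oQ_U\subset\oQ_V\cup\hh_V$, so you would still need to rule out $\oQ_U\cap\hh_V\neq\emptyset$ separately, e.g.\ by noting that points of $\hh_V$ have $V$-coordinate equal to the midpoint $h_V$, which is not a vertex of $\hT_V$).

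One genuine bookkeeping error: you describe case (4) of Lemma \ref{lem:hs nest} as the case ``$U\nest V$'' where $\hpi_V(\oQ_U)=\hull_{\hT_V}(\hd^U_V\cup l_U)$, but case (4) is actually $V\nest U$, for which Lemma \ref{lem:partition}(3) gives $\hpi_V(\oQ_U)=\hT_V$ if $\hd^V_U\in\hT'_U$ and $\hpi_V(\oQ_U)=\hd^U_V(\hT'_U)$ otherwise -- and the conditions ``$\hd^V_U\notin\hT'_U$ and $\hd^U_V(\hT'_U)\in\hT'_V$'' are read off directly from that. The convex hull picture $\hull_{\hT_V}(\hd^U_V\cup l_U)$ belongs to Lemma \ref{lem:partition}(4), which governs case (3) of Lemma \ref{lem:hs nest} (where $U\nest V$), and the conditions there are the different pair ``$\hd^U_V\in\hT'_V$ and $\hT'_U\subset\hd^V_U(\hT'_V)$.'' Once you untangle which item of Lemma \ref{lem:partition} feeds which case, your outline for the forward and backward directions is sound and matches the paper's proof, which simply records items (2)--(4) as direct consequences of the corresponding items of Lemma \ref{lem:partition}.
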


\begin{proof}
Item (1) follows from the definition of $\oQ_U, \oQ_V$ as preimages under component-wise projection to $\hT_U = \hT_V$.  Items (2)--(4) are direct consequences of items (2)--(4) of Lemma \ref{lem:partition}, respectively.

We now prove the last statement.  The statement follows immediately from (1) when the domain labels for $\oQ_U, \oQ_V$ are the same.  Suppose then that $\oQ_U, \oQ_V$ correspond to half-trees $\hT'_U, \hT'_V$ for tree-hyperplanes $h_U,h_V$ in $\hT_U,\hT_V$, respectively.  When $U \pitchfork V$, if $\oQ_U \nsubset \oQ_V$, then  $\hd^V_U$ is contained in the complement of $\hT'_U$, and hence item (2) above implies that $\oQ_U \subset \oQ^*_V$.  For $U \nest V$, if $\oQ_U \nsubset \oQ_V$, then $\hd^U_V \notin \hT'_V$.  It follows that $\hd^U_V \in \hT_V - \hT'_V$, and hence item (3) implies that $\oQ_U \subset \oQ^*_V$.  See Figure \ref{fig:hs_nest}.  Finally, the claim when $\oQ_U = \oQ_V$ follows immediately from the definitions, while when $U \perp V$, the claim also requires item (1) of Lemma \ref{lem:partition}.  This completes the proof.

\begin{figure}
    \centering
    \includegraphics[width = .8\textwidth]{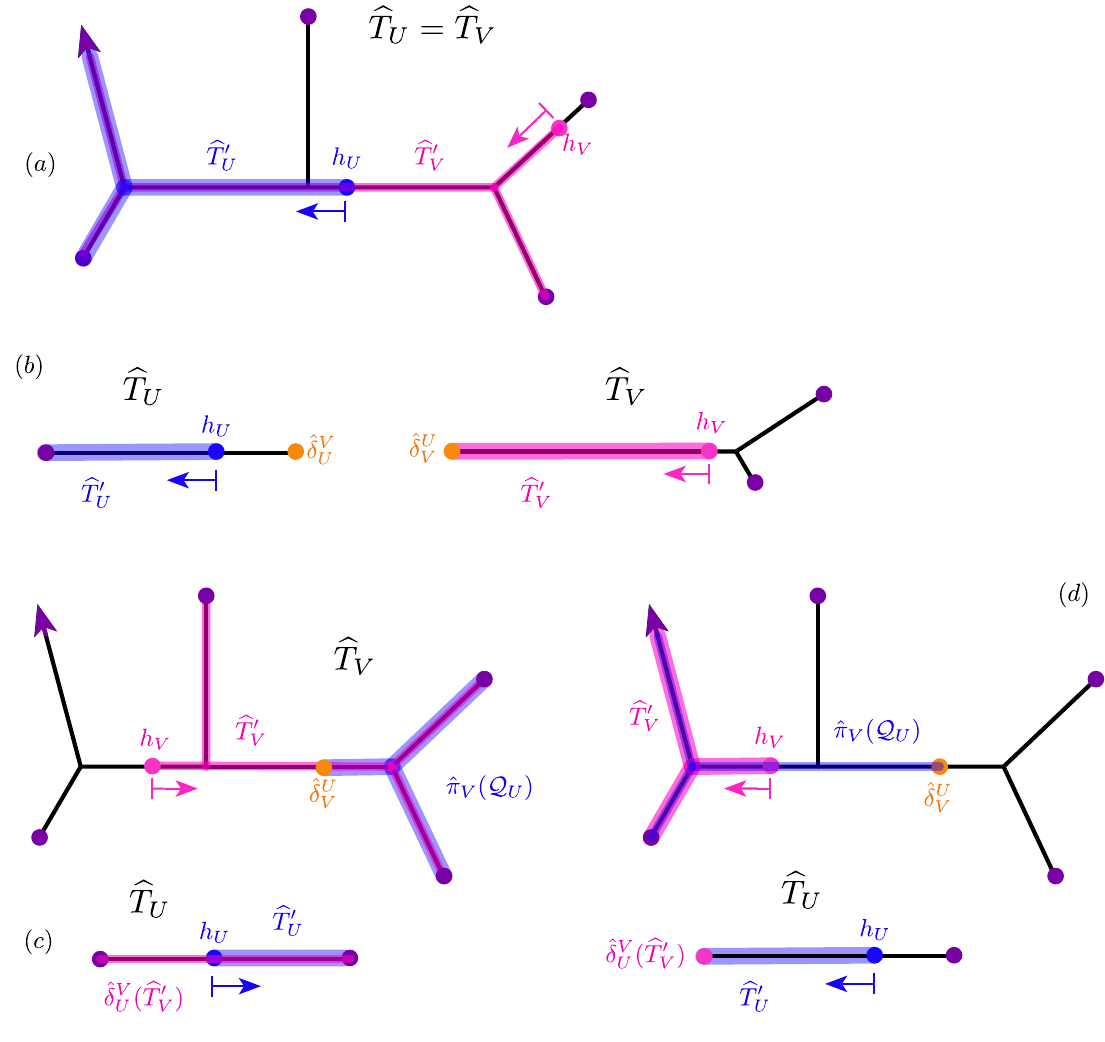}
    \caption{The four nesting possibilities in Lemma \ref{lem:hs nest}: In (a), the domain labels for $\oQ_U, \oQ_V$ are the same and we have $\oQ_U \subset \oQ_V$ because $\hT'_U \subset \hT'_V$.  In (b), $U \pitchfork V$ and $\oQ_U \subset \oQ_V$ because $\hd^U_V \in \hT'_V$ while $\hd^V_U \notin \hT'_U$.  In (c), we have $U \nest V$ and $\oQ_U \subset \oQ_V$ because $\hT'_U \subset \hd^V_U(\hT'_V)$.  In (d), we have $U \nest V$ and $\oQ_V \subset \oQ_U$ because $\hd^V_U(\hT'_V) \subset \hT'_U$.}
    \label{fig:hs_nest}
\end{figure}
\end{proof}

\subsection{Half-spaces are nonempty} \label{subsec:hs nonempty}

In this subsection, we prove that every $\oQ$-half-space contains a point in $\calQ$.  This fact is almost immediate when $\Lambda = \emptyset$, but requires some work in the infinite case.  It is crucial for proving local finite-ness in the proof that the induced wall-space on $\oQ$ forms a pocset (Proposition \ref{prop:pocset}), as well as proving that only points in $\calQ^{\infty}$ are contained in infinite descending chains of half-spaces (Proposition \ref{prop:DCC}).

\begin{lemma} \label{lem:hs nonempty}
For any $\oQ$-half-space $\oQ_U$, we have that $\oQ_U \cap \calQ$ and $\oQ^*_U \cap \calQ$ are nonempty.  Moreover, if $\hT'_U$ is the half-tree of $\hT_U$ corresponding to $\oQ_U$, then the following hold:
\begin{enumerate}
\item If $a \in F$ with $\ha_U \in \hT'_U$, then $\ha \in \oQ_U \cap \calQ$.
\item If $\lambda \in \Lambda$ with $\hlam_U \in \hT'_U \cup \partial \hT'_U$, then $\hlam \in \oQ_U \cap \calQ^{\infty}$.
\end{enumerate}
\end{lemma}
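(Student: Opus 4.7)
The plan is to prove both conclusions simultaneously by exhibiting explicit witnesses from $F \cup \Lambda$. The ``moreover'' statements (1) and (2) are essentially immediate from the definitions, and the non-emptiness of each $\oQ$-half-space will follow by showing that a suitable element of $F \cup \Lambda$ always exists as a witness.

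First, I would observe that statements (1) and (2) are almost immediate. Indeed, by Lemma \ref{lem:F and Lambda} we have $\hPsi(F) \subset \calQ$ and $\hPsi(\Lambda) \subset \calQ^{\infty}$, so $\ha \in \calQ$ and $\hlam \in \calQ^{\infty}$ automatically. Since $\oQ_U = \hpi_U^{-1}(\hT'_U) \cap \oQ$, where $\hT'_U$ denotes the half-tree of $\hT_U$ corresponding to $\oQ_U$ (and its closure inside $\hT_U \cup \partial \hT_U$ includes $\partial \hT'_U$), the hypothesis $\ha_U \in \hT'_U$ gives $\ha \in \oQ_U$ directly, and similarly $\hlam_U \in \hT'_U \cup \partial \hT'_U$ gives $\hlam \in \oQ_U$. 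This proves (1) and (2).

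Next, to establish that $\oQ_U \cap \calQ$ and $\oQ^*_U \cap \calQ$ are both nonempty, it suffices to produce, for each of the two half-trees $\hT^{\pm}_U$ of $\hT_U - \{h_U\}$, some $a \in F \cup \Lambda$ so that either $\ha_U$ lies in that half-tree or $\hlam_U$ lies in its boundary at infinity; then (1) or (2) supplies the witness. The key structural fact is that every leaf of $\hT_U$, together with every ray end $\hlam_U \in \partial \hT_U$ for $\lambda \in \Lambda$ with $U \in \supp(\lambda)$, is labeled by some element of $F \cup \Lambda$: in the HFT setting this is item (4) of Definition~\ref{defn:HFT}, and in the construction of Section~\ref{sec:Q} it follows from the fact that $\hT_U$ is obtained from the Gromov tree $T_U$ of $\pi_U(F \cup \Lambda)$ by collapsing clusters, which preserves leaves and ray ends up to identifying them with marked or cluster points. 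Since $\hT_U$ is a tree and $h_U$ is the midpoint of an edge, each component of $\hT_U - \{h_U\}$ contains at least one leaf or ray end of $\hT_U$ (otherwise the component would be a compact tree with no leaves).

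Consequently, for each choice of half-tree $\hT'_U$, we can pick such a leaf or ray end; it is labeled by some $a \in F \cup \Lambda$, giving either a marked point $\ha_U \in \hT'_U$ (if $a \in F$, or $a = \lambda \in \Lambda$ with $U \notin \supp(\lambda)$) or a ray $\hlam_U \in \partial \hT'_U$ (if $\lambda \in \Lambda$ with $U \in \supp(\lambda)$). Applying (1) or (2) accordingly produces an element of $\oQ_U \cap \calQ$ or $\oQ_U \cap \calQ^{\infty}$, and the same argument applied to the opposite half-tree handles $\oQ^*_U$. I do not expect any real obstacle here; the only point requiring minor care is the infinite case, where one must confirm that the boundary partition $\partial \hT_U = \partial \hT^+_U \sqcup \partial \hT^-_U$ is compatible with the half-space description $\oQ^{\pm}_{\hh_U} = \hpi_U^{-1}(\hT^{\pm}_U \cup \partial \hT^{\pm}_U) \cap \oQ$, which is immediate since every geodesic ray in $\hT_U$ eventually enters exactly one of the two components of $\hT_U - \{h_U\}$.
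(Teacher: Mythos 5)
Your proof establishes that $\oQ_U \cap \oQ$ is nonempty (and similarly for $\oQ^*_U$), but the lemma demands the stronger conclusion that $\oQ_U \cap \calQ$ is nonempty, where $\calQ$ is the canonical subset of $\oQ$. Witnesses drawn from $\Lambda$ land in $\calQ^{\infty}$, not $\calQ$, so they do not suffice. Concretely: when the half-tree $\hT'_U$ you pick contains no $\ha_U$ for $a \in F$ (for instance, when $\hT'_U$ sits past all $F$-marked points along a ray toward some $\hlam_U \in \partial\hT_U$), your argument hands you only an element of $\oQ_U \cap \calQ^{\infty}$, leaving $\oQ_U \cap \calQ \neq \emptyset$ unproved. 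You even say as much in your last sentence ("an element of $\oQ_U \cap \calQ$ \emph{or} $\oQ_U \cap \calQ^{\infty}$"), but this ``or'' is not good enough. There is also a smaller inaccuracy: you treat $\lambda \in \Lambda$ with $U \notin \supp(\lambda)$ as yielding a $\calQ$-witness via item (1), but item (1) applies only to $a \in F$; by Lemma \ref{lem:F and Lambda}, $\hPsi(\lambda) \in \calQ^{\infty}$ regardless of whether $U \in \supp(\lambda)$.

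This missing piece is in fact the bulk of the paper's proof. To produce a canonical $0$-consistent tuple in $\oQ_U$ when all witnesses in $\hT'_U \cup \partial\hT'_U$ come from $\Lambda$, the paper chooses an explicit vertex $w$ (either in $\hT'_U$ itself, or in $\hT_V$ for a carefully chosen nested or witnessing domain $V$), extends it to a full tuple $\hw$ by setting all other coordinates to relative projections or to $\ha_V$ for a fixed $a \in F$, and then proves $\hw$ is canonical. Proving canonicality is the hard step: one must rule out infinitely many coordinates disagreeing with some $\ha$, and this is done via Lemma \ref{lem:forced support} and Strong Passing-Up (Proposition \ref{prop:SPU}), split into three cases according to whether $U \in \supp(\lambda)$ and whether $\hlam_U$ is a boundary point or an interior cluster point of $\hT_U$. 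None of this is ``minor care in the infinite case''; it is the core content of the lemma, and your proposal does not address it.
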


\begin{proof}
Let $\hT'_U$ be the half-tree of $\hT_U$ corresponding to $\oQ_U$.  Note that there is some $z \in F \cup \Lambda$ so that $\hz_U \in \hT'_U \cup \partial \hT'_U$.  First suppose that we can take $z = a \in F$ so that $\ha_U \in \hT'_U$, then $\ha \in \oQ_U\cap \calQ$.  This is because $\left(\oQ_U \sqcup \oQ^*_U \sqcup \hh_U \right)\cap \calQ= \calQ$ is indeed a partition of $\calQ$, and by Lemma \ref{lem:partition} and our assumption that $\ha_U \in \hT'_U$, we cannot have $\ha \in \oQ^*_U \cup \hh_U$.  This proves part (1) of the statement, and the whole statement when $\Lambda = \emptyset$.  Also, observe that part (2) is immediate from this observation as well.
 
The bulk of the work is in when we must take $z = \lambda \in \Lambda$ with $\hlam_U \in \hT'_U \cup \partial \hT'_U$.  Observe that when this happens, the opposite half-tree of $\hT'_U$ contains some marked point for an element of $F \neq \emptyset$, and hence $\oQ^*_U \cap \calQ \neq \emptyset$.  For showing $\oQ_U \cap \calQ \neq \emptyset$, there are a three main cases:

\begin{enumerate}
\item $U \in \supp(\lambda)$ and $\hlam_U \in \partial \hT_U$ represents a ray;
\item $U \in \supp(\lambda)$ and $\hlam_U \in \hT_U$ is a cluster point;
\item $U \notin \supp(\lambda)$.
\end{enumerate}

In each case, the idea is to construct a tuple $\hw \in \oQ_U$ and prove that it is canonical (Definition \ref{defn:Q consistent}).  The arguments are related, and each invokes Strong Passing-Up \ref{prop:SPU} at a key moment.

\underline{Case (1)}: In this case, there are infinitely-many vertices in $\hT'_U$ which are not marked, branched, or cluster points.  Choose a vertex $w \in \hT'_U$ sufficiently out the ray for $\hlam_U$ so that no marked point or branch points separates $w$ from $\hlam_U$.

Define a tuple $\hw$ domain-wise as follows:
\begin{itemize}
\item $\hw_U = w$;
\item If $V \pitchfork U$ or $U \nest V$, then $\hw_V = \hd^U_V = \hlam_V$;
\item If $V \nest U$, then we set $\hw_V = \hd^U_V(h_U) = \hlam_V$;
\item Otherwise, we fix $a \in F$ and for any $V \perp U$, we set $\hw_V = \ha_V$.
\end{itemize}

Observe that $\hw \in \oQ_U$ by construction, in particular it is $0$-consistent.  To see canonicality, let $b \in F$ and set $\calV_b = \{V \in \calU| \hb_V \neq \hw_V\}$.  If $\#\calV_b < \infty$ for all $b \in F$, then $\hw$ is canonical and we are done.

Let $\supp(\lambda) = \{U_1, \dots, U_n\}$, which we recall are pairwise orthogonal (Lemma \ref{lem:big orth}).  Then Lemma \ref{lem:forced support} implies that there is a cofinite subset $\calV' \subset \calV$ which partitions into subsets $\calV'_1, \dots, \calV'_n$ so that $V \nest U_i$ for each $V \in \calV'_i$.

Assuming that $U = U_1$, we observe that when $i>2$, then by Definition \ref{defn:ray projection} we have $\hw_{U_i} = \hlam_{U_i} = \ha_{U_i}$ for our fixed $a \in F$.  Since $\#\Rel_{50E}(a,b)<\infty$ (Corollary \ref{cor:rel sets are finite}), it follows that $\#\calV_i< \infty$ for each $i>2$.

On the other hand, observe that if $V \in \calV'_1$, then $\delta^V_U$ separates $b_U$ from $q^{-1}_U(w)$ in $T_U$ by the BGI property of Lemma \ref{lem:tree control}.  On the other hand, if $\#\calV'_1 = \infty$, then Strong Passing-Up \ref{prop:SPU} implies that the spread of the $\rho^V_U$ in $\calC(U)$, and hence $\delta^V_U$ in $T_U$, must be infinite, which is impossible because $d_{T_U}(b_U, q^{-1}_U(w))<\infty$.  Hence $\#\calV'_1 < \infty$ and $\hw$ is canonical.  This proves case (1).

\vspace{.1in}

\underline{Case (2)}: Now we assume that $U \in \supp(\lambda)$ but $\hlam_U \in \hT_U$ is a cluster point.  By definition of $\hT_U$, since $d_{T_U}(b_U, \lambda_U) = \infty$ for all $b \in F$, it follows that $\#\{V \in \Rel_{50E}(b, \lambda) | V \nest U, \hspace{.1in} \hd^V_U = \hlam_U\} = \infty$.

Let $V \in \calU$ be any $\nest_{\calU}$-minimal domain with $V \nest U$ and $\delta^V_U$ at least sufficiently far past any branched or marked point of $T_U$ along the ray to $\lambda_U$, so that $[b,c] \cap \calN_{E}(\rho^V_U) = \emptyset$ for any geodesic $[b,c]$ between $b,c$ in $\calC(U)$, for all $b \in F$ and $c \in F \cup \Lambda - \{\lambda\}$.  The BGIA \ref{ax:BGIA} then implies that $d_V(b,c)<E$ for all such $b,c$ and hence $d_V(b, \lambda)>K-E$, since $V \in \calU = \Rel_K(F \cup \Lambda)$.

It follows from item (6) of Lemma \ref{lem:collapsed tree control} that $d_{\hT_V}(\hb_V, \hlam_V)\succ K$, and in particular that $\hb_V \neq \hlam_V$ for all $b \in F$.  Thus there exists a vertex $w \in \hT_V$ which is not a marked or cluster point (there are none of the latter) which separates $\hb_V$ from $\hlam_V$ for all $b \in F$.

Now define a tuple $\hw$ analogously to the above, where we now let $V$ control the related projections:

\begin{itemize}
\item $\hw_V = W$;
\item If $V \pitchfork Z$ or $V \nest Z$, then $\hw_Z = \hd^V_Z$;
\item Otherwise, we fix $a \in F$ and for any $V \perp U$, we set $\hw_V = \ha_V$.
\end{itemize}

Note that $\nest_{\calU}$-minimality says that no domain $Z \nest V$.  Hence the above gives a complete tuple and checking $0$-consistency is straight-forward, and $\hw \in \calQ_U$ since $\hw_U = \hd^V_U = \hlam_U \in \hT'_U$ by construction.

For canonicality, we argue in a similar fashion to case (1): If $b \in F$ and $\#\{Z \in \calU| \hb_Z \neq \hw_Z\} = \infty$, then Lemma \ref{lem:forced support} provides a cofinite subset which admits a partition into subsets of domains which nest into the finitely-many support domains $\{U_1, \dots, U_n\} = \supp(\lambda)$.  Since $V \nest U \in \supp(\lambda)$, the coordinate of $\hw_X$ for $X \in \supp(\lambda) - \{U\}$ is $\ha_X$, and hence the only possibly infinite subset corresponds to $U$.  

Let $\calV$ be this infinite subset.  By Strong Passing-Up \ref{prop:SPU}, we can choose $Z \in \calV$ so that $\delta^Z_U$ is much further out the ray toward $\lambda_U$ than $\delta^V_U$ in $T_U$.  In this case we have $V \pitchfork Z \nest U$ and it follows from item (6) of Lemma \ref{lem:tree control} that $d_{T_Z}(\delta^V_Z, \delta^U_Z(\delta^V_Z)) < E'$, while the BGI property of Lemma \ref{lem:tree control} also implies that $d_{T_Z}(b_Z,  \delta^U_Z(\delta^V_Z)<E'$, and so $\hb_Z = \hd^V_Z$, whereas $\hd^V_Z = \hw_Z$, by construction.  This contradicts the assumption that $Z \in \calV$, and hence $\#\calV < \infty$, as required.

\vspace{.1in}

\underline{Case (3)}: The last and final case uses similar ideas.  Since we are assuming that there does not exist $b \in F$ so that $\hb_U \in \hT'_U$, it follows that $U$ is not orthogonal to every domain in $\supp(\lambda)$, as the projection of $\lambda$ to those domains is determined by some point in $F$ (Definition \ref{defn:ray projection}).

Let $V \in \supp(\lambda)$ so that $U$ and $V$ are not orthogonal.  We argue case-wise depending on how $U$ relates to $V$.

When $V \pitchfork U$ or $V \nest U$, then it follows that $\hlam_U = \hd^V_U$.  In both cases, it follows from Lemma \ref{lem:partition} that $\hpi_V(\oQ_U) = \hT_V$.  Now we are in a similar position to cases (1) and (2), where either $\hlam_V$ represents a point in $\partial \hT_V$ or is a cluster point.  In the former case, we argue as in (1) by choosing a vertex $w \in \hT_V$ which is not at a cluster point or branched point, and then completing it to a tuple $\hw$.  Note that $\hd^U_V$ is a marked point by item (2) of Lemma \ref{lem:collapsed tree control}, and hence any vertex will do for the argument.  In the latter case, we again pass to a $\nest_{\calU}$-minimal domain $Z \nest V$ with $\hd^Z_V = \hlam_V$ in which we can enforce that $\hb_Z \neq \hlam_Z$ for any $b \in F \cup \Lambda -\{\lambda\}$.  Analogous arguments suffice to prove canonicality in both cases.

The final case is where $U \nest V$.  In this case, $\hlam_U = \hd^V_U(\hlam_V) = \hd^V_U(\hT^{\lambda}_V)$ where $\hT^{\lambda}_V \subset \hT_V - \{\hd^U_V\}$ is the component containing $\hlam_V$.  The same two subcases as above, namely where $\hlam_V$ is a ray or cluster point, happen again, and similar argument works.

This completes the proof of the lemma.

\end{proof}

\subsection{Descending chains of half-spaces} \label{subsec:hs DCC}

In this subsection, we prove a proposition which gives us control over infinite nested sequences of half-spaces, which will be useful to verify the descending chain condition (Definition \ref{defn:ultrafilter}) in the definition of the map from $\calQ$ to its dual cube complex in Theorem \ref{thm:dual} below, as well as later understanding the simplicial boundary of (the cubical dual of) $\calQ$ in Section \ref{sec:boundary compare}.  We note that this proposition only has content when $\Lambda \neq \emptyset$.

\begin{proposition}\label{prop:DCC}
Let $\hx \in \oQ$.  Then $\hx \in \calQ^{\infty} = \oQ - \calQ$ if and only if there exists a sequence $\{\calQ_{U_i}\}$ of $\oQ$-half-spaces with $\hx \in \oQ_{U_i}$ and $\oQ_{U_{i+1}} \subsetneq \oQ_{U_i}$ for all $i$.
\end{proposition}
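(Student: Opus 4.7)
The plan is to prove both directions using the descriptions of $\oQ$-half-space nesting (Lemma \ref{lem:hs nest}), intersection (Lemma \ref{lem:hs int}), and nonemptiness (Lemma \ref{lem:hs nonempty}), combined with the structural information about infinite families of relevant domains encoded in Lemma \ref{lem:encoding PU}. Non-canonicality $\hx \in \calQ^{\infty}$ means (possibly both): (a) some coordinate $\hx_U$ lies in $\partial \hT_U$, or (b) for some $a \in F$, the set $\calV = \{U \in \calU \mid \hx_U \neq \ha_U\}$ is infinite.

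For the forward direction, case (a) is easy: a geodesic ray in $\hT_U$ representing $\hx_U$ crosses successive hyperplanes $h_{U,i}$, and the corresponding half-spaces $\oQ_{U,i}$ on the ray's side form a strict descending chain containing $\hx$, by Lemma \ref{lem:hs nest}(1) and Lemma \ref{lem:hs nonempty}. For case (b), apply Lemma \ref{lem:encoding PU} to $\calV$ to extract an infinite sequence $\{V_k\}$ together with auxiliary elements $c \in F$ and $\lambda \in \Lambda$ satisfying the stated $\hd$-data. I first show $\hx_{V_k} = \hlam_{V_k}$ for every $k$: by $0$-consistency applied to the $\pitchfork$-pair $(V_k,V_j)$ with $j > k$, either $\hx_{V_k} = \hd^{V_j}_{V_k} = \hlam_{V_k}$ or $\hx_{V_j} = \hd^{V_k}_{V_j} = \hc_{V_j}$; if the first option failed at some $k_0$ then the second would hold for all $j > k_0$, and combined with $V_j \in \calV$ giving $\hx_{V_j} \neq \ha_{V_j}$ this would force $\ha \neq \hc$ on infinitely many domains, contradicting finiteness of $\Rel_{50E}(a,c)$ from Corollary \ref{cor:rel sets are finite}. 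With $\hx_{V_k} = \hlam_{V_k}$ established, let $\oQ_{V_k}$ be the half-space on the $\hlam_{V_k}$-side of a hyperplane of $\hT_{V_k}$ separating $\hlam_{V_k}$ from $\hc_{V_k}$; the Lemma \ref{lem:encoding PU} $\hd$-data plugged into Lemma \ref{lem:hs nest}(2) gives $\oQ_{V_{k+1}} \subseteq \oQ_{V_k}$, with strictness from Lemma \ref{lem:hs int}(3).

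For the reverse direction, suppose $\hx$ is canonical and assume for contradiction an infinite descending chain $\oQ_{U_1} \supsetneq \oQ_{U_2} \supsetneq \cdots$ with $\hx \in \oQ_{U_i}$ for all $i$. If some label $U$ repeats infinitely often, restricting to the corresponding subchain produces an infinite descending chain of half-trees in the simplicial tree $\hT_U$ all containing the point $\hx_U$, which is impossible because such half-trees are parametrized by edges of $\hT_U$ and descending corresponds to edges moving along the finite path from $\hx_U$ to the first hyperplane. Hence infinitely many distinct labels appear; apply Lemma \ref{lem:encoding PU} to them to obtain $\{V_k\}$, $c \in F$, $\lambda \in \Lambda$. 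The map sending $V_k$ to its position in the chain is an injection $\mathbb{N} \to \mathbb{N}$, which admits no infinite strictly decreasing subsequence, so after passing to a sub-subsequence both orderings ascend together. For consecutive $V_{k_i}, V_{k_{i+1}}$, the nesting $\oQ_{V_{k_{i+1}}} \subsetneq \oQ_{V_{k_i}}$ combined with Lemma \ref{lem:hs nest}(2) and Lemma \ref{lem:encoding PU} forces $\hc_{V_{k_{i+1}}} \notin \hT'_{V_{k_{i+1}}}$; meanwhile canonicality together with Corollary \ref{cor:rel sets are finite} ($\ha = \hc$ on cofinitely many domains) and $\hx \in \oQ_{V_{k_{i+1}}}$ forces $\hc_{V_{k_{i+1}}} \in \hT'_{V_{k_{i+1}}}$ for all large $i$, yielding the contradiction.

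The main obstacle is reconciling the witness $a$ (from the definition of canonicality) with the element $c \in F$ produced by Lemma \ref{lem:encoding PU}, together with aligning Lemma \ref{lem:encoding PU}'s intrinsic ordering of $\{V_k\}$ with the chain's nesting order; both resolve cleanly via Corollary \ref{cor:rel sets are finite} (so $\ha$ and $\hc$ coincide on cofinitely many domains) and the elementary observation that every injection $\mathbb{N} \to \mathbb{N}$ admits an infinite increasing subsequence.
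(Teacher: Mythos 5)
Your proof is correct and takes essentially the same route as the paper: both directions split the same way (either a $\partial\hT_U$-coordinate or a single label repeating infinitely often, versus infinitely many distinct labels), with Lemma~\ref{lem:encoding PU} doing the heavy lifting in the many-labels case and Lemmas~\ref{lem:hs nest} and~\ref{lem:hs nonempty} supplying the half-space calculus. That said, you fill in two points the paper treats rather tersely, and these are real improvements. In the forward direction, case (b), you first establish $\hx_{V_k}=\hlam_{V_k}$ for every $k$ (via the $0$-consistency dichotomy for $V_k\pitchfork V_j$ plus finiteness of $\Rel_{50E}(a,c)$) \emph{before} building the chain of half-spaces on the $\hlam$-side; this is exactly what is needed to guarantee $\hx\in\oQ_{V_k}$, whereas the paper's construction chooses the $\ha$-side and the containment of $\hx$ is left implicit (and its formulas $\hd^{V_{i+1}}_{V_i}=\ha_{V_i}$, $\hd^{V_i}_{V_{i+1}}=\hlam_{V_{i+1}}$ appear to have the $a$/$\lambda$ labels swapped relative to Lemma~\ref{lem:encoding PU}). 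In the reverse direction you explicitly pass to a sub-subsequence on which the Lemma~\ref{lem:encoding PU} index order agrees with the chain-position order, a reconciliation the paper does not spell out but which is genuinely needed before applying Lemma~\ref{lem:hs nest}(2). One small quibble: for strictness of the descending chain you cite Lemma~\ref{lem:hs int}(3), but that lemma characterizes nonempty intersections, not proper inclusion. Strictness is easy but should be argued directly --- e.g., distinct labels $V_k\neq V_{k+1}$ give distinct $\oQ$-hyperplanes (since $\hpi_{V_k}$ sends $\hh_{V_k}$ to the tree-hyperplane $h_{V_k}$ but sends $\hh_{V_{k+1}}$ to the vertex $\hd^{V_{k+1}}_{V_k}$ by Lemma~\ref{lem:partition}), hence distinct half-spaces.
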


\begin{proof}

Both directions of the proof proceed by a similar argument, either relying on the existence of an infinite domain $W \in \calU$ in which $\hx$ picks out a boundary point of $\hT_W$, and otherwise by using Lemma \ref{lem:encoding PU}, which is a consequence of Strong Passing-Up \ref{prop:SPU}, to violate the canonicality condition.

\medskip

\underline{\textbf{No DCC implies non-canonicality}}: First suppose that we have such a sequence for $\hx \in \oQ$.  We claim that $\hx$ is not canonical in the sense of Definition \ref{defn:Q consistent}.

 There are two cases: (1) up to passing to an infinite subsequence, there is a domain $W \in \calU$ so that the defining domains of the $U_i$ all equal $W$, or (2) there is no such infinite subsequence.
 
 In case (1), part (1) of Lemma \ref{lem:hs nest} implies that $\hT'_{U_{i+1}} \subsetneq \hT'_{U_i}$ for all $i$, where $\hT'_{U_i}$ is the half-tree of $\hT_{U_i}$ chosen by its defining tree-hyperplane $h_i$.  The fact that the sequence is infinite and the trees are properly nested then forces the tree-hyperplanes $h_i$ to run out an end of $\hT_W$ corresponding to some $\lambda \in \Lambda$.  Hence each $\hT'_{U_i}$ contains the end labeled by $\hlam_W$, forcing $\hx_W = \hlam_W$, which violates the first part of the definition of canonicality.
 
 In case (2), we use Lemma \ref{lem:encoding PU}, which allows us to pass to a (relabeled) subsequence $\{V_1, V_2, \dots\}$ and provides elements $a \in F$ and $\lambda \in \Lambda$ so that

\begin{enumerate}
\item $V_j \pitchfork V_i$ for $i \neq j$;
\item For $j>i$, we have $\hd^{V_j}_{V_i} = \hlam_{V_i}$ and $\hd^{V_i}_{V_j} = \ha_{V_j}$;
\item For $k<i$, we have $\hd^{V_k}_{V_i} = \ha_{V_i}$ and $\hd^{V_i}_{V_k} = \hlam_{V_k}$. 
 \end{enumerate}
 
 For each $i$, let $h_{V_i}$ be some tree-hyperplane in $\hT_{V_i}$ (which exists by property (1) above), and let $\hT'_{V_i}$ be the half-tree containing $\ha_{V_i}$, thus determining a $\oQ$-half-space $\oQ_{V_i}$.  Observe that, by construction, we have for each $i$ that $\hd^{V_{i+1}}_{V_i} = \ha_{V_i} \in \hT'_{V_i}$ while $\hd^{V_i}_{V_{i+1}} = \hlam_{V_{i+1}} \notin \hT'_{V_{i+1}}$ and similarly $\hd^{V_i}_{V_{i+1}} = \hlam_{V_{i+1}} \notin \hT'_{V_{i+1}}$.   Hence Lemma \ref{lem:hs nest} implies that $\oQ_{V_{i+1}}\subset \oQ_{V_i}$ for all $i$.  Note that part of this discussion is that $\ha_{V_i} \neq \hlam_{V_i}$ for all $i$.

Now item (2) of Lemma \ref{lem:partition} forces $\hx_{V_{i+1}} = \hd^{V_i}_{V_{i+1}} = \hlam_{V_{i+1}} \neq \ha_{V_i}$ for each $i$.  It follows that $\hx$ is not canonical as in Definition \ref{defn:Q consistent}, as required.

\medskip

\underline{\textbf{Non-canonicality implies no DCC}}: Now suppose that $\hx \in \calQ^{\infty} - \calQ$ is not canonical (Definition \ref{defn:Q consistent}).  Then either (a) there exists $U \in \calU$ so that $\hx_U \in \partial \hT_U$ or (b) no such $U$ exists but there does exist a sequence of domains $U_i \in \calU$ with $\hx_{U_i} = \hlam_{U_i} \in \hT_{U_i}$ for some fixed $\lambda \in \Lambda$, with $\hx_{U_i} \neq \ha_{U_i}$ for any $a \in F$ and each $i$.

\underline{Case (a)}: The distance between $\hx_U$ and $\ha_U$ for any $a \in F$ is infinite, and so we can find a sequence of tree-hyperplanes $h_{U,i} \in \hT_U$ each of which separate $\ha_{U}$ from $\hx_{U}$, and which monotically converge to $\hx_U$ in $\hT_U$.  If we define $\hT'_{U,i}$ to be the half-tree of $\hT_U- \{h_{U,i}\}$ containing $\hx_U$, then Lemma \ref{lem:hs nest} implies that $\oQ_{U,i+1} \subset \oQ_{U,i}$ for all $i$, providing the desired sequence of $\oQ$-half-spaces.

\underline{Case (b)}: This case follows from an almost identical application of Lemma \ref{lem:encoding PU}.  The only difference is now that conclusion is different, namely that the chain of $\oQ$-halfspaces $\oQ_{V_{i+1}} \sqsubsetneq \oQ_{V_i}$ produced by the argument is infinite. This completes the proof of the lemma.

\end{proof}

\subsection{$\calQ$ half-spaces form a pocset}

With these preliminaries in hand, we can now prove that the family of $\oQ$ half-spaces with the inclusion relation forms a pocset (Definition \ref{defn:pocset}):

\begin{proposition}\label{prop:pocset}
The set of $\oQ$-half-spaces together with the containment relation $\oQ_U \subset \oQ_V$ forms a locally finite, finite-width pocset where transverse half-spaces are labeled by orthogonal domains.
\end{proposition}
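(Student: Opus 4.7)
The plan is to verify the pocset axioms directly and then address local finiteness, finite width, and the transverse-orthogonal correspondence as separate items. The natural involution $\oQ_U \mapsto \oQ_U^{*}$ sends a $\oQ$-half-space to the other side of its defining $\oQ$-hyperplane $\hh_U$, so the order-reversing property $\oQ_U \subset \oQ_V \implies \oQ_V^* \subset \oQ_U^*$ holds by set-theoretic complementation within $\oQ$. The condition $\oQ_U \neq \oQ_U^*$ and their incomparability follows from the fact that both sides are nonempty (in fact both meet $\calQ$), which is exactly Lemma \ref{lem:hs nonempty}.

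Next, I would establish the transverse-orthogonal dictionary. If $U \perp V$, then the component projections satisfy $\hpi_V(\oQ_U) = \hpi_V(\oQ_U^*) = \hT_V$ by item (1) of Lemma \ref{lem:partition}, so each of $\oQ_U, \oQ_U^*$ meets each of $\oQ_V, \oQ_V^*$ nontrivially via Lemma \ref{lem:hs int}(1); in particular none of the four nesting relations can hold, so the two are transverse. Conversely, the last sentence of Lemma \ref{lem:hs nest} says that when $U, V$ are neither orthogonal nor equal as labels (and $\oQ_U \neq \oQ_V$), at least one of the four nesting relations among $\{\oQ_U, \oQ_U^*, \oQ_V, \oQ_V^*\}$ holds, so they fail to be transverse; the equal-label case trivially gives a nesting relation. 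Thus transversality of $\oQ$-half-spaces is equivalent to orthogonality of their labels.

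Finite width then drops out immediately: a pairwise transverse set of $\oQ$-half-spaces determines a pairwise orthogonal collection of domains in $\calU$, whose size is bounded by Corollary \ref{cor:dimension}, i.e.\ by the uniform bound on $\perp$-collections coming from finite complexity of $\mathfrak S$ (Axiom \ref{item:dfs_complexity}). I would record this as the width bound.

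The one step requiring genuine input is local finiteness, and this is where I expect to lean on the canonicality condition in Definition \ref{defn:Q consistent}. Given $\oQ_U \subsetneq \oQ_V$, Lemma \ref{lem:hs nonempty} provides points $x \in \oQ_U \cap \calQ$ and $y \in \oQ_V^* \cap \calQ$. Any $\oQ$-half-space $\oQ_W$ with $\oQ_U \subsetneq \oQ_W \subsetneq \oQ_V$ must contain $x$ and satisfy $y \in \oQ_W^*$, so its defining tree-hyperplane $h_W \subset \hT_W$ separates $\hx_W$ from $\hy_W$; in particular $\hx_W \neq \hy_W$. For each fixed $W \in \calU$, the number of tree-hyperplanes of $\hT_W$ separating two given vertices is exactly $d_{\hT_W}(\hx_W, \hy_W)$, which is finite. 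Summing over $W$ gives the total count
\[
\#\,[\oQ_U,\oQ_V]\ \le\ \sum_{W \in \calU} d_{\hT_W}(\hx_W,\hy_W)\ =\ d_{\calQ}(x,y),
\]
and this sum is finite precisely because $x,y \in \calQ$ are canonical, so all but finitely many summands vanish. Hence every interval is finite, completing the pocset verification. The only delicate point is to ensure that distinct $\oQ_W$ in the interval really do correspond to distinct tree-hyperplanes on their labeling domains, which is built into the definition of the $\oQ$-hyperplanes together with item (2) of Lemma \ref{lem:hyperplane char}.
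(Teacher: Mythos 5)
Your argument is correct, and the local-finiteness step is where you genuinely diverge from the paper's proof. The paper argues by contradiction: assuming $[\oQ_U,\oQ_Z]$ is infinite, it splits into the cases where the set of labels is finite (producing an ascending chain and deriving a contradiction from where $\oQ_Z$ sits) or infinite (invoking Lemma \ref{lem:encoding PU}, which is where Strong Passing-Up enters, and then Proposition \ref{prop:DCC} to violate canonicality of a point in $\oQ_U\cap\calQ$ or $\oQ_Z^*\cap\calQ$). Your approach is more direct: pick canonical witnesses $x \in \oQ_U \cap \calQ$, $y \in \oQ_V^* \cap \calQ$ via Lemma \ref{lem:hs nonempty}, observe that every $\oQ_W$ strictly between them separates $x$ from $y$, reduce to separating tree-hyperplanes in each $\hT_W$ via Lemma \ref{lem:wall separation}, and read off the bound $\#[\oQ_U,\oQ_V] \le d_{\calQ}(x,y)$, which is finite precisely because the canonicality condition of Definition \ref{defn:Q consistent} forces only finitely many nonzero summands (and each summand is a finite tree distance since canonical coordinates are interior points). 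Both routes ultimately use Lemma \ref{lem:hs nonempty}---itself a nontrivial Strong Passing-Up argument in the infinite case---but yours dispenses with the case analysis and with Proposition \ref{prop:DCC} entirely, and it has the added benefit of producing an explicit quantitative bound on interval size rather than a pure finiteness statement. The other items (involution, order-reversal, transverse-orthogonal dictionary, finite width) follow the same route as the paper, modulo a slight gloss on order-reversal: ``set-theoretic complementation within $\oQ$'' overstates it slightly since $\hh_U$ is a third piece of the partition, but the conclusion is immediate from the case descriptions in Lemma \ref{lem:hs nest} once you know (Lemma \ref{lem:hyperplane char}) that a point of $\hh_U$ must have $V$-coordinate $\hd^U_V$ whenever $U,V$ are non-orthogonal, and so cannot lie in $\oQ_V^*$ when $\oQ_U \subset \oQ_V$.
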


\begin{proof}
We verify the various items in Definition \ref{defn:pocset}.  Items (1) and (2) follow directly from the definitions and the fact that the relation is just inclusion.  The fact that transversality for the pocset corresponds to orthogonality of labels is Lemma \ref{lem:hs nest}.  Finite width is Corollary \ref{cor:dimension}.

The bulk of the work is local finiteness.  Note that local finiteness is immediate when $\Lambda = \emptyset$, because then the set of $\oQ$-hyperplanes is finite.  Hence we are already done when $\Lambda = \emptyset$, and more generally when our setup is that of a hierarchical family of trees (Definition \ref{defn:HFT}).

To see local finiteness when $\Lambda \neq \emptyset$, let $\oQ_U \subset \oQ_Z$ be nested $\oQ$ half-spaces, and $\calV$ be the set of domains labeling the half-spaces in $[\oQ_U, \oQ_Z]$.  Suppose for a contradiction that $[\oQ_U, \oQ_Z]$ is infinite.  Then either $\#\calV$ is finite or infinite.

\vspace{.1in}
\underline{$\#\calV$ is finite}: Then there exists some domain $W \in \calU$ labeling infinitely-many of the half-spaces in $[\oQ_U, \oQ_Z]$.  It follows that there is an infinite ordered subcollection of half-spaces $\{\oQ_{i}\} \subset [\oQ_U, \oQ_Z]$ corresponding to a sequence of tree-hyperplanes $h_i \in \hT_W$, so that the $h_i$ monotonically run out a ray end of $\hT_W$ corresponding to a distinct $\lambda \in \Lambda$, and we can even choose the $h_i$ to be at least $10$-away from any branched, marked, or cluster point of $\hT_W$.

By construction, we must have $\oQ_U \subset \oQ_{i} \subset \oQ_Z$ for each $i$.  Lemma \ref{lem:hs nest} says that $U, Z$ are not orthogonal to $W$, and moreover:
\begin{itemize}
\item If $U \pitchfork W$ or $U \nest W$, then items (2) or (3), respectively, of Lemma \ref{lem:hs nest} imply that $\hd^U_W \in \hT'_i$ for all $i$.
\item If $W \nest U$, then item (4) of Lemma \ref{lem:hs nest} implies that $\hd^U_W(\hT'_U) \in \hT'_i$ for each $i$, where $\hT'_U$ is the half-tree of $\hT_U$ corresponding to $\oQ_U$.  Moreover, $\hd^U_W(\hT'_U)$ is a marked point by the BGI property \eqref{item:BGI HFT} of Definition \ref{defn:HFT}.
\item If $W = U$, then $h_U \in \hT'_i$ for each $i$ by item (1) of Lemma \ref{lem:hs nest}, where $h_U$ is the tree-hyperplane in $\hT_U = \hT_W$ corresponding to $\oQ_U$, and $\hT'_i$ is the associated half-tree of $\hT_W - \{h_i\}$.
\end{itemize}

In each of these cases, the half-trees $\hT'_i$ are forced to nest $\hT'_1 \subsetneq \hT'_{2} \subset \cdots$ and by our choice of the $h_i$, we have that every marked or cluster point in $\hT_W$ is contained in each $\hT'_i$.  Note that this implies that $\oQ_i \subset \oQ_{i+1}$ for each $i$, which is an ascending chain and hence Proposition \ref{prop:DCC} does not apply.  However, we can now derive a contradiction based on where $\oQ_Z$ lies in $\hT_W$.

Observe that if $Z \pitchfork W, Z \nest W$, then $\hd^Z_W$ is at a marked or cluster point, and hence $\hd^Z_W \in \hT'_i$ for all $i$, which violates the assumption that $\oQ_i \subset \oQ_Z$ for all $i$.  Similarly, if $W \nest Z$, then $\hd^Z_W(\hT'_Z) \in \hT_W$ is a marked point, where $\hT'_Z$ is the half-tree of $\hT_Z$ corresponding to $\oQ_Z$, and hence is contained in each $\hT'_i$.  And finally, when $Z = W$, then the tree-hyperplane $h_Z$ corresponding to $\oQ_Z$ must be contained in $\hT'_i$ for sufficiently large $i$.  Applying the appropriate part of Lemma \ref{lem:hs nest} to each of these situations results in the conclusion that $\oQ_Z \subset \oQ_i$ for some $i$, which is a contradiction.

\vspace{.1in}

\underline{$\#\calV$ is infinite}: In this case, we apply Lemma \ref{lem:encoding PU} to obtain an infinite subcollection $\{V_1, V_2, \dots\} \subset [\oQ_U, \oQ_Z]$ of domains and elements $a \in F$ and $\lambda \in \Lambda$ so that:

\begin{enumerate}
\item $V_j \pitchfork V_i$ for $i \neq j$;
\item For $j>i$, we have $\hd^{V_j}_{V_i} = \hlam_{V_i}$ and $\hd^{V_i}_{V_j} = \ha_{V_j}$;
\item For $k<i$, we have $\hd^{V_k}_{V_i} = \ha_{V_i}$ and $\hd^{V_i}_{V_k} = \hlam_{V_k}$. 
 \end{enumerate}

Our goal is to use this family of domains to build a chain of $\oQ$-half-spaces, which is either descending and contains $\oQ_U$, in which case any $\hx \in \oQ_U \cap \calQ$ (which exists by Lemma \ref{lem:hs nonempty}) must actually be in $\calQ^{\infty}$ by Proposition \ref{prop:DCC}, or is ascending and is contained in $\oQ_Z$, in which case we can force $\oQ^*_Z\cap \calQ$ to contain a point in $\calQ^{\infty}$, with both outcomes being a contradiction.

We begin with restricting how $U$ relates to the $V_i$.  First, note that $U$ is not orthogonal to any $V_i$ by Lemma \ref{lem:hs nest} because $\oQ_U \subset \oQ_{V_i}$.  Second, the Covering Lemma \ref{lem:covering} says that $U$ only nests into finitely-many domains in $\calU$, so we may presume that $U$ does not nest into any of the $V_i$.  This allows us to reduce to the case where either (a) $V_i \nest U$ for all $i$ or (b) $U \pitchfork V_i$ for all $i$.  We treat case (a); the proof of case (b) requires replacing occurrences of $\hT^U_*(\hT'_U)$ with $\hT^U_*$, where $*$ is the contextually appropriate $V_i$.

For each $i$, let $h_i$ be the tree-hyperplane in $\hT_{V_i}$ corresponding to $\oQ_{V_i}$, and $\hT'_{V_i}$ the corresponding half-tree of $\hT'_{V_i} - \{h_{V_i}\}$.

We proceed by proving two claims:

\begin{claim}\label{claim:not the same}
If $i < j$, then we cannot have that both $\ha_{V_i}, \hlam_{V_i}$ are contained in the same half-tree of $\hT_{V_i} - \{h_{V_i}\}$ and that $\ha_{V_j}, \hlam_{V_j}$ are contained in the same half-tree of $\hT_{V_j} - \{h_{V_j}\}$.  In particular, we cannot have $\ha_{V_i} = \hlam_{V_i}$ and $\ha_{V_j} = \hlam_{V_j}$ for $i \neq j$.
\end{claim}

\begin{proof}[Proof of \ref{claim:not the same}]
Since $i \neq j$, we have $V_i \pitchfork V_j$, and hence Lemma \ref{lem:hs nest} implies that some pair of $\oQ_{V_i}, \oQ_{V_j}, \oQ^*_{V_i}, \oQ^*_{V_j}$ nests, which is impossible given the assumptions and the characterization of nesting for transversely labeled domains in Lemma \ref{lem:hs nest}.
\end{proof}

Hence we may assume that, for all $i$, we have $\ha_{V_i} \neq \hlam_{V_i}$ and exactly one of $\ha_{V_i} \in \hT'_{V_i}$ or $\hlam_{V_i} \in \hT'_{V_i}$ holds.  

\begin{claim} \label{claim:pick a side}
If $\ha_{V_i} = \hd^{U}_{V_i}(\hT'_U) \in \hT'_{V_i}$ for some $i$, then for any $j>i$, we have $\hd^{U}_{V_j}(\hT'_U) = \ha_{V_j}$.  In particular, we have $\oQ_{V_i} \subset \oQ_{V_{i+1}} \subset \cdots$.

\begin{itemize}
\item Otherwise, we have $\hlam_{V_i} = \hd^{U}_{V_i}(\hT'_U) \in \hT'_{V_i}$ for all $i$, and $\oQ_{V_i} \supset \oQ_{V_{i+1}} \supset \cdots$.
\end{itemize}
\end{claim}

\begin{proof}[Proof of Claim \ref{claim:pick a side}]
Given $\hx \in \oQ_U$ (which exists by Lemma \ref{lem:hs nonempty}), the fact that $\oQ_U \subset \oQ_{V_i}$ and Lemma \ref{lem:hs nest} imply that $\hx_{V_i} = \ha_{V_i}$, while the above reduction says that $\ha_{V_i} \neq \hlam_{V_i} = \hd^{V_j}_{V_i}$ for any $j > i$.  Hence $0$-consistency of $\hx$ gives $\hd^{U}_{V_j}(\hT'_U) = \hx_{V_j} = \hd^{V_i}_{V_j} = \ha_{V_j}$, as required.  The conclusion that $\oQ_{V_i} \subset \oQ_{V_{i+1}} \subset \cdots$ is immediate from Lemma \ref{lem:hs nest}.

If, on the other hand, we have $\hlam_{V_i} \in \hT'_{V_i}$ for all $i$, then Claim \ref{claim:not the same} implies that $\ha_{V_i} \notin \hT'_{V_i}$ for all $i$.  The conclusion that $\oQ_{V_i} \supset \oQ_{V_{i+1}} \supset \cdots$ again follows from Lemma \ref{lem:hs nest}.
\end{proof}

Hence we may assume that either $\oQ_{V_1} \subset \oQ_{V_2} \subset \cdots$ or $\oQ_{V_1} \supset \oQ_{V_2} \supset \cdots$.  In the former case, since $\oQ_{V_i} \subset \oQ_Z$ for all $i$, any $\hx \in \oQ^*_Z \cap \oQ$ (Lemma \ref{lem:hs nonempty}) must be contained in the descending chain $\oQ^*_{V_1} \supset \oQ^*_{V_2} \supset \cdots$ forcing $\hx \in \calQ^{\infty}$ by Proposition \ref{prop:DCC}, contradicting that $\hx \in \oQ = \oQ - \calQ^{\infty}$.  In the latter case, since $\oQ_{U} \subset \oQ_{V_i}$ for all $i$, it follows that any $\hy \in \oQ_U \cap \oQ$ is contained in the above descending chain, and hence Proposition \ref{prop:DCC} and Lemma \ref{lem:hs nonempty} give another contradiction.  This completes the proof of the proposition.

\end{proof}

\section{$\oQ$ is a cube complex}\label{sec:Q is cubical}

In this section, we analyze the geometry of $\calQ$ using the induced wall structure studied in the last section, Section \ref{sec:walls in Q}.  Our main goal is to prove that $\calQ$ is isometric to its cubical dual, via a map which identifies the $0$-cubes at infinity of the dual with $\calQ^{\infty}$.

We remark that it is possible to show that $\calQ$ is a complete connected median subalgebra of $\calY$, hence endowing it with a CAT(0) metric by work of Bowditch \cite[Theorem 1.1]{Bowditch_some}, or when $\Lambda = \emptyset$ then $\calY$ is finite dimensional thus making $\calQ$ quasi-median quasi-isometric to a cube complex \cite[Proposition 2.12]{HP_proj}.  In fact, one need not assume that the collapsed trees are simplicial for this argument, though the fact that it is quasi-isometric to a cube complex (via the work in this section) is useful for proving connectivity.  This version of $\calQ$ has a natural cuboid structure, in the sense of Beyrer-Fioravanti \cite[Subsection 2.4]{BF_cross}.  It is possible that this CAT(0) cuboid approach may be useful in settings where one wants to directly encode distances from the hyperbolic spaces $\calC(U)$ into the distances in the collapsed trees, as these distances are altered when passing to the simplicial versions of the trees.

On the other hand, our approach is more concrete and hands on, as we directly analyze the structure of $\calQ$ and then prove that it is \emph{isometric} to its cubical dual, thereby encoding hierarchical information into the cubical structure of $\calQ$.

\subsection{Separation and distance in $\calQ$}

We begin by describing the $\ell^1$-metric on $\calQ$ in terms of wall separation.  The following definition is standard:

\begin{definition}[Hyperplane separation]
We say a $\oQ$-hyperplane $\hh_U$ \emph{separates} $\hx, \hy \in \oQ$ if there is some choice of $\oQ$-half-spaces $\oQ_U, \oQ^*_U$ so that $\hx \in \oQ_U$ while $\hy \in \oQ^*_U$.
\begin{itemize}
\item For any $\hx,\hy \in \oQ$, we let $\calW(\hx|\hy)$ denote the set of $\oQ$-hyperplanes separating $\hx$ from $\hy$
\end{itemize}
\end{definition}

The following is immediate from the fact that the $\oQ$-hyperplanes are defined as preimages of component-wise projection of the tree-hyperplanes:

\begin{lemma}\label{lem:wall separation}
Let $U \in \calU$ and $h_U \in \hT_U$ be a component hyperplane with its corresponding $\oQ$-hyperplane $\hh_U$.  Then $\hh_U$ separates $\hx,\hy \in \oQ$ if and only if $h_U$ separates $\hx_U, \hy_U$.

\end{lemma}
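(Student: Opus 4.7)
The plan is to prove this lemma by directly unwinding the definitions of $\oQ$-hyperplanes, $\oQ$-half-spaces, and wall separation, since the statement is essentially a tautology once one tracks how the hyperplane structure on $\oY$ restricts to $\oQ$ via component-wise projection.

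First, I would recall the setup from Subsection \ref{subsec:walls and halfspaces}: the global hyperplane $\bar{h}_U \subset \oY$ is defined as $h_U \times \prod_{V \neq U} (\hT_V \cup \partial \hT_V)$, and the $\oQ$-hyperplane is $\hh_U = \bar{h}_U \cap \oQ = \hpi_U^{-1}(h_U) \cap \oQ$. Similarly, the two $\oQ$-half-spaces associated to $\hh_U$ are $\oQ^{\pm}_{\hh_U} = \hpi_U^{-1}(\hT_U^{\pm}) \cap \oQ$, where $\hT_U^+, \hT_U^-$ are the two components of $\hT_U - \{h_U\}$. By definition, $\hh_U$ separates $\hx,\hy \in \oQ$ if there is some labeling of the two half-spaces so that $\hx \in \oQ^+_{\hh_U}$ and $\hy \in \oQ^-_{\hh_U}$.

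Next, I would observe that since the half-spaces are defined as preimages of the corresponding half-trees under the coordinate projection $\hpi_U$, membership $\hx \in \oQ^{\pm}_{\hh_U}$ is equivalent to $\hx_U \in \hT_U^{\pm}$. Chaining these equivalences gives: $\hh_U$ separates $\hx$ from $\hy$ in $\oQ$ if and only if $\hx_U \in \hT_U^+$ and $\hy_U \in \hT_U^-$ (up to swapping), which is precisely the statement that $h_U$ separates $\hx_U$ from $\hy_U$ in the tree $\hT_U$. Since this argument is entirely formal and uses only the definitions, there is no substantive obstacle. The lemma will then be used freely in subsequent sections to translate between the cubical geometry of $\calQ$ and the tree-metric data on the factors $\hT_U$.
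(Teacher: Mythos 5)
Your proposal is correct and matches the paper's approach exactly: the paper simply states the lemma is ``immediate from the fact that the $\oQ$-hyperplanes are defined as preimages of component-wise projection of the tree-hyperplanes,'' and your unwinding of the definitions is precisely the content of that remark.
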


For $\hx,\hy \in \calQ$ (which, recall, are $0$-consistent tuples of vertices in the $\hT_U$), we let $\calW(\hx|\hy)$ denote the sets of $\oQ$-hyperplanes $\hh_U$ separating $\hx,$ from $\hy$.  As an easy application of this setup, we have:

\begin{proposition}\label{prop:separation}
For any $\hx,\hy \in \calQ$, we have $d_{\calQ}(\hx,\hy) = \#\calW(\hx|\hy).$
\end{proposition}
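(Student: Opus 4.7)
The plan is to reduce the claim directly to a tree-by-tree count using the characterization of $\oQ$-hyperplanes from Lemma \ref{lem:wall separation}, and then to verify finiteness using canonicality. First I would invoke Corollary \ref{cor:simplicial structure} so that we may assume each $\hT_U$ is a simplicial tree in which all marked points, cluster points, and coordinates of tuples in $\calQ$ are vertices; this makes ``tree-hyperplane of $\hT_U$'' literally mean ``midpoint of an edge of $\hT_U$,'' and turns $d_{\hT_U}(\hx_U,\hy_U)$ into a count of edges crossed along the unique geodesic between $\hx_U$ and $\hy_U$.

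Next I would observe that, in any simplicial tree, for two vertices $p,q$ one has
\[
d_{\hT_U}(p,q)\;=\;\#\{h_U\subset \hT_U \text{ a tree-hyperplane} : h_U \text{ separates } p \text{ from } q\},
\]
since every edge of the geodesic $[p,q]_{\hT_U}$ contributes exactly one separating tree-hyperplane, and no other tree-hyperplane of $\hT_U$ separates $p$ from $q$. Applying this with $p=\hx_U$, $q=\hy_U$ and summing over $U\in\calU$ gives
\[
d_{\calQ}(\hx,\hy)\;=\;\sum_{U\in\calU}\#\{h_U\subset \hT_U : h_U \text{ separates } \hx_U,\hy_U\}.
\]

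Now I would appeal to Lemma \ref{lem:wall separation}: for each $U\in\calU$ and each tree-hyperplane $h_U\subset\hT_U$, the associated $\oQ$-hyperplane $\hh_U$ separates $\hx$ from $\hy$ if and only if $h_U$ separates $\hx_U$ from $\hy_U$. Moreover distinct pairs $(U,h_U)$ produce distinct $\oQ$-hyperplanes (since $\hh_U$ determines both its domain label $U$ and the component hyperplane $h_U$ via $\hh_U=\hpi_U^{-1}(h_U)\cap\oQ$). Therefore the disjoint union over $U\in\calU$ of the sets of separating tree-hyperplanes is in natural bijection with $\calW(\hx|\hy)$, and the displayed sum equals $\#\calW(\hx|\hy)$.

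Finally I would check that both quantities are finite, which is where canonicality enters. Fix any $a\in F$; by the canonicality condition in Definition \ref{defn:Q consistent}, both sets $\{U:\hx_U\neq\ha_U\}$ and $\{U:\hy_U\neq\ha_U\}$ are finite, so $\hx_U=\ha_U=\hy_U$ for cofinitely many $U\in\calU$, and hence $d_{\hT_U}(\hx_U,\hy_U)=0$ for all but finitely many $U$. This guarantees convergence of the $\ell^1$-sum defining $d_{\calQ}(\hx,\hy)$, so no subtlety arises and the equality $d_{\calQ}(\hx,\hy)=\#\calW(\hx|\hy)$ follows. There is essentially no obstacle here beyond bookkeeping; the content of the statement has already been extracted in Section \ref{sec:walls in Q}, and in particular the nontrivial structural work (local finiteness of the pocset, nonemptiness of half-spaces, and reduction to simplicial trees) has been carried out beforehand.
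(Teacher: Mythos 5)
Your proof is correct and follows the same route as the paper: expand $d_{\calQ}(\hx,\hy)$ as the $\ell^1$-sum of tree distances, identify each tree distance with the count of separating tree-hyperplanes, and conclude via Lemma \ref{lem:wall separation}. The extra remarks you include (distinctness of the $(U,h_U)\mapsto\hh_U$ assignment and finiteness via canonicality) are sound and simply make explicit what the paper leaves implicit.
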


\begin{proof}
By definition of the $\ell^1$-metric on $\calQ$, we have $d_{\calQ} (\hx,\hy) = \sum_{U \in \calU} d_{\hT_U}(\hx_U,\hy_U)$.  On the other hand, the $\ell^1$-metric on $\hT_U$ says that, for each $U \in \calU$, the distance $d_{\hT_U}(\hx_U,\hy_U)$ is the number of component hyperplanes $h_U$ separating $\hx_U,\hy_U$.  Hence we are done by Lemma \ref{lem:wall separation}.
\end{proof}

\subsection{The cubical dual to $\oQ$ (is $\oQ$)}\label{subsec:dual to Q}

Let $\calW(\oQ)$ denote the collection of all $\oQ$-half-spaces as described at the beginning of Subsection \ref{subsec:Q hs}.  By Proposition \ref{prop:pocset}, $(\calW(\oQ), \subset)$ is a locally finite, finite-width pocset structure (Definition \ref{defn:pocset}).

Let $\calD(\oQ)$ denote the Sageev dual cube complex provided by Theorem \ref{thm:Sageev}, where the internal $0$-cubes $\calD^{(0)}(\calQ)$ correspond to DCC ultrafilters on $(\calW(\calQ), \subset)$, and the $0$-cubes at infinity $\calD^{(0)}(\calQ^{\infty})$ correspond to ultrafilters which are not DCC (see Definition \ref{defn:ultrafilter}).  Equivalently, we think of ultrafilters as consistent choices of $\oQ$-half-spaces (Definition \ref{defn:ultrafilter}).  We endow $\calD(\calQ)$ with the path metric on its $1$-skeleton, i.e. the $\ell^1$- or combinatorial metric.

Our goal in this subsection is to prove that $\calQ$ and $\calD(\calQ)$ are isometric under a map which extends to a bijection at infinity, namely between $\calQ^{\infty}$ and $\calD^{(0)}(\calQ^{\infty})$.  In particular, this will show  that hierarchical consistency and cubical consistency coincide exactly in $\oQ$.

In order to define $\calD^{-1}$ in the case where $\Lambda \neq \emptyset$, we need the following lemma:

\begin{lemma} \label{lem:dual well-defined}
Let $\Sigma \in \calD(\oQ)$ be a consistent collection of $\oQ$-half-spaces.  For each $U$, let $\Sigma_U = \{\oQ_i\}$ denote the half-spaces labeled by $U$, with $\hT'_i$ the corresponding half-trees of $\hT_U$.  Then either
\begin{enumerate}
\item There exists a vertex $\hx_U \in \hT_U$ so that $\hx_U \in \bigcap_i \hT'_i$, or 
\item $\bigcap_i \hT'_i = \emptyset$ and there exists a unique $\lambda \in \Lambda$ so that $\hlam_U \in \hT'_i$ for all $i$.
\end{enumerate}
\end{lemma}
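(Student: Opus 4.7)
My plan is to establish pairwise intersection of the half-trees $\hT'_i$ from consistency of $\Sigma$, and then apply the Helly property for subtrees together with a compactness argument in the end compactification of $\hT_U$ to locate a common point either inside $\hT_U$ (giving case (1)) or on $\partial \hT_U$ (giving case (2)). First I would observe that if $\hT'_i \cap \hT'_j = \emptyset$ in $\hT_U$ for some $\oQ_i, \oQ_j \in \Sigma_U$, then $\hT'_i$ sits inside the complementary half-tree of $\hT'_j$, which by Lemma \ref{lem:hs nest} (case $U = V$) means $\oQ_i \subset \oQ^*_j$; consistency of the ultrafilter $\Sigma$ (Definition \ref{defn:ultrafilter}) would then force $\oQ^*_j \in \Sigma$, contradicting $\oQ_j \in \Sigma$. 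Hence the family $\{\hT'_i\}$ is pairwise-intersecting in $\hT_U$.

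Next I would exploit that $\hT_U$ is a locally finite simplicial tree by Corollary \ref{cor:simplicial structure}. By the construction in Definition \ref{defn:collapsed tree}, every infinite ray in $\hT_U$ descends from a ray in $T_U$, and rays in $T_U$ are in bijective correspondence with $\{\lambda \in \Lambda : U \in \supp(\lambda)\}$; those $\lambda$ whose rays are absorbed into clusters become cluster-point vertices, and the rest give ends in $\partial \hT_U$. In particular, because $\Lambda$ is finite, $\hT_U$ has finitely many ends, so the end compactification $\hT_U \cup \partial \hT_U$ is compact. Let $\overline{\hT'_i}$ denote the closure of $\hT'_i$ in this compactification, i.e.\ $\hT'_i$ together with the boundary points that are limits of rays contained in $\hT'_i$. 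These are closed sets, and the Helly property for finite collections of pairwise-intersecting subtrees of a tree shows that any finite subfamily of the $\hT'_i$ already has nonempty intersection in $\hT_U$. The finite intersection property thus passes to $\{\overline{\hT'_i}\}$ in the compact space $\hT_U \cup \partial \hT_U$, giving $\bigcap_i \overline{\hT'_i} \neq \emptyset$.

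From here I would split on where this nonempty intersection lives. If it meets $\hT_U$ itself, then since each hyperplane $h_i$ is an edge midpoint and every vertex of $\hT_U$ lies strictly on one side of each $h_i$, a short tree-convexity argument (any nonempty intersection of half-trees of a simplicial tree, being convex and meeting at least one edge, contains a vertex of that edge) produces a vertex $\hx_U \in \bigcap_i \hT'_i$, which is conclusion (1). Otherwise $\bigcap_i \hT'_i = \emptyset$ in $\hT_U$, but $\bigcap_i \overline{\hT'_i}$ still contains some $\xi \in \partial \hT_U$, and by the identification of ends above $\xi = \hlam_U$ for some $\lambda \in \Lambda$, giving the existence part of conclusion (2). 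For uniqueness I would note that two distinct ends $\hlam_U \neq \hlam'_U$ both lying in every $\overline{\hT'_i}$ would force each convex half-tree $\hT'_i$ to contain the bi-infinite geodesic in $\hT_U$ joining the two ends, placing this geodesic in $\bigcap_i \hT'_i$ and contradicting emptiness in $\hT_U$.

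The main obstacle I anticipate is the compactness step: one has to carefully identify $\partial \hT_U$ with a subset of $\Lambda$ so that no spurious end is introduced by subdivision or clustering, and verify that $\overline{\hT'_i}$ in the end compactification is exactly $\hT'_i$ together with those ends that are limits of rays staying in $\hT'_i$, so that the finite intersection property really does transfer cleanly. The rest of the argument is then a routine Helly-type chase, with the simplicial refinement from Corollary \ref{cor:simplicial structure} ensuring that the points produced inside $\hT_U$ can always be taken to be vertices.
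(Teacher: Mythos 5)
Your proof is correct and takes a genuinely different route from the paper's. Both start the same way: ultrafilter consistency, via Lemma \ref{lem:hs nest} with $U = V$, shows the half-trees $\{\hT'_i\}$ are pairwise intersecting, and both ultimately lean on the Helly property for subtrees. From there the paths diverge. You pass to the end compactification $\hT_U \cup \partial \hT_U$, which is compact because $\hT_U$ is locally finite (bounded branching inherited from the Gromov tree) with at most $|\Lambda|$ ends, and extract a common point from the finite intersection property; a short tree-convexity observation then promotes an interior intersection point to a vertex, and a geodesic-between-two-ends argument gives uniqueness of the boundary end. The paper instead argues combinatorially: after disposing of the vertex case directly, it assumes $\bigcap_i \hT'_i = \emptyset$, deduces $\partial\hT_U \neq \emptyset$, and hand-selects tree-hyperplanes $h_i$ and $h'_i$ deep toward each end so as to build a finite convex subtree $\hT^{fin}_U$ on which Helly gives a contradiction unless exactly one end is ``always chosen,'' then finishes by showing every half-tree contains that end. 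Your compactness argument is shorter and more conceptual; the paper's is more hands-on and, as a by-product, exhibits the ultrafilter's structure near the selected end more explicitly. Two minor things worth tightening in your write-up: (i) the sets $\overline{\hT'_i}$ should be taken as the actual topological closures (including the midpoint $h_i$) so that they are honestly closed --- this changes nothing, since no vertex is ever a midpoint; and (ii) the paper's proof additionally notes that the vertex in case (1) is \emph{unique}, which the Lemma statement does not demand but which Definition \ref{defn:dual maps}(a) quietly uses. The uniqueness is immediate from your setup (two distinct vertices are separated by a tree-hyperplane, and $\Sigma$ can choose only one side of it), so it is worth a sentence.
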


\begin{proof}
First, observe that if $\bigcap_i \hT'_i \neq \emptyset$, then it must contain a vertex.  Next, suppose that $\hx_U, \hy_U \in \hT_U$ are vertices as in the statement.  Then there exists a tree-hyperplane $h_U \in \hT_U$ separating them, while $h_U$ only determines one half-space in $\Sigma$, which is impossible.  Hence, if $\bigcap_i \hT'_i \neq \emptyset$, then it contains a unique vertex.

On the other hand, suppose that $\bigcap_i \hT'_i = \emptyset$.  It follows that $\partial \hT_U \neq \emptyset$, since otherwise $\hT_U$ is a finite tree, while consistency says that all half-spaces intersect, and then the Helly property for trees would imply that the intersection of all tree half-spaces is nonempty, a contradiction.

Let $\lambda^1, \dots, \lambda^n \in \Lambda$ be the labels which (bijectively) determine the points in $\partial \hT_U$.  For each $i$, let $h_i \in \hT_U$ be any tree hyperplane in $\hT_U$ which is at least $10$-away from any branch point along any geodesic from $\hf_U$ to $\hlam^i_U$ in $\hT_U$ for any $f \in F$.  Any particular, there is a unique geodesic ray from any $h_i$ to $\hlam^i_U$ for each $i$.

Now suppose that $J_i, J'_i \in \hT_U$ are some tree-hyperplanes which are further toward $\lambda_i$ than $h_i$, with $J'_i$ further along than $J_i$.  Observe that consistency of the ultrafilter $\Sigma$ implies that if the half-tree $\hT'_{J'}$ chosen by the $\oQ$-half-space in $\Sigma$ determined by $J'_i$ contains $\hlam^i_U$, then so does the half-tree corresponding to $J_i$, as the latter contains the former.  Hence either every such hyperplane escaping toward $\hlam^i_U$ chooses the infinite end corresponding to $\hlam^i_U$, or only finitely-many do.

We now claim that our assumption that $\bigcap_i \hT'_i = \emptyset$ implies that exactly one $\hlam^i_U$ exhibits this behavior, namely that every tree-hyperplane further out than $h_i$ toward $\hlam^i_U$ chooses the half-tree containing $\hlam^i_U$.

To see existence, suppose otherwise; we note the proof of this part is similar to the proof of Lemma \ref{lem:extended helly}.  Then, for each $i$, we can replace each $h_i$ by some further out hyperplane $h'_i$ along the ray toward $\hlam^i_U$, namely the first tree-hyperplane which chooses the half-tree not containing $\hlam^i_U$.  We can then consider the subtree $\hT^{fin}_U$ which is the convex hull of the $h'_i$ and $\hf_U$ for $f \in F$.  By construction, $\bigcap_i \hT'_i  = \bigcap_i (\hT'_i \cap \hT^{fin}_U)$, while also $\hT'_i \cap \hT'_j \cap \hT^{fin}_U \neq \emptyset$ for all $i,j$.  The Helly property for trees thus says that $\bigcap_i \hT'_i \neq \emptyset$, which is a contradiction.

Next, suppose that both $\hlam^i_U$ and $\hlam^k_U$ exhibit this behavior, for $i \neq k$.  We will derive a contradiction of (ultrafilter) consistency using Lemma \ref{lem:hs nest}.  By construction and consistency, we must have that the half-trees $\hT'_{J_i}, \hT'_{J_k}$ chosen by $\Sigma$ for $h_i$ and $h_k$ contain $\hlam^i_U$ and $\hlam^k_U$, respectively.  However, by construction, the opposite half-tree $\hT^*_{J_k}$ to $\hT'_{J_k}$ contains $\hT'_{J_i}$.  If $\oQ^*_{J_k}, \oQ_{J_k}, \oQ_{J_i}$ are their respective $\oQ$-half-spaces, then consistency implies that $\oQ_{J_i} \subset \oQ^*_{J_k}$, which contradicts the assumption that $\oQ_{J_k} \in \Sigma$.  Hence there can be only one end of $\hT_U$ which exhibits this behavior.

Finally, it remains to prove that $\hlam^i_U \in \hT'_j$ for all $j$, that is that every half-tree chooses the end containing $\hlam^i_U$.  In particular, we have that the tree-hyperplane $h_i$ chooses the half-tree $\hT'_{h_i}$ containing $\hlam^i_U$.  Now let $\hT^*_{h_i}$ be the opposite half-tree, and suppose that $h \in \hT^*_{h_i}$ is any other tree-hyperplane.  If $\hT_h$ is the half-tree of $\hT_U - \{h\}$ containing $\hlam^i_U$, then $\hT'_{h_i} \subset \hT_h$, and hence their respective $\oQ$-half-spaces $\oQ_{h_i}$ and $\oQ_{h}$ satisfy $\oQ_{h_i} \subset \oQ_{h}$ by Lemma \ref{lem:hs nest}.  Hence consistency of $\Sigma$ implies that $\hT_h$ must be the the half-tree corresponding to the $\oQ$-half-space in $\Sigma$.  This completes the proof. 
\end{proof}

With this lemma in place, we can define the maps:

\begin{definition}[Dual cubical maps]\label{defn:dual maps}
We define the maps $\calD: \oQ \to 2^{\calW(\calQ)}$ and $\calD^{-1}: \calD(\oQ) \to \oY$ as follows:
\begin{enumerate}
\item[($\calD$)] Given a tuple $\hx = (\hx_U) \in \oQ$, we set $\calD(\hx)$ to the set of orientations where, for all $U\in \calU$ and component hyperplanes $h_U \in \hT_U$, the corresponding $\calQ$-hyperplane $\hh_U$ chooses the $\calQ$-half-space $\oQ_U$ to correspond to the half-space of $\hT_U - \{h_U\}$ containing $\hx_U$.
\item[($\calD^{-1}$)] Given a consistent collection of half-spaces $\Sigma = (\oQ_U) \in \calD(\oQ)$, define $\calD^{-1}(\Sigma) = (\hy_U) \in \oY$ as follows: For each $U \in \calU$, let $\Sigma_U$ denote the set of half-spaces labeled by $U$.  Lemma \ref{lem:dual well-defined} provides two cases:
\begin{enumerate}
\item The intersection of the collection of half-trees of $\hT_U$ corresponding to the $\oQ$-half-spaces in $\Sigma_U$ contains a single vertex of $\hT_U$, which we define to be $\hy_U$.
\item Otherwise, the intersection is empty and Lemma \ref{lem:dual well-defined} produces some $\lambda \in \Lambda$ so that all corresponding half-trees in $\hT_U$ contain $\hlam_U$, and we set $\hy_U = \hlam_U$. 
\end{enumerate}
\end{enumerate}
\end{definition}

\begin{theorem}\label{thm:dual}
Both $\calD$ and $\calD^{-1}$ are well-defined with images contained in $\calD(\oQ)$ and $\oQ$, respectively.  Moreover, the following hold:
\begin{enumerate}
\item $\hx \in \calQ^{\infty}$ if and only if $\calD(\hx) \in \calD(\calQ^{\infty})$.
\item $\Sigma \in \calD(\calQ^{\infty})$ if and only if $\calD^{-1}(\Sigma) \in \calQ^{\infty}$.
\item  $\calD \circ \calD^{-1} = \id_{\calD(\oQ)}: \calD(\oQ) \to \calD(\oQ)$.
\item The restriction $\calD:\calQ \to \calD^{(0)}(\calQ)$ is an isometry.
\item The restriction $\calD:\calQ^{\infty} \to \calD(\calQ^{\infty})$ is a bijection.
\end{enumerate}
\end{theorem}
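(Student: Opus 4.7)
The plan is to establish well-definedness of $\calD$ and $\calD^{-1}$ first, then deduce items (1)--(5) from the structural results already in place, especially Proposition \ref{prop:pocset}, Proposition \ref{prop:DCC}, Lemma \ref{lem:hs nest}, and Lemma \ref{lem:dual well-defined}.

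First I would verify that $\calD(\hx)$ is an ultrafilter. Since each $\hT_U$ is simplicial and each coordinate $\hx_U$ is a vertex (or a point of $\partial \hT_U$), $\hx$ lies in no $\oQ$-hyperplane, so for each unordered pair $\{\oQ_U,\oQ_U^*\}$ the rule in Definition \ref{defn:dual maps} selects exactly one half-space. For consistency, if $\oQ_U \subset \oQ_V$ and $\calD(\hx)$ chooses $\oQ_U$, then $\hx_U$ sits on the side of the tree hyperplane $h_U$ corresponding to $\oQ_U$; by the characterizations of nesting in Lemma \ref{lem:hs nest} together with the description of half-space projections in Lemma \ref{lem:partition}, this forces the $V$-coordinate of $\hx$ to sit on the side of $h_V$ corresponding to $\oQ_V$, so $\oQ_V \in \calD(\hx)$.

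Next I would prove that $\calD^{-1}(\Sigma) \in \oQ$. Lemma \ref{lem:dual well-defined} gives the coordinates $\hy_U \in \hT_U \cup \partial \hT_U$; the work is to verify $0$-consistency (Definition \ref{defn:Q consistent}) by a case analysis. For the transverse case, suppose $U \pitchfork V$ and both $\hy_U \neq \hd^V_U$ and $\hy_V \neq \hd^U_V$; then in each tree I can pick a tree hyperplane separating the coordinate from the relative projection, giving $\oQ$-half-spaces $\oQ_U, \oQ_V \in \Sigma$ whose half-trees $\hT'_U, \hT'_V$ satisfy $\hd^V_U \notin \hT'_U$ and $\hd^U_V \notin \hT'_V$. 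Item (2) of Lemma \ref{lem:hs nest} then forces $\oQ_U \subset \oQ_V^*$, contradicting ultrafilter consistency of $\Sigma$. The nested case is analogous, using items (3) and (4) of Lemma \ref{lem:hs nest} combined with the BGI property \eqref{item:BGI HFT} of Definition \ref{defn:HFT} to handle the case where $\hy_V$ is a boundary point picked out by a $\lambda \in \Lambda$.

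With both maps in hand, items (1) and (2) are immediate from Proposition \ref{prop:DCC}: $\hx \in \calQ^\infty$ iff some descending chain of $\oQ$-half-spaces containing $\hx$ fails to terminate, which is precisely the statement that $\calD(\hx) \notin \calD^{(0)}(\calQ)$; and symmetrically for $\calD^{-1}$. Item (3), namely $\calD \circ \calD^{-1} = \id$, is a direct check: by construction of $\calD^{-1}$, the coordinate $\calD^{-1}(\Sigma)_U$ lies in (or limits to) the intersection of the half-trees determined by the $U$-labeled half-spaces of $\Sigma$, so $\calD$ recovers $\Sigma$ hyperplane by hyperplane. For item (4), the isometry statement, I combine Proposition \ref{prop:separation}, which identifies $d_{\calQ}(\hx,\hy)$ with $\#\calW(\hx|\hy)$, with the standard fact that combinatorial distance in the dual cube complex equals the number of hyperplanes on which two ultrafilters disagree. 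Item (5) follows from items (2) and (3) once one observes that $\calD$ is injective, which is immediate since distinct tuples differ in some coordinate and hence are separated by a tree hyperplane, giving distinct orientations.

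The main obstacle will be the consistency verification in the second step, particularly when $\Lambda \neq \emptyset$: one must handle the possibility that $\hy_U \in \partial \hT_U$ by the second clause of Lemma \ref{lem:dual well-defined}, in which case ``separating'' tree hyperplanes must be produced by running out the ray end, and then the nesting relations in Lemma \ref{lem:hs nest} must be applied with attention to which half-tree contains the boundary point. Everything else is a bookkeeping exercise in the structure already developed.
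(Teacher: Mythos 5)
Your proposal is correct and follows essentially the same line of argument as the paper: well-definedness of $\calD^{-1}$ via a contradiction argument against ultrafilter consistency using Lemma \ref{lem:hs nest} (case split on $\pitchfork$ vs.\ $\nest$), items (1)--(2) reduce to Proposition \ref{prop:DCC}, item (3) is a direct coordinate check, item (4) combines Proposition \ref{prop:separation} with the standard wall-distance identity in the dual, and item (5) follows from the bijection and the DCC characterizations. One very small simplification the paper uses that you overcomplicate: consistency of $\calD(\hx)$ as an ultrafilter is immediate from set containment ($\hx \in \oQ_U$ and $\oQ_U \subset \oQ_V$ gives $\hx \in \oQ_V$), without needing to route through Lemmas \ref{lem:hs nest} and \ref{lem:partition}.
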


\begin{proof}

We prove the claims in order:
\medskip

\underline{\textbf{$\Dual$ is well-defined}}: Let $\hx \in \oQ$.  Consistency (item (2) of Definition \ref{defn:ultrafilter}) of the choice of half-spaces $\{\oQ_U\}$ defined by $\Dual(\hx)$ is immediate: if $\hx \in \oQ_U$ and $\oQ_U \subset \oQ_V$, then $\hx \in \oQ_V$.  When $\{\oQ_U\}$ satisfies the DCC, then the fact that $\hx \in \calQ$ is exactly the statement of Proposition \ref{prop:DCC}.  Similarly, when $\{\oQ_U\}$ does not satisfy the DCC, then $\hx \in \calQ^{\infty}$.

\vspace{.1in}

\underline{\textbf{$\Dual^{-1}$ is well-defined}}: Let $\{\oQ_U\}$ be a consistent choice of $\oQ$-half-spaces and let $(\hy_U) = \calD^{-1}(\{\oQ_U\})$ be as in Definition \ref{defn:dual maps}.  This tuple is well-defined by Lemma \ref{lem:dual well-defined}.

We claim that the tuple $\hy = (\hy_U)$ is contained in $\oQ$, and we check $0$-consistency by a domain-wise analysis.

\begin{itemize}
\item Let $U \pitchfork V$ and suppose that $\hy_V \neq \hd^U_V$.  Then there exists tree a tree hyperplane $h_V$ in $\hT_V$ with chosen half-space $\hT'_V$ with $\hd^U_V \notin \hT'_V$.  Let $h_U$ be any tree-hyperplane in $\hT_U$.  We claim that the corresponding chosen half-tree $\hT'_U$ contains $\hd^V_U$.  It will follow then that the intersection of all half-trees in $\hT_U$ contains $\hd^V_U$, and hence $\hy_U = \hd^V_U$.   

To see this, let $\hT^*_U, \hT^*_V$ denote the complementary half-trees to $\hT'_U, \hT'_V$, respectively.  The fact that $\hd^U_V \notin \hT'_V$ implies that $\hd^U_V \in \hT^*_V$.  So if also $\hd^V_U \notin \hT'_U$, then item (2) of Lemma \ref{lem:hs nest} implies that $\calQ_U \nsubset \calQ^*_V$.   But our consistent set of choices of half-spaces chose $\calQ_V$, so $\calQ^*_V$ is not in our set of half-spaces, which violates the assumption that this set was (ultrafilter) consistent, giving a contradiction.

\item Let $U \nest V$ and suppose that $\hy_V \neq \hd^U_V$.  Then there exists some $\calQ$-half-space $\calQ_V$ coming from a tree-hyperplane $h_V$ with corresponding half-space $\hT'_V$ which does not contain $\hd^U_V$.  We claim that for all tree-hyperplanes $h_U$, the chosen corresponding half-tree $\hT'_U$ must contain the point $\hd^V_U(\hT'_V)$.  If not, then part (4) of Lemma \ref{lem:hs nest} implies that $\calQ_V \subset \calQ^*_U$, and hence $\calQ_U \subset \calQ^*_V$, which contradicts the assumption that our chosen $\calQ$-half-space for $h_V$ is $\calQ_V$.
\end{itemize}

Thus we have established the first part of the statement.  It suffices to check the numbered items.

\vspace{.1in}
Item (1) is an immediate consequence of Proposition \ref{prop:DCC}, for if $\hx \in \calQ^{\infty}$, then it is contained in some infinite descending chain of half-spaces, and vice versa.

\vspace{.1in}

For (2), let $\Sigma \in \calD(\calQ^{\infty})$.  In view of Proposition \ref{prop:DCC}, it suffices to show that $\hy \in \oQ_U$ for all $\oQ_U \in \Sigma$.  But this is clear, since $\hy \in \oQ$, and $\hy_U \in \hT'_U$ by construction, where $\hT'_U$ is the half-tree of $\hT_U$ chosen by $\oQ_U$, where $U$ is the label of $\oQ_U$.

\vspace{.1in}

For (3), let $\{\oQ_U\} \in \calD(\oQ)$ be a consistent family of $\calQ$-half-spaces, and let $\calD^{-1}(\{\oQ_U\}) = \hx \in \oQ$ be as defined above.  Then $\calD(\hx)$ is the family of $\oQ$-half-spaces, where for each tree-hyperplane $h_U$, we choose the $\oQ$-half-space whose corresponding tree half-space $\hT'_U$ contains $\hx_U$.  By the coordinate-wise definition of $\calD^{-1}$---which, for each $U$, was either defined as an intersection over all half-trees, or as the unique boundary point in $\partial \hT_U$ which is contained in every half-tree---this $\oQ$-half-space must coincide with the original $\oQ$-half-space in the family $\{\oQ_U\}$.  This completes the proof that $\calD \circ \calD^{-1} = id_{\calD(\oQ)}$ and hence of the theorem.

\vspace{.1in}

For (4) and (5), observe that item (3) implies that $\calD:\oQ \to \calD(\oQ)$ is a bijection, while (1) and (2) imply that this restricts to a bijection $\calD: \calQ^{\infty} \to \calD(\calQ^{\infty})$, as required for (5).  On the other hand, Proposition \ref{prop:separation} implies that $\calD:\calQ \to \calD(\calQ)$ is an isometric embedding, thus making it an isometry.  This completes the proof of the theorem.
\end{proof}

\section{Cubical hierarchy paths and the distance formula} \label{sec:HP and DF}

In this section, we use the cubical structure of the cubical models to produce special hierarchy paths in any HHS $\calX$:

\begin{definition}\label{defn:cubical path}
Let $x \in \calX$ and $y \in \calX^{\infty}$ with $\hO:\calQ(x,y) \to \hull_{\calX}(x,y)$ any a cubical model.  If $\gamma$ is any combinatorial geodesic (ray) in $\calQ(x,y)$ between $\hPsi(x)$ and $\hPsi(y)$, we call $\hO(\gamma)$ a \emph{cubical path} in $\calX$ between $x,y$.
\end{definition}

The following is the main result of this section:

\begin{proposition}\label{prop:cube paths hp}
There exists $L =L(\calX)>0$ so that any cubical path between $x \in \calX$ and $y \in \calX^{\infty}$ is an $L$-hierarchy path.
\end{proposition}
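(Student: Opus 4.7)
The plan is to verify the defining condition of a hierarchy path (ray) directly: for each $U \in \mathfrak{S}$, the projection $\pi_U(\hO(\gamma))$ is an unparameterized $(L,L)$-quasi-geodesic in $\calC(U)$. Set $p^{(n)} = \hO(\gamma(n))$ along $\gamma$, where $n$ ranges over a finite or infinite interval. The non-relevant domains are handled trivially: for $U \notin \calU = \Rel_K(x,y)$, the projection $\pi_U(H)$ has uniformly bounded diameter (depending only on $\calX$), so $\pi_U(\hO(\gamma))$ is bounded and the projection is trivially an unparameterized quasi-geodesic.

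For $U \in \calU$ I will combine two key ingredients. First, since $\hO: \calQ \to H$ is a uniform quasi-isometry (Corollary \ref{cor:Q qi to H}), the sequence $\pi_U(p^{(n)})$ has uniformly bounded consecutive jumps. Second, by Theorem \ref{thm:dual}, $\calQ$ is isometric to its cubical dual, so the combinatorial geodesic $\gamma$ projects coordinate-wise to a combinatorial geodesic $(\gamma(n)_U)$ in $\hT_U$ from $\hPsi(x)_U$ to $\hPsi(y)_U$ (with stalls between crossings of $U$-labeled hyperplanes). Applying Proposition \ref{prop:coarsely surjective, trees} to the $\alpha_{\omega}$-consistent tuples $\Omega(\gamma(n))$ gives $d_{T_U}(\psi_U(p^{(n)}), \Omega_U(\gamma(n))) \leq D_1$ uniformly; since $q_U$ is $1$-Lipschitz and $q_U(\Omega_U(\gamma(n))) = \gamma(n)_U$, the sequence $\hpsi_U(p^{(n)})$ is a uniform $D_1$-fellow-traveler of the combinatorial geodesic $(\gamma(n)_U)$ in $\hT_U$.

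Combined with item (4) of Lemma \ref{lem:collapsed tree control}, which yields $d_{\calC(U)}(\pi_U(p^{(n_1)}), \pi_U(p^{(n_2)})) \asymp d_{T_U}(\psi_U(p^{(n_1)}), \psi_U(p^{(n_2)}))$, the fellow-traveling estimate shows that the $\calC(U)$-progress between $p^{(n_1)}$ and $p^{(n_2)}$ is bounded below by $d_{\hT_U}(\gamma(n_1)_U, \gamma(n_2)_U)$ up to a uniform additive error, which is exactly the number of $U$-labeled hyperplanes of $\calQ$ crossed by $\gamma$ between times $n_1$ and $n_2$. This establishes the coarse monotonicity in $\calC(U)$ needed for the unparameterized quasi-geodesic property.

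The main obstacle is controlling the wandering of $\pi_U(p^{(n)})$ in $\calC(U)$ during subintervals where $\gamma(n)_U$ stalls at a cluster point of $\hT_U$. During such stalls the honing choices $\Omega_U(\gamma(n))$ may shift within the corresponding cluster in $T_U$ as other coordinates change; however, the uniformly bounded diameter of the honing sets $B_U(\gamma(n))$ (Lemma \ref{lem:bdd int total}) combined with the coarse Lipschitz property of $\hO$ constrains the movement of $\pi_U(p^{(n)})$ to a uniformly bounded neighborhood of the $\calC(U)$-arc associated to the cluster. Together with the coarse-monotonicity estimate above, this yields the unparameterized $(L,L)$-quasi-geodesic property in $\calC(U)$ with $L$ depending only on $\calX$; when $\gamma$ is an infinite combinatorial ray the same argument applied to every initial subpath produces a hierarchy ray.
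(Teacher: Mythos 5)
Your overall approach matches the paper's: reduce via Lemma \ref{lem:NR hp in Q} to coordinate-wise control in the trees, use Proposition \ref{prop:coarsely surjective, trees} to see that $\psi_U(\hO(\gamma(n)))$ is uniformly close to the honing points $\Omega_U(\gamma(n))$, and then try to show the honing points make coarse forward progress. But the final paragraph contains a genuine gap, and it is precisely the part the paper spends most of its effort on.

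The problem is the claim that ``the uniformly bounded diameter of the honing sets $B_U(\gamma(n))$ (Lemma \ref{lem:bdd int total}) combined with the coarse Lipschitz property of $\hO$ constrains the movement of $\pi_U(p^{(n)})$ to a uniformly bounded neighborhood of the $\calC(U)$-arc associated to the cluster.'' Two distinct things are being conflated here. Bounded diameter of $B_U(\gamma(n))$ is a per-time bound, and the coarse Lipschitz property of $\hO$ bounds only \emph{consecutive} jumps; neither of these controls cumulative wandering of $\Omega_U(\gamma(n))$ within a cluster $C\subset T_U$ while $\gamma(n)_U$ stalls at the cluster point $q_U(C)$. The cluster $C$ can have arbitrarily large (even infinite) diameter, so even if $\pi_U(p^{(n)})$ stays in a bounded neighborhood of the image $\phi_U(C)\subset\calC(U)$, that image can itself be a long subarc of $[x,y]_U$; staying near a long subarc is fully compatible with arbitrary back-and-forth oscillation and therefore does not yield the unparameterized quasi-geodesic property. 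What is actually needed is a coarse \emph{monotonicity} statement for $\Omega_U(\gamma(n))$ in $T_U$, and that is exactly what item (2) of Claim \ref{claim:dense shadow} in the paper supplies. The paper proves it by a contradiction argument: if $d_{T_U}(B_U(\gamma(s)),y_U)-d_{T_U}(B_U(\gamma(t)),y_U)$ were large for some $s<t$, one locates a $\nest_{\calU}$-minimal bipartite domain $W\nest U$ with $\delta^W_U\subset C$ separating $\{x_U,t_U\}$ from $\{y_U,s_U\}$; since $W$ is bipartite and $\nest_{\calU}$-minimal, the $\hT_W$-coordinate of $\gamma$ would then have to go from one endpoint of $\hT_W$ to the other and back, contradicting Lemma \ref{lem:NR hp in Q}. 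Your proposal needs this step (or a substitute using the bipartite-domain structure) to close the argument; without it, the no-backtracking half of the unparameterized quasi-geodesic property is asserted rather than proved.
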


The existence of hierarchy paths and the proof of the lower (and more difficult) bound in the Distance Formula \ref{thm:DF} are usually intertwined.  Notably, this is not the case for us, where we only need to know the existence of an $\ell^1$-geodesic in any cubical model $\calQ$ to obtain the latter as a consequence of Proposition \ref{prop:lower bound}, Corollary \ref{cor:Q qi to H}, and Theorem \ref{thm:dual}:

\begin{corollary}\label{cor:DF lower bound}
There exists $K_0\geq 0$ such that for all $K>K_0$ and any $x, y \in \calX$, we have
$$\sum_{U \in \mathfrak S} \left[d_U(x,y)\right]_K \prec d_{\calX}(x,y).$$
\end{corollary}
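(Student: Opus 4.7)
The plan is to combine three ingredients, each of which is available without invoking the Distance Formula: the sum-to-$\calQ$-distance comparison of Proposition~\ref{prop:lower bound}, the fact that $\calQ$ is isometric to an $\ell^1$ cube complex (Theorem~\ref{thm:dual}), and the conversion from combinatorial geodesics in $\calQ$ to hierarchy paths in $\calX$ (Proposition~\ref{prop:cube paths hp}). Given $x,y\in\calX$, form the hull $H=\hull_{\calX}(x,y)$, construct the cubical model $\hO:\calQ(x,y)\to H$ with distinguished $0$-cells $\hPsi(x),\hPsi(y)\in\calQ$, and let $\calU=\Rel_{K}(x,y)$ be the relevant set for the chosen largeness threshold $K$.

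The core estimate is assembled as follows. First, by Theorem~\ref{thm:dual}, $\calQ$ is isometric to its cubical dual, so there is a combinatorial geodesic $\gamma:\{0,1,\dots,n\}\to\calQ$ from $\hPsi(x)$ to $\hPsi(y)$ of length exactly $n=d_{\calQ}(\hPsi(x),\hPsi(y))$. Second, by Proposition~\ref{prop:cube paths hp}, the image $\hO\circ\gamma$ is an $L$-hierarchy path in $\calX$; in particular, it is an $(L,L)$-quasi-geodesic, and its endpoints $\hO(\hPsi(x)),\hO(\hPsi(y))$ lie uniformly close to $x,y$ by Theorem~\ref{thm:coarse inverse}. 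The quasi-geodesic inequality then yields
\[
n \;=\; d_{\calQ}(\hPsi(x),\hPsi(y)) \;\leq\; L\cdot d_{\calX}(x,y) + L',
\]
for some $L'=L'(\calX)$ absorbing the bounded errors at the endpoints. Third, Proposition~\ref{prop:lower bound} (with a sufficiently large threshold $K\ge K_{1}$) gives
\[
\sum_{U\in\mathfrak S}[d_U(x,y)]_{K} \;=\; \sum_{U\in\calU}[d_U(x,y)]_{K} \;\prec\; \sum_{U\in\calU} d_{\hT_U}(\hx_U,\hy_U) \;=\; d_{\calQ}(\hPsi(x),\hPsi(y)),
\]
where the equality on the left uses that any $U\notin\calU$ contributes $0$ at threshold $K$. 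Chaining these inequalities produces the desired bound $\sum_{U\in\mathfrak S}[d_U(x,y)]_{K}\prec d_{\calX}(x,y)$, with constants depending only on $\calX$ and $K$.

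The step I expect to require the most care is the bookkeeping that keeps the argument non-circular. One must verify that every ingredient used is genuinely independent of the Distance Formula: Proposition~\ref{prop:lower bound} only relies on the hierarchical accounting of Section~\ref{sec:qpu} (in particular Bounding Large Containers~\ref{prop:bounding containers}); Proposition~\ref{prop:cube paths hp} is proved via purely cubical/hierarchical means; Theorem~\ref{thm:coarse inverse} ultimately depends on Tree Trimming~\ref{thm:tree trimming}, which is again DF-free; and Theorem~\ref{thm:dual} is a structural isometry on $\calQ$. Corollary~\ref{cor:Q qi to H} must \emph{not} be invoked, since its proof uses Theorem~\ref{thm:Q distance estimate} together with Theorem~\ref{thm:DF}; instead, the quasi-geodesic character of $\hO\circ\gamma$ supplies the required upper bound on $d_{\calQ}(\hPsi(x),\hPsi(y))$ directly. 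With this care, the proof delivers the lower bound of the distance formula as a clean consequence of the cubical modeling machinery.
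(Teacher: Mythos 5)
Your proposal follows the same overall route as the paper's proof (cubical model, combinatorial geodesic in $\calQ$ via Theorem~\ref{thm:dual}, push it back to $\calX$ with $\hO$, compare to the sum via Proposition~\ref{prop:lower bound}), and your concern about circularity is well placed: the paper's written proof does cite Corollary~\ref{cor:Q qi to H}, whose proof invokes the Distance Formula. However, your substitute step contains a genuine gap. You write that ``$\hO\circ\gamma$ is an $L$-hierarchy path in $\calX$; in particular, it is an $(L,L)$-quasi-geodesic,'' but Definition~\ref{defn:hp} defines an $L$-hierarchy path only by requiring that every projection $\pi_U(\hh)$ be an unparameterized $(L,L)$-quasi-geodesic of $\calC(U)$; it says nothing about $\hh$ being a quasi-geodesic of $\calX$. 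That implication is precisely the content needed for the displayed inequality, and the only place the paper supplies it is --- again --- Corollary~\ref{cor:Q qi to H}: the proof of Proposition~\ref{prop:cube paths hp} explicitly begins with ``the image under $\hO:\calQ\to H$ of a geodesic is automatically a quasi-geodesic by Corollary~\ref{cor:Q qi to H},'' with the remaining Claim~\ref{claim:dense shadow} only handling the no-backtracking (unparameterized) conclusion. So your assertion that Proposition~\ref{prop:cube paths hp} delivers the quasi-geodesic bound by ``purely cubical/hierarchical means'' is not accurate for the part you need, and the quasi-geodesic inequality you then apply is unavailable without some DF-free replacement.

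A DF-free way to close the gap is to bound $d_{\calQ}(\hPsi(x),\hPsi(y))$ directly by showing that $\hPsi\circ\ret_H:\calX\to\calQ$ is coarsely Lipschitz, rather than trying to run the quasi-geodesic estimate through $\hO$. For $z,z'\in H$ with $d_{\calX}(z,z')\le 1$, every $d_U(z,z')$ is uniformly bounded by coarse Lipschitzness of $\pi_U$, hence so is every $d_{T_U}(\psi_U(z),\psi_U(z'))$. By item~(8) of Lemma~\ref{lem:collapsed tree control} (whose proof uses only $0$-consistency and the BGI-type estimates, not the Distance Formula), the sets $\{U:\hpsi_U(z)\in\hT^e_U\}$ and $\{U:\hpsi_U(z')\in\hT^e_U\}$ are pairwise orthogonal and hence of bounded cardinality; for every other $U$ both coordinates lie at cluster or marked points, and if they differed they would lie in distinct clusters, forcing $d_{T_U}(\psi_U(z),\psi_U(z'))$ to exceed the cluster separation constant $r$. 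Choosing $r$ larger than the uniform bound above (permissible, since both depend only on $\calX$) rules this out, so $\hpsi_U(z)=\hpsi_U(z')$ for all but boundedly many $U$, and each nonzero coordinate contributes at most a uniform amount. Thus $d_{\calQ}(\hPsi(z),\hPsi(z'))$ is uniformly bounded, so $\hPsi$ is coarsely Lipschitz on $H$; precomposing with the coarsely Lipschitz retraction of Lemma~\ref{lem:gate retract} yields $d_{\calQ}(\hPsi(x),\hPsi(y))\prec d_{\calX}(x,y)$ without the Distance Formula, and Proposition~\ref{prop:lower bound} then finishes the argument.
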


\begin{proof}
This statement is \cite[Lemma 4.19]{HHS_2}, whose relatively short proof hinges on the existence of a path in $\calX$ between $x,y$ whose length is coarsely the sum on the left (which they prove exists in \cite[Corollary 4.16]{HHS_2}).  Our proof proceeds as follows:

Let $\calQ$ be any cubical model for $x,y$.  Theorem \ref{thm:dual}, any combinatorial geodesic $\gamma$ between $\hx,\hy$ in the cubical dual $\calD(\calQ)$ to $\calQ$ is a geodesic in $\calQ$, while Corollary \ref{cor:Q qi to H} implies that $\hO(\gamma)$ is sent to a uniform quasi-geodesic between $x,y$.  Finally,  Proposition \ref{prop:lower bound} says that the length of $\hO(\gamma)$ is coarsely bounded below by the left-hand side of the sum in the statement, completing the proof.
\end{proof}

We will begin by describing a procedure due to Niblo-Reeves \cite{NR} for constructing combinatorial geodesics (or rays) between $0$-cells of a cube complex, which we include for completeness and summarize in Proposition \ref{prop:NR path construct}.  With this in hand, we prove that any combinatorial geodesic in a cubical model $\calQ$ for a hierarchical hull $H$ gets sent to a hierarchy path, proving Proposition \ref{prop:cube paths hp}.

\subsection{Niblo-Reeves normal paths}

In \cite{NR}, Niblo-Reeves provide a way of producing very nice combinatorial geodesics (and rays) in any CAT(0) cube complex which traverse the hyperplanes between any pair of $0$-cells in a maximally efficient fashion.  In this subsection, we describe their construction.

Let $X$ be a CAT(0) cube complex, and $x,y$ be $0$-cells in $X$, possibly at infinity.  Let $\calW(x|y)$ denote the hyperplanes in $X$ which separate $x$ from $y$.  For any hyperplanes $H,H' \in \calW(x|y)$, we write $H < H'$ if one half-space of $H$ contains $x$ and the other half-space of $H$ contains both $y$ and $H'$.  The following is straight-forward exercise:

\begin{lemma}\label{lem:NR po}
The relation $<$ is a partial order on $\calW(x|y)$.  Moreover, $H, H' \in \calW(x|y)$ are $<$-incomparable if and only if $H$ and $H'$ cross.
\end{lemma}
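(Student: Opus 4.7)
The plan is to reformulate $H<H'$ in half-space language and then check the axioms one at a time. For each hyperplane $K \in \calW(x|y)$, let $k^x$ and $k^y$ denote the half-spaces of $K$ containing $x$ and $y$, respectively. The condition defining $H<H'$ then becomes: $H'$ is contained in $h^y$ (equivalently, $H$ and $H'$ do not cross and the $y$-half-space $h'^y$ is strictly contained in $h^y$). This is the reformulation I will use throughout.

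First I will check that $<$ is a strict partial order. Irreflexivity is immediate, since a hyperplane is never contained in one of its own open half-spaces. For transitivity, suppose $H_1<H_2<H_3$. Then $H_2\subset h_1^y$ and $H_2$ does not cross $H_1$, so the half-space $h_2^y$ (being the component of the complement of $H_2$ containing $y$, which lies in $h_1^y$) satisfies $h_2^y\subset h_1^y$. Since $H_3\subset h_2^y$, we get $H_3\subset h_1^y$, i.e.\ $H_1<H_3$. Antisymmetry is then automatic.

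For the crossing characterization, the forward direction is easy: if $H,H'$ cross, then each half-space of $H$ meets each half-space of $H'$, so $H'$ is not contained in either half-space of $H$, and so neither $H<H'$ nor $H'<H$ holds. The interesting direction is the converse. If $H$ and $H'$ do not cross, then one of $h'^x,h'^y$ is contained in one of $h^x,h^y$. The key step, and the only place where we use that both hyperplanes separate $x$ from $y$, is to rule out two of the four a priori possible containments: the relations $x\in h^x\cap h'^x$ and $y\in h^y\cap h'^y$ force $h^x\cap h'^x\neq\emptyset$ and $h^y\cap h'^y\neq\emptyset$, which eliminates all cross-wise inclusions. Hence either $h'^x\subset h^x$ (equivalently $H'\subset h^y$, giving $H<H'$) or $h^x\subset h'^x$ (giving $H'<H$). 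This is the only mildly delicate point, and once it is in place the lemma is complete.
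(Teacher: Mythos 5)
The paper states this lemma without proof (``The following is a straight-forward exercise''), so there is no in-paper argument to compare against. Your overall strategy is the standard one and is sound: reformulate $H<H'$ as $H'\subset h^y$, verify the partial-order axioms via nesting of half-spaces, and use the ``quadrant'' argument (exactly one of the four intersections $h^{\pm}\cap h'^{\pm}$ is empty for non-crossing hyperplanes, and the presence of $x$ and $y$ rules out the two cross-wise vanishings) for the comparability characterization.

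There is, however, a slip in the final sentence that you should fix: the two cases are labeled backward. From your own (correct) reformulation, $H<H'$ holds iff $H'\subset h^y$ iff $h'^y\subsetneq h^y$ iff $h^x\subsetneq h'^x$ --- that is, the $x$-half-space of the \emph{smaller} hyperplane $H$ is the one that is contained in the other. So the case ``$h^x\subset h'^x$'' is the one equivalent to $H'\subset h^y$ and hence gives $H<H'$, while ``$h'^x\subset h^x$'' gives $H'<H$. As written, the parenthetical ``(equivalently $H'\subset h^y$)'' following ``$h'^x\subset h^x$'' is false. The conclusion that $H$ and $H'$ are $<$-comparable is unaffected, since you exhaust both remaining cases, but the stated equivalence should be corrected by swapping the two parentheticals.
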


Assuming that $x \in X^{(0)}$ is a finite $0$-cell, and $y$ is any $0$-cell (possibly at infinity), we can now recursively define a sequence of vertices $(z_i) \subset X^{(0)}$ using this partial order as follows:

\begin{itemize}
\item We set $z_0 = x_0$.
\item Let $\calZ_1 \subset \calW(z_0|y) = \calW(x|y)$ be the set of hyperplanes which are $<$-minimal.  These are pairwise incomparable and hence pairwise cross by Lemma \ref{lem:NR po}.  We set $z_1$ to be the unique $0$-cell determined by flipping the orientations on the hyperplanes in $\calZ_1$ from the orientations in $z_0$, and leaving all other orientations the same.  Observe that $z_0$ and $z_1$ are opposite corners of a unique cube in $X$, and hence $d^{\infty}_X(z_0,z_1) \leq 1$, with equality when $\calZ_1 \neq \emptyset$.
\item We now define $\calZ_2 \subset \calW(z_1|y)$ to be the $<$-minimal hyperplanes in $\calW(z_1|y)$, and define $z_2$ similarly.
\item Preceding recursively, this process either terminates at $y$ when $y$ is a finite $0$-cell, or limits to $y$ when $y$ is at infinity.
\item For each $i$, let $\gamma_i$ be a combinatorial geodesic segment between $z_i$ and $z_{i+1}$.

\end{itemize}

The following summarizes Niblo-Reeves construction:

\begin{proposition}\label{prop:NR path construct}
For each $i$ for which $\calZ_i \neq \emptyset$, we have $d^{\infty}_X(z_i, z_{i+1}) = 1$.  Moreover, if $X$ is a finite dimensional cube complex, then concatenation of the $\gamma_i$ segments forms a combinatorial geodesic (ray) from $x$ to $y$.  In particular, $\calW(\gamma) = \calW(x|y)$.
\end{proposition}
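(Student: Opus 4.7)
The plan is to verify in turn: (a) the equality $d_X^{\infty}(z_i, z_{i+1}) = 1$ when $\calZ_{i+1} \neq \emptyset$, which is already noted during the construction but deserves a brief justification; (b) that the recursion exhausts $\calW(x|y)$, so that $\gamma = \bigcup_i \gamma_i$ is well-defined as a finite path or combinatorial ray; and (c) that the resulting $\gamma$ is a geodesic with $\calW(\gamma) = \calW(x|y)$.

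For (a), Lemma \ref{lem:NR po} says that the $<$-minimal elements $\calZ_{i+1}$ of $\calW(z_i|y)$ are pairwise $<$-incomparable and hence pairwise crossing in $X$. In a finite-dimensional CAT(0) cube complex any pairwise-crossing family of hyperplanes has at most $\dim X$ elements and spans a unique cube of that dimension, so $\calZ_{i+1}$ is finite and spans a cube in $X$ with $z_i$ and $z_{i+1}$ as opposite corners (they differ precisely in the orientations on $\calZ_{i+1}$). Consequently $\gamma_i$ is a combinatorial geodesic of length $|\calZ_{i+1}|$ crossing exactly the hyperplanes of $\calZ_{i+1}$.

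For (b), the key claim is that every $H \in \calW(x|y)$ becomes $<$-minimal in $\calW(z_i|y)$ at some finite step $i$ and is then flipped in $\calZ_{i+1}$. I plan to show the predecessor set $\{H' \in \calW(x|y) : H' < H\}$ is finite for each fixed $H$: any such $H'$ separates $x$ from both $y$ and $H$, so in particular $H'$ separates $x$ from every $0$-cell of $X$ adjacent to $H$ on the $x$-side, and the number of such hyperplanes is bounded by the (finite) combinatorial distance from $x$ to such a vertex. An easy induction using $\calW(z_i|y) = \calW(x|y) \setminus \bigsqcup_{j \leq i} \calZ_j$ shows that at each step the number of predecessors of $H$ remaining in $\calW(z_i|y)$ strictly decreases, so after finitely many steps $H$ itself becomes $<$-minimal and is flipped. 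Once $H$ is flipped it no longer appears in any later $\calZ_j$, so the $\calZ_i$ are pairwise disjoint with $\bigsqcup_i \calZ_i = \calW(x|y)$.

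For (c), by (a) the path $\gamma$ crosses each hyperplane of $\calZ_i$ exactly once inside $\gamma_i$, and by the disjointness in (b) crosses no hyperplane twice, giving $\calW(\gamma) = \bigsqcup_i \calZ_i = \calW(x|y)$. When $y$ is a finite $0$-cell, the length of $\gamma$ equals $\#\calW(x|y) = d_X(x,y)$, so $\gamma$ is a combinatorial geodesic; when $y$ is at infinity, the same accounting applied to each initial segment of $\gamma$ ending at $z_i$ shows $\gamma$ is a combinatorial ray and that $(z_i) \to y$ as $0$-cells at infinity, since every hyperplane in $\calW(x|y)$ is eventually flipped. The main delicate point is the termination argument in (b): without finiteness of the predecessor sets, the recursion could stall in the ray case and fail to visit every hyperplane of $\calW(x|y)$.
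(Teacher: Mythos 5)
The paper does not actually prove this proposition; it is stated as a summary of Niblo--Reeves' normal cube path construction and cited to \cite{NR}, so there is no internal proof for you to be compared against. Your reconstruction follows the standard argument and is essentially correct, with one step that deserves a sentence more. In part~(a), the assertion that $\calZ_{i+1}$ spans a cube with $z_i$ as a corner is not a consequence of pairwise crossing alone: one must also check that each $H\in\calZ_{i+1}$ is dual to an edge at $z_i$. This is exactly where $<$-minimality is used --- if $H$ were not dual to an edge at $z_i$, some hyperplane $H'$ would separate $z_i$ from the carrier of $H$, and one checks that such an $H'$ lies in $\calW(z_i|y)$ with $H'<H$, contradicting minimality. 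Once each $H\in\calZ_{i+1}$ is dual to an edge at $z_i$, the pairwise-crossing condition and Sageev's cube lemma give the unique cube with $z_i$ and $z_{i+1}$ at opposite corners, so $d^\infty_X(z_i,z_{i+1})=1$. With that supplied, your parts~(b) and~(c) are sound: the finiteness of each predecessor set (bounded by the combinatorial distance from $x$ to the carrier of $H$) is precisely what makes the layered removal exhaust $\calW(x|y)$ and what makes the ray case terminate hyperplane-by-hyperplane, which you correctly flagged as the crux. The identification of $\calZ_{i+1}$ (rather than $\calZ_i$) as the relevant non-empty layer in the distance claim is also right; the proposition's indexing there is slightly off.
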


We call the paths constructed as above \emph{Niblo-Reeves paths}, or sometimes \emph{normal cube paths}.

\begin{remark}
Note that any combinatorial geodesic (ray) between $\hx$ and $\hy$ satisfies $\calW(\gamma) = \calW(\hx|\hy)$, though having this statement in hand will be useful in the next section.
\end{remark}

\subsection{All combinatorial geodesics in $\oQ$ are hierarchical}

In this subsection, we prove that any combinatorial geodesic in $\calQ$ is sent to a hierarchy path in $H$.  We begin with the following lemma, which says that combinatorial geodesics are hierarchical in $\calQ$:

\begin{lemma}\label{lem:NR hp in Q}
Let $\hx \in \calQ$ and $\hy \in \oQ$.  Let $\gamma$ be any combinatorial geodesic (ray) in $\calQ$ from $\hx$ to $\hy$.  Then for all $U \in \calU$, the projection $\hpi_U(\gamma) \subset \hT_U$ is an unparameterized geodesic between $\hx_U$ and $\hy_U$.
\end{lemma}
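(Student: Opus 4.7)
The plan is to reduce the statement to a basic fact about paths in trees: any path in a tree that crosses each tree-hyperplane separating two points exactly once (and no others) is an unparameterized geodesic. The key structural observations to set up this reduction come from Section \ref{sec:walls in Q}.

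First, I would exploit the product structure of $\calY = \prod_{U \in \calU} \hT_U$. Each edge of the combinatorial path $\gamma$ is an edge of $\calY$, hence changes exactly one coordinate. That is, each edge of $\gamma$ crosses a unique $\calQ$-hyperplane $\hh_U$ for some $U \in \calU$, and traversing this edge moves the $U$-coordinate across the corresponding tree-hyperplane $h_U \in \hT_U$ by exactly one edge, while every other coordinate in $\prod_{V \neq U} \hT_V$ remains fixed.

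Next, since $\gamma$ is a combinatorial geodesic (or geodesic ray), it never backtracks across any $\calQ$-hyperplane. Combining this with Proposition \ref{prop:separation} in the geodesic case (where the length of $\gamma$ equals $\#\calW(\hx|\hy) = d_{\calQ}(\hx,\hy)$), $\gamma$ crosses exactly the $\calQ$-hyperplanes in $\calW(\hx|\hy)$, each exactly once. Fix $U \in \calU$. By Lemma \ref{lem:wall separation}, the $U$-labeled $\calQ$-hyperplanes in $\calW(\hx|\hy)$ correspond bijectively to the tree-hyperplanes of $\hT_U$ separating $\hx_U$ from $\hy_U$. Therefore $\hpi_U(\gamma)$ crosses each such $h_U \in \hT_U$ exactly once and is constant along every edge of $\gamma$ labeled by a domain $V \neq U$. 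Since $\hT_U$ is a tree, the unique geodesic from $\hx_U$ to $\hy_U$ crosses precisely these tree-hyperplanes (each once), and any path with this crossing pattern must be an unparameterized geodesic: the non-constant edges of $\hpi_U(\gamma)$ form a subword of this geodesic, and since there are no repeats, the subword is the geodesic itself.

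The main obstacle is handling the ray case, where $\hy \in \calQ^{\infty}$, $\gamma$ is a combinatorial ray, and $\calW(\hx|\hy)$ may be infinite. Proposition \ref{prop:separation} no longer directly applies, so I would argue instead that every $\calQ$-hyperplane in $\calW(\hx|\hy)$ must be crossed by $\gamma$. Were some $\hh_U \in \calW(\hx|\hy)$ not crossed, the entire ray $\gamma$ would lie in a single $\calQ$-half-space determined by $\hh_U$, and the $U$-projection $\hpi_U(\gamma)$ would then be confined to one half-tree of $\hT_U - \{h_U\}$. This half-tree does not contain $\hy_U$ (nor, in the case $\hy_U \in \partial \hT_U$, the end represented by $\hy_U$, using the characterization in Proposition \ref{prop:DCC}), contradicting convergence of $\gamma$ to $\hy$. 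With this established, the rest of the argument runs as in the geodesic case coordinate by coordinate: for each $U \in \calU$, $\hpi_U(\gamma)$ crosses exactly the tree-hyperplanes between $\hx_U$ and $\hy_U$, each once, forcing it to be an unparameterized geodesic (finite when $\hy_U \in \hT_U$ and a geodesic ray when $\hy_U \in \partial \hT_U$).
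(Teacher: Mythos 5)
Your proof is correct, and it takes a genuinely different route from the paper's in one essential respect. The paper fixes two $U$-labeled $\calQ$-hyperplanes $\hh, \hh' \in \calW(\hx|\hy)$ and invokes Lemma~\ref{lem:hs nest} to show that the crossing order $h < h'$ along the tree geodesic in $\hT_U$ forces the half-space nesting in $\calQ$, so that $\gamma$ must cross $\hh$ before $\hh'$; monotonicity of $\hpi_U(\gamma)$ then follows and the ``no double-crossing'' property finishes the argument. Your argument bypasses Lemma~\ref{lem:hs nest} entirely: you exploit the cubical product structure of $\calY$ (each edge of $\gamma$ flips exactly one coordinate by one tree-edge), establish $\calW(\gamma) = \calW(\hx|\hy)$ (via Proposition~\ref{prop:separation} for interior $\hy$, and a convergence argument in the ray case), translate this via Lemma~\ref{lem:wall separation} into the statement that $\hpi_U(\gamma)$ crosses exactly the tree-hyperplanes separating $\hx_U$ from $\hy_U$ once each, and then appeal to the tree-geometric fact that such a path must traverse the geodesic monotonically. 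Both work. Your version has the merit of making explicit a step the paper leaves implicit --- that $\gamma$ crosses \emph{every} hyperplane in $\calW(\hx|\hy)$ --- which genuinely deserves an argument in the ray case, since Proposition~\ref{prop:separation} as stated only applies to $\hy \in \calQ$. The paper's reliance on Lemma~\ref{lem:hs nest} keeps the partial order on $\calW(\hx|\hy)$ central, which fits the Niblo--Reeves machinery of the surrounding section, whereas your approach is somewhat more elementary. One small wording issue in your last step: rather than saying the non-constant edges of $\hpi_U(\gamma)$ ``form a subword'' of the geodesic, the cleaner statement is that the set of edges traversed is exactly the set of geodesic edges, and the tree structure forces them to be traversed in the monotone order, since any other order would require crossing some edge twice.
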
 

\begin{proof}
Let $U \in \calU$ and let $\hh, \hh' \in \calW(x|y)$ be labeled by $U$, corresponding to tree-hyperplanes $h,h' \in \hT_U$.  Without loss of generality, we may assume that $h<h'$ in $\calW_{\hT_U}(\hx_U, \hy_U)$, i.e. the unique (ray) in $\hT_U$ between $\hx_U$ and $\hy_U$ must cross $h$ before $h'$.  Then Lemma \ref{lem:hs nest} implies that $\hh < \hh'$ in $\calW(\hx|\hy)$.  Hence $\gamma$ crosses $\hh$ before $\hh'$, and thus $\hpi_U(\gamma)$ crosses $\hh$ before $\hh'$.  In particular, $\hpi_U(\gamma)$ must project onto the geodesic in $\hT_U$.  Moreover, since combinatorial geodesics do not cross a hyperplane twice, it follows that $\hpi_U(\gamma)$ does not backtrack in $\hT_U$ either.  This completes the proof.
\end{proof}

With this in hand, we prove our hierarchy path statement:

\begin{proposition} \label{prop:cube paths hp}
Let $x, y \in \calX$.  Set $F = \{x,y\}$, $\Lambda = \emptyset$, and let $\calQ$ be the cubical model for $\hx \in \calQ$ and $\hy \in \oQ$.  Let $\gamma$ be any combinatorial geodesic (ray) between $\hx$ and $\hy$.  Then $\hO(\gamma) \subset H$ is a hierarchy path between $\hO(\hx), \hO(\hy)$ with uniform constants depending only on $|F \cup \Lambda|$ and the constants involved in the construction of $\oQ$.
\begin{itemize}
\item Moreover, when $F = \{x,y\}$, $\Lambda = \emptyset$, $\hx = \hPsi(x)$, and $\hy = \hPsi(y)$, then the image in $\calX$ under $\hO$ of a combinatorial geodesic between $\hx,\hy$ in $\calQ$ is a hierarchy path between $x,y$ with constants depending only on $\calX$.
\end{itemize}
\end{proposition}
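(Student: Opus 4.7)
The goal is to show that for every $U \in \mathfrak{S}$, the projection $\pi_U \circ \hO(\gamma)$ is an unparameterized $(L,L)$-quasi-geodesic in $\calC(U)$ with $L$ depending only on $\calX$ (and $|F\cup\Lambda|$ in the general statement). My first step is to observe that, since $\hO(\gamma) \subset H$, we immediately dispatch the domains outside $\calU = \Rel_K(F\cup\Lambda)$: for $U \notin \calU$ we have $\diam_{U}(\pi_U(F \cup \Lambda)) < K$, and every point of $H = \hull_\theta(F\cup\Lambda)$ projects to within $\theta$ of $H_U$, so $\pi_U(\hO(\gamma))$ is contained in a set of diameter at most $K + 2\theta$. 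Any such bounded-diameter path is trivially an unparameterized quasi-geodesic (reparameterize constantly), so no further work is needed on these domains.

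For $U \in \calU$, I would invoke Lemma \ref{lem:NR hp in Q}, which says that the coordinate projection $\hpi_U(\gamma) \subset \hT_U$ is an unparameterized geodesic. The point is to lift this monotonicity, first through the collapse map $q_U:T_U \to \hT_U$ and then across the $(C,C)$-quasi-isometric embedding $\phi_U:T_U \to \calC(U)$. Writing $\gamma = (\hz_i)$ and using $\hO(\hz_i)_U \in B_U(\hz_i) \subset q_U^{-1}(\hz_{i,U})$ (from Section \ref{sec:Q to Y}, together with the fact that $\hpsi_U \circ \hO(\hz_i)$ coarsely coincides with $\phi_U(\hO(\hz_i)_U)$), the sequence $(q_U(\hO(\hz_i)_U))_i$ moves monotonically along the geodesic $[\hx_U,\hy_U]_{\hT_U}$ and hits each vertex along it at least once. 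Since $\phi_U$ sends a tree geodesic to a $(C,C)$-quasi-geodesic in $\calC(U)$, this will produce the required unparameterized quasi-geodesic provided we can control how far $\hO(\hz_i)_U$ deviates from a fixed lift of $[\hx_U,\hy_U]_{\hT_U}$ into $T_U$.

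The main obstacle will be precisely this deviation: within a single cluster $C = q_U^{-1}(\hz_{i,U}) \subset T_U$, the set $B_U(\hz_i)$ can sit anywhere in $C$, and $C$ itself can have large (indeed, infinite) diameter when rays in $\Lambda$ are involved. Consecutive values $\hO(\hz_i)_U$ and $\hO(\hz_{i+1})_U$ with $\hz_{i,U} = \hz_{i+1,U}$ can therefore, a priori, jump considerably within the cluster. To control this I plan to use that $\diam_{T_U}(B_U(\hz_i))<B_1$ (Lemma \ref{lem:bdd int total}) and that the only coordinates governing $B_U(\hz_i)$ are those of $\hz_i$ on $V \in \BM(C)$, which are themselves advanced monotonically by $\gamma$ in the trees $\hT_V$ (again by Lemma \ref{lem:NR hp in Q}). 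The total wandering inside $C$ is therefore bounded by a constant multiple of the sum of the changes in the $\hT_V$ coordinates, and Proposition \ref{prop:bipartite} together with Tree Trimming \ref{thm:tree trimming} allow us to absorb this wandering into the quasi-geodesic constants uniformly in $\calX$.

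Putting this together, each dropped subsequence of $(\hO(\hz_i))$ obtained by collapsing the runs with constant $\hz_{i,U}$ yields a monotonic trace along a tree geodesic in $T_U$ with bounded excursions inside each cluster, and applying $\phi_U$ delivers an unparameterized $(L,L)$-quasi-geodesic in $\calC(U)$ with $L = L(\calX,|F\cup\Lambda|)$. Combined with the bounded-diameter case above, this verifies the hierarchy-path (or hierarchy-ray) condition for $\hO(\gamma)$. The "moreover" part is immediate: when $F=\{x,y\}$ and $\Lambda=\emptyset$ the constant depends only on $\calX$, recovering the uniform hierarchy-path statement used in Corollary \ref{cor:DF lower bound}.
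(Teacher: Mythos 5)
Your outline correctly identifies the shape of the problem — reduce to $U\in\calU$, use Lemma \ref{lem:NR hp in Q} for monotonicity in the collapsed trees $\hT_U$, and worry about the behavior of $B_U(\hz)$ inside a single cluster $C$ of $T_U$. But the mechanism you propose for controlling that behavior is not the right one, and as written it would not close the argument.

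The claim you actually need (and what the paper proves as Claim \ref{claim:dense shadow}) is a \emph{uniform pointwise bound}: for every vertex $\hz$ on $\gamma$ and every $U\in\calU$, the chosen point $z_U = B_U(\hz)$ lies within a constant $\xi = \xi(\calX,|F\cup\Lambda|)$ of the geodesic $[x_U,y_U]_{T_U}$, plus a matching monotonicity statement. Your proposed bound, ``total wandering inside $C$ $\leq$ constant $\times$ sum of changes in the $\hT_V$ coordinates for $V\in\BM(C)$,'' is a bound on accumulated variation, and that sum is part of $d_{\calQ}(\hx,\hy) \asymp d_{\calX}(x,y)$ — it is not a uniform constant. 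A path whose deviation from a geodesic grows linearly in the endpoints' distance is not an unparameterized quasi-geodesic. Appealing to Tree Trimming \ref{thm:tree trimming} to ``absorb'' this does not help: tree trimming moves you to a quasi-isometric model at the cost of constants depending on the size $B$ of the subtrees you collapse, and here the quantities you want to absorb are not uniformly bounded.

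The missing idea is a contradiction using the bipartite structure. Suppose $z_U$ is far ($>100r$) from $[x_U,y_U]_{T_U}$ inside the cluster $C = q_U^{-1}(\hz_U)$. Since the shadows of $\nest_{\calU}$-minimal bipartite domains are $r$-dense in $C$ (Lemma \ref{lem:BM dense}, Proposition \ref{prop:bipartite}), you can find such a $W\in\BM(C)$ whose shadow $\delta^W_U$ separates $z_U$ from \emph{both} $x_U$ and $y_U$. Because $W$ is bipartite and $\nest_{\calU}$-minimal, $\hT_W$ is a nondegenerate interval whose endpoints are the two marked clusters, and $W$ enters the definition of all three of $B_W(\hz)$, $B_W(\hx)$, $B_W(\hy)$; the separation then forces $\hz_W$ to sit at one endpoint of $\hT_W$ while $\hx_W = \hy_W$ sit at the other. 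But Lemma \ref{lem:NR hp in Q} says $\hpi_W(\gamma)$ is an unparameterized geodesic from $\hx_W$ to $\hy_W$, which in this case is a single point, so $\hz_W$ cannot be at the far endpoint — contradiction. An analogous separation argument rules out backtracking (your item (2) in the geodesic description). This is a genuinely different (and indispensable) step from what you propose: it establishes a \emph{uniform} bound on each $z_U$ from $[x_U,y_U]_{T_U}$ by exploiting the fact that every potential excursion is witnessed by a bipartite domain in which $\gamma$ is constrained.

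One secondary point: you write the wandering within $C$ is governed by ``the only coordinates ... on $V \in \BM(C)$, which are themselves advanced monotonically.'' Monotonicity of each $\hT_V$-coordinate does not by itself imply monotonicity of the intersection point $B_U(\hz) = \bigcap_{V\in\BM(C)} C^V_U$ along a fixed geodesic in $T_U$ — the intersection can sit anywhere in $C$, including off $[x_U,y_U]_{T_U}$ entirely (e.g., when $C$ branches). The contradiction argument is what prevents this, and it needs to be made explicit.
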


\begin{remark}
One could use the fact that $\hO:\calQ \to H$ is quasi-median (Theorem \ref{thm:quasi-median} below) plus the useful fact that median quasi-geodesics are hierarchy paths \cite{RST18} to conclude that the image of a combinatorial geodesic (ray) in $H$ is a hierarchy path (or ray).  But the work in \cite{RST18} requires the distance formula (Theorem \ref{thm:DF}), and one consequence of our construction is a new proof of the distance formula, so it is important that we have a direct proof.
\end{remark}

\begin{proof}
Note that the ``moreover'' statement is immediate.  Also, the image under $\hO: \calQ \to H$ of a geodesic is automatically a quasi-geodesic by Corollary \ref{cor:Q qi to H}.  So it suffices to prove that no backtracking occurs in any domain, and note that up to an error involving the choice of largeness constant $K$ (Subsection \ref{subsec:K}) and $|F \cup \Lambda|$, we need only consider domains in $\calU$.

Let $\gamma$ be a combinatorial geodesic (ray) between $\hx \in \calQ$ and $\hy \in \oQ$.  We want to show that for all $U \in \calU$, we have that $\pi_U(\hO(\gamma)) \subset \calC(U)$ is an unparameterized quasi-geodesic.  Recall that $\hO$ is defined coordinate-wise, first by defining a tuple of coordinates in the product of trees $\calY = \prod_{U \in \calU} T_U$, then by pushing this tuple in $\prod_{U \in \calU} \calC(U)$ and applying Realization \ref{thm:realization}; see Subsection \ref{subsec:honing clusters} for a full description.  The first step above involves a map $\Omega: \calQ \to \calZ_{\omega} \subset \calY$, where $\calZ_{\omega}$ is the $\omega$-consistent subset of $\calY$, and $\Omega$ is defined by coordinate-wise maps $B_U:\calQ \to T_U$.  Since the latter two steps only introduce a bounded error in each coordinate, it suffices to show that the coordinate-wise maps $B_U(\gamma) \subset T_U$ is an unparameterized quasi-geodesic in each $T_U$.

\smallskip

Let $(x_U) = (B_U(\hx))= \Omega(\hx)$ and $(y_U) = (B_U(\hy)) = \Omega(\hy)$ be the tuples in $\calY$ determined by $\hx,\hy$, respectively.  Let $r= r(\calX,|F\cup \Lambda|)>0$ be the cluster separation constant (Subsection \ref{subsec:shadows}).  The following claim is sufficient to complete the proof:

\begin{claim}\label{claim:dense shadow}
There exists a constant $\xi = \xi(\calX, |F \cup \Lambda|, K)>0$ such that for all $U \in \calU$, we have that $B_U(\gamma)$ satisfies the following two properties:
\begin{enumerate}
\item If $[x_U, y_U]_U \subset T_U$ is the geodesic in $T_U$ between $x_U, y_U$, then $d^{Haus}_{T_U}(B_U(\gamma), [x_U,y_U]_U)<\xi$, and
\item If $s<t \in [0, d_{\calQ}(\hx,\hy)]$, then $d_{T_U}(B_U(\gamma(s)), y_U) - d_{T_U}(B_U(\gamma(t)),y_U)<\xi$.
\end{enumerate}
\end{claim}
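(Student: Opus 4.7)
My plan is to reduce the claim to Lemma \ref{lem:NR hp in Q}, which gives the analogous statement ``one level down'': $\hpi_U(\gamma)$ is an unparameterized geodesic in $\hT_U$ from $\hx_U$ to $\hy_U$ that does not backtrack. Since $B_U$ lifts $\hpi_U$ along the collapsing map $q_U \colon T_U \to \hT_U$, the task becomes controlling the discrepancy between $B_U(\gamma)$ and $[x_U,y_U]_U$ arising from collapsed clusters. The subtree $q_U^{-1}([\hx_U,\hy_U]_{\hT_U}) \subset T_U$ is obtained from $[x_U,y_U]_U$ by adjoining the full clusters $C_1,\dots,C_n$ that the $\hT_U$-geodesic visits in order, and $B_U(\gamma(t))$ always lies in this subtree.

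On the edge components $\hpi_U(\gamma(t)) \in \hT^e_U$, the map $B_U$ is the isometric inverse of $q_U$, so $B_U(\gamma(t))$ lands directly on $[x_U,y_U]_U$ and, by the monotonicity in $\hT_U$, the quantity $d_{T_U}(B_U(\gamma(t)),y_U)$ strictly decreases with $t$. The work is concentrated in the case when $\hpi_U(\gamma(t))$ sits at a cluster point $c_i = q_U(C_i)$: there $B_U(\gamma(t))$ is an essentially arbitrary choice within the honed set, which by Lemma \ref{lem:bdd int total} has diameter at most $B_1 = B_1(\calX,|F \cup \Lambda|)$. Since $[x_U,y_U]_U$ also crosses $C_i$ between its two adjacent edge components of $[\hx_U,\hy_U]_{\hT_U}$, both (1) and (2) reduce to bounding, independently of $\diam_{T_U}(C_i)$, how far $B_U(\gamma(t))$ can wander within $C_i$ as $t$ varies, and then summing over the clusters visited.

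The main obstacle is that clusters can a priori have large diameter, so a naive ``$B_U(\gamma(t)) \in C_i$''-bound is insufficient. My plan to circumvent this is to exploit the finer combinatorial structure of $\gamma$: consecutive vertices on $\gamma$ differ in exactly one coordinate, and $B_U(\gamma(t))$ is determined by the intersection of the subtrees $C^V_U$ for $V \in \BM(C_i)$, which only changes when the flipped coordinate is some $V \in \BM(C_i)$. Combined with the facts that $\gamma$ crosses each hyperplane at most once, that $\calQ$ is finite-dimensional (Corollary \ref{cor:dimension}), and that $\BM(C_i)$ is controlled by the bipartite-domain analysis of Proposition \ref{prop:bipartite}, this should yield a bound on the total variation of $B_U(\gamma)$ inside $C_i$ that is uniform in the cluster diameter and depends only on $\calX$, $|F \cup \Lambda|$, and $K$. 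Concatenating these cluster-wise bounds with the exact edge-component behavior then gives the desired uniform $\xi$ for both (1) and (2).
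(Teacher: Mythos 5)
Your reduction to Lemma \ref{lem:NR hp in Q} and your identification of the obstacle (clusters of unbounded diameter, within which $B_U$ is chosen somewhat arbitrarily inside a $B_1$-ball whose location can slide) are exactly right, and this part matches the paper. However, the strategy you propose for closing the gap is genuinely different from the paper's, and as stated it does not work.

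The key step in your plan is the assertion that crossing each hyperplane once, together with finite dimensionality of $\calQ$ and Proposition \ref{prop:bipartite}, ``should yield a bound on the total variation of $B_U(\gamma)$ inside $C_i$ that is uniform in the cluster diameter.'' This is where the argument breaks: the set $\BM(C_i)$ of $\nest_{\calU}$-minimal bipartite domains whose shadows live in $C_i$ is not bounded in terms of $\calX$, $|F\cup\Lambda|$, $K$ — its cardinality is proportional to $\diam_{T_U}(C_i)$, which is not controlled. Proposition \ref{prop:bipartite} only bounds the number of \emph{non}-bipartite domains, not $|\BM(C_i)|$. So the number of $\gamma$-steps that flip a coordinate in some $V\in\BM(C_i)$, and hence potentially move the honed intersection $\bigcap_{V\in\BM(C_i)} C^V_U$, is unbounded. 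A total-variation count does not become uniform unless you separately prove that these per-step displacements are small and, more importantly, that they accumulate monotonically rather than oscillating; that monotonicity is really the content of the claim and is not supplied by the facts you cite.

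The paper sidesteps the total-variation bookkeeping entirely. For item (1), assuming $z_U := B_U(\hz)$ wanders farther than $100r$ from $[x_U,y_U]_U$, it locates a single $\nest_{\calU}$-minimal bipartite witness $W$ with $\delta^W_U$ separating $z_U$ from both $x_U$ and $y_U$, at distance $>10r$ from all three. Then $W\in\BM(C)$ is involved in the honing for $\hx$, $\hy$, and $\hz$, and the BGI/bipartite structure forces $\hz_W$ and $\hx_W=\hy_W$ to be the two opposite endpoints of the nondegenerate interval $\hT_W$. Since $\hx_W=\hy_W$, Lemma \ref{lem:NR hp in Q} forces $\hpi_W(\gamma)$ to be constant, contradicting $\hz_W\neq\hx_W$. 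Item (2) is the same device with a separating $\delta^W_U$ placed between $\{x_U,t_U\}$ and $\{y_U,s_U\}$. This ``single-witness contradiction'' is the idea your proposal is missing; without it (or a genuine monotonicity lemma for $B_U$ inside clusters) the argument is incomplete.
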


For each $U\in \calU$, we let $\gamma_U = \hpi_U(\gamma)$, and recall that Lemma \ref{lem:NR hp in Q} implies that $\gamma_U$ is an unparameterized geodesic in $\hT_U$.

The proof is by induction on the $\nest_{\calU}$-level of domains in $\calU$.  First, suppose that $U \in \calU$ is $\nest_{\calU}$-minimal.  Then $\hT_U$ contains no cluster points, and hence $B_U(\gamma) = q^{-1}_U(\gamma_U)$.  As such, (1) and (2) are automatic from Lemma \ref{lem:NR hp in Q} and the fact that $\phi_U:T_U \to \calC(U)$ is a uniform quasi-isometric embedding whose image is within uniform Hausdorff distance of $\hull_U(F \cup \Lambda)$ with constants depending only on $\calX$ and $|F \cup \Lambda|$.

Now suppose that $U \in \calU$ is not $\nest_{\calU}$-minimal.  For (1), observe that if $\hz \in \gamma$ so that $\hz_U \in \gamma_U$ is not at a cluster point in $\hT_U$, then $z_U := B_U(\hz) = q^{-1}_U(\hz_U)$ must lie on $[x_U,y_U]_U$.  This is because $q_U:T_U \to \hT_U$ isometrically preserves the edge components of the decomposition $T_U = T^e_U \cup T^c_U$, and this decomposition determines a decomposition of $[x_U,y_U]_U$, whose edge components are in isometric bijective correspondence (via $q_U$) with the components of $\gamma_U$ in the complement of the cluster points it contains.

Hence if $\hz \in \gamma$ with $d_{T_U}(z_U, [x_U,y_U]_U)>0$, then $\hz_U$ must be contained in a cluster point of $\hT_U$.  Suppose, for a contradiction, that $d_{T_U}(z_U, [x_U,y_U]_U) > 100r$.  Let $C \subset T^c_U$ be the cluster component containing $z_U$, and let $W \in \calU$ be any $\nest_{\calU}$-minimal bipartite domain with $d_{T_U}(\delta^W_U, z_U)>10r$ and $d_{T_U}(\delta^W_U, [x_U,y_U]_U)>10r$.  Since $W$ is $\nest_{\calU}$-minimal and bipartite, it is involved in the definition of $z_U, x_U$, and $y_U$.  And since $\delta^W_U$ separates $z_U$ from $x_U,y_U$, it follows that $\hz_W$ is one endpoint of interval $\hT_W$, and $\hx_W = \hy_W$ is the other, where $\hT_W$ is nondegenerate (with length coarsely $K$) by item (6) of Lemma \ref{lem:collapsed tree control}.  However this contradicts Lemma \ref{lem:NR hp in Q}, so (1) is proven.

For (2), suppose for a contradiction that there exist $s< t \in [0, d_{\calQ}(\hx,\hy)]$ and $U \in \calU$ so that $d_{T_U}(B_U(\gamma(s)), y_U) - d_{T_U}(B_U(\gamma(t)),y_U) > 100r$.  Observe that we must have that $\gamma_U(t), \gamma_U(s) \in \hT_U$ are at the same cluster point, otherwise we get a contradiction of Lemma \ref{lem:NR hp in Q}.  Set $s_U = B_U(\gamma(s))$ and $t_U = B_U(\gamma(t))$, and let $C \subset T^c_U$ be the cluster component containing $s_U,t_U$.  Our contradiction assumption implies that there exists a $\nest_{\calU}$-minimal bipartite $W \in \calU$ with $\delta^W_U \subset C$ so that one component of $T_U - \delta^W_U$ contains $x_U$ and $t_U$, while the other component contains $y_U$ and $s_U$, with $x_U,t_U,y_U,s_U$ all at least $10r$-away from $\delta^W_U$.

Since $W$ is involved in the definition of all of $x_U,s_U,y_U,t_U$, we get that $\hx_W = \hat{s}_W \neq \hy_W = \hat{t}_W$ are at opposite ends of the (nondegenerate) interval $\hT_W$, which again contradicts Lemma \ref{lem:NR hp in Q}.  This completes the proof of the claim and hence the proposition.
\end{proof}

\subsection{Hyperplanes and product regions}\label{subsec:active interval}

We finish this section with an observation about how cubically-defined paths in $\calX$ navigate its product regions (see Subsection \ref{subsec:product region}).

\begin{lemma}\label{lem:hyp PU}
Let $\calQ$ be a cubical model for any finite $F \subset \calX$ and finite set of hierarchy rays $\Lambda$.  Then there exists a constant $\nu'>0$ depending only on $\calX$, $|F \cup \Lambda|$, and the constants in the construction of $\calQ$, so that for any $\calQ$-hyperplane $\hh$, if $U \in \calU$ is the domain label for $\hh$, then $\hO(\hh) \subset \calN_{\nu'}(\PP_U)$.
\end{lemma}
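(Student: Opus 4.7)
The plan is to use the hyperplane characterization from Lemma \ref{lem:hyperplane char} to pin down the projections of any $y = \hO(\hz)$ with $\hz \in \hh$, and then use the gate map to $\PP_U$ together with realization uniqueness to land $y$ in a uniform neighborhood of $\PP_U$.

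First, take any $\hz \in \hh$ and set $y = \hO(\hz) \in H$. Item (1) of Lemma \ref{lem:hyperplane char} gives that $\hz_V = \hd^U_V$ whenever $V \pitchfork U$ or $U \nest V$. Tracing through the construction of $\hO$ from Section \ref{sec:Q to Y}---which sends $\hz_V$ first to the bounded diameter set $B_V \subset T_V$ (which is near $\delta^U_V$ by definition of $\hd^U_V$), then pushes forward by $\phi_V$, then applies Realization \ref{thm:realization}---and combining with the bound $d_V(\phi_V(\delta^U_V), \rho^U_V) < R$ from Lemma \ref{lem:tree control}(2), we obtain a constant $\mu_0 = \mu_0(\calX, |F \cup \Lambda|)$ such that $d_V(\pi_V(y), \rho^U_V) < \mu_0$ for every such $V$. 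Crucially, every error here is controlled by the uniform quasi-isometry constants of the $\phi_V$, the honing bound of Lemma \ref{lem:bdd int total}, and the realization error, so $\mu_0$ does not depend on the largeness threshold $K$.

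Next, let $y' = \gate_{\PP_U}(y) \in \PP_U$, which exists by Lemma \ref{lem:PR hqc} combined with Lemma \ref{lem:gates exist}. We bound $d_V(y, y')$ uniformly in every $V \in \mathfrak S$ by a split: for $V \pitchfork U$ or $U \nest V$, Lemma \ref{lem:prod coord} gives $d_V(y', \rho^U_V) < \theta$, which together with the previous step yields $d_V(y, y') < \mu_0 + 2\theta$. For $V \sqsubseteq U$ or $V \perp U$, the image $\pi_V(\PP_U)$ contains $\pi_V(\bF_U)$ or $\pi_V(\bE_U)$, which is coarsely all of $\calC(V)$ by normalization (Definition \ref{defn:normalized}); since the gate coordinate is coarsely closest-point projection to $\pi_V(\PP_U)$, it barely moves $\pi_V(y)$. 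We may then apply the uniqueness clause of Realization \ref{thm:realization} to the tuple $(\pi_V(y))$ to conclude $d_\calX(y, y') < \nu'$, for some $\nu'$ depending only on $\calX$, $|F \cup \Lambda|$, and the construction constants.

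The main point of care is the constant tracking in the first step: one must verify that passing from the \emph{exact} equation $\hz_V = \hd^U_V$ in the collapsed tree to the \emph{coarse} closeness $\pi_V(y) \approx \rho^U_V$ in $\calC(V)$ does not introduce any error depending on $K$. This is essentially automatic because the collapse map $q_V$ and the cluster-honing procedure defining $\hO$ both lift $\hd^U_V$ to something in $\calN_{O(1)}(\delta^U_V)$ in $T_V$, with constants controlled only by $\calX$ and $|F\cup\Lambda|$. Once this is in hand, the rest of the argument is a clean application of gate maps and realization, with no further subtleties.
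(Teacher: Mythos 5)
The key step in your proposal is not justified, and in fact it is precisely the step the paper's entire proof is devoted to establishing.

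You write that passing from the exact equation $\hz_V = \hd^U_V$ in $\hT_V$ to the coarse closeness $\pi_V(y) \approx \rho^U_V$ in $\calC(V)$ is ``essentially automatic because the collapse map $q_V$ and the cluster-honing procedure defining $\hO$ both lift $\hd^U_V$ to something in $\calN_{O(1)}(\delta^U_V)$ in $T_V$.'' This is exactly the nontrivial claim. When $\hz_V = \hd^U_V$, the coordinate $x_V = B_V(\hz)$ produced by the honing procedure of Subsection~\ref{subsec:honing clusters} lies in the cluster $C = q_V^{-1}(\hz_V) \subset T_V$, but that cluster can contain $\delta^W_V$ for many other domains $W$, can have diameter far larger than any uniform constant, and (when $\Lambda \neq \emptyset$) can even have infinite diameter. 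The honing output $B_V(\hz)$ is defined by intersecting the half-subtrees $C^W_V$ over all $\nest_{\calU}$-minimal bipartite $W \in \BM(C)$, and nothing in its definition places it near $\delta^U_V$ a priori. Lemma~\ref{lem:bdd int total}, which you cite, only bounds $\diam_{T_V}(B_V)$; it does not locate $B_V$ within $C$, and in particular does not control $d_{T_V}(B_V, \delta^U_V)$.

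The paper's proof shows $d_{T_V}(x_V, \delta^U_V)$ is uniformly bounded by an explicit contradiction argument: if $d_{T_V}(x_V, \delta^U_V) > 100r$, one can find a $\nest_{\calU}$-minimal bipartite domain $W \nest V$ with $\delta^W_V$ separating $x_V$ from $\delta^U_V$ inside $C$, and then use Lemma~\ref{lem:rho project} (to relate $\hd^U_W$ to $\hd^V_W(\hd^U_V)$) together with the characterization $\hpi_W(\hx) = \hd^U_W$ from Lemma~\ref{lem:partition} to derive a contradiction with $0$-consistency. None of this is automatic. If you wanted to keep the rest of your gate-map-plus-realization scaffolding, you would first need to reprove this bipartite-domain estimate; as written, your proposal asserts the lemma's hard content rather than establishing it. There is also a secondary worry you do not address: even granting a per-domain bound $d_V(y,\rho^U_V) < \mu_0$ for all the relevant $V$, when $\Lambda \neq \emptyset$ there may be infinitely many such $V$, so you would additionally need $\mu_0 < K$ (so that those terms vanish from the distance-formula sum) before the realization-uniqueness step can close the argument.
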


\begin{proof}
Let $\hx \in \hh$ and $(x_U) = \Omega(\hx) \in \prod_{U \in \calU} T_U$.  Then Lemma \ref{lem:partition} implies that for any $U \nest V$ or $U \pitchfork V$, we have $\hpi_V(\hx) = \hd^U_V$.   Suppose, for a contradiction, that $d_{T_V}(x_V, \delta^U_V)>100r$, where $r= r(\calX,|F\cup \Lambda|)>0$ is the cluster separation constant (Subsection \ref{subsec:shadows}).

Since $\hpi_V(\hx) = \hd^U_V$, there exists a cluster $C \subset T^c_V$ containing both $x_V, \delta^U_V$.  Moreover, there exists a $\nest_{\calU}$-minimal bipartite $W \in \calU$ with $W \nest V$ so that $d_{T_V}(\delta^W_V, x_V)>10r$ and $d_{T_V}(\delta^W_V, \delta^V_V)>10r$.  Note that $W \pitchfork U$.  When $U \nest V$, an application of Lemma \ref{lem:rho project} implies that $d_{T_W}(\delta^U_W, \delta^V_W(\delta^U_V))<E'$ and hence $\hd^U_W$ is at one endpoint of the (nondegenerate) interval $\hT_W$, whereas $\hx_W$ is at the other endpoint, which contradicts Lemma \ref{lem:partition}.  Hence no such $W$ exists, and $d_{T_V}(x_V,\delta^U_V)$ is bounded in terms of the construction, showing that $\hO(\hh) \subset \calN_{\nu'}(\PP_U)$ for $\nu'>0$ controlled by the setup.

On the other hand, when $U \pitchfork V$, then there exists $a \in F \cup \Lambda$, so that $d_{T_V}(\delta^V_U, a_V)<E'$.  Since $\delta^W_V$ separates $x_V$ from $a_V$, it follows that $\hx_W$ and $\ha_W = \hd^V_W$ are opposite endpoints of the (nondegenerate) interval $\hT_W$, which contradicts Lemma \ref{lem:partition}.  This completes the proof.
\end{proof}

\section{Median structures, hierarchical and cubical}\label{sec:medians}

In this section, we complete the proof of our cubical model theorem, Theorem \ref{thmi:main model}, by explaining why the map $\hPsi:H \to \calQ$ is quasi-median.

The purpose of isolating this discussion in its own section is to make transparent the ways medians behave in the various parts of the construction, instead of having them sprinkled throughout the paper.  Incidentally, this shows how disconnected medians are from the underlying construction, as compared to Bowditch's construction \cite{Bowditch_hulls}, where the median property is a crucial part of the axiomatic setup.  To be sure, the median property of Behrstock-Hagen-Sisto's original construction \cite{HHS_quasi} has proven useful in essentially every application \cite{HHS_quasi, HHP, DMS20, DZ22}.

We begin the section with a very brief overview on (coarse) medians.  We then define hierarchical coarse medians and cubical medians, and then explain how the former winds itself to within bounded distance of the latter in the cubical model $\calQ$.

\begin{remark}
In a cube complex, one can take medians between triples of $0$-cubes which are possibly at infinity.  This requires the formalism of a median algebra, which we do not need here for our purposes.  In the same vein, one can define a median algebra structure on the set of extended $0$-consistent tuples $\oQ$ in any of our cubical models.  We leave this for future work.
\end{remark}

\subsection{(Coarse) median overview}

In \cite{Bowditch_coarsemedian}, Bowditch introduced the notion of a coarse median space, to generalize some of the properties that median spaces possess, see \cite{CDH10} and also Bowditch's book \cite{Bowditch_medianbook} for a thorough treatment of the latter.  We begin with defining a median metric space (see e.g. \cite{verheul1993multimedians, CDH10}):

\begin{definition}[Intervals and medians]
Let $x,y \in X$ be points in a metric space $(X,d_X)$.  The \emph{interval} between $x,y$ in $X$ is the set
$$I(x,y) = \{z \in X| d_X(x,y) = d_X(x,z) + d_X(z,y)\}.$$
\begin{itemize}
\item We say $X$ is a \emph{median space} if for all $x,y,z$ we have that
$$ I(x,y) \cap I(x,z) \cap I(y,z)$$
contains exactly one point, which is called the \textbf{\em{median}} of $x,y,z$ and denote by $\med_X(x,y,z)$.
\end{itemize}
\end{definition}

We get a particularly nice description of this property for graphs:

\begin{definition}[Median graph]\label{defn:median graph}
Let $\Gamma$ be a graph with unit edge lengths endowed with its path metric $d$.  We say $\Gamma$ is a \emph{median graph} if there exists a map $\med_{\Gamma}:\Gamma^3 \to \Gamma$ such that the following holds:
\begin{itemize}
\item If $x,y z \in \Gamma$, then $d(x,y) = d(x,\med_{\Gamma}(x,y,z)) + d(\med_{\Gamma}(x,y,z),y)$, and similarly for the pairs $x,z$ and $y,z$.
\end{itemize}  
\end{definition}

Note that when $\Gamma$ is a tree, then $\med_{\Gamma}(a,b,c)$ is precisely the center of unique tripod subgraph of $\Gamma$ with $a,b,c$ as leaves.

While the above definition may seem specialized, work of Chepoi \cite{Chepoi_median} shows that median graphs are in bijective correspondence with the $1$-skeleta of CAT(0) cube complexes.  This fact motivates the following definition due to Bowditch:

\begin{definition}[Coarse median]\label{defn:coarse median}
Let $(X,d)$ be a metric space and $\med:X^3 \to X$ be an operation satisfying the following properties:
\begin{enumerate}
\item (triples) There exists constants $\kappa, h(0)$ such that for all $a,a',b,b',c,c' \in X$, we have
$$d(\med(a,b,c),\med(a',b',c')) \leq \kappa(d(a,a') + d(b,b') + d(c,c')) + h(0).$$
\item (tuples) There exists a function $h:\mathbb N \cup \{0\} \to [0, \infty)$ so that for any finite $F \subset X$ with $|F| = n \leq \infty$, there exists a CAT(0) cube complex $\calF_n$ and maps $\alpha:F \to \calF^{(0)}_n$ and $\beta:\calF^{(0)}_n \to X$ so that $d(f, \beta(\alpha(f))<h(p)$ for all $f \in F$, and so that
$$d(\beta(\med_{\calF_n}(x,y,z)), \med_X(\beta(x),\beta(y),\beta(z)))\leq h(n)$$
for all $x,y,z \in \calF_n$, where $\med_{\calF_n}$ and $\med_X$ are the medians in $\calF_n$ and $X$, respectively.
\end{enumerate}
\end{definition}

\begin{remark}\label{rem:Bowditch median remark}
We note that the existence of a cube complex in the tuple condition for coarse median spaces is not unrelated to the cubical models constructed in this paper and in \cite{HHS_quasi, Bowditch_hulls}.  The key point is that the maps $\alpha, \beta$ above need only preserve the medians and not necessarily distances.
\end{remark}

Notably, every hyperbolic space is coarse-median, where we can take the approximating cube complexes to be Gromov modeling trees, and medians to be coarse centers of triangles.  To motivate our next observation, we need the following definition:

\begin{definition}[Quasi-median] \label{defn:quasi-median}
A map $\phi:X \to Y$ between coarse median spaces is \emph{$C$-quasi-median} if for any triple $a,b,c \in X$, we have $d_Y(\phi(\med_X(a,b,c)), \med_Y(\phi(a),\phi(b), \phi(c)))< C$.
\end{definition}

The following is an exercise in $\delta$-hyperbolic geometry along the lines of Lemma \ref{lem:ray trees exist}:

\begin{lemma}\label{lem:median hyp}
Let $X$ is $\delta$-hyperbolic, $F \subset X$ and $\Lambda \in \partial \calX$ are finite, and $\phi:T \to \hull_X(F \cup \Lambda)$ be any tree modeling the hull of $F \cup \Lambda$ in $X$.  Then $\phi$ is $C$-quasi-median for some $C = C(\delta)>0$.
\end{lemma}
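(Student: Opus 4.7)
The plan is a direct application of the thin triangles property, using that $\phi$ is a uniform quasi-isometric embedding onto a set close to $\hull_X(F\cup\Lambda)$ by Lemma \ref{lem:ray trees exist}. Fix $x,y,z\in T$ and let $m_T=\med_T(x,y,z)$ be the tripod center. Since $T$ is a tree, the three geodesic (or quasi-geodesic ray) arcs $[x,m_T],[y,m_T],[z,m_T]$ only meet at $m_T$ and their union is the convex hull of $\{x,y,z\}$ in $T$.

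First I would handle the interior case, where $\phi(x),\phi(y),\phi(z)$ are all internal points of $X$. Because $\phi$ is a $(C,C)$-quasi-isometric embedding, the images $\phi([x,m_T])$, $\phi([y,m_T])$, $\phi([z,m_T])$ are $(C,C)$-quasi-geodesics in $X$, and by the Morse lemma each one lies within some distance $N=N(\delta,C)$ of the corresponding geodesic sides $[\phi(x),\phi(m_T)]$, $[\phi(y),\phi(m_T)]$, $[\phi(z),\phi(m_T)]$ of a geodesic triangle. In a $\delta$-hyperbolic space, any such triangle is $\delta$-thin, so the point $\phi(m_T)$ lies within $N+\delta$ of each side of the geodesic triangle on $\phi(x),\phi(y),\phi(z)$; equivalently, $\phi(m_T)$ is a coarse center of the triangle, and the coarse median $\med_X(\phi(x),\phi(y),\phi(z))$ is coarsely unique in $X$ with error bounded in $\delta$ (this is a standard $\delta$-hyperbolic fact; see for instance the construction of coarse medians in \cite{Bowditch_coarsemedian}). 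Hence $d_X(\phi(m_T),\med_X(\phi(x),\phi(y),\phi(z)))<C_0$ for some $C_0=C_0(\delta,C)$.

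For the general case where one or more of $x,y,z$ is a point of $\Lambda\subset \partial X$, I would replace the geodesic sides by $(1,20\delta)$-quasi-geodesic rays (or biinfinite quasi-geodesics), which exist and are uniformly coarsely unique in $\delta$-hyperbolic spaces, and recall (Remark \ref{rem:geodesic rays in trees}) that the corresponding arcs in $T$ are the unique geodesic rays to the relevant ends. The coarse median of any triple involving boundary points is defined exactly as a coarse center of the resulting ideal triangle, and the thin triangles property extends verbatim to ideal triangles with $\delta$ replaced by a uniform constant depending only on $\delta$. The same Morse-plus-thinness argument then applies to conclude $\phi(m_T)$ is within bounded distance of $\med_X(\phi(x),\phi(y),\phi(z))$.

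The only potentially delicate point is ensuring the coarse median in $X$ is itself well-defined up to bounded error when some input lies on $\partial X$; this is standard and follows from the fact that any two $(1,20\delta)$-quasi-geodesic rays with the same pair of endpoints (interior or ideal) uniformly fellow-travel, so different representatives yield coarse centers within bounded distance of each other. Absorbing all constants into a single $C=C(\delta)$ (which also depends implicitly on the quasi-isometry constants of $\phi$, themselves controlled by $\delta$ and $|F\cup\Lambda|$ via Lemma \ref{lem:ray trees exist}) yields the claim.
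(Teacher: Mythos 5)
The paper does not actually supply a proof of Lemma \ref{lem:median hyp}: it is stated with the preamble ``The following is an exercise in $\delta$-hyperbolic geometry along the lines of Lemma \ref{lem:ray trees exist},'' and no argument is given. So there is no paper proof to compare against, and your job is simply to carry out the exercise correctly.

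Your overall strategy (tree median maps to coarse center via Morse and thin triangles) is the right one, and the treatment of the boundary case and of the constants is fine. However, there is a real gap in the middle step of your interior-point argument. You apply the Morse lemma to the individual tripod legs $\phi([x,m_T])$, $\phi([y,m_T])$, $\phi([z,m_T])$, concluding only that each lies close to a geodesic from $\phi(\cdot)$ to $\phi(m_T)$. You then invoke ``thin triangles'' to conclude that $\phi(m_T)$ is within $N+\delta$ of each side of the geodesic triangle on $\phi(x),\phi(y),\phi(z)$. This deduction does not follow: knowing that a point $p$ is joined to each of three vertices by quasi-geodesics does not, by thinness alone, place $p$ near the sides of the triangle (a point very far from the triangle is still joined to the vertices by geodesics). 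The missing ingredient is that $m_T$ lies on the geodesic $[x,y]_T$ in $T$, so that the concatenation $\phi([x,m_T])\cup\phi([m_T,y])=\phi([x,y]_T)$ is itself a $(C,C)$-quasi-geodesic from $\phi(x)$ to $\phi(y)$ (because $\phi$ is a quasi-isometric embedding and $[x,y]_T$ is a geodesic in $T$). Applying the Morse lemma to these full concatenations rather than to the individual legs immediately places $\phi(m_T)$ within $N$ of each geodesic side $[\phi(x),\phi(y)]$, $[\phi(y),\phi(z)]$, $[\phi(z),\phi(x)]$, which is exactly the coarse-center property. With that substitution the proof is complete; the thin-triangles property is then only needed to say that coarse centers are unique up to $O(\delta)$ error, which you do invoke correctly. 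The same repair carries over to the ideal-triangle case verbatim.
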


\subsection{Coarse medians in HHSes}

The following construction was originally contained in \cite{BM_centroid} for mapping class groups, and was later generalized to every hierarchically hyperbolic space as in Definition \ref{defn:coarse median} by Bowditch by \cite{Bowditch_largemap}; see also \cite{HHS_2}.

Given an HHS $(\calX, \mathfrak S)$ and $a,b,c$, consider the tuple $(\med_U(a,b,c))_{U \in \mathfrak S} = (\med_U(\pi_U(a), \pi_U(b), \pi_U(c)))_{U \in \mathfrak S}$.  The following is contained in \cite[Lemma 2.6]{HHS_2}:

\begin{lemma}
The tuple $(\med_U(a,b,c))_{U \in \mathfrak S}$ is $\theta_{\med}$-consistent for some $\theta_{\med} = \theta_{\med}(\calX)>0$.
\end{lemma}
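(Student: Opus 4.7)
The plan is to verify both consistency inequalities of Definition \ref{defn:consistency} for the tuple $(m_U)_{U \in \mathfrak S}$ with $m_U := \med_U(a,b,c)$, using three ingredients: the $\theta$-consistency of each individual tuple $(\pi_W(x))_{W\in\mathfrak S}$ for $x \in \{a,b,c\}$ (Lemma \ref{lem:base consistency}), the uniform $\delta$-hyperbolicity of each $\calC(W)$, and the Bounded Geodesic Image Axiom \ref{ax:BGIA}. The workhorse observation throughout is that in a $\delta$-hyperbolic space, the median of three points lies within $O(\delta)$ of every geodesic between any two of those points.

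First I would handle the transverse case $U \pitchfork V$ as a warm-up. Since each of $a, b, c$ satisfies $\min\{d_U(\cdot, \rho^V_U), d_V(\cdot, \rho^U_V)\} \le \theta$, pigeonhole gives two of them — say $a, b$ — realizing the same side; WLOG $d_U(a_U,\rho^V_U), d_U(b_U,\rho^V_U) \le \theta$. Any $\calC(U)$-geodesic from $a_U$ to $b_U$ stays within $\theta + O(\delta)$ of $\rho^V_U$ by thin triangles, and $m_U$ is within $\delta$ of that geodesic, giving $d_U(m_U, \rho^V_U) \prec \theta + \delta$.

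For the nested case $U \nest V$ the same pigeonhole splits into two subcases. If two of $a, b, c$ realize the first option $d_V(\cdot, \rho^U_V) \le \theta$, the previous argument carried out in $\calC(V)$ controls $d_V(m_V, \rho^U_V)$ and we are done. Otherwise two of them — say $a, b$ — realize the second option, so $\rho^V_U(a_V)$ and $\rho^V_U(b_V)$ are within $\theta$ of $a_U$ and $b_U$ respectively, and the goal becomes to bound $\diam_U(m_U \cup \rho^V_U(m_V))$. This is the main obstacle. I would split on whether a $\calC(V)$-geodesic $\gamma$ from $a_V$ to $b_V$ enters $\calN_E(\rho^U_V)$: if it does not, BGIA forces $\diam_U(\rho^V_U(\gamma)) \le E$, so since $m_V$ is $\delta$-close to $\gamma$ the estimate $\rho^V_U(m_V) \approx \rho^V_U(a_V) \approx a_U$ and symmetrically $\approx b_U$ collapses $a_U, b_U$ to within $2\theta + E + O(\delta)$ of each other, forcing $m_U \approx a_U \approx \rho^V_U(m_V)$ by hyperbolicity. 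If $\gamma$ does enter $\calN_E(\rho^U_V)$ at a point $p$, then one applies BGIA to whichever of the sub-geodesics $[a_V,m_V]$ or $[m_V,b_V]$ still avoids $\calN_E(\rho^U_V)$; if neither does, then $m_V$ itself lies within $E + O(\delta)$ of $\rho^U_V$, and a direct application of $\theta$-consistency to any realization point (produced via Partial Realization Axiom \ref{item:dfs_partial_realization} for the partial tuple at $m_V$) controls $\rho^V_U(m_V)$ in terms of $m_U$.

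The hard part is the final subcase above, and its difficulty is genuinely the interaction between medians and BGIA: a median is only coarsely located on each pairwise geodesic, so one must argue that whichever geodesic fails to hit $\calN_E(\rho^U_V)$ still passes close enough to $m_V$ to transfer the BGIA bound. Throughout, each estimate depends only on $\theta$, $\delta$, $E$, $\xi$, and the complexity of $\mathfrak S$, so the resulting constant $\theta_{\med}$ depends only on $\calX$. Assembling these two cases yields the desired $\theta_{\med}$-consistency of $(m_U)_{U \in \mathfrak S}$.
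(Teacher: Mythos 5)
The paper does not actually prove this lemma; it is stated as a citation to \cite[Lemma 2.6]{HHS_2}. So you are supplying an argument the paper omits, and your overall strategy --- coordinate-wise verification of the consistency inequalities via pigeonhole on the three points, $\delta$-thinness, and the BGIA --- is the right one and is essentially what appears in the cited source. The transverse case is fine (the ``thin triangles'' phrasing is slightly off: the real reason $m_U$ is close to $\rho^V_U$ is simply that $a_U$ and $b_U$ are both within $\theta$ of a set of diameter $\le\xi$, so the geodesic $[a_U,b_U]$ is short and stays near $\rho^V_U$).

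The nested case has a genuine gap in its second sub-subcase. When $[a_V,b_V]$ enters $\calN_E(\rho^U_V)$ and you apply the BGIA to whichever of $[a_V,m_V]$, $[m_V,b_V]$ avoids it --- say $[m_V,b_V]$ --- you correctly conclude $\rho^V_U(m_V)\approx\rho^V_U(b_V)\approx b_U$, but you never explain why $m_U$ should be close to $b_U$. Unlike the first sub-subcase (where BGIA along the whole geodesic $[a_V,b_V]$ collapses $a_U\approx b_U$, forcing the median near both), here you have said nothing to locate $m_U$. The fix is to bring $c$ into play: if $d_V(m_V,\rho^U_V)$ is large, then $\rho^U_V$ sits on one leg of the tripod spanned by $a_V,b_V,c_V$, so the geodesic between the two endpoints on the other legs --- say $[b_V,c_V]$ --- avoids $\calN_E(\rho^U_V)$. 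Then BGIA on $[b_V,c_V]$ gives $\rho^V_U(b_V)\approx\rho^V_U(c_V)$, $\theta$-consistency of $b$ and $c$ (the first option must fail for them since they are far from $\rho^U_V$) gives $b_U\approx\rho^V_U(b_V)$ and $c_U\approx\rho^V_U(c_V)$, hence $b_U\approx c_U$ and thus $m_U\approx b_U$; finally $m_V$ is $\delta$-close to $[b_V,c_V]$, so $\rho^V_U(m_V)\approx b_U\approx m_U$. In other words, the correct pigeonhole in this subcase is over the three pairwise geodesics, not over sub-geodesics through $m_V$ of a single pair.

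Your final subcase (both sub-geodesics enter $\calN_E(\rho^U_V)$, so $m_V$ lies within $E+O(\delta)$ of $\rho^U_V$) does not need the Partial Realization Axiom at all: once $d_V(m_V,\rho^U_V)\le E+O(\delta)$, the \emph{first} option of the nested consistency inequality for the tuple $(m_W)$ is already satisfied with constant $E+O(\delta)$, and you are done. Introducing a realization point and then projecting is both unnecessary and logically awkward, since the lemma is precisely what licenses realizing the median tuple in the first place.
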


Hence we can define a map $\med_{\calX}:\calX^3 \to \calX$ by taking $\med_{\calX}(a,b,c)$ to be any realization point for the tuple $(\med_U(a,b,c))_{U \in \mathfrak S}$ as provided by Realization \ref{thm:realization}, and this map $\med_{\calX}$ gives a coarse median structure on $\calX$ \cite[Theorem 7.3]{HHS_2}.

\subsection{Media in hierarchical hulls}

For the rest of this section, we fix our usual setup of a finite set $F \cup \Lambda$ of internal points and hierarchy rays in an HHS $\calX$.  Fix the hierarchical hull threshold $\theta_0= \theta_0(\calX, |F \cup \Lambda|)>0$ provided by Proposition \ref{prop:ray replace boundary}, and let $H = \hull_{\theta_0}(F \cup \Lambda)$ be the hierarchical hull and $\ret_H:\calX \to H$ be the coarse retraction provided by Lemma \ref{lem:gate retract}.

Recall that $H$ has an induced HHS structure (Lemma \ref{lem:hull induced structure}), in which the index set is $\calU$ and the hyperbolic spaces are $\{\hull_U(F \cup \Lambda)| U \in \calU\}$, with projections being compositions $r_U \circ \pi_U:H \to \hull_U(F \cup \Lambda)$, where $r_U:\calC(U)\to \hull_U(F \cup \Lambda)$ is the hyperbolic retraction onto the quasi-convex set $\hull_U(F \cup \Lambda)$.  For any $a,b,c \in H$, let $\med_H(a,b,c)$ be the coarse median of $a,b,c$ in $H$, using this induced HHS structure.  Finally, let $i_H:H \to \calX$ be the inclusion, which is defined by completing the partial tuples defined by points in $H$ by setting, for  any $V \in \mathfrak S - \calU$ $f_V$ for fixed $f \in F$. 

The following lemma is straight-forward, and says that all of the various reasonable ways of building a median coincide:

\begin{lemma}\label{lem:med convex}
There exists $A = A(\calX, |F \cup \Lambda|, \theta_0)>0$ so that the following hold:
\begin{enumerate}
\item The map $i_H:H \to \calX$ is an $A$-median $(A,A)$-quasi-isometric embedding,
\item For any $a,b,c \in H$, each of $d_{\calX}(\med_{\calX}(a,b,c), \ret_H(\med_{\calX}(a,b,c))),$ and $i_H(\med_H(a,b,c))$ are at pairwise distance at most $A$.
\end{enumerate}
\end{lemma}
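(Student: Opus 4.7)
My plan is to verify both parts by comparing the three media coordinate-wise in each hyperbolic space $\calC(U)$, exploiting the fact that for $a,b,c \in H$ all relevant projections already lie close to the quasi-convex hyperbolic hulls $H_U$.

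For part (1), the quasi-isometric embedding claim is already Lemma \ref{lem:H qie}; what remains is the quasi-median property. I would first observe that for any $a,b,c \in H$ and any $U \in \calU$, the coarse median $\med_U(\pi_U(a), \pi_U(b), \pi_U(c))$ in the $\delta$-hyperbolic space $\calC(U)$ lies within distance $O(\delta + \theta_0)$ of $H_U$. This follows because the three projections already sit in $\mathcal{N}_{\theta_0}(H_U)$ by definition of the hull, and $H_U$ is uniformly quasi-convex, so any coarse center stays near $H_U$. Hence the hyperbolic retraction $r_U:\calC(U) \to H_U$, which is involved in the projections of the induced HHS structure on $H$, moves this coarse center only boundedly, and the $U$-coordinates of $i_H(\med_H(a,b,c))$ and $\med_{\calX}(a,b,c)$ agree up to a uniform error for each $U \in \calU$.

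For domains $V \in \mathfrak S - \calU$ we have $\diam_V(F \cup \Lambda) < K$ and $a,b,c \in H$ project $\theta_0$-close to $H_V$, which has diameter bounded by $K + 2\theta_0$. Hence $\pi_V(a), \pi_V(b), \pi_V(c)$ all lie in a uniformly bounded region of $\calC(V)$, forcing $\med_V(a,b,c)$ to be uniformly close to the basepoint projection $\pi_V(f)$ that defines the extension $i_H$. Combining both regimes, the tuples $(\pi_U(\med_{\calX}(a,b,c)))_{U \in \mathfrak S}$ and $(\pi_U(i_H(\med_H(a,b,c))))_{U \in \mathfrak S}$ are coordinate-wise within a uniform bound, so by the coarse uniqueness half of Realization \ref{thm:realization}, the two points themselves lie within bounded distance in $\calX$. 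This gives the quasi-median inequality and completes part (1).

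For part (2), the remaining comparison is between $\med_{\calX}(a,b,c)$ and $\ret_H(\med_{\calX}(a,b,c))$. The coordinate-wise analysis above shows that for every $U \in \mathfrak S$, the projection of $\med_{\calX}(a,b,c)$ to $\calC(U)$ lies within a uniform distance of $H_U$. By Definition \ref{defn:hier hull}, this places $\med_{\calX}(a,b,c)$ in $\hull_{\theta'}(F \cup \Lambda)$ for some $\theta'$ depending only on $\calX, |F \cup \Lambda|, \theta_0$, and by Lemma \ref{lem:hull is hqc} together with the coarse Lipschitz retract property of Lemma \ref{lem:gate retract}, the gate $\ret_H$ moves any such point by a bounded amount. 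This yields the third pairwise bound.

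The only subtle point is avoiding a circular appeal to the Distance Formula \ref{thm:DF}, since this section feeds into the new proof of Corollary \ref{cor:DF lower bound}. Fortunately every comparison above relies only on Realization \ref{thm:realization}, the uniform quasi-convexity of the $H_U$ in their $\delta$-hyperbolic ambients, and the gate retract from Lemma \ref{lem:gate retract}, none of which require the distance formula. The main obstacle is simply book-keeping: collecting the additive errors arising from the hyperbolic retractions $r_U$, the bounded coordinates outside $\calU$, and the gate map $\ret_H$, and absorbing them into a single constant $A$ depending only on $\calX, |F \cup \Lambda|$, and $\theta_0$.
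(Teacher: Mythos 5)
Your proposal is essentially the paper's proof: compare the three median constructions coordinate-wise in each $\calC(U)$, use uniform quasi-convexity of $H_U$ to keep the hyperbolic coarse centers near the hulls, absorb the $r_U$ and $\ret_H$ corrections as bounded errors, and then pass from coordinate-wise bounds to a global bound in $\calX$.

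The one place you go astray is the final paragraph, where you assert you have avoided the Distance Formula. You have not: you open part (1) by citing Lemma~\ref{lem:H qie}, whose proof in the paper explicitly invokes Theorem~\ref{thm:DF}, and your appeal to Lemma~\ref{lem:hull is hqc} (to conclude that $\hull_{\theta'}$ and $\hull_{\theta_0}$ are at bounded Hausdorff distance, so the gate moves $\med_{\calX}(a,b,c)$ boundedly) also rests on the Distance Formula in that lemma's proof. The paper's proof of Lemma~\ref{lem:med convex} does not try to avoid the Distance Formula and says so openly; the remark immediately following the lemma explains why that is harmless, since the median results of this section are not inputs to the proof of the upper bound of the Distance Formula in Corollary~\ref{cori:DF and HP}. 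So the circularity you are defending against is already ruled out by the logical organization of the paper, and your claimed DF-free route does not actually materialize from the lemmas you cite. A minor further nit: to convert coordinate-wise bounds $d_U(x,y) < D$ for all $U$ into a global bound, the right tool is the Uniqueness Axiom~\ref{item:dfs_uniqueness} applied contrapositively; the ``coarse uniqueness half'' of Realization~\ref{thm:realization} fixes a consistent tuple and a threshold $\theta_e$ that you do not get to choose, so it does not directly give what you want when $D$ is large.
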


\begin{proof}
The fact that $i_H:H \to \calX$ is an $(A,A)$-quasi-isometric embedding for $A = A(\calX, |F \cup \Lambda|, \theta_0)>0$ is an application of the Distance Formula \ref{thm:DF}, since each of the hyperbolic hulls $H_V$ of $\pi_V(F \cup \Lambda) \subset \calC(V)$ for $V \in \mathfrak S - \calU$ has uniformly bounded diameter (controlled by our largeness constant $K$), and hence we can use a threshold so that all of those terms vanish. 

For the rest of the statement, note that by definition $\med_{\calX}(a,b,c) = (\med_U)_{U \in \mathfrak S}$ is a realization point for the tuple of hyperbolic medians of the projections of $a,b,c$, which are uniformly close to $H_U$ for each $U \in \mathfrak S$, by definition of $H$.  Since each $H_U$ is uniformly quasi-convex in the hyperbolic space $\calC(U)$ and hence median quasi-convex (with constants depending only on $\calX, |F \cup \Lambda|, \theta_0$), each coordinate $\med_U$ is uniformly close to the hyperbolic hull $H_U = \hull_{U}(a,b,c)$.  On the other hand, the retraction $\ret_H$ is defined by realizing the tuple obtained by coordinate-wise closest-point projecting $\pi_U(\med_{\calX}(a,b,c))$ to $H_U$.  Hence the coordinates of $\med_{\calX}(a,b,c)$ and $\ret_H(\med_{\calX}(a,b,c))$ are uniformly close, and hence the points themselves are uniformly close by the Distance Formula \ref{thm:DF}.

It remains to see that $d_{\calX}(\med_{\calX}(a,b,c), i_H(\med_H(a,b,c))<A$, but this again is essentially by construction, since each of the coordinates for $U \in \calU$ used to define $\med_H(a,b,c)$ and $\med_{\calX}(a,b,c)$ are uniformly close, while the coordinates for $V \in \mathfrak S - \calU$ are again bounded in terms of $K$.  This completes the proof.
\end{proof}

\begin{remark}
Median quasi-convexity of hulls is contained in \cite{HHS_2, RST18}, but we have included a simple proof for completeness.  Note that the proof uses the Distance Formula \ref{thm:DF}, which is not a problem for us, as our median considerations are essentially independent of the rest of the construction, and in particular of our proof of the upper bound of the distance formula (Corollary \ref{cori:DF and HP}). 
\end{remark}

In particular, Lemma \ref{lem:med convex} allows us to work with medians as defined in the hull $H= \hull_{\calX}(F \cup \Lambda)$.

\subsection{Median structure of $\calQ$}

In this subsection, we will prove that $\calQ$ admits a median structure directly from the definitions.  In this subsection, we need not assume that the $\hT_U$ are simplicial trees, as produced by Corollary \ref{cor:simplicial structure}.

We begin by describing intervals in $\calQ$:

\begin{lemma}\label{lem:Q intervals}
For any $\ha,\hb \in \calQ$, we have $I_{\calQ}(\ha,\hb) = \{\hz \in \calQ| \hz_U \in [\ha_U,\hb_U] \hspace{.05in}  \text{for all} \hspace{.05in} U \in \calU\}$.
\end{lemma}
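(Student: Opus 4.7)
The statement asserts that intervals in $\calQ$ decompose coordinate-wise as intervals in the component trees $\hT_U$. This should follow almost directly from the fact that the metric on $\calQ$ is the $\ell^1$-sum $d_{\calQ}(\hx,\hy) = \sum_{U \in \calU} d_{\hT_U}(\hx_U,\hy_U)$, which is a finite sum when both tuples are canonical (Definition \ref{defn:Q consistent}). The plan is to prove the two containments directly from the definition of the interval, using only the additive nature of the $\ell^1$-metric and the fact that each $\hT_U$ is itself a (simplicial) tree with well-defined intervals.

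For the containment ``$\supseteq$'', suppose $\hz \in \calQ$ satisfies $\hz_U \in [\ha_U,\hb_U]$ in $\hT_U$ for every $U \in \calU$. Then by the definition of tree intervals, $d_{\hT_U}(\ha_U,\hb_U) = d_{\hT_U}(\ha_U,\hz_U) + d_{\hT_U}(\hz_U,\hb_U)$ for every $U$. Summing these equalities over all $U \in \calU$ and using canonicality to ensure the sums are finite immediately yields $d_{\calQ}(\ha,\hb) = d_{\calQ}(\ha,\hz) + d_{\calQ}(\hz,\hb)$, placing $\hz$ in $I_{\calQ}(\ha,\hb)$.

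For the containment ``$\subseteq$'', suppose $\hz \in I_{\calQ}(\ha,\hb)$. The triangle inequality applied in each $\hT_U$ gives $d_{\hT_U}(\ha_U,\hb_U) \leq d_{\hT_U}(\ha_U,\hz_U) + d_{\hT_U}(\hz_U,\hb_U)$, and summing over $U \in \calU$ produces a term-by-term inequality whose total is $d_{\calQ}(\ha,\hb) \leq d_{\calQ}(\ha,\hz) + d_{\calQ}(\hz,\hb)$. The assumption that $\hz$ lies in $I_{\calQ}(\ha,\hb)$ forces equality in the sum, and hence equality in each summand, so $\hz_U \in [\ha_U,\hb_U]$ in $\hT_U$ for every $U \in \calU$.

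There is no real obstacle here: the only subtlety is ensuring the sums defining $d_{\calQ}(\ha,\hz)$ and $d_{\calQ}(\hz,\hb)$ are finite, which is automatic from the canonicality condition in Definition \ref{defn:Q consistent} that guarantees $\hz \in \calQ$ differs from any fixed basepoint in $F$ at only finitely many coordinates. The statement and its proof should be seen as the ``exact'' counterpart of the familiar fact that intervals in an $\ell^1$-product of median spaces decompose coordinate-wise, which is what makes $\calQ$ a median subspace of the product $\prod_{U \in \calU} \hT_U$ and sets up the coordinate-wise definition of $\med_{\calQ}$ in the subsequent discussion.
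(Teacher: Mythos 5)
Your proof is correct and takes essentially the same approach as the paper's, namely exploiting the $\ell^1$-decomposition of $d_{\calQ}$ and the coordinate-wise triangle inequality, with canonicality ensuring all sums are finite; the paper states only the $\subseteq$ direction (by contradiction) and leaves $\supseteq$ implicit, whereas you spell out both, which is a harmless expansion of the same argument.
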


\begin{proof}
Let $\ha,\hb \in \calQ$ and let $\hz \in I_{\calQ}(\ha,\hb)$, which we observe contains any combinatorial geodesic between $\ha,\hb$.  Note that the $\ell^1$-distance sums corresponding to $d_{\calQ}(\ha, \hb), d_{\calQ}(\ha, \hz),$ and $d_{\calQ}(\hz, \hb)$ all have finitely-many positive terms.  Hence we have the following by definition of $\hz \in I_{\calQ}(\ha,\hb)$,

\begin{eqnarray*}
\sum_{V \in \calU} d_{\hT_V}(\ha,\hb) &=& d_{\calQ}(\ha, \hb)\\
&=& d_{\calQ}(\ha,\hz) + d_{\calQ}(\hz,\hb)\\
&=& \sum_{V \in \calU} d_{\hT_V}(\ha_V, \hz_V) + \sum_{V \in \calU} d_{\hT_V}(\hz_V, \hb_V).
\end{eqnarray*}

But if $\hz_U \notin [\ha_U,\hb_U]$ for some $U \in \calU$, then $d_{\hT_U}(\ha_U, \hz_U) + d_{\hT_U}(\hz_U,\hb_U) \gneq d_{\hT_U}(\ha_U,\hb_U)$.  Since the triangle inequality forces $d_{\hT_W}(\ha_W, \hz_W) + d_{\hT_W}(\hz_W,\hb_W) \geq d_{\hT_W}(\ha_W,\hb_W)$ for all $W \in \calU$, this says that the last line of the above equation is bigger than the first, giving us a contradiction and completing the proof.
 
\end{proof}

Observe that each $\hT_U$ is a tree, and hence has a median metric structure, where $\med_{\hT_U}(\ha_U,\hb_U,\hc_U)$ is the center point of the unique tripod with $\ha_U,\hb_U,\hc_U \in \hT_U$ as its leaves.  Equivalently, 
$$\med_{\hT_U}(\ha_U,\hb_U,\hc_U) = [\ha_U, \hb_U] \cap [\ha_U, \hc_U] \cap [\hb_U, \hc_U] = I_{\hT_U}(\ha_U,\hb_U) \cap I_{\hT_U}(\ha_U, \hc_U) \cap I_{\hT_U}(\hb_U, \hc_U),$$
where $[\ha_U, \hb_U]$ is the unique geodesic in $\hT_U$ between $\ha_U, \hb_U$, etc.

The natural candidate for a median for $\calQ$ is to take the tuple of medians in each coordinate, namely an exact version of the HHS setup:

\begin{lemma} \label{lem:Q median}
For any $\ha,\hb,\hc \in \calQ$, we have $\left(\med_{\hT_U}(\ha_U,\hb_U,\hc_U)\right) \in \calQ$.
\end{lemma}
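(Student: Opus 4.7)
The plan is to verify the two conditions that define membership in $\calQ$ (Definition \ref{defn:Q consistent}): $0$-consistency of the coordinate-wise median tuple $\hz := (\med_{\hT_U}(\ha_U,\hb_U,\hc_U))_{U \in \calU}$, and canonicality. The canonicality condition will be almost immediate from canonicality of $\ha, \hb, \hc$, so the work lies in $0$-consistency, which splits into a transverse case and a nested case.

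For the transverse case $U \pitchfork V$, the key observation is a pigeonhole: $0$-consistency of each of $\ha, \hb, \hc$ forces, for each one, either its $U$-coordinate to equal $\hd^V_U$ or its $V$-coordinate to equal $\hd^U_V$. Among three tuples, at least two must satisfy the same alternative, so WLOG $\ha_U = \hb_U = \hd^V_U$. The median of three points in a tree where two coincide is that common point, giving $\hz_U = \hd^V_U$ and thus the transverse consistency condition on $\hz$.

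For the nested case $U \nest V$, I would argue directly using the BGI property (item (7) of Lemma \ref{lem:collapsed tree control}). If $\hz_V = \hd^U_V$ we are done; otherwise $\hz_V$ lies in some component $C$ of $\hT_V \setminus \{\hd^U_V\}$. A basic tree-median fact then shows that at least two of $\ha_V, \hb_V, \hc_V$ also lie in $C$ (since otherwise the median would be forced back to $\hd^U_V$). Call these two $\ha_V, \hb_V$. Since $\ha_V, \hb_V \neq \hd^U_V$, $0$-consistency of $\ha, \hb$ forces $\ha_U = \hd^V_U(\ha_V)$ and $\hb_U = \hd^V_U(\hb_V)$; by BGI, $\hd^V_U$ sends all of $C$ (including $\hz_V$) to a single marked point $\hf_U$ for the unique $f \in F \cup \Lambda$ with $\hf_V \in C$. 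Hence $\ha_U = \hb_U = \hd^V_U(\hz_V) = \hf_U$, so $\hz_U = \med_U(\hf_U, \hf_U, \hc_U) = \hf_U = \hd^V_U(\hz_V)$, verifying the nested consistency condition. The main obstacle here is being careful about the case analysis of which components of $\hT_V \setminus \{\hd^U_V\}$ contain $\ha_V, \hb_V, \hc_V$, but every subcase reduces to the above via the same BGI collapse.

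For canonicality, the condition $\hz_U \notin \partial \hT_U$ is automatic, since all coordinates of $\ha, \hb, \hc$ lie in $\hT_U$ (because $\ha, \hb, \hc \in \calQ$) and the median of three interior points of a tree is again an interior point. For the cofinite-agreement condition, fix $f \in F$; canonicality of each of $\ha, \hb, \hc$ gives that $\ha_U = \hb_U = \hc_U = \hf_U$ for all but finitely many $U \in \calU$, and at any such $U$ we have $\hz_U = \hf_U$. Thus $\#\{U \mid \hz_U \neq \hf_U\} < \infty$, completing canonicality and hence the proof that $\hz \in \calQ$.
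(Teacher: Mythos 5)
Your proposal is correct and follows essentially the same route as the paper's proof: a pigeonhole on the transverse consistency inequality in the $U \pitchfork V$ case, a component-of-complement argument plus the BGI property for $U \nest V$, and finiteness from canonicality of $\ha, \hb, \hc$. The one small addition you make is to explicitly justify why two of $\ha_V, \hb_V, \hc_V$ must lie in the same component $C$ of $\hT_V \setminus \{\hd^U_V\}$ as $\hz_V$ (the paper asserts "up to exchanging roles, $[\ha_V,\hb_V]\cap \hd^U_V = \emptyset$" without comment), but this is a refinement of the same step rather than a different method.
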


\begin{proof}
For each $U \in \calU$, set $\med_U = \med_{\hT_U}(\ha_U,\hb_U,\hc_U)$.  We claim that the tuple $(\med_U)$ is $0$-consistent and canonical.

For consistency, let $U \pitchfork V$.  By $0$-consistency of $\ha,\hb,\hc$, up to switching the roles of $U,V$ and substituting $\hc$ for one of $\ha,\hb$, we have $\ha_V = \hb_V = \hd^U_V$, with Lemma \ref{lem:Q intervals} forcing $\med_V = \hd^U_V$, since it lies on the unique geodesic between $\ha_V$ and $\hb_V$ which is a point.

Now let $U \nest V$, and we may assume that $\med_V \neq \hd^U_V$, otherwise we are done.  Up to exchanging the roles of $\ha,\hb,\hc$, we have that $[\ha_V, \hb_V] \cap \hd^U_V = \emptyset$.  Then $0$-consistency and the BGI property of Lemma \ref{lem:collapsed tree control} forces that $\ha_U = \hb_U = \hd^V_U([\ha_V, \hb_V]) = \hd^V_U(\med_V)$, which shows that $\med_U = \hd^V_U(\med_V)$, as required.

Finally, for canonicality, observe that since $\ha,\hb,\hc$ are canonical, we have that $\ha_U =\hb_U =\hc_U$ for all but finitely-many $U \in \calU$, and hence $(\med_U)$ is canonical by Definition \ref{defn:Q consistent}.  This completes the proof.
\end{proof}

\begin{remark}[Median structure on $\oQ$]
It is possible to show that $\oQ$ admits the structure of a median algebra, where the median is defined as above.  Notably, medians of non-canonical tuples need not be non-canonical, just as medians of triples of $0$-cubes in the Roller boundary of a cube complex can be contained in the interior of the space.  The median structure on $\oQ$ seems useful and we leave any investigation of it for future work.
\end{remark}

With these lemmas in hand, we can prove our structural statement:

\begin{proposition}\label{prop:Q median structure}
For any $\ha,\hb,\hc \in \calQ$, set $\med_{\calQ}(\ha,\hb,\hc)  = \left(\med_{\hT_U}(\ha_U,\hb_U,\hc_U)\right)$.  Then

$$I_{\calQ}(\ha,\hb) \cap I_{\calQ}(\ha, \hc) \cap I_{\calQ}(\hb,\hc) = \{\med_{\calQ}(\ha,\hb,\hc)\}.$$
\begin{itemize}
\item In particular, the map $\med_{\calQ}:\calQ^3 \to \calQ$ defined a metric median structure on $\calQ$.
\end{itemize}
\end{proposition}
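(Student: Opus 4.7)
The plan is to reduce the identification of the triple-interval intersection in $\calQ$ to a coordinate-wise statement in each tree $\hT_U$, which is a median space. The two lemmas immediately preceding the statement do essentially all of the work, so the proof is mostly an assembly of these ingredients plus the observation that the resulting tuple is both consistent and canonical.

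First I would use Lemma \ref{lem:Q intervals} three times to characterize
$$I_{\calQ}(\ha,\hb) \cap I_{\calQ}(\ha,\hc) \cap I_{\calQ}(\hb,\hc)$$
as the set of $\hz \in \calQ$ such that, for every $U \in \calU$,
$$\hz_U \in [\ha_U,\hb_U] \cap [\ha_U,\hc_U] \cap [\hb_U,\hc_U] \subset \hT_U.$$
Since each $\hT_U$ is a simplicial tree, it is a median metric space, and this triple intersection in $\hT_U$ is exactly the singleton $\{\med_{\hT_U}(\ha_U,\hb_U,\hc_U)\}$. Consequently, any element of the triple $\calQ$-interval must coincide coordinate-wise with the tuple $(\med_{\hT_U}(\ha_U,\hb_U,\hc_U))_{U \in \calU}$, so the triple intersection contains at most this one tuple.

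To finish, I need to know that this tuple really is an element of $\calQ$ (in particular, that it is $0$-consistent and canonical) and that it actually lies in each of the three pairwise intervals. The first of these is precisely the content of Lemma \ref{lem:Q median}. For the second, for each $U \in \calU$ the median $\med_{\hT_U}(\ha_U,\hb_U,\hc_U)$ lies on the unique geodesic $[\ha_U,\hb_U]$ (and similarly on $[\ha_U,\hc_U]$ and $[\hb_U,\hc_U]$), so applying Lemma \ref{lem:Q intervals} in the reverse direction places $\med_{\calQ}(\ha,\hb,\hc)$ in each of $I_{\calQ}(\ha,\hb), I_{\calQ}(\ha,\hc), I_{\calQ}(\hb,\hc)$. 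Combining these yields the desired equality
$$I_{\calQ}(\ha,\hb) \cap I_{\calQ}(\ha,\hc) \cap I_{\calQ}(\hb,\hc) = \{\med_{\calQ}(\ha,\hb,\hc)\}.$$
The ``in particular'' clause is then immediate: the map $\med_{\calQ}$ sends any triple to the unique point of the triple interval, which is exactly the defining property of a metric median.

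There is no real obstacle here, since the heavy lifting is already done in Lemmas \ref{lem:Q intervals} and \ref{lem:Q median}. The only subtle point worth double-checking is that the $\ell^1$-sum defining $d_{\calQ}$ behaves well under the triangle inequality when infinitely many coordinates are nonzero, but canonicality of $\ha,\hb,\hc$ (and hence of the candidate median, by Lemma \ref{lem:Q median}) guarantees that only finitely many coordinates contribute, which is exactly what Lemma \ref{lem:Q intervals} exploits.
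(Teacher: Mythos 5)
Your proof is correct and uses exactly the same two ingredients (Lemmas \ref{lem:Q intervals} and \ref{lem:Q median}) in essentially the same way as the paper. The paper's own argument is terser: it deduces from Lemma \ref{lem:Q intervals} that any point of the triple intersection must have $U$-coordinate $\med_{\hT_U}(\ha_U,\hb_U,\hc_U)$ for every $U$, then simply cites Lemma \ref{lem:Q median} to conclude that this tuple is in $\calQ$. You additionally spell out the (easy but genuinely needed) converse containment --- that $\med_{\calQ}(\ha,\hb,\hc)$ actually lies in each pairwise interval --- and note that this amounts to applying Lemma \ref{lem:Q intervals} in the easy direction, coordinate-wise. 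That is a correct and slightly more complete rendering of the same proof; the paper leaves that step implicit. Your closing observation that canonicality guarantees finitely many nonzero coordinates in the $\ell^1$-sum, so the triangle inequality argument is well-posed, is the same finiteness point the paper handles inside the proof of Lemma \ref{lem:Q intervals}.
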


\begin{proof}
By Lemma \ref{lem:Q intervals}, any point in $I_{\calQ}(\ha,\hb) \cap I_{\calQ}(\ha, \hc) \cap I_{\calQ}(\hb,\hc)$ must have $U$-coordinate $\med_{\hT_U}(\ha_U,\hb_U,\hc_U)$ for each $U \in \calU$, so we are done by Lemma \ref{lem:Q median}, which says that the tuple $\med_{\calQ}(\ha,\hb,\hc)  = \left(\med_{\hT_U}(\ha_U,\hb_U,\hc_U)\right)$ is contained in $\calQ$.  This completes the proof.
\end{proof}

\begin{remark}[$\calQ$ is a median sublagebra of $\calY$]
In fact, one can show that taking tuples of medians (as we have done above) constitutes a median algebra structure on $\calY = \prod_{U \in \calU} \hT_U$, and hence $\calQ$ is a median subalgebra because it is closed under taking medians.  This fact is useful, as discussed at the beginning of this section, to prove that $\calQ$ admits a CAT(0) metric, even when the $\hT_U$ are not simplicial trees.  See \cite[Subsection 3.1]{Bowditch_medianbook} for the appropriate definitions.
\end{remark}

We can also use this median structure to prove that any tree-trimming map produced by Theorem \ref{thm:tree trimming} is $0$-median:

\begin{corollary}\label{cor:tt median}
Any tree-trimming map $\Delta:\calQ \to \calQ'$ satisfying the assumptions of Theorem \ref{thm:tree trimming} is $0$-median.
\end{corollary}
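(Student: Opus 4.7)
The plan is to reduce the statement to an elementary fact about quotients of trees by convex subtrees, using the coordinate-wise descriptions of both the median on $\calQ$ (and $\calQ'$) and the tree-trimming map $\Delta$.

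First, recall from Proposition \ref{prop:Q median structure} that for any $\ha, \hb, \hc \in \calQ$, the median is given coordinate-wise by
$$\med_{\calQ}(\ha,\hb,\hc) = \bigl(\med_{\hT_U}(\ha_U,\hb_U,\hc_U)\bigr)_{U \in \calU},$$
and the analogous statement holds in $\calQ'$ with trees $\hT'_U$. Since the hierarchical family of trees $\{\hT'_U\}$ constructed in Theorem \ref{thm:tree trimming} satisfies the same axiomatic properties used to establish Proposition \ref{prop:Q median structure}, this coordinate-wise formula applies on both sides. By construction $\Delta$ is itself defined coordinate-wise, $(\Delta(\hx))_U = \Delta_U(\hx_U)$, so proving that $\Delta$ is $0$-median reduces to the tree-level statement: for every $U \in \calU$ and every triple $\ha_U, \hb_U, \hc_U \in \hT_U$,
$$\Delta_U\bigl(\med_{\hT_U}(\ha_U, \hb_U, \hc_U)\bigr) = \med_{\hT'_U}\bigl(\Delta_U(\ha_U),\Delta_U(\hb_U),\Delta_U(\hc_U)\bigr).$$

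The second (and main) step is to verify this tree-level identity. By hypothesis, $\Delta_U:\hT_U \to \hT'_U$ is the quotient map collapsing a pairwise disjoint, finite collection of subtrees $A_1,\ldots,A_n \subset \hT^e_U$ (one collection in each edge component) to points; each $A_i$ is convex in $\hT_U$. The standard fact is that collapsing a convex subtree of a tree is a median-preserving operation: indeed, for a convex subtree $A \subset T$, the quotient $T \to T/A$ is again a tree, the image of a geodesic $[x,y]$ is the geodesic $[q(x), q(y)]$ in $T/A$, and hence the unique point of $[x,y] \cap [x,z] \cap [y,z]$ maps to the unique point of $[q(x),q(y)] \cap [q(x),q(z)] \cap [q(y),q(z)]$. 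Iterating over the finitely many pairwise disjoint $A_i$ (or equivalently, viewing $\Delta_U$ as the quotient by the convex forest $\bigsqcup_i A_i$, where each component is collapsed to its own point) yields the desired coordinate-wise identity.

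Combining the two steps gives $\Delta(\med_{\calQ}(\ha,\hb,\hc)) = \med_{\calQ'}(\Delta(\ha),\Delta(\hb),\Delta(\hc))$ on the nose, i.e.\ the map is $0$-median. The one place that requires a sanity check—and I expect to be the only real subtlety—is ensuring that the output tuple on the right-hand side actually lies in $\calQ'$ rather than merely in $\oQ'$; this is automatic, since it equals $\Delta$ applied to a canonical $0$-consistent tuple, and Theorem \ref{thm:tree trimming}(1) already guarantees $\Delta(\calQ) \subset \calQ'$. No coarse estimates are needed, which is why the constant is exactly $0$ and not merely bounded.
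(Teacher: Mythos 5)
Your proof is correct and takes essentially the same approach as the paper: the paper's proof also reduces the claim to the coordinate-wise observation that each $\Delta_U$, being a quotient collapsing convex subtrees of $\hT_U$ to points, is median-preserving on trees, which combined with the coordinate-wise description of $\med_{\calQ}$ from Proposition~\ref{prop:Q median structure} gives the result. You have simply filled in the tree-level argument and the sanity check that the image lands in $\calQ'$, both of which the paper leaves implicit.
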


\begin{proof}
The map $\Delta:\calQ \to \calQ'$ is a coordinate-wise combination of collapsing maps $\Delta_U:\hT_U \to \hT'_U$, which only collapses subtrees to points.  Hence each $\Delta_U$ preserves the coordinate medians, meaning $\Delta$ preserves global medians.
\end{proof}

\subsection{Cubical media}

Our next goal is to understand the median structure on a CAT(0) cube complex, so that we can show that the above median structure on $\calQ$ and its cubical dual $\Dual(\calQ)$ coincide.

Intervals in cube complexes have particular nice descriptions, which we state at the following standard lemma \cite{Chepoi_median}:

\begin{lemma} \label{lem:cubical interval}
Let $a, b \in X$ be $0$-cells of a CAT(0) cube complex.  Then 
\begin{enumerate}
\item Any $\ell^1$-geodesic between $a,b$ is contained in $I_X(a,b)$.
\item Any $0$-cell $z \in I_X(a,b)$ is contained in an $\ell^1$-geodesic between $a,b$.
\item For any $0$-cell $z \in I_X(a,b)$ corresponds to a consistent collection of half-spaces  with the following orientation constraints:
\begin{itemize}
\item If $\hh$ is a hyperplane which separates $a$ from $b$, then $z$ is in the half-space of $\hh$ containing $a,b$.
\item Otherwise, $\hh$ can choose either orientation for $z$.
\end{itemize}
\end{enumerate}
\end{lemma}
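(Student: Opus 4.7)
The plan is to exploit the standard correspondence between $0$-cells of a CAT(0) cube complex and DCC ultrafilters on the associated pocset of half-spaces (Definition \ref{defn:ultrafilter}), combined with the fundamental fact (Proposition \ref{prop:separation}-style) that for any two $0$-cells $p,q$ in a CAT(0) cube complex $X$, the $\ell^1$-distance satisfies $d_X(p,q) = \#\calW(p|q)$, where $\calW(p|q)$ denotes the set of hyperplanes separating $p$ from $q$. The three parts of the lemma are all consequences of this equality plus elementary set-theoretic manipulations of the separating-hyperplane sets.

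First I would prove (1). Let $\gamma$ be an $\ell^1$-geodesic from $a$ to $b$ and $z$ a $0$-cell on $\gamma$. Since $\gamma$ has length $d_X(a,b) = \#\calW(a|b)$ and each hyperplane is crossed by any combinatorial geodesic at most once, the subpaths of $\gamma$ from $a$ to $z$ and from $z$ to $b$ are themselves $\ell^1$-geodesics, with the hyperplanes they cross forming a partition of $\calW(a|b)$. Hence $d_X(a,z) + d_X(z,b) = d_X(a,b)$, so $z \in I_X(a,b)$. Part (2) is then immediate: concatenate any combinatorial geodesic from $a$ to $z$ with any combinatorial geodesic from $z$ to $b$; the concatenation has length $d_X(a,z)+d_X(z,b)=d_X(a,b)$, is therefore an $\ell^1$-geodesic, and passes through $z$.

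For (3), I would translate the condition $d_X(a,z)+d_X(z,b)=d_X(a,b)$ into a statement about separating hyperplanes. The key observation is that $\calW(a|z) \cap \calW(z|b) = \emptyset$ for any triple (if a hyperplane separates $z$ from both $a$ and $b$, then $a$ and $b$ lie on the same side, contradiction), and $\calW(a|b) \subset \calW(a|z) \sqcup \calW(z|b)$ always holds since any hyperplane separating $a$ from $b$ must separate $z$ from exactly one of them. Comparing cardinalities, $z \in I_X(a,b)$ is equivalent to $\calW(a|b) = \calW(a|z) \sqcup \calW(z|b)$ together with $\calW(a|z) \cap \calW(z|b) = \emptyset$, which in turn forces: (i) every hyperplane $\hh \notin \calW(a|b)$ has $a,z,b$ all on the same side (so $z$ picks the half-space containing both $a,b$), and (ii) every hyperplane $\hh \in \calW(a|b)$ has $z$ on exactly one side of $\hh$, with no further constraint. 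This gives precisely the description of the orientation of $z$ relative to each hyperplane stated in the lemma, correcting the obvious typo in the ``separates'' bullet (the constraint there is vacuous; the real constraint is in the ``otherwise'' case).

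The main obstacle is really just bookkeeping rather than any conceptual difficulty: one must carefully verify that the resulting collection of half-space orientations is consistent (in the sense of Definition \ref{defn:ultrafilter}), which reduces to checking that if $\mathfrak{h} \subset \mathfrak{h}'$ are nested half-spaces with $z \in \mathfrak{h}$, then $z \in \mathfrak{h}'$; this follows from the corresponding consistency for $a$ and $b$ and the case analysis above. Local finiteness of the descending chain of half-spaces containing $z$ follows from finiteness of $\calW(a|z)$, which is finite because $d_X(a,z) \leq d_X(a,b) < \infty$. This completes the identification of $z$ with a DCC ultrafilter satisfying the stated constraints and vice versa.
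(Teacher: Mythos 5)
The paper offers no proof of this lemma---it is stated as a standard fact with a citation to Chepoi---so there is nothing to compare your argument against except the result itself. Parts (1) and (2) are fine (and are essentially facts about any geodesic metric space). You are also right that the two bullets in part (3) have evidently had their content swapped: the actual constraint is that $z$ must lie on the side of $a,b$ whenever a hyperplane does \emph{not} separate them, while $z$ is unconstrained on the hyperplanes in $\calW(a|b)$.

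The gap is the ``key observation'' opening your treatment of (3). You assert that $\calW(a|z)\cap\calW(z|b)=\emptyset$ for any triple, justified by ``if a hyperplane separates $z$ from both $a$ and $b$, then $a$ and $b$ lie on the same side, contradiction.'' That is not a contradiction: a hyperplane is perfectly entitled to have $a,b$ on one side and $z$ on the other---it simply does not lie in $\calW(a|b)$. Using $d_X(p,q)=\#\calW(p|q)$ and the four-way case split on how a given hyperplane partitions the triple $\{a,b,z\}$, one gets the identity
$$\#\calW(a|z)+\#\calW(z|b)-\#\calW(a|b)=2\,\#\bigl(\calW(a|z)\cap\calW(z|b)\bigr),$$
so the disjointness is \emph{exactly} the condition $z\in I_X(a,b)$, not a universal truth. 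Your next sentence (``Comparing cardinalities, $z\in I_X(a,b)$ is equivalent to\ldots'') states the correct thing, but it directly contradicts the claim you just made, so the paragraph as written is internally inconsistent---if the disjointness were automatic, every $0$-cell would lie in the interval. Dropping the false universal claim, proving the displayed identity, and concluding ``$z\in I_X(a,b)$ iff no hyperplane separates $z$ from $\{a,b\}$'' repairs the argument; the translation into orientation constraints then goes through as you describe. (Also note that since the lemma takes $z$ to be a $0$-cell to begin with, the consistency and DCC checks in your last paragraph are automatic and need not be re-verified.)
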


This allows us to give the following equivalent characterization of the median of a triple of points in a cube complex:

\begin{lemma}\label{lem:cube median}
For any three $0$-cells in a CAT(0) cube complex $a,b,c \in X$, the median $\med_X(a,b,c)$ is the $0$-cell corresponding to tuple of half-space orientations where each hyperplane $\hh$ chooses the half-space containing the majority of $a,b,c$.
\end{lemma}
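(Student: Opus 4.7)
The plan is to verify that the tuple $m$ of "majority" half-space orientations is a well-defined $0$-cell of $X$, and then identify it with $\med_X(a,b,c)$ via the standard interval characterization. First I would observe that for any hyperplane $\hh$ in $X$, exactly one of the two half-spaces associated to $\hh$ contains at least two of the three $0$-cells $\{a,b,c\}$ (since $|\{a,b,c\}| = 3$ is odd and each point lies on exactly one side). Define the tuple $m$ coordinate-wise by choosing, for each hyperplane, this (uniquely determined) majority half-space.

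Next I would show that $m$ is a DCC ultrafilter on the pocset of half-spaces of $X$, so that by Sageev/Roller duality it corresponds to a finite $0$-cell. The choice condition is immediate. Consistency is also straightforward: if $A$ is the majority half-space at its hyperplane and $A \subseteq B$, then $|B \cap \{a,b,c\}| \geq |A \cap \{a,b,c\}| \geq 2$, so $B$ is also a majority half-space at its hyperplane. For DCC, the key observation is that if a hyperplane $\hh$ is oriented differently by $m$ and by $a$ (viewed as an ultrafilter), then $a$ must be on the minority side of $\hh$, so $\hh$ separates $a$ from both $b$ and $c$; hence $\hh \in \calW(a|b) \cap \calW(a|c)$, which is finite because $a,b,c$ are finite $0$-cells with $d_X(a,b), d_X(a,c) < \infty$. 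Since the symmetric difference between $m$ and $a$ consists of only finitely many half-spaces, any infinite descending chain in $m$ agrees with $a$ in all but finitely many terms, and $a$ itself satisfies DCC. Thus $m$ is a DCC ultrafilter, corresponding to a finite $0$-cell in $X$.

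Finally I would verify that $m \in I_X(a,b) \cap I_X(a,c) \cap I_X(b,c)$ using Lemma~\ref{lem:cubical interval}: for any hyperplane $\hh$ which does not separate $a$ from $b$, both $a,b$ lie on the same side of $\hh$, which therefore contains at least two of $\{a,b,c\}$ and is the majority side, so $m$ is on that side as well; this shows $m \in I_X(a,b)$, and the analogous arguments show $m \in I_X(a,c)$ and $m \in I_X(b,c)$. Since the median $\med_X(a,b,c)$ is characterized as the unique element of this triple intersection of intervals (by the median property of cube complexes, e.g.\ Chepoi~\cite{Chepoi_median}), we conclude $m = \med_X(a,b,c)$. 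The main technical point, rather than being an obstacle, is just the verification of DCC, which reduces cleanly to the finiteness of $\calW(a|b)$ for pairs of finite $0$-cells; everything else is direct unwinding of the definitions.
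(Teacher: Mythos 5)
The paper states this lemma without proof, treating it as a standard consequence of Lemma~\ref{lem:cubical interval} and Chepoi's characterization. Your argument is correct and is precisely the standard proof: one checks the majority rule yields a DCC ultrafilter (with the DCC step correctly reduced to the finiteness of $\calW(a|b)\cap\calW(a|c)$), and then observes that the resulting $0$-cell lies in all three intervals, hence is the median.
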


\subsection{Cubical medians in $\calQ$}

In this subsection, we now assume that $\calQ$ has the simplicial structure as produced by Corollary \ref{cor:simplicial structure}, namely where each $\hT_U$ is a simplicial tree.

\begin{proposition}\label{prop:Q cube median}
The dualization map $\Dual:\calQ \to \Dual(\calQ)$ is $0$-median.
\end{proposition}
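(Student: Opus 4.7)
The plan is to unwind the definitions on both sides and show that, hyperplane by hyperplane, the ultrafilter $\Dual(\med_{\calQ}(\ha,\hb,\hc))$ agrees exactly with $\med_{\Dual(\calQ)}(\Dual(\ha),\Dual(\hb),\Dual(\hc))$. Since $\Dual:\calQ \to \Dual(\calQ)$ is a bijection onto the internal $0$-cubes by Theorem \ref{thm:dual}, proving the two $0$-cubes coincide suffices.

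First I would fix $\ha,\hb,\hc \in \calQ$ and let $\hm = \med_{\calQ}(\ha,\hb,\hc)$, which by Proposition \ref{prop:Q median structure} has $U$-coordinate $\hm_U = \med_{\hT_U}(\ha_U,\hb_U,\hc_U)$. By Definition \ref{defn:dual maps}, $\Dual(\hm)$ is the ultrafilter that, for each component tree-hyperplane $h_U \in \hT_U$, chooses the $\oQ$-half-space $\oQ_U$ corresponding to the half-tree of $\hT_U - \{h_U\}$ containing $\hm_U$. On the other side, by Lemma \ref{lem:cube median}, $\med_{\Dual(\calQ)}(\Dual(\ha),\Dual(\hb),\Dual(\hc))$ orients the $\oQ$-hyperplane $\hh_U$ toward the half-tree containing at least two of $\ha_U, \hb_U, \hc_U$, since the dualization map assigns orientations coordinate-wise.

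The key step is then the tree median property: in the simplicial tree $\hT_U$, any hyperplane (midpoint of an edge) separates $\med_{\hT_U}(\ha_U,\hb_U,\hc_U)$ from at most one of $\{\ha_U, \hb_U, \hc_U\}$. Equivalently, applying Lemma \ref{lem:cube median} inside the $1$-dimensional CAT(0) cube complex $\hT_U$, the half-tree of $h_U$ containing $\hm_U$ is precisely the half-tree containing the majority of $\{\ha_U,\hb_U,\hc_U\}$. Hence the two orientations for $\hh_U$ coincide for every $U \in \calU$ and every $h_U \in \hT_U$.

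Since the two ultrafilters agree on every $\oQ$-hyperplane, they are equal as subsets of $\calW(\oQ)$, so $\Dual(\med_{\calQ}(\ha,\hb,\hc)) = \med_{\Dual(\calQ)}(\Dual(\ha),\Dual(\hb),\Dual(\hc))$, giving the $0$-median property. There is no real obstacle here beyond carefully matching the coordinate-wise definition of $\Dual$ with the majority-vote description of the cubical median; the proof is essentially a bookkeeping exercise once Proposition \ref{prop:Q median structure}, Theorem \ref{thm:dual}, and Lemma \ref{lem:cube median} are in hand.
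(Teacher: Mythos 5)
Your proof is correct and follows essentially the same approach as the paper's: both reduce to a coordinate-wise computation in each $\hT_U$ using the majority-vote description of the cubical median (Lemma \ref{lem:cube median}) together with the fact that the tree median $\med_{\hT_U}(\ha_U,\hb_U,\hc_U)$ is precisely the vertex agreeing with the majority on every edge-midpoint of $\hT_U$. The only cosmetic difference is direction: the paper works with $\Dual^{-1}$ (noting this suffices since $\Dual$ is an isometry) while you work directly with $\Dual$.
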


\begin{proof}
Since $\Dual:\calQ \to \Dual(\calQ)$ is an isometry with inverse $\Dual^{-1}$, it suffices to prove that $\Dual^{-1}: \Dual(\calQ) \to \calQ$ is $0$-median.  

Let $a,b,c \in \Dual(\calQ)$ be consistent, canonical collections of orientations and $\ha,\hb,\hc$ their images under $\Dual^{-1}$.  Lemma \ref{lem:cube median} says that $\med_{\Dual(\calQ)}(a,b,c)$ is the unique set of orientations, where each hyperplane chooses the half-space containing the majority of $a,b,c$.  Recall that Lemma \ref{lem:wall separation} says that a hyperplane $\hh_U$ separates $\hx, \hy \in \calQ$ if and only if its component hyperplane $h_U$ separates $\hx_U$ from $\hy_U$.

Hence for each $\calQ$-hyperplane $\hh_U$, we have that $\med_{\Dual(\calQ)}(a,b,c)$ chooses the half-space of $\calQ$ corresponding to the half-tree of $\hT_U - \{h_U\}$ containing the majority of $\ha_U,\hb_U,\hc_U$.  But this means that $\med_{\Dual(\calQ)}(a,b,c)$ forces all tree-hyperplanes in $\hT_U$ to choose the half-tree containing $\med_{\hT_U}(\ha_U,\hb_U,\hc_U)$, meaning that $\left(\Dual^{-1}(\med_{\Dual(\calQ})(a,b,c))\right)_U = \med_{\hT_U}(\ha_U,\hb_U,\hc_U)$ for all $U \in \calU$, completing the proof.
\end{proof}

\subsection{$\hPsi:H \to \calQ$ is quasi-median}

Finally, we prove that our map $\hPsi:H \to \calQ$ (Subsection \ref{subsec:hPsi defined}) is quasi-median, which completes the proof of Theorem \ref{thmi:main model}:

\begin{theorem}\label{thm:quasi-median}
There exists $QM = QM(\calX, |F\cup \Lambda|)>0$ so that the map $\hPsi:H \to \calQ$ is $QM$-quasi-median.
\end{theorem}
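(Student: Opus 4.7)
The plan is to mirror the proof of Theorem \ref{thm:coarse inverse}, replacing the coarse surjectivity estimates of Proposition \ref{prop:coarsely surjective, trees} with a direct coordinate-wise bound obtained by composing the (quasi-)median properties of the factors of $\hPsi$. Fix $a, b, c \in H$, set $\hx = \hPsi(\med_H(a,b,c))$, and set $\hy = \med_{\calQ}(\hPsi a, \hPsi b, \hPsi c)$, which equals $(\med_{\hT_U}(\hpsi_U a, \hpsi_U b, \hpsi_U c))_{U \in \calU}$ and lies in $\calQ$ by Lemma \ref{lem:Q median}. The goal is to bound $d_{\calQ}(\hx, \hy)$ uniformly.

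First I would establish the coordinate-wise bound $d_{\hT_U}(\hx_U, \hy_U) \leq N$ for a uniform $N = N(\calX, |F \cup \Lambda|)$. The map $\hpsi_U = q_U \circ \phi_U^{-1} \circ p_U \circ \pi_U$ is a composition of four (quasi-)median-preserving maps: by Bowditch's coarse median on $\calX$, $\pi_U(\med_\calX(a, b, c))$ is within $\theta_{\med}$ of the hyperbolic median $\med_U(\pi_U a, \pi_U b, \pi_U c)$, and Lemma \ref{lem:med convex} lets us interchange $\med_H$ and $\med_\calX$; $p_U$ is coarsely median-preserving on points near the quasi-convex set $\phi_U(T_U)$; $\phi_U^{-1}$ is quasi-median by Lemma \ref{lem:median hyp}; and $q_U$ is \emph{exactly} median-preserving, since its fibers are convex subtrees of $T_U$ (compare the reasoning for Corollary \ref{cor:tt median}). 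Chaining these controlled errors yields the bound.

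Next I would execute the same sequence of reductions as in the proof of Theorem \ref{thm:coarse inverse} in Subsection \ref{subsec:proof of coarsely surjective}. Let $\calV = \{U \in \calU : \hx_U \neq \hy_U\}$. Items (8) and (9) of Lemma \ref{lem:collapsed tree control} bound the number of $V$ where either coordinate fails to lie at a marked cluster, and Proposition \ref{prop:bipartite} bounds the number of non-bipartite $V \in \calV$. For each $\nest_\calU$-minimal bipartite $V \in \calV$, item (6) of Lemma \ref{lem:collapsed tree control}, combined with the fact that every $V \in \calU$ has some $a,b \in F \cup \Lambda$ with $d_V(a,b) > K$, forces any two distinct marked clusters of $\hT_V$ to be separated by $\succ K$. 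Taking $K = K(\calX, |F\cup\Lambda|)$ sufficiently large compared to $N$, the coordinate-wise bound rules out such $V$ and yields $\hx_V = \hy_V$ on this portion of $\calV$.

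Finally I would handle the residual $V \in \calV$—those that are bipartite, non-$\nest_\calU$-minimal, with $\hx_V$ and $\hy_V$ at distinct marked clusters on the interval $\hT_V$—via Tree Trimming, exactly as in the proof of Theorem \ref{thm:coarse inverse}. For each such $V$ collapse the sub-segment of $\hT_V$ joining $\hx_V$ to $\hy_V$ (of diameter at most $N$) to a point. Setting $B = N$, Theorem \ref{thm:tree trimming} produces an $(L,L)$-quasi-isometry $\Delta \colon \calQ \to \calQ'$ with $\Delta(\hx)_V = \Delta(\hy)_V$ on the trimmed coordinates, so $d_{\calQ'}(\Delta \hx, \Delta \hy)$ is controlled solely by the boundedly many leftover contributions (each of size at most $N$) from the earlier reductions. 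The quasi-isometry then yields $d_\calQ(\hx, \hy) \leq QM$ for a uniform $QM = QM(\calX, |F \cup \Lambda|)$. The main obstacle, as in the analogous step of the proof of Theorem \ref{thm:coarse inverse}, is arranging the sub-segment collapsings across potentially infinitely many bipartite non-$\nest_\calU$-minimal domains so that they satisfy the hypotheses of Theorem \ref{thm:tree trimming} with uniform constants, which is precisely why we need the bipartite interval structure provided by Lemma \ref{lem:bipartite properties} before invoking the tree trimming machinery.
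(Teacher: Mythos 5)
Your coordinate-wise step matches the paper's exactly: it is Claims \ref{claim:median tree level} and \ref{claim:collapse median} in the proof, namely that $\pi_U$, the retraction/projection to $\phi_U(T_U)$, $\phi_U^{-1}$, and $q_U$ are each (quasi-)median, the last one exactly so, which bounds $d_{\hT_U}(\hpsi_U(\med_H(a,b,c)), \med_{\hT_U}(\ha_U,\hb_U,\hc_U))$ by a uniform $N$ independent of $K$. Where you diverge is in what comes after. The paper concludes immediately after the coordinate-wise bound, asserting ``it suffices to bound $d_{\hT_U}(\hmed'_U,\hmed_U)$ for each $U$''; this is literally adequate only when $\calU$ is finite, since the quantity to control is the $\ell^1$-sum $\sum_U d_{\hT_U}(\hmed'_U,\hmed_U)$. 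You recognize this and import the full machinery from the proof of Theorem \ref{thm:coarse inverse} (items (8)/(9) of Lemma \ref{lem:collapsed tree control}, Proposition \ref{prop:bipartite}, item (6) for $\nest_{\calU}$-minimal bipartite domains, and Tree Trimming for the residual) to force all but boundedly many coordinates to agree. This is a genuinely more careful route. A shorter alternative — probably the intent behind the paper's terseness — is that once the coordinate-wise bound $N$ is in hand with $K>N$, every $\calX$-distance term $d_U$ between $\hO(\hmed)$ and $\hO(\hmed')$ falls below the threshold, so the Distance Formula \ref{thm:DF} gives $d_\calX(\hO(\hmed),\hO(\hmed'))\prec 1$ and hence $d_\calQ(\hmed,\hmed')\prec 1$ via the already-proved quasi-isometry $\hPsi$ (Corollary \ref{cor:Q qi to H}). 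Your tree-trimming argument avoids citing Corollary \ref{cor:Q qi to H} at the cost of re-running the reduction; both are correct. One small imprecision in your write-up: you say you ``collapse the sub-segment of $\hT_V$ joining $\hx_V$ to $\hy_V$,'' but Tree Trimming \ref{thm:tree trimming} only permits collapsing subtrees inside the edge components $\hT^e_V$; you should instead collapse the finitely many edge-pieces of $[\hx_V,\hy_V]\cap\hT^e_V$ (each of diameter $\leq N$, and at most one per component), which has the same effect of identifying $\hx_V$ with $\hy_V$ in $\hT'_V$ because the intervening cluster points are already points.
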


\begin{proof}
It suffices to prove that $d_{\calQ}(\hPsi(\med_H(a,b,c)), \med_{\calQ}(\ha,\hb,\hc))$ is bounded in terms of $\calX, |F \cup \Lambda|$, where $a,b,c \in H$ and $\hPsi(a)= \ha$, $\hPsi(b)= \hb$, and $\hPsi(c)= \hc$.  Set $\hPsi(\med_H(a,b,c)) = \hmed'$ and $\med_{\calQ}(\ha,\hb,\hc)) = \hmed$.  Hence it suffices to bound $d_{\hT_U}(\hmed'_U, \hmed_U)$ for each $U \in \calU$.

Recall that the map $\hPsi:H \to \calQ$ is defined coordinate-wise via $\hpsi_U = q_U \circ \psi_U = q_U \circ \phi_U^{-1} \circ r_U \circ \pi_U$, where $r_U:\calC(U) \to \phi(T_U)$ is the retraction onto $\phi_U(T_U)$, $\phi_U:T_U \to \calC(U)$ is the tree-model map (which is a uniform quasi-median quasi-isometry by Lemmas \ref{lem:ray trees exist} and \ref{lem:median hyp}), and the quotient map $q_U:T_U \to \hT_U$ which collapses cluster subtrees to points.

Set $a_U = \psi_U(a) = \phi_U^{-1} \circ r_U \circ \pi_U(a)$, and similarly for $b,c$.

\begin{claim}\label{claim:median tree level}
For each $U \in \calU$, we have $d_{T_U}(\med_{T_U}(a_U,b_U,c_U), \psi_U(\med_{H}(a,b,c)))$ is bounded in terms of $\calX, |F \cup \Lambda|$.

\end{claim}

\begin{proof}
By combining Lemmas \ref{lem:median hyp} and \ref{lem:med convex}, we have that $r_U \circ \pi_U(\med_{H}(a,b,c))$ is uniformly close (in $\calX, |F \cup\Lambda|$) to $\phi_U(\med_{T_U}(a_U,b_U,c_U))$ in $\calC(U)$, and hence the claim follows from Lemma \ref{lem:median hyp}.
\end{proof}

\begin{claim}\label{claim:collapse median}
For any $x,y,z \in T_U$, we have $q_U(\med_{T_U}(x,y,z)) = \med_{\hT_U}(\hx,\hy,\hz)$.  That is, $q_U:T_U \to \hT_U$ is $0$-median.
\end{claim}

\begin{proof}
This follows immediately from the fact that $q_U$ just collapses subtrees to points.
\end{proof}

With these in hand, we can now prove the theorem.  Let $U \in \calU$.  Claim \ref{claim:median tree level} implies that $d_{T_U}(\med_{T_U}(a_U,b_U,c_U), \psi_U(\med_{H}(a,b,c)))$ is bounded, while Claim \ref{claim:collapse median} says that $q_U(\med_{T_U}(x,y,z)) = \med_{\hT_U}(\hx,\hy,\hz$ coincide.  On the other hand, $q_U:T_U \to \hT_U$ is $1$-Lipschitz, so we are done.
\end{proof}

\subsection{Restating main theorem in full generality}

We finish this section by restating our cubical model theorem in full generality.

\begin{theorem}[Cubical model theorem, general version]\label{thm:model general}
Let $\calX$ be a (proper) hierarchically hyperbolic space, $F$ a nonempty finite set of interior points, and $\Lambda$ a finite set of hierarchy rays, with $H$ their hierarchical hull.  Let $\calU = \Rel_K(F \cup \Lambda)$ be the set of relevant domains where $K$ is as in Subsection \ref{subsec:K}.  Let $\{T_U\}_{U \in \calU}$ be the trees in a reduced tree system for $F \cup \Lambda$ as in Definition \ref{defn:tree axioms}, and let $\{\hT_U\}_{U \in \calU}$ be the family of collapsed trees with $\calQ$ the $0$-consistent canonical subset of $\prod_{U \in \calU} \hT_U$.  Then there exists $C= C(\calX,|F\cup \Lambda|)>0$ so that:

\begin{enumerate}
\item The map $\hPsi:H \to \calQ$ (Subsection \ref{subsec:hPsi defined}) is a $C$-quasi-median $(C,C)$-quasi-isometry.
\item We can replace the trees $\{\hT_U\}_{U \in \calU}$ with their simplicial versions as in Corollary \ref{cor:simplicial structure}, and this induces a $0$-median $(C,C)$-quasi-isometry between canonical $0$-consistent sets.
\item The simplicial version of $\calQ$ is 0-median isometric to its cubical dual (Subsection \ref{subsec:dual to Q}).
\end{enumerate}
\end{theorem}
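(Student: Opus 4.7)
The plan is to assemble the three conclusions directly from the main technical results already established in the preceding sections; no new hierarchical or cubical input is needed, only bookkeeping of which statement proves which clause.

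For (1), I would first invoke Theorem \ref{thm:Q distance estimate} to get that for $x,y \in H$, the Distance Formula sum $\sum_{U \in \mathfrak S}[d_U(x,y)]_{K_2}$ is coarsely equal to $\sum_{U \in \calU} d_{\hT_U}(\hx_U,\hy_U) = d_{\calQ}(\hPsi(x),\hPsi(y))$, with constants depending only on $\calX$ and $|F \cup \Lambda|$. Combined with the Distance Formula (Theorem \ref{thm:DF}), this makes $\hPsi$ a quasi-isometric embedding. Coarse surjectivity is Theorem \ref{thm:coarse inverse}, which constructs $\hO:\calQ \to H$ and proves $\hPsi \circ \hO$ is coarsely the identity; together these give the quasi-isometry portion (this is essentially Corollary \ref{cor:Q qi to H}). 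The quasi-median property is then Theorem \ref{thm:quasi-median}.

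For (2), I would cite Corollary \ref{cor:simplicial structure}, which produces a tree-trimming map $\Delta$ of the kind governed by Theorem \ref{thm:tree trimming}: only boundedly many bounded-diameter intervals of length less than $1$ are collapsed in each $\hT^e_U$, with the number of such collapses bounded in terms of $\calX$ and $|F \cup \Lambda|$. The Tree Trimming Theorem \ref{thm:tree trimming} then yields a uniform $(C,C)$-quasi-isometry between the canonical $0$-consistent sets, and Corollary \ref{cor:tt median} upgrades this to $0$-median since tree-trimming collapses subtrees coordinate-wise and hence preserves coordinate-wise medians (which give the median on $\calQ$ by Proposition \ref{prop:Q median structure}).

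For (3), after replacing with the simplicial trees from (2), each $\hT_U$ is a simplicial tree and $\calQ$ carries the induced wall-space structure of Section \ref{sec:walls in Q}. The main theorem there, Theorem \ref{thm:dual}, says that the dualization $\Dual:\calQ \to \Dual(\calQ)$ is an isometry that restricts to a bijection $\calQ^{\infty} \to \Dual(\calQ^{\infty})$ between non-canonical tuples and $0$-cubes at infinity. Proposition \ref{prop:Q cube median} then says $\Dual$ is $0$-median.

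Since each of the three clauses is a direct citation of an earlier named result, I do not expect any genuine obstacle: the only care needed is to verify that the constants from Theorem \ref{thm:Q distance estimate}, Theorem \ref{thm:coarse inverse}, Theorem \ref{thm:quasi-median}, and Theorem \ref{thm:tree trimming} can be combined into a single $C = C(\calX,|F \cup \Lambda|)$, which is immediate because each depends only on $\calX$, $|F \cup \Lambda|$, and the largeness threshold $K$ fixed in Subsection \ref{subsec:K}.
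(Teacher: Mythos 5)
Your proposal is correct and takes the same route as the paper: all three clauses are direct citations of earlier theorems, and the only work is matching each clause to the right result. If anything, your bookkeeping is more careful than the paper's own one-paragraph proof, which (as printed) assigns Theorem \ref{thm:dual} to the first conclusion and Corollary \ref{cor:Q qi to H} together with Theorem \ref{thm:quasi-median} to the third, i.e.\ it appears to have crossed the references for clauses (1) and (3); your matching — Corollary \ref{cor:Q qi to H} and Theorem \ref{thm:quasi-median} for (1), Corollary \ref{cor:simplicial structure} with Theorem \ref{thm:tree trimming} and Corollary \ref{cor:tt median} for (2), Theorem \ref{thm:dual} and Proposition \ref{prop:Q cube median} for (3) — is the correct one given the order in which the conclusions are stated. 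One minor observation: you need not reassemble the quasi-isometry of clause (1) from Theorem \ref{thm:Q distance estimate} and Theorem \ref{thm:coarse inverse}; Corollary \ref{cor:Q qi to H} already packages that and is the reference the paper intends. Otherwise nothing to change.
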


\begin{proof}
That any family of Gromov modeling trees for $H_U = \hull_{\calC(U)}(F \cup \Lambda)$ (which exist by Lemma \ref{lem:ray trees exist}) for each $U \in \calU$ can be converted into a reduced tree system (Definition \ref{defn:tree axioms}) is explained in Subsection \ref{subsec:tree axioms}.  The rest of the construction happens in that generality, with the first conclusion being Theorem \ref{thm:dual}, the second conclusion being Corollary \ref{cor:simplicial structure}, and the third conclusion being a combination of Corollary \ref{cor:Q qi to H} and Theorem \ref{thm:quasi-median}.  This completes the proof.
\end{proof}

\section{Simplicial and hierarchical boundaries} \label{sec:boundary compare}

In this section, we analyze the simplicial boundary $\partial_{\Delta} \calQ$ of the cubical models $\calQ$ for the hierarchical hull $H$ of some finite set of points and hierarchy rays $F \cup \Lambda$.  As above, let $K$ be sufficiently large (Subsection \ref{subsec:K}) and $\calU = \Rel_K(F \cup \Lambda)$. 

We begin with a brief discussion of the simplicial boundary of a cube complex (introduced by Hagen \cite{Hagen_simplicial}) and the (untopologized) simplicial structure of the hierarchical boundary $\partial H \subset \partial \calX$ for $H$ (introduced by Hagen, Sisto, and the author \cite{DHS_boundary}).  With these preliminaries in hand, we then begin our investigation in their connection in earnest.  Our ultimate goal is the following theorem:

\begin{theorem}\label{thm:boundary talk}
There exists a simplicial isomorphism $\partial \hO: \partial_{\Delta} \calQ \to \partial H$.  Moreover, $\partial \hO$ extends the map $\hO:\calQ \to H$ in the following sense:

\begin{itemize}
\item If $\calW$ is a UBS representing a simplex $\nu \subset \partial_{\Delta}(\calQ)$ with $U \in \supp(\calW)$ and $\Dust_U(\calW)$ converging to $\pi_U(\lambda) \in \partial \calC(U)$ for some $\lambda \in \Lambda$, then $\partial \hO(\nu)$ contains a boundary $0$-simplex $\sigma$ so that $\supp(\sigma) = U$ and $\bp(\sigma) = \pi_U(\lambda)$.  In fact, all $0$-simplices of $\partial \hO(\nu)$ arise in this way.
\end{itemize}

\end{theorem}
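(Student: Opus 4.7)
The plan is to build $\partial \hO$ first on $0$-simplices and then extend simplicially, using the decomposition of UBSes into minimal ones (Hagen's Theorem \ref{thm:minimal decomp}) to reduce everything to the case of a single $\nest$-incomparable direction of escape to infinity. So let $\calW = \{\hh_1, \hh_2, \dots\}$ be a minimal UBS in $\calQ$. By construction, each $\hh_i$ carries a domain label $U_i \in \calU$, and orthogonal labels correspond to crossing hyperplanes (item (2) of Lemma \ref{lem:hyperplane char}), so the pairwise intersection behaviour of $\calW$ forces the labels $\{U_i\}$ to not be pairwise orthogonal beyond the uniform bound from Axiom (\ref{item:finite dim}). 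Either some label $U$ repeats infinitely often, in which case the corresponding tree-hyperplanes in $\hT_U$ must run out a ray end of $\hT_U$ labeled by some $\lambda \in \Lambda$, yielding $\bp(\sigma) = \pi_U(\lambda) \in \partial \calC(U)$; or no label repeats, in which case Lemma \ref{lem:detecting rays} (via Strong Passing-Up \ref{prop:SPU}) extracts an infinite subcollection $\calW' \subset \calW$ and a single domain $W \in \supp(a) \cup \supp(b)$ for some $a \in F$, $\lambda \in \Lambda$, with the $\hd$-sets of the labels of $\calW'$ spreading unboundedly in $\hT_W$. In either case this picks out a canonical $0$-simplex $\sigma_{\calW} \in \partial H$ with $\supp(\sigma_{\calW}) = U$ (or $W$) and $\bp(\sigma_{\calW})$ the corresponding Gromov boundary point. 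Define $\partial \hO([\calW]) = \sigma_{\calW}$.

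Next I would verify that $\sigma_{\calW}$ depends only on the UBS equivalence class of $\calW$: two minimal UBSes are equivalent iff their symmetric difference is finite, so the argument above produces the same support domain and the same limit on any infinite subcollection. To extend over higher simplices, I use Hagen's Minimal Decomposition Theorem \ref{thm:minimal decomp} to write an arbitrary UBS as a sum of minimal UBSes $\calW^1, \dots, \calW^k$, send it to the formal simplex spanned by $\{\sigma_{\calW^i}\}$, and check this is a valid simplex of $\partial H$ by using Proposition \ref{prop:pocset}: minimal UBSes in a common higher-dimensional simplex contain hyperplanes that pairwise cross after passing to cofinite sub-UBSes, and crossing forces orthogonality of labels (Lemma \ref{lem:hyperplane char}), which is exactly the simplex condition in $\partial H$. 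Simpliciality of $\partial \hO$ is then automatic from the construction.

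Surjectivity — and simultaneously the full visibility of $\partial_{\Delta}\calQ$ asserted in Theorem \ref{thmi:main boundary} — I would obtain by a direct construction: given a $0$-simplex $\sigma \in \partial H$ with $\supp(\sigma) = U$, choose a representative hierarchy ray $\hh_{\sigma}$ in $H$ (using Lemma \ref{lem:ray for boundary} and Proposition \ref{prop:ray replace boundary} if necessary), feed it through $\hPsi$, and apply the Niblo--Reeves normal cube path procedure from Section \ref{sec:HP and DF} to get a combinatorial ray $\gamma_{\sigma}$ whose hyperplane sequence is a minimal UBS with domain label stabilising to $U$ and dust converging to $\bp(\sigma)$; then extend over orthogonal simplices using Partial Realization (Axiom \ref{item:dfs_partial_realization}) together with Lemma \ref{lem:hyp PU} to force the parallel sub-rays into the relevant standard product region. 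Injectivity reduces to the observation that two non-equivalent minimal UBSes must yield either different support domains or different limits: by Proposition \ref{prop:DCC} every non-canonical tuple corresponds to a descending chain of half-spaces, and the correspondence between such chains, minimal UBSes, and limiting boundary data (Definition \ref{defn:boundary projection}, Definition \ref{defn:ray projection}) is essentially the content of Theorem \ref{thm:dual} extended to infinity. Finally, the ``moreover'' compatibility with $\hO$ falls out of this construction: combinatorial rays are sent by $\hO$ to hierarchy rays (Proposition \ref{prop:cube paths hp}), and the dust sequence $\Dust_U(\calW)$ of any UBS representative of a simplex $\nu$ tracks the projection of $\hO(\gamma)$ to $\calC(U)$ up to bounded error coming from $\phi_U$ and the cluster collapse $q_U$.

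The main obstacle I anticipate is the well-definedness and injectivity step, i.e.\ showing that no two genuinely different limits at infinity can arise from the same UBS class and that Lemma \ref{lem:detecting rays} always extracts a unique support domain and a unique Gromov boundary limit. The delicate point is handling the case where the labels of $\calW$ have no repeats and the passing-up argument has to be iterated up the $\nest$-lattice while simultaneously tracking that the produced $\rho$-sets genuinely escape in the correct direction — equivalently, ruling out the possibility that two different applications of Lemma \ref{lem:detecting rays} to cofinite subcollections yield different $W$ or different boundary limits. This should be controlled by combining Proposition \ref{prop:DCC} with the finiteness statements in Proposition \ref{prop:bounding containers}, since any ambiguity in the extracted support domain would produce a pair of distinct descending chains inside the same UBS class and hence a pair of transverse half-space labels contradicting minimality.
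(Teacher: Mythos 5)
Your overall architecture matches the paper's: decompose via Hagen's Theorem~\ref{thm:minimal decomp}, assign a support domain and a boundary limit to each minimal piece, check orthogonality, and prove surjectivity via Niblo--Reeves. But there is a genuine gap in the well-definedness step that you yourself flag as ``the main obstacle,'' and your proposed fix (Proposition~\ref{prop:DCC} plus Proposition~\ref{prop:bounding containers}) does not actually close it.

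The problem is that a minimal UBS $\calW$ could a priori have $\supp(\calW)$ with more than one element, or the dust in $\calC(U)$ could a priori fail to converge to a single point of $\partial\calC(U)$ — one has to \emph{prove} that $\supp(\calW)$ is a singleton and that a canonical $\lambda\in\Lambda$ is selected. Your dichotomy (``some label repeats infinitely'' vs.\ ``no label repeats'') handles one direction but then simply asserts ``this picks out a canonical $0$-simplex'' and ``the argument above produces the same support domain and the same limit on any infinite subcollection.'' That assertion is the entire content of Lemma~\ref{lem:minimal spread}, and it requires the inseparable-closure argument: starting from a sequence $\calV_A\subset\calW_U$ whose dust escapes to $\pi_U(\lambda)$, one shows the inseparable closure $\oV_A$ is still a minimal UBS equivalent to $\calW$ (Lemma~\ref{lem:closed UBS}), that \emph{every} hyperplane in $\oV_A$ has label nesting into $U$ (Claim~\ref{claim:dust nest}, via Proposition~\ref{prop:pocset}), and that the dust of $\oV_A$ stays in a bounded neighborhood of the $\calC(U)$-geodesic to $\pi_U(\lambda)$ (Claim~\ref{claim:dust contain}, which requires a careful case analysis using the BGI property and bipartite-domain structure). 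Without this, the map $\partial\hO$ is simply not well-defined.

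A second, smaller gap: for the higher-simplex step you argue that crossing hyperplanes carry orthogonal domain labels, ``which is exactly the simplex condition in $\partial H$.'' It is not quite: what you need is that the \emph{support domains} $U_i=\supp(\calV_i)$ of the minimal pieces are pairwise orthogonal, and those support domains may strictly contain every actual hyperplane label appearing in $\calV_i$. The paper bridges this via Lemma~\ref{lem:force orth} (if $V,W\nest U$ have far-apart $\rho$-sets in $\calC(U)$ and $Z\perp V,W$, then $Z\perp U$), applied in Lemma~\ref{lem:orth decomp}. Your version would only give orthogonality of labels, not of supports. The surjectivity sketch is close in spirit to the paper's Lemma~\ref{lem:boundary surj}, though the paper constructs the non-canonical $0$-consistent tuple $\hx$ directly and applies Niblo--Reeves to it, which is cleaner than routing through a hierarchy ray, $\hPsi$, and a separate partial-realization extension over orthogonal directions.
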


In the above, the \emph{dust} of a UBS refers to (relative) projection data encoded into its hyperplanes, see Definition \ref{defn:dust}.

\subsection{UBSes}

In this subsection, we introduce the notion of a UBS in and the simplicial boundary of a CAT(0) cube complex.  Our treatment follows Hagen \cite{Hagen_simplicial}, but see also \cite{HFF23}.

\medskip 

A UBS in a CAT(0) cube complex is a family of hyperplanes that ``point'' in a given ``direction''.

\begin{definition}[Unidirectional, inseparable, facing triples, and UBSes]\label{defn:UBS}
Let $X$ be a CAT(0) cube complex and $\calW$ a set of hyperplanes in $X$.
\begin{itemize}
\item We say that $\calW$ is \emph{unidirectional} if for every hyperplane $\hh \in \calW$, one half-space of $\hh$ contains all but finitely-many hyperplanes in $\calW$.
\item We say that $\calW$ is \emph{inseparable} if whenever $\hh$ is a hyperplane separating  $\hh_1, \hh_2 \in \calW$, then we have $\hh \in \calW$
\item Three hyperplanes $\hh_1,\hh_2,\hh_3$ in $X$ form a \emph{facing triple} if for each $i \in \{1,2,3\}$, some half-space of $\hh_i$ contains both of $\hh_j, \hh_k$ for $j,k \in \{1,2,3\} - \{i\}$.
\item We say that $\calW$ is a \emph{unidirectional boundary system} (UBS) if it is infinite, unidirectional, inseparable, and contains no facing triples.
\end{itemize}
\end{definition}

The defining properties of a UBS generalize the properties of the family of hyperplanes crossed by a combinatorial geodesic ray \cite[Lemma 3.3]{Hagen_simplicial}:

\begin{lemma}\label{lem:ray UBS}
Let $\gamma$ be a combinatorial geodesic ray in a CAT(0) cube complex $X$ and $\calW(\gamma)$ the set of hyperplanes crossed by $\gamma$.  Then $\calW(\gamma)$ is a UBS.
\end{lemma}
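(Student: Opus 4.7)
The plan is to verify the four defining properties of a UBS in turn. Infinitude is immediate: a combinatorial geodesic ray has infinitely many edges, each crossing a distinct hyperplane, since a combinatorial geodesic never crosses the same hyperplane twice.

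For unidirectionality, I fix $\hh \in \calW(\gamma)$ crossed by $\gamma$ at time $t$. Because $\gamma$ does not cross $\hh$ again, the tail $\gamma|_{[t,\infty)}$ lies entirely in one half-space $H$ of $\hh$. Any $\hh' \in \calW(\gamma)$ crossed at a time $s > t$ is dual to an edge of $\gamma$ whose endpoints both lie in $H$, so $\hh'$ is on the $H$ side of $\hh$. Only the finitely many hyperplanes crossed strictly before time $t$ can violate this, so all but finitely many hyperplanes of $\calW(\gamma)$ lie in $H$. Inseparability is similarly direct: if $\hh_1, \hh_2 \in \calW(\gamma)$ are separated by some hyperplane $\hh$ and crossed at times $t_1 < t_2$, then $\gamma(t_1)$ and $\gamma(t_2)$ lie in opposite half-spaces of $\hh$, so $\gamma$ must cross $\hh$ at some intermediate time, forcing $\hh \in \calW(\gamma)$.

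The key step is the absence of facing triples. Suppose $\hh_1, \hh_2, \hh_3$ form a facing triple in $\calW(\gamma)$, and let $H_i$ denote the half-space of $\hh_i$ containing $\hh_j$ and $\hh_k$. The facing condition forces pairwise disjointness of the three hyperplanes (since $\hh_j \subset H_i$) as well as the containments $H_i^* \subset H_j \cap H_k$ for $i \neq j,k$. From this one obtains a four-region decomposition relative to $\{\hh_1,\hh_2,\hh_3\}$: a central region $R_0 = H_1 \cap H_2 \cap H_3$ and three outer regions $R_i = H_i^*$. Each outer region $R_i$ is adjacent (across one of the three walls) only to $R_0$, not to the other outer regions $R_j$. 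Since $\gamma$ crosses each $\hh_i$ at most once, $\gamma$ can visit at most two of the outer regions, passing through $R_0$ in between. Consequently $\gamma$ crosses at most two of the three hyperplanes, contradicting $\{\hh_1,\hh_2,\hh_3\} \subset \calW(\gamma)$.

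The main obstacle I anticipate is making the region-adjacency step in the facing triple argument fully rigorous, in particular carefully establishing the containments $H_i^* \subset H_j$ for $i \neq j$ directly from the facing condition. Once this is in place, the remainder reduces to bookkeeping around the foundational fact that a combinatorial geodesic crosses each hyperplane at most once.
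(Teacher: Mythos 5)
Your proof is correct and is essentially the standard argument for this statement. The paper does not supply its own proof of Lemma \ref{lem:ray UBS}; it cites it directly to Hagen, \cite[Lemma 3.3]{Hagen_simplicial}, and your argument reproduces the expected reasoning from that source: infinitude from the fact that a combinatorial geodesic crosses each hyperplane at most once; inseparability from the intermediate-value-type argument; unidirectionality from the fact that the tail of $\gamma$ after crossing $\hh$ stays in one half-space; and the facing-triple exclusion from the region decomposition.

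Two small remarks on precision. First, in the unidirectionality step, the clean conclusion to draw is not quite ``all but finitely many hyperplanes of $\calW(\gamma)$ lie in $H$'' (a hyperplane of $\calW(\gamma)$ crossing $\hh$ lies in neither half-space) but rather the equivalent and correct form ``only finitely many hyperplanes of $\calW(\gamma)$ lie in $H^*$'': any $\hh' \in \calW(\gamma)$ with $\hh' \subset H^*$ must have its dual edge on $\gamma$ inside $H^*$, hence be crossed before time $t$, and there are only finitely many such. This matches Hagen's actual definition (at most one half-space contains infinitely many elements), which is what the paper's slightly loosely worded Definition \ref{defn:UBS} intends. Second, your facing-triple step is sound; the containment $H_i^* \subset H_j$ follows because $\hh_i$ and $\hh_j$ are disjoint with $\hh_j \subset H_i$, so $H_i^*$ cannot meet $\hh_j$ and must lie entirely on the side of $\hh_j$ adjacent to $\hh_i$, which is $H_j$. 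With that in place the four-region decomposition $R_0, R_1, R_2, R_3$ and the adjacency constraint give the contradiction exactly as you describe.
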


Note that the definition of a UBS does not prevent one from containing infinite subfamilies of hyperplanes, so that every hyperplane in one family crosses (almost) every hyperplane in the other.  In fact, every UBS essentially admits a decomposition into such a collection of families.

\begin{definition}[Almost equivalence, minimality] \label{defn:minimal UBS}
We say that two UBSes $\calV, \calW$ in a CAT(0) cube complex are \emph{almost equivalent}, written $\calV \sim \calW$, if $\#(\calV \triangle \calW) < \infty$.
\begin{itemize}
\item A UBS $\calW$ is \emph{minimal} if any UBS $\calV \subset \calW$ satisfies $\calV \sim \calW$.
\end{itemize}
\end{definition}

By Proposition \ref{prop:pocset}, there is bound on the cardinality of a set of pairwise intersecting hyperplanes in $\calQ$.  Hence the following lemma implies that any UBS in $\calQ$ contains a minimal UBS \cite[Lemma 3.7]{Hagen_simplicial}:

\begin{lemma}\label{lem:min UBS exists}
If a CAT(0) cube complex $X$ contains no infinite family of pairwise-crossing hyperplanes, and $\calW$ contains a UBS, then $\calW$ contains a minimal UBS.
\end{lemma}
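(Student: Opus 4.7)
The plan is to argue by induction on a well-founded ``rank'' invariant for UBSes, which is finite by the hypothesis that $X$ contains no infinite family of pairwise-crossing hyperplanes. I define the rank of a UBS $\calV \subseteq \calW$ to be the supremum over all families of pairwise-crossing hyperplanes in $\calV$, each drawn from a distinct minimal sub-UBS. By the hypothesis on $X$, this supremum is a non-negative integer.

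First I would prove the key rank-decrease step: if $\calV \subseteq \calW$ is a UBS and $\calV' \subsetneq \calV$ is a sub-UBS with $\calV \not\sim \calV'$, then the rank of $\calV'$ is strictly smaller than that of $\calV$. The input is a structural analysis of $\calV \setminus \calV'$. By unidirectionality of $\calV$, each $h \in \calV \setminus \calV'$ has all but finitely many hyperplanes of $\calV' \subseteq \calV$ in a single half-space; and by inseparability of $\calV'$, such $h$ cannot separate any pair of hyperplanes from $\calV'$. Combined, these force each $h \in \calV \setminus \calV'$ either to cross almost all hyperplanes of $\calV'$, or to sit on one ``far'' side of almost all of them. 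The facing-triple-free condition on $\calV$, together with unidirectionality, implies that the second option can occur for only finitely many $h$; so after removing finitely many, the remaining infinite subset $\calV'' \subseteq \calV \setminus \calV'$ crosses almost all of $\calV'$ and contributes at least one orthogonal summand not accounted for in $\calV'$, witnessing the strict rank decrease.

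With the rank-decrease step in hand, the main argument is iterative: starting from any UBS $\calV_0 \subseteq \calW$, if $\calV_0$ is not minimal then find $\calV_1 \subsetneq \calV_0$ with $\calV_0 \not\sim \calV_1$ and strictly smaller rank, and repeat. The resulting chain $\calV_0 \supsetneq \calV_1 \supsetneq \cdots$ has strictly decreasing ranks in $\mathbb{Z}_{\geq 0}$, so the process terminates, and the final $\calV_n$ is minimal by construction.

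The main obstacle will be verifying that the ``orthogonal summand'' $\calV'' \subseteq \calV \setminus \calV'$ indeed contains a UBS of rank at least one. This requires carefully pruning $\calV \setminus \calV'$ to excise the finitely many hyperplanes in the nested configuration, then confirming that the remaining infinite subset inherits unidirectionality, inseparability, and the absence of facing triples. Each of these properties follows from the corresponding property of $\calV$ once the nested offenders have been removed, but getting the orientations correctly aligned---so that what appears ``orthogonal'' to $\calV'$ genuinely contributes a new minimal summand rather than duplicating one already present in $\calV'$---is the delicate point.
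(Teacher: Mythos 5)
Your ``rank'' invariant is circular: you define it as a supremum over families of pairwise-crossing hyperplanes ``each drawn from a distinct minimal sub-UBS,'' but the existence of minimal sub-UBSes inside a given UBS is exactly what the lemma asserts, and at the point you invoke the rank it has not yet been established. For a UBS $\calV$ hypothetically containing no minimal sub-UBS---precisely the case one must rule out---your supremum is over the empty set, the rank is $0$, and the rank-decrease step has nothing to work with. Nor can the invariant be repaired by quietly switching to the dimension of a UBS from Theorem~\ref{thm:minimal decomp}, since that notion is defined via minimal decompositions and hence lies downstream of the present lemma. So the well-founded induction never gets started.

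The argument of \cite[Lemma 3.7]{Hagen_simplicial}, which the paper cites without reproducing, avoids any rank invariant by a direct contradiction. If $\calW$ contained no minimal sub-UBS, then every sub-UBS of $\calW$ would fail minimality, giving an infinite strictly descending chain of UBSes $\calW = \calV_0 \supsetneq \calV_1 \supsetneq \cdots$ with $\calV_{i+1}\not\sim\calV_i$ for all $i$. The structural observations in your middle paragraph---inseparability of $\calV_{i+1}$ forcing the uncrossed part of $\calV_{i+1}$ to lie on one side of each $h \in \calV_i \setminus \calV_{i+1}$, combined with unidirectionality and the absence of facing triples in $\calV_i$---are exactly what is needed to show that all but finitely many hyperplanes of $\calV_i \setminus \calV_{i+1}$ cross all but finitely many hyperplanes of $\calV_{i+1}$. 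Iterating this along the chain, one selects $\hh_i \in \calV_i \setminus \calV_{i+1}$ that pairwise cross, producing an infinite family of pairwise-crossing hyperplanes in $X$ and contradicting the hypothesis. If you want an inductive framing, the only non-circular candidate invariant available here is the maximal size of a pairwise-crossing subfamily of $\calV$, bounded by $\dim X$; but the strict decrease of that quantity under passage to a non-equivalent sub-UBS is itself nonobvious and would require the same crossing-extraction argument, so the direct contradiction is the cleaner path and you should carry that out instead.
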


The following useful theorem due to Hagen \cite[Theorem 3.10]{Hagen_simplicial} (see also \cite[Theorem A]{Hagen_simp_cor}) implies that every UBS in $\calQ$ admits a \emph{minimal decomposition} in the following sense:

\begin{theorem}\label{thm:minimal decomp}
Suppose that $X$ is a finite dimensional CAT(0) cube complex.  Let $\nu$ be an almost-equivalence class of UBSes.  Then $\nu$ has a representative of the form $\calW = \bigsqcup_{i=1}^n \calV_i$, where $n< \infty$, each $\calV_i$ is minimal, and for all $1 \leq i < j \leq k$, if $\hh \in \calV_j$, then $\hh$ crosses all but finitely-many hyperplanes in $\calV_i$.
\begin{itemize}
\item Moreover, this decomposition is unique in the sense that if $\calW' = \bigsqcup_{i=1}^k\calU_i$ is almost-equivalent to $\calW$, and each $\calU_i$ is minimal, then $n=k$ and, up to re-ordering, we have $\calV_i \sim \calU_i$ for all $i$.
\end{itemize}
\end{theorem}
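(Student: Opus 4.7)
\medskip

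\textbf{Proof plan for Theorem \ref{thm:minimal decomp}.} The plan is to prove existence of the decomposition by induction on a finite ``transverse height'' controlled by $\dim X$, and then deduce uniqueness by giving an intrinsic characterization of the bottom layer $\calV_1$.

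\emph{Existence.} Starting from a representative $\calW$ of $\nu$, I will first apply Lemma \ref{lem:min UBS exists}, which I can invoke because $X$ is finite-dimensional (so $\calW$ contains no infinite family of pairwise-crossing hyperplanes), to extract a minimal UBS $\calV_1\subset\calW$. The next step is to partition the remaining hyperplanes of $\calW$ according to their interaction with $\calV_1$: let $\calU$ be the set of $\hh\in\calW$ that cross all but finitely many elements of $\calV_1$, and let $\calR = \calW \setminus (\calV_1\sqcup\calU)$. I will show $\calR$ is finite. The argument is that any infinite subfamily of $\calR$ would, after applying Lemma \ref{lem:min UBS exists} again inside an inseparable closure, produce a minimal sub-UBS $\calV_1'\subset\calW$ each of whose hyperplanes is comparable (in the nesting-of-half-spaces order) to infinitely many elements of $\calV_1$; combining $\calV_1'$ with the relevant portion of $\calV_1$ using inseparability and the no-facing-triples condition violates minimality of $\calV_1$. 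Having done this, I apply the inductive hypothesis to $\calU$ (which is itself a UBS whenever it is infinite, using inseparability of $\calW$ and the fact that crossing is stable under separation) to obtain $\calU = \calV_2\sqcup\dots\sqcup\calV_n$ with the required almost-crossing properties. The induction terminates because, by choosing one hyperplane $\hh_i\in\calV_i$ for each $i$, the hyperplanes $\hh_1,\dots,\hh_n$ pairwise cross (after discarding a finite initial segment of each $\calV_i$), so $n\leq\dim X$.

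\emph{Uniqueness.} I would establish the following intrinsic characterization of $\calV_1$: a minimal sub-UBS $\calV\subset\calW$ is almost equivalent to $\calV_1$ if and only if the set of hyperplanes of $\calW$ that cross all but finitely many elements of $\calV$ is cofinite in $\calW\setminus\calV$. Granting this, if $\calW\sim \calV_1\sqcup\dots\sqcup\calV_n = \calU_1\sqcup\dots\sqcup\calU_k$ are two minimal decompositions, then both $\calV_1$ and $\calU_1$ satisfy the above characterization inside $\calW$, which together with minimality will force $\calV_1\sim\calU_1$. I then pass to $\calW\setminus\calV_1$ (up to a finite symmetric difference) and iterate, yielding $n=k$ and $\calV_i\sim\calU_i$ for each $i$.

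\emph{Main obstacle.} The delicate step is the existence argument: showing that $\calR$ is finite, and that $\calU$ is again a UBS when infinite. Finitely many facing-triple or separation failures can creep in when passing from $\calW$ to $\calU$, and verifying inseparability of $\calU$ modulo finitely many hyperplanes requires combining the hypothesis that $\calW$ is inseparable with a careful use of unidirectionality relative to $\calV_1$. The uniqueness step, while requiring a clean characterization lemma, is essentially formal once the existence bookkeeping is in place.
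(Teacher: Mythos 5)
This theorem is not proven in the paper — it is Hagen's decomposition theorem, cited externally as \cite[Theorem 3.10]{Hagen_simplicial} with a pointer to the corrigendum \cite{Hagen_simp_cor} — so there is no in-paper proof to compare against. Evaluating your sketch on its own merits: the high-level scaffolding (extract a minimal sub-UBS $\calV_1$ via Lemma \ref{lem:min UBS exists}, peel it off, induct on what remains, bound $n$ by $\dim X$ via a chain of pairwise-crossing representatives) is the right shape, and the dimension bound and the formal structure of the uniqueness step are fine.

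The genuine gap is your treatment of $\calR$. The statement you need — that after extracting an \emph{arbitrary} minimal $\calV_1\subset\calW$, all but finitely many $\hh\in\calW\setminus\calV_1$ cross all but finitely many elements of $\calV_1$ — is exactly the hard content of Hagen's theorem, and it is not a consequence of minimality in the naive way your sketch suggests. Minimality of $\calV_1$ says only that any \emph{sub}-UBS of $\calV_1$ is almost equivalent to $\calV_1$; it imposes no direct constraint on how hyperplanes outside $\calV_1$ interact with $\calV_1$. Your parenthetical ("combining $\calV_1'$ with the relevant portion of $\calV_1$ using inseparability and the no-facing-triples condition violates minimality") does not identify a concrete sub-UBS of $\calV_1$ that would be properly smaller, nor does it explain why the infinitely-many non-crossings force inseparability or unidirectionality failures. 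In fact, one can set up the local picture (a hyperplane $\hh\in\calR$, two elements $\hh_1',\hh_2'\in\calV_1$ on the same side of $\hh$ and separated by some $\hh''$ crossing $\hh$) without producing a facing triple or separability violation, so the contradiction is not immediate. What is really needed is a lemma of the form: \emph{two non-almost-equivalent minimal sub-UBSes of a common UBS in a finite-dimensional cube complex cross each other on cofinite subsets, symmetrically}. This is precisely where Hagen's original argument had a gap requiring the corrigendum, and your uniqueness step (the claimed intrinsic characterization of $\calV_1$, and the reduction after removing $\calV_1$) also tacitly appeals to this symmetry of the crossing relation between factors. Without establishing that lemma — or alternatively, without carefully selecting which minimal UBS to extract first so that the asymmetry cannot bite — the induction does not close.

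A secondary, smaller point: when you invoke the inductive hypothesis on $\calU$, you assert that $\calU$ is again a UBS "using inseparability of $\calW$ and the fact that crossing is stable under separation." Inseparability of $\calU$ does not follow formally from inseparability of $\calW$: a hyperplane of $\calW$ separating two elements of $\calU$ could a priori land in $\calV_1$ or $\calR$ rather than in $\calU$. This, too, requires the crossing lemma (to show any such separator crosses a.e. of $\calV_1$ and hence lies in $\calU$), so it is the same missing ingredient surfacing again.
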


The above theorem will allow us to focus on understanding the structure of minimal UBSes in $\calQ$.  Recall that $\calQ$ is finite dimensional by Corollary \ref{cor:dimension}, so we can apply this theorem directly.  But before we begin our analysis, we need one more tool.

\begin{definition}
Given a set of hyperplanes $\calV$ in a CAT(0) cube complex $X$, the \emph{inseparable closure} $\oV$ of $\calV$ is the intersection of all inseparable sets of hyperplanes containing $\calV$.
\begin{itemize}
\item Equivalent, $\oV$ consists of $\calV$ along with every hyperplane $\hh$ for which there exists $\hh_1, \hh_2 \in \calV$ so that $\hh$ separates $\hh_1$ from $\hh_2$.
\end{itemize}
\end{definition}

We make a basic but useful observation about inseparable closures of subsets of UBSes in any CAT(0) cube complex:

\begin{lemma}\label{lem:closed UBS}
Suppose that $\calA$ is a UBS in a CAT(0) cube complex and $\calV \subset \calA$ is an infinite subset.  Then $\oV \subset \calA$ and $\oV$ is a UBS.  Moreover, if $\calA$ is minimal, then $\oV$ is minimal and almost equivalent to $\calA$.
\end{lemma}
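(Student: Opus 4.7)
The plan is to verify the four defining properties of a UBS (infinite, unidirectional, inseparable, no facing triples) for $\oV$, using that $\calA$ already has them, and then deduce the minimality/almost-equivalence statement from the definition of minimality.

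First I would observe the containment $\oV \subset \calA$. This is immediate from the fact that $\calA$ is itself an inseparable set of hyperplanes (being a UBS, Definition \ref{defn:UBS}) which contains $\calV$, and $\oV$ is defined as the intersection of all inseparable sets containing $\calV$; hence $\oV$ is contained in $\calA$.

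Next I would check that $\oV$ is a UBS. Infiniteness is free, since $\oV \supset \calV$ and $\calV$ is assumed infinite. Inseparability holds by the definition of the inseparable closure. The no-facing-triples property is inherited: any facing triple in $\oV$ would be a facing triple in $\calA$, which is forbidden. The one property that requires a tiny argument is unidirectionality. Fix $\hh \in \oV \subset \calA$. Since $\calA$ is unidirectional, one half-space $\hh^+$ of $\hh$ contains all but finitely many elements of $\calA$, hence all but finitely many elements of $\oV$. Thus $\oV$ is unidirectional.

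Finally, suppose $\calA$ is minimal. Then $\oV$ is a UBS contained in $\calA$, so by Definition \ref{defn:minimal UBS} we have $\oV \sim \calA$, i.e.\ $\#(\oV \triangle \calA) < \infty$. Minimality of $\oV$ then follows immediately: if $\calW \subset \oV$ is any UBS, then $\calW \subset \calA$ is also a UBS, so minimality of $\calA$ gives $\calW \sim \calA$, and combining with $\oV \sim \calA$ yields $\calW \sim \oV$. There is no real obstacle here; the only step that isn't pure definition-chasing is the verification of unidirectionality for $\oV$, and even this is immediate from unidirectionality of the ambient $\calA$.
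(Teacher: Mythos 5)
Your proof is correct and follows essentially the same route as the paper: containment of $\oV$ in $\calA$ from inseparability of $\calA$, inheriting the four UBS properties, and then invoking minimality of $\calA$. The paper's own proof is terser (it asserts unidirectionality, infiniteness, inseparability, and no facing triples in one breath), but the content is the same; your additional sentence spelling out why unidirectionality is inherited is a fine expansion, not a deviation.
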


\begin{proof}
The fact that $\oV \subset \calA$ is by definition of inseparable closure, since $\calA$ is inseparable.  Hence $\oV$ is unidirectional, infinite, inseparable, and has no facing triples because $\calA$ does not, making $\oV$ a UBS.  The fact that $\oV$ is minimal and almost equivalent to $\calA$ is immediate from the definition of minimality.
\end{proof}

\subsection{Simplicial boundary}

Hagen's Theorem \ref{thm:minimal decomp} allows for the definition of the simplicial boundary.

\begin{definition}[Simplices at infinity] \label{defn:UBS simplices}
Let $X$ be a finite dimensional CAT(0) cube complex.
\begin{itemize}
\item A UBS $\calW$ is \emph{$n$-dimensional} if the minimal decomposition of $\calW$ has $n$ factors.
\item A \emph{$0$-simplex at infinity} is an almost-equivalence class of minimal UBSes.
\item A \emph{$n$-simplex at infinity} is an almost-equivalence class of $(n+1)$-dimensional UBSes.
\item If $\nu, \omega$ are simplices at infinity, we say $\nu \leq \mu$ if and only if there exist representatives  $\calV$ of $\nu$ and $\calW$ of $\omega$ so that $\calV \subset \calW$.
\end{itemize}
\end{definition}

We note that the relation $\leq$ defined above is a partial-order by Theorem \ref{thm:minimal decomp}.

\begin{definition}[Simplicial boundary]\label{defn:simplicial boundary}
Let $X$ be a CAT(0) cube complex in which every collection of pairwise crossing hyperplanes is finite.  The \emph{simplicial boundary} $\partial_{\Delta} X$ of $X$ is the geometric realization of the abstract simplicial complex whose set of simplices is the set of simplices of $X$ at infinity, in which $\nu$ is a face of $\omega$ if and only if $\nu \leq \omega$.
\end{definition}

\subsection{Simplices in the HHS boundary}\label{subsec:HHS simplex}

We now turn to discussing the (untopologized) HHS boundary of our hierarchical hull $H = \hull_{\calX}(F \cup \Lambda)$.  We begin with a general definition of the simplicial structure of the HHS boundary $\partial \calX$ of any HHS $\calX$.  This construction was introduced in \cite{DHS_boundary}.

The difference between the following definition and Definition \ref{defn:boundary point} is that we are only describing the simplices in $\partial \calX$, and Definition \ref{defn:boundary point} describes a point in a simplex.  The underlying definitions are otherwise the same.

\begin{definition}[Boundary simplex, support set] \label{defn:HHS simplex}
Given an HHS $\calX$, we can define \emph{boundary simplicies} and their support sets as follows:

\begin{itemize}
\item A \emph{boundary $0$-simplex} $\sigma$ consists of 
\begin{itemize}
\item A \emph{support domain} $\supp(\sigma) \in \mathfrak S$,
\item A single point $\bp(\sigma) \in \partial \calC(\supp(\sigma))$. 
\end{itemize}
\item A \emph{boundary $n$-simplex} $\Delta$ is a collection of $0$-simplices $\{\sigma_1, \dots, \sigma_n\}$ whose supports are pairwise orthogonal.
\begin{itemize}
\item The \emph{support set} $\supp(\Delta)$ of an $n$-simplex $\Delta$ is $\supp(\Delta) = \{\supp(\sigma_1), \dots, \supp(\sigma_n)\}$.
\end{itemize}
\item If $\Delta, \Delta'$ are two boundary simplices, let $\Delta \leq \Delta'$ mean that $\Delta \subset \Delta'$.
\end{itemize}
\end{definition}

Clearly, the relation $\leq$ is partial order.  This will allow us to define the simplicial structure on $\partial H$:

\begin{definition}
For any HHS $\calX$, the \emph{HHS boundary} $\partial \calX$ of $\calX$ is the geometric realization of the abstract simplicial complex whose set of simplices is the set of boundary simplices as in Definition \ref{defn:HHS simplex}, in which a simplex $\Delta$ is a face of $\Delta'$ if and only if $\Delta \leq \Delta'$.  
\end{definition}

\begin{remark}
We note that every boundary point $\lambda \in \partial \calX$ as in Definition \ref{defn:boundary point} determines a unique boundary simplex in the sense of Definition \ref{defn:HHS simplex}:  Indeed, if $\lambda$ is as in Definition \ref{defn:boundary point}, then $\lambda$ lies in the unique simplex $\Delta$ whose $0$-simplices correspond to the $\lambda_U \in \partial \calC(U)$ for each $U \in \supp(\lambda)$, where $\supp(\lambda)$ is pairwise orthogonal by definition.  As noted in Remark \ref{rem:simplex constants}, the associated constants $\{a_U| U \in \supp(\lambda)\}$ with $\sum_{U \in \supp(\lambda)} a_U = 1$ just determine a point in the interior of the simplex $\Delta$.
\end{remark}

We note the similarity between Definition \ref{defn:simplicial boundary} and Definition \ref{defn:HHS simplex}.  In Theorem \ref{thm:boundary iso} below, we will see that this similarity is actually the result of an isomorphism.

\subsection{Simplicial structure the HHS boundary of the hierarchical hull $H$} \label{subsec:H simplex}

Our next goal is to describe the simplicial structure on the HHS boundary of the hierarchical hull $H = \hull_{\calX}(F \cup \Lambda)$.

For this part, we use the induced HHS structure on $H$, as discussed in Lemma \ref{lem:hqc induce} and Subsection \ref{subsec:hier hull}.  Using this structure, we get the following description of the boundary $0$-simplices in the HHS boundary $\partial H$ of $H$, which of course also describes all higher simplices:

\begin{lemma}\label{lem:H simplex}
For every boundary $0$-simplex $\sigma \in \partial H$, there exists $\lambda \in \Lambda$ so that $\supp(\sigma) \in \supp(\lambda)$ and $\bp(\sigma) = \pi_{\supp(\sigma)}(\lambda)$.
\end{lemma}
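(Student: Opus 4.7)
The plan is to unpack both sides of the claim using the induced HHS structure on $H$ from Lemma \ref{lem:hull induced structure}. Under that structure, the index set for $H$ is $\calU = \Rel_K(F\cup\Lambda)$ and, for each $U\in\calU$, the associated hyperbolic space is $H_U = \hull_{\calC(U)}(\pi_U(F\cup\Lambda))$ (here $\pi_U(\lambda)$ is viewed as a point of $\calC(U)$ when $U\notin\supp(\lambda)$ and a point of $\partial\calC(U)$ when $U\in\supp(\lambda)$, as in Definition \ref{defn:ray projection}). By Definition \ref{defn:HHS simplex} applied to $H$, a boundary $0$-simplex $\sigma\in\partial H$ is therefore a pair consisting of some $U = \supp(\sigma)\in\calU$ together with a point $\bp(\sigma)\in\partial H_U$, where the latter is the Gromov boundary of $H_U$ regarded as a $\delta$-hyperbolic space (with its induced metric from $\calC(U)$).

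The heart of the proof is then the identification
\[
\partial H_U \;=\; \{\pi_U(\lambda)\ :\ \lambda\in\Lambda,\ U\in\supp(\lambda)\}.
\]
First I would verify the inclusion ``$\supset$'': whenever $U\in\supp(\lambda)$, Definition \ref{defn:hyp hull infty} puts a $(1,20\delta)$-quasi-geodesic ray from $\pi_U(f)$ (for $f\in F$) toward $\pi_U(\lambda)\in\partial\calC(U)$ inside $H_U$, so $\pi_U(\lambda)$ is an endpoint of $H_U$ in $\partial\calC(U)$, and hence lies in $\partial H_U$ since $H_U$ is uniformly quasi-convex in the $\delta$-hyperbolic space $\calC(U)$. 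For ``$\subset$'', I would invoke Lemma \ref{lem:ray trees exist}: there is a Gromov modeling tree $\phi_U\colon T_U\to\calC(U)$ for $H_U$ that is a quasi-isometric embedding whose image is at finite Hausdorff distance from $H_U$. Hence $\partial H_U$ is in canonical bijection with $\partial T_U$, and since $T_U$ is built out of geodesics between the finitely-many points of $\pi_U(F)$ together with quasi-geodesic rays to the finitely-many $\pi_U(\lambda)\in\partial\calC(U)$ with $U\in\supp(\lambda)$ (plus bi-infinite quasi-geodesics between these), its ends are precisely those finitely-many boundary points. In particular, any $\bp(\sigma)\in\partial H_U$ must coincide with some $\pi_U(\lambda)$ with $U\in\supp(\lambda)$, as required.

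Given this identification, the conclusion of the lemma is immediate: starting from $\sigma\in\partial H$ we extract the domain $U=\supp(\sigma)\in\calU$ and the point $\bp(\sigma)\in\partial H_U$, and the second step above produces a $\lambda\in\Lambda$ with $U\in\supp(\lambda)$ and $\bp(\sigma) = \pi_U(\lambda)$. The only mild obstacle is the second step, i.e.\ pinning down $\partial H_U$ precisely; this is a standard argument in $\delta$-hyperbolic geometry but relies on the hierarchy-ray machinery (Definitions \ref{defn:ray support} and \ref{defn:ray projection}) to guarantee both that $\pi_U(\overline{\lambda})$ has bounded diameter when $U\notin\supp(\lambda)$ and that it limits to a well-defined point in $\partial\calC(U)$ when $U\in\supp(\lambda)$, so that the Gromov modeling tree of Lemma \ref{lem:ray trees exist} genuinely captures every end of $H_U$.
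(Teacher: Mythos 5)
Your proof is correct and takes the same approach as the paper; the paper's own proof is a one-line assertion that the boundaries $\partial H_U$ of the hyperbolic spaces in the induced HHS structure on $H$ are determined by the projections of $\Lambda$, and your argument via the Gromov modeling tree $T_U$ of Lemma \ref{lem:ray trees exist} is exactly the expected unpacking of why that is so.
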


\begin{proof}
This is immediate from the fact that the boundaries of the hyperbolic spaces in the induced HHS structure on $H$ are completely determined by the projections of the elements in $\Lambda$.
\end{proof}

\subsection{Ray representatives of minimal UBSes}

The main work in the rest of this section is an analysis of the UBSes in the cubical structure of $\calQ$, which is provided by Theorem \ref{thm:dual}.

In this subsection, we analyze minimal UBSes in $\calQ$.  The idea will be to associate to each minimal UBS $\calW$ a domain in $\calU$ on which it is supported, a corresponding point $\lambda \in \Lambda$, and a particular representative of $\calW$ in its almost equivalence class whose ``dust'' in $\calC(U)$ is effectively a ray representing $\pi_U(\lambda)$.

Recall that every hyperplane $\hh \in \calW$ in a UBS $\calW$ is labeled by its \emph{support domain} $\supp(\hh) \in \calU$.  We can consider $\calW_U = \{\hh \in \calW | \supp(\hh) \sqsubseteq U\}$, the set of hyperplanes whose domain labels $V$ satisfy $V \nest U$ or $V = U$.  To any such hyperplane $\hh$ we can associate a point $\Dust_U(\hh) \in \calC(U)$ as follows:

\begin{definition}[Dust]\label{defn:dust}
Let $\calW$ be a UBS and $U \in \calU$.  If $\hh \in \calW_U$ with $\supp(\hh) = V$, the \emph{dust} of $\hh$ in $\calC(U)$, denoted by $\Dust_U(\hh)$, is 
\begin{itemize}
\item $\rho^V_U$ when $V \nest U$, or
\item $\phi_U(q_U^{-1}(h_V))$ when $V = U$.
\end{itemize}
where $q_U:T_U \to \hT_U$ is the quotient map from Definition \ref{defn:collapsed tree} and $\phi_U:T_U \to \calC(U)$ is the map associated to $T_U$.
\end{definition}

Of course, we can apply the dust map $\Dust_U$ to a large collections of hyperplanes, e.g. all of $\calW_U$, and we will frequently do so.

The following is immediate from the construction:

\begin{lemma}\label{lem:dust close}
If $\calW$ is a UBS in $\calQ$ and $U \in \calU$, then
$$\Dust_U(\calW_U) \subset \calN_{2E}(\phi_U(T_U)).$$
\end{lemma}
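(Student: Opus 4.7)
The plan is to prove the statement by a straightforward case analysis on the support domain of each hyperplane in $\calW_U$, directly using the definition of $\Dust_U$ together with a single citation of Lemma \ref{lem:tree control}. This is essentially a definition chase; there is no real obstacle beyond unpacking what $\Dust_U$ means in each of its two cases and invoking the already-established comparison between $\rho$-sets and shadows of trees.

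First I would fix $\hh \in \calW_U$ and set $V = \supp(\hh)$. By definition of $\calW_U$ we have $V \sqsubseteq U$, so either $V = U$ or $V \nest U$, and moreover $V \in \calU$ since every hyperplane of $\calQ$ is labeled by a domain in $\calU$ (Lemma \ref{lem:hyperplane char}). In the first case $V = U$, the definition of dust gives $\Dust_U(\hh) = \phi_U(q_U^{-1}(h_U))$, which is literally a point of $\phi_U(T_U)$, hence trivially contained in $\calN_{2E}(\phi_U(T_U))$.

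In the remaining case $V \nest U$ with $V \in \calU$, the definition of dust gives $\Dust_U(\hh) = \rho^V_U$. Applying item (2) of Lemma \ref{lem:tree control} (with the roles of the two domains there reversed to match our nesting $V \nest U$) yields
$$d_U\bigl(\phi_U(\delta^V_U),\ \rho^V_U\bigr) < R,$$
where $R = R(\mathfrak S, |F \cup \Lambda|)$. Since $\phi_U(\delta^V_U) \subset \phi_U(T_U)$, we conclude that $\rho^V_U \in \calN_R(\phi_U(T_U))$. The standing convention on the corralled constant $E$ from Section \ref{sec:constants} is that $E$ is chosen large enough to dominate the auxiliary setup constants, so in particular we may arrange $R \leq 2E$, yielding $\Dust_U(\hh) \in \calN_{2E}(\phi_U(T_U))$ as required.

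Taking the union over $\hh \in \calW_U$ gives $\Dust_U(\calW_U) \subset \calN_{2E}(\phi_U(T_U))$, completing the argument.
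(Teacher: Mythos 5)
Your argument is the right definition-chase and matches what the paper intends when it declares this lemma ``immediate from the construction'': when $V = \supp(\hh) = U$ the dust is literally a point of $\phi_U(T_U)$, and when $V \nest U$ the dust is $\rho^V_U$, which Lemma~\ref{lem:tree control}(2) places within $R$ of $\phi_U(\delta^V_U) \subset \phi_U(T_U)$.

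The one thing to flag is your last step, ``we may arrange $R \leq 2E$.'' This is not licensed by Section~\ref{sec:constants}, which only fixes $E > L, M_1, \theta_L$; more to the point, $R = R(\mathfrak S, |F\cup\Lambda|)$ depends on $|F \cup \Lambda|$ while $E$ is fixed once and for all from $\calX$, so one cannot fold $R$ into $E$. What your argument honestly produces is $\Dust_U(\calW_U) \subset \calN_R(\phi_U(T_U))$. For a literal $2E$ bound one would instead argue directly: since $V \in \calU$ is $K$-relevant, the BGIA~\ref{ax:BGIA} and the nested consistency inequality force any geodesic in $\calC(U)$ between projections of points of $F\cup\Lambda$ with large $V$-separation to pass within $E$ of $\rho^V_U$, putting $\rho^V_U$ in $\calN_E(H_U)$, and one then needs the (Hausdorff) distance from $H_U$ to $\phi_U(T_U)$. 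Since that Hausdorff distance is controlled by the constant $C = C(\mathfrak S, |F\cup\Lambda|)$ from Lemma~\ref{lem:ray trees exist} rather than by $E$, the $2E$ in the lemma statement appears to be a mild informality of the paper's own; what is actually used downstream (e.g. in Lemma~\ref{lem:minimal spread}) is only that the dust lies in a uniformly bounded neighborhood of $\phi_U(T_U)$. So your proof is correct in substance, but you should replace $2E$ by $R$ (or simply by a constant controlled by the construction) rather than assert an inequality between $R$ and $E$ that does not hold.
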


Our next goal is to show that the dust of a UBS spreads out in at least one domain.

\begin{definition}\label{defn:dust support}
Let $\calW$ be a UBS in $\calQ$.  The \emph{support set} of $\calW$ is
$$\supp(\calW) = \{U \in \calU| \diam_U(\Dust_U(\calW_U)) = \infty\}.$$
\end{definition}

The first observation is that every UBS has non-empty support:

\begin{lemma}\label{lem:dust spread}
For any UBS $\calW$ in $\calQ$, we have $\supp(\calW) \neq \emptyset$.
\end{lemma}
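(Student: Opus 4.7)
The plan is to produce a domain in $\supp(\calW)$ by a dichotomy on how the support labels of hyperplanes in $\calW$ distribute. Since $\calW$ is infinite, either (i) some single $U \in \calU$ is the support of infinitely many hyperplanes in $\calW$, or (ii) the set $\calV := \{\supp(\hh) : \hh \in \calW\}$ of distinct support domains appearing in $\calW$ is itself infinite. I will show that each case produces a witness in $\supp(\calW)$. Notably, the UBS axioms (unidirectionality, inseparability, absence of facing triples) play no role here; only the infinitude of $\calW$ will be used.

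In case (i), Lemma \ref{lem:hyperplane char} identifies each $\hh \in \calW$ with $\supp(\hh) = U$ with a distinct tree-hyperplane of the simplicial tree $\hT_U$ (using the simplicial structure from Corollary \ref{cor:simplicial structure}). The valence of $\hT_U$ at every vertex is uniformly bounded in terms of $\calX$ and $|F \cup \Lambda|$, so the existence of infinitely many distinct tree-hyperplanes forces $\hT_U$ to have infinite diameter, and moreover to contain pairs of edges at arbitrarily large pairwise distance. Transferring this through the $1$-Lipschitz quotient $q_U : T_U \to \hT_U$ and the quasi-isometric embedding $\phi_U : T_U \to \calC(U)$ yields $\diam_U(\Dust_U(\calW_U)) = \infty$, so $U \in \supp(\calW)$.

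In case (ii), pigeonholing over the finite collection of pairs in $F \cup \Lambda$ produces an infinite subset $\calV_0 \subset \calV$ contained in $\Rel_K(a,b)$ for some fixed $a, b \in F \cup \Lambda$. Lemma \ref{lem:detecting rays} then furnishes an infinite subcollection $\calV' \subset \calV_0$ and a domain $W \in \supp(a) \cup \supp(b)$ with $V \nest W$ for every $V \in \calV'$ and
\[
\diam_W\Bigl(\bigcup_{V \in \calV'} \rho^V_W\Bigr) = \infty.
\]
Such a $W$ lies in $\calU$ because $W \in \supp(\lambda)$ for the corresponding $\lambda \in \Lambda$ forces $\diam_W(\lambda) = \infty$ and hence $W \in \Rel_K(F \cup \Lambda) = \calU$. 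Discarding the at most one index with $V = W$, every remaining $V \in \calV'$ satisfies $V \sqsubsetneq W$, and any hyperplane $\hh \in \calW$ supported on $V$ contributes $\Dust_W(\hh) = \rho^V_W$ to $\Dust_W(\calW_W)$ by Definition \ref{defn:dust}. Hence $\Dust_W(\calW_W)$ has infinite diameter in $\calC(W)$, giving $W \in \supp(\calW)$.

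The only substantive technical input is Lemma \ref{lem:detecting rays}, itself a consequence of the Strong Passing-Up Proposition \ref{prop:SPU}; this is where actual hierarchical content enters the argument. Case (i) is by contrast a purely combinatorial observation about simplicial trees of bounded valence. The one minor subtlety to verify in writing up the details is that one genuinely has \emph{distinct} tree-hyperplanes in case (i), which follows directly from the definition $\hh_U = \hpi_U^{-1}(h_U)$ of $\calQ$-hyperplanes in Section \ref{subsec:walls and halfspaces}: two $\calQ$-hyperplanes with the same support are equal if and only if their underlying tree-hyperplanes coincide.
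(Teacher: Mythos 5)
Your proof is correct and follows essentially the same dichotomy as the paper: either some single $U \in \calU$ is the exact support of infinitely many hyperplanes in $\calW$ (in which case the corresponding tree-hyperplanes already spread to infinite diameter in $\hT_U$, hence in $\calC(U)$), or the set of distinct support labels is infinite and a passing-up argument produces a domain $W$ in which the $\rho$-sets spread to infinite diameter. The one small refinement in your write-up is that you invoke Lemma~\ref{lem:detecting rays} rather than citing Strong Passing-Up (Proposition~\ref{prop:SPU}) directly, as the paper does; this is the cleaner reference, since a single application of Strong Passing-Up only gives arbitrarily large \emph{finite} diameter, and the iteration needed to reach infinite diameter is precisely what Lemma~\ref{lem:detecting rays} packages. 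Your added remark in case (i) that uniformly bounded valence is what converts ``infinitely many edges'' into ``infinite diameter'' is a useful detail the paper leaves implicit.
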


\begin{proof}
We claim that since $\calW$ is infinite, there exists $U \in \calU$ so that $\calW_U$ is infinite.  Now, if $\#\{\hh \in \calW| \supp(\hh) = U\} = \infty$ for some $U \in \calU$, then $U \in \supp(\calW)$ and we are done as the associated tree-hyperplanes must have infinite diameter in $\hT_U$ and hence $\diam_U(\Dust_U(\calW)) = \infty$.  On the other hand, if $\#\{\hh \in \calW| \supp(\hh) = U\}<\infty$ for all $U \in \calU$, then there exists an infinite subset $\calV \subset \calW$ so that if $\hh \neq \hh' \in \calV$, then $\supp(\hh) \neq \supp(\hh')$.  Passing to an infinite subset if necessary, we may assume that $\supp(\calV) \subset \Rel_K(a,b)$ for some fixed $a,b \in F \cup \Lambda$.  In this case, Strong Passing-up \ref{prop:SPU} implies that there exists $W \in \calU$ and an infinite subset $\calV' \subset \calV$ so that $Z \nest W$ for all $Z \in \supp(\calV')$, and $\diam_W(\bigcup_{Z \in \supp(\calV')} \rho^Z_W) = \infty$.  This completes the proof.
\end{proof}

The next goal is to prove that the support set of a minimal UBS contains a unique domain, and moreover that any such minimal UBS admits a ``ray'' representative in its equivalence class.

\begin{lemma}\label{lem:minimal spread}
Let $\calW$ be a minimal UBS in $\calQ$.  Then there exists $U \in \calU$ such that $\supp(\calW) = \{U\}$.  Moreover, for any $A>0$, there exists $\lambda \in \Lambda$ and a minimal UBS $\calA_A$ with $\calA_A \sim \calW$ so that
\begin{enumerate}
\item For all $\hh \in \calA_A$, we have $\supp(\hh) \nest U$ or $\supp(\hh) = U$.
\item $d_U(\Dust_U(\calA_A), \hull_U(F \cup \Lambda - \{\lambda\})) > A.$
\end{enumerate}
\begin{itemize}
\item In particular, we have that $\Dust_U(\calA_A)) \subset \calC(U)$ converges to $\pi_U(\lambda) \in \calC(U)$.
\end{itemize}
\end{lemma}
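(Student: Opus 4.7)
The plan is to first establish that $\supp(\calW)$ is a singleton, then identify $\lambda \in \Lambda$ from the accumulation of dust in that unique domain, and finally construct $\calA_A$ by trimming finitely many hyperplanes from a natural infinite subset of $\calW$ and closing up inseparably. Throughout, minimality of $\calW$ together with Lemma \ref{lem:closed UBS} is the principal leverage: any infinite subset of $\calW$ that happens to be a UBS must be almost equivalent to $\calW$, so partitioning arguments quickly reduce to finite exceptional sets.

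First I would take the nonemptiness of $\supp(\calW)$ for granted from Lemma \ref{lem:dust spread} and fix $U \in \supp(\calW)$. Set $\calW_U = \{\hh \in \calW \mid \supp(\hh) \nest U \text{ or } \supp(\hh) = U\}$. Because $\Dust_U(\calW - \calW_U)$ is controlled by the bounded geodesic image property (item (5) of Lemma \ref{lem:tree control}) together with the standard HHS consistency estimates, all but finitely many contributions to the infinite-diameter set $\Dust_U(\calW)$ come from $\calW_U$. Hence $\calW_U$ is infinite, and by Lemma \ref{lem:closed UBS} its inseparable closure $\overline{\calW_U} \subset \calW$ is a UBS, forcing $\overline{\calW_U} \sim \calW$ by minimality. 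To show uniqueness of the support, I would suppose $V \in \supp(\calW)$ with $V \neq U$ and argue a contradiction by cases: when $V \perp U$, the HHS axioms prevent any single domain from nesting into both, so $\calW_U \cap \calW_V = \emptyset$, yet both are cofinite in $\calW$; when $V$ is nested in or transverse to $U$, the BGI property (now applied in $\calC(V)$) forces $\Dust_V(\calW_U)$ to be bounded, so the infinite-diameter dust in $\calC(V)$ would need to come from $\calW - \calW_U$, which is finite. With the support established as $\{U\}$, the set $\Dust_U(\calW_U) \subset \calN_{2E}(\phi_U(T_U))$ (by Lemma \ref{lem:dust close}) must accumulate on a single end of $\phi_U(T_U)$; since the infinite ends of $T_U$ correspond bijectively to elements $\lambda \in \Lambda$ with $U \in \supp(\lambda)$, this pins down a unique $\lambda$ with $\Dust_U(\calW_U)$ converging to $\pi_U(\lambda)$.

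For the construction of $\calA_A$, given any $A > 0$ I would take $\calW_U^A = \{\hh \in \calW_U \mid d_U(\Dust_U(\hh), \hull_U(F \cup \Lambda - \{\lambda\})) > A + C\}$ for a sufficiently large constant $C = C(\calX, |F \cup \Lambda|)$ absorbing the BGI error, and set $\calA_A = \overline{\calW_U^A}$. Since $\hull_U(F \cup \Lambda - \{\lambda\})$ does not point toward the end $\pi_U(\lambda)$ while $\Dust_U(\calW_U)$ escapes there, the exceptional set $\calW_U - \calW_U^A$ is finite, so $\calA_A \sim \calW$ by minimality. The main obstacle is verifying that the closure $\overline{\calW_U^A}$ respects both conclusions (1) and (2): any added hyperplane $\hh$ separates some pair $\hh_1, \hh_2 \in \calW_U^A$, and one must use the case analysis of Lemma \ref{lem:hs nest} (combined with the fact that half-spaces of hyperplanes with supports not nested in $U$ cannot lie strictly between half-spaces with supports in $\calW_U$ without violating the orthogonality and BGI constraints from Lemma \ref{lem:collapsed tree control}) to force $\supp(\hh) \sqsubseteq U$ and to control $\Dust_U(\hh)$; the constant $C$ is precisely chosen to accommodate the additional BGI slack introduced by these separating hyperplanes.
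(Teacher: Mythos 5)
Your overall strategy — pass from $\calW$ to $\calW_U$, take the inseparable closure, invoke minimality via Lemma \ref{lem:closed UBS}, then trim by a threshold $A$ and re-close — is the same as the paper's. However, your proposed proof of the singleton support statement has real gaps, and the order of steps is subtly off.

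First, the phrase ``$\Dust_U(\calW - \calW_U)$ is controlled by the bounded geodesic image property'' is ill-posed: by Definition \ref{defn:dust}, $\Dust_U$ is only defined on $\calW_U$, so this quantity does not exist. You appear to be conflating the dust of a hyperplane in $\calC(U)$ (defined only when its support nests into $U$) with some auxiliary projection, and no correct version of that claim is supplied. Second, in the uniqueness case analysis, the subcase $V \perp U$ asserts that ``both $\calW_U$ and $\calW_V$ are cofinite in $\calW$,'' but you only established that $\calW_U$ is infinite, whence $\overline{\calW_U}$ is cofinite by minimality — the non-closed sets $\calW_U, \calW_V$ themselves need not be cofinite, and disjointness of the non-closed sets gives no contradiction unless you first control the supports of hyperplanes introduced by the closure. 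The subcase $V \nest U$ or $V \pitchfork U$ invokes ``BGI in $\calC(V)$ forces $\Dust_V(\calW_U)$ to be bounded,'' which again does not parse: when $V \nest U$ one has $\calW_V \subset \calW_U$, and the dust of $\calW_V$ in $\calC(V)$ can well be unbounded; what is actually bounded is the dust of $\calW_V$ back in $\calC(U)$ (coarsely $\rho^V_U$), and extracting the contradiction from that requires the closure-control statement you defer.

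The structural issue is that you try to prove $|\supp(\calW)| = 1$ before the closure control (the paper's Claims \ref{claim:dust nest} and \ref{claim:dust contain}), but the contradiction in each of your cases actually needs that closure control: one must know that the inseparable closure of an infinite subset of $\calW_U$ has all supports nested into $U$ before one can intersect two such closures for $U$ and $V$ and force a contradiction via the projection axioms. The paper sidesteps this ordering problem by not proving uniqueness as a separate first step: it goes directly to selecting an infinite $\calV_A \subset \calW_U$ whose dust escapes toward $\pi_U(\lambda)$ while staying $A$-far from $\hull_U(F \cup \Lambda - \{\lambda\})$, takes the inseparable closure $\oV_A$, and proves (via Proposition \ref{prop:pocset}, Lemma \ref{lem:wall separation}, and the collapsed-tree BGI property) that every added separating hyperplane has support nested into $U$ and dust within a uniform $\zeta$ of the hull of the dust of the separated pair. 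The singleton support then drops out implicitly, since any competing $V \in \supp(\calW)$ would produce a second cofinite subset with all supports nested into $V$, and their intersection forces the dust in $\calC(U)$ of a cofinite subset of $\calA_A$ to sit near $\rho^V_U$, contradicting convergence toward $\pi_U(\lambda)$. A related smaller slip: your assertion that ``the exceptional set $\calW_U - \calW_U^A$ is finite'' is not established (all you know is that the dust of some infinite subset escapes, not that it all does) and is not needed — once $\calW_U^A$ is infinite, $\overline{\calW_U^A} \sim \calW$ follows from minimality alone. If you rewrote your argument to prove the closure-support claim first, then apply it to derive uniqueness, the proposal would essentially match the paper.
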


\begin{proof}

Let $U \in \supp(\calW)$.  By Lemma \ref{lem:dust close} and the definition of $U \in \supp(\calW)$, there exists $\lambda \in \Lambda$ and $f \in F$ so that $\Dust_U(\calW_U)$ spreads out along a ray from $f$ to $\pi_U(\lambda)$ in $\calC(U)$, i.e.,
$$\diam_U(\Dust_U(\calW_U) \cap \calN_{2E}(\hull_U(f, \pi_U(\lambda)))) = \infty.$$

In particular, we can choose an infinite sequence $\calV \subset \calW_U$ so that $\Dust_U(\calV)$ represents $[\pi_U(\lambda)]\in \partial \calC(U)$.

As such, for any $A>0$, we can choose an infinite subsequence $\calV_A \subset \calV$ so that $\Dust_U(\calV_A)$ still represents $[\pi_U(\lambda)]\in \partial \calC(U)$, while also
$$d_U(\Dust_U(\calV_A), \hull_U(F \cup \Lambda - \{\lambda\}))> A.$$

Observe that since $\calW$ is a UBS, we have that $\calV_A$ is unidirectional, infinite, and without any facing triples.  Hence by Lemma \ref{lem:closed UBS}, we have that its inseparable closure $\oV_A$ is a minimal UBS which is almost equivalent to $\calW$.  It remains to prove that $\oV_A$ satisfies the rest of the conclusion in the lemma.

\begin{claim}\label{claim:dust nest}
For all $\hh \in \oV_A$, we have $\supp(\hh) \sqsubseteq U$.
\end{claim}

By definition of $\hh \in \oV_A$, there exist $\oQ$-hyperplanes $\hh_B, \hh_C \in \calV_A$ which are separated by $\hh$.   Let $V = \supp(\hh), B = \supp(\hh_B),$ and $C = \supp(\hh_C)$.

Observe that since $\hh$ separates $\hh_B$ from $\hh_C$, Proposition \ref{prop:pocset} implies that $B,C$ are not orthogonal to $V$.  Since both $B,C \sqsubseteq U$, it follows that $U$ and $V$ are not orthogonal.  Since we are also done if either $V \sqsubseteq B$ or $V \sqsubseteq C$ holds, we have that one of $B \nest V$ or $B \pitchfork V$ holds, and similarly either $C \nest V$ or $C \pitchfork V$ holds.  In particular, $B,C$ have well-defined relative projections to $V$.

Finally, note that if $U \nest V$ or $U \pitchfork V$, then $\hd^B_V = \hd^C_V = \hd^U_V$ because $B, C \nest U$, whereas Lemma \ref{lem:wall separation} requires that $\hpi_V(\hh)$ separate $\hpi_V(\hh_B) = \hd^B_V$ from $\hpi_V(\hh_C) = \hd^C_V$, which is impossible.  Hence $V \sqsubseteq U$, proving the claim. \qed

The following claim now completes the proof of the lemma:

\begin{claim}\label{claim:dust contain}
There exists $\zeta = \zeta(\mathfrak S, |F\cup \Lambda|)>0$ so that if $\hh \in \oV_A$ separates $\hh_B, \hh_C \in \calV_B$, then $\Dust_U(\hh) \subset \calN_{\zeta}(\hull_U(\Dust_U(\hh_B) \cup \Dust_U(\hh_C)))$.
\end{claim}
 
As above, let $V = \supp(\hh), B = \supp(\hh_B),$ and $C = \supp(\hh_C)$.  Let $\hx \in \hh_B$ and $\hy \in \hh_C$, so that $\hx_V \in \hpi_V(\hh_B)$ and $\hy_V \in \hpi_V(\hh_C)$.  Set $h_V = \hpi_V(\hh)$ and $h_U = \hd^V_U$ if $V \nest U$ and the tree-hyperplane $h \in \hT_U$ if $V = U$.

First suppose that $h_U, \hx_U, \hy_U$ are all distinct in $\hT_U$.  Then $h_U$ separates $\hx_U$ from $\hy_U$.  This is because $h_V$ separates $\hx_V, \hy_V$ in $\hT_V$, and is immediate when $V = U$, and otherwise when $V \nest U$, this follows from the BGI property (7) of Lemma \ref{lem:collapsed tree control} and $0$-consistency of $\hx,\hy$.  But this implies that $\Dust_U(\hh)$ coarsely separates $\Dust_U(\hh_A)$ from $\Dust_U(\hh_B)$ in $\calC(U)$, with the coarseness depending only on $\mathfrak S, |F \cup \Lambda|$, so we are done in this case.

Now suppose that $h_A, \hx_U,$ and $\hy_U$ are not distinct.  This forces the non-distinct $\hT_U$-coordinates to be at a cluster point, which forces $V \sqsubsetneq U$ and at least one of $A \sqsubsetneq U$ or $B \sqsubsetneq U$.

There are several subcases, involving similar arguments.  The first main subcase is when $A \pitchfork V$ and $B \pitchfork V$.  In this case, Lemma \ref{lem:partition} implies that $\hx_V = \hd^A_V$ and $\hy_V = \hd^B_V$.  But since $h_V$ separates $\hx_V$ from $\hy_V$, this says that $\hd^A_V \neq \hd^B_V$, which is impossible if $\rho^V_U \notin \mathcal \calN_{2E}(\hull_U(\rho^A_U \cup \rho^B_U))$.

For the remaining subcases, we may assume without loss of generality that $A \nest U$ and $\hd^A_U = \hd^V_U \neq \hy_U$.  The two cases are when $B = U$ and $B \nest U$, and we explain the former case, leaving the latter for the reader.

We may assume that $A \pitchfork V$, otherwise $d_U(\rho^A_U, \rho^V_U) < E$ and we are done.  Let $T_U$ denote the Gromov tree for $F \cup \Lambda$, and let $y_U = q^{-1}_U(\hy_U)$.  Let $C$ be the cluster in $T_U$ containing both $\delta^V_U$ and $\delta^A_U$.  Assuming that $\rho^V_U \notin \mathcal \calN_{\zeta}(\hull_U(\rho^A_U \cup \phi^{-1}_U(y_U)))$ for some $\zeta = \zeta(\mathfrak S, |F \cup \Lambda|)>0$, there exits a $\nest_{\calU}$-minimal bipartite domain $W \in \calU$ so that $W \nest U$, $W \pitchfork U$, $W \pitchfork A$, and $\delta^W_U$ separates $\delta^V_U$ from $\delta^A_U$ and $y_U$ in $T_U$.

This, along with the BGI property (7) of Lemma \ref{lem:collapsed tree control} and Lemma \ref{lem:partition}, implies that $\hpi_W(\hh) = \hd^V_W$ and $\hd^A_W = \hy_W = \hx_W$ are at the two distinct endpoints of $\hT_W$.  But then $0$-consistency implies that $\hd^W_V = \hx_V = \hy_V$, which is a contradiction of the assumption that $h_V$ separates $\hx_V$ from $\hy_V$.  This completes the proof.

\end{proof}

\subsection{General UBSes}

Now let $\calW$ be any UBS in $\calQ$.  The goal is to use Lemma \ref{lem:minimal spread} above to study the components of a minimal decomposition of $\calW$ provided by Hagen's Theorem \ref{thm:minimal decomp}:

We begin with a simple hierarchical fact:

\begin{lemma}\label{lem:force orth}
Suppose that $V,W,U,Z \in \mathfrak S$ so that $V, W \nest U$ with $d_U(\rho^V_U, \rho^W_U)>3E$, while also $Z \perp V$ and $Z \perp W$.  Then $Z \perp U$.
\end{lemma}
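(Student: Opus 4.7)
The plan is to rule out each of the four non-orthogonal possibilities for how $Z$ relates to $U$, namely $Z = U$, $U \nest Z$, $Z \nest U$, and $Z \pitchfork U$. The first two cases are immediate combinatorial contradictions, while the latter two use a $\rho$-set estimate built from the product region machinery of Subsection \ref{subsec:product region}.

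For $Z = U$: then $V \perp Z = U$ holds alongside $V \nest U$, which contradicts the clause of Axiom \ref{item:dfs_orthogonal} asserting that orthogonal domains are $\nest$-incomparable. For $U \nest Z$: transitivity of $\nest$ yields $V \nest U \nest Z$, hence $V \nest Z$, which contradicts $V \perp Z$ by the same clause. Both eliminations are one-line structural arguments and serve as a warm-up for the real work.

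For the remaining cases $Z \nest U$ and $Z \pitchfork U$, the relative projection $\rho^Z_U \in \calC(U)$ is well-defined, and the goal is to show both $d_U(\rho^V_U, \rho^Z_U)$ and $d_U(\rho^W_U, \rho^Z_U)$ are uniformly small, so that the triangle inequality will contradict $d_U(\rho^V_U, \rho^W_U) > 3E$. The key idea is to exploit a common product region point. Since $V \perp Z$, Lemma \ref{lem:double prod} produces some $x \in \PP_V \cap \PP_Z$. Applying Lemma \ref{lem:prod coord} to $x \in \PP_V$ (using $V \nest U$) gives $d_U(\pi_U(x), \rho^V_U) \leq \theta$, while the analogous statement for $x \in \PP_Z$---which holds by Lemma \ref{lem:prod coord} when $Z \nest U$, and by the explicit formula $\pi_U(P_Z(\cdot,\cdot)) = \rho^Z_U$ from Subsection \ref{subsec:product region} when $Z \pitchfork U$---yields $d_U(\pi_U(x), \rho^Z_U) \leq \theta$. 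Hence $d_U(\rho^V_U, \rho^Z_U) \leq 2\theta$. Repeating the argument with some $y \in \PP_W \cap \PP_Z$ gives $d_U(\rho^W_U, \rho^Z_U) \leq 2\theta$, and the triangle inequality produces $d_U(\rho^V_U, \rho^W_U) \leq 4\theta$. Since $E$ in Section \ref{sec:constants} is chosen to absorb the consistency constant $\theta$, this contradicts $d_U(\rho^V_U, \rho^W_U) > 3E$, completing the case analysis.

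I do not anticipate any real obstacle beyond the routine bookkeeping of ensuring $3E$ strictly exceeds the product-region bound $4\theta$, which amounts to choosing $E$ large enough relative to $\theta$ as done throughout the paper. The conceptual content is simply that orthogonality data is transported up the $\nest$-lattice through $\rho$-sets via product regions: two orthogonal domains both nested into $U$ have $\rho$-sets that are close in $\calC(U)$.
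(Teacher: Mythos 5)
Your proof is correct and takes essentially the same approach as the paper's. The paper's own proof is terse---it simply asserts $d_U(\rho^Z_U,\rho^V_U) < E$ and $d_U(\rho^Z_U,\rho^W_U) < E$ without justification when $Z \nest U$ or $Z \pitchfork U$, leaving the reader to supply the standard argument---and you have correctly identified that the argument runs through the product-region machinery: $\PP_V \cap \PP_Z \neq \emptyset$ by Lemma \ref{lem:double prod} because $V \perp Z$, and any common point is uniformly close to both $\rho^V_U$ and $\rho^Z_U$ in $\calC(U)$. The only difference is cosmetic bookkeeping (you arrive at $\leq 4\theta$ rather than the paper's $<2E$), which, as you note, is absorbed by how $E$ dominates the smaller structural constants.
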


\begin{proof}
We cannot have $U \nest Z$ or $U = Z$.  And if $Z \nest U$ or $Z \pitchfork U$, then $d_U(\rho^Z_U, \rho^V_U)<E$ and $d_U(\rho^Z_U, \rho^W_U)<E$, showing $d_U(\rho^V_U, \rho^W_U)<2E$, which is a contradiction.  Hence $Z \perp U$ and we are done.
\end{proof}

\begin{lemma}\label{lem:orth decomp}
Let $\calW$ be a UBS in $\calQ$ and $\calW = \bigcup_{i=1}^n \calV_i$ its minimal decomposition.  Then $\supp(\calV_i) \perp \supp(\calV_j)$ for all $i \neq j$.
\end{lemma}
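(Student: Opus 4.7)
The plan is to reduce the statement, via Lemma \ref{lem:minimal spread} and Hagen's minimal decomposition theorem \ref{thm:minimal decomp}, to two successive applications of Lemma \ref{lem:force orth}. First, apply Lemma \ref{lem:minimal spread} to each minimal UBS $\calV_i$ in the decomposition: this produces a unique support domain $U_i \in \calU$ and, for any choice of parameter $A$, a representative $\calA_{i,A} \sim \calV_i$ all of whose hyperplanes are supported on a domain $W$ with $W = U_i$ or $W \nest U_i$, and whose dust $\Dust_{U_i}(\calA_{i,A}) \subset \calC(U_i)$ converges to a single point $\pi_{U_i}(\lambda_i) \in \partial \calC(U_i)$. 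Our task is to show $U_i \perp U_j$ for $i \neq j$.

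Fix $i \neq j$ and, without loss of generality, $i < j$, so that Theorem \ref{thm:minimal decomp} guarantees every hyperplane in $\calV_j$ crosses all but finitely many hyperplanes in $\calV_i$. Translating via Lemma \ref{lem:hyperplane char}(2), crossing of two $\calQ$-hyperplanes is equivalent to orthogonality of their support labels.

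For the first reduction, fix any $\hh \in \calA_{j,A}$ with support $W_j$; crossing yields $W_i \perp W_j$ for all but finitely many supports $W_i$ of hyperplanes $\hh' \in \calA_{i,A}$. If some such $W_i$ equals $U_i$, we obtain $U_i \perp W_j$ directly. Otherwise all but finitely many such $W_i$ satisfy $W_i \nest U_i$, and because $\Dust_{U_i}(\calA_{i,A})$ escapes to $\pi_{U_i}(\lambda_i) \in \partial \calC(U_i)$ we may pick two of them $W_i^{(1)}, W_i^{(2)}$ with $d_{U_i}(\rho^{W_i^{(1)}}_{U_i}, \rho^{W_i^{(2)}}_{U_i}) > 3E$. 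Lemma \ref{lem:force orth} then gives $W_j \perp U_i$. In either case, $W_j \perp U_i$ for every $W_j$ arising as the support of some hyperplane in $\calA_{j,A}$. For the second reduction, apply the same idea to $\calA_{j,A}$: if some hyperplane has support exactly $U_j$ we are done; otherwise Lemma \ref{lem:minimal spread} lets us pick supports $W_j^{(1)}, W_j^{(2)} \nest U_j$ with $d_{U_j}(\rho^{W_j^{(1)}}_{U_j}, \rho^{W_j^{(2)}}_{U_j}) > 3E$, and a final application of Lemma \ref{lem:force orth} with $U_j$ in the role of $U$ and $U_i$ in the role of $Z$ yields $U_i \perp U_j$.

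The only substantive new ingredient beyond the already-established Lemma \ref{lem:minimal spread} and Hagen's decomposition theorem is the elementary hierarchical fact Lemma \ref{lem:force orth}, which is in hand. The main work is the bookkeeping of the edge cases in which some hyperplane of $\calA_{i,A}$ or $\calA_{j,A}$ is supported exactly on its top domain $U_i$ or $U_j$; in each such case the conclusion follows immediately without invoking Lemma \ref{lem:force orth}, so these cases are simplifications rather than genuine obstacles.
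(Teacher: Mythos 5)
Your argument is correct and follows essentially the same route as the paper's proof: reduce via Lemma \ref{lem:minimal spread} to a single support domain $U_i$ per factor with a nested representative, translate the crossing condition from Theorem \ref{thm:minimal decomp} into label orthogonality via Lemma \ref{lem:hyperplane char}(2), and apply Lemma \ref{lem:force orth} twice, handling in each application the edge case where some hyperplane carries the top label $U_i$ or $U_j$ directly. The only cosmetic difference is the direction in which you unroll the cofiniteness: you fix $\hh \in \calA_{j,A}$ and let it cross cofinitely many hyperplanes in $\calA_{i,A}$, which is the literal form of Theorem \ref{thm:minimal decomp} as stated, whereas the paper fixes $\hh \in \calV_i$ and extracts a cofinite subset of $\calV_j$; both are fine since the crossing relation between $\calQ$-hyperplanes is determined purely by their labels, so the finitely many exceptions correspond to finitely many bad labels in either factor.
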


\begin{proof}

By Lemma \ref{lem:minimal spread}, we have that $\supp(\calV_i) = \{U_i\}$ for some $U_i \in \calU$ and each $i$.  In particular, $\supp(\calW) = \{U_1, \dots, U_n\}$ and any UBS almost equivalent to $\calW$ has the same support set by Hagen's Theorem \ref{thm:minimal decomp}.  Lemma \ref{lem:minimal spread} also provides a representative of each $\calV_i$ so that $\supp(\hh) \sqsubseteq U_i$ for each $\hh \in \calV_i$ and each $i$.

Let $\hh \in \calV_i$.  Then by Theorem \ref{thm:minimal decomp}, for any $j \neq i$, there exists a cofinite subset $\calV'_j \subset \calV_j$ so that $\supp(\hh) \perp \supp(\hh')$ for any $\hh' \in \calV'_j$.  Since $\supp(\calV'_j) = \{U_j\}$ and $\supp(\hh') \sqsubseteq U_j$ for all $\hh' \in \calV'_j$, either some $\hh' \in \calV'_j$ has $\supp(\hh') = U_j$, or there exists $\hh'_1, \hh'_2 \in \calV'_j$ whose $\rho$-sets are far apart in $\calC(U_j)$, and hence Lemma \ref{lem:force orth} implies that $\supp(\hh) \perp U_j$.

Now either there exists infinitely-many $\hh \in \calV_i$ with $\supp(\hh) = U_i$, or otherwise there exist $\hh_1, \hh_2 \in \calV_i$ whose $\rho$-sets are far apart in $\calC(U_i)$ and hence Lemma \ref{lem:force orth} implies that $U_i \perp U_j$.  Either way, we are done.
\end{proof}

The following proposition simply combines Lemmas \ref{lem:minimal spread} and \ref{lem:orth decomp} plus the uniqueness of the minimal decomposition of $\calW$ up to almost-equivalence from Theorem \ref{thm:minimal decomp}:

\begin{proposition}\label{prop:UBS decomp}
For any UBS $\calW$ in $\calQ$, any minimal decomposition $\calW = \bigcup_{i=1}^n \calV_i$ has the following properties:

\begin{enumerate}
\item For each $i$, there exists $U_i \in \calU$ so that $\supp(\calV_i) = \{U_i\}$.
\item For $i \neq j$, we have $U_i \perp U_j$.
\item For each $i$, the trail of dust $\Dust_{U_i}(\calV_i)$ converges to $\pi_{U_i}(\lambda_i) \in \partial \calC(U_i)$ for some $\lambda_i \in \Lambda$.
\end{enumerate}

\begin{itemize}
\item In particular, the $U_i$ and $\lambda_i$ are independent of the choice of representative of $\calW$ from its almost equivalence class. 
\end{itemize}
\end{proposition}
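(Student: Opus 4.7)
The plan is to stitch together three results from earlier in this section: Lemma \ref{lem:minimal spread} applied to each minimal factor, Lemma \ref{lem:orth decomp} for the pairwise orthogonality, and the uniqueness clause of Hagen's Theorem \ref{thm:minimal decomp} for the ``in particular'' invariance. The proposition is essentially a bookkeeping step that packages these three inputs into a single statement about arbitrary (not necessarily minimal) UBSes in $\calQ$.

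Concretely, I would first fix any minimal decomposition $\calW = \bigcup_{i=1}^n \calV_i$ provided by Theorem \ref{thm:minimal decomp}, which applies because $\calQ$ is finite dimensional (Corollary \ref{cor:dimension}). Applying Lemma \ref{lem:minimal spread} to each minimal UBS $\calV_i$ gives a unique support domain $U_i \in \calU$ with $\supp(\calV_i) = \{U_i\}$, which is (1). The same lemma produces a $\lambda_i \in \Lambda$ and, for every $A>0$, a representative $\calA_{A,i} \sim \calV_i$ whose dust in $\calC(U_i)$ stays at distance more than $A$ from $\hull_{U_i}(F \cup \Lambda - \{\lambda_i\})$ and hence converges to $\pi_{U_i}(\lambda_i) \in \partial\calC(U_i)$. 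Since $\calV_i$ and $\calA_{A,i}$ differ in only finitely many hyperplanes, $\Dust_{U_i}(\calV_i)$ agrees with $\Dust_{U_i}(\calA_{A,i})$ outside a finite set and therefore also converges to $\pi_{U_i}(\lambda_i)$, giving (3). Conclusion (2) is an immediate invocation of Lemma \ref{lem:orth decomp}.

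For the ``in particular'' clause, let $\calW = \bigcup_{j=1}^{n'} \calV'_j$ be any other minimal decomposition. Theorem \ref{thm:minimal decomp} yields $n' = n$ and, after reindexing, $\calV_i \sim \calV'_i$ for every $i$. The support data $\supp(\calV_i) = \{U_i\}$ is read off from infinite diameter of dust (Definition \ref{defn:dust support}), and the limit $\pi_{U_i}(\lambda_i) \in \partial\calC(U_i)$ is read off from the asymptotic behavior of an infinite set of dust points in $\calC(U_i) \cup \partial \calC(U_i)$; both features are unaffected by deleting or adding finitely many hyperplanes. Hence running the previous paragraph's argument on $\calV'_i$ produces the same $U_i$ and the same limit point $\pi_{U_i}(\lambda_i)$, and therefore the same $\lambda_i$. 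I do not anticipate any serious obstacle; the only mildly delicate point is verifying that the convergence in (3) applies to $\calV_i$ itself rather than merely to the rarefied representatives $\calA_{A,i}$, which is exactly where the ``finite symmetric difference preserves accumulation points'' observation enters.
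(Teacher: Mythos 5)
Your proposal is correct and takes essentially the same approach as the paper, which explicitly states that the proposition ``simply combines Lemmas \ref{lem:minimal spread} and \ref{lem:orth decomp} plus the uniqueness of the minimal decomposition of $\calW$ up to almost-equivalence from Theorem \ref{thm:minimal decomp}.'' Your extra remark about why convergence passes from the rarefied representatives $\calA_{A,i}$ to $\calV_i$ itself (finite symmetric difference preserves accumulation points) is a reasonable fleshing out of a step the paper leaves implicit.
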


\begin{remark}
In (3) above, the dust set $\Dust_{U_i}(\calV_i)$ is countable.  The claim that its \emph{trail converges} to $\lambda_i$ is that, given any enumeration, the corresponding sequence of points in $\calC(U_i)$ converges to $\lambda_i$.
\end{remark}

\subsection{Visibility of $0$-simplices and surjectivity}

As we will see below in Theorem \ref{thm:boundary iso}, the above structural results are enough to provide a simplicial embedding of $\partial_{\Delta} \calQ$ into $\partial H$.  To see surjectivity, we need to combinatorial rays, such as those constructed in Proposition \ref{prop:NR path construct}.

Recall from Lemma \ref{lem:ray UBS} above that if $\gamma$  is a combinatorial geodesic in $\calQ$, then the set $\calW(\gamma)$ of hyperplanes crossed by $\gamma$ determines a UBS $\calW(\gamma)$.

\begin{definition}[Visibility]\label{defn:visible}
A simplex $\nu$ of $\partial_{\Delta} \calQ$ is \emph{visible} if there exists a combinatorial geodesic ray $\gamma$ such that $\calW(\gamma)$ represents $\nu$.  We say $\partial_{\Delta} \calQ$ is \emph{fully visible} if every simplex is visible.
\end{definition}

The following lemma proves that every simplex in $\partial_{\Delta} \calQ$ is visible, and is moreover required for proving that our boundary map is surjective in Theorem \ref{thm:boundary iso} below. 

\begin{lemma}\label{lem:boundary surj}
Let $\Delta$ be a simplex in $\partial H$, with the $0$-simplices $\sigma_1, \dots, \sigma_n$ of $\Delta$ satisfying $\supp(\sigma_i) = U_i \in \calU$ and $\bp(\sigma_i) = \lambda_i \in \Lambda$ for each $i$.  Then there exists a combinatorial geodesic ray $\gamma$ in $\calQ$ so that the corresponding UBS $\calW(\gamma)$ has $\supp(\calW(\gamma)) = \{U_1, \dots, U_n\}$, and for each $i$, the trail of $\Dust_{U_i}(\calW(\gamma))$ in $\calC(U_i)$ converges to $\pi_U(\lambda_i)$ in $\partial \calC(U)$.
\end{lemma}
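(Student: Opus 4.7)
The plan is to construct a non-canonical tuple $\hy \in \calQ^{\infty}$ encoding the simplex $\Delta$, apply the Niblo-Reeves normal cube path construction (Proposition~\ref{prop:NR path construct}) via the isometry $\Dual:\calQ \to \Dual(\calQ)$ of Theorem~\ref{thm:dual} to obtain a combinatorial geodesic ray $\gamma$ from a fixed $\hf$ (with $f \in F$) to $\hy$, and finally verify that $\calW(\gamma) = \calW(\hf|\hy)$ has the asserted support and dust properties. Lemma~\ref{lem:ray UBS} then automatically gives that $\calW(\gamma)$ is a UBS, so only support and dust convergence need checking.

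\medskip

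\noindent \emph{Constructing $\hy$.} Fix $f \in F$ and define $\hy = (\hy_V)_{V \in \calU}$ domain-by-domain. For $V = U_i$, set $\hy_{U_i} = \hlam_{i, U_i} \in \partial \hT_{U_i}$. For $V \notin \{U_1, \dots, U_n\}$ non-orthogonal to some $U_i$, pick such a $U_i$ and set $\hy_V = \hd^{U_i}_V$ when $V \pitchfork U_i$ or $V \nest U_i$, and $\hy_V = \hd^{V}_{U_i}(\hlam_{i,U_i})$ when $U_i \nest V$, the latter being interpreted as the unique cluster or boundary point of $\hT_V$ at which the relative projections from domains nesting into $U_i$ with $\rho$-sets running out toward $\pi_{U_i}(\lambda_i)$ accumulate; equivalently, this is $\hPsi(\lambda_i)_V$. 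For $V$ orthogonal to every $U_i$, set $\hy_V = \hf_V$. Well-definedness (independence of the choice of $U_i$) uses pairwise orthogonality of $\{U_i\}$ together with Lemma~\ref{lem:force orth} and item~(10) of Lemma~\ref{lem:collapsed tree control}. Verifying $0$-consistency is a case analysis analogous to Proposition~\ref{prop:extended consistency}, leveraging the fact that each $\hlam_i$ is already $0$-consistent and that in the cases where two distinct $U_i,U_j$ interact with $V$, their orthogonality forces the relevant $\hd$-coordinates to collapse to marked cluster points via the BGI property. Non-canonicality is automatic from $\hy_{U_i} \in \partial \hT_{U_i}$, so $\hy \in \calQ^{\infty}$.

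\medskip

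\noindent \emph{Applying Niblo-Reeves and verifying properties.} With $\hx := \hf$, Theorem~\ref{thm:dual} identifies $\hy$ with a $0$-cube at infinity in $\Dual(\calQ)$, and Proposition~\ref{prop:NR path construct} produces a combinatorial geodesic ray $\gamma$ in $\calQ$ with $\calW(\gamma) = \calW(\hx|\hy)$. By Lemma~\ref{lem:wall separation}, the hyperplanes in $\calW(\gamma)$ labeled $V$ correspond bijectively to the tree-hyperplanes of $\hT_V$ separating $\hx_V$ from $\hy_V$. For $V = U_i$ this is an infinite family escaping out the end of $\hT_{U_i}$ representing $\hlam_{i, U_i}$, and by construction of $\hT_{U_i}$ from the Gromov modeling tree $\phi_{U_i}:T_{U_i} \to \calC(U_i)$, its dust traces a quasi-geodesic ray converging to $\pi_{U_i}(\lambda_i) \in \partial \calC(U_i)$. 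To obtain $\supp(\calW(\gamma)) \subseteq \{U_1, \dots, U_n\}$, fix $V \notin \{U_i\}$: if $V$ is orthogonal to every $U_i$ then $\hy_V = \hf_V$ and no hyperplane labeled $V$ is crossed; otherwise $\hy_V$ is a vertex of $\hT_V$ at finite distance from $\hf_V$, so only finitely many hyperplanes labeled $V$ appear in $\calW(\gamma)$. The contribution of hyperplanes with label $W \sqsubsetneq V$ to $\Dust_V(\calW(\gamma)_V)$ is controlled by the BGI property (item~(5) of Lemma~\ref{lem:tree control}): any infinite spread in $\calC(V)$ would, via Lemma~\ref{lem:detecting rays} and Strong Passing-up~\ref{prop:SPU}, detect a domain in $\supp(\hy) \cap \calU$, but by construction this set is exactly $\{U_1, \dots, U_n\}$, contradicting $V \notin \{U_i\}$.

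\medskip

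\noindent \emph{Main obstacle.} The principal difficulty lies in Step~1, specifically in verifying well-definedness and $0$-consistency of $\hy$ when a single domain $V$ interacts non-orthogonally with multiple $U_i$'s simultaneously; pairwise orthogonality of the $U_i$'s is essential here, combined with the reduction that in such configurations the candidate $\hy_V$ values must coincide with a common marked cluster point. A secondary but real obstacle is the support containment in Step~3, which requires invoking the strong passing-up machinery to rule out infinite dust contributions from domains nesting into $V$, analogously to arguments appearing in Section~\ref{sec:walls in Q}.
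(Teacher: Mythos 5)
Your overall strategy matches the paper's: build a non-canonical $0$-consistent tuple encoding $\Delta$, apply Niblo–Reeves to get a combinatorial geodesic ray $\gamma$ from $\hf$, and then verify support and dust convergence for $\calW(\gamma)$. However, there is a genuine gap in the construction step.

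You assert that $\hy_{U_i} = \hlam_{i,U_i} \in \partial \hT_{U_i}$, and then deduce non-canonicality and dust convergence from the ray ``escaping out the end of $\hT_{U_i}$''. But this is not always true. By Definition \ref{defn:collapsed tree}, the coordinate $\hlam_{i,U_i} = q_{U_i}(\lambda_{i,U_i})$ can be an \emph{interior cluster point} of $\hT_{U_i}$ rather than a boundary point: this happens precisely when the shadows of nested domains accumulate densely along the ray representing $\lambda_{i,U_i}$ in the Gromov tree $T_{U_i}$, so that the entire ray is absorbed into a single infinite-diameter cluster (this is exactly the phenomenon in the worked example of Subsection \ref{subsec:ray example}; see also Remark \ref{rem:collapsing rays}). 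In that case your argument fails twice: non-canonicality does not follow from $\hy_{U_i} \in \partial \hT_{U_i}$ (which is false), and there is no infinite family of $U_i$-labeled tree-hyperplanes producing the dust — the dust must instead come from an infinite sequence of hyperplanes labeled by $\nest_{\calU}$-minimal bipartite domains $W_k \nest U_i$ whose $\rho$-sets $\rho^{W_k}_{U_i}$ escape toward $\pi_{U_i}(\lambda_i)$, and one needs to observe that for these $W_k$ the coordinate $\hy_{W_k} = \hlam_{i,W_k}$ differs from $\hf_{W_k}$ (this is where one invokes the bipartite structure and a passing-up argument). The paper explicitly splits into these two cases and gives the separate argument for the collapsed-ray case; your proof needs that case too, since the statement must hold for all cubical models and the cluster separation constant cannot in general be chosen small enough to rule the collapse out.

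A secondary difference (not a gap): for showing $\supp(\calW(\gamma)) \subseteq \{U_1,\dots,U_n\}$, you argue directly that any $V \notin \{U_i\}$ has bounded dust, using Strong Passing-Up to control contributions from $W \sqsubsetneq V$. The paper instead first shows each $U_i \in \supp(\calW(\gamma))$, invokes Lemma \ref{lem:orth decomp} to get pairwise orthogonality of the support set, and then only has to rule out domains orthogonal to all $U_i$'s, where canonicality of $\hf$ gives the bound immediately. The paper's route is cleaner, but your direct approach should also close, assuming the case-(2) gap above is repaired so the dust analysis is complete.
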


\begin{proof}
Define a $0$-consistent tuple $\hx \in \calQ^{\infty}$ as follows:
\begin{itemize}
\item For each $i$, set $\hx_{U_i} = \hlam_{i,U_i}$, i.e., the coordinate of $\hlam_i$ in $\hT_{U_i}$.
\item For any $V \in \calU$ with $V \nest U_i$ or $V \pitchfork U_i$ for some $i$, set $\hx_V = \hd^{U_i}_V = \hlam_{i,V}$.
\item For any $V \in \calU$ with $U_i \nest V$ for some $i$, set $\hx_V = \hd^{U_i}_V = \hlam_V$.
\item Fix $f \in F$, and for any $V \in \calU$ with $V \perp U_i$ for all $i$, set $\hx_V = \hf_V$.
\end{itemize}

It is a straight-forward exercise to confirm that  $\hx$ is $0$-consistent, after observing that the fact that the $U_i$ are pairwise orthogonal means that, for instance, $\hd^{U_i}_V = \hd^{U_j}_V$ if $U_i, U_j \pitchfork V$.  Observe also that $\hx$ is not canonical, because for each $i$ either
\medskip

\begin{enumerate}
\item $\hlam_{i,U} \in \partial \hT_{U_i}$, or
\item  There exists an infinite-diameter cluster $C$ in $T_{U_i}$ which contains a ray limiting to $\lambda_{i,U} \in \partial T_{U_i}$.
\end{enumerate}
\medskip

In item (1), $\hx$ fails property (1) of Definition \ref{defn:Q consistent} of canonicality.  In item (2), there exists an infinite-diameter cluster $C$ in $T_U$ which contains a ray limiting to $\lambda_U \in \partial T_U$.  In this latter case, there is an infinite sequence of bipartite $\nest$-minimal domains $(W_k)$ so that, for all $k$, we have $W_k \nest U_i$ with one endpoint of $\hT_{W_k}$ labeled by $F \cup \Lambda - \{\lambda_i\}$ and the other endpoint labeled by $\lambda_i$, and hence coinciding with $\hlam_{W_k}$, which violates property (2) of the definition of canonicality.

Moreover, we can use the above cases to argue that $\calW(\hf|\hx)$ is infinite as follows:  In case (1), there exists an infinite sequence of hyperplanes labeled by $U_i$ which separate $\hf_{U_i}$ from $\hlam_{U_i}$, and in case (2), each domain in the sequence $(W_k)$ supports a hyperplane $\hh_k$ whose tree-hyperplane $h_k$ separates $\hf_{W_k}$ from $\hlam_{W_k}$, while also $\hh_k$ separates $\hh_{k-1}$ from $\hh_{k+1}$ in $\calQ$ for each $k\geq 2$ (this again uses the fact that each $W_k$ is bipartite and $\nest$-minimal).

Now apply Proposition \ref{prop:NR path construct} to produce a combinatorial geodesic ray $\gamma$ in $\calQ$, so that if $\calW(\gamma) = \calW(\hf|\hx)$.

We claim that $\supp(\calW(\gamma)) = \{U_1, \dots, U_n\}$.  To see this, observe that the above argument shows that $\diam_{U_i}(\Dust_{U_i}(\calW(\gamma))) = \infty$ for each $i$, and hence $U_i \in \supp(\calW(\gamma))$ for each $i$.  On the other hand, $\supp(\calW(\gamma))$ is pairwise orthogonal by Lemma \ref{lem:orth decomp}, so it suffices to eliminate candidate domains $V$ which satisfy $V \perp U_i$ for all $i$.  However, by construction $\gamma$ projects to $\hf_V$ for all $V \perp U_i$ for all $i$, and since $\hf$ is canonical (Definition \ref{defn:Q consistent}) by Proposition \ref{prop:hPsi defined}, we have that $\#\{V \in \calU| \hf_V \neq \ha_V\}< \infty$ for all $a \in F$.  Hence $\Dust_V(\calW(\gamma))$ is contained in a bounded neighborhood of $\pi_V(F)$ for all $V \perp U_i$ for all $i$, and thus no domain orthogonal to all of $\{U_1, \dots, U_n\}$ can be in the support of $\calW(\gamma)$.

Finally, we observe that for each $i$,  Proposition \ref{prop:UBS decomp} implies that $\Dust_{U_i}(\calW(\gamma))$ converges to some boundary point, and hence it must converge to $\pi_{U_i}(\lambda_i) \in \partial \calC(U_i)$ by construction and by the existence of sequences of hyperplanes of type (1) or (2) above.  This completes the proof.

\end{proof} 

As a corollary, we get:

\begin{corollary}\label{cor:visibility}
Every simplex in $\partial_{\Delta} \calQ$ is visible.  In particular, $\partial_{\Delta} \calQ$ is fully visible.
\end{corollary}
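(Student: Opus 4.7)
The plan is to pull the boundary data out of an arbitrary UBS representative, feed it through Lemma \ref{lem:boundary surj} to manufacture a candidate ray, and then argue the resulting ray-UBS lies in the same almost-equivalence class as the one we started with.

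Concretely, I would first take any simplex $\nu \in \partial_{\Delta}\calQ$ and choose a UBS representative $\calW$. Applying Proposition \ref{prop:UBS decomp}, I obtain a minimal decomposition $\calW = \bigsqcup_{i=1}^{n}\calV_i$ together with its invariant data $\{(U_i,\lambda_i)\}_{i=1}^{n}$, where $\supp(\calV_i) = \{U_i\}$, the domains $U_i$ are pairwise orthogonal, and $\Dust_{U_i}(\calV_i)$ converges to $\pi_{U_i}(\lambda_i) \in \partial\calC(U_i)$. The pairwise-orthogonality of the $U_i$ means that the collection of $0$-simplices $\sigma_i$ defined by $\supp(\sigma_i)=U_i$ and $\bp(\sigma_i)=\pi_{U_i}(\lambda_i)$ assembles, via Lemma \ref{lem:H simplex}, into a boundary simplex $\Delta$ of $\partial H$.

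Next I would apply Lemma \ref{lem:boundary surj} to $\Delta$ to produce a combinatorial geodesic ray $\gamma$ in $\calQ$ whose hyperplane set $\calW(\gamma)$ is a UBS (Lemma \ref{lem:ray UBS}) with $\supp(\calW(\gamma)) = \{U_1,\ldots,U_n\}$ and with $\Dust_{U_i}(\calW(\gamma))$ converging to $\pi_{U_i}(\lambda_i)$ for each $i$. By Proposition \ref{prop:UBS decomp}, $\calW(\gamma)$ therefore has the same invariant data as $\calW$; by the uniqueness clause of Hagen's Theorem \ref{thm:minimal decomp}, showing $\calW(\gamma)\sim\calW$ reduces to the following claim, which I isolate as the main obstacle: any two minimal UBSes $\calV,\calV'$ in $\calQ$ with $\supp(\calV)=\supp(\calV')=\{U\}$ and with dust trails converging to the same $\pi_U(\lambda)\in\partial\calC(U)$ are almost equivalent.

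I would prove this claim by arguing that $\calV\cup\calV'$ is itself a UBS. Unidirectionality is automatic because, by Lemma \ref{lem:minimal spread}, one can pass to representatives of $\calV$ and $\calV'$ whose dust in $\calC(U)$ is uniformly far from $\hull_U(F\cup\Lambda-\{\lambda\})$, so that every hyperplane in the union has almost every other hyperplane in the union on its $\lambda$-side. Inseparability reduces to passing to the inseparable closure, which lies in $\calQ$ and is still a UBS by Lemma \ref{lem:closed UBS} and Lemma \ref{lem:minimal spread}. For the absence of facing triples, the BGI property in item (7) of Lemma \ref{lem:collapsed tree control} together with the dust-convergence control forces the $\hpi_U$-images of any three such hyperplanes to lie in linear order along the ray of $\hT_U$ toward $\hlam_U$, which rules out facing configurations after the bounded portion near branch or cluster points is discarded. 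Once $\calV\cup\calV'$ is known to be a UBS containing the minimal UBS $\calV$, minimality forces $\calV\cup\calV'\sim\calV$, hence $\#(\calV'-\calV)<\infty$; the symmetric argument gives $\calV\sim\calV'$. Applying this factor-by-factor yields $\calW(\gamma)\sim\calW$, so $\gamma$ witnesses visibility of $\nu$, completing the proof. Full visibility of $\partial_{\Delta}\calQ$ then follows from Definition \ref{defn:visible}.
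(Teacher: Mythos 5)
Your proposal correctly identifies something the paper leaves implicit: Corollary~\ref{cor:visibility} does not follow literally from Lemma~\ref{lem:boundary surj} alone. Given a simplex $\nu$ of $\partial_{\Delta}\calQ$ with UBS representative $\calW$, Lemma~\ref{lem:boundary surj} produces a ray $\gamma$ whose UBS $\calW(\gamma)$ has the same invariant data $\{(U_i,\lambda_i)\}$ as $\calW$, but one must still verify $\calW(\gamma)\sim\calW$; this is exactly the (unargued) injectivity of $\partial\hO$ that the paper also leans on in Theorem~\ref{thm:boundary iso}. Your reduction, via the uniqueness clause of Theorem~\ref{thm:minimal decomp}, to the claim that a minimal UBS in $\calQ$ is determined up to almost-equivalence by its pair $(U,\lambda)$ is the right crux, and the broad structure matches the intended argument.

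Two details in your proof of that claim need repair. First, for inseparability you cite Lemma~\ref{lem:closed UBS} on the inseparable closure of $\calV\cup\calV'$, but that lemma presupposes that the set sits inside an ambient UBS, which is precisely what is being proved; as stated the appeal is circular. Second, the no-facing-triples argument via linear ordering of $\hpi_U$-images of hyperplanes along the $\hlam_U$-ray does not distinguish distinct hyperplanes labeled by $V_1,V_2\nest U$ with $\hd^{V_1}_U=\hd^{V_2}_U$ (the same cluster point), so one still needs an argument inside the cluster. A cleaner route, which also recovers the ambient UBS you are missing, is the following: using Lemma~\ref{lem:minimal spread} replace $\calV$ by a representative $\calA_A$ all of whose hyperplanes have support $\sqsubseteq U$ and whose dust is far past $\hull_U(F\cup\Lambda-\{\lambda\})$, and verify directly via the BGIA (Axiom~\ref{ax:BGIA}) and the form of the non-canonical tuple $\hx$ in the proof of Lemma~\ref{lem:boundary surj} that $\calA_A\subset\calW(\gamma)$ for $A$ large. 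Since $\supp(\calW(\gamma))=\{U\}$ forces a single factor in the minimal decomposition by Proposition~\ref{prop:UBS decomp}, $\calW(\gamma)$ is itself minimal up to finite modification, and Lemma~\ref{lem:closed UBS} applied with $\calW(\gamma)$ as the ambient UBS gives $\calA_A\sim\calW(\gamma)$, avoiding the union step altogether.
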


\subsection{The map $\partial_{\Delta} \calQ \to \partial H$}

With the above in hand, we can prove our isomorphism theorem:

\begin{theorem}\label{thm:boundary iso}
There exists a simplicial isomorphism $\partial \hO: \partial_{\Delta} \calQ \to \partial H$.  Moreover, $\partial \hO$ extends the map $\hO:\calQ \to H$ in the following sense:

\begin{itemize}
\item If $\calW$ is a UBS representing a simplex $\nu \subset \partial_{\Delta}(\calQ)$ with $U \in \supp(\calW)$ and $\Dust_U(\calW)$ converging to $\pi_U(\lambda) \in \partial \calC(U)$ for some $\lambda \in \Lambda$, then $\partial \hO(\nu)$ contains a boundary $0$-simplex $\sigma$ so that $\supp(\sigma) = U$ and $\bp(\sigma) = \pi_U(\lambda)$.  In fact, all $0$-simplices of $\partial \hO(\nu)$ arise in this way.
\end{itemize}
\end{theorem}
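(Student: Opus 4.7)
The plan is to define $\partial\hO$ on $0$-simplices using the structural analysis of minimal UBSes already carried out in Proposition~\ref{prop:UBS decomp}, and then extend to higher simplices via Hagen's minimal decomposition (Theorem~\ref{thm:minimal decomp}). Concretely, given a $0$-simplex $\nu \in \partial_\Delta \calQ$, choose any minimal UBS $\calW$ representing it. By Proposition~\ref{prop:UBS decomp}(1) and Lemma~\ref{lem:minimal spread}, there is a unique $U \in \calU$ with $\supp(\calW) = \{U\}$, and the dust $\Dust_U(\calW)$ converges to $\pi_U(\lambda) \in \partial\calC(U)$ for a unique $\lambda \in \Lambda$; the ``in particular'' clause of Proposition~\ref{prop:UBS decomp} guarantees that $(U,\lambda)$ does not depend on the representative. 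Define $\partial\hO(\nu)$ to be the boundary $0$-simplex $\sigma$ of $\partial H$ with $\supp(\sigma) = U$ and $\bp(\sigma) = \pi_U(\lambda)$; this is a legitimate $0$-simplex of $\partial H$ by Lemma~\ref{lem:H simplex}.

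For an arbitrary simplex $\mu \in \partial_\Delta\calQ$, take any minimal decomposition $\calW = \bigsqcup_{i=1}^n \calV_i$ of a representative and set $\partial\hO(\mu) = \{\partial\hO([\calV_1]),\dots,\partial\hO([\calV_n])\}$. The fact that this is actually a simplex of $\partial H$ is exactly Proposition~\ref{prop:UBS decomp}(2): the support domains $U_i$ are pairwise orthogonal, which is precisely the combinatorial condition for a collection of $0$-simplices to span a simplex of $\partial H$. The uniqueness clause in Theorem~\ref{thm:minimal decomp} ensures independence of the choice of minimal decomposition. Simpliciality is then immediate from the definitions: if $\nu \leq \mu$, pick representatives with $\calV \subset \calW$, complete $\calV$ to a minimal decomposition of itself and extend it to one of $\calW$ (possible by the structural uniqueness in Theorem~\ref{thm:minimal decomp}), and observe that this produces a face inclusion of vertex sets under $\partial\hO$.

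Surjectivity onto $\partial H$ is handled by Lemma~\ref{lem:boundary surj}: given any simplex $\Delta \in \partial H$ with $0$-simplices determined by data $(U_i,\pi_{U_i}(\lambda_i))$, Lemma~\ref{lem:boundary surj} furnishes a combinatorial geodesic ray $\gamma \subset \calQ$ whose UBS $\calW(\gamma)$ has support exactly $\{U_1,\dots,U_n\}$ with dust in each $\calC(U_i)$ converging to $\pi_{U_i}(\lambda_i)$. The minimal decomposition of $\calW(\gamma)$ therefore yields a simplex $\mu \in \partial_\Delta\calQ$ with $\partial\hO(\mu) = \Delta$. Full visibility (and hence the second stated conclusion of the theorem) drops out as Corollary~\ref{cor:visibility} of this construction.

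The main obstacle is injectivity on $0$-simplices, since surjectivity and simpliciality are essentially formal consequences of the structural results already in place. The issue is to show that two minimal UBSes $\calW, \calW'$ with the same associated data $(U,\lambda)$ satisfy $\calW \sim \calW'$. My plan is to argue as follows: apply Lemma~\ref{lem:minimal spread} with an arbitrarily large threshold $A$ to both $\calW$ and $\calW'$ to extract minimal representatives $\calA_A, \calA'_A$ whose hyperplanes are all labeled by domains nested in $U$ (or equal to $U$) and whose dust is pushed at least $A$-far from $\hull_U(F \cup \Lambda - \{\lambda\})$. Since both trails of dust converge to $\pi_U(\lambda)$ and lie within $2E$ of $\phi_U(T_U)$ by Lemma~\ref{lem:dust close}, they track a common $(1,20\delta)$-quasi-geodesic ray in $\calC(U)$ representing $\pi_U(\lambda)$, and the hyperplanes of each with a given label are pinned down (up to bounded ambiguity) by the position of their dust. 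The inseparable closure property, combined with the fact that any hyperplane separating two elements of $\calA_A$ must itself be nested in $U$ with its dust close to this common ray (Claims in the proof of Lemma~\ref{lem:minimal spread}), will force $\calA_A$ and $\calA'_A$ to coincide up to finitely many hyperplanes. Letting $A \to \infty$ and using the uniqueness portion of Theorem~\ref{thm:minimal decomp} then yields $\calW \sim \calW'$, completing injectivity.
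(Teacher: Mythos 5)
Your treatment of well-definedness, simpliciality, and surjectivity (via Lemma~\ref{lem:boundary surj} and Corollary~\ref{cor:visibility}) matches the paper's proof essentially step for step, so those parts are fine. You are also right to single out injectivity as the point requiring care: the paper's own proof asserts that $\partial\hO$ ``clearly respects the simplicial structures $\ldots$ and hence defines a simplicial embedding'' without arguing why two distinct almost-equivalence classes of minimal UBSes cannot share the same data $(U,\pi_U(\lambda))$, so flagging this is a legitimate observation rather than an overcomplication.

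The sketch you give for injectivity does not, however, close the gap. First, the claim that bounded ambiguity in dust positions ``will force $\calA_A$ and $\calA'_A$ to coincide up to finitely many hyperplanes'' is unsupported: the claims in Lemma~\ref{lem:minimal spread} (Claims~\ref{claim:dust nest} and \ref{claim:dust contain}) constrain hyperplanes separating two elements of a \emph{single} minimal UBS, and do not by themselves compare two a priori distinct UBSes; moreover, the inseparable closure you invoke is only shown (Lemma~\ref{lem:closed UBS}) to behave well inside a given ambient UBS, and you have not yet exhibited one containing both $\calA_A$ and $\calA'_A$. Second, the closing appeal to the uniqueness clause of Theorem~\ref{thm:minimal decomp} is circular: that clause presupposes $\calW \sim \calW'$ and compares their minimal decompositions, which is precisely what injectivity asks you to establish. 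Finally, ``letting $A \to \infty$'' adds nothing, since for any single $A$ the conclusion $\calA_A \sim \calA'_A$ would already give $\calW \sim \calW'$. A workable route is: use Lemma~\ref{lem:minimal spread} with $A$ large to show that all but finitely many hyperplanes of $\calW$ (and of $\calW'$) actually separate $\hf$ from $\hlam = \hPsi(\lambda)$, so that both are almost-contained in the UBS $\calW(\hf \mid \hlam)$; then apply the minimal decomposition of $\calW(\hf \mid \hlam)$ together with Lemma~\ref{lem:orth decomp}, which says the supports of its factors are pairwise orthogonal and hence distinct, to conclude that $\calW$ and $\calW'$ must each be almost-equivalent to the unique factor supported on $U$, hence to each other.
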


\begin{proof}
Let $\nu$ be a simplex in $\partial_{\Delta} \calQ$.  We define a simplex $\partial \hO(\nu)$ in $\partial H$ as follows:

Let $\calW$ be a UBS representing $\nu$ with minimal decomposition $\calW = \bigsqcup_{i=1}^n \calV_i$.  We now use Proposition \ref{prop:UBS decomp}.

For each $i$, let $\supp(\calV_i) = \{U_i\}$ and let $\lambda_i \in \Lambda$ so that the trail of $\Dust_{U_i}(\calV_i)$ converges to $\pi_{U_i}(\lambda_i) \in \partial \calC(U_i)$.

Define a boundary $n$-simplex $\Sigma$ in $\partial H$ as follows:
\begin{itemize}
\item For each $i$, let $\sigma_i$ be the boundary $0$-simplex with $\supp(\sigma_i) = U_i$ and $\bp(\sigma_i) = \pi_{U_i}(\lambda_i) \in \partial \calC(U_i)$.
\item Set $\Sigma$ to be the unique $n$-simplex with $0$-simplices $\sigma_1, \dots, \sigma_n$.
\end{itemize}

Observe that $\Sigma$ is indeed a boundary $n$-simplex in $\partial H$, and moreover all of its $0$-simplices arise in the way described in the ``moreover'' part of the statement of the theorem.  Also observe that $\partial \hO$ clearly respects the simplicial structures of $\partial_{\Delta} \calQ$ and $\partial H$, and hence defines a simplicial embedding.

Finally, Lemma \ref{lem:boundary surj} immediately gives that $\partial \hO$ is surjective and hence an isomorphism. This completes the proof.

\end{proof}

\subsection{Equivariant boundary isomorphisms} \label{subsec:auto boundary}

In this subsection, we briefly observe that HHS automorphisms induce isomorphisms between simplicial boundaries of a cubical model and its isometric image.  This basically boils down to an application of Corollary \ref{cor:auto model}.

Recall that a map $h:X \to Y$ between cube complexes is a \emph{cubical isomorphism} if it induces a bijection between half-spaces that preserves inclusion and complementation.  The reader can confirm that isomorphic cube complexes have isomorphic simplicial boundaries.

\begin{corollary}\label{cor:auto boundary}
If $g \in \mathrm{Aut}(\calX, \mathfrak S)$ is an HHS automorphism, the isometry $\calQ \to \calQ_g$ provided by Corollary \ref{cor:auto model} is a cubical isomorphism and hence it extends to a simplicial isomorphism $\partial_{\Delta}\calQ \to \partial_{\Delta} \calQ_g$.
\end{corollary}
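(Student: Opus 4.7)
The plan is to leverage Corollary \ref{cor:auto model} to show that the isometry $\Theta_g \colon \calQ \to \calQ_g$ induced by $g$ respects the entire wall-space structure developed in Section \ref{sec:walls in Q}, from which cubical isomorphism and the boundary extension both follow formally.

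First, I would unpack Corollary \ref{cor:auto model} to record what $g$ preserves at the level of trees: $g$ gives a bijection $\calU \to g\cdot \calU$ preserving $\nest$, $\perp$, and $\pitchfork$, and isometries $\hT_U \to \hT_{gU}$ that send marked points $\ha_U$ to $\widehat{g\cdot a}_{gU}$ and cluster points $\hd^V_U$ to $\hd^{gV}_{gU}$. Consequently, $g$ sends each component hyperplane $h_U \subset \hT_U$ bijectively to a component hyperplane $g\cdot h_U \subset \hT_{gU}$, and hence induces a bijection between the $\oQ$-hyperplanes of $\calQ$ and those of $\calQ_g$ that sends $\hh_U = \bar h_U \cap \oQ$ to $\hh_{gU}$, with the two half-spaces of $\hh_U$ mapping to the two half-spaces of $\hh_{gU}$. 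By Lemma \ref{lem:hs nest}, the nesting of half-spaces in $\calQ$ is characterized entirely in terms of the relations in $\calU$, the positions of relative projections $\hd^V_U$, and marked points in each $\hT_U$, all of which are preserved by $g$. Thus $\Theta_g$ induces a pocset isomorphism from $(\calW(\oQ), \subset)$ to $(\calW(\oQ_g), \subset)$.

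Next, I would invoke Theorem \ref{thm:dual}, which says that $\calQ$ is canonically isometric to the Sageev dual $\Dual(\calQ)$ of its pocset, and similarly for $\calQ_g$. A pocset isomorphism between $\calW(\oQ)$ and $\calW(\oQ_g)$ induces an isomorphism between their dual cube complexes by Theorem \ref{thm:Sageev} (this is formal: consistent DCC ultrafilters on one pocset pull back to those on the other). Composing the dual identifications of Theorem \ref{thm:dual} with this induced cubical isomorphism yields that $\Theta_g \colon \calQ \to \calQ_g$ is a cubical isomorphism, establishing the first claim. The same argument extends to the $0$-cubes at infinity via part (2) of Theorem \ref{thm:dual}, so $\Theta_g$ sends $\calQ^{\infty}$ bijectively to $\calQ_g^{\infty}$.

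Finally, to extend to the simplicial boundary, I would observe that a cubical isomorphism induces a bijection between hyperplanes which preserves the relations used to define UBSes (Definition \ref{defn:UBS}): unidirectionality, inseparability, and facing triples are all expressible purely in terms of which hyperplanes separate which. Therefore $\Theta_g$ carries UBSes in $\calQ$ to UBSes in $\calQ_g$, preserves almost-equivalence (Definition \ref{defn:minimal UBS}), and preserves minimality and the containment relation on equivalence classes. By Definition \ref{defn:simplicial boundary}, this produces a simplicial isomorphism $\partial_{\Delta}\calQ \to \partial_{\Delta} \calQ_g$ extending $\Theta_g$. No step here is a serious obstacle; the only point that requires a bit of care is the explicit verification that each clause of Lemma \ref{lem:hs nest} is preserved under the labeling bijection induced by $g$, but this is immediate from the compatibility of $g$ with the relative projections $\hd^V_U$ stated in Corollary \ref{cor:auto model}.
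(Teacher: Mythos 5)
Your proof is correct and follows essentially the same route as the paper: both arguments establish that $g$ induces a bijection of half-spaces preserving complementation and (via Lemma \ref{lem:hs nest}) inclusion, then appeal to the fact that a cubical isomorphism induces a simplicial isomorphism of boundaries. You spell out more detail (the pocset isomorphism, Theorem \ref{thm:Sageev}, and the UBS-preservation step that the paper leaves as a remark just before the corollary), but the core mechanism is identical.
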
 

\begin{proof}
Since the half-spaces in the cubical structure of $\calQ$ and $\calQ_g$ are determined by the half-trees of the $\hT_U$ and $\hT_{gU}$, respectively, which are isometrically identified via $g$, its clear that the set of half-spaces are in bijective correspondence under $g$.  Moreover, this correspondence clearly preserves complementation and that is preserves nesting is an application of Lemma \ref{lem:hs nest}.  This completes the proof.
\end{proof}

With this corollary, we complete the proofs of the equivariance parts of Theorems \ref{thmi:main model} and \ref{thmi:main boundary} from the introduction.

\section{Capturing curve graph distance from cubical models}\label{sec:sep hyp}

In this section, we give an application of our refined cubical machinery.  The goal is to show that one can encode distance in the top-level hyperbolic space associated to many HHSes $\calX$ by the combinatorics of hyperplanes in the cubical models.  For this purpose, we will work in a slightly restricted class of HHSes, which notably includes all of the main examples, including all HHGs.  We explain this context next.

\subsection{Unbounded products and the ABD structure}\label{subsec:ABD}

The following definitions come from joint work of the author with Abbott and Behrstock \cite{ABD}.

\begin{definition}\label{defn:BDD}
An HHS $\calX$ has 
\begin{itemize}
\item the \emph{bounded domain dichotomy} if there exists $B>0$ so that if $U \in \mathfrak S$ has $\diam(\calC(U))>B$, then $\diam(\calC(U)) = \infty$;
\item \emph{unbounded products} if for any $U \in \mathfrak S$ not $\nest$-maximal with $\diam(\mathbb{E}_U) = \infty$, we have $\diam(\mathbb{F}_U) = \infty$;
\item \emph{clean containers} if for each $U \in \mathfrak S$ and each $V \nest U$ with $\{Z \in \mathfrak S_V| Z \perp V\} \neq \emptyset$, then the associated container provided by the orthogonality axiom \eqref{item:dfs_orthogonal} is orthogonal to $V$.
\end{itemize}
\end{definition}

Note that all HHGs have the bounded domain dichotomy because of the requirement of a cofinite action of the set of domain labels, as do all of the main examples.  On the other hand, some natural HHS structures, like those on right-angled Coxeter groups, do not have unbounded products.  However, all of the main examples have the clean containers property, making the following meaningful \cite[Theorem 3.11]{ABD}: 

\begin{theorem}\label{thm:ABD}
Any hierarchically hyperbolic space $\calX$ with clean containers satisfying the bounded domain dichotomy admits an HHS structure with unbounded products, clean containers, and the bounded domain dichotomy.
\end{theorem}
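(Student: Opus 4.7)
The statement is \cite[Theorem 3.11]{ABD}, so the plan is to outline its proof strategy rather than execute every verification. The idea is to surgically remove the domains witnessing the failure of unbounded products.

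First, I would define the \emph{bad} domains
\[
\mathcal{B} = \{U \in \mathfrak{S} \mid U \text{ not } \nest\text{-maximal},\ \diam(\mathbb{E}_U) = \infty,\ \diam(\mathbb{F}_U) < \infty\},
\]
which are precisely the obstructions to unbounded products. Under the bounded domain dichotomy, $\diam(\mathbb{F}_U) < \infty$ forces $\diam(\mathbb{F}_U)$ to be \emph{uniformly} bounded by some $B'=B'(\calX)$, and in particular $\diam(\calC(V))$ is uniformly bounded for every $V \nest U$ (including $V=U$), since each such $\calC(V)$ embeds coarsely into $\mathbb{F}_U$. Let $\mathcal{B}^{\downarrow}$ consist of $\mathcal{B}$ together with every domain properly nested in some element of $\mathcal{B}$, and set $\mathfrak{S}' = \mathfrak{S} \setminus \mathcal{B}^{\downarrow}$, retaining the inherited relations $\nest, \perp, \pitchfork$, the hyperbolic spaces $\calC(V)$, the projections $\pi_V$, and the relative projections $\rho^V_W$ for $V, W \in \mathfrak{S}'$.

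Next, I would verify the HHS axioms for $(\calX, \mathfrak{S}')$. Most axioms (projection, nesting, transversality, consistency, finite complexity, bounded geodesic image, partial realization) are inherited automatically since every relation and projection that survives is identical to the original. Uniqueness is also inherited: projections into removed domains have uniformly bounded image, so they contribute at most an additive error to any uniqueness threshold. The two delicate axioms are orthogonality containers and large links. For orthogonality, given $V \in \mathfrak{S}'$ with a nonempty orthogonal complement inside $\mathfrak{S}'_V$, the clean containers hypothesis produces a container $W \in \mathfrak{S}$ with $W \perp V$; one must show $W \in \mathfrak{S}'$. This is where the clean containers hypothesis is essential: because $W \perp V$, one can argue that $\mathbb{E}_W$ contains $V$ together with something of infinite diameter coming from within $V$'s nested family, while $\mathbb{F}_W$ inherits enough of $V$'s nested structure to avoid being bounded, so $W \notin \mathcal{B}$. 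For large links, given $U \in \mathfrak{S}'$ with $d_U(x,y)$ large, the original axiom provides containers $\{T_i\} \subset \mathfrak{S}_U$; any $T_i \in \mathcal{B}^{\downarrow}$ has uniformly bounded $\calC(T_i)$ and so cannot witness a large projection, allowing us to replace it by its $\nest$-minimal container in $\mathfrak{S}'_U$ while absorbing the bounded error into the large links constants.

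Finally, I would confirm the three advertised properties. Unbounded products holds by construction: any $U \in \mathfrak{S}'$ that is not $\nest$-maximal with $\diam(\mathbb{E}_U) = \infty$ is not bad, hence $\diam(\mathbb{F}_U) = \infty$. The bounded domain dichotomy persists since no $\calC(V)$ for $V \in \mathfrak{S}'$ has been changed. Clean containers is preserved by the same argument used in verifying the orthogonality axiom above. The main obstacle is precisely the container-preservation step: one must show that the surgery $\mathfrak{S} \rightsquigarrow \mathfrak{S}'$ does not accidentally delete a domain whose role as a container or large-link witness is irreplaceable. The clean containers hypothesis is tailored to handle exactly this, which is why it is indispensable in the argument.
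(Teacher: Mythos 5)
The paper does not prove Theorem~\ref{thm:ABD}; it is cited directly from \cite[Theorem~3.11]{ABD}, so there is no in-paper argument against which to compare. Evaluating the proposal on its own terms, your overall strategy (delete the domains $U$ witnessing the failure of unbounded products together with everything properly nested below them) is plausible and can be made to work, but the justification you give for the delicate step — the orthogonality-container axiom — is incorrect.

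You write that, given $V \in \mathfrak{S}'$ with a nonempty orthogonal complement, clean containers gives $W \perp V$, and then argue $W$ survives because ``$\mathbb{F}_W$ inherits enough of $V$'s nested structure to avoid being bounded.'' But $W$ is \emph{orthogonal} to $V$; by Axiom~\ref{item:dfs_orthogonal}, anything nested in $W$ is orthogonal to $V$, so $\mathbb{F}_W$ is built from domains disjoint from $V$'s nested family and inherits nothing from it. There is no reason for $W$ to escape $\mathcal{B}^{\downarrow}$. The correct observation is the opposite: you do not need to show $W$ survives. If the original container $W$ lies in $\mathcal{B}^{\downarrow}$, then \emph{every} $Z \nest W$ also lies in $\mathcal{B}^{\downarrow}$ — if $W \in \mathcal{B}$, any $Z \sqsubsetneq W$ is nested below a bad domain, and if $W \sqsubsetneq U'$ with $U'$ bad, then $Z \sqsubsetneq U'$ too. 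Since the original container axiom puts every $Z \perp V$ with $Z \nest T$ inside $W$, removing $W$ removes the entire orthogonal complement of $V$ in $\mathfrak{S}'_T$, rendering the axiom vacuous for that pair. Thus the container axiom holds trivially whenever the inherited container is deleted, and with the inherited container otherwise; cleanness is preserved in the surviving case since orthogonality relations are unchanged.

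A second, unaddressed point: after surgery, the product regions $\mathbb{E}_U$ and $\mathbb{F}_U$ are recomputed over $\mathfrak{S}'$, so your appeal to ``any $U \in \mathfrak{S}'$ not $\nest$-maximal with $\diam(\mathbb{E}_U) = \infty$ is not bad, hence $\diam(\mathbb{F}_U) = \infty$'' quotes the \emph{old} products, not the new ones. This does go through, but it needs an argument: if $\diam(\mathbb{F}_U) = \infty$, the distance formula together with the bounded domain dichotomy produces some $V \nest U$ with $\diam(\calC(V)) = \infty$; such $V$ is neither bad nor nested below a bad domain (either would force $\diam(\calC(V)) < B$), so $V$ survives and witnesses $\diam(\mathbb{F}'_U) = \infty$. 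As written, the proposal silently conflates the old and new structures here.
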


For the rest of this section, we will refer to the HHS structure in Theorem \ref{thm:ABD} on an HHS with clean containers and the bounded domain dichotomy as the \emph{ABD structure}.

The following is the only consequence of this fact that we will need:

\begin{lemma}\label{lem:UP}
Suppose that $(\calX, \mathfrak S)$ is an HHS with the bounded domain dichotomy and unbounded products.  If $U \in \mathfrak S$ has $\diam \calC(U) = \infty$, then there exists $V \in \mathfrak S$ with $V \perp U$ so that $\diam(\calC(V)) = \infty$.
\end{lemma}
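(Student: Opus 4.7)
The plan is to work with the product region decomposition $\PP_U \simeq \bF_U \times \bE_U$ from Subsection \ref{subsec:product region}: first transfer unboundedness from $\calC(U)$, which sits on the $\bF_U$ side, over to the $\bE_U$ side via the unbounded products hypothesis, and then peel off a single unbounded orthogonal domain using the inherited HHS structure on $\bE_U$ together with the bounded domain dichotomy.

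First I would observe that since $U \nest U$, the space $\calC(U)$ appears as one of the hyperbolic factors whose consistent tuples define $\bF_U$, and partial realization supplies a uniform quasi-isometric embedding $\calC(U) \hookrightarrow \bF_U$. Hence $\diam \calC(U) = \infty$ forces $\diam \bF_U = \infty$. The unbounded products hypothesis (Definition \ref{defn:BDD}) then ties the two factors of $\PP_U$ together, yielding $\diam \bE_U = \infty$; implicit here is that $U$ is non-$\nest$-maximal, which is the only regime in which the conclusion of the lemma can have content. Next, $\bE_U$ inherits an HHS structure as a hierarchically quasi-convex subspace (Lemma \ref{lem:hqc induce}) with index set $\mathfrak S^\perp_U$ and hyperbolic spaces $\calC(W)$ for $W \in \mathfrak S^\perp_U$. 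Applying the Distance Formula \ref{thm:DF} to a pair of points in $\bE_U$ realizing arbitrarily large diameter, some coordinate term $[d_W(x,y)]_K$ must be unbounded, producing $W \in \mathfrak S^\perp_U$ with $\diam \calC(W) > B$, which the bounded domain dichotomy upgrades to $\diam \calC(W) = \infty$.

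If $W \perp U$, setting $V = W$ completes the proof. The main obstacle I anticipate is the case where the unbounded $W$ produced above is the container $A$ from the orthogonality axiom, which lies in $\mathfrak S^\perp_U$ but need not itself be orthogonal to $U$. I would handle this by iterating the argument on $A$: since $\calC(A)$ is now infinite and $A$ is strictly higher than $U$ in the $\nest$-lattice while still having every domain $\perp U$ nested beneath it, finite complexity (Axiom \ref{item:dfs_complexity}) bounds the length of the induction, and the chain of containers it produces must eventually terminate at a domain genuinely orthogonal to $U$. In the ABD structure provided by Theorem \ref{thm:ABD}, the clean containers property collapses this issue immediately, since the container for $U$'s orthogonal complement is then itself orthogonal to $U$, and so the recursion is not needed at all.
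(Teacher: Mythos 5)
Your proposal follows the same route as the paper's proof: pass from $\diam\calC(U)=\infty$ to $\diam\bF_U=\infty$, invoke unbounded products to get $\diam\bE_U=\infty$, then use the distance formula on the inherited HHS structure of $\bE_U$ together with the bounded domain dichotomy to extract an unbounded coordinate. The one place you go beyond the paper is in flagging the container issue, and you are right to worry: the distance-formula sum for $\bE_U$ runs over $\mathfrak S^\perp_U = \{W \mid W\perp U\}\cup\{A\}$, and the paper silently writes it as a sum over $V\perp U$, which only matches if $A\perp U$. Your iteration fix, however, does not obviously close the gap, because the step that makes it work — ``$A$ is strictly higher than $U$ in the $\nest$-lattice'' — is not guaranteed by the axioms. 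The container axiom only says that every $V\perp U$ nests into $A$; it says nothing about how $U$ relates to $A$, and one can have $U\pitchfork A$. Without $U\nest A$ you cannot conclude $V\perp A\Rightarrow V\perp U$, so peeling off a domain orthogonal to $A$ (or to a higher container $A'$) does not produce one orthogonal to $U$, and finite complexity alone does not steer the chain back toward $U$'s orthogonal complement. The observation that does resolve the issue is precisely the one you make at the end: in the ABD structure (Theorem \ref{thm:ABD}), clean containers gives $A\perp U$, so $A$ is already one of the candidates and the sum over $V\perp U$ is the full distance-formula sum. Since the lemma is only invoked in the section that fixes the ABD structure as a standing assumption, this is the intended reading; it would be cleanest to either state clean containers as a hypothesis of the lemma or drop the iteration and just cite the ABD structure directly.
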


\begin{proof}
If $\diam (\calC(U)) = \infty$, then the fact that $\calX$ is normalized (which we have made a blanket assumption, see Definition \ref{defn:normalized}) means that there exist $x, y \in \calX$ so that $d_U(x,y)$ is as large as we need.  In particular, the Distance Formula \ref{thm:DF} on $\mathbb{F}_U$ implies that $\diam(\mathbb{F}_U) = \infty$.  Since $\calX$ has unbounded products, we have $\diam(\mathbb{F}_U) = \infty$, and hence for any $n$ there exist $a_n,b_n \in \mathbb{E}_U$ with $d_{\mathbb{E}_U}(a_n,b_n)> n$.  Since $\mathbb{E}_U$ also has the bounded domain dichotomy with constant $B$, the distance formula on $\mathbb{E}_U$ then implies that for some sufficiently large $N_B$, we have
$$\sum_{V \perp U} [d_V(a_{N_B},b_{N_B})]_B > B,$$
which implies that for at least one $V \perp U$, we have $d_V(a_{N_B},b_{N_B})>B$, and hence $\diam( \calC(V)) = \infty$.  Since $U \perp V$, we are done.
\end{proof}

\subsection{Separated hyperplanes and the statement of the theorem}

Our goal is to encode curve graph distance via hyperplane combinatorics in cubical models.  In order to complete the setup, we need the following definition due to Genevois \cite{Gen_hyp}:

\begin{definition}[Separated hyperplanes]\label{defn:sep hyp}
Two disjoint hyperplanes $H_1,H_2$ in a CAT(0) cube complex $X$ are $L$-separated ($L \geq 0$) if the number of hyperplanes meeting them both and containing no facing triple has cardinality at most $L$.  A set $\{H_i\}$ hyperplanes is an \emph{$L$-separated chain} if each $H_i$ separates $H_j$ from $H_k$ for all $j<i<k$, and the set is pairwise $L$-separated.
\begin{itemize}
\item For any $x, y\in X^{(0)}$ and $L\geq 0$, we let $d^X_L(x,y)$ denote the length of any maximal $L$-separated chain of hyperplanes separating $x,y$.
\end{itemize}
\end{definition}

Notably, Genevois proves that this function $d^X_L$ defines a distance on the vertices of any CAT(0) cube complex $X$.  While we will not be able to define a new distance on an HHS using our current cubulation techniques, we can capture distance in the top-level hyperbolic space of a broad through restricted class of HHSes as follows:

\begin{theorem}\label{thm:sep hyp}
Let $(\calX,\mathfrak S)$ be an HHS with the bounded domain dichotomy equipped with its ABD structure, and $\nest$-maximal domain $S \in \mathfrak S$.  For any $x,y \in \calX$, there exists a cubical model $\calQ$ for $\hull_{\calX}(x,y)$, so that
$$d_{\calC(S)}(x,y) \asymp d^{\calQ}_0(x,y),$$
where the coarse constants associated to $\asymp$ depend only on the structure $(\calX, \mathfrak S)$.
\end{theorem}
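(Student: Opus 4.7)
The theorem asks for $d_{\calC(S)}(x,y) \asymp d^{\calQ}_0(x,y)$. The direction $d_{\calC(S)}(x,y) \prec d^{\calQ}_0(x,y)$ holds in every cubical model by (essentially) Lemma \ref{lem:sep LB}: since $S$ is $\nest$-maximal, axiom \ref{item:dfs_orthogonal} forbids any domain from being $\perp$-comparable with $S$, so by item (2) of Lemma \ref{lem:hyperplane char} no hyperplane crosses a hyperplane labeled by $S$, making any chain of $S$-labeled hyperplanes automatically $0$-separated. The plan is to combine this with the chain of $S$-labeled hyperplanes along the geodesic in $\hT_S$ from $\hx_S$ to $\hy_S$, using Tree Trimming \ref{thm:tree trimming} to prevent unnecessary cluster collapse, producing a $0$-separated chain of length coarsely $d_{\calC(S)}(x,y)$. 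The central work lies in the reverse direction $d^{\calQ}_0(x,y) \prec d_{\calC(S)}(x,y)$, which requires a specific choice of cubical model $\calQ_{x,y}$ to rule out fake separation (Figure \ref{fig:fake_sep}).

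For this reverse direction, I would introduce the notion of \emph{filling}: two domains $U_1, U_2 \in \mathfrak S$ fill if no $W \in \mathfrak S$ with $\diam \calC(W) = \infty$ satisfies $W \perp U_1$ and $W \perp U_2$. The ABD structure with its unbounded products (Lemma \ref{lem:UP}) makes this a meaningful notion. The key geometric lemma is that non-filling forces close $\rho$-sets in $\calC(S)$: if $U_1, U_2$ are non-filling with witness $W$, choose $z \in \PP_W \cap \PP_{U_i}$ (nonempty by Lemma \ref{lem:double prod}) and apply Lemma \ref{lem:prod coord} to $W \nest S$ and $U_i \nest S$ to obtain $d_S(\rho^{U_i}_S, \rho^W_S)$ bounded by a uniform constant, whence $d_S(\rho^{U_1}_S, \rho^{U_2}_S)$ is uniformly bounded as well. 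Contrapositively, $\rho$-sets far apart in $\calC(S)$ force filling. Since the BGIA \ref{ax:BGIA} places $\rho^{U_i}_S$ for each $U_i \in \calU$ within $E$ of any geodesic $[\pi_S(x), \pi_S(y)]$, a pairwise-filling collection $U_1, \dots, U_n \subset \calU$ yields $d_{\calC(S)}(x,y) \succ n$.

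By item (2) of Lemma \ref{lem:hyperplane char}, two hyperplanes with labels $U_1, U_2$ fail to be $0$-separated in any cubical model iff there is a hyperplane with label $W \in \calU$ satisfying $W \perp U_1, U_2$; such $W$ has $\diam \calC(W) \geq K$, so filling of $(U_1, U_2)$ always implies $0$-separation. Fake separation for a non-filling pair $(U_1, U_2)$ arises exactly when its witness $W \notin \calU$, i.e., $\diam_W(x,y) < K$. My plan for $\calQ_{x,y}$ is to begin with the standard cubical model from Theorem \ref{thm:model general} and use Tree Trimming \ref{thm:tree trimming} to collapse, within each tree $\hT^e_U$, the bounded-diameter subtrees generating the offending hyperplane pairs; the Bounding Containers Proposition \ref{prop:bounding containers} ensures that each non-$\calU$ witness $W$ contributes only boundedly-many bounded-diameter subtrees across the trees (controlled by $\diam_W(x,y) < K$), satisfying the Tree Trimming hypothesis and yielding a quasi-isometric cubical model. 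The main obstacle I anticipate is global coherence: the independent trimmings across the trees $\{\hT_U\}_{U \in \calU}$ must cohere so that every surviving $0$-separated pair in $\calQ_{x,y}$ either admits an orthogonal witness in $\calU$ (cutting the fake separation) or has filling labels. I plan to establish this via a passing-up argument combining Strong Passing-Up \ref{prop:SPU} with the filling-spread lemma, controlling the propagation of non-filling data through the $\nest$-lattice of $\calU$ and ensuring that the finitely-many rounds of trimming stabilize.
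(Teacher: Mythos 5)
Your identification of the key ingredients is correct: the filling notion, the observation (via Lemmas \ref{lem:double prod} and \ref{lem:prod coord}) that a non-filling pair has close $\rho$-sets in $\calC(S)$, the deduction via item (2) of Lemma \ref{lem:hyperplane char} that filling labels force $0$-separation, and the diagnosis that fake $0$-separation arises when the orthogonal witness $W$ lies outside $\calU$. This all matches the paper. Lemma \ref{lem:sep LB} also handles the lower bound; one small caution is that restricting to $S$-labeled hyperplanes would fail if $\hT_S$ degenerates under cluster collapse, which is precisely why the paper's proof of Lemma \ref{lem:sep LB} selects hyperplanes by the \emph{dust} of the chain (Definition \ref{defn:dust}) rather than by label, and why no trimming step is involved in that direction.

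The genuine gap is your construction of $\calQ_{x,y}$ for the upper bound. Fake $0$-separation is a deficiency of \emph{crossing}: the witness hyperplane labeled by $W$ is missing because $W \notin \calU$. The fix therefore must \emph{add} a crossing hyperplane witnessing non-separation, which is structurally the opposite of Tree Trimming \ref{thm:tree trimming}. Trimming collapses subtrees of $\hT^e_U$ and hence deletes or merges hyperplanes; it cannot introduce a new hyperplane orthogonal to $\hh_{U_1}$ and $\hh_{U_2}$. Deleting $\hh_{U_1}$ or $\hh_{U_2}$ also cannot be made coherent: each label $U_1$ can carry many hyperplanes, all with the same filling profile, and $U_1$ may simultaneously fill with some $U_3$ while failing to fill with $U_2$, so its hyperplanes cannot be sacrificed without destroying the lower bound. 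Your appeal to Bounding Containers \ref{prop:bounding containers} also bounds domains \emph{nesting into} $W$, not domains \emph{orthogonal to} $W$, which is the relevant set here. The "global coherence" obstacle you flag at the end is exactly where the argument breaks; no amount of passing-up salvages a trimming-based construction.

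What the paper does instead (Subsection \ref{subsec:well-sep cube}) is the inverse operation: it \emph{enlarges} the hierarchical family of trees by introducing, for each $U \in \calU - \{S\}$, a fresh \emph{firewall} domain label $V_U$ carrying a unit interval $\hT_{V_U} = [0,1]$, declared orthogonal to every $W$ that does not fill with $U$ and transverse (with constant relative projections) to every $W$ that does. The resulting index set $\calU'$ is still a hierarchical family of trees (Lemma \ref{lem:firewall HFT}), and its $0$-consistent set $\calQ'$ is quasi-isometric to $\calQ$, with the quasi-isometry constants controlled by Tree Trimming applied to the \emph{collapse} map $\calQ' \to \calQ$ in the reverse direction (Lemma \ref{lem:Q and Q'}). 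Each new hyperplane $\hh_{V_U}$ at $\frac{1}{2} \in \hT_{V_U}$ crosses every hyperplane whose label fails to fill with $U$, so in $\calQ'$ two hyperplanes are $0$-separated precisely when their labels fill (Proposition \ref{prop:firewall cubes}); combining with Lemma \ref{lem:filling UB} and Lemma \ref{lem:S hyp distance} gives the upper bound. This additive "firewall" construction is the missing idea in your proposal.
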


The lower bound is \cite[Theorem C]{DZ22}, though there we need to take some separation constant $L = L(\calX)\geq0$, as the limiting models from that paper do not encode domain relations in hyperplane labels, as is the case with our cubical models $\calQ$.  We give a short proof of the above better lower bound next in Lemma \ref{lem:sep LB} which does not require a special cubical model.  The upper bound requires adapting an observation from the world of surfaces.

\subsection{Lower bound}

The following is an improved version of \cite[Theorem C]{DZ22}:

\begin{lemma}\label{lem:sep LB}
Let $(\calX,\mathfrak S)$ be an HHS with the bounded domain dichotomy equipped with its ABD structure, and $\nest$-maximal domain $S \in \mathfrak S$.  For any $x \in \calX$ and $y \in \calX \cup \calX^{\infty}$ and any model $\calQ$ for $\hull_{\calX}(x,y)$, we have
$$d_{\calC(S)}(x,y) \prec d^{\calQ}_0(x,y),$$
with constants depending only on $\calX$ and the constants in the construction of $\calQ$.
\end{lemma}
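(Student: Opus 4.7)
The plan is to explicitly construct a $0$-separated chain of $\calQ$-hyperplanes separating $\hx = \hPsi(x)$ from $\hy = \hPsi(y)$ whose length is coarsely $d_{\calC(S)}(x,y)$. Any cubical model $\calQ$ is built from a Gromov modeling tree $\phi_S\colon T_S \to \calC(S)$ for the hyperbolic hull of $\pi_S(x), \pi_S(y)$, so $d_{T_S}(x_S, y_S) \asymp d_{\calC(S)}(x,y)$ (allowing the value $\infty$ when $y \in \calX^{\infty}$ has $S \in \supp(y)$). First I would take the unique geodesic (or geodesic ray) $\gamma_S$ in $T_S$ from $x_S$ to $y_S$ and subdivide it into consecutive subsegments $I_1, I_2, \ldots$ of length $10r$, where $r$ is the cluster separation constant from Subsection~\ref{subsec:shadows}.

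From each subsegment $I$ I would select a $\calQ$-hyperplane $\hh_I$ separating $\hx$ from $\hy$: if $I$ meets the edge part $T^e_S$ in a sub-interval of length at least $r$, then $q_S(I) \subset \hT_S$ contains a tree-hyperplane and yields an $S$-labeled hyperplane $\hh_I$; otherwise $I$ lies essentially inside a single cluster of $T^c_S$, and the $r$-density conclusion of Lemma~\ref{lem:BM dense} supplies a $\nest_{\calU}$-minimal bipartite domain $V_I \nest S$ with $V_I \in \calU$ and $\delta^{V_I}_S \subset I$, from which I pick any tree-hyperplane of the nondegenerate interval $\hT_{V_I}$ to produce a $V_I$-labeled $\hh_I$. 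Either way $\hh_I$ separates $\hx,\hy$ by Lemma~\ref{lem:wall separation}, and its distinguished point in $\hT_S$---the tree-hyperplane itself or the cluster point $\hd^{V_I}_S$---lies within $I$; transition subsegments straddling cluster/edge boundaries, of which there are $O(1)$ per cluster, are skipped at bounded additive cost.

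Next I would check that the family $\{\hh_I\}$ forms a $0$-separated chain. The chain property follows by applying Lemma~\ref{lem:hs nest} case-by-case to pairs of labels drawn from $\{S\} \cup \{V \nest S : V \in \calU\}$: the ordering of distinguished points along $\gamma_S$ forces the half-spaces of the $\hh_I$ containing $\hx$ to nest in the same order. For $0$-separation, suppose some $\calQ$-hyperplane $\hh_Z$ with $Z \in \calU$ crosses both $\hh_I$ and $\hh_J$ for $I \neq J$; Lemma~\ref{lem:hyperplane char}(2) gives $Z \perp W_I$ and $Z \perp W_J$ where $W_I, W_J$ are the labels. If either label equals $S$, then $Z \perp S$; but the ABD structure on $\calX$ is chosen so that the $\nest$-maximal $S$ has no unbounded orthogonals, hence by the bounded domain dichotomy $\diam \calC(Z)$ is bounded in terms of $\calX$ and $Z \notin \calU$ for our threshold $K$---a contradiction. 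Otherwise both labels $V_I, V_J \nest S$ satisfy $d_S(\rho^{V_I}_S, \rho^{V_J}_S) > 3E$ (guaranteed by the $10r$-subdivision for $r$ sufficiently large), and Lemma~\ref{lem:force orth} then forces $Z \perp S$, again a contradiction.

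The hardest part will be the meticulous case analysis for the chain property via Lemma~\ref{lem:hs nest}, handling every combination of labels $(S,S)$, $(S,V)$, and $(V_1,V_2)$ with $V_i \nest S$, together with pinning down the structural property of the ABD structure that no $\calU$-domain is orthogonal to the $\nest$-maximal $S$---this is exactly the point that allows strengthening from the $L$-separation lower bound of \cite[Theorem C]{DZ22} to $0$-separation. Granting these, the chain has $\asymp d_{\calC(S)}(x,y)/r$ elements for $r$ depending only on $\calX$, delivering the desired $d_{\calC(S)}(x,y) \prec d^{\calQ}_0(\hx,\hy)$. The case $y \in \calX^{\infty}$ with $S \in \supp(y)$ follows by subdividing the full ray in $T_S$ and producing an infinite $0$-separated chain, giving $d^{\calQ}_0(\hx,\hy) = \infty = d_{\calC(S)}(x,y)$.
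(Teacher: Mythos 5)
Your proposal is correct, but it takes a genuinely different route from the paper.  The paper's proof takes a combinatorial geodesic $\gamma'$ in $\calQ$ between $\hx$ and $\hy$ and invokes Claim~\ref{claim:dense shadow} of Proposition~\ref{prop:cube paths hp} (together with Lemma~\ref{lem:dust close}) to conclude that the dust of $\calW(\gamma')$ lies within uniform Hausdorff distance of a geodesic $[x,y]_S \subset \calC(S)$.  It then picks a net of hyperplanes from $\calW(\gamma')$ whose dust points are far apart in $\calC(S)$, and the $0$-separation is immediate: any hyperplane crossing two of them would have to be labeled by a domain orthogonal to both labels, which by Lemma~\ref{lem:force orth} (or directly) forces orthogonality to $S$, an impossibility.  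Your argument reproduces the same obstruction but builds the chain bottom-up, constructing hyperplanes by hand from the decomposition $T_S = T_S^e \sqcup T_S^c$ --- $S$-labeled hyperplanes from edge subsegments, $V$-labeled hyperplanes from cluster subsegments via the net of $\BM(C)$-shadows in Lemma~\ref{lem:BM dense}.  The paper's version is more economical because the dust-spread claim already packages the hard work; your version essentially re-derives that correspondence, which is why you anticipate a ``meticulous case analysis.''

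A few observations worth recording.  First, the chain-ordering verification via Lemma~\ref{lem:hs nest} is unnecessary overhead: pairwise disjoint hyperplanes that all separate the fixed pair $\hx, \hy$ can contain no facing triple (if $A_i$ denotes the half-space of $\hh_i$ avoiding the other two, the $A_i$ are pairwise disjoint, so $x$ and $y$ cannot be simultaneously separated by all three), and hence they automatically linearly order as a chain.  Second, your claim that transition subsegments are skipped ``at bounded additive cost'' should be ``bounded multiplicative cost'': the number of cluster/edge transitions along $\gamma_S$ grows with $d_{\calC(S)}(x,y)$, but since each retained subsegment contributes one hyperplane to the chain, the estimate $d_{\calC(S)}(x,y) \prec d^{\calQ}_0(\hx,\hy)$ survives with an adjusted multiplicative constant.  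Third, and worth noting explicitly: the $0$-separation obstruction in either proof reduces to the fact that no domain is orthogonal to the $\nest$-maximal $S$ (everything nests into $S$, and orthogonal domains cannot be $\nest$-comparable by Axiom~\eqref{item:dfs_orthogonal}) --- this holds in any HHS and does not use unbounded products or the ABD structure, which is only needed for the upper-bound direction in Proposition~\ref{prop:sep UB}.  Finally, you should state the case $y \in \calX^\infty$ with $S \notin \supp(y)$ explicitly; it follows from the same finite subdivision argument without modification.
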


\begin{proof}
Let $\gamma'$ be a combinatorial geodesic (ray) in $\calQ$ between $\hx = \hPsi(x)$ and $\hy = \hPsi(y)$, where $\hPsi:\hull_{\calX}(x,y) \to \calQ$ is a quasi-isometry from Corollary \ref{cor:Q qi to H} with constants depending only on $\calX$.  Let $\gamma = \hO(\gamma')$ be the corresponding cubical path (ray) between $x,y$ in $\calX$.  Let $\calW(\gamma')$ denote the set of hyperplanes $\gamma'$ crosses in $\calQ$.

It follows from Claim \ref{claim:dense shadow} of Proposition \ref{prop:cube paths hp} and Lemma \ref{lem:dust close} that $d^{Haus}_S(\Dust_S(\calW(\gamma')), [x,y]_S) < \alpha = \alpha(\calX, \calQ)$ is uniformly bounded in terms of $\calX$ and the construction of $\calQ$, where $[x,y]_S$ is any geodesic in $\calC(S)$ between $x,y$.

Choose a $(10\alpha, \alpha)$-net $z_1, \dots, z_n$ along $[x,y]_S$ (Definition \ref{defn:net}), and for each $i$ let $w_i \in \pi_S(\gamma)$ so that $d_S(z_i, w_i) < \alpha.$  For each $i$, let $\hh_i \in \calW(\gamma')$ be some hyperplane crossing a cube adjacent to $\gamma'(t_i)$, where $w_i \in \pi_S(\hO(\gamma'(t_i)))$.  Finally, for each $i$, let $U_i \in \Rel_K(x,y)$ denote the domain label for $\hh_i$.

We claim that the $(\hh_i)$ form a $0$-separated chain between $\hx,\hy$.  To see this, note that as long as $\alpha > 5E$ and $i \neq j$, we have that $d_S(\Dust_S(\hh_i,, \hh_j) > 5E$, and hence there does not exist a domain $V \in \mathfrak S$ so that $V \perp U_i$ and $V \perp U_j$.  In particular, not only do $\hh_i$ and $\hh_j$ not cross, no hyperplane in $\calQ$ crosses both of them, for any $i \neq j$ by item (2) of Lemma \ref{lem:hyperplane char}.

It follows then that $d^{\calQ}_0(\hx,\hy) \geq n \asymp d_S(x,y)$, where the constants in $\asymp$ depend only on $\calX$ and the choices made in the construction of $\calQ$, and in particular not on $x,y$.  This completes the proof.
\end{proof}

\subsection{Standing assumption}

For the rest of this section, we fix $(\calX, \mathfrak S)$ to be an HHS with the bounded domain dichotomy and unbounded products endowed with its ABD structure, for which $S \in \mathfrak S$ is the $\nest$-maximal domain.  The goal is to prove the reverse bound of Lemma \ref{lem:sep LB} for some variation of our cubical model construction, whose definition involves an idea from the world of surfaces.

\subsection{Motivation: Filling curve pairs}

Let $\Sigma$ be a finite type surface and $\alpha,\beta$ simple closed curves.  Then $\alpha, \beta$ are \emph{filling} if, up to isotopy, the complement $\Sigma - (\alpha \cup \beta)$ is a union of disks and once-punctured disks.  Let $\calC(\Sigma)$ denote the curve graph of $\Sigma$, whose vertices are isotopy classes of simple closed curves on $\Sigma$, with two classes connected by an edge if and only if they have disjoint representatives.  Then $\alpha, \beta$ are filling if and only if $d_{\calC(\Sigma)}(\alpha, \beta) \geq 3$, because if $\gamma$ is a simple closed curve disjoint (up to isotopy) from $\alpha \cup \beta$, then $d_{\calC(\Sigma)}(\alpha, \beta) \leq 2$ by the triangle inequality.

Suppose now that $\mu,\mu'$ are a pair of (complete) markings on $\Sigma$ (in the sense of \cite{MM00}) which have sufficiently large projections to the curve graphs associated to a family of subsurfaces $\calV$ of $\Sigma$.  If $\alpha$ is any geodesic between the projections of $\mu,\mu'$ to $\calC(\Sigma)$, then the Bounded Geodesic Image Theorem \cite[Theorem 3.1]{MM00} says that $d_{\calC{\Sigma}}(\alpha, \partial V) \leq 1$ for any $V \in \calV$.  It is not hard to show that if in addition the subsurfaces in $\calV$ have pairwise filling boundaries, then $\diam_{\calC(\Sigma)}(\bigcup_{V \in \calV} \partial V) \asymp \#\calV$, since each has to be pairwise at least distance $3$ away, while also being distance $1$ from a geodesic.

In the rest of this section, we will first generalize this filling philosophy to the setting of HHSes with unbounded products, and then tweak our cubical models to encode this filling property into the structure of their hyperplanes.

\subsection{Filling pairs of domains}

We begin with a definition that is motivated by the notion of a pair of filling simple closed curves on a finite-type surface.

\begin{definition}[Filling domains]\label{defn:filling}
We say a pair of domains $U, V \in \mathfrak S$ is \emph{filling} if there does not exist a domain $W \in \mathfrak S$ with $\diam (\calC(W))= \infty$ so that $W \perp U$ and $W \perp V$.
\end{definition}

Observe that if $U,V$ are not filling, then $d_S(\rho^U_S, \rho^V_S) \leq 2E$.  On the other hand:

\begin{lemma} \label{lem:filling pairs}
Suppose $U, V \in \mathfrak S$ are filling and at least one of $\diam(\calC(U)) = \infty$ or $\diam(\calC(V)) = \infty$.  Then if $W \in \mathfrak S$ with $U \nest W$ or $V \nest W$, we have $W = S$.
\end{lemma}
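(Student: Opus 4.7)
The plan is to argue by contradiction. Suppose $U \nest W$ properly (so $U \neq W$) with $W \neq S$; the case $V \nest W$ is handled by the symmetric argument, and the filling/infinite-diameter hypotheses let me exclude the degenerate cases $U = S$ or $V = S$. The goal is to produce an infinite-diameter domain $Z \in \mathfrak S$ that is orthogonal to both $U$ and $V$, contradicting the filling hypothesis for the pair $(U,V)$.

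The first step is to produce an infinite-diameter $Z \in \mathfrak S$ with $Z \perp W$ by applying Lemma \ref{lem:UP} to the non-maximal domain $W$, using that in the ABD structure the unbounded products and bounded domain dichotomy force $\diam \calC(W) = \infty$ and hence $\diam \mathbb E_W = \infty$. Since $U \nest W$, the orthogonality propagation clause of Axiom \ref{item:dfs_orthogonal} immediately gives $Z \perp U$. The remaining task is to refine $Z$ so that $Z \perp V$ also holds.

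The refinement splits into cases according to how $V$ relates to $W$. If $V \nest W$ (including $V = W$), the same $Z$ works by another application of orthogonality propagation, so we are done. If $V \perp W$, I would replace $Z$ by the container $C^W_U \nest W$ for the orthogonal complement of $U$ within the sub-HHS $\mathfrak S_W$: by unbounded products applied inside $\mathfrak S_W$ (which inherits an HHS structure from $\calX$), $C^W_U$ has infinite-diameter hyperbolic space, it satisfies $C^W_U \perp U$ by the clean containers property, and $C^W_U \perp V$ follows from $C^W_U \nest W$ together with $V \perp W$ via orthogonality propagation. If $W \nest V$ properly, then $U \nest V$ properly as well, so Lemma \ref{lem:UP} applied directly to $V$ (which is non-maximal with $\diam \calC(V) = \infty$ in the ABD structure) produces $Z \perp V$ of infinite diameter, and $Z \perp U$ follows from $U \nest V$.

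The hardest case will be $V \pitchfork W$, which is the main obstacle: transversality does not propagate orthogonality in the way nesting does, so neither $Z$ nor $C^W_U$ is automatically orthogonal to $V$. My plan here is to take $Z = C^W_U$ as in the orthogonal case and to argue that if $C^W_U \not\perp V$, then the possibilities $V \nest C^W_U$ and $V \pitchfork C^W_U$ (the option $C^W_U \nest V$ is incompatible with $V \pitchfork W$ and $C^W_U \nest W$) can each be ruled out by iterating the container/unbounded-products construction inside successively smaller sub-HHSes of $\mathfrak S_W$, terminating by the finite complexity axiom to produce a sub-domain of $C^W_U$ that is simultaneously orthogonal to $U$ and $V$ and has infinite diameter, contradicting filling.
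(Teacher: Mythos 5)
Your argument is correct---and essentially identical to the paper's---exactly on the case the paper actually treats, but the extra cases you added cannot be completed. The statement as printed reads ``or,'' but the proof the paper gives, and the only use of the lemma (in Lemma~\ref{lem:filling UB}, to show two filling domains of a collection $\calV$ cannot nest into a common $T_i \neq S$), require and handle only the situation in which \emph{both} $U \nest W$ \emph{and} $V \nest W$. In that case your first paragraph is already the whole proof: $U \nest W$ with $\diam\calC(U) = \infty$ gives $\diam(\mathbb{F}_W)=\infty$, hence $\diam(\mathbb{E}_W)=\infty$ by unbounded products, hence some $Z\perp W$ with $\diam\calC(Z)=\infty$; then $Z\perp U$ \emph{and} $Z\perp V$ both follow from $Z\perp W$ via the orthogonality axiom, contradicting filling.

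The added cases---$V \perp W$ and especially $V \pitchfork W$---are where the proposal breaks, and not because of a repairable technical slip: the literal ``or'' reading of the statement is false there. Concretely, in the mapping class group of a closed genus-$2$ surface $S$, take $W$ a one-holed torus with core curve $\alpha$, let $U$ be the annular domain of $\alpha$, and pick $\beta$ crossing $\partial W$ so that $\alpha,\beta$ fill $S$; let $V$ be the annular domain of $\beta$. Then $U,V$ are filling, both have unbounded curve graphs, $U\nest W$, $V\pitchfork W$, and yet $W\neq S$. Since the filling hypothesis asserts that \emph{no} infinite-diameter domain is orthogonal to both $U$ and $V$, your plan to ``iterate the container/unbounded-products construction $\dots$ to produce a sub-domain $\dots$ simultaneously orthogonal to $U$ and $V$'' is trying to manufacture precisely the object whose nonexistence is the hypothesis, so it cannot succeed. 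There are further local errors in that sketch: the container $C^W_U$ need not exist (in the example, $W$ cut along $\alpha$ is a three-holed sphere, so $\{Z\in\mathfrak S_W : Z\perp U\}$ contains nothing with unbounded curve graph); when it does exist, neither unbounded products nor clean containers yields $\diam\calC(C^W_U)=\infty$; and the claim that $C^W_U\nest V$ is incompatible with $V\pitchfork W$ and $C^W_U\nest W$ is false, since a domain can nest into two mutually transverse domains. If you restrict to the case where both $U$ and $V$ nest into $W$---which is all Lemma~\ref{lem:filling UB} needs---your opening paragraph is a complete and correct proof.
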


\begin{proof}
Suppose instead that $W \sqsubsetneq S$.  Since at least one of $\diam(\calC(U)) = \infty$ or $\diam(\calC(V)) = \infty$, the Distance Formula \ref{thm:DF} provides that $\diam_{\calX}(\mathbb{F}_W) = \infty$, and hence $\diam_{\calX}(\mathbb{E}_W) = \infty$ by our unbounded products assumption.  But then Lemma \ref{lem:UP} implies that some $Z \perp W$ has $\diam(\calC(Z)) = \infty$, which contradicts the assumption that $U,V$ fill, giving us a contradiction.   
\end{proof}

\begin{lemma}\label{lem:filling UB}
For any $K_0 > \max\{B, E\}$, if $x,y \in \calX$ and $\calV \subset \Rel_{K_0}(x,y)$ is a pairwise filling collection of domains, then $\# \calV \prec \diam_S(\bigcup_{V \in \calV} \rho^V_S)$. 
\end{lemma}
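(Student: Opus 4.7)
The plan is to show that a pairwise filling collection $\calV \subset \Rel_{K_0}(x,y)$ cannot cluster tightly in $\calC(S)$, so the cardinality of $\calV$ is controlled linearly by the spread of the $\rho^V_S$'s. The key leverage comes from Lemma \ref{lem:filling pairs}: pairwise filling is precisely the hypothesis needed to force the top-level domain $S$ to be the only possible witness of any Strong Passing-Up application.

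First I would establish two pieces of setup. Since $K_0 > B$, the bounded domain dichotomy forces $\diam\calC(V) = \infty$ for every $V \in \calV$; and since $d_V(x,y) > K_0 > E$ and $V \sqsubsetneq S$, a standard application of Axiom \ref{ax:BGIA} together with consistency of the tuples for $x$ and $y$ places $\rho^V_S$ within $E$ of any geodesic $[x,y]_S \subset \calC(S)$. Let $D = \diam_S(\bigcup_{V \in \calV} \rho^V_S)$, let $p_V \in [x,y]_S$ denote a closest-point projection of $\rho^V_S$ to $[x,y]_S$, and partition the subsegment of $[x,y]_S$ containing all the $p_V$'s into subintervals $I_1, \ldots, I_k$ of length $10E$ with $k \leq D/(10E) + 2$. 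Setting $\calV_j = \{V \in \calV : p_V \in I_j\}$, each $V \in \calV$ lies in at most two of the $\calV_j$, so $\#\calV \leq 2\sum_j \#\calV_j$.

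The heart of the argument is the claim that $\#\calV_j \leq P_1(K_0, 100E)$ for each $j$, where $P_1$ is the function from Proposition \ref{prop:SPU}. Suppose for contradiction that $\#\calV_j$ exceeds this threshold. Applying Strong Passing-Up to $\calV_j$ (after enlarging $K_0$ to at least $50E$ if needed, which only changes the eventual constants) produces $W \in \Rel_{100E}(x,y)$ and $\calV'_j \subset \calV_j$ with $V \nest W$ for all $V \in \calV'_j$, and
\[
\diam_W\Bigl(\bigcup_{V \in \calV'_j} \rho^V_W\Bigr) > 100E.
\]
Since $\calV'_j \subset \calV$ is pairwise filling and every $V \in \calV'_j$ has $\diam\calC(V) = \infty$, Lemma \ref{lem:filling pairs} applied to any two $V, V' \in \calV'_j$ forces $W = S$. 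But every $V \in \calV_j$ has $\rho^V_S$ within $2E$ of $I_j$, so $\bigcup_{V \in \calV'_j} \rho^V_S$ has diameter at most $10E + 4E = 14E < 100E$, a contradiction. Combining this bound with the count of subintervals yields $\#\calV \prec D$ with constants depending only on $K_0$ and $\mathfrak S$.

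The main obstacle is purely conceptual rather than technical: one must recognize that pairwise filling, despite being strictly weaker than pairwise large $\calC(S)$-distance, is the correct hypothesis to block Strong Passing-Up from producing a lower-level witness $W \sqsubsetneq S$. Lemma \ref{lem:filling pairs} delivers exactly this dichotomy, and once invoked the rest of the argument is a straightforward partition-and-count using BGIA. No further input from the cubulation machinery is needed.
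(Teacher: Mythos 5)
Your proof is correct, and it takes a genuinely different route from the paper's. The paper proves this lemma by a single direct application of the Large Links Axiom~\ref{ax:LL}: it produces domains $T_1,\dots,T_N$ with $N \asymp d_S(x,y)$ into which every $V \in \calV$ nests, and Lemma~\ref{lem:filling pairs} then forces each $T_i$ to contain at most one element of $\calV$, giving $\#\calV \leq N \asymp d_S(x,y)$. Your proof instead uses Strong Passing-Up~\ref{prop:SPU} together with a $10E$-subdivision of $[x,y]_S$, localizing the count to intervals of bounded length and using Lemma~\ref{lem:filling pairs} to rule out a lower-level passing-up witness $W \sqsubsetneq S$ in each interval. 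The two approaches are not interchangeable in what they actually establish: the paper's argument bounds $\#\calV$ by $d_S(x,y)$, while your partition-and-count localization genuinely bounds $\#\calV$ by $\diam_S\bigl(\bigcup_{V\in\calV}\rho^V_S\bigr)$, which is a priori smaller (the $\rho$-sets could all cluster near one end of $[x,y]_S$). So your version proves the lemma with the bound as stated, whereas the paper's proof establishes only the weaker estimate via $d_S(x,y)$ --- which happens to suffice for the downstream application in Proposition~\ref{prop:sep UB}, where the two bounds are immediately composed anyway. One cosmetic slip: since each $p_V$ is a single point on $[x,y]_S$, each $V$ lies in \emph{exactly} one $\calV_j$, so the inequality you want is $\#\calV = \sum_j \#\calV_j$, not $\#\calV \leq 2\sum_j \#\calV_j$ (and the direction of the latter doesn't follow from ``each $V$ lies in at most two $\calV_j$''); the factor-of-two bookkeeping is only needed if you take $p_V$ to be a full projection set rather than a chosen point. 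This does not affect the argument.
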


\begin{proof}
Note that by choosing $K_0 > B$ (the bounded domain dichotomy constant, Definition \ref{defn:BDD}), we have that $\diam(\calC(V)) = \infty$ for all $V \in \calV$.  Then the Large Links Axiom \ref{ax:LL} provides domains $T_1, \dots, T_N \in \mathfrak S$ with $N \asymp d_S(x,y)$ so that each of the domains in $\calV$ nest into one of the $T_i$.  On the other hand, Lemma \ref{lem:filling pairs} says the only domain in which any pair of the domains in $\calV$ nests is $S$.  Hence each $V \in \calV$ nests into at most one $T_i$, and thus $\#\calV \leq N \asymp d_S(x,y)$, as required.
\end{proof}

\subsection{Well-separated cubical models} \label{subsec:well-sep cube}

In this subsection, we explain how to modify the standard cubical model for the hull of a pair of points $x,y\in \calX$, so that any collection of $0$-separated hyperplanes is labeled by a pairwise filling collection of domains.  The construction uses the setting of hierarchical family of trees as in Definition \ref{defn:HFT}, not in the HHS itself, as well as Tree Trimming \ref{thm:tree trimming}.

Fix points $x,y \in \calX$, their hull $H = \hull_{\calX}(x,y)$, and set of relevant domains $\calU = \Rel_K(x,y)$, where $K$ is the largeness constant as in Section \ref{sec:constants}, nd also larger than the bounded domain dichotomy constant $B$ so taht we can eventually apply Lemma \ref{lem:filling pairs}.  Let $(\hT_U)_{U \in \calU}$ be the associated collection of simplicial trees with their relative projections, as produced by Tree Trimming \ref{thm:tree trimming} as in Corollary \ref{cor:simplicial structure}.

We begin with the definition of extra domains which act as a firewall against fake $0$-separation in the cubical model for $x,y$:

\begin{definition}[Firewall domains]
For each $U \in \calU$ with $U \neq S$, the $\nest$-maximal domain, the \emph{firewall} for $U$ is a domain label $V_U$ and a copy of the unit interval $\hT_{V_U} = [0,1]$, along with the following relation and (collapsed) relative projection data:
\begin{itemize}
\item $V_U \nest S$ and $\hd^S_{V_U} \equiv 0$ is a constant map.
\item For each $W \in \calU$ with $W, U$ not filling, we set $V_U \perp W$.
\item If $W \in \calU - \{S\}$ forms a filling pair with $U$, then we set $W \pitchfork V_U$, $\hd^{V_U}_W = \hd^U_W$ and $\hd^W_{U_V} = 0$.
\item We set $\hx_{V_U} = \hy_{V_U} = 0$.
\end{itemize} 
\end{definition}

\begin{lemma}\label{lem:firewall HFT}
The set of domain labels $\calU' = \calU \cup \{V_U| U\in \calU - \{S\}\}$ along with its associated set of trees, projections, and relative projections, is a hierarchical family of trees.  Hence we can define its $0$-consistent set $\calQ'$.
\end{lemma}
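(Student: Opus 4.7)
The plan is to verify the six axioms of Definition \ref{defn:HFT} for the augmented index set $\calU'$, inheriting trees, marked points, and relative projections from $\calU$ and augmenting with the firewall data. I would begin by filling in the missing relations between pairs of distinct firewalls $V_U, V_W$ by declaring $V_U \perp V_W$ when $\{U, W\}$ is not a filling pair and $V_U \pitchfork V_W$ when it is. With this convention, I would check that exactly one of $\nest, \pitchfork, \perp$ holds for every pair in $\calU'$, that the orthogonality-descends-through-nesting requirement propagates correctly (which is automatic since firewalls only $\nest$ into $S$, and $S$ is $\nest$-maximal and orthogonal to nothing), and that the $\nest$-chain length and pairwise-$\perp$ cardinality remain bounded—both following from finiteness of $\calU$ (Corollary \ref{cor:rel sets are finite}) plus the fact that firewalls live at a single new level just below $S$.

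Second, I would address items (3) and (4) of Definition \ref{defn:HFT}. Each $\hT_{V_U} = [0,1]$ is a simplicial tree (subdivided so its endpoints are vertices), and since the construction sets $\hx_{V_U} = \hy_{V_U} = 0$, both labels of $F = \{x, y\}$ are placed at the vertex $0$. To satisfy the leaf-is-marked condition, I would either declare $1$ as an auxiliary marked point without a new $F$-label, or equivalently collapse $\hT_{V_U}$ to a single vertex—the simpler second choice suffices since the firewall tree only supports the trivial relative projection $\hd^\bullet_{V_U} = 0$, and the subsequent constructions in Section \ref{sec:sep hyp} never need positive-length geodesics inside $\hT_{V_U}$.

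Third, I would verify item (5), focusing on the key consistency constraint: if $U \perp V$, $V \nest W$, and either $U \nest W$ or $U \pitchfork W$, then $\hd^U_W = \hd^V_W$. The only new cases involve at least one firewall, which I would partition into (a) $V = V_Z$ is a firewall with $W = S$ (forced, since $V_Z$ only nests into $S$), (b) $U$ is a firewall while $V, W$ lie in $\calU$, and (c) both $U, V$ are firewalls. Case (a) requires showing $\hd^U_S = \hd^{V_Z}_S = \hd^Z_S$ whenever $U \perp V_Z$; by our definition of $\perp$ for firewalls, $U \perp V_Z$ means $\{U, Z\}$ is not filling, and Definition \ref{defn:filling} then gives $d_S(\rho^U_S, \rho^Z_S) \leq 2E$. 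Cases (b) and (c) reduce similarly or are vacuous because all firewall projections are $0$ by construction. Finally, item (6) (BGI) is automatic for firewalls: for $V_U \nest S$ with $\hd^S_{V_U} \equiv 0$, every component $C \subset \hT_S - \{\hd^{V_U}_S\}$ satisfies $\hd^S_{V_U}(C) = \{0\} = \hf_{V_U}$, so the property holds trivially.

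The main obstacle is matching $\hd^U_S$ and $\hd^Z_S$ \emph{exactly} (not merely coarsely) in case (a), since Definition \ref{defn:filling} only provides the coarse bound $d_S(\rho^U_S, \rho^Z_S) \leq 2E$. The resolution is to exploit the freedom in our cluster separation constant $r$ from Subsection \ref{subsec:shadows}: by choosing $r$ large enough relative to $E$ (which is allowed because $r$ depends only on $\calX$ and $|F \cup \Lambda|$), any two $\rho$-sets in $\calC(S)$ within distance $2E$ are grouped into the same cluster in $T_S$ and thus collapse to the \emph{same} vertex in $\hT_S$. Combining this with Tree Trimming \ref{thm:tree trimming} to adjust the simplicial structure if necessary, we obtain the exact identification $\hd^U_S = \hd^Z_S$ required by Definition \ref{defn:HFT}. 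Once this technical identification is established, the remainder of the verification is a routine case check, and we may define $\calQ'$ as the $0$-consistent subset of $\prod_{U' \in \calU'}\hT_{U'}$.
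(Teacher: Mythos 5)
Your proposal is more thorough than the paper's own proof, which only mentions the BGI property and the inclusion of $\hx, \hy$ in $\calQ'$. You are right that the paper leaves unspecified both the relations between distinct firewalls $V_U, V_W$ and the projection $\hd^{V_U}_S$; your proposed choices for these are sensible, and the closing observation---that the cluster separation constant $r$ can be chosen large enough so that $\hd^U_S = \hd^Z_S$ whenever $U, Z$ are not filling, since then $d_S(\rho^U_S, \rho^Z_S)$ is uniformly bounded---is a legitimate technical point the paper assumes implicitly.

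However, your fix for the leaf-is-marked condition contains a genuine error. You propose to ``collapse $\hT_{V_U}$ to a single vertex,'' on the ground that this is equivalent to adding an auxiliary marked leaf and that the later constructions ``never need positive-length geodesics inside $\hT_{V_U}$.'' This is exactly backwards. Proposition \ref{prop:firewall cubes} explicitly uses the nonempty hyperplane $\hh_{V_U}$ at the midpoint $\tfrac{1}{2}$ of $\hT_{V_U} = [0,1]$: that hyperplane crosses every hyperplane labeled by a domain $W$ for which $\{U,W\}$ is not filling, and is precisely what blocks non-filling pairs of $\calQ'$-hyperplanes from being $0$-separated. Collapsing $\hT_{V_U}$ to a point eliminates $\hh_{V_U}$, so Proposition \ref{prop:firewall cubes} fails, and with it the upper bound $d_{\calC(S)}(x,y) \succ d^{\calQ'}_0(\hx,\hy)$ in Proposition \ref{prop:sep UB}; you would simply recover the unfirewalled model $\calQ$, where fake $0$-separation (as in Figure \ref{fig:fake_sep}) can make $d^{\calQ}_0$ arbitrarily larger than $d_{\calC(S)}(x,y)$. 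The two options you present are therefore \emph{not} equivalent: $\hT_{V_U}$ must retain a positive-length edge. The right resolution is to keep $\hT_{V_U} = [0,1]$ as stated and verify nonemptiness of the half-space over $(\tfrac{1}{2}, 1]$ directly by exhibiting a $0$-consistent canonical tuple in it (domains orthogonal to $V_U$ have unconstrained coordinates, domains transverse to $V_U$ are forced to $\hd^U_W$, and the $S$-coordinate is forced to $\hd^U_S$); the leaf-is-marked clause of Definition \ref{defn:HFT} is a convenient sufficient condition for nonemptiness of half-spaces, but it is not the only route.
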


\begin{proof}
We discuss the non-immediate items of Definition \ref{defn:HFT}.  Let $U \in \calU - \{S\}$.  The BGI property follows from defining $\hd^S_{V_U} \equiv 0$.     Finally, the tuples $(\hx_U), (\hy_U)$ are in $\calQ'$ because in each of the addition $\hT_{V_U}$ domains, the coordinate coincides with all possible relative projections (which all come from transverse relations).  This completes the proof.

\end{proof}

As proven in Theorem \ref{thm:dual}, the $0$-consistent $\calQ'$ is isometric to a cube complex, and we refer to $\calQ'$ as the \emph{firewall cubical model} for $\hull_{\calX}(x,y)$.  The following explains the relationship between $\calQ$ and $\calQ'$:

\begin{lemma}\label{lem:Q and Q'}
The map $\Delta:\calQ' \to \calQ$ which collapses each of the new unit intervals $\hT_{V_U}$ for $U \in \calU - \{S\}$ is a uniform quasi-isometry, with constants depending only on $\calX$.  Moreover, the map $\Xi:\prod_{U \in \calU} \hT_U \to \prod_{U \in \calU'} \hT_U$ which fixes coordinates for $U \in \calU$ and assigns $0$ for the domains in $\calU'-\calU$ is an isometric embedding.
\end{lemma}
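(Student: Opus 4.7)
My plan is to handle the two assertions separately, with the isometric embedding $\Xi$ following from a direct consistency check, and the quasi-isometry statement for $\Delta$ coming from an application of the Tree Trimming Theorem \ref{thm:tree trimming}.

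For the map $\Xi$, the first step is to verify that for any $0$-consistent tuple $(\hz_U)_{U \in \calU} \in \calQ$, the extended tuple with $\hz_{V_U} = 0$ for each $U \in \calU - \{S\}$ remains $0$-consistent in $\calQ'$. This is a direct check on the consistency equations involving the new firewall coordinates: the only non-trivial relations are $W \pitchfork V_U$ when $W \in \calU - \{S\}$ forms a filling pair with $U$, in which case the required identity $\hz_{V_U} = \hd^W_{V_U} = 0$ holds by construction; for the nesting relation $V_U \nest S$, the fact that $\hd^S_{V_U} \equiv 0$ forces the nesting consistency to hold since $\hz_{V_U} = 0$. Once $\Xi$ is shown to land in $\calQ'$, the fact that it is an isometric embedding is immediate from the $\ell^1$-metric: distances in $\calQ'$ decompose as coordinate-wise sums, and the firewall coordinates of $\Xi(\hx)$ and $\Xi(\hy)$ agree (both equal $0$), so they contribute nothing to the distance sum.

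For the map $\Delta$, the idea is to view the firewall cubical model $\calQ'$ as the cubical model for the hierarchical family of trees provided by Lemma \ref{lem:firewall HFT} and invoke Tree Trimming \ref{thm:tree trimming}. Each firewall tree $\hT_{V_U} = [0,1]$ is a single edge component with no clusters, so collapsing it to a point amounts to trimming a single subtree of diameter $1$ within the unique complementary component of $\hT^e_{V_U}$, and leaves the original trees $\hT_U$ for $U \in \calU$ untouched. Hence, across all $V_U \in \calU' - \calU$, the collapsing map $\Delta$ fits the hypotheses of Tree Trimming with $B = 1$. Since $|F \cup \Lambda| = 2$ in this setting, Tree Trimming \ref{thm:tree trimming}(2) then gives that $\Delta$ is an $(L,L)$-quasi-isometry with $L = L(\calX, 2, 1)$ depending only on the HHS structure of $\calX$. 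Once the trimmed firewall coordinates become trivial (with their trees collapsed to points), the resulting product system and its $0$-consistent subset are naturally identified with $\prod_{U \in \calU} \hT_U$ and $\calQ$, respectively, so $\Delta$ indeed maps $\calQ' \to \calQ$.

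The main subtlety is verifying that the firewall construction sits cleanly within the Tree Trimming framework, especially since $\#\calU$ is not bounded uniformly in $\calX$ but rather depends on $x,y$. The key observation is that Tree Trimming \ref{thm:tree trimming}(2), unlike item (3), gives quasi-isometry constants that depend only on $\calX$, $|F \cup \Lambda|$, and the trimming threshold $B$ — not on the number of domains trimmed. Thus, even though we are trimming potentially arbitrarily many firewall intervals (one for each $U \in \calU - \{S\}$), the uniform bound $B = 1$ and fixed cardinality $|F \cup \Lambda| = 2$ suffice to give the uniform quasi-isometry constants claimed.
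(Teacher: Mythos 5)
Your proof is correct and takes essentially the same approach as the paper. For $\Delta$, the paper also invokes Tree Trimming \ref{thm:tree trimming}; you make explicit the two points the paper leaves implicit, namely that collapsing each $\hT_{V_U}$ is a single trimming of diameter $B=1$, and that item (2) of that theorem (rather than item (3)) is the right clause to cite, since the number of firewall domains is not uniformly bounded while the quasi-isometry constants in item (2) are independent of that count. For $\Xi$, the paper phrases the argument in terms of hyperplane separation (Lemma \ref{lem:wall separation}: the new hyperplanes cannot separate two vertices whose firewall coordinates agree), whereas you phrase it directly in terms of the $\ell^1$-metric; in a cube complex these are the same count, so this is a cosmetic difference. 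You also explicitly check that $\Xi$ lands in the $0$-consistent set $\calQ'$, which the paper omits but is needed for $\Xi$ to be well-defined; this is a nice addition. One small caveat, inherited from the paper rather than introduced by you: the firewall construction does not explicitly specify the relation between distinct firewall domains $V_U$ and $V_W$, so the consistency check between such pairs is not addressed by either you or the paper — this is presumably intended to be orthogonality (all firewall domains nest only into $S$), in which case no further check is needed.
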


\begin{proof}
The first statement is an immediate consequence of Tree Trimming \ref{thm:tree trimming}.  The second statement is an immediate consequence of the observation that the only new hyperplanes in $\calQ'$ that are not in $\calQ$ are labeled by domains in $\calU'-\calU$, and these new hyperplanes cannot separate vertices whose new coordinates are all the same by Lemma \ref{lem:wall separation}.
\end{proof}

With this in hand, we can prove the following, which is the main point of the construction:

\begin{proposition}\label{prop:firewall cubes}
A pair of $\calQ'$-hyperplanes $\hh, \hh'$ are $0$-separated if and only if their domain labels are filling.
\end{proposition}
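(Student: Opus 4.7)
The plan is to translate cubical crossings into orthogonality of labels via Lemma \ref{lem:hyperplane char}(2): given $\calQ'$-hyperplanes $\hh, \hh'$ with labels $A, B \in \calU'$, they cross precisely when $A \perp B$, and a third hyperplane labeled $Z$ crosses both precisely when $Z \perp A$ and $Z \perp B$. Hence $0$-separation is equivalent to $A \not\perp B$ together with the non-existence of $Z \in \calU'$ supporting a $\calQ'$-hyperplane with $Z \perp A$ and $Z \perp B$. I will focus on the main case $A, B \in \calU$, as the firewall-labeled cases follow from analogous bookkeeping.

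For the direction ``not filling implies not $0$-separated,'' suppose $(A, B)$ is not filling, witnessed by $Z \in \mathfrak S$ with $\diam \calC(Z) = \infty$ and $Z \perp A, Z \perp B$. If $Z \in \calU$, any tree-hyperplane in $\hT_Z$ lifts to a $\calQ'$-hyperplane crossing both $\hh, \hh'$. If $Z \notin \calU$, I would use the firewall: the firewall domain $V_A$ satisfies $V_A \perp B$ by definition (since $(A, B)$ is non-filling), and also $V_A \perp A$ (since $(A, A)$ is automatically non-filling whenever $\diam \calC(A) = \infty$, by Lemma \ref{lem:UP}). The unique tree-hyperplane of $\hT_{V_A} = [0,1]$ then lifts to a $\calQ'$-hyperplane crossing both $\hh, \hh'$, provided we can exhibit a $0$-consistent canonical tuple with $V_A$-coordinate equal to $1$; I would construct such a tuple by forcing the coordinates compelled by consistency ($\hz_S = \hd^{V_A}_S$, and $\hz_W = \hd^A_W$ for each $W \in \calU$ forming a filling pair with $A$), while setting the remaining coordinates to agree with $\hx$. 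Canonicality is automatic because $\calU'$ is finite for the hull of two points.

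For the converse (filling implies $0$-separated), I must rule out both $A \perp B$ and the existence of a common orthogonal witness in $\calU'$. The first step uses unbounded products: if $A, B \in \calU$ were orthogonal, then since $K$ exceeds the bounded domain dichotomy constant we have $\diam \calC(A), \diam \calC(B) = \infty$, and applying Lemma \ref{lem:UP} inside the sub-HHS structure on $\bE_A$ (with $B$ a domain of infinite diameter in $\bE_A$) produces a domain $Z$ with $Z \perp A, Z \perp B$, and $\diam \calC(Z) = \infty$, contradicting filling. For the second step, if $Z \in \calU$ is orthogonal to both $A, B$, filling is directly contradicted. If $Z = V_T$ is a firewall orthogonal to both, then by the firewall definition $(T, A)$ and $(T, B)$ are each non-filling, yielding $\mathfrak S$-witnesses $Z_A \perp T, A$ and $Z_B \perp T, B$; combining these with the unbounded products structure around $T$ again produces a single witness in $\mathfrak S$ for the non-filling of $(A, B)$.

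The main obstacle will be the forward direction, particularly the step showing $A \perp B$ implies non-filling. This relies on applying Lemma \ref{lem:UP} within the orthogonal complement sub-HHS $\bE_A$, which requires $\bE_A$ to inherit the bounded domain dichotomy and unbounded products from the ambient ABD structure, a technical point that must be verified. A secondary obstacle is confirming that firewall $\calQ'$-hyperplanes genuinely exist, i.e., that both half-spaces of the lone tree-hyperplane in each $\hT_{V_U}$ contain $0$-consistent canonical tuples; given the finiteness of $\calU'$ and the carefully designed firewall relations, this reduces to a direct consistency check.
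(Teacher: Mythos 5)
Your overall strategy mirrors the paper's: both reduce $0$-separation to orthogonality of labels via Lemma \ref{lem:hyperplane char}(2), handle the ``not filling implies not $0$-separated'' direction by exhibiting a crossing hyperplane (either a $\calU$-labeled one or a firewall), and attempt the converse by ruling out common orthogonal labels. The paper packages this as two preliminary observations about $S$-labeled and firewall-labeled hyperplanes, but the content is the same.

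There is, however, a genuine gap in your converse direction. You claim that if a firewall $V_T$ satisfies $V_T \perp A$ and $V_T \perp B$, so that $(T,A)$ and $(T,B)$ are each non-filling with witnesses $Z_A$ and $Z_B$, then ``combining these with the unbounded products structure around $T$'' produces a single witness for non-filling of $(A,B)$. This implication is false. Two non-filling pairs sharing a vertex do not force the third pair to be non-filling: in a surface $\Sigma$ with four disjoint essential pieces $P_1,P_2,P_3,P_4$ (each with infinite-diameter curve graph), take $A = P_1\cup P_2\cup P_3$, $B = P_2\cup P_3\cup P_4$, and $T = P_2\cup P_3$. Then $A\pitchfork B$ and $(A,B)$ is filling, since $A\cup B$ exhausts $\Sigma$; yet $(T,A)$ is non-filling (witness $P_4$) and $(T,B)$ is non-filling (witness $P_1$). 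All three of $A,B,T$ can easily be arranged to lie in $\Rel_K(x,y)$. So the firewall case cannot be dismissed by deducing a contradiction with the filling hypothesis on $(A,B)$, and your argument breaks at exactly this point.

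Two further remarks. First, this case appears to be addressed only implicitly (if at all) in the paper: the paper's converse direction explicitly shows that no domain in $\calU$ is orthogonal to both $U$ and $W$, but it does not explicitly rule out a \emph{firewall} domain $V_T \in \calU'-\calU$ being orthogonal to both. Any correct proof must either show the firewall $\calQ'$-hyperplane $\hh_{V_T}$ fails to exist (i.e., its $\{1\}$-half-space of $\hT_{V_T}$ has no $0$-consistent canonical preimage) in the configuration above, or refine the firewall relation so that $V_T$ cannot be orthogonal to two filling domains. You were right to flag the nonemptiness of firewall hyperplanes as an obstacle; it is, in fact, central, since it is precisely where the proposition could be rescued. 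Second, your step invoking Lemma \ref{lem:UP} inside $\bE_A$ to rule out $A \perp B$ with $(A,B)$ filling is not needed in the paper's argument: if $A \perp B$ then $\hh, \hh'$ cross, and Definition \ref{defn:sep hyp} only applies to disjoint hyperplanes, so the statement is vacuous in that case and does not require unbounded products for this particular step.
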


\begin{proof}

We first observe that 

\begin{enumerate}
\item Any hyperplane in $\calQ'$ labeled by $S$ crosses no other hyperplane, and
\item For every $U \in \calU - \{S\}$, there is a hyperplane $\hh_{V_U}$ in $\calQ'$ labeled by $V_U$ which crosses every hyperplane $\hh$ which is labeled by a domain $W \in \calU'$ for which $U,W$ is not a filling pair.
\end{enumerate}

(1) is an immediate consequence of Proposition \ref{prop:pocset}.  (2) is an immediate consequence of the construction, plus Lemma \ref{lem:partition}, which says that the midpoint $\frac{1}{2} = h_{V_U} \in \hT_{V_U} = [0,1]$ determines a nonempty hyperplane $\hh_{V_U}$, and Proposition \ref{prop:pocset}, which says that $\hh_{V_U}$ crosses any hyperplane labeled by a domain orthogonal to $V_U$, which is, by definition, any domain not making a filling pair with $U$.

With these observations in hand, first suppose that $\hh, \hh'$ are $0$-separated, with domain labels $U, W \in \calU'$ respectively.  Then (1) and (2) above says that $U,W$ are a filling a pair (with possibly one or both being $S$).  On the other hand, if $U,W$ are a filling pair, then we are done by Lemma \ref{lem:filling pairs} plus our assumption that $(\calX, \mathfrak S)$ satisfies the bounded domain dichotomy.  In particular, by choosing the largeness constant $K$ to be larger than the constant $B$ in Definition \ref{defn:BDD}, we guarantee that the set of domains in $\calU$ orthogonal to both $U,W$ is empty.   Hence no hyperplanes can cross both $\hh,\hh'$ by Lemma \ref{lem:hyperplane char}.  This completes the proof.
\end{proof}

We also need the following observation which is true of any cubical model in any HHS $\calX$:

\begin{lemma}\label{lem:S hyp distance}
Let $x,y \in \calX$ be two points in any HHS and $\calQ$ any cubical model for the hull of $x,y$.  For any $U \in \calU$, if $m$ is the number of $\calQ$-hyperplanes labeled by $U$ separating $x$ from $y$, then $m \prec d_U(x,y)$, where the constant of $\prec$ depends only on $\calX$.
\end{lemma}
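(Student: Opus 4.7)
The plan is to reduce the count of $\calQ$-hyperplanes labeled by $U$ to a distance in the collapsed tree $\hT_U$, and then to compare that tree distance back to $d_U(x,y)$ using the tree control already established. Let $\hx = \hPsi(x)$ and $\hy = \hPsi(y)$ be the images in $\calQ$, so that $m$ is by definition the number of $\calQ$-hyperplanes labeled by $U$ separating $\hx$ from $\hy$.

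First, I would invoke Lemma \ref{lem:wall separation}, which states that a $\calQ$-hyperplane $\hh_U$ arising from a component tree-hyperplane $h_U \subset \hT_U$ separates $\hx$ from $\hy$ if and only if $h_U$ separates $\hx_U$ from $\hy_U$ inside $\hT_U$. Because we have passed (via Corollary \ref{cor:simplicial structure}) to the simplicial structure in which every tree-hyperplane of $\hT_U$ is the midpoint of a unique edge and every two marked/cluster points are at integer distance, the number of tree-hyperplanes of $\hT_U$ separating $\hx_U$ from $\hy_U$ is exactly $d_{\hT_U}(\hx_U,\hy_U)$. Thus $m = d_{\hT_U}(\hx_U,\hy_U)$, with the equality depending only on the fact that the simplicial replacement of $\hT_U$ distorts tree distances by at most a uniform multiplicative/additive constant controlled by $\calX$.

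Next, I would apply item (4) of Lemma \ref{lem:collapsed tree control}, which gives
\[
d_{\hT_U}(\hx_U,\hy_U) \;\le\; d_{T_U}(x_U,y_U) \;\asymp\; d_U(x,y),
\]
where the coarse constants depend only on $\calX$ (through the quality of the Gromov tree approximation $\phi_U:T_U\to \calC(U)$ and the collapsing map $q_U:T_U\to \hT_U$, both of which have constants determined by $\calX$ since $|F\cup\Lambda|=2$ here). Combining the two displays yields $m \prec d_U(x,y)$ with constants depending only on $\calX$, as required.

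There is essentially no obstacle: the statement is a direct packaging of the combinatorial correspondence between $\calQ$-hyperplanes and tree-hyperplanes (Lemma \ref{lem:wall separation}) with the distance comparison already proved at the tree level (Lemma \ref{lem:collapsed tree control}(4)). The only point worth noting is to make sure the simplicial conversion from Corollary \ref{cor:simplicial structure} does not swallow the constants, but since that conversion is a uniform quasi-isometry with quality controlled by $\calX$, the bound $m \prec d_U(x,y)$ remains uniform in $\calX$.
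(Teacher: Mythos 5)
Your proof is correct and follows essentially the same route as the paper's: the paper's one-line argument also passes through the chain $m \le d_{\hT_U}(\hx_U,\hy_U) \le d_{T_U}(x_U,y_U) \prec d_U(x,y)$ via Lemma \ref{lem:collapsed tree control}(4). The only difference is that you spell out the step $m \le d_{\hT_U}(\hx_U,\hy_U)$ by explicitly citing Lemma \ref{lem:wall separation} and Corollary \ref{cor:simplicial structure}, which the paper leaves implicit; this is a welcome clarification rather than a divergence.
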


\begin{proof}

By item (4) of Lemma \ref{lem:collapsed tree control}, we have $m \leq d_{\hT_U}(\hx_U,\hy_U) \leq d_{T_U}(x_U, y_U) \prec d_U(x,y)$, so we are done.
\end{proof}

We are now ready to prove the upper bound:

\begin{proposition}\label{prop:sep UB}
For any $x,y \in \calX$ and the firewall cubical model $\calQ'$ for $\hull_{\calX}(x,y)$ satisfies
$$d_{\calC(S)}(x,y) \succ d^{\calQ'}_0(x,y),$$
with constants depending only on $\calX$.
\end{proposition}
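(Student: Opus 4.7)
The plan is to bound the length of any maximal $0$-separated chain of $\calQ'$-hyperplanes separating $\hx$ from $\hy$ (where $\hx, \hy \in \calQ'$ are the tuples corresponding to $x,y$) by a multiple of $d_{\calC(S)}(x,y)$ with constants depending only on $\calX$. Fix such a chain $\hh_1, \ldots, \hh_n$ with domain labels $U_1, \ldots, U_n \in \calU'$ realizing $d^{\calQ'}_0(\hx,\hy) = n$, and split the argument by whether each $U_i$ is a firewall label, equals the $\nest$-maximal domain $S$, or is some other domain in $\calU$.

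The first key observation is that no firewall label appears among the $U_i$: by the firewall construction, $\hx_{V_U} = \hy_{V_U} = 0$ for each $U \in \calU - \{S\}$, so the unique hyperplane in $\hT_{V_U}$ fails to separate $\hx$ from $\hy$. Thus every $U_i$ lies in $\calU$. The second key observation rules out repeated labels in $\calU - \{S\}$ using the firewalls themselves. Unbounded products combined with Lemma \ref{lem:UP} show that $(U,U)$ is not filling for any $U \in \calU - \{S\}$, so the firewall construction sets $V_U \perp U$. Hence item (2) of Lemma \ref{lem:hyperplane char} forces the unique hyperplane $\hh_{V_U}$ to cross every hyperplane labeled $U$, and any two chain hyperplanes sharing a label $U \in \calU - \{S\}$ would both be crossed by $\hh_{V_U}$, violating $0$-separation. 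So the chain contains at most one hyperplane per such label.

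The remaining labels equal $S$, and by observation (1) inside the proof of Proposition \ref{prop:firewall cubes}, $S$-labeled hyperplanes cross no others, so any number of them may appear in a $0$-separated chain. Their count is bounded by $d_{\hT_S}(\hx_S, \hy_S)$, which Lemma \ref{lem:S hyp distance} controls by $d_{\calC(S)}(x,y)$ up to multiplicative constants. For the set $\calV \subset \calU - \{S\}$ of non-$S$ chain labels, Proposition \ref{prop:firewall cubes} guarantees that $\calV$ is pairwise filling, so Lemma \ref{lem:filling UB} gives $\#\calV \prec \diam_S\bigl(\bigcup_{V \in \calV} \rho^V_S\bigr)$. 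The Bounded Geodesic Image Axiom \ref{ax:BGIA} places each $\rho^V_S$ within $E$ of any geodesic between $\pi_S(x)$ and $\pi_S(y)$, which bounds the right-hand diameter above by $d_{\calC(S)}(x,y) + O(1)$. Adding the two estimates yields $n \prec d_{\calC(S)}(x,y)$, as required.

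The main obstacle in this argument is the verification that repeated labels in $\calU - \{S\}$ cannot occur in a $0$-separated chain, since the ambient HHS structure alone does not exclude the possibility of ``fake'' $0$-separation as in Figure \ref{fig:fake_sep}. This is precisely the role the firewall domains were engineered to play, and once one has both this together with the simple observation that firewall hyperplanes never separate $\hx$ from $\hy$, the remaining bounds are direct applications of Lemma \ref{lem:filling UB}, Lemma \ref{lem:S hyp distance}, and the BGIA.
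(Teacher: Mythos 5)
Your proof is correct and takes the same overall approach as the paper — bound the non-$S$-labeled part of the chain via Proposition \ref{prop:firewall cubes} and Lemma \ref{lem:filling UB}, and the $S$-labeled part via Lemma \ref{lem:S hyp distance}. However, you are more careful than the paper at two points, and both refinements are genuinely needed. First, you explicitly observe that firewall labels cannot appear in the chain because $\hx_{V_U} = \hy_{V_U} = 0$. Second, and more substantively, you explicitly rule out repeated labels $U \in \calU - \{S\}$: since $K > B$ forces $\diam \calC(U) = \infty$, Lemma \ref{lem:UP} yields an infinite-diameter domain orthogonal to $U$, so $(U,U)$ is not filling, the firewall construction sets $V_U \perp U$, and the firewall hyperplane $\hh_{V_U}$ crosses every hyperplane labeled $U$, preventing two of them from lying in a $0$-separated chain. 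The paper's own proof is terse here; it bounds the number $m$ of \emph{distinct} non-$S$ labels via Lemma \ref{lem:filling UB} and then asserts that the $S$-labeled hyperplanes number $n-m$, which tacitly uses exactly the no-repeats fact you verify. Your version is not merely equivalent but fills in the argument that makes this bookkeeping legitimate. One very small stylistic redundancy: you write ``unbounded products combined with Lemma \ref{lem:UP}'', but unbounded products is already a hypothesis of that lemma, so citing the lemma alone would suffice.
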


\begin{proof}
 Let $\hh_1, \dots, \hh_n$ be a $0$-separated chain separating $\hx,\hy \in \calQ'$.  By Proposition \ref{prop:firewall cubes} and Lemma \ref{lem:filling UB}, the number $m$ of domains $U \sqsubsetneq S$ labeling hyperplanes in the chain satisfies
 $$m \prec \diam_S(\bigcup_{V \in \calV} \rho^V_S) \prec d_S(x,y),$$
  with the latter coarse inequality provided by the BGIA \ref{ax:BGIA}.  On the other hand, the number of domains in the chain labeled by $S$ is $n-m$.  Hence we are done by Lemma \ref{lem:filling UB} and Lemma \ref{lem:S hyp distance}.
\end{proof}

\bibliography{Recubulation}{}
\bibliographystyle{alpha}

 \end{document}